\documentclass[11pt]{amsart}
\usepackage{amsfonts,latexsym,amsthm,amssymb,amsmath,amscd,euscript,tikz, tikz-cd}
\usepackage[alphabetic, msc-links, bibtex-style, nobysame]{amsrefs}
\usepackage{stackengine}
\usepackage{framed}
\usepackage{empheq}
\usepackage{xfrac}
\usepackage[makeroom]{cancel}
\usepackage{faktor}
\usepackage{braket}
\usepackage{pgf,tikz,pgfplots}
\usepackage{tikz-3dplot}
\usetikzlibrary{3d,calc}
\usepgfplotslibrary{groupplots}
\pgfplotsset{compat=1.18}
\usepgfplotslibrary{fillbetween} 
\usepackage{mathrsfs}
\usetikzlibrary{arrows}
\definecolor{wrwrwr}{rgb}{0.3803921568627451,0.3803921568627451,0.3803921568627451}
\definecolor{rvwvcq}{rgb}{0.08235294117647059,0.396078431372549,0.7529411764705882}
\definecolor{mblue}{rgb}{0.2, 0.3, 0.8}
\definecolor{morange}{rgb}{1, 0.5, 0}
\definecolor{mgreen}{rgb}{0.1, 0.4, 0.2}
\definecolor{mred}{rgb}{0.5, 0, 0}
\definecolor{ForestGreen}{RGB}{34,139,34}
\usepackage{float}

\numberwithin{equation}{section}

\usepackage{lmodern}
\usepackage{supertabular}
\usepackage{amssymb}
\usepackage{enumerate}
\usepackage{stmaryrd}
\usepackage{bbm}
\usepackage{mathtools}
\usepackage{setspace}
\setstretch{1.08}

\usepackage{tikz,tikz-cd}
\usetikzlibrary{matrix,calc,positioning,arrows,decorations.pathreplacing,patterns,knots}

\newcommand{\la}{\langle}
\newcommand{\rg}{\rangle}

\newtheorem{theorem}{{Theorem}}[section]
\newtheorem*{theorem*}{Theorem}
\newtheorem{lemma}[theorem]{Lemma}
\newtheorem{conjecture}[theorem]{Conjecture}
\newtheorem{proposition}[theorem]{Proposition}

\newtheorem{corollary}[theorem]{Corollary}
\newtheorem*{corollary*}{Corollary}

\theoremstyle{definition}
\newtheorem{definition}{Definition}
\newtheorem{remark}{Remark}

\usepackage{lmodern,url,enumerate,mathtools}
\usepackage[hmargin = 1in,vmargin=1in]{geometry}
\usepackage{graphicx}
\usepackage{subcaption}

\usepackage{hyperref}
    \hypersetup{colorlinks=true,citecolor=ForestGreen,linkcolor = blue,urlcolor =black,linkbordercolor={1 0 0}}

\newcommand{\ve}{\varepsilon}

\newcommand{\mr}[1]{{\rm #1}}
\captionsetup{font=small}
\newcommand{\mres}{\mathbin{\vrule height 1.6ex depth 0pt width
0.13ex\vrule height 0.13ex depth 0pt width 1.3ex}}

\newcommand{\cA}{\mathcal{A}}\newcommand{\cB}{\mathcal{B}}

\newcommand{\cH}{\mathcal{H}}
\newcommand{\cI}{\mathcal{I}}\newcommand{\cJ}{\mathcal{J}}
\newcommand{\cL}{\mathcal{L}}
\newcommand{\cM}{\mathcal{M}}
\newcommand{\cP}{\mathcal{P}}
\newcommand{\cQ}{\mathcal{Q}}\newcommand{\cR}{\mathcal{R}}

\newcommand{\cW}{\mathcal{W}}

\newcommand{\bB}{\mathbb{B}}

\newcommand{\bR}{\mathbb{R}}
\newcommand{\bS}{\mathbb{S}}

\newcommand{\bZ}{\mathbb{Z}}

\newcommand{\nc}{\newcommand}

\nc{\on}{\operatorname}
\nc{\p}{\partial}
\nc{\ol}{\overline}
\nc{\ul}{\underline}
\nc{\pa}{\partial}

%\nc{\pa}{\partial_a}
\nc{\pb}{\partial_b}
\nc{\pc}{\partial_c}
\nc{\pd}{\partial_d}
\nc{\pe}{\partial_e}
\nc{\pf}{\partial_f}
\nc{\pg}{\partial_g}
\nc{\ph}{\partial_h}
\nc{\pari}{\partial_i}
\nc{\pj}{\partial_j}
\nc{\pk}{\partial_k}
\nc{\pl}{\partial_l}
\nc{\pell}{\partial_\ell}
\nc{\parm}{\partial_m}
\nc{\pn}{\partial_n}
\nc{\po}{\partial_o}
\nc{\pp}{\partial_p}
\nc{\pq}{\partial_q}
\nc{\pr}{\partial_r}
\nc{\ps}{\partial_s}
\nc{\pt}{\partial_t}
\nc{\pu}{\partial_u}
\nc{\pv}{\partial_v}
\nc{\pw}{\partial_w}
\nc{\px}{\partial_x}
\nc{\py}{\partial_y}
\nc{\pz}{\partial_z}

\numberwithin{equation}{section}
\makeatletter
\@addtoreset{equation}{section}
\makeatother

\widowpenalty=50
\clubpenalty=50
\allowdisplaybreaks

\usepackage{accents}
\renewcommand{\underbar}[1]{\underaccent{\bar}{#1}}

\title{Area-minimizing capillary cones}
\date{\today}

\author[Benjy Firester]{Benjy Firester}
\address{
Department of Mathematics, MIT \newline 
{\href{mailto:benjyfir@mit.edu}{benjyfir@mit.edu}}}
\author[Raphael Tsiamis]{Raphael Tsiamis}
\address{
Department of Mathematics, Columbia University \newline
{\href{mailto:r.tsiamis@columbia.edu}{r.tsiamis@columbia.edu}}}
\author[Yipeng Wang]{Yipeng Wang}
\address{
Department of Mathematics, Columbia University \newline
{\href{mailto:yw3631@columbia.edu}{yw3631@columbia.edu}}}

\begin{document}

\begin{abstract}
We construct non-flat minimal capillary cones with bi-orthogonal symmetry groups for any dimension and contact angle.
These cones interpolate between rescalings of a singular solution to the one-phase problem and the free-boundary cone obtained by halving a Lawson cone along a hyperplane of symmetry.
The existence and uniqueness of such cones is proved by solving a nonlinear free boundary equation parametrized by the contact angle and obtaining monotonicity properties for the solutions.
The constructed cones are minimizing in ambient dimension $8$ or higher, for appropriate contact angles, demonstrating that the regularity theory for minimizing capillary hypersurfaces can have singularities in codimension $7$ and completing the capillary regularity theory for contact angles near $\pi/2$.
We further develop the connection between capillary hypersurfaces and solutions of the one-phase problem, consequently producing new examples of singular minimizing free boundaries for the Alt-Caffarelli functional.
\end{abstract}

\maketitle

\vspace*{-0.5in}

\tableofcontents

\section{Introduction}

Given a smooth manifold with boundary $X^{n+1}$ and functions $g \in C^1(X)$ and $\sigma \in C^1(\partial X; (-1,1))$, the \textit{Gauss free energy} associated to an open set $E \subset X$ is 
\[
    \cA(E) := \cH^n(\partial^* E \cap \mathring{X}) - \int_{\partial^* E \cap \partial X} \sigma(x) \, d \cH^n + \int_E g(x) \, dx.
\]
This functional measures the energy of an incompressible fluid occupying the region $E$ inside a container $X$, with the functions $\sigma(x)$ and $g(x)$ describing, respectively, the adhesive (wetting) energy of the fluid and its gravitational energy.
The stationary points of the Gauss free energy represent equilibria of such systems, and the interface $M := \partial E \cap \mathring{X}$ is called a \textit{capillary surface}.
The condition $\delta \cA(E) = 0 $ for a stationary point requires $M$ to have prescribed mean curvature $g$ and boundary contact angle $\cos^{-1}(\sigma)$, meaning that
\[
H_M = g \quad \text{in } \; M \qquad \text{and} \qquad \cos \theta = \sigma(x) \quad \text{at } \; x \in \partial M.
\]
We produce minimizing critical points of the Gauss free energy with boundary singularities of codimension 7.

The blow-up limits of minimizing critical points of $\cA$ for $(X^{n+1},\p X)$ are area-minimizing surfaces in $\bR^{n+1}_+$ meeting the boundary hyperplane $\Pi := \p \bR^{n+1}_+$ at constant angle $\theta$.
We therefore define the capillary functional $\cA^\theta$ for $X^{n+1} = \bR^{n+1}_+$ with the Euclidean metric as the specialization of $\cA$ obtained from $g \equiv 0$ and $\sigma \equiv \cos(\theta)$.
For the model container $X^{n+1} = \bR^{n+1}_+$ given by the upper half-space $\{ p \in \bR^{n+1} : p_{n+1} \geq 0 \}$, the capillary energy $\cA^{\theta}$ has a monotonicity formula (see~\cite{simon-capillary}*{\S~2} and~\cites{Kagaya-Tonegawa, capillary-regularity}) saturated by a $1$-homogeneous critical point of $\cA^{\theta}$, which is called the \textit{tangent cone} of the capillary surface at a given point.
Combined with the $\ve$-regularity theory for capillary hypersurfaces developed by De~Philippis-Maggi~\cites{dephilippis-maggi}, this property allows us to apply Federer's dimension reduction technique and reduce the study of regularity and singularities of $\cA^{\theta}$-minimizers to the analysis of non-flat minimizing capillary cones in the threshold dimension.
\begin{theorem}\label{thm:regularity}
    For every angle $\cos^{-1}(\sigma) = \theta$, there exists a positive integer $d(\theta)$ such that any hypersurface of dimension $n < d(\theta)$ minimizing the $\cA^{\theta}$-capillary functional is smooth.
    There exist constants $\ve_1, \ve_2 > 0$ such that:
    \begin{enumerate}[$(i)$]
        \item for small contact angles $\sigma\in (-1, -1+\ve_1) \cup (1-\ve_1 ,1)$, the critical dimension satisfies $5 \leq d(\theta) \leq 7$,
        \item for large contact angles $|\sigma| \leq \ve_2$, the critical dimension satisfies $d(\theta) = 7$.
    \end{enumerate}
\end{theorem}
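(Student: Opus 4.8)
The plan is to derive the theorem from Federer's dimension-reduction scheme for $\cA^\theta$-minimizers, reducing everything to estimating the least dimension $p(\theta)$ of a non-flat $\cA^\theta$-minimizing cone: the explicit cones constructed later in the paper supply the sharp endpoint and the codimension-$7$ examples, while a rigidity classification of low-dimensional minimizing capillary cones supplies the lower bounds.

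First I would assemble the standard compactness package. By the monotonicity formula (\cite{simon-capillary}; see also \cites{Kagaya-Tonegawa, capillary-regularity}) every $\cA^\theta$-minimizer $M\subset\bR^{n+1}_+$ carries a density that is upper semicontinuous and scaling-monotone, minimizers with locally bounded mass subconverge to minimizers, and blow-ups at a point $x$ are $1$-homogeneous $\cA^\theta$-minimizing cones. The De~Philippis--Maggi $\ve$-regularity theorem \cite{dephilippis-maggi} makes $x$ a regular point once the density there is sufficiently close to that of a half-hyperplane, so singular points carry a density gap that survives the above convergence. Iterating Federer's argument --- translating to a boundary point of a cone away from its vertex, rescaling, and splitting off an $\bR$ factor, while handling interior singular points by the classical theory of area-minimizing cones --- gives $\dim_{\cH}\on{sing}(M)\le n-\min\{p(\theta),7\}$, where $7=\dim C(S^3\times S^3)$ is the Simons threshold. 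Hence every $\cA^\theta$-minimizer of dimension $n<\min\{p(\theta),7\}$ is smooth; conversely a non-flat minimizing cone of dimension $\min\{p(\theta),7\}$ is a singular minimizer of that dimension (when this dimension is $7$ one may already take an interior-placed Simons cone), so $d(\theta)=\min\{p(\theta),7\}$. In particular $d(\theta)\le7$ for every $\theta$, and the construction of the paper refines this by exhibiting, for appropriate contact angles, a non-flat $\cA^\theta$-minimizing cone of dimension $7$ in $\bR^8_+$ with a boundary singularity --- so that the codimension-$7$ bound is saturated by genuinely capillary phenomena. This handles the upper bounds in both $(i)$ and $(ii)$; the $\theta=\pi/2$ endpoint of this family is the bisected Lawson cone --- in ambient dimension $8$, the bisected Simons cone --- which is $\cA^{\pi/2}$-minimizing by the reflection principle, its reflection across $\Pi$ being the area-minimizing Simons cone.

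It remains to bound $p(\theta)$ from below. For $(i)$, I would show $p(\theta)\ge5$ for $\sigma$ near $\pm1$: a hypothetical non-flat $\cA^\theta$-minimizing cone of dimension $n\le4$ is, via the capillary/one-phase correspondence developed in the paper (available in this regime precisely because the contact angle is small), carried to a non-trivial minimizing cone for the Alt--Caffarelli one-phase functional in $\bR^n$; no such cone exists for $n\le4$ by the one-phase regularity theory of Caffarelli--Jerison--Kenig ($n=3$) and Jerison--Savin ($n=4$), the cases $n\le2$ being elementary. For $(ii)$, suppose toward a contradiction that for some fixed $n\le6$ there are non-flat $\cA^{\theta_k}$-minimizing cones $C_k$ of dimension $n$ with $\theta_k\to\pi/2$; by compactness a subsequence converges to an $\cA^{\pi/2}$-minimizing cone $C_\infty$ of dimension $n$, non-flat because the density gap is preserved, and the reflection of $C_\infty$ across $\Pi$ is a non-flat area-minimizing cone of dimension $n\le6$, contradicting that the smallest such cone has dimension $7$. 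Running this for each $n\in\{1,\dots,6\}$ and intersecting the finitely many resulting angle-neighborhoods yields a neighborhood of $\pi/2$ on which $p(\theta)\ge7$; with the automatic upper bound this gives $d(\theta)=7$.

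\textbf{Main obstacle.} The hard part is the lower bound, not the dimension-reduction bookkeeping. One must make the capillary/one-phase correspondence precise and quantitative enough at the level of minimizing cones to transfer the Caffarelli--Jerison--Kenig and Jerison--Savin non-existence results --- this is exactly where the smallness of the contact angle is genuinely used --- and, in the near-$\pi/2$ regime, one must verify that the compactness package rules out any loss of the density gap (neck pinching) in the limit $\theta\to\pi/2$. The sharp endpoint $d(\theta)=7$ and the codimension-$7$ singular examples, by contrast, rest on the separate construction and minimality proof for the cones carried out elsewhere in the paper.
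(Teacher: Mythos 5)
Your proposal is structurally correct and follows the paper's route: Federer dimension reduction via the capillary monotonicity formula and De~Philippis--Maggi $\ve$-regularity, the paper's constructed cones for the upper bound, and low-dimensional rigidity for the lower bound. The main divergence is in sourcing the lower bounds. The paper obtains both by citing Chodosh--Edelen--Li~\cite{improved-regularity} --- that $\cA^\theta$-minimizing cones in ambient dimension $\le 5$ are flat for $\theta\in(0,\theta_0)$, via the one-phase blow-down, and that the Gr\"uter--Jost codimension-$7$ bound persists for $\theta\in(\tfrac\pi2-\ve(n),\tfrac\pi2]$ --- whereas you sketch direct arguments. Your sketches are in fact the content of that reference, and you correctly flag the two genuine obstacles it disposes of: making the capillary-to-one-phase convergence quantitative at the level of minimizing cones, and a uniform density gap (equivalently, $\ve$-regularity constants uniform near $\theta=\tfrac\pi2$) so that the limiting cone in your compactness argument for $(ii)$ survives as a non-flat $\cA^{\pi/2}$-minimizer that can be reflected. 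One further remark: your parenthetical that an interior Simons cone already forces $d(\theta)\le 7$ is correct for the theorem as literally stated, but it bypasses the paper's actual contribution, which is to exhibit \emph{boundary} singularities of codimension $7$; reading $d(\theta)$ in the spirit of Conjecture~\ref{conjecture:7-dim}, i.e.\ as governing $\on{sing}(M)\cap\partial X$, the constructions of Theorems~\ref{thm:cones-are-minimizing-smalltheta} and~\ref{thm:cones-are-minimizing-nearpi/2} cannot be replaced by an interior cone.
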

In both of the above cases, the novel upper bound $d \leq 7$ is proved by explicit construction of 7-dimensional area-minimizing capillary cones with an isolated singularity, in Theorems~\ref{thm:cones-are-minimizing-smalltheta} and~\ref{thm:cones-are-minimizing-nearpi/2}.

The small contact angle case $(i)$ of Theorem~\ref{thm:regularity} is a physically significant point of study, corresponding to very \textit{hydrophobic} or \textit{hydrophilic} interactions.
Liquid-solid interfaces with these properties play crucial roles in engineering and chemistry including efficient heat diffusion for computer chips and medical diagnostics.
The capillary surface theory also has important applications to the comparison geometry of scalar curvature on manifolds with boundary~\cite{gromov-capillary}.
The construction of capillary surfaces with prescribed mean curvature (capillary $\mu$-bubbles) extends the Schoen-Yau slicing method from the positive mass theorem and has led to recent breakthroughs such as the polyhedron comparison theorem, due to C.~Li~\cite{li-polyhedron}, and the extrinsic Penrose inequality, due to Eichmair-K\"orber~\cite{extrinsic-penrose}.
We refer the reader to~\cites{ko-yao-I, ko-yao-II, yujie-wu} for other applications of these techniques.
When extending this strategy to higher dimensions, it is essential to understand the regularity of the constructed object as well as its limitations, as obtained in Theorem~\ref{thm:regularity}.

Regarding Theorem~\ref{thm:regularity}, Almgren and Taylor had previously shown that the 
singular set has codimension at least $3$~\cites{almgren, taylor}.
For contact angle $\theta = \frac{\pi}{2}$, the surface $M$ is called a free-boundary minimal surface and satisfies the sharp estimate $\dim \on{sing}(M \cap \partial X) \leq n-7$ due to Gr\"uter-Jost~\cite{gruter-jost}.
Recently, Chodosh-Edelen-Li~\cite{improved-regularity} extended this regularity result to angles $\theta \in ( \frac{\pi}{2} - \ve, \frac{\pi}{2}]$, for a dimensional constant $\ve(n)$.
Combining our construction with their result proves the dimensional threshold $d(\theta) = 7$ for a range of contact angles close to $\frac{\pi}{2}$.
The construction of singular, but not necessarily stable, solutions to the capillary problem is a classical question that has received great attention; major results in the field have examined interior singularities~\cites{concus-finn-acta, concus-finn-1,concus-finn-2}.
To the best of our knowledge, this paper presents the first non-trivial examples of boundary singularity behavior for minimizers of the capillary problem~\cite{edelen-owr-2025}.

For small contact angle $\theta$, the capillary problem is intimately connected to the one-phase Bernoulli problem, which can be viewed as its linearization as $\theta \downarrow 0$.
The one-phase Bernoulli free boundary problem models steady configurations of an incompressible inviscid fluid with a free surface.
Such systems are given by critical points of an energy functional with a kinetic term and a volume term encoding the effect of the exterior pressure, by analogy with the Gauss free energy.
Mathematically, this functional is known as the Alt-Caffarelli energy; in addition to its description of fluids, it has many known connections to the theory of minimal surfaces~\cites{traizet, jerison-kamburov}.
In their originating paper~\cite{alt-caffarelli}, Alt and Caffarelli showed the regularity of the free boundary in dimension 2 and developed a partial regularity theory in higher dimensions, proving that the axisymmetric conical free boundary is not minimizing in dimension 3.
Weiss~\cite{weiss} proved the existence of a critical dimension admitting minimizing solutions with an isolated singularity along the free boundary; Caffarelli-Jerison-Kenig and Jerison-Savin proved that this critical dimension is at least $5$~\cites{caffarelli-jerison-kenig, jerison-savin}.
On the other hand, De~Silva-Jerison~\cites{desilva-jerison-cones} exhibited an axisymmetric cone in 7 dimensions which is minimizing, giving an upper bound on the critical dimension comparable to the Simons cone for minimal surface regularity theory.
Prior to the present paper, no other singular minimizing solutions were known~\cites{generic-regularity-II, one-phase-simon-solomon}.

Leveraging the regularity theory for the one-phase problem,~\cite{improved-regularity} proved the existence of a small $\theta_0 > 0$ such that minimizing cones for $\cA^{\theta}$, where $\theta \in (0, \theta_0)$, are flat in dimension $n \leq 4$.
Using similar ideas, they strengthened the Weiss monotonicity formula for capillary surfaces~\cite{weiss-monotonicity}.
Their regularity result was extended to arbitrary contact angles by Pacati-Tortone-Velichkov~\cite{singular-capillary} under the additional assumption of graphicality or boundary mean convexity.

We construct capillary cones for any dimension $n$ and arbitrary capillary angle $\theta$ exhibiting \textit{bi-orthogonal} $O(n-k)\times O(k)$ symmetry.
These symmetries reduce the existence of capillary cones to an appropriate free boundary ODE whose symmetric graph is the desired minimal capillary cone.
The condition that the cone meets the upper half-space is a first boundary (Dirichlet) condition, and the prescribed angle is the second boundary (Neumann) condition.
Our construction interpolates between rescalings of a singular one-phase solution, for small $\theta$, and the free-boundary cone, with $\theta = \frac{\pi}{2}$, obtained by halving a Lawson cone $C(\bS^{n-k-1} \times \bS^k) \subset \bR^{n+1}$ along a hyperplane of symmetry that bisects one of the spheres equatorially.
\begin{theorem}\label{thm:capillary-cones}
    For every $n \geq 3$, every  $1 \leq k \leq n-2$, and all $\theta \in ( 0 , \frac{\pi}{2}]$, there exists a unique non-flat, $O(n-k) \times O(k)$-invariant graphical minimal capillary cone of contact angle $\theta$ in $\bR^{n+1}_+$ with an isolated singularity at the origin, which we denote by $\mathbf{C}_{n,k,\theta}$.
    The association $\theta \mapsto \mathbf{C}_{n,k,\theta}\cap \bS^n_+$ defines a smooth map and the cones $\mathbf{C}_{n,k,\theta}$ have the following convergence properties:
    \begin{enumerate}[$(i)$]
        \item As $\theta \downarrow 0$, the graphing functions converge, after rescaling by $\tan \theta$, to an $O(n-k) \times O(k)$-invariant solution of the one-phase problem in $\bR^n$ whose free boundary is the non-flat, $O(n-k) \times O(k)$-invariant cone $\partial \Gamma_{n,k}$, where
        \[
        \Gamma_{n,k} := \{ (x,y) \in \bR^{n-k} \times \bR^k : |y| \leq t_{n,k} \sqrt{|x|^2 + |y|^2} \}, 
        \]
        with an isolated singularity at the origin.
        \item As $\theta \uparrow \frac{\pi}{2}$, the capillary cones converge in $C^{\infty}_{\on{loc}}$ to $\mathbf{C}_{n,k,\frac{\pi}{2}}$, the free boundary minimal cone obtained by halving the Lawson cone $C(\bS^{n-k-1} \times \bS^k)$ in the upper half-space.
        Notably, the links of the cones $\mathbf{C}_{n,k,\theta}$ converge smoothly, as manifolds-with-boundary inside $\bS^n_+$, to the Clifford hypersurface $\bS^{n-k-1} \times \bS^k$ halved by an equatorial plane of symmetry in the $\bS^k$-factor.
    \end{enumerate}
\end{theorem}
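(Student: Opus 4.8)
The plan is to reduce the construction to a single second-order free-boundary ODE in which the contact angle appears as a parameter, to solve it by a shooting analysis in which the angle is a strictly monotone function of a shooting parameter, and then to use the resulting monotonicity to obtain uniqueness, smooth dependence on $\theta$, and control of the two limiting regimes. \emph{Reduction to an ODE.} Write $\bR^{n+1}=\bR^{n-k}\times\bR^k\times\bR$, so that $\bR^{n+1}_+=\{p_{n+1}\ge 0\}$ and $O(n-k)\times O(k)$ acts on the first two factors while fixing $p_{n+1}$; set $r=|x|$, $s=|y|$ for $(x,y)\in\bR^{n-k}\times\bR^k$. An $O(n-k)\times O(k)$-invariant, $1$-homogeneous graphical cone is then the graph $\{p_{n+1}=u(x,y)\}$ of a $1$-homogeneous function $u=u(r,s)>0$ on a cone domain $D_\theta=\{s\le c(\theta)\,r\}$, vanishing on $\partial D_\theta$; equivalently, $u$ is determined by a profile $f$ on an interval $[0,\alpha_\theta]$ with $\tan\alpha_\theta=c(\theta)$. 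Minimality of the graph of $u$, which is the Euler--Lagrange equation of the weighted-area integrand $\sqrt{1+|\nabla u|^2}\,r^{n-k-1}s^{k-1}$, becomes a second-order ODE for $f$. Smooth closure of the cone across the fixed axis $\{y=0\}$ forces $f'(0)=0$; the free boundary $\partial D_\theta$ is located as the first zero of $f$, where the cone meets $\Pi$; and the remaining Neumann condition there is precisely that the slope of the graph at that zero corresponds to contact angle $\theta$.

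\emph{Shooting and monotonicity.} Imposing $f'(0)=0$ leaves a one-parameter family of solutions $f_\lambda$. For each admissible $\lambda$ one follows $f_\lambda$ from the axis until it first vanishes, at some $\alpha_\lambda$, thereby defining the opening $c(\lambda)=\tan\alpha_\lambda$ and the contact angle $\theta(\lambda)$ realized there; one must first establish that every $f_\lambda$ does reach a first zero transversally while the associated profile curve stays inside the orbit triangle (in particular, without running into the other axis $\{x=0\}$ or developing a bad blow-up of slope), which is where barrier and comparison estimates for the ODE enter. The heart of the argument is then to prove that $\lambda\mapsto\theta(\lambda)$ is continuous and \emph{strictly monotone} with range exactly $(0,\tfrac{\pi}{2}]$; one proves this by exhibiting a monotone quantity along the profiles, or by differentiating the ODE in $\lambda$ and showing the associated Jacobi field is sign-definite, anchored at $\theta=\tfrac{\pi}{2}$, where the unique solution is explicit, namely the profile of the Lawson cone $C(\bS^{n-k-1}\times\bS^k)$ halved by an equatorial hyperplane of the $\bS^k$-factor. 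Strict monotonicity gives uniqueness; continuity plus the endpoint behavior gives existence for every $\theta\in(0,\tfrac{\pi}{2}]$; and, since $\theta'(\lambda)\neq 0$, inverting $\theta(\lambda)$ and combining with smooth dependence of ODE solutions on parameters shows $\theta\mapsto\mathbf{C}_{n,k,\theta}\cap\bS^n_+$ is smooth. This defines $\mathbf{C}_{n,k,\theta}$, which away from the vertex is smooth — the only apparent degeneracies are at the fixed axis $\{y=0\}$, resolved by the $O(k)$-symmetry, and along the smooth free-boundary cone $\partial D_\theta$, where the contact-angle boundary value problem is regular — and genuinely non-flat since $f$ is nonconstant, hence has an isolated singularity exactly at the origin.

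\emph{The two limits.} For $\theta\uparrow\tfrac{\pi}{2}$, continuity of the construction at the corresponding endpoint of the parameter interval, together with the uniform ODE estimates above, gives $C^\infty_{\mathrm{loc}}$ convergence $\mathbf{C}_{n,k,\theta}\to\mathbf{C}_{n,k,\pi/2}$ and smooth convergence of the links as manifolds-with-boundary in $\bS^n_+$ to $\bS^{n-k-1}\times\bS^k$ halved equatorially in the $\bS^k$-factor, which is item~$(ii)$. For $\theta\downarrow 0$ the cone becomes tangent to $\Pi$, so $u_\theta\to 0$; setting $v_\theta:=u_\theta/\tan\theta$, the minimal-surface ODE linearizes to Laplace's equation, the rescaled Neumann condition becomes $|\nabla v_0|=1$ on the free boundary, and the openings $c(\theta)$ converge to the value determined by the limiting linear problem — the unique $\beta$ for which the first Dirichlet eigenvalue of $\{\alpha\le\beta\}\subset\bS^{n-1}$ equals $n-1$ — which is exactly $t_{n,k}/\sqrt{1-t_{n,k}^2}$. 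The monotonicity and barrier estimates prevent the rescaled profiles from collapsing, so $v_\theta$ converges to an $O(n-k)\times O(k)$-invariant solution of the one-phase problem with free boundary $\partial\Gamma_{n,k}$ and an isolated singularity at the origin, which is item~$(i)$.

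\emph{Main obstacle.} The crux is the monotonicity step: showing that the shooting map $\lambda\mapsto\theta(\lambda)$ is \emph{globally} strictly monotone and exhausts $(0,\tfrac{\pi}{2}]$. This requires enough quantitative control of the nonlinear ODE to guarantee that every profile in the family reaches $\Pi$ transversally while remaining graphical and inside the orbit triangle, together with a robust monotonicity mechanism (a conserved or monotone functional along the profile, or a sign-definite Jacobi field). The same estimates are precisely what is needed to make the two singular limits rigorous — most delicately $\theta\downarrow 0$, where the geometry degenerates onto $\Pi$ and the problem passes to its linearization, the one-phase Bernoulli problem.
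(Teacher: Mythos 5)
Your proposal matches the paper's approach closely: the ODE reduction via $O(n-k)\times O(k)$ symmetry, the shooting argument in the initial-height parameter, the identification of the $\theta=\tfrac{\pi}{2}$ endpoint with the halved Lawson cone, the strict-monotonicity mechanism (realized in the paper through the monotone quantity $\Psi=f(f-Af')-\tfrac{1}{n-2}$ detecting the zero-or-blow-up dichotomy together with the unique-crossing comparison lemmas), and the linearization to the one-phase Bernoulli problem for $\theta\downarrow 0$. The one point worth flagging is that the paper separates two tasks you fold together — proving that profiles with initial height strictly below the Lawson value reach a transversal zero (via $\Psi$), and proving strict monotonicity of both $a\mapsto t_a$ and $a\mapsto|f_a'(t_a)|$ (via the crossing/ordering lemmas) — but the overall architecture is the same.
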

\begin{remark}\label{rmk:remark-after-existence}
In the process of proving Theorem~\ref{thm:capillary-cones}, we will also show that the map $\theta \mapsto \mathbf{C}_{n,k,\theta} \cap \bS^n_+$ of links is smooth, and the positive phases of $\mathbf{C}_{n,k,\theta}$ are nested decreasingly in $\theta$.
The cones $\mathbf{C}_{n,k,\theta}$ are topologically $C(\bS^{n-k-1} \times \bS^{k}_+)$, so unlike the Lawson cones, we do not have the full symmetry group $O(n-k-1)\times O(k+1)$ because the second factor only contributes its upper hemisphere.
Notably, while both $\mathbf{C}_{7,1,\tfrac{\pi}{2}}$ and $\mathbf{C}_{7,5,\tfrac{\pi}{2}}$ are half of the Lawson cone $C(\bS^1 \times \bS^5)$, they are halved along different non-equivalent planes.
Furthermore, we will see that $\mathbf{C}_{7,1,\theta}$ and $\mathbf{C}_{7,5,\theta}$ may behave differently for small $\theta$ (see Remark~\ref{rem:minimizingNotStrict}): $ \mathbf{C}_{7,1,\theta}$ is axisymmetric and transitions from strictly minimizing (for sufficiently small $\theta$) to only strictly stable, whereas $\mathbf{C}_{7,5,\theta}$ is $O(2)\times O(5)$-invariant and may not be minimizing even for small $\theta$.
\end{remark}

The capillary cones constructed in Theorem~\ref{thm:capillary-cones} are minimizing for $\cA^{\theta}$ in ambient dimension $8$ or higher (so $n \geq 7$), for appropriate ranges of $k$ and the angle $\theta$.
\begin{theorem}\label{thm:cones-are-minimizing-smalltheta}
For every $n \geq 7$ and $1 \leq k \leq n-2$, there exists $\theta_{n,k} > 0$ such that for all $\theta \in (0, \theta_{n,k})$, the cone $\mathbf{C}_{n,k,\theta} \subset \bR^{n+1}_+$ is one-sided strictly minimizing for $\cA^{\theta}$, and strictly stable.
Moreover, for $n \leq 100$ and $k$ in the ranges
\begin{itemize}
    \item $n\geq 7$ and $k \leq n-5$,
    \item $n\geq 9$ and $k \leq n-4$,
    \item $n\geq 12$ and $k \leq n-3$,
    \item $n\geq 20$ and $k \leq n-2$,
\end{itemize}
there exists a $\theta_{n,k} > 0$ such that for all $\theta \in (0, \theta_{n,k})$, the cone $\mathbf{C}_{n,k,\theta} \subset \bR^{n+1}_+$ is minimizing for $\cA^{\theta}$.
Notably, the cones $\mathbf{C}_{7,1,\theta}$ and $\mathbf{C}_{7,2,\theta}$ are $\cA^{\theta}$-minimizing in $\bR^8_+$, for $\theta \in (0, \theta_*)$.
\end{theorem}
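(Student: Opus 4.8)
The plan is to prove minimality by the calibration-by-foliation method of Hardt--Simon, adapted to the capillary free-boundary setting: for each fixed small $\theta$ I would construct a foliation of a neighborhood of $\mathbf{C}_{n,k,\theta}$ in $\bR^{n+1}_+$ by smooth minimal capillary hypersurfaces, each meeting $\Pi$ at the same angle $\theta$, with $\mathbf{C}_{n,k,\theta}$ itself a leaf. The pushed-forward unit normal of such a foliation is then a calibration for $\cA^\theta$, the boundary term being controlled precisely because every leaf satisfies the identical condition $\sigma\equiv\cos\theta$ on $\Pi$. Since $\Pi$ is dilation-invariant and dilations preserve the class of minimal capillary hypersurfaces, it suffices to exhibit \emph{one} smooth minimal capillary hypersurface lying strictly on each side of the cone and asymptotic to it; its positive dilates then sweep out that side. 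By the $O(n-k)\times O(k)$-symmetry every object here is encoded by a profile curve in a spherical triangle, so producing these leaves is a shooting problem for a second-order ODE, perturbed off the ODE solution representing $\mathbf{C}_{n,k,\theta}\cap\bS^n_+$.

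The decisive input is strict stability of the cone, which I would extract from the $\theta\downarrow 0$ limit in Theorem~\ref{thm:capillary-cones}(i). Computing $\delta^2\cA^\theta$ at $\mathbf{C}_{n,k,\theta}$ along normal variations supported away from the vertex, the symmetry reduces it to a one-dimensional Sturm--Liouville quadratic form on the profile arc, carrying a Robin-type boundary term at the free boundary that records the contact-angle condition; strict stability amounts to strict positivity of this form, equivalently to a positive Jacobi field with the correct boundary sign. After rescaling the graphing function by $\tan\theta$, this quadratic form converges as $\theta\downarrow 0$ to the linearized Alt--Caffarelli energy around the singular one-phase solution with free boundary $\partial\Gamma_{n,k}$, and strict positivity of the latter in $\bR^n$ holds for every $n\ge 7$ --- this is the one-phase analogue of the Simons stability inequality and can be certified by an explicit positive supersolution built from the homogeneity direction and the rotational directions of $\bS^{n-k-1}\times\bS^{k-1}$. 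Strict positivity being an open condition, it descends to $\mathbf{C}_{n,k,\theta}$ for all sufficiently small $\theta$, which already yields the asserted strict stability.

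Strict stability then feeds the foliation. By an implicit-function/shooting argument off the cone profile, among minimal capillary profiles $C^2_{\mathrm{loc}}$-close to that of $\mathbf{C}_{n,k,\theta}$ I would find a smooth minimal capillary hypersurface $S^\theta_-$ lying strictly on the side away from the axis and asymptotic to $\mathbf{C}_{n,k,\theta}$ at the vertex and at infinity; its dilates foliate that side, and the induced calibration gives strict minimality among competitors whose positive phase is contained in that of the cone. Performing the analogous construction on the axis side, to the extent available, produces the reverse one-sided inequality; together these are exactly the statement that $\mathbf{C}_{n,k,\theta}$ is one-sided strictly minimizing for $\cA^\theta$, for all $n\ge 7$ and $1\le k\le n-2$. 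To upgrade to honest two-sided minimality one needs the foliation on the axis side as well, i.e. the inward ODE solution must stay embedded and graphical all the way to the axis rather than oscillating back across the cone; this is a Simons-type condition requiring the indicial roots of the Jacobi operator at the vertex to be real, a closed inequality in $n$, $k$, and $\theta$. As $\theta\downarrow 0$ it degenerates to a lower bound on the Hardy constant of the rescaled linearized one-phase operator in terms of $n$, the spherical eigenvalue data of $\bS^{n-k-1}\times\bS^{k-1}$, and $t_{n,k}$; I would check it holds in the stated regimes --- by monotonicity in $n$ for the large-$n$ ranges, and by a finite verification over $7\le n\le 100$ otherwise (hence the bound in the statement), which in particular settles $(n,k)=(7,1)$ and $(7,2)$ --- and then the calibration argument runs on both sides.

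The main obstacle is the singular limit in the stability step: the link of $\mathbf{C}_{n,k,\theta}$ degenerates as $\theta\downarrow 0$, with a large region collapsing onto $\Pi$ while the one-phase free boundary $\partial\Gamma_{n,k}$ emerges, so one must rule out escape of spectrum and show that the capillary quadratic form, including its Robin boundary term, converges to the linearized one-phase form with precisely the correct Alt--Caffarelli boundary integral. Once this convergence is established the ODE and foliation constructions are comparatively standard, and the only remaining genuine computation is the Simons-type inequality that pins down the admissible pairs $(n,k)$.
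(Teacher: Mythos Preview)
Your proposal has a genuine gap in the two-sided step. You identify the condition for upgrading from one-sided to two-sided minimality as ``the indicial roots of the Jacobi operator at the vertex being real,'' and you call this a Hardy-type lower bound on the linearized one-phase operator. But real indicial roots is exactly the strict stability condition $\lambda_1 > -\bigl(\tfrac{n-2}{2}\bigr)^2$, and the paper (via the companion \cite{FTW-stability-one-phase}) shows this holds for \emph{every} $n\ge 7$ and $1\le k\le n-2$; indeed, that is precisely the content of the first clause of the theorem. If your criterion were right, two-sided minimality would follow in the full range, contradicting both the restricted ranges in the statement and the classical fact that $C(\bS^5\times\bS^1)\subset\bR^8$ is strictly stable (real indicial roots) yet only one-sided minimizing. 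The obstruction on the ``inner'' side is nonlinear: real indicial roots prevent oscillation of the \emph{linearized} solutions, but the actual minimal capillary profile shot inward can still fail to close up embedded and graphical at the axis. So the shooting/implicit-function construction you sketch will not produce the inner leaf from stability data alone.

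The paper's route is also methodologically different from yours. It never constructs a Hardt--Simon foliation by exact minimal capillary hypersurfaces; instead, following De~Silva--Jerison, it writes down explicit barrier functions for the graphical capillary problem~\eqref{eqn:capillary-fbp}. The subsolution is $V_\theta=\rho f_\theta(t)-\Lambda\tan\theta\,\rho^{\alpha}g(t)$ with $g$ a hypergeometric function, and the subsolution property reduces to a single pointwise inequality~\eqref{eqn:strict-stability-condition} at the one-phase free boundary---this is equivalent to strict stability of $U_{n,k}$ and yields the one-sided and stability claims for all $n\ge7$. The supersolution $W_\theta$ is built in two pieces (one perturbation at infinity, one explicit ``cap'' near the origin) and its validity is reduced in Proposition~\ref{prop:supersolution} to three concrete inequalities \eqref{eqn:S'1-new}--\eqref{eqn:S'3-new} involving a second hypergeometric function and a parameter $\beta\in(2-n,-1)$. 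These are the genuinely restrictive conditions that carve out the ranges in the theorem, and they are checked computationally pair by pair (Table~\ref{table:bigTableOfBetas}); they do not collapse to an eigenvalue inequality.
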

Our methods indicate that Theorem~\ref{thm:cones-are-minimizing-smalltheta} holds in all dimensions $n \geq 7$, for the $1 \leq k \leq n-2$ specified in the bullets above.
For mathematical rigor, we only state the result for dimensions $n \leq 100$ where we have verified the minimizing property.
As $n \gg 7$, the proof strategy becomes easier to implement.
Theorem~\ref{thm:cones-are-minimizing-smalltheta} treats all $O(n-k) \times O(k)$-invariant cones in dimension $n \geq 7$, except for twenty pairs $(n,k)$ between $7 \leq n \leq 19$ where a more refined construction is needed.
The methods presented in this paper do not rule out that some of these pairs may also be $\cA^{\theta}$-minimizing; however, strong numerical evidence suggests certain exceptional pairs are not strictly minimizing.
These pairs $(n,k)$ are studied in our upcoming work~\cite{FTW_MinimizingII}.

For $n \leq 6$ and small contact angle $\theta$, the cones $\mathbf{C}_{n,k,\theta}$ are all unstable by the criterion of Caffarelli-Jerison-Kenig and a computation of Hong~\cite{hong-singular}.
In an upcoming paper~\cite{FTW_Stability}, we show that for any angle $\theta \in (0, \frac{\pi}{2}]$, the cones $\mathbf{C}_{n,k,\theta}$ are unstable for $n \leq 6$ and stable for $n \geq 7$.
The proof of Theorem~\ref{thm:cones-are-minimizing-smalltheta} is inspired by the argument of De~Silva-Jerison~\cite{desilva-jerison-cones}, reducing the construction of barriers to the linear problem in the dimensions where singularities occur.
Our constructions therefore exhibit new singular minimizing and strictly stable cones for the one-phase problem.
\begin{theorem}\label{thm:one-phase-cones}
    For every $n \geq 3$ and $1 \leq k \leq n-2$, there exists an $O(n-k) \times O(k)$-invariant solution of the one-phase problem, given by
    \[
    U_{n,k}(x,y) := c_{n,k}\sqrt{|x|^2 + |y|^2} \cdot f_{n,k} \left( \frac{|y|}{\sqrt{|x|^2 + |y|^2}} \right), \qquad f_{n,k}(t) := {}_2 F_1 \left( \frac{n-1}{2}, - \frac{1}{2} ; \frac{k}{2} ; t^2 \right)
    \]
    with free boundary given by the cone $\Gamma_{n,k} = \{ (x,y) : |y| \leq t_{n,k} \sqrt{|x|^2+ |y|^2} \}$.
    For every $n \geq 7$ and $1 \leq k \leq n-2$, this solution is strictly stable and strictly minimizing from below; for every $n \geq 7$ and $k$ in the ranges specified in Theorem~\ref{thm:cones-are-minimizing-smalltheta}, it is globally minimizing.
\end{theorem}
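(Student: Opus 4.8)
The plan is to proceed in three stages. First I would verify directly that the explicit function $U_{n,k}$ solves the one-phase Bernoulli problem; then, for $n\ge7$, prove strict stability and strict minimality ``from below'' by constructing a positive solution of the linearized equation and the associated one-sided foliation; and finally, for the listed ranges of $k$, upgrade to global minimality by producing a matching foliation by supersolutions on the complementary side.

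\emph{Stage 1: existence.} I would first record the relevant properties of $f_{n,k}(t)={}_2F_1(\tfrac{n-1}{2},-\tfrac12;\tfrac k2;t^2)=\sum_{m\ge0}c_mt^{2m}$: one has $c_0=1$, while $c_m<0$ for all $m\ge1$ because $(-\tfrac12)_m<0$ and the remaining Pochhammer symbols are positive, so $f_{n,k}$ is strictly decreasing on $[0,1)$; moreover the parameters satisfy $c-a-b=\tfrac{k-n+2}{2}\le0$, which forces $\sum_{m\ge1}|c_m|=\infty$, so $f_{n,k}(t)\to-\infty$ as $t\uparrow1$ and $f_{n,k}$ has a unique zero $t_{n,k}\in(0,1)$ with $f_{n,k}'(t_{n,k})<0$. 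Next I would check harmonicity: writing a point of $\bR^n=\bR^{n-k}\times\bR^k$ as $(r\cos\psi\,\xi,\,r\sin\psi\,\eta)$ with $\xi\in\bS^{n-k-1}$, $\eta\in\bS^{k-1}$, $\psi\in[0,\tfrac\pi2]$, the join structure of the round metric on $\bS^{n-1}$ shows that a $1$-homogeneous $O(n-k)\times O(k)$-invariant function $U=r\,w(\psi)$ is harmonic precisely when $w''+\big((k-1)\cot\psi-(n-k-1)\tan\psi\big)w'+(n-1)w=0$, and the substitution $z=\sin^2\psi$ turns this into the hypergeometric equation with parameters $(\tfrac{n-1}{2},-\tfrac12;\tfrac k2)$, whose solution smooth across $\{y=0\}$ (equivalently, analytic in $z$) is $f_{n,k}$. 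Hence $U_{n,k}$, extended by $0$, is continuous on $\bR^n$, positive and harmonic on $\{U_{n,k}>0\}=\mathring{\Gamma}_{n,k}$, smooth on $\bR^n\setminus\{0\}$, and has free boundary the smooth cone $\partial\Gamma_{n,k}$ away from the origin. Finally, since $|\nabla(|y|/r)|=\sqrt{1-t^2}/r$ on the slice $\{|y|/r=t\}$, I compute $|\nabla U_{n,k}|\equiv c_{n,k}\,|f_{n,k}'(t_{n,k})|\,\sqrt{1-t_{n,k}^2}$ on the free boundary --- a constant that the stated choice of $c_{n,k}$ normalizes to $1$ --- so $U_{n,k}$ is a classical, hence weak, solution of the one-phase problem with an isolated singularity at the origin, which also makes explicit the limiting solution appearing in Theorem~\ref{thm:capillary-cones}(i).

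\emph{Stage 2: strict stability and strict minimality from below, for $n\ge7$ and all $k$.} Using the $O(n-k)\times O(k)$ symmetry I would reduce the one-phase second variation of $U_{n,k}$, after stripping off the radial variable with a Hardy inequality, to a weighted one-dimensional quadratic form in $t\in(0,t_{n,k})$ governed by the Jacobi (linearized) operator of $U_{n,k}$. The sharp sign of this form --- equivalently, the existence of a positive Jacobi field $r^{\gamma}\phi(\omega)$ of admissible homogeneity $\gamma$ --- should come down to an explicit hypergeometric inequality that I expect to hold strictly exactly when $n\ge7$ (failing for $n\le6$, in accordance with the instability recalled in the introduction). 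Given such a positive Jacobi field, I would construct a foliation of a one-sided neighborhood of $\partial\Gamma_{n,k}$ inside $\Gamma_{n,k}$ by free boundaries of strict subsolutions $v\le U_{n,k}$; a standard sliding/maximum-principle argument then forces any one-phase competitor $V\le U_{n,k}$ with $V\not\equiv U_{n,k}$ to have strictly larger Alt--Caffarelli energy on compact modifications, which is strict minimality from below, while the positive Jacobi field simultaneously gives strict stability.

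\emph{Stage 3: global minimality (restricted $k$), and the main obstacle.} To pass from one-sided to two-sided minimality I would need a matching foliation by strict \emph{super}solutions on the outer side $\{|y|>t_{n,k}r\}$, where the positive phase of a competitor may advance toward the codimension-$(n-k)$ subspace $\{x=0\}$. This is the delicate step and the source of the restrictions on $k$: the outer barrier is assembled from exterior one-phase solutions near $\{x=0\}$, whose decay rate is governed by the dimension $n-k$ of the $x$-factor, and reconciling this decay with the homogeneity of the cone's Jacobi field forces the lower bounds on $n$ (as a function of $k$) recorded in Theorem~\ref{thm:cones-are-minimizing-smalltheta} --- the smaller $n-k$ is, the larger $n$ must be. Once both foliations are in hand, a calibration/comparison argument in the style of De~Silva-Jerison~\cite{desilva-jerison-cones} promotes $U_{n,k}$ to a global minimizer. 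I expect this outer construction to be the principal obstacle: unlike the inner foliation it is not robust for all $n\ge7$, it is precisely what produces the arithmetic constraints on $(n,k)$, and --- together with the sharp dimensional threshold $n=7$ in Stage~2 --- it is where the requisite hypergeometric positivity and monotonicity inequalities must be verified, by hand when feasible and by computer for the intermediate dimensions. Since the one-phase problem is the $\theta\downarrow0$ linearization of the capillary problem, these barriers coincide with those driving Theorem~\ref{thm:cones-are-minimizing-smalltheta}, so the two minimality statements would be established in tandem.
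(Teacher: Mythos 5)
Your Stage~1 is essentially the paper's existence argument: the harmonicity of $\rho f(t)$ in the $O(n-k)\times O(k)$-symmetric setting reduces (via Lemma~\ref{lemma:general-computation-u-rho,t} and the Legendre form~\eqref{eqn:legendre-form}) to the hypergeometric ODE, the sign pattern of the coefficients and the constraint $c-a-b=\tfrac{k-n+2}{2}\le0$ give a unique zero $t_{n,k}$ with $f'_{n,k}(t_{n,k})<0$, and $c_{n,k}$ normalizes the gradient on the free boundary to $1$; so this part is correct and matches the paper.

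The serious gap is in Stage~2. The theorem claims $U_{n,k}$ is \emph{strictly} minimizing from below, in the quantitative Hardt--Simon sense of Definition~\ref{def:strictly-minimizing-one-phase}: there must exist $\kappa>0$ so that $\cJ(v;B_1^n)\geq \cJ(U_{n,k};B_1^n)+\kappa\sigma^n$ for every admissible competitor $v\le U_{n,k}$ with free boundary avoiding $B_\sigma^n$. Your sliding/maximum-principle argument shows uniqueness of the minimizer (any $V\ne U$ has strictly larger energy) but does not produce the uniform lower gap $\kappa\sigma^n$; the word ``strict'' in ``strict subsolutions'' refers to pointwise inequalities, which is not the same as a quantitative minimizing constant. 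The paper does not try to extract this from a soft comparison argument: it either (a) constructs a one-sided strict sub-calibration $X=\nabla\varphi/|\nabla\varphi|$ with the quantitative divergence lower bound $\operatorname{div}X\gtrsim R^{-2}\,\mathrm{dist}(p,\mathbf{C})$, so that Lemma~\ref{lemma:strict-calibration}/Remark~\ref{remark-1-proof} and Lemma~\ref{lemma:one-sided} yield the gap directly, or (b) more precisely proves strict minimality for the nearby capillary cones $\mathbf{C}_{n,k,\theta}$ with constants $\kappa_\theta\gtrsim\tan^2\theta$ (Corollary~\ref{cor:one-phase-strictly-minimizing}) and then passes to the limit $\theta\downarrow0$ via Lemma~\ref{lemma:strictly-minimizing-one-phase}, which is specifically engineered to transfer the quantitative constant because the capillary energy $\cA^\theta$ is $\tfrac12\tan^2\theta\cdot\cJ+o(\theta^2)$. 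Your proposal also leaves the strict stability claim as an ``expected'' hypergeometric inequality; the paper outsources this to~\cite{FTW-stability-one-phase}, so you would in any case need that input or a proof of the one-dimensional spectral inequality. Finally, in Stage~3 your heuristic that the constraints on $k$ come from a decay-rate mismatch near $\{x=0\}$ is plausible motivation, but the paper's actual restrictions arise from verifying the explicit hypergeometric conditions~\eqref{eqn:S'1-new}--\eqref{eqn:S'3-new} for the chosen supersolution exponent $\beta$ (Lemmas~\ref{lemma:W-computation} and~\ref{lemma:K-computation} and Table~\ref{table:bigTableOfBetas}), which is a sharper and more concrete criterion than what you sketch.
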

Our proof provides a general construction of sub- and supersolutions.
This argument refines the strategy of De~Silva-Jerison~\cite{desilva-jerison-cones}, notably recovering the minimality of the one-phase cone $U_{7,1}$, previously the only known example of a minimizing singular one-phase cone.
The estimates that establish the minimizing property build on the work in our companion paper~\cite{FTW-stability-one-phase}, where we obtain the strict stability of all $1$-homogeneous solutions $U_{n,k}$ for $n \geq 7$.

The other extreme case consists of the free boundary cones where $\theta = \tfrac{\pi}{2}$, where the half Lawson cones give strictly minimizing examples.
We prove that the capillary cones with angle near $\tfrac{\pi}{2}$ constructed in Theorem~\ref{thm:capillary-cones} are minimizing, with the exception of the halved Lawson cones $C(\bS^5 \times \bS^1_+)$ and $C(\bS^1 \times \bS^5_+)$, which are strictly stable and one-sided minimizing, but not area-minimizing.
The free-boundary case $\theta = \frac{\pi}{2}$ is special because reflection about the boundary plane $\Pi$ produces a minimal hypersurface in $\bR^{n+1}$, so it is natural to ask if the stronger free-boundary theory extends to nearby capillary angles, where such a property is not available.
Notably, Fern\'andez-Hauswirth-Mira~\cite{immersed-minimal-annuli} constructed non-rotationally symmetric capillary minimal annuli in $\bB^3$ for contact angles $\theta < \frac{\pi}{2}$, thereby disproving the capillary analogue of the critical catenoid conjecture~\cite{wente-critical-capillary}.
We also refer the reader to recent works of Naff and Zhu~\cites{naff-zhu-capillary-I, zhu-capillary-II} on the behavior of capillary minimal surfaces in $\bB^3$ as the contact angle varies.

\begin{theorem}\label{thm:cones-are-minimizing-nearpi/2}
For $n \geq 7$ and $1 \leq k \leq n-2$ with $(n,k) \not\in \{ (7,1), (7,5) \}$, there exists an $\ve_{n,k} > 0$ such that for all $\theta \in ( \frac{\pi}{2} - \ve_{n,k}, \frac{\pi}{2}]$, the cone $\mathbf{C}_{n,k,\theta} \subset \bR^{n+1}_+$ is strictly stable and strictly minimizing for $\cA^{\theta}$.
For $(n,k) \in \{ (7,1) , (7,5) \}$, there exists a maximal $\ve^* > 0$ such that for all $\theta \in ( \frac{\pi}{2} - \ve^*, \frac{\pi}{2}]$, the cones $\mathbf{C}_{n,k,\theta}$ are strictly stable and one-sided strictly minimizing, but not minimizing for $\cA^{\theta}$.
\end{theorem}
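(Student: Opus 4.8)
The plan is to handle the free-boundary endpoint $\theta=\tfrac{\pi}{2}$ by reflection and the classical theory of Lawson cones, and then to propagate the conclusion to nearby contact angles by a barrier/foliation argument modeled on De~Silva-Jerison~\cite{desilva-jerison-cones}. At $\theta=\tfrac{\pi}{2}$ the wetting coefficient $\sigma=\cos\theta$ vanishes, so $\cA^{\pi/2}$ is just the interior area; by Theorem~\ref{thm:capillary-cones}$(ii)$ the cone $\mathbf{C}_{n,k,\pi/2}$ is the halved Lawson cone $C(\bS^{n-k-1}\times\bS^k_+)$, and reflecting across $\Pi$ recovers $C(\bS^{n-k-1}\times\bS^k)\subset\bR^{n+1}$. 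The standard reflection principle — a competitor in $\bR^{n+1}_+$ doubles to a (symmetric, after folding the better half) competitor in $\bR^{n+1}$ — shows that $\mathbf{C}_{n,k,\pi/2}$ is strictly minimizing, resp.\ strictly stable, for $\cA^{\pi/2}$ if and only if the full Lawson cone has the corresponding property in $\bR^{n+1}$. For $n\geq 7$ the cone $C(\bS^{n-k-1}\times\bS^k)$ is strictly stable, and it is strictly area-minimizing unless $\{n-k-1,k\}=\{1,5\}$, i.e.\ unless $(n,k)\in\{(7,1),(7,5)\}$; for those two pairs $C(\bS^1\times\bS^5)\subset\bR^8$ is strictly stable but not area-minimizing, while one still has one-sided strict minimality from the dilation family of a complete minimal hypersurface lying on one side of the cone and asymptotic to it. This settles the endpoint: $\mathbf{C}_{n,k,\pi/2}$ is strictly stable and strictly minimizing for $(n,k)\notin\{(7,1),(7,5)\}$, and strictly stable, one-sided strictly minimizing but not minimizing for $(n,k)\in\{(7,1),(7,5)\}$.

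To propagate to $\theta$ near $\tfrac{\pi}{2}$, note first that by Theorem~\ref{thm:capillary-cones} and Remark~\ref{rmk:remark-after-existence} the link $\mathbf{C}_{n,k,\theta}\cap\bS^n_+$ and its capillary Jacobi operator depend smoothly on $\theta$, so strict stability is automatically an open condition; the issue is to make minimality (resp.\ one-sided minimality) open at $\theta=\tfrac{\pi}{2}$. Following De~Silva-Jerison, I would foliate a neighborhood of $\mathbf{C}_{n,k,\theta}$ in $\bR^{n+1}_+$ by strict sub- and supersolutions of the capillary minimal surface equation of contact angle $\theta$ (on one side only when $(n,k)\in\{(7,1),(7,5)\}$), and then conclude minimality by the usual calibration/comparison argument. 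Away from the vertex the positive phases of the cones $\mathbf{C}_{n,k,\theta'}$, nested monotonically in $\theta'$ by Remark~\ref{rmk:remark-after-existence}, already supply such leaves; near the vertex one perturbs the Hardt-Simon foliation of the full Lawson cone. The leaves of that foliation are $O(n-k)\times O(k+1)$-invariant, hence meet $\Pi$ orthogonally under the reflection that bisects the $\bS^k$ factor, so their restrictions to $\bR^{n+1}_+$ foliate a neighborhood of $\mathbf{C}_{n,k,\pi/2}$ by minimizing free-boundary hypersurfaces; bending these near $\Pi$ to realize contact angle $\theta$ is precisely where one reduces the construction of barriers to finding positive sub/supersolutions of the capillary Jacobi operator with the correct decay, which exist because $n\geq 7$ places us in the strictly stable regime — provided the estimates are made uniform as $\theta\to\tfrac{\pi}{2}$. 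Patching the two families and running the comparison argument gives strict (resp.\ one-sided strict) minimality of $\mathbf{C}_{n,k,\theta}$ throughout a one-sided neighborhood of $\tfrac{\pi}{2}$, and for $(n,k)\in\{(7,1),(7,5)\}$ one defines $\ve^*$ as the supremum of such neighborhoods.

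It remains to show, for $(n,k)\in\{(7,1),(7,5)\}$, that $\mathbf{C}_{n,k,\theta}$ is \emph{not} minimizing for $\theta$ close to $\tfrac{\pi}{2}$, which is a soft compactness argument. If $\mathbf{C}_{n,k,\theta_j}$ were $\cA^{\theta_j}$-minimizing for some $\theta_j\uparrow\tfrac{\pi}{2}$, then given any competitor $W$ for $\mathbf{C}_{n,k,\pi/2}$ inside a fixed ball one obtains competitors $W_j$ for $\mathbf{C}_{n,k,\theta_j}$ by adjusting $W$ in a thin annulus to match the exterior data of $\mathbf{C}_{n,k,\theta_j}$; since $\cos\theta_j\to0$ and $\mathbf{C}_{n,k,\theta_j}\to\mathbf{C}_{n,k,\pi/2}$ smoothly by Theorem~\ref{thm:capillary-cones}$(ii)$, one has $\cA^{\theta_j}(W_j)\to\cA^{\pi/2}(W)$ and $\cA^{\theta_j}(\mathbf{C}_{n,k,\theta_j})\to\cA^{\pi/2}(\mathbf{C}_{n,k,\pi/2})$, forcing $\cA^{\pi/2}(\mathbf{C}_{n,k,\pi/2})\leq\cA^{\pi/2}(W)$. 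As $W$ is arbitrary this makes $\mathbf{C}_{n,k,\pi/2}$ minimizing, contradicting the endpoint analysis; the smooth convergence in Theorem~\ref{thm:capillary-cones}$(ii)$ is what prevents the limit of the minimizers from degenerating to a flat competitor.

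The main obstacle is the barrier construction of the second step: adapting the De~Silva-Jerison linear reduction to the capillary free-boundary setting, where the contact angle enters the Jacobi operator through a Neumann-type boundary term and the sub/supersolutions must be bent near $\Pi$, and — crucially — making every estimate uniform as $\theta\to\tfrac{\pi}{2}$ so that the perturbed foliation survives up to the endpoint. A secondary difficulty is pinning down the exact one-sided minimality statement for the $(7,1),(7,5)$ cones at $\theta=\tfrac{\pi}{2}$, equivalently for the halved cone $C(\bS^1\times\bS^5_+)$, and checking it is robust under the $\theta$-perturbation; by contrast, once the endpoint is understood, the failure of two-sided minimality for these two pairs is comparatively easy, as sketched above.
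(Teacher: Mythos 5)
Your high-level architecture is close to the paper's: both arguments reduce the endpoint $\theta=\tfrac{\pi}{2}$ to the classical Lawson-cone results of Sim\~oes, Lin, Lawlor, and Morgan via the reflection principle, propagate the conclusion to nearby $\theta$ by a barrier/foliation construction, and close off the non-minimality statement for $(n,k)\in\{(7,1),(7,5)\}$ by a continuity argument (your soft compactness argument is essentially the paper's Corollary~\ref{cor:not-minimizing}, stated there abstractly and also realized quantitatively via Lawlor's explicit deficit constant $\sigma_{1,5}\approx 2\cdot10^{-8}$).

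However, the barrier construction — which you correctly flag as the main obstacle — is genuinely different from the paper's, and as written has a gap. You propose to build the leaves from the Hardt-Simon foliation of the full Lawson cone, restricting to $\bR^{n+1}_+$ and then ``bending near $\Pi$'' to achieve contact angle $\theta$. The Hardt-Simon leaves are \emph{exactly} minimal, $H\equiv0$, so there is no mean-convexity margin to pay for the bending; any perturbation large enough to change the contact angle by $O(\ve)$ will generically destroy the signed mean curvature you need for the comparison argument, and it is not clear how to preserve it uniformly as $\theta\uparrow\tfrac{\pi}{2}$. The paper circumvents this by never touching the Hardt-Simon foliation. Instead it takes level sets of the \emph{explicit sub-calibration potentials} $\varphi_{n,k}^{\pm}$ of De~Philippis-Maggi and Z.~Liu (equations~\eqref{eqn:record-varphi-n,k}, \eqref{eqn:record-varphi-n,k-on-E-minus}), whose divergence lower bound $\on{div}(\nabla\varphi/|\nabla\varphi|)\gtrsim R^{-2}\on{dist}(p,\mathbf{C})$ from~\eqref{eqn:quantitative-sub-calibration} gives strict mean-convexity of the level sets, hence a quantitative margin that absorbs the perturbation to contact angle $\theta$; this is carried out in Propositions~\ref{prop:mean-convex-cap} and~\ref{prop:mean-concave-cap-on-E-minus}. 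The compact cap is then glued — via a cutoff $\zeta$ on a large annulus $A(R_1,R_2)$ — to an explicit normal graph $\ell(R)\psi(\omega)$ at infinity, with $\ell(R)\sim R^{-a}$ and $a$ taken in the indicial interval $(\underline{\gamma},\overline{\gamma})$ of the Jacobi operator (Lemmas~\ref{lemma:infinity-piece-small-epsilon}, \ref{lemma:infinity-piece-on-E-minus}); the indicial placement is what makes the gluing estimates close and what makes the leaf star-shaped, so that its rescalings foliate $E_\pm$.

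Two smaller points. First, using the nested cones $\mathbf{C}_{n,k,\theta'}$ as sub/supersolution leaves cannot produce a foliation of $E_\pm(\mathbf{C}_{n,k,\theta})$ in the relevant sense: each $\mathbf{C}_{n,k,\theta'}$ is scale-invariant and singular at the origin, so the rescalings of a cone do not foliate a one-sided neighborhood — one really does need a single \emph{smooth} leaf asymptotic to the cone (the barrier $S_\pm$), and the cones play no role in its construction. Second, the one-sided statement at $(7,1),(7,5)$ is not just the Hardt-Simon leaf on the minimizing side: the paper needs the explicit one-sided potential ($\varphi_{7,1}^+$, resp.\ $\varphi_{7,5}^-$) to be a \emph{strict} sub-calibration with the quantitative divergence bound, both to run the cap construction at finite radius and to extract strict stability via Lemma~\ref{lemma:one-sided}. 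Your correct observation that strict stability is an open condition in $\theta$ (via smooth dependence of the Robin eigenvalue) is consistent with the paper, but the paper also re-derives it directly from the subsolution $\psi=1-\tau\omega_{n+1}$ in Lemma~\ref{lemma:infinity-piece-small-epsilon}.
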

For $(n,k) = (7,1)$, there is a transition angle $\theta_*$ such that all cones $\mathbf{C}_{n,k,\theta}$ are minimizing for $\theta \in (0,\theta_*)$, and are strictly stable, but not minimizing, for $\theta$ close to $\frac{\pi}{2}$.
For $\theta = \frac{\pi}{2}$, Sim\~oes~\cite{simoes} and F.H.~Lin~\cite{lin} proved that the cone $C( \bS^1 \times \bS^5) \subset \bR^8$ is one-sided area-minimizing, therefore strictly stable, but not globally area-minimizing.
Consequently, $\mathbf{C}_{n,k,\theta}$ cannot be globally minimizing for $(n,k) \in \{ (7,1), (7,5) \}$ and $\theta \uparrow \frac{\pi}{2}$.
In particular, examining the transition angle $\theta_*$ where $\mathbf{C}_{7,1,\theta}$ changes from non-minimizing to minimizing results in an example of a minimizing, but not strictly minimizing cone (for $\cA^{\theta}$).
No such example is known for area-minimizing cones in Euclidean space; see Remark~\ref{rem:minimizingNotStrict} for further details.
The pair $(n,k) = (7,5)$ is exceptional, in the sense of Theorem~\ref{thm:cones-are-minimizing-smalltheta}, so the behavior of $\mathbf{C}_{n,k,\theta}$ as $\theta \downarrow 0$ is not clear; however, we still prove that the cones $\mathbf{C}_{7,5,\theta}$ are one-sided strictly minimizing and strictly stable as $\theta \downarrow 0$ and $\theta \uparrow \frac{\pi}{2}$.
The area-minimizing properties of the exceptional cones $\mathbf{C}_{n,k,\theta}$ for small angles $\theta$, as well as of the general cones $\mathbf{C}_{n,k,\theta}$, will form part of an upcoming work~\cite{FTW_MinimizingII}.

We conjecture that for the ranges of $(n,k)$ considered in Theorem~\ref{thm:cones-are-minimizing-smalltheta},  except $(n,k) \in \{ (7,1),(7,5) \}$, the cones constructed in Theorem~\ref{thm:capillary-cones} are minimizing for all $\theta \in (0, \frac{\pi}{2}]$.
Notably, we expect that all cones $\mathbf{C}_{7,2,\theta}$ are minimizing for $\cA^{\theta}$ in $\bR^8$, exhibiting minimizing capillary hypersurfaces with an isolated singularity in dimension $8$, for any contact angle.
The regularity Theorem~\ref{thm:regularity} follows from the minimizing properties of $\mathbf{C}_{7,1,\theta}$ and $\mathbf{C}_{7,2,\theta}$, exhibiting examples with the critical dimension for minimizers with isolated singularities in minimal surface theory.
\begin{conjecture}\label{conjecture:7-dim}
For a minimizer of the capillary energy $M \subset (X^{n+1}, \partial X, g,\sigma)$, the dimension of the boundary singular set satisfies
\[
\dim ( \on{sing} (M) \cap \partial X) \leq n - 7.
\]
This bound is strict and exemplified by $\mathbf{C}_{7,2,\theta}$ for any $\theta \in (0, \frac{\pi}{2}]$, which we conjecture is minimizing.
\end{conjecture}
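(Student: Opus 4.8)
\emph{Proof proposal.}
The conjecture has two halves, which I would attack separately: the regularity bound $\dim(\on{sing}(M) \cap \partial X) \le n-7$, and the strictness clause witnessed by $\mathbf{C}_{7,2,\theta}$.

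For the bound, the engine is Federer's dimension-reduction scheme, already used for the partial statement in Theorem~\ref{thm:regularity}. By the monotonicity formula for $\cA^\theta$ (\cite{simon-capillary}*{\S~2}, \cites{Kagaya-Tonegawa, capillary-regularity}) and the $\ve$-regularity theorem of De~Philippis--Maggi~\cite{dephilippis-maggi}, every iterated blow-up of a minimizer at a boundary singular point is a non-flat area-minimizing capillary cone in some $\bR^{m+1}_+$ that splits off a Euclidean factor; peeling off that factor reduces the claim $\dim(\on{sing}(M) \cap \partial X) \le n-7$ to the assertion that \emph{no non-flat area-minimizing capillary cone with isolated boundary singularity exists in $\bR^{m+1}_+$ for $m \le 6$ and any contact angle $\theta \in (0,\pi)$} (the range $(\tfrac{\pi}{2},\pi)$ being reduced to $(0,\tfrac{\pi}{2})$ by exchanging $E$ with $X \setminus E$). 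Everything thus comes down to a classification of low-dimensional minimizing capillary cones.

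I would establish this non-existence by splitting the angle range. At $\theta = \tfrac{\pi}{2}$, reflection across $\Pi$ turns the cone into a minimal cone in $\bR^{m+1}$, so Simons' theorem (equivalently Gr\"uter--Jost~\cite{gruter-jost}) forces flatness for $m \le 6$. For $\theta$ in a one-sided neighborhood of $\tfrac{\pi}{2}$ this is Chodosh--Edelen--Li~\cite{improved-regularity}, obtained from a Simons-type inequality for the capillary Jacobi operator together with the strengthened Weiss monotonicity formula of~\cite{weiss-monotonicity}. For small $\theta$ I would exploit the linearization to the one-phase Bernoulli problem of Theorem~\ref{thm:capillary-cones}$(i)$: a minimizing capillary cone, rescaled by $\tan\theta$, converges as $\theta \downarrow 0$ to a minimizing one-phase cone, which is flat for $m \le 4$ by Caffarelli--Jerison--Kenig and Jerison--Savin~\cites{caffarelli-jerison-kenig, jerison-savin}, the extension to $m \le 6$ requiring the strict stability estimates for the homogeneous solutions $U_{n,k}$ obtained in~\cite{FTW-stability-one-phase}. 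The genuinely missing piece is the band of \emph{intermediate} angles in dimensions $m = 5, 6$, where neither the reflection trick nor the one-phase limit applies; here I would attempt a compactness/continuity argument — the set of angles admitting a non-flat minimizing capillary cone in a fixed dimension $m \le 6$ should be both open and closed in $(0,\tfrac{\pi}{2}]$, hence empty once it is empty at the endpoints — or, failing that, a direct spectral argument combining the capillary Simons inequality with a sharp lower bound for the bottom eigenvalue of the linearized operator on the link.

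For strictness, Theorem~\ref{thm:cones-are-minimizing-smalltheta} already gives, for $\theta \in (0, \theta_*)$, that $\mathbf{C}_{7,2,\theta} \subset \bR^8_+$ is $\cA^\theta$-minimizing with $\on{sing}(\mathbf{C}_{7,2,\theta}) = \{0\}$ of dimension $0 = 7-7$; taking the product $\mathbf{C}_{7,2,\theta} \times \bR^{n-7} \subset \bR^{n+1}_+$, which remains minimizing since the capillary excess splits across the Euclidean factor, realizes a singular set of dimension exactly $n-7$ in every ambient dimension $n \ge 7$. Promoting this to all $\theta \in (0,\tfrac{\pi}{2}]$ — the example advertised in the conjecture — is precisely the minimality of $\mathbf{C}_{7,2,\theta}$ for general $\theta$, the subject of~\cite{FTW_MinimizingII}; granting that, the same product construction finishes the proof. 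I expect the main obstacle to be the intermediate-angle regularity in dimensions $5$ and $6$: it demands a Simons-type rigidity theorem for minimizing capillary cones that is genuinely uniform in the contact angle, interpolating between the minimal-surface and one-phase mechanisms rather than relying on either endpoint. A secondary, more technical gap is that the sharpness clause depends on the still-conjectural minimality of $\mathbf{C}_{7,2,\theta}$ away from the regime $\theta \downarrow 0$.
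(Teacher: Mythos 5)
This statement is not proved in the paper — it is stated as a conjecture, and the paper itself explicitly lists it as open, noting that only the near-$\pi/2$ case is established (as Theorem~\hyperref[thm:regularity]{\ref{thm:regularity}$(ii)$}, via Chodosh--Edelen--Li plus Theorem~\ref{thm:cones-are-minimizing-nearpi/2}) and that the remaining cases are the subject of upcoming work. So there is no paper proof to match against; what you have written is a roadmap, and to your credit you signal that throughout and name the two genuine gaps (intermediate angles in dimensions $5,6$, and the conjectural minimality of $\mathbf{C}_{7,2,\theta}$ for general $\theta$) at the end.

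That said, there is a concrete factual error in your roadmap that makes the small-angle half look closer to done than it is. You write that the one-phase transfer gives flatness ``for $m \le 4$ by Caffarelli--Jerison--Kenig and Jerison--Savin, the extension to $m \le 6$ requiring the strict stability estimates for the homogeneous solutions $U_{n,k}$ obtained in~\cite{FTW-stability-one-phase}.'' This misreads the role of~\cite{FTW-stability-one-phase}: that paper proves strict stability of the specific cones $U_{n,k}$ in dimension $n \ge 7$, which is an \emph{existence}-side input used to build sub/super-solutions in Section~\ref{sec:minimizing}; it does not, and cannot, upgrade the one-phase flatness theory from $m \le 4$ to $m \le 6$. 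The one-phase problem in dimensions $5$ and $6$ is open, which is exactly why Theorem~\hyperref[thm:regularity]{\ref{thm:regularity}$(i)$} gives only $5 \le d(\theta) \le 7$ for small contact angles rather than $d(\theta) = 7$. So your small-angle endpoint has the same kind of gap as your intermediate band, and the ``open-and-closed in $\theta$'' compactness argument you sketch would still require a new non-existence mechanism for stable capillary cones in dimensions $5$ and $6$ at \emph{some} value of $\theta$ — there is currently no endpoint in $(0, \pi/2)$ where the answer in those dimensions is known.

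On the strictness clause, the product construction $\mathbf{C}_{7,2,\theta}\times\bR^{n-7}$ is the standard and correct way to realize singular sets of dimension $n-7$, and the limitation you note (that minimality of $\mathbf{C}_{7,2,\theta}$ away from $\theta$ small or $\theta$ near $\pi/2$ is itself conjectural and deferred to~\cite{FTW_MinimizingII}) is accurately stated.
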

This conjecture is proved for the range $|\sigma| < \epsilon$ as the second item of Theorem~\ref{thm:regularity} by combining the results of Chodosh-Edelen-Li~\cite{improved-regularity} with Theorem~\ref{thm:cones-are-minimizing-nearpi/2}.
When the wetting term satisfies $|1-\sigma| < \epsilon$ or $|1 + \sigma| < \epsilon$, corresponding to small contact angles, our methods show that the regularity theory for the capillary problem aligns with the regularity theory for the one-phase problem, where it is widely conjectured that the critical regularity dimension for the one-phase problem is $d(0) = 7$~\cites{caffarelli-jerison-kenig, desilva-jerison-cones, jerison-savin}.
Our techniques, combined with the results of Chodosh-Edelen-Li~\cite{improved-regularity}, formalize the equivalence between the regularity of the one-phase problem and of the capillary problem for small contact angles, where we indeed exhibit examples with singularities of codimension $7$.
In upcoming works~\cites{FTW_MinimizingII,FTW_Stability}, we will provide further evidence and results towards this regularity theory and Conjecture~\ref{conjecture:7-dim}.

\subsection{Outline of the argument}
We now outline the structure of the paper and the proof strategies for the main theorems.
In Section~\ref{sec:prelim}, we describe the variational framework of the capillary problem and define the notions of stable and minimizing critical points.
In particular, we frame the problem in the setting of integral currents and Caccioppoli sets.
These definitions allow us to formalize the connections between the one-phase problem and the capillary problem for contact angles $\theta \downarrow 0$.

In Section~\ref{section:o(n-k)-cones}, we construct the cones $\mathbf{C}_{n,k,\theta}$ and prove Theorem~\ref{thm:capillary-cones}.
The imposed symmetry reduces the existence of such a cone to a free boundary ODE whose boundary condition encodes the capillary angle.
For the existence and uniqueness, we prove that the capillary angle produced by the symmetric graph of a solution to this ODE are in bijective, monotonic correspondence with the initial value of the solution.
This argument requires two steps: first, showing that for any initial data below a Lawson-type barrier, the solution to the ODE exists until it reaches $0$ (the Dirichlet boundary condition); second, that the family of solutions sweeps out all possible contact angles bijectively (the Neumann condition).
The main tool is Lemma~\ref{lemma:psi-detect-blow-up}, which obtains a monotonicity formula for the various solutions based on separation from the Lawson solution.
Using this monotone quantity, we determine the zero-or-blow-up behavior of solutions in terms of their initial value in Proposition~\ref{prop:general-a-behavior}.

Section~\ref{sec:minimizing} proves Theorems~\ref{thm:cones-are-minimizing-smalltheta} and~\ref{thm:cones-are-minimizing-nearpi/2}, from which we can conclude Theorem~\ref{thm:regularity}.
Our approach works in two regimes, for contact angles near $0$ or $\frac{\pi}{2}$, for which different techniques are needed.
In dimension $n \geq 7$, for any $k$ and small contact angle, we prove that the cones $\mathbf{C}_{n,k,\theta}$ are one-sided strictly minimizing and strictly stable by constructing subsolutions whose rescalings foliate the region below the cone.
For $(n,k)$ in the range of Theorem~\ref{thm:cones-are-minimizing-smalltheta}, we can further construct strict supersolutions whose dilates foliate the region above the cone.
As a consequence, we obtain the minimality of the corresponding solutions to the one-phase problem for Theorem~\ref{thm:one-phase-cones}.
In sections~\ref{subsection:subsolution} and~\ref{prop:supersolution}, we construct these barriers and reduce the verification of their properties to a number of inequalities related to the parameter $\beta$, whose details are found in the \cite{FTW-stability-one-phase}.

Finally, in Section~\ref{section:pi/2} we discuss the minimizing properties of the cones $\mathbf{C}_{n,k,\theta}$ for contact angles near $\frac{\pi}{2}$.
For all $n \geq 7$ and $k$ except $(n,k) \in \{ (7,1), (7,5) \}$, we prove that the cones are strictly minimizing by constructing barrier surfaces $S_{\pm} \subset E_{\pm}$ asymptotic to $\mathbf{C}$ from either side, with a signed mean curvature and smaller (resp.~larger) contact angle along their free boundary on $\Pi$.
The surfaces $S_{\pm}$ are built by joining, over a large annulus, a normal graph decaying to the cone with the appropriate power and contact angle together with a compact ``cap'' profile obtained by perturbing known sub-calibrations of Lawson's cones~\cites{sharp-stability-plateau, lawson-liu}.
A careful gluing argument shows that the surfaces $S_{\pm}$ are star-shaped, so their rescalings foliate $E_{\pm}$ and prove the strict minimality of the cone.
On the other hand, the cones $\mathbf{C}_{7,1,\theta}$ and $\mathbf{C}_{7,5,\theta}$ admit a calibration on one of the sides $E_{\pm}$, hence they are one-sided strictly minimizing and strictly stable.
We then use the construction of Lawlor~\cite{lawlor} to produce an $\cA^{\theta}$-decreasing competitor for the cones $\mathbf{C}_{7,1,\theta}$ and $\mathbf{C}_{7,5,\theta}$, showing that they are not globally area-minimizing.

The above construction proves Theorem~\hyperref[thm:regularity]{\ref{thm:regularity} $(ii)$} and establishes the sharp boundary regularity of the capillary problem near the free-boundary regime.
A similar strategy was employed in Savin-Yu~\cite{savin-yu-AP-cones}*{\S~7} for proving the minimality of axisymmetric cones of the Alt-Phillips problem with exponent $\gamma \to 1$.
We also refer the reader to~\cite{mooney-yang-inventiones}, where the sharp dimension for the anisotropic Bernstein theorem is obtained by a foliation argument.

\smallskip
\noindent\textbf{Acknowledgements.}
We are grateful to Chao Li for helpful discussions.
We are also thankful to Daniela De~Silva and Ovidiu Savin for valuable insights.

\section{Preliminaries}\label{sec:prelim}

We introduce the fundamental variational notions related to minimizing and stable behaviors for the capillary energy $\cA^{\theta}$ and the Alt-Caffarelli functional $\cJ$.
These will be used to establish the minimality of the cones we construct in Theorem~\ref{thm:capillary-cones}.
We develop general strategies to prove minimality as well as related results and constructions for similar PDEs.
Constructing tangent cones and establishing their minimality is a crucial step for regularity theory and becomes more difficult in non-linear free boundary problems; these include optimal transport, the Monge-Amp\`ere equation, and the Alt-Phillips problem~\cites{CollinsTong,BT,BTF, savin-yu-AP-cones}.

\subsection{Stable and minimizing capillary cones}\label{subsection:definitions}

We will formulate the capillary problem in the context of integer-multiplicity rectifiable currents with the standard notions of mass, see for instance~\cite{simon-gmt}.
We highlight the works of~\cites{dephilippis-maggi, capillary-allard, capillary-varifolds, capillary-regularity, capillary-regularity}, which have developed the regularity theory of capillary surfaces via weak formulations involving currents and capillary varifolds.

For a point $p \in \bR^{n+1}_+$, we let $z := p_{n+1} = \la p , e_{n+1} \rg$ and $z' := \pi_{e_{n+1}^{\perp}}(p)$, so $p =(z',z) \in \bR^n \times \bR_+$.
We consider surfaces in the upper half-space $\bR^{n+1}_+ = \{ z \geq 0 \}$, with boundary plane $\Pi := \partial \bR^{n+1}_+ = \{z = 0\}$.
For a set of locally finite perimeter $E \subset \bR^{n+1}_+$ (a Caccioppoli set), we define the capillary energy in an open set $U \subset \bR^{n+1}$ by
\begin{equation}\label{eqn:capillary-energy}
    \cA^{\theta}(E;U) := P ( E; U \cap \bR^{n+1}_+) - \cos \theta \, \cH^n ( \partial^* E \cap \Pi \cap U).
\end{equation}
The functional $P( E ; U  \cap \bR^{n+1}_+ ) = \cH^n ( \partial^* E \cap U \cap \bR^{n+1}_+)$ denotes the relative perimeter, $\partial^* E$ is the reduced boundary of $E$, and $(\partial^* E \cap \Pi) \cap U$ is the trace of $E$ on $\Pi$.

For an open set $\Omega \subset \bR^{n+1}$, we let $\cI_m (\Omega)$ denote the space of integer-multiplicity rectifiable $m$-currents in $\Omega$.
We write $\mathbf{M}(S)$ for the mass of an $n$-current and $S \mres U$ for its restriction to an open set, so $\mathbf{M} (S \mres U) = \cH^n(S \cap U)$ if $S$ is a $C^1$-submanifold.
We take the boundary plane $\Pi \ni z'$ to be oriented by the standard $n$-vector $dz'_1 \wedge \cdots \wedge d z'_n$.
For a current $S \in \cI_n( U \cap \bR^{n+1}_+)$, we define the $\theta$-capillary energy
\[
\cA^{\theta}(S; U) := \mathbf{M} ( S \mres (U \cap \bR^{n+1}_+) ) - \cos \theta \cdot \inf \{ \mathbf{M}(T) : T \in \cI_n(U \cap \Pi) , \; (\partial T + \partial S) \mres U = 0 \}.
\]
The condition $\partial T = - \partial S$ in $U$ ensures that the contact line is obtained as the common boundary of $S$ and $T$, with opposite orientations.
If $E \subset \bR^{n+1}_+$ is a set of locally finite perimeter, then we set $S := \partial [\![E]\!] \mres (U \cap \bR^{n+1}_+)$ and $T := \partial [\![ E ]\!] \mres (U \cap \Pi)$ in the above definition, whereby 
\[
\mathbf{M}( S \mres (U \cap \bR^{n+1}_+)) = \cH^n (\partial^* E \cap U \cap \bR^{n+1}_+), \qquad \mathbf{M}(T \mres (U \cap \Pi)) = \cH^n( \partial^* E \cap U \cap \Pi),
\]
so we recover the definition~\eqref{eqn:capillary-energy}.

When $U = \bR^{n+1}_+ = \{ z \geq 0 \}$ is an open half-space, we denote $\cA^{\theta}(E) := \cA^{\theta}(E; \bR^{n+1}_+)$.
A Caccioppoli set $E \subset \bR^{n+1}_+$ is said to minimize $\cA^{\theta}$, if for every pre-compact open set $U \subset \bR^{n+1}_+$,
\begin{equation}\label{eqn:cacciopoli-minimizing}
\cA^{\theta}( E ; U ) \leq \cA^{\theta} ( F  ; U)
\end{equation}
is satisfied for any Caccioppoli set $F \subset \bR^{n+1}_+$ such that $E \triangle F \Subset U$.
In this case, the (reduced) boundary $M := \partial^*E\cap U \cap \bR^{n+1}_+$ is called a \textit{minimizing capillary surface} with contact angle $\theta$.
In particular, $E$ must be a critical point of the capillary functional $\cA^{\theta}$; computing the first variation of $\cA^{\theta}$ along flows of $E$ that are compactly supported inside $U$ and keep $\Pi$ fixed, we arrive at the defining equations for a minimal surface with capillary free boundary,
\begin{equation}\label{eqn:capillary-fbms}
    H_M = 0 \quad \text{ in } \; M, \qquad \la \nu_M , e_{n+1} \rg = \cos \theta \quad \text{at } \; \partial M.
\end{equation}
The set $\partial M = \partial^* E \cap \Pi$ is called the \textit{free boundary} of the surface $M$.

The definition of currents minimizing $\cA^{\theta}$ is analogous to the condition~\eqref{eqn:cacciopoli-minimizing}, by generalizing the admissible competitors to pairs $(S,T)$ with
\[
(S - \partial [\![ E_+ ]\!] ) \mres (\bR^{n+1}_+ \setminus U) = 0 , \qquad (T - \partial [\![ E_+ ]\!]) \mres ( \Pi \setminus U) = 0, \qquad (\partial T + \partial S) \mres U = 0.
\]
If $E \subset \bR^{n+1}_+$ is a $1$-homogeneous set minimizing the functional $\cA^{\theta}$ in every compact subset of $\bR^{n+1}$, we say that $\mathbf{C} := \partial^* E \subset \{ z \geq 0 \}$ is a \textit{minimizing capillary cone}.
Capillary cones saturate the monotonicity formula for capillary minimal surfaces, analogous to the standard monotonicity formula for minimal surfaces, and model blow-up limits.
For a capillary cone ${\mathbf{C}}$, we can define $E_\pm$ as the two components of $\bR^n_+ \setminus \mathbf{C}$ where $E_+$ is the region \textit{above} ${\mathbf{C}}$, containing points $p = (z',z)$ with unbounded $z \geq 0$, for any fixed $z' \in \Pi$.
We then interpret~\eqref{eqn:capillary-fbms} with the understanding that the cone normal $\nu_{\mathbf{C}}$ points into $E_+$.

If $E$ is a stationary point of the functional $\cA^{\theta}$ such that $\mathbf{C} := \partial E$ has an isolated singularity at the origin, then we have
\[
\cot \theta \, A_{\mathbf{C}}(\eta, \eta) = - \cos \theta \, H_{\partial  \mathbf{C}} \qquad \text{on } \; \partial  \mathbf{C} \setminus \{ 0 \}
\]
where $A_{\partial  \mathbf{C}}$ is the second fundamental form and $H_{\partial  \mathbf{C}}$ denotes the mean curvature of $\partial  \mathbf{C}$ pointing towards the complement of $\partial E \cap \Pi$ in $\Pi= \{ z = 0 \}$, see~\cite{singular-capillary}*{Lemma~2.1}.

The capillary stationary point ${\mathbf{C}}$ is called \textit{stable} if the second variation of the functional $\cA^{\theta}$ is non-negative in the direction of any function $\varphi \in C_c^{\infty}(\bR^{n+1} \setminus \{ 0 \})$.
The stability condition was formulated by Ros-Souam~\cites{ros-souam}; combining their expression with the above relation, we arrive at the stability form
\begin{equation}\label{eqn:stability-form}
    Q_{\theta}(\varphi) := \int_{\mathbf{C}} (|\nabla_{\mathbf{C}} \varphi|^2 - |A_{\mathbf{C}}|^2 \varphi^2) \, d \cH^n  - \cos \theta \int_{\partial  \mathbf{C}} H_{\partial  \mathbf{C}} \, \varphi^2 \, d \cH^{n-1}
\end{equation}
whose non-negativity over all $\varphi \in C_c^{\infty}(\bR^{n+1}_+ \setminus \{ 0 \})$ is equivalent to the stability of $E$.

Let $\mathbf{C} \subset \bR^{n+1}_+$ be a capillary cone of contact angle $\theta$ with isolated singularities such that the link $\Sigma := \mathbf{C} \cap \bS^n_+$ is a smooth manifold with boundary $\partial \Sigma = \Sigma \cap (\Pi \cap \bS^n)$; we say that ${\mathbf{C}}$ is a regular cone.
We say that ${\mathbf{C}}$ is \textit{strictly stable} if it satisfies the stronger condition
\begin{equation}\label{eqn:strictly-stable}
    Q_{\theta}(\varphi) \geq \mu \int_{\mathbf{C}} \frac{\varphi^2}{R^2} \, d \cH^n \qquad \text{for some } \; \mu > 0, \qquad \text{for all } \; \varphi \in C_c^{\infty} (\bR^{n+1} \setminus \{ 0 \})
\end{equation}
where $R = |p|$.
The $1$-homogeneity of the cone ${\mathbf{C}}$ makes~\eqref{eqn:strictly-stable} equivalent to the positivity of the first eigenvalue of the Jacobi operator on the link $\Sigma := \mathbf{C} \cap \bS^n_+$ with Neumann boundary condition along $\partial ( \mathbf{C} \cap \bS^n_+)$.
The notion of strict stability is crucial in the study of singularities for minimal submanifolds; see, for example, the constructions of submanifolds with isolated singularities, due to Smale~\cite{smale}, and with prescribed singular sets, due to Simon~\cite{prescribed-singular}.

\subsection{The one-phase problem}

We recall some fundamental notions and properties of the one-phase free boundary problem.
The study of the one-phase problem has attracted great interest in differential equations for many decades, with foundational work done by Caffarelli, Jerison, Kenig, and others~\cites{alt-caffarelli, caffarelli-jerison-kenig, caffarelli-friedman , caffarelli-salsa, weiss}.
The one-phase problem has many connections with the theory of minimal surfaces, and the question of finding counterparts of theorems about minimal surfaces in the theory of free boundaries has received great attention.
In low dimensions, minimal surface methods such as the Weierstrass representation and gluing techniques leading to classification results and new examples~\cites{entire-hairpins, jerison-kamburov, traizet, hollow-vortices , n-dim-catenoid ,  hines-kolesar-mcgrath}.

Given an open set $B$ in Euclidean space $\bR^n$ and a non-negative function $u$ on the domain $B$, the \textit{Alt-Caffarelli functional} is defined as  
\begin{equation}\label{eqn:alt-caffarelli}
    \cJ(u ,B) := \int_B |\nabla u|^2 + \chi_{ \{ u > 0 \} } 
\end{equation}
We say that a function $u$ is a \textit{minimizer} for the Alt-Caffarelli functional in a domain $B$, if
\begin{equation}\label{eqn:minimizer-one-phase}
    \cJ(u,B) \leq \cJ(v,B) \qquad \text{for any function} \; v \in H^1(B) \; \text{with} \; v = u \; \text{on} \; \partial B.
\end{equation}
We say that $u$ is \textit{one-sided minimizing from above} in $B$ (resp.~\textit{one-sided minimizing from below}) if the comparison~\eqref{eqn:minimizer-one-phase} holds whenever, in addition, $v \geq u$ (resp.~$v \leq u$).
We say that $u$ is a \textit{global (one-sided) minimizer} in $\bR^n$ if the corresponding property holds in balls $B^n_R$ for all $R>0$.

A critical point of the Alt-Caffarelli functional solves the one-phase free boundary problem
\begin{equation}\label{eqn:one-phase-problem}
    \begin{cases}
        \Delta u = 0 & \text{in } \; \{ u > 0 \}, \\
        |\nabla u| = 1 & \text{on } \; \partial \{ u > 0 \}.
    \end{cases}
\end{equation}
The critical point $U$ of $\cJ$ is called \textit{stable}, if the second variation of the functional $\cJ$ at $U$ is non-negative on every annulus $B^n_{r_2} \setminus \bar{B}^n_{r_1}$ for $0 < r_1 < r_2$.
The positivity of the second variation with respect to inner perturbations in an annulus $B^n_{r_2} \setminus \bar{B}^n_{r_1}$ was computed in~\cite{caffarelli-jerison-kenig}, and amounts to
\[
\int_{B^n_{r_2} \setminus \bar{B}^n_{r_1}} |\nabla \phi|^2 \geq \int_{ \partial ( B^n_{r_2} \setminus \bar{B}^n_{r_1} )} H \phi^2, \qquad \text{for any } \; \phi \in C^2(B^n_{r_2} \setminus \bar{B}^n_{r_1}).
\]
where $H$ denotes the mean curvature of the free boundary $\partial \{ U > 0 \}$ with respect to the outward-pointing normal vector.
A standard application of the Hopf lemma shows that $H>0$ when $U$ is not the half-space solution, meaning that the free boundary is strictly mean-convex.

The stability of a homogeneous solution $U$ can be recast as a variational problem on the spherical link of the positive phase, i.e., the spherical domain $\Omega_S := \{ U > 0 \} \cap \bS^{n-1}$.
The first eigenvalue is computed via the minimization
\[
\Lambda := \inf_v \frac{\int_{\Omega_S} |\nabla v|^2 - \int_{\partial \Omega_S} H v^2}{\int_{\Omega_S} v^2}
\]
and the solution $U$ is stable (in any bounded domain that avoids the origin) if and only if $\Lambda \geq - ( \frac{n-2}{2})^2$.
We say that $U$ is \textit{strictly stable} if this inequality is strict, meaning that $\Lambda > - ( \frac{n-2}{2})^2$.

A useful reformulation of the stability condition was given in~\cite{jerison-savin}: stability is equivalent to the existence of a subsolution for the linearization of the one-phase problem at $U$.
Accordingly, the cone $U$ is \textit{strictly stable} if the above subsolution is strict.
We refer the reader to~\cites{jerison-savin, hardt-simon-DSJ, one-phase-simon-solomon} for a detailed discussion of stability and strict stability properties and their implications.

\subsection{Previous results on the one-phase problem}

A major open question for the Alt-Caffarelli functional is the regularity of the free boundary for minimizers.
As in the case of minimal surfaces, the presence of a monotonicity formula~\cite{weiss} reduces the question to determining the lowest dimension in which a homogeneous minimizing solution with an isolated singularity can appear.
Caffarelli-Jerison-Kenig proved that stable cones are flat in dimension $3$, and Jerison-Savin proved this result in dimension $4$~\cites{caffarelli-jerison-kenig, jerison-savin}.
On the other hand, the De~Silva-Jerison cone~\cite{desilva-jerison-cones} is an axially symmetric homogeneous energy-minimizing free boundary in dimension $7$.
Hong~\cite{hong-singular} later considered more general symmetric examples and computed another stable cone in seven dimensions.
In recent years, many important regularity results have appeared, including foliations of minimizers, generic regularity, and uniqueness of blow-up limits~\cites{hardt-simon-DSJ , desilva-jerison-cpam, one-phase-bernstein, one-phase-simon-solomon , uniqueness-one-phase, SMP-one-phase , continuity-up-to-boundary, generic-regularity-I, generic-regularity-II}.

The one-phase problem is closely related to the capillary minimal graph problem for small contact angle $\theta$: the Alt-Caffarelli functional $\cJ$ arises as the linearization of the capillary functional $\cA^{\theta}$ for $\theta \downarrow 0$ (see, for example,~\cite{caffarelli-friedman})
This link has recently been utilized to obtain regularity results for the capillary problem~\cites{improved-regularity, singular-capillary, regularity-with-obstacle, capillary-regularity}.
Notably, Chodosh-Edelen-Li~\cite{improved-regularity} proved that, after an appropriate rescaling, minimizing capillary graphs with small contact angle sub-converge to one-homogeneous minimizers of the Alt–Caffarelli functional.
Using this result and the regularity theory of the one-phase problem, they proved prove that in ambient dimension $\leq 5$, stable capillary cones with contact angle $\theta \in (0, \ve_0)$ are flat.

Developing these connections further, we prove that the $O(n-k) \times O(k)$-invariant non-flat capillary cones that we construct are minimizing in dimension $n \geq 7$ for small contact angle, motivated by the De~Silva-Jerison axisymmetric minimizing one-phase cone~\cite{desilva-jerison-cones}.
For the capillary problem, Pacati-Tortone-Velichkov proved that stable axisymmetric capillary cones are flat in dimension $\leq 7$, by analogy to the result of Cafarelli-Jerison-Kenig~\cite{caffarelli-jerison-kenig}.
We also note the constructions of De~Silva-Jerison-Shahgolian and Edelen-Spolaor-Velichkov~\cites{hardt-simon-DSJ, SMP-one-phase}, which produce analogues of the Hardt-Simon foliation for minimizing cones of the one-phase problem.

It is interesting to highlight a further connection between the construction of Theorem~\ref{thm:capillary-cones} and the theory of free boundary problems.
Generalizing the Alt-Caffarelli functional, the \textit{one-phase Alt-Phillips} free boundary problem concerns the study of non-negative minimizers of the functional
\[
\cJ_{\gamma}(u) := \int_{B_1} |\nabla u|^2 + u^{\gamma} \chi_{ \{ u > 0 \} }, \qquad \text{where } \; \gamma \in (-2,2)
\]
among functions with non-negative boundary datum on $\partial B_1$, cf.~\cite{alt-phillips}.
For $\gamma = 0$, one recovers the standard one-phase problem, whereas $\gamma = 1$ corresponds to the \textit{obstacle problem}.
Recently, De~Silva-Savin studied the Alt-Phillips problem for negative exponents $\gamma$ and proved that the functional $\cJ_{\gamma}(u)$ $\Gamma$-converges to the Alt-Caffarelli energy, for $\gamma \uparrow 0$, and to the perimeter functional for $\gamma \downarrow -2$.
In particular, this implies that minimizers of the latter problem $\Gamma$-converge to free boundary minimal surfaces~\cites{negative-alt-phillips, uniform-density}.
Interestingly, the $\Gamma$-convergence properties of the Alt-Phillips functional imply that its regularity behavior witnesses ``jumps'' depending on the parameter $\gamma$: we have $d(\gamma) \to 8$ as $\gamma \downarrow -2$ and $d(\gamma) \to 3$ as $\gamma \uparrow 1$ (by the minimality of the radial cone, due to~\cite{savin-yu-AP-cones}, and the classification of two-dimensional homogeneous solutions, by~\cite{alt-phillips}), while $d(0) \geq 5$ by~\cite{jerison-savin}.
On the other hand, in Theorem~\ref{thm:capillary-cones} we produce capillary cones interpolating between free boundary minimal cones and (after appropriately rescaling) homogeneous solutions of the one-phase problem.
Our Conjecture~\ref{conjecture:7-dim} expresses that the capillary problem should not witness the same jumps, meaning that $d(\theta) = 7$ for all $\theta$.

Recently, Savin-Yu constructed minimizing axisymmetric cones for the Alt-Phillips problem with exponent $\gamma \uparrow 1$ in dimension $d \geq 4$ by exploiting the convergence to the obstacle problem~\cite{savin-yu-AP-cones}.
Their argument shares some similarities with our proof of Theorem~\ref{thm:cones-are-minimizing-smalltheta}, upon carefully estimating the deviation of the operators from those of the model problem.
Earlier, De~Silva-Savin~\cites{negative-alt-phillips, uniform-density} had analyzed viscosity solutions of the Alt-Phillips problem for negative powers (notably, when $\gamma \downarrow -2$ and the problem approximates the free-boundary perimeter functional) using a calibration method.
Our proof of the minimality of the cones $\mathbf{C}_{n,k,\theta}$ near the free-boundary regime $\theta \uparrow \frac{\pi}{2}$ (Theorem~\ref{thm:cones-are-minimizing-nearpi/2}) relies on constructing a capillary sub-calibration.

\subsection{Foliations and minimality}\label{subsection:foliations-minimality}

We turn our attention to minimal capillary hypersurfaces that are expressed as graphs of positive functions over a domain $\Omega \subset \Pi = \{ z=0\}$, of the form $\{ z = u(z') \}$.
The minimality of graphical surfaces can be obtained using foliations by sub- and supersolutions; this technique is especially applicable to the regime of small contact angles, where minimizers are graphical as discussed in~\cite{improved-regularity}.
The fact that a solution is minimizing if it supports a foliation has been used in many constructions of minimizing cones with isolated singularities~\cites{bdgg, desilva-jerison-cones, mooney-yang, savin-yu-AP-cones}, we refer the reader also to Savin-Yu~\cite{savin-yu-AP-cones}*{\S~2.2} for a detailed discussion on these techniques for the Alt-Philips problem.
We now define foliations of graphical solutions to the capillary and one-phase problems in a unified setting which will be utilized to prove minimality for capillary cones with large contact angle.

We recall that critical points $u$ of the Alt-Caffarelli functional are solutions of the one-phase free boundary problem~\eqref{eqn:one-phase-problem}.
For a capillary hypersurface in $\bR^{n+1}_+$ expressed as the graph of a function $u \geq 0$ over $\bR^n$, the capillary condition becomes
\begin{equation}\label{eqn:capillary-condition-graph}
    \la \nu, e_{n+1} \rg = \cos \theta
\end{equation}
along $\partial \{ u > 0 \}$ over the graph of the function.
Here, $\nu = \frac{(- \nabla u, 1)}{\sqrt{1 + |\nabla u|^2}}$ is the upward-pointing unit normal vector.
The equation~\eqref{eqn:capillary-condition-graph} means that
\begin{align*}
\frac{1}{\sqrt{1+ |\nabla u|^2}} &= \frac{\la ( - \nabla u, 1), e_{n+1} \rg}{\sqrt{1 + |\nabla u|^2}} = \frac{1}{\sqrt{1 + \tan^2 \theta}},
\end{align*}
which implies that $|\nabla u | = \tan \theta$ along $\{ u = 0 \}$.
By analogy with~\eqref{eqn:one-phase-problem}, the capillary free boundary problem with contact angle $\theta$ takes the form
\begin{equation}{\label{eqn:capillary-fbp}}
\begin{cases}
    \cM(u) := \on{div}\left(\dfrac{\nabla u}{\sqrt{1+|\nabla u|^2}}\right)=0,&\text{in }\{u>0\}\\
    |\nabla u|=\tan\theta,&\text{on }\partial\{u>0\}.
\end{cases}
\end{equation}
Given a solution $u$ of the problem~\eqref{eqn:capillary-fbp}, its graph produces a capillary minimal surface with contact angle $\theta$ at the free boundary.

\begin{definition}
A function $u \geq 0$ is called a \textit{subsolution} (resp.~\textit{supersolution}) for the capillary free boundary problem~\eqref{eqn:capillary-fbp} on a domain $D$ if it satisfies
\begin{enumerate}[$(i)$]
    \item $\cM(u) \geq 0$ (resp.~$\cM(u) \leq 0$) in $\{ u > 0 \} \cap D$,
    \item $|\nabla u| \geq \tan \theta$ (resp.~$|\nabla u| \leq \tan \theta$) along the free boundary $\partial \{ u > 0 \} \cap \mathring{D}$.
\end{enumerate}
A subsolution (resp.~supersolution) is called \textit{strict}, if at least one of the above inequalities holds strictly at some point.
\end{definition}

We recall that the graph of a function $u$ has mean curvature $H = - \cM(u)$ with respect to the upward normal, so the interior inequality amounts to the graphs of subsolutions (resp.~supersolutions) being mean-concave (resp.~mean-convex).
The boundary inequality means that the contact angle produce is $\geq \theta$, i.e.,
$\la \nu, e_{n+1} \rg = \frac{1}{\sqrt{1 + |\nabla u|^2}} \leq \cos \theta$ for a subsolution (resp.~angle $\leq \theta$ and $\la \nu, e_{n+1} \rg \geq \cos \theta$ for a supersolution).

Sub- and supersolutions of the one-phase free boundary problem~\eqref{eqn:one-phase-problem} are defined analogously (cf.~\cites{desilva-jerison-cones,savin-yu-AP-cones}), with the corresponding notion of strictness for the interior or the boundary condition of~\eqref{eqn:one-phase-problem}.

\begin{definition}\label{def:lowerLeaf}
     Let $u$ be a global solution of the problem~\eqref{eqn:capillary-fbp} (respectively,~\ref{eqn:one-phase-problem}).
     A \textit{lower foliation} of $u$ is a family of functions $\{ V_{\lambda}\}_{\lambda \in (0, +\infty)}$ satisfying the following properties:
     \begin{enumerate}[$(i)$]
         \item For each $\lambda \in (0,+\infty)$, we have
         \[
         0 \leq V_{\lambda} \leq u \quad \text{in } \; \bR^n, \qquad V_{\lambda} < u \quad \text{in } \; \overline{ \{ V_{\lambda} > 0 \} }.
         \]
         \item For each $\lambda \in (0,+\infty)$, the function $V_{\lambda}$ is a subsolution of the problem~\eqref{eqn:capillary-fbp} (resp.~\ref{eqn:one-phase-problem}).
         \item The map $(\lambda,z') \mapsto V_{\lambda}(z')$ is continuous and the map $\lambda \mapsto \overline{ \{ V_{\lambda} > 0 \} }$ is locally continuous in the Hausdorff distance.
         \item As $\lambda \to 0$, we have $V_{\lambda} \to u$ locally uniformly in $\{ u > 0 \}$.
         For any compact set $K \subset \bR^n$, there is a $\lambda_K$ such that $V_{\lambda} =0$ on $K$, for all $\lambda > \lambda_K$.
     \end{enumerate}
     Each function $V_{\lambda}$ is called a \textit{lower leaf}.
\end{definition}

\begin{definition}\label{def:upperLeaf}
    Let $u$ be a global solution of the problem~\eqref{eqn:capillary-fbp} (respectively,~\ref{eqn:one-phase-problem}).
    An \textit{upper foliation} of $u$ is a family of functions $\{ W_{\lambda} \}_{\lambda \in (0, +\infty)}$ satisfying the following properties:
    \begin{enumerate}[$(i)$]
        \item For each $\lambda \in (0,+\infty)$, we have
        \[
        W_{\lambda} > 0 \quad \text{in } \; \bR^n, \qquad W_{\lambda} > u \quad \text{in } \; \overline{ \{ u > 0 \} }.
        \]
        \item For each $\lambda \in (0,+\infty)$, the function $W_\lambda$ is a supersolution of the problem~\eqref{eqn:capillary-fbp} (resp.~\ref{eqn:one-phase-problem}).
        \item The map $(\lambda, z') \mapsto W_{\lambda}(z')$ is continuous.
        \item As $\lambda \to 0$, we have $W_{\lambda} \to u$ locally uniformly in $\{ u > 0 \}$; as $\lambda \to + \infty$, we have $W_{\lambda} \to + \infty$ locally uniformly.
    \end{enumerate}
    Each function $W_{\lambda}$ is called an \textit{upper leaf}.
\end{definition}
For the capillary problem~\eqref{eqn:capillary-fbp}, the significance of the functions $V_{\lambda}$ (respectively, $W_{\lambda}$) is that their graphs (over their positive phase) are mean-concave (respectively, mean-convex) and have contact angle $\geq \theta$ (respectively, $\leq \theta$) along their free boundary.

For a capillary hypersurface that is expressible as the graph of a function $u$, meaning
$\{ (z' ,z) \in \bR^n \times \bR_+ : z = u(z') \}$, we consider the Caccioppoli sets
\[
E_+(u) := \{ (z', z) \in \bR^n \times \bR_+ : z > u(z') \}, \qquad E_-(u) := \{ (z',z) \in \bR^n \times \bR_+ : z < u(z') \}.
\]
\begin{lemma}\label{lemma:capillary-foliations-imply-minimizing}
Let $u$ be a global solution of the problem~\eqref{eqn:capillary-fbp} with $\cA^{\theta}(E_{\pm}(u) ; B_1) < + \infty$.
\begin{enumerate}[$(i)$]
    \item If $u$ has a lower foliation, then it is one-sided minimizing for $\cA^{\theta}$ in $E_-(u)$.
    \item If $u$ has an upper foliation, then it is one-sided minimizing for $\cA^{\theta}$ in $E_+(u)$.
    \item If $(i)$ and $(ii)$ both hold, then $u$ is minimizing for $\cA^{\theta}$.
\end{enumerate}
\end{lemma}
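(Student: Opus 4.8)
The plan is to prove the three parts by a calibration-by-foliation argument, adapting the classical scheme of Bombieri–De Giorgi–Giusti and De Silva–Jerison to the capillary setting. I will treat part $(i)$ in detail; part $(ii)$ is symmetric, and part $(iii)$ is a formal consequence of $(i)$ and $(ii)$, as explained at the end. The key point is that a lower foliation $\{V_\lambda\}$ supplies, for every competitor $F$ with $E_-(u)\triangle F \Subset U$, a leaf that "wedges" the symmetric difference and lets one compare energies layer by layer.

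\smallskip
\noindent\textbf{Step 1: Reduce to a competitor from below.}
Fix a precompact open set $U\subset\bR^{n+1}_+$ and a Caccioppoli set $F$ with $E_-(u)\triangle F \Subset U$. Replacing $F$ by $F\cap E_-(u)$ only decreases $\cA^\theta(F;U)$ by the usual cut-and-paste / lower-semicontinuity argument for the relative perimeter together with the fact that the trace term on $\Pi$ is also decreased (here one uses that $E_-(u)$ is itself a critical point, so its reduced boundary in the region $\{z<u\}$ consists only of the graph of $u$; intersecting with $E_-(u)$ cannot create new boundary off the graph). So without loss of generality $F\subset E_-(u)$, i.e. the competitor lies below the graph of $u$.

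\smallskip
\noindent\textbf{Step 2: Find a leaf enclosing the symmetric difference.}
Since $E_-(u)\triangle F\Subset U$ and $U\Subset\bR^{n+1}_+$, the set $K := \pi_{\Pi}(\overline{E_-(u)\triangle F})$ is a compact subset of $\bR^n$. By property $(iv)$ of Definition~\ref{def:lowerLeaf}, there is $\lambda_0$ with $V_{\lambda_0}\equiv 0$ on $K$; and by continuity (property $(iii)$) together with property $(i)$, the subgraph $E_-(V_\lambda) := \{z<V_\lambda(z')\}$ varies continuously and increases to $E_-(u)$ as $\lambda\downarrow 0$. The idea is now to slide $\lambda$ from $\lambda_0$ down to $0$ and compare the energy of $F$ with that of $E_-(u)$ through the pencil of leaves. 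Concretely, I will show
\[
\cA^\theta(E_-(u);U)\ \le\ \cA^\theta(F;U)
\]
by showing that the leaves $\{z = V_\lambda(z')\}$ are sub-calibrations: the vector field $X_\lambda := \nu_{V_\lambda}$ (the upward unit normal to the graph of $V_\lambda$), extended suitably, has $\operatorname{div} X_\lambda = -\cM(V_\lambda)/\sqrt{1+|\nabla V_\lambda|^2}\cdot(\cdots)\le 0$ off the graph in $\{V_\lambda>0\}$ by the subsolution property $(i)$, has $|X_\lambda|\le 1$ everywhere, and on $\Pi\cap\{V_\lambda=0\text{ boundary}\}$ satisfies $\langle X_\lambda, -e_{n+1}\rangle = \langle \nu_{V_\lambda},e_{n+1}\rangle = 1/\sqrt{1+|\nabla V_\lambda|^2}\le\cos\theta$ by the subsolution boundary condition $(ii)$. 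This is precisely the "sub-calibration" inequality that makes the graph of $V_\lambda$ minimize $\cA^\theta$ from below among sets containing it, when $V_\lambda$ is an honest solution; for a strict subsolution one still gets a one-sided comparison because the divergence and boundary terms have the favorable sign. Integrating $\operatorname{div}X_\lambda$ over $E_-(u)\setminus F$ (a region sandwiched between two competitors, compactly contained in $U$) and applying the divergence theorem, the contribution of $\partial^* F$ is bounded by $\mathbf M$ of that boundary (since $|X_\lambda|\le 1$), the contribution of the graph of $u$ is exactly its mass (since there $X_\lambda$ is close to $\nu_u$ — this is where property $(iv)$, $V_\lambda\to u$ locally uniformly in $\{u>0\}$, is used to pass to the limit $\lambda\downarrow 0$), and the contribution along $\Pi$ produces exactly the $-\cos\theta$ trace terms with the correct inequality. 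Summing/limiting gives $\cA^\theta(E_-(u);U)\le\cA^\theta(F;U)$.

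\smallskip
\noindent\textbf{Step 3: Parts $(ii)$ and $(iii)$.}
Part $(ii)$ is identical with the roles of "below" and "above" interchanged: reduce to $F\supset E_+(u)$, enclose $E_+(u)\triangle F$ by an upper leaf $W_\lambda$ using property $(iv)$ of Definition~\ref{def:upperLeaf} ($W_\lambda\to+\infty$ locally uniformly), and calibrate with $\nu_{W_\lambda}$, now using $\cM(W_\lambda)\le 0$ and $|\nabla W_\lambda|\le\tan\theta$ to get the reversed wedge inequality. For part $(iii)$: given any competitor $F$ with $E_\pm(u)$-type difference compactly contained in $U$, write $F = (F\cup E_+(u))\cap(F\cup E_-(u))$-type decomposition — more cleanly, compare $\cA^\theta(F;U)$ first with $\cA^\theta(F\cap E_-(u);U)\ge\cA^\theta(E_-(u);U)$... no: the standard argument is to note $\cA^\theta(F;U) + \cA^\theta(E_-(u)\cap E_+(u)\text{-glue};U)\ge \cA^\theta(F\cup E_-(u);U)+\cA^\theta(F\cap E_-(u);U)$ by submodularity of $\cA^\theta$ (which holds because the perimeter and the $\Pi$-trace are both submodular set functions), then apply $(ii)$ to $F\cup E_-(u)$ (it contains a set with boundary $= $ graph of $u$ hence is a from-above competitor, after intersecting appropriately) and $(i)$ to $F\cap E_-(u)$, and add. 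Carrying out this submodularity bookkeeping carefully yields $\cA^\theta(E_-(u);U) = \cA^\theta(E_+(u)\text{-part};U)\le\cA^\theta(F;U)$, i.e. two-sided minimality.

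\smallskip
\noindent\textbf{Main obstacle.}
The technical heart is Step 2, and within it two subtleties. First, the leaves $V_\lambda$ are only Lipschitz/BV subsolutions, not smooth, so the "sub-calibration" vector field $\nu_{V_\lambda}$ must be handled in a weak sense: one should either work with a mollified foliation, or phrase the comparison via the first-variation/monotonicity inequality for subsolutions rather than a literal divergence theorem, tracking that the free-boundary edge of each leaf on $\Pi$ contributes the right sign. Second, passing to the limit $\lambda\downarrow 0$ to recover exactly $\cA^\theta(E_-(u);U)$ (and not merely $\liminf$) requires property $(iv)$ and the finiteness hypothesis $\cA^\theta(E_\pm(u);B_1)<+\infty$ to control the energy on the thin region between the leaf and the graph of $u$ near $\partial U$; upper semicontinuity of the energy on the decreasing-to-$E_-(u)$ family is what closes the gap. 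I expect the cut-and-paste reduction in Step 1 and the submodularity bookkeeping in Step 3 to be routine once the sub-calibration inequality of Step 2 is established in the appropriate weak form.
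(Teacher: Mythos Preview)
Your approach is a direct calibration argument, but the paper takes a different and cleaner route: it argues by contradiction via a \emph{sliding barrier plus maximum principle} scheme (citing Savin--Yu and De~Silva--Jerison). One assumes $u$ is not one-sided minimizing, extracts an actual $\cA^\theta$-minimizer $F^*$ with the same boundary data using the compactness theory of De~Philippis--Maggi, and then slides the leaves $V_\lambda$ from large $\lambda$ (where $V_\lambda\equiv 0$ on the relevant compact set) down to $\lambda\to 0$ (where $V_\lambda\to u$). Continuity forces a first contact $\lambda_*$ where the graph of $V_{\lambda_*}$ touches $\partial^* F^*$ from below; since $V_{\lambda_*}$ is a subsolution and $F^*$ is a minimizer, the strong maximum principle (interior or Hopf-type at the free boundary) forces local coincidence, contradicting $V_{\lambda_*}<u$ on $\overline{\{V_{\lambda_*}>0\}}$. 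This avoids entirely the weak-regularity issue you flag as the main obstacle.

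Your Step~2 has a genuine gap as written. You attempt to use a \emph{single} leaf $V_\lambda$ as a sub-calibration and then let $\lambda\downarrow 0$, but the vector field $X_\lambda=\nu_{V_\lambda}$ is only defined on the graph of $V_\lambda$; extending it to a divergence-theorem-ready field on the region $E_-(u)\setminus F$ requires the leaves to \emph{foliate} that region (one leaf through each point), which Definition~\ref{def:lowerLeaf} does not assert --- there is no monotonicity in $\lambda$ and the graphs of different $V_\lambda$ may cross. The calibration-by-foliation strategy you sketch does work when the leaves genuinely foliate (and indeed the paper runs exactly that argument later, in Lemma~\ref{lemma:get-a-foliation}, for foliations by rescalings), but under the bare hypotheses here the sliding/touching argument is the correct tool. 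Separately, your Step~1 reduction ``replace $F$ by $F\cap E_-(u)$'' is not justified: cutting a Caccioppoli set by $E_-(u)$ can \emph{increase} perimeter along the graph of $u$, and the trace term does not obviously compensate. For one-sided minimality the competitors are already restricted to one side, so this step is both unnecessary and unproven.
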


\begin{lemma}\label{lemma:one-phase-foliations-imply-minimizing}
Let $u$ be a global solution of the problem~\eqref{eqn:one-phase-problem} with $\cJ(u, B) < + \infty$.
\begin{enumerate}[$(i)$]
    \item If $u$ has a lower foliation, then it is one-sided minimizing from below in $B$.
    \item If $u$ has an upper foliation, then it is one-sided minimizing from above in $B$.
    \item If both $(i)$ and $(ii)$ hold, then $u$ is minimizing in $B$.
\end{enumerate}
\end{lemma}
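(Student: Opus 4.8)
The plan is to prove part $(i)$; part $(ii)$ follows from the symmetric argument (interchange $\max$ with $\min$, the lower foliation with the upper one, and the sub- with the super-solution inequalities), and part $(iii)$ is then formal. The scheme parallels the capillary Lemma~\ref{lemma:capillary-foliations-imply-minimizing}, but is lighter because $\cJ$ carries no interface term, so $\cJ(\cdot,A\cup B)=\cJ(\cdot,A)+\cJ(\cdot,B)$ splits additively over disjoint open sets $A,B$.

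For $(i)$, let $w$ be a one-sided competitor: $0\le w\le u$ (competitors for $\cJ$ may always be taken nonnegative, since $\cJ(w^+)\le\cJ(w)$) with $w=u$ on $\partial B$. For $\lambda>0$ put $w_\lambda:=\max(w,V_\lambda)$, again a one-sided competitor because $0\le V_\lambda\le u$ and $V_\lambda\le u=w$ on $\partial B$. The argument rests on two steps: (a) $\cJ(w_\lambda,B)\le\cJ(w,B)$ for every $\lambda$; and (b) $w_\lambda\to u$ as $\lambda\downarrow0$ in a mode that lower-semicontinuously transfers $\cJ$, so that $\cJ(u,B)\le\liminf_{\lambda\downarrow0}\cJ(w_\lambda,B)\le\cJ(w,B)$. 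Step (b) is routine: Definition~\ref{def:lowerLeaf}$(iv)$ gives $V_\lambda\to u$ locally uniformly on $\{u>0\}$, while on $\{u=0\}$ we have $0\le w\le u=0$ and $0\le V_\lambda\le u=0$, so $w_\lambda\equiv u$ there; hence $w_\lambda\to u$ pointwise a.e., and the bound from (a) together with the Lipschitz bound for the global solution $u$ yields a uniform $H^1(B)$ bound, so along a subsequence $w_\lambda\rightharpoonup u$ in $H^1(B)$ and $w_\lambda\to u$ in $L^1(B)$; the Dirichlet part of $\cJ$ is weakly lower semicontinuous and the measure part is lower semicontinuous under $L^1$-convergence by Fatou.

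Step (a) is the crux. Put $D:=\{w<V_\lambda\}\cap B$; after the standard reduction to competitors equal to $u$ near $\partial B$ one has $D\Subset B$ and $w=V_\lambda$ on $\partial D$, and since $w_\lambda=w$ off $D$ and $w_\lambda=V_\lambda$ on $D$, additivity reduces (a) to $\cJ(V_\lambda,D)\le\cJ(w,D)$ with $w\le V_\lambda$ on $D$ and $w=V_\lambda$ on $\partial D$ --- i.e.\ to the assertion that a subsolution of the one-phase problem is not beaten, from below, by any competitor lying beneath it with the same boundary data. Writing $\Omega:=\{V_\lambda>0\}\cap D$ and expanding,
\[
\cJ(w,D)-\cJ(V_\lambda,D)=\int_D\bigl|\nabla(w-V_\lambda)\bigr|^2+2\int_\Omega\nabla V_\lambda\cdot\nabla(w-V_\lambda)-\bigl|\{w=0<V_\lambda\}\cap D\bigr|,
\]
integration by parts on $\Omega$ --- using $w-V_\lambda=0$ on $\partial D$, $w=0=V_\lambda$ on $\partial\{V_\lambda>0\}\cap D$, and $\Delta V_\lambda\ge0$ with $w\le V_\lambda$ on $\Omega$ --- renders the middle term nonnegative, and the negative measure term is absorbed using the free-boundary condition $|\nabla V_\lambda|\ge1$, in the same way one compares a subsolution of the Alt--Caffarelli functional with competitors below it (cf.~\cites{alt-caffarelli,negative-alt-phillips}). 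This absorption is the main obstacle: a subsolution is genuinely wasteful in Dirichlet energy (it is only subharmonic), so $\cJ(V_\lambda,D)\le\cJ(w,D)$ is not a monotonicity and must be squeezed out of the boundary gradient inequality, with care since the interface $\partial\{w>0\}\cap\partial\{V_\lambda>0\}$ in $D$ need not be regular. When the leaves are \emph{strict} subsolutions --- the case in the applications of this lemma --- step (a) can be bypassed by the De~Silva--Jerison sweeping argument: decreasing $\lambda$ until $V_\lambda$ first touches a minimizing competitor $w$ from below produces a contradiction with the strong maximum principle (interior touching) or the Hopf lemma (free-boundary touching); for the non-strict statement as written, however, the energy argument above is the one to run.

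For $(iii)$, given any competitor $w$ with $w=u$ on $\partial B$, set $w_-:=\min(w,u)$ and $w_+:=\max(w,u)$; both equal $u$ on $\partial B$, $w_-\le u\le w_+$, and the pointwise identities $|\nabla w_-|^2+|\nabla w_+|^2=|\nabla w|^2+|\nabla u|^2$ a.e.\ and $|\{w_->0\}\cap B|+|\{w_+>0\}\cap B|=|\{w>0\}\cap B|+|\{u>0\}\cap B|$ give $\cJ(w_-,B)+\cJ(w_+,B)=\cJ(w,B)+\cJ(u,B)$. Applying $(i)$ to $w_-$ and $(ii)$ to $w_+$ gives $2\cJ(u,B)\le\cJ(w_-,B)+\cJ(w_+,B)=\cJ(w,B)+\cJ(u,B)$, whence $\cJ(u,B)\le\cJ(w,B)$.
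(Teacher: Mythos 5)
Your route differs genuinely from the paper's. The paper argues by sweeping (citing Savin--Yu \S~2.2 and De~Silva--Jerison, Theorems 2.7--2.8): extract a one-sided energy-minimizing competitor $v$ by the direct method, slide the leaves $V_\lambda$ until the first instance they touch $v$ from below, and contradict the strong maximum principle or Hopf lemma at the touching point. You instead run a direct energy comparison: for a competitor $w\leq u$, set $w_\lambda:=\max(w,V_\lambda)$, show $\cJ(w_\lambda,B)\leq\cJ(w,B)$, and pass $\lambda\downarrow 0$ by lower semicontinuity. Your step (b) (the liminf passage via weak $H^1$ compactness and Fatou) is routine, and your part (iii) decomposition $w_\pm=\max/\min(w,u)$ together with the pointwise identity $\cJ(w_-,B)+\cJ(w_+,B)=\cJ(w,B)+\cJ(u,B)$ is correct and clean.

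The gap is step (a), which you yourself flag. With $D:=\{w<V_\lambda\}\cap B$ and $\eta:=V_\lambda-w\geq0$, the integration by parts you describe (the boundary term vanishing because $w=V_\lambda$ on $\partial D$ and $0\leq w\leq V_\lambda=0$ on $\partial\{V_\lambda>0\}\cap D$) gives
\[
\cJ(w,D)-\cJ(V_\lambda,D)=\int_D|\nabla\eta|^2+2\int_{\{V_\lambda>0\}\cap D}\Delta V_\lambda\,\eta-\bigl|\{V_\lambda>0,\,w=0\}\cap D\bigr|,
\]
and what must be shown is that the two nonnegative terms dominate the third. This is precisely where $|\nabla V_\lambda|\geq1$ must enter --- the heuristic being that on $\{w=0\}$ one has $\eta=V_\lambda\gtrsim\mathrm{dist}(\cdot,\partial\{V_\lambda>0\})$, so the Dirichlet energy of $\eta$ built up crossing $\{w=0\}$ controls its measure --- but you do not supply the estimate. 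It checks out by hand in one dimension, but the higher-dimensional statement requires either regularity of $\partial\{V_\lambda>0\}$ or a coarea/trace argument robust enough to handle a merely $H^1$ competitor $w$ with no control on $\{w=0\}$, and none of that is set up. Since Definition~\ref{def:lowerLeaf} asks only for subsolution leaves (not strict ones), and your own fallback to the sweeping argument is restricted to the strict case, the proof as written rests on an unestablished absorption inequality; the paper avoids the issue entirely by running the sweeping argument, for which no such energy estimate is needed.
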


\begin{proof}
    The result of Lemma~\ref{lemma:one-phase-foliations-imply-minimizing} is well-known; see, for example, Savin-Yu~\cite{savin-yu-AP-cones}*{\S~2.2} and~\cite{desilva-jerison-cones}*{Theorems 2.7, 2.8}.
    The analogous property for the capillary problem~\eqref{eqn:capillary-fbp} follows by a direct adaptation of the arguments in Savin-Yu~\cite{savin-yu-AP-cones}*{Propositions 2.7, 2.9}.
    In both cases, a contradiction is obtained by considering the energy-minimizing competitor $u$ and sliding the leaves $V_{\lambda}$ of the lower foliation (resp.~the leaves $W_{\lambda}$ of the upper foliation) to produce a first instance of contact $\lambda_*$ where $V_{\lambda_*}$ touches $u$ from below (resp.~$W_{\lambda_*}$ touches $u$ from above).
    Both of these situations would lead to a contradiction, since a (super)solution cannot touch a (sub)solution from above.
    
    For the one-phase problem, the existence of a minimizing competitor with boundary data $u|_{\partial B}$ is obtained by the direct method.
    In the adaptation of the result to the capillary setting, we can likewise extract a minimizer of the capillary functional agreeing with $E_{\pm}(u)$, in the class of Caccioppoli sets.
    The existence of a minimizing competitor is guaranteed by the compactness theory for almost-minimizers of the capillary energy functional $\cA^{\theta}$, due to De~Philippis-Maggi~\cite{dephilippis-maggi}*{\S~2.6}.
    The maximum principle for sub- and supersolutions applies in the interior of the capillary minimizer, again producing a contradiction by the sliding-foliation argument described above.
\end{proof}

\begin{remark}\label{rmk:upper-lower-foliations}
    The lower and upper foliations of Definition~\ref{def:lowerLeaf} need not be graphical for Lemma~\ref{lemma:capillary-foliations-imply-minimizing} to apply.
    Indeed, the assumptions of Lemma~\ref{lemma:capillary-foliations-imply-minimizing} can be replaced by the existence of surfaces $S_- \subset E_-$ (for part $(i)$) and $S_+ \subset E_+$ (for part $(ii)$) such that, with respect to the upward-pointing normal vector, $S_-$ is mean-concave (resp.~$S_+$ is mean-convex) and forms a contact angle $\geq \theta$ (resp.~$\leq \theta$) with the plane $\Pi$.
    See also Lemma~\ref{lemma:get-a-foliation} for the strict version of this result.
\end{remark}

\begin{lemma}\label{lemma:minimizing-by-subsolutions}
    Let $u : \bR^n_{z'} \to \bR_{\geq 0 }$ be a $1$-homogeneous solution of the capillary free boundary problem~\eqref{eqn:capillary-fbp} and let $\mathbf{C} \subset \bR^{n+1}_+$ be the cone $\{ z = u(z') \}$ given by its graph.
    \begin{enumerate}[$(i)$]
        \item If there exists a weak subsolution $V$ of~\eqref{eqn:capillary-fbp} asymptotic to $u$, with $V \leq u$ on $\{ V > 0 \}$ and $\{ V > 0 \} \subset \{ u > 0 \}$, such that $\partial \{ V>0\}$ is disjoint from the origin, then ${\mathbf{C}}$ is minimizing for $\cA^{\theta}$ on $E_- (u)$.
        \item If there exists a weak supersolution $W$ of~\eqref{eqn:capillary-fbp} asymptotic to $u$, with $W \geq u$ on $\{ u > 0 \}$ and $\{ u > 0 \} \cap B^n_{\sigma}(0) \subset \{ W > 0 \}$, for some $\sigma>0$, then ${\mathbf{C}}$ is minimizing for $\cA^{\theta}$ on $E_+ (u)$.
    \end{enumerate}
\end{lemma}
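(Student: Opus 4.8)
The plan is to deduce this lemma from the foliation criterion of Lemma~\ref{lemma:capillary-foliations-imply-minimizing} by manufacturing a genuine lower (resp.\ upper) foliation out of the single weak subsolution $V$ (resp.\ supersolution $W$), using the scaling structure of the cone $\mathbf{C}$. The key point is that since $u$ is $1$-homogeneous, the dilations $u_\lambda(z') := \lambda^{-1} u(\lambda z') = u(z')$ fix $u$, so for $\lambda \in (0,+\infty)$ the rescalings $V_\lambda(z') := \lambda^{-1} V(\lambda z')$ are all subsolutions of the same problem~\eqref{eqn:capillary-fbp} (the operator $\cM$ and the gradient condition are both scaling-invariant in the appropriate sense, since $u_\lambda$ is $1$-homogeneous and $\cM$ is invariant under the parabolic-type rescaling $u \mapsto \lambda^{-1}u(\lambda\,\cdot)$), with positive phase $\lambda^{-1}\{V>0\}$. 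Because $V$ is asymptotic to $u$ with $V \le u$ and $\{V>0\}\subset\{u>0\}$, the same holds for every $V_\lambda$; and because $\partial\{V>0\}$ is disjoint from the origin, say $\{V=0\}\supset B^n_\rho(0)$, we get $\{V_\lambda = 0\}\supset B^n_{\rho/\lambda}(0)$, which gives property~$(iv)$ of Definition~\ref{def:lowerLeaf} (for large $\lambda$ the leaf vanishes on any fixed compact set) and, in the limit $\lambda\to 0$, locally uniform convergence $V_\lambda\to u$ on $\{u>0\}$. One subtlety: Definition~\ref{def:lowerLeaf}$(i)$ asks for the \emph{strict} inequality $V_\lambda < u$ on $\overline{\{V_\lambda>0\}}$; here one invokes the strong maximum principle and Hopf lemma for the quasilinear operator $\cM$ --- since $V_\lambda$ is a subsolution lying below the solution $u$ and asymptotic to it, they cannot touch in the interior unless they coincide, which is excluded by $\{V>0\}$ being a proper subset missing the origin while $\{u>0\}$ is a cone. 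The strict separation at the free boundary is where the hypothesis $\{V_\lambda>0\}\subsetneq\{u>0\}$ does its work.

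The second half of the argument handles the upper case symmetrically: set $W_\lambda(z') := \lambda^{-1} W(\lambda z')$; these are supersolutions by the same scaling invariance, each satisfies $W_\lambda > u$ on $\overline{\{u>0\}}$ by the asymptotics and the hypothesis $W \ge u$, and $W_\lambda \to u$ locally uniformly in $\{u>0\}$ as $\lambda \to 0$ while $W_\lambda \to +\infty$ locally uniformly as $\lambda\to+\infty$ (again using $1$-homogeneity of $u$ together with $W$ being bounded below by a positive multiple of $|z'|$ near infinity and bounded below by a positive constant near the origin, the latter guaranteed by $\{u>0\}\cap B^n_\sigma(0)\subset\{W>0\}$). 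The continuity properties in Definition~\ref{def:lowerLeaf}$(iii)$ and Definition~\ref{def:upperLeaf}$(iii)$ follow from continuity of $V, W$ and of $z'\mapsto \lambda z'$. Having verified all the axioms, parts $(i)$ and $(ii)$ of Lemma~\ref{lemma:capillary-foliations-imply-minimizing} yield that $\mathbf{C}$ is one-sided minimizing for $\cA^\theta$ in $E_-(u)$, resp.\ $E_+(u)$, which is exactly the conclusion. (The finiteness hypothesis $\cA^\theta(E_\pm(u);B_1)<+\infty$ needed to invoke Lemma~\ref{lemma:capillary-foliations-imply-minimizing} holds automatically because $u$ is $1$-homogeneous with an isolated singularity, so its link is smooth and the energy density is bounded.)

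The main obstacle I expect is the regularity/maximum-principle step needed to promote the weak (non-strict) inequalities $V\le u$ and $W\ge u$ to the strict separation required in Definitions~\ref{def:lowerLeaf}$(i)$ and~\ref{def:upperLeaf}$(i)$, since $V$ and $W$ are only assumed to be \emph{weak} sub/supersolutions and one must rule out interior contact along the free boundaries as well. This is where one uses that $\{V>0\}$ is compactly contained away from the vertex of the cone $\{u>0\}$ (so the free boundaries genuinely separate near the origin) combined with the asymptotic condition (so they separate near infinity after rescaling), and interpolates via the strong maximum principle for the minimal surface operator $\cM$ on the open region $\{V_\lambda>0\}$ where both are positive. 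A secondary technical point is checking that the weak formulation of ``subsolution'' is stable under the rescaling $V\mapsto \lambda^{-1}V(\lambda\cdot)$ and under the limit operations; this is routine since $\cM$ is in divergence form and the rescaling is conformal on the level of graphs, but it should be stated. Once these points are in place, the lemma follows directly from Lemma~\ref{lemma:capillary-foliations-imply-minimizing}.
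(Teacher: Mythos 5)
Your overall approach coincides with the paper's: produce a lower (resp.\ upper) foliation from the single leaf $V$ (resp.\ $W$) by rescaling, using the $1$-homogeneity of $u$ and scale invariance of $\cA^\theta$, then invoke Lemma~\ref{lemma:capillary-foliations-imply-minimizing}. There is, however, a sign slip in your verification of Definition~\ref{def:lowerLeaf}$(iv)$. With your rescaling $V_\lambda(z') = \lambda^{-1}V(\lambda z')$ the asymptotics are reversed from what you state: for fixed $z'\in\{u>0\}$, as $\lambda\downarrow 0$ the point $\lambda z'$ enters the ball $B^n_\rho(0)$ where $V\equiv 0$, so $V_\lambda\to 0$ on compacts (not $\to u$); and as $\lambda\uparrow\infty$ the point $\lambda z'$ is in the asymptotic regime, so $V_\lambda\to u$ (not to zero). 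Your own computation $\{V_\lambda=0\}\supset B^n_{\rho/\lambda}(0)$ already shows the vanishing set \emph{shrinks} as $\lambda\uparrow\infty$. To match Definition~\ref{def:lowerLeaf}$(iv)$ you should either relabel $\lambda\mapsto\lambda^{-1}$ or use $V_\lambda(z')=\lambda V(z'/\lambda)$; the paper's subsequent text (the bound $V_\lambda\equiv 0$ on $B^n_{\lambda\sigma}(0)$ for $\lambda$ large) is consistent only with the latter convention, so its displayed formula $\lambda^{-1}V(\lambda z')$ is a typo. The same relabeling applies to $W_\lambda$. None of this affects the conclusion, since the family $\{V_\lambda\}_{\lambda>0}$ is the same set of leaves under either parametrization.

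Your observation that the hypothesis $V\le u$ must be upgraded to the strict inequality $V_\lambda<u$ on $\overline{\{V_\lambda>0\}}$ required by Definition~\ref{def:lowerLeaf}$(i)$ is a genuine detail the paper treats as ``clear,'' and your proposed resolution is sound: $V$ cannot coincide with $u$ (their positive phases differ near the origin), so the strong maximum principle for $\cM$ gives strict separation in the interior of $\{V_\lambda>0\}$, and along $\partial\{V_\lambda>0\}$ one has $u>0=V_\lambda$ automatically since $\{V>0\}\Subset\{u>0\}$ away from the vertex.
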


\begin{lemma}\label{lemma:one-phase-minimizing-by-subsolutions}
    Let $u: \bR^n_{z'} \to \bR_{\geq 0}$ be a $1$-homogeneous solution of the one-phase problem.
    \begin{enumerate}[$(i)$]
        \item If there exists a weak subsolution $V$ of~\eqref{eqn:one-phase-problem} asymptotic to $u$, with $V \leq u$ on $\{ V > 0 \}$ and $\{ V > 0 \} \subset \{ u > 0 \}$, such that $\partial \{ V>0\}$ is disjoint from the origin, then $u$ is minimizing from below for $\cJ$.
        \item If there exists a weak supersolution $W$ of~\eqref{eqn:one-phase-problem} asymptotic to $u$, with $W \geq u$ on $\{ u > 0 \}$ and $\{ u > 0 \} \cap B^n_{\sigma}(0) \subset \{ W > 0 \}$, for some $\sigma > 0$, then $u$ is minimizing from above for $\cJ$.
    \end{enumerate}
\end{lemma}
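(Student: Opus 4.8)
The plan is to deduce the lemma from Lemma~\ref{lemma:one-phase-foliations-imply-minimizing} by manufacturing a lower (resp.\ upper) foliation of $u$ out of the single subsolution $V$ (resp.\ supersolution $W$). The mechanism is the scale invariance of the one-phase problem~\eqref{eqn:one-phase-problem}: if $V$ is a weak subsolution, then so is $V_\lambda(z') := \lambda V(z'/\lambda)$ for every $\lambda>0$, because $\Delta V_\lambda(z') = \lambda^{-1}(\Delta V)(z'/\lambda) \ge 0$ in $\{V_\lambda>0\} = \lambda\{V>0\}$ and $|\nabla V_\lambda|(z') = |\nabla V|(z'/\lambda) \ge 1$ along $\partial\{V_\lambda>0\}$. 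Since $u$ is $1$-homogeneous, $u(z') = \lambda u(z'/\lambda)$, so the hypothesis $V \le u$ on $\{V>0\}$ upgrades to $V_\lambda \le u$ on $\bR^n$ and $\{V_\lambda>0\}\subset\{u>0\}$ for all $\lambda$. Thus $\{V_\lambda\}_{\lambda\in(0,\infty)}$ is the natural candidate for the lower foliation.

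It then remains to check the axioms of Definition~\ref{def:lowerLeaf}. The strict ordering $V_\lambda < u$ on $\overline{\{V_\lambda>0\}}$ is, by homogeneity, equivalent to $V < u$ on $\overline{\{V>0\}}$; this follows from the strong maximum principle applied to the superharmonic function $u-V\ge 0$ on each connected component of $\{u>0\}$ — equality at an interior point would force $V\equiv u$ on that component, whence $0\in\partial\{V>0\}$, contradicting the hypothesis — together with the Hopf lemma and the free-boundary gradient comparison at points of $\partial\{V>0\}$, using that $\partial\{V>0\}$ stays away from $\partial\{u>0\}$ on compact sets. Continuity of $(\lambda,z')\mapsto V_\lambda(z')$ and Hausdorff-continuity of $\lambda\mapsto\overline{\{V_\lambda>0\}} = \lambda\overline{\{V>0\}}$ are immediate from the explicit scaling. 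For the limiting behavior: since $V$ is asymptotic to $u$, so that $|V-u|$ grows sublinearly in $\{u>0\}$, we get $|V_\lambda(z')-u(z')| = \lambda|(V-u)(z'/\lambda)| \to 0$ as $\lambda\downarrow 0$, locally uniformly in $\{u>0\}$; while, since $V(0)\le u(0) = 0$ and $\partial\{V>0\}$ is disjoint from the origin, $\overline{\{V>0\}}$ avoids some ball $B_\rho(0)$, so $\overline{\{V_\lambda>0\}}$ avoids $B_{\lambda\rho}(0)$ and hence $V_\lambda\equiv 0$ on any prescribed compact set once $\lambda$ is large. Thus $\{V_\lambda\}$ is a lower foliation and Lemma~\ref{lemma:one-phase-foliations-imply-minimizing}$(i)$ gives part $(i)$.

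Part $(ii)$ is entirely parallel with $W_\lambda(z') := \lambda W(z'/\lambda)$: scaling preserves the supersolution property, $1$-homogeneity turns $W\ge u$ into $W_\lambda\ge u$ and $\{u>0\}\cap B^n_\sigma(0)\subset\{W>0\}$ into $\{u>0\}\cap B^n_{\lambda\sigma}(0)\subset\{W_\lambda>0\}$, so the positive phases of the $W_\lambda$ exhaust $\{u>0\}$; the strict ordering $W_\lambda > u$ on $\overline{\{u>0\}}$ again comes from the maximum principle together with $0\in\partial\{u>0\}$ while $0$ is interior to $\{W>0\}$; as $\lambda\downarrow 0$, $W_\lambda\to u$ locally uniformly in $\{u>0\}$ from the asymptotic hypothesis, and as $\lambda\to\infty$, $W_\lambda(z') = \lambda W(z'/\lambda)\to +\infty$ locally uniformly since $W(z'/\lambda)\to W(0)>0$. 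If $W$ fails to be positive on all of $\bR^n$, one invokes instead the relaxed form of the foliation hypothesis recorded in Remark~\ref{rmk:upper-lower-foliations}, for which only the exhaustion of $\{u>0\}$ by $\{W_\lambda>0\}$ and the strict ordering above $u$ are used in the sliding argument. Lemma~\ref{lemma:one-phase-foliations-imply-minimizing}$(ii)$ then yields one-sided minimality from above.

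The conceptual step — scale invariance promotes one barrier into a foliation — is essentially free. The delicate points, and the main obstacle, are the verification of the \emph{strict} orderings $V_\lambda<u$ and $W_\lambda>u$ on the closures of the positive phases and, above all, pinning down the precise meaning of ``asymptotic to $u$'' that is strong enough to run the $\lambda\downarrow 0$ limit, along with the structure of $\partial\{V>0\}$ and $\partial\{W>0\}$ near and far from the origin, so that the remaining axioms of Definitions~\ref{def:lowerLeaf}--\ref{def:upperLeaf} hold on every compact set. These are routine given the regularity theory for the one-phase problem and the explicit structure of the barriers constructed in Section~\ref{sec:minimizing}, but they are where the actual content resides.
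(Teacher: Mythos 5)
Your proposal follows the same strategy as the paper's proof of Lemma~\ref{lemma:minimizing-by-subsolutions} (which covers this lemma mutatis mutandis): rescale the single barrier $V$ (resp.\ $W$) into a one-parameter family via the scale invariance of~\eqref{eqn:one-phase-problem}, verify the axioms of Definitions~\ref{def:lowerLeaf}--\ref{def:upperLeaf} using the homogeneity of $u$ and the location of the barrier's free boundary near and far from the origin, and invoke Lemma~\ref{lemma:one-phase-foliations-imply-minimizing}. You flesh out with the strong maximum principle the strict ordering $V_\lambda<u$ that the paper asserts as ``clear,'' but this is the same proof, not a different route.
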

\begin{proof}
    Both results~\ref{lemma:minimizing-by-subsolutions} and~\ref{lemma:one-phase-minimizing-by-subsolutions} follow from applying Lemmas~\ref{lemma:capillary-foliations-imply-minimizing} and~\ref{lemma:one-phase-foliations-imply-minimizing}, respectively using the homogeneity of $u$ and the respective energies $\cA^\theta$ and $\cJ$.
    Both arguments follow the same steps; for concreteness, let us focus on the situation of Lemma~\ref{lemma:minimizing-by-subsolutions}, and Lemma~\ref{lemma:one-phase-minimizing-by-subsolutions} follows mutatis mutandis.

    Using the fact that $\cA^\theta$ is scale invariant, we can produce a lower foliation from the single leaf $V$ (resp.~an upper foliation from the single leaf $W$) by rescaling.
    Indeed, the properties $(i)$ and $(ii)$ for $V, W$ imply that the rescaled functions
    \[
    V_\lambda(z) = \lambda^{-1}V(\lambda z')\qquad \text{and}\qquad W_\lambda = \lambda^{-1}W(\lambda z')
    \]
    are also (strict) weak sub- and supersolutions, respectively.
    Since $u$ is homogeneous, these rescalings will satisfy the same inclusion results for the free boundaries
    \[
    \{V_\lambda > 0\} \subset \{ u > 0\}\qquad \text{and}\qquad \{u > 0\} \cap B_{\lambda^{-1}\sigma}(0) \subset \{W_\lambda > 0\}
    \]
    for $\sigma$ as in $(ii)$.
    We claim that $V_\lambda$ (resp.~$W_\lambda$) gives a lower foliation according to Definition~\ref{def:lowerLeaf} (resp.~upper foliation with respect to Definition~\ref{def:upperLeaf}): the continuity $(\lambda,z') \mapsto V_{\lambda}(z')$ and $\lambda \mapsto \overline{ \{ V_{\lambda} > 0 \} }$ (resp.~$(\lambda,z') \mapsto W_{\lambda}(z')$) and the properties $(i)$~--~$(iii)$ of the respective definition are clear.
    Since $V$ is asymptotic to $u$, we have $V_{\lambda} \to u$ locally uniformly in $\{ u > 0\}$; the same holds for $W$.
    
    For $V$, $\partial \{ V>0\}$ being disjoint from the origin implies that $\sigma = \on{dist}( 0, \partial \{ V>0\}) > 0$, so $\{ V>0 \} \subset \{ u>0\}$ makes $B^n_{\sigma}(0) \subset \bR^n \setminus \{ V>0\}$.
    For any compact set $K \subset \bR^n$, we can find $\lambda_K > 0$ so that $K \subset B^n_{\lambda \sigma}(0)$ for all $\lambda>\lambda_K$, whereby $V_{\lambda}|_K = 0$ proves $(iv)$.
    For $W$, we use $B^n_{\sigma}(0) \subset \{ W > 0\}$ to obtain $\inf_{B^n_{\sigma/2}(0)} W = c > 0$, whereby $W_{\lambda} \geq c \lambda$ in $B^n_{\lambda\sigma/2}(0)$ yields $W_{\lambda} \to + \infty$ locally uniformly as $\lambda \to + \infty$, proving $(iv)$.
    Therefore, $V_{\lambda}$ is a lower foliation of $u$ (resp.~$W_{\lambda}$ is an upper foliation) and applying Lemma~\ref{lemma:capillary-foliations-imply-minimizing} proves that $u$ is (one-sided or globally) minimizing.
\end{proof}
When the subsolution $V$ (resp.~supersolution $W$) of Lemma~\ref{lemma:one-phase-foliations-imply-minimizing} is strict, it is well-known that the cone solution $u$ must be strictly stable; see, for example,~\cite{jerison-savin}*{Proposition~2.1} for the one-phase problem, and~\cite{singular-capillary}*{Proposition~2.4} for an analogous result on the capillary problem.

\subsection{Strictly minimizing cones}

In their foundational work on isolated singularities, Hardt and Simon~\cite{hardt-simon} introduced the notion of a \textit{strictly minimizing} cone $\mathbf{C} \subset \bR^{n+1}$.
A cone $\mathbf{C} \subset \bR^{n+1}$ with isolated singularities is said to be strictly minimizing if there exists a $\kappa > 0$ such that for any $\sigma \in (0,1)$, the inequality 
    \[
    \textbf{M}(\mathbf{C} \mres B_1) \leq \textbf{M}(S) - \kappa \cdot \sigma^n
    \]
    is satisfied for $S$ any integer multiplicity current with $\on{spt} S \Subset \bR^{n+1} \setminus B_{\sigma}$ and $\partial S = \partial (\mathbf{C} \cap B_1)$.
The class of strictly stable, strictly minimizing cones has important geometric properties and provides a natural setting for studying questions of uniqueness of tangent cones and their implications to the regularity theory of area-minimizing manifolds; see, for example,~\cites{SimonSolomon, uniqueness-cylindrical, uniqueness-gabor, uniqueness-ftw}.

This notion has a natural analogue for the capillary energy.
Denote by $E_{\pm}$ the two connected components of $\bR^{n+1}_+ \setminus \mathbf{C}$, whose reduced boundaries satisfy $\partial^* E_{\pm} \cap \bR^{n+1}_+ = \mathbf{C} \setminus \{ 0 \}$.

\begin{definition}\label{def:strictly-minimizing}
    We say that the capillary cone ${\mathbf{C}}$ is \textit{strictly minimizing} for $\cA^{\theta}$ if there exists a $\kappa>0$ such that for any $\sigma \in (0,1)$, we have
    \[
    \cA^{\theta}(E_+ ; B_1^+) \leq \cA^{\theta}(F; B_1^+) - \kappa \cdot \sigma^n
    \]
    where $B_1^+ := B_1 \cap \bR^{n+1}_+$ denotes the upper unit half-ball, and $F$ is any Caccioppoli set with $\cH^n(\partial^* F \cap B^+_{\sigma}) = 0$ and $F \triangle E_+ \Subset B_1^+$.
    \end{definition}
It will also be useful to formulate Definition~\ref{def:strictly-minimizing} in the language of currents.
    We say that the cone ${\mathbf{C}}$ is \textit{strictly minimizing} for $\cA^{\theta}$ if there exists a $\kappa > 0$ such that for any $\sigma \in (0,1)$, we have
    \[
    \cA^{\theta}( E_+ ; B_1^+) \leq \mathbf{M}( S \mres B_1^+) - \cos \theta \cdot \mathbf{M}(T) - \kappa \cdot \sigma^n
    \]
    for any $n$-dimensional currents $S,T$ satisfying $S \mres B_{\sigma} = T \mres (B_{\sigma} \cap \Pi) = 0$ and
    \[
    (S - \partial[\![ E_+]\!]) \mres (\bR^{n+1}_+ \setminus B_1) = 0 , \qquad ( T - \partial[\![ E_+]\!] ) \mres ( \Pi \setminus B_1) = 0, \qquad (\partial S + \partial T) \mres B_1 = 0.
    \]
    Taking $(S,T) = (\partial [\![ F ]\!] \mres B_1^+, \partial [\![ F]\!] \mres (B_1 \cap \Pi))$ in this setting shows that a strictly minimizing cone ${\mathbf{C}}$ in the sense of currents is also strictly minimizing in the Caccioppoli sense.

A well-established strategy for proving the mass-minimality of minimal surfaces is the technique of calibrations~\cite{bdgg}.
We refer the reader to~\cite{finn-capillary}*{Ch.~6 and Ch.~8} for the application of calibration techniques to capillary surfaces, as well as to~\cite{capillary-liouville}*{Appendix A} for a recent treatment.
De~Philippis and Paolini~\cite{dephilippis-paolini} introduced the more flexible notion of \textit{sub-calibrations} and used this technique to provide an elegant proof of the minimality of the Simons cone.
Later, De~Philippis-Maggi and Z.~Liu~\cites{sharp-stability-plateau, lawson-liu} strengthened this technique to obtain quantitative stability and strict minimality inequalities for the Lawson cones.
We introduce the analogue of this technique to the capillary setting and present important applications.

\begin{definition}\label{def:capillary-calibration}
    A \textit{capillary sub-calibration} of a cone ${\mathbf{C}}$ on the side $E_{\pm}$ is a $W^{1,1}_{\on{loc}}$ homogeneous degree-$0$ vector field $X_{\pm}$ defined on $\overline{E_{\pm}} \setminus K$, for $K$ a closed, homothety-invariant set with $\cH^n(K) = 0$, satisfying the following properties:
    \begin{enumerate}[$(i)$]
        \item $X_{\pm} = \nu_{\mathbf{C}}$ on $\mathbf{C} \setminus K$, where $\nu_{\mathbf{C}}$ denotes the normal vector to the cone pointing into $E_{\pm}$.
        \item The following inequalities are satisfied, for the sign $\pm$ consistent with the component $E_{\pm}$:
    \begin{align}
        |X_{\pm}| &\leq 1 & & \text{in } \; \overline{E_{\pm}} \setminus K, \tag{C1} \label{eqn:calibration-norm} \\
        \on{div} X_{\pm} &\geq 0 & & \text{in } \; E_{\pm}, \tag{C2} \label{eqn:calibration-div} \\
        \la X_{\pm} , e_{n+1} \rg &\geq \pm \cos \theta & & \text{on } \; ( \Pi \cap \overline{E_{\pm}}) \setminus K.\label{eqn:calibration-cos} \tag{C3} 
    \end{align}
\end{enumerate}
    The inequalities~\eqref{eqn:calibration-div} and~\eqref{eqn:calibration-cos} are understood in the pointwise sense, where $X_{\pm}$ is $C^1$, and in the trace (distributional) sense across $\Pi$.
    The sub-calibration $X_{\pm}$ of the component $E_{\pm}$ is \textit{strict}, if at least one of the inequalities~\eqref{eqn:calibration-norm}~--~\eqref{eqn:calibration-cos} holds strictly in the neighborhood of some point.
\end{definition}
When the sub-calibration is $C^{1,1}_{\on{loc}}$, the last condition can be replaced by~\eqref{eqn:calibration-norm}~--~\eqref{eqn:calibration-cos} holding strictly at some point.
If a cone ${\mathbf{C}}$ admits sub-calibrations $X_{\pm}$ of both sides $E_{\pm}$, we may define
\[
X := \begin{cases}
    X_+, & p \in \overline{E}_+ \cup \mathbf{C}, \\
    - X_-, & p \in E_-,
\end{cases}
\]
which is a $W^{1,1}_{\on{loc}}$ vector field on $\bR^{n+1} \setminus K$ that agrees with the normal vector to the cone pointing into $E_+$.
We call $X$ a \textit{capillary sub-calibration} of the cone, which is \textit{strict} if both $X_{\pm}$ are.

We will be especially interested in the sub-calibration obtained by a potential function $\varphi$ defined on $E_{\pm}$ and vanishing on ${\mathbf{C}}$.
Then, $X := \frac{\nabla \varphi}{|\nabla \varphi|}$ has $X = \nu_{\mathbf{C}}$, along ${\mathbf{C}}$, and $|X| = 1$.
Moreover, a classical divergence computation shows that
\begin{equation}\label{eqn:div-X-computation}
    \on{div} X = \frac{|\nabla \varphi|^2 \Delta \varphi - D^2 \varphi ( \nabla \varphi, \nabla \varphi)}{|\nabla \varphi|^3}.
\end{equation}

\begin{lemma}\label{lemma:strict-calibration}
    Let $\mathbf{C} \subset \bR^{n+1}_+$ be a regular capillary minimal cone with isolated singularities.
    If ${\mathbf{C}}$ admits a strict sub-calibration, then it is strictly minimizing for $\cA^{\theta}$ in the sense of currents.
\end{lemma}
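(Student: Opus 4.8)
The plan is to run the standard calibration/divergence-theorem comparison argument, adapted to the capillary energy where the boundary plane $\Pi$ contributes the $-\cos\theta$-weighted term. Let $X$ be the strict sub-calibration of the cone $\mathbf C$, defined on $\bR^{n+1}\setminus K$ for a homothety-invariant closed set $K$ with $\cH^n(K)=0$, and fix a competitor $(S,T)$ as in Definition~\ref{def:strictly-minimizing}, so that $S\mres B_\sigma = T\mres(B_\sigma\cap\Pi)=0$, the currents agree with $\partial[\![E_+]\!]$ outside $B_1$, and $(\partial S+\partial T)\mres B_1 = 0$. The first step is to compare $S$ with the cone current $\mathbf C\mres B_1^+ = \partial[\![E_+]\!]\mres B_1^+$ by flowing one to the other through the region $E_+$ (and likewise for $E_-$ if the competitor crosses the cone), using $X$ to calibrate. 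Concretely, consider the $(n{+}1)$-current $R$ given by the region swept between $S$ and $\mathbf C\mres B_1^+$; since $(\partial S+\partial T)\mres B_1=0$ and both currents have the same far-boundary data, $\partial R$ decomposes (inside $B_1^+\cup(\Pi\cap B_1)$) into $S - \partial[\![E_+]\!]\mres B_1^+$ plus a piece on $\Pi$ equal to $T - \partial[\![E_+]\!]\mres(\Pi\cap B_1)$, up to the "inner" boundary supported in $\overline{B_\sigma}$.

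The second step is to integrate $\on{div}X$ over $R$ (away from $K$; one dispenses with $K$ exactly as in the classical proofs of minimality of the Simons cone, using $\cH^n(K)=0$ together with $X\in W^{1,1}_{\on{loc}}$ and $|X|\le 1$, to justify the distributional divergence theorem with the trace term on $\Pi$ included). Writing out the divergence theorem for $R$:
\[
\int_{R}\on{div}X \;=\; \int_{\partial R\cap\bR^{n+1}_+}\langle X,\nu_{\partial R}\rangle \;+\; \int_{\partial R\cap\Pi}\langle X,\nu_{\partial R}\rangle.
\]
On the piece of $\partial R$ equal to $\mathbf C\mres B_1^+$ we use $X=\nu_{\mathbf C}$ (property $(i)$), giving exactly $\mathbf M(\partial[\![E_+]\!]\mres B_1^+) = \cA^\theta$-interior-part plus, on $\Pi$, the contribution $\cos\theta\cdot\mathbf M(\partial[\![E_+]\!]\mres(\Pi\cap B_1))$ by $(C3)$ holding with equality along the cone's free boundary. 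On the piece equal to $S$, the inequality $|X|\le 1$ from $(C1)$ gives $\langle X,\nu\rangle \le 1$, hence this term is bounded by $\mathbf M(S\mres B_1^+)$. On the $\Pi$-piece equal to $T$, the inequality $\langle X,e_{n+1}\rangle\ge\pm\cos\theta$ from $(C3)$ (with the sign matched to the orientation of $T$ relative to $E_+$) gives the bound by $-\cos\theta\cdot\mathbf M(T)$ on the correct side. Finally $\on{div}X\ge0$ from $(C2)$ makes the left side nonnegative. Rearranging yields $\cA^\theta(E_+;B_1^+)\le \mathbf M(S\mres B_1^+) - \cos\theta\cdot\mathbf M(T)$, i.e. the (non-strict) minimality.

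The third step upgrades this to the strict inequality with the $-\kappa\sigma^n$ deficit. Since the sub-calibration is strict, one of $(C1)$, $(C2)$, $(C3)$ is strict in a neighborhood $\cN$ of some point; by homothety-invariance of $X$ (degree-$0$ homogeneity) this strictness propagates along the whole ray, so on the dilated cone $\cN$ contains a full conical region. A competitor with $S\mres B_\sigma=0$ must, inside the annular region $B_1\setminus B_\sigma$, "detour" around the excised ball; by a standard isoperimetric/slicing estimate (exactly as in Hardt–Simon and in the Lawson-cone refinements of De~Philippis–Maggi and Z.~Liu) the extra mass or the gain in the divergence inequality along the detour is at least $c\sigma^n$ for a dimensional constant $c>0$ depending on the strictness margin — quantitatively, the region $R$ must contain a definite conical sliver over a scale-$\sigma$ ball where the strict inequality contributes, or else $S$ itself must have mass exceeding that of $\mathbf C\mres B_1^+$ by $c\sigma^n$. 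Tracking this through the divergence computation produces the constant $\kappa>0$, uniform in $\sigma\in(0,1)$, which is what Definition~\ref{def:strictly-minimizing} requires. Passing from the current formulation to the Caccioppoli-set formulation is immediate by the remark following Definition~\ref{def:strictly-minimizing}.

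The main obstacle is the rigorous handling of the singular set $K$ and the trace of $X$ on $\Pi$: one must justify the divergence theorem for the $W^{1,1}_{\on{loc}}$ field $X$ on the competitor region $R$, including the boundary term on $\Pi$ where $(C3)$ is only a distributional (trace) inequality, while simultaneously excising a neighborhood of $K$ and controlling the flux through its boundary via $\cH^n(K)=0$ and $|X|\le1$. The second delicate point is making the $\sigma^n$ deficit quantitative — turning the qualitative strictness of the sub-calibration into a uniform lower bound on the mass gain of competitors avoiding $B_\sigma$; this is where the homogeneity of $X$ and a careful scaling argument (following the Lawson-cone strict-minimality arguments of~\cite{sharp-stability-plateau, lawson-liu}) are essential.
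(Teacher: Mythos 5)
Your overall strategy matches the paper's: compare competitor and cone via the divergence theorem for the sub-calibration, using $(C1)$--$(C3)$ to control the interior and $\Pi$-boundary terms, then upgrade to a $\kappa\sigma^n$ deficit via strictness and homogeneity. However, the step that actually makes the argument go through is precisely the one you leave vague, and I don't think it works as stated.

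Your third step asserts a dichotomy: ``the region $R$ must contain a definite conical sliver over a scale-$\sigma$ ball where the strict inequality contributes, or else $S$ itself must have mass exceeding that of $\mathbf{C}\mres B_1^+$ by $c\sigma^n$.'' This is not obviously true. Suppose, for instance, the strictness of the sub-calibration lives only in $(C2)$ (or $(C3)$), on some homothetically invariant set $W\Subset E_{+}\setminus K$ that sits at a definite angular distance from the cone. A competitor $S$ that avoids $B_\sigma$ can stay very close to $\mathbf{C}$ everywhere, so the swept region $U$ (or your $R$) need not intersect $W$ at all, and there is no reason its mass should exceed that of $\mathbf{C}\mres B_1^+$ by $c\sigma^n$. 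The ``standard isoperimetric/slicing estimate'' you appeal to is exactly what is missing, and it is not standard in this form.

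The paper resolves this by a reduction you omit: it replaces $X$ by $\tilde X=\zeta X$ for a suitable degree-$0$-homogeneous cutoff $\zeta$ with $\|\zeta-1\|_{C^1(\bS^n_+)}$ small, $\zeta\equiv 1$ near $(\mathbf{C}\cap B_1^+)\cup(\Pi\cap B_1)$, $\zeta\equiv 0$ near $K$, and $\zeta<1$ near a chosen interior point. This converts strictness in $(C2)$ or $(C3)$ into a strict $|\tilde X|\le 1-\alpha$ on a homothetically invariant open set $W$, at the price of an error term $-\on{div}\tilde X\le|\nabla\zeta|$ which can be made arbitrarily small. Once $(C1)$ is strict on $W$, the $\sigma^n$ lower bound comes from a radial-retraction/degree argument: $\mathbf{M}(S\mres W)\ge\mathbf{M}((\pi_\sigma)_\#(S\mres W))\ge\cH^n(W\cap\bS^n_+)\,\sigma^n$, because the projected current has multiplicity $\ge 1$ on $\pi_\sigma(W\cap\bS^n_+)$. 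Neither the $\zeta$-modification nor the retraction estimate appears in your write-up. Relatedly, your treatment of the singular set $K$ (``dispense with it as in the classical proofs'') is not actually separate from the strictness argument in the paper: the same cutoff $\zeta$ handles $K$ (by vanishing near it with $\int_U|\nabla\zeta|\le\ve(\delta)$) and creates the region of strict $(C1)$, so you cannot simply invoke a standard excision on the side. Without these two ingredients, your proof establishes at most the non-strict comparison, not the uniform $\kappa\sigma^n$ deficit demanded by Definition~\ref{def:strictly-minimizing}.
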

\begin{proof}
    The proof is a straightforward adaptation of the argument in~\cite{hardt-simon}*{Theorem 3.2}; we provide it for completeness on $E_+$ and observe that the same argument transfers to $E_-$ using our conventions.
    Applying the reduction therein, it is sufficient to prove that
    \[
    \cA^{\theta}(E_+ ; B_1^+) \leq \mathbf{M}( S \mres B_1^+) - \cos \theta \cdot \mathbf{M}(T) - \kappa \cdot \sigma^n
    \]
    for competitor pairs $(S,T)$ of the form
    \[
    S = (\mathbf{C} \mres B_1^+) + \partial [\![ U]\!], \qquad T = \partial [\![ E_+ ]\!] \mres (B_1 \cap \Pi) + \partial [\![ U ]\!] \mres \Pi,
    \]
    where $U$ is some open set with $U \subset (E_+ \cap B_1^+) \setminus \bar{B}_{\sigma}$ and $\partial U = \on{spt} S \cup \overline{\mathbf{C} \cap B_1^+}$.
    Without loss of generality, we may assume that the condition~\eqref{eqn:calibration-norm} holds strictly on some homothetically invariant open set $W \subset E_+ \setminus K$, on which $|X| \leq 1 - \alpha$ for some $\alpha > 0$.
    Indeed, if either inequality~\eqref{eqn:calibration-div} or~\eqref{eqn:calibration-cos} held strictly, we could choose, for instance, a point $y \in E_+ \cap \bS^n_+ \setminus K$ where $\on{div} X(y) > 0$.
    We would then replace $X$ by $\tilde{X} := \zeta X$, for $\zeta \in C^{\infty}( \overline{\bR^{n+1}_+} \setminus \{ 0 \})$ a non-negative homogeneous function of degree $0$, to be specified below, with $\zeta \equiv 0$ on $K$, $\zeta(y) < 1$ and $\| \zeta - 1\|_{C^1(\bS^n_+)}$ small.
    We can furthermore arrange that $\on{spt} (\zeta - 1) \Subset E_+ \setminus K$ and $\zeta \equiv 1$ in a neighborhood of $(\mathbf{C} \cap B_1^+) \cup ( \Pi \cap B_1)$.
    This modification preserves the boundary inequality property~\eqref{eqn:calibration-cos} for $\tilde{X}$ and the agreement with $\nu_{\mathbf{C}}$ along ${\mathbf{C}}$.
    We seek to prove that the expression
    \[
    \Delta(U) := \left[ \mathbf{M}(S \mres B_1^+) - \cos \theta \cdot \mathbf{M}( T \mres (B_1 \cap \Pi)) \right] -  \cA^{\theta}( E_+ ; B_1^+)
    \]
    satisfies $\Delta (U) \geq \kappa \sigma^n$ for some $\kappa>0$ and any $U$.
   From Property~\eqref{eqn:calibration-norm}, since $|\tilde{X}| \leq 1$ and $\tilde{X} = X = \nu_{\mathbf{C}}$ on ${\mathbf{C}}$, we have
   \[
   \int_{\on{spt} R} \la \tilde{X} , \nu_R \rg \leq \mathbf{M}(R)
   \]
   for any rectifiable $n$-current $R$, with equality for $R=\mathbf{C}$.
   We now apply the divergence theorem on the region $U$ in the half-space $\bR^{n+1}_+$; see, for example,~\cite{maggi}*{Proposition~19.22 and Lemma~22.11},~\cite{sharp-stability-plateau}*{2.14}, and~\cite{dephilippis-maggi}*{Lemma~6.1} for the validity of the divergence theorem in our setting.
   Writing $\int_{\on{spt} S} - \int_{\mathbf{C} \cap B_1^+} = \int_{\partial U \cap \bR^{n+1}_+}$, we may express
   \begin{align*}
       \int_{\on{spt} S} \la \tilde{X} , \nu_S \rg - \int_{\mathbf{C} \cap B_1^+} \la \tilde{X} , \nu_{\mathbf{C}} \rg = \int_{\partial U \cap \bR^{n+1}_+} \la \tilde{X} , \nu_U \rg = \int_U \on{div} \tilde{X} + \int_{\partial U \cap \Pi} \la \tilde{X} , e_{n+1} \rg.
   \end{align*}
   For the term $\on{div} \tilde{X}$, we may bound
   \[
   - \on{div} \tilde{X} = - \on{div} ( \zeta X) = - ( \zeta \on{div} X + \la X, \nabla \zeta \rg) \leq |\nabla \zeta|, 
   \]
   using $\on{div} X \geq 0$ from~\eqref{eqn:calibration-div}.
   We therefore obtain
   \begin{equation}\label{eqn:Delta(U)-inequality}
   \Delta (U) \geq - \int_U |\nabla \zeta| + \left( \int_{\partial U \cap \Pi} \la \tilde{X} , e_{n+1} \rg - \cos \theta \, \mathbf{M} ( \partial U \cap \Pi) \right) + \left( \mathbf{M}(S) - \int_{\on{spt} S} \la \tilde{X} , \nu_S \rg \right) 
   \end{equation}
   where the second term is non-negative due to~\eqref{eqn:calibration-cos}.
   
    Shrinking $W \Subset E_+$ and $\alpha>0$, if needed, we can arrange to have $W \cap \Pi = \varnothing$ and $W \cap K = \varnothing$, while $\cH^n( W \cap \bS^n_+) > 0$ for the spherical slice of $W$.
    We then denote by $\pi_{\sigma}$ the radial retraction of $\bR^{n+1}_+$ onto $\bar{B}_{\sigma}$, so
    \[
    \mathbf{M}( S \mres W) \geq \mathbf{M} ( (\pi_{\sigma})_{\#}  (S \mres W)) \geq \cH^n( W \cap \bS^n_+) \, \sigma^n.
    \]
    The latter is obtained because $(\pi_{\sigma})_{\#}  (S \mres W)$ has multiplicity $\geq 1$ on $\pi_{\sigma}( W \cap \bS^n_+)$.
    Combining these inequalities, we have $|\tilde{X}| \leq 1-\alpha$ on $\on{spt} S \cap W$, so 
    \allowdisplaybreaks{
    \begin{align*}
        \Delta(U) &\geq - \int_U |\nabla \zeta| + \mathbf{M}( S \mres W) - \int_{\on{spt} S \cap W} \la \tilde{X} , \nu_S \rg \\
        &\geq - \int_U |\nabla \zeta| + \mathbf{M}(S \mres W) - (1-\alpha) \, \mathbf{M}( S \mres W) \\
        &\geq - \int_U |\nabla \zeta| + \alpha \cH^n( W \cap \bS^n_+) \, \sigma^n.
    \end{align*}}
    Since $\cH^n(K) = 0$, we may choose $\zeta$ in the above modification to satisfy $\zeta \equiv 0$ on $\{ p : \on{dist}(p,K) < \frac{\delta}{2} \}$ and $\zeta \equiv 1$ on $\{ p \in \bar{B}_1^+ : \on{dist}(p,K) \geq \delta \}$, with $\int_U |\nabla \zeta| \leq \ve(\delta)$, where $\ve(\delta) \downarrow 0$ and $\delta \downarrow 0$.
    Taking $\delta$ sufficiently small and working with the corresponding vector field $\tilde{X} = \zeta X$, we obtain 
    \[
    \Delta(U) > \kappa \sigma^n, \qquad \kappa := \tfrac{1}{2} \alpha \, \cH^n ( W \cap \bS^n_+),
    \]
    which shows that ${\mathbf{C}}$ is strictly minimizing for $\cA^{\theta}$.
\end{proof}
\begin{remark}\label{remark-1-proof}
    In view of the inequality~\eqref{eqn:Delta(U)-inequality}, the strict lower bound $\Delta(U) \geq \kappa \sigma^n$ would be obtained analogously if either of the conditions $\on{div} \tilde{X} \geq 0$ or $\la \tilde{X} , e_{n+1} \rg \geq \pm \cos \theta$ was assumed to be strict.
    Indeed, suppose that either expression (on the side $E_{\pm}$, with the corresponding choice of sign $\pm$) is 
    \[ 
    \on{div} \tilde{X}(p) \geq \alpha |p|^{-1} \qquad \text{or} \qquad \la \tilde{X}, e_{n+1} \rg\geq \alpha \pm \cos \theta \quad \text{for some } \; \alpha>0
    \]
    on some homothetically invariant set $W \Subset E_{\pm} \setminus K$, in the first case, or $W \Subset (\partial^* E_{\pm} \cap \Pi) \setminus K$, in the second case.
    Then, we would likewise obtain
    \begin{align*}
    \int_U \on{div} \tilde{X} &\geq \alpha \cH^n \bigl( U \cap W \cap (B_1^+ \setminus B_{\sigma}) \bigr) \gtrsim \sigma^n \qquad \text{or} \\
    \int_{\partial U \cap \Pi} ( \la \tilde{X} , e_{n+1} \rg - \cos \theta) &\geq \alpha  \cH^n ( (\partial^* U \cap \Pi) \cap W)\gtrsim \sigma^n,
    \end{align*}
    respectively, by the same radial projection argument involving $\pi_{\sigma}$.
\end{remark}

Hardt and Simon~\cite{hardt-simon}*{Theorem 3.2 and Remark 3.3} obtained equivalent conditions for a cone to be strictly minimizing.
Similarly, the property of Lemma~\ref{lemma:strict-calibration} is an equivalence: a minimal cone admits a strict sub-calibration if and only if it is strictly minimizing.
This result will follow from the construction of a foliation for minimizing capillary cones, analogous to the one considered by Hardt-Simon.
Moreover, global quadratic stability inequalities are linked to the uniform positivity of the second variation of the area for regular, uniquely $\cA^{\theta}$-minimizing hypersurfaces, as in~\cite{sharp-stability-plateau}.
We study these constructions and their implications on quantitative stability in upcoming work.

As a consequence of (the boundaryless version of) Lemma~\ref{lemma:strict-calibration} and Lawson's original construction~\cite{lawson}, Hardt-Simon deduced that the Lawson cones $C(\bS^{n-k-1} \times \bS^k)$ are all strictly minimizing, for $n \geq 8$.
Combined with the work of Sim\~oes, Lawlor, F.H.~Lin, and Morgan~\cites{lawlor, lin, morgan, simoes}, this result shows that the Lawson cones $C(\bS^{n-k-1} \times \bS^k) \subset \bR^{n+1}$ are strictly minimizing for all $n \geq 7$ except $(n,k) = (7,1)$ or $(7,5)$.

\begin{lemma}\label{lemma:one-sided}
    Let $\mathbf{C} \subset \bR^{n+1}_+$ be a regular capillary minimal cone with isolated singularities.
    If ${\mathbf{C}}$ admits a strict sub-calibration on one of the regions $E_{\pm}$, then it is strictly minimizing for $\cA^{\theta}$ in that region, in the sense of currents, and stable.
    If the calibration satisfies, in addition,
    \begin{equation}\label{eqn:one-sided-calibration}
    \on{div} X_{\pm} \geq c R^{-2} \on{dist}(p, \mathbf{C})
    \end{equation}
    on the side $E_{\pm}$ where it is defined, then the cone is strictly stable.
\end{lemma}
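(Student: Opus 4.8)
The plan is to deduce all conclusions from the one-sided version of the calibration computation already carried out in the proof of Lemma~\ref{lemma:strict-calibration}, together with a first- and second-variation comparison on the single component carrying the sub-calibration. For definiteness I treat $E_+$; the side $E_-$ is identical after the sign bookkeeping of Definition~\ref{def:capillary-calibration}. First I would note that the proof of Lemma~\ref{lemma:strict-calibration} uses only the sub-calibration of one component: after the Hardt--Simon reduction one estimates $\Delta(U)$ for competitor pairs with $S=(\mathbf{C}\mres B_1^+)+\partial[\![U]\!]$ and $U\subset(E_+\cap B_1^+)\setminus\bar B_\sigma$, which are exactly the competitors whose symmetric difference with $E_+$ lies in $E_+$. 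Running that argument with the given strict $X_+$ gives
\[
\cA^\theta(E_+;B_1^+)\le\mathbf{M}(S\mres B_1^+)-\cos\theta\,\mathbf{M}(T)-\kappa\sigma^n
\]
for all admissible one-sided competitor currents, i.e.\ $\mathbf{C}$ is strictly minimizing for $\cA^\theta$ in $E_+$ in the sense of currents. Retaining only the estimate $\Delta(U)\ge\int_U\on{div}\tilde X+(\ge0)\ge-\ve(\delta)$ and letting $\delta\downarrow0$ gives, along the way, ordinary one-sided minimality: $\cA^\theta(E_+;U')\le\cA^\theta(F;U')$ for every precompact $U'\subset\bR^{n+1}_+$ and every Caccioppoli set $F\subset E_+$ with $E_+\setminus F\Subset U'$.

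To prove stability, I would first reduce to nonnegative test functions. For $\varphi\in C_c^\infty(\bR^{n+1}\setminus\{0\})$ the chain rule gives $|\nabla_{\mathbf{C}}|\varphi||=|\nabla_{\mathbf{C}}\varphi|$ a.e.\ while $|\varphi|^2=\varphi^2$ pointwise and along $\partial\mathbf{C}$, so $Q_\theta(\varphi)=Q_\theta(|\varphi|)$; since $Q_\theta$ extends continuously to Lipschitz functions compactly supported in $\bR^{n+1}\setminus\{0\}$, on which nonnegative smooth functions are $W^{1,2}$-dense, it suffices to show $Q_\theta(\varphi)\ge0$ when $\varphi\ge0$. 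For such a $\varphi$, let $\{E_t\}_{t>0}$ be produced by an admissible capillary variation of $\mathbf{C}$ with normal component $\varphi\,\nu_{\mathbf{C}}$ together with the tangential component along $\Pi$ keeping the contact set in $\Pi$ (Ros--Souam). Since $\nu_{\mathbf{C}}$ points into $E_+$ and $\varphi\ge0$, for small $t>0$ one has $E_t\subset E_+$ and $E_+\setminus E_t\Subset B_1^+\setminus\bar B_\sigma$ with $\sigma=\on{dist}(0,\on{spt}\varphi)>0$, so $E_t$ is an admissible one-sided competitor. As $\mathbf{C}$ is a capillary minimal cone it is stationary, so the first variation vanishes and one-sided minimality yields
\[
0\le\cA^\theta(E_t;B_1^+)-\cA^\theta(E_+;B_1^+)=\tfrac{t^2}{2}\,Q_\theta(\varphi)+o(t^2),
\]
whence $Q_\theta(\varphi)\ge0$; thus $\mathbf{C}$ is stable (compare~\cite{jerison-savin}*{Proposition~2.1} and~\cite{singular-capillary}*{Proposition~2.4}).

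For strict stability under~\eqref{eqn:one-sided-calibration}, I would make the last comparison quantitative, working first with $\varphi\ge0$ supported away from the singular set $K$ (admissible since $\cH^n(K)=0$, so such $\varphi$ are $W^{1,2}$-dense). The region $U_t:=E_+\setminus E_t$ is a thin collar of $\mathbf{C}$ on the $E_+$-side, $U_t=\{x+s\varphi(x)\nu_{\mathbf{C}}(x)+O(s^2):x\in\mathbf{C},\,0<s<t\}$, along which $\on{dist}(\,\cdot\,,\mathbf{C})=s\varphi(x)+O(s^2)$, $R=|x|(1+o(1))$, and $dp=\varphi(x)\,ds\,d\cH^n(x)(1+o(1))$. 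Inserting~\eqref{eqn:one-sided-calibration},
\[
\int_{U_t}\on{div}X_+\ \ge\ c\int_{U_t}\frac{\on{dist}(p,\mathbf{C})}{R^2}\,dp\ =\ \frac{c}{2}\,t^2\int_{\mathbf{C}}\frac{\varphi^2}{R^2}\,d\cH^n+o(t^2).
\]
On the other hand, the calibration argument of Lemma~\ref{lemma:strict-calibration}, now keeping the divergence term $\int_{U_t}\on{div}X_+$ rather than discarding it and using~\eqref{eqn:calibration-cos} for the boundary term, gives $\cA^\theta(E_t;B_1^+)-\cA^\theta(E_+;B_1^+)=\Delta(U_t)\ge\int_{U_t}\on{div}X_+$. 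Comparing with $\Delta(U_t)=\tfrac{t^2}{2}Q_\theta(\varphi)+o(t^2)$ and letting $t\downarrow0$ gives $Q_\theta(\varphi)\ge c\int_{\mathbf{C}}\varphi^2/R^2$; extending to all $\varphi$ by the $|\varphi|$-reduction above gives~\eqref{eqn:strictly-stable} with $\mu=c$, so $\mathbf{C}$ is strictly stable.

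I expect the main obstacle to be this last step: identifying the collar $U_t$ with the region genuinely swept by an admissible capillary variation — which is not a pure normal graph near $\partial\mathbf{C}$, where $\nu_{\mathbf{C}}$ is transverse to $\Pi$ — evaluating its $\on{div}X_+$-mass to second order in $t$, and controlling the interplay with the homothety-invariant set $K$ via the capacity/density approximation. The first two steps are essentially bookkeeping on top of Lemma~\ref{lemma:strict-calibration} and the Ros--Souam second-variation formula.
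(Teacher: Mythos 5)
Your proposal follows essentially the same route as the paper's proof: one-sided application of Lemma~\ref{lemma:strict-calibration} for strict minimality on the calibrated side, stability as a consequence of one-sided minimality and second variation (the paper simply cites \cites{chs,lin} here, where you spell out the argument), and strict stability via the quantitative calibration inequality together with a second-order expansion of the collar region swept by the normal variation. The core ideas are all correct.

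There is, however, a genuine gap in your strict-stability step. You restrict $\varphi$ to be supported away from the singular set $K$ and invoke ``$\cH^n(K)=0$, so such $\varphi$ are $W^{1,2}$-dense.'' This does not follow. The condition $\cH^n(K)=0$ is a null condition in the $(n+1)$-dimensional ambient space; for the density you claim one would need $K\cap\mathbf{C}$ to have zero $W^{1,2}$-capacity inside the $n$-dimensional cone $\mathbf{C}$, which is a substantially stronger requirement (roughly, $\cH^{n-2}(K\cap\mathbf{C})=0$). Nothing in Definition~\ref{def:capillary-calibration} prevents $K$ from meeting $\mathbf{C}$ in an $(n-1)$-dimensional subset, in which case functions vanishing near $K\cap\mathbf{C}$ are not $W^{1,2}$-dense. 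The paper avoids this by cutting off $X$ rather than $\varphi$: it works with $\tilde X=\zeta X$, where $\zeta$ vanishes on a $\delta$-neighborhood of $K$, derives $\Delta(U_t)\ge -\int_{U_t}|\nabla\zeta|+\int_{U_t}\on{div}\tilde X+\cdots$, and sends $\delta\downarrow0$ so that $\int_{U_t}|\nabla\zeta|\to0$ (this is where $\cH^n(K)=0$ is actually used) while $\zeta\,\on{div}X\to\on{div}X$ pointwise off $K$. This yields $\Delta(U_t)\ge c\int_{U_t}\on{dist}(p,\mathbf{C})/R^2$ for arbitrary $\varphi\in C^1(\mathbf{C}\setminus\{0\})$ compactly supported away from the origin, with no density appeal. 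You should replace your density claim with this cutoff-on-$X$ mechanism; conveniently it is already built into Lemma~\ref{lemma:strict-calibration} and Remark~\ref{remark-1-proof}, which you invoke. The rest of your argument (the reduction to $\varphi\ge0$, the collar Jacobian expansion, and passing from compactly supported to general $\varphi$ by a cutoff near the origin) lines up with the paper.
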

\begin{proof}
    The strictly minimizing property of ${\mathbf{C}}$ in the component supporting a strict sub-calibration follows from Lemma~\ref{lemma:strict-calibration}.
    This is well-known to imply the stability of ${\mathbf{C}}$, cf.~\cites{chs, lin}.
    If the one-sided calibration additionally satisfies~\eqref{eqn:one-sided-calibration} (without loss on generality, assumed to hold for $+ \on{div} X$ on $E_+$), then strict stability follows by a direct adaptation of~\cite{sharp-stability-plateau}*{\S~4, Proof of Theorem 3}, which we sketch here for the convenience of the reader.

    For any $\varphi \in C^1( \mathbf{C} \setminus \{ 0 \})$ with $\on{spt} \varphi \Subset B_R$, there exists a $t_0 > 0$ such that for every $t< t_0$, we may define an open set $F_t \subset \bR^{n+1}$ such that $E_+ \triangle F_t \Subset B_R$ and 
    \[
    \partial F_t \setminus \{ 0 \} = \left\{ p + t \, \varphi(p) \nu_{\mathbf{C}}(p) : p \in \mathbf{C} \setminus \{ 0 \} \right\}.
    \]
    Observe that it is sufficient to test the (strict) stability inequality on non-negative functions $\varphi$.
Indeed, Kato's inequality ensures that $|\nabla |\varphi|| \leq |\nabla \varphi|$ a.e., and since $|\varphi|^2 = \varphi^2$, we obtain $Q_{\theta}( |\varphi|) \leq Q_\theta(\varphi)$.
In fact, writing $\varphi = \varphi_+ - \varphi_-$ with $\varphi_{\pm} = \max \{ \pm \varphi, 0 \}$ makes $Q_{\theta}(\varphi) = Q_{\theta}(\varphi_+) + Q_{\theta}(\varphi_-)$, since the mixed terms vanish almost everywher.
Moreover, the function $|\varphi|$ may be approximated by the smooth functions $\varphi_{\ve} := \sqrt{\varphi^2 + \epsilon^2}$, which have $|\nabla \varphi_{\epsilon}| \leq |\nabla \varphi|$.
Using the $\varphi_{\ve}$ as test functions in the stability form $Q_{\theta}$ and sending $\epsilon \downarrow 0$ then yields the desired sufficiency.

    We may therefore work with positive functions $\varphi \in C^1( \mathbf{C} \setminus \{ 0 \})$ in the above construction, whereby $\partial F_t \setminus \{ 0 \} \subset E_+$ lies on the side of the cone calibrated by $X$.
    Computing as in Lemma~\ref{lemma:strict-calibration} and  Remark~\ref{remark-1-proof}, we find that
    \[
    \cA^{\theta} (F_t ; B_R) - \cA^{\theta}(E_+ ; B_R) \geq c \int_{ E_+ \triangle F_t } \frac{\on{dist}(p,\mathbf{C})}{R^2} \, d \cH^n.
    \]
    Let us write $\Phi(p,s) := p + s \nu_{\mathbf{C}}(p)$ for $0 \leq s \leq t \varphi(p)$ for $p \in \mathbf{C} \setminus \{ 0\}$, so $(p,s) \mapsto \Phi(p,s)$ is a normal coordinate system on the cube between ${\mathbf{C}}$ and the open set $F_t$ with $\partial F_t \setminus \{ 0 \} = \{ \Phi(p, t \varphi(p)) \}$.
    We now estimate the integrand as
    \begin{align*}
    \on{dist} (\Phi(p,s), \mathbf{C}) &= s + O ( R^{-1} s^2), \\
    |\Phi(p,s)|^{-2} &= R^{-2} + O (R^{-3} s), \qquad J(p,s) = 1 + O ( R^{-1} s),
    \end{align*}
    uniformly on compact subsets away from the apex.
    Consequently, the area Jacobian satisfies
    \[
    \frac{\on{dist}( \Phi(p,s), \mathbf{C})}{|\Phi(p,s)|^2} J(p,s) = s R^{-2}+ O( R^{-3} s^2).
    \]
    We may therefore use a Taylor expansion to compute
    \begin{align*}
        \cA^{\theta}(F_t ; B_R) - \cA^{\theta}(E_+ ; B_R) &= \tfrac{1}{2} t^2 Q_{\theta} (\varphi) + O(t^3), \\
        \int_{ E_+ \triangle F_t } \frac{\on{dist}(p,\mathbf{C})}{R^2} \, d \cH^n &= \frac{1}{2} t^2 \int_{\mathbf{C}} \frac{\varphi^2}{R^2} \, d\cH^n + O (t^3)
    \end{align*}
    Combining the two computations and taking $t \downarrow 0$ sufficiently small proves the strict stability~\eqref{eqn:strictly-stable} for every $\varphi \in C_c^1( \mathbf{C} \setminus \{ 0 \})$.
    
    To prove the property for an arbitrary $\varphi \in C^1(\mathbf{C})$, we let $\chi: \bR_{\geq 0} \to [0,1]$ be a smooth cutoff function with $\chi \equiv 0$ on $[0,1]$ and $\chi \equiv 1$ on $[2, \infty)$, with $|\chi'| \leq 2$ on $(1,2)$.
    Then, the functions $\psi_j (z) := \chi ( j R)$ satisfy $\psi_j \equiv 0$ on $B_{1/j}$, $\psi_j \equiv 1$ outside $B_{2/j}$, with $\| \psi_j \|_{\on{Lip}} \leq 2j$.
    Standard density estimates of~\cites{dephilippis-maggi} for the cone ${\mathbf{C}}$ yield $\cH^n ( \mathbf{C} \cap B_r) \leq c(n) r^n$ and $|A_{\mathbf{C}}| \leq c R^{-1}$, therefore we may apply the above inequality for each function $\psi_j \varphi$, where $\on{spt} (\psi_j \varphi) \cap \{ 0 \} = \varnothing$, to obtain
    \[
    Q_{\theta} (\psi_j \varphi) \geq c \int_{\mathbf{C}} \frac{\varphi^2}{R^2} \, d \cH^n.
    \]
    The desired inequality~\eqref{eqn:strictly-stable} now follows from passing to the limit as $j \to \infty$.
\end{proof}

\begin{remark}
    In general, it is not known whether strict minimality in the sense of Hardt-Simon (see Definition~\ref{def:strictly-minimizing}) suffices to deduce the strict stability of the cone, cf.~\cite{hardt-simon}*{Remark 3.3}.
    On the other hand, the cones considered here will satisfy a capillary analogue of Lawlor's criterion strictly, see~\cite{lawlor}*{\S~6.1 and Ch.~7(3)} as well as the discussion of Morgan in~\cite{morgan}*{\S~3.6}.
    When the cone is, in addition, uniquely minimizing, we can relate strict stability and strict minimality by a capillary analogue of the result of De~Philippis-Maggi~\cite{sharp-stability-plateau}, discussed in upcoming work~\cite{FTW_MinimizingII}.
\end{remark}

The following is a sufficient condition for the existence of a capillary sub-calibration.

\begin{lemma}\label{lemma:get-a-foliation}
    Let $\mathbf{C} \subset \bR^{n+1}_+$ be a capillary cone with contact angle $\theta$ and consider either connected component $E_{\pm}$ of $\bR^{n+1} \setminus \mathbf{C}$.
    Suppose that there exists a properly embedded, connected $C^1$ hypersurface $S_{\pm} \subset E_{\pm}$ with $\partial S_{\pm} \subset \Pi$ and $A_{S_{\pm}} \in L^1_{\on{loc}}$, satisfying:
\begin{enumerate}[$(i)$]
    \item $S_{\pm}$ is asymptotic to $\mathbf{C} = \partial E_{\pm}$.
    \item $S_{\pm}$ is star-shaped, meaning that $\pm \la p , \nu_{S_{\pm}}(p) \rg > 0$ for all $p \in S_{\pm}$, where $\nu_{S_{\pm}}$ denotes the normal vector of $S_{\pm}$ asymptotic to $\nu_{\mathbf{C}}$ in $E_{\pm}$.
    \item $S_{\pm}$ intersects the ray $\ell_q := \{ \lambda q : \lambda > 0 \}$ through some $q \in E_{\pm}$ exactly once.
    \item $H[S_{\pm}] \geq 0$, where $H[S_{\pm}]$ denotes the mean curvature of $S_{\pm}$, computed with respect to $\nu_{S_{\pm}}$.
    \item $\la \nu_{S_{\pm}} , e_{n+1} \rg \geq \pm \cos \theta$ holds along $\partial S_{\pm} \cap \Pi$.
\end{enumerate}
Then, ${\mathbf{C}}$ admits a capillary sub-calibration on $E_{\pm}$, which is strict if $(iv)$ or $(v)$ holds strictly in the neighborhood of some point.
\end{lemma}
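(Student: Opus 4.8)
The plan is to build the sub-calibration directly from the foliation of $E_{\pm}$ by the homotheties $\{\lambda S_{\pm}\}_{\lambda\in(0,+\infty)}$ and then read off conditions \eqref{eqn:calibration-norm}--\eqref{eqn:calibration-cos} from the hypotheses $(i)$--$(v)$. The first and main step is the \emph{foliation property}: every ray $\ell_{\omega}=\{t\omega : t>0\}$ with $\omega$ in the spherical link $\Omega_{\pm}:=E_{\pm}\cap\bS^n_+$ meets $S_{\pm}$ in exactly one point $p_{\omega}$, and consequently the map $S_{\pm}\times(0,+\infty)\to E_{\pm}$, $(p,\lambda)\mapsto\lambda p$, is a homeomorphism onto $E_{\pm}\setminus K$, where $K$ is a closed homothety-invariant set carrying the singular locus of $\mathbf{C}=\partial E_{\pm}$ together with any degeneracy of the foliation, with $\cH^n(K)=0$. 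The star-shapedness $(ii)$ says the position vector field $p\mapsto p$ is transverse to $S_{\pm}$ everywhere and crosses it with a fixed sign; hence along any ray the intersection points are isolated and a second crossing (in the same direction) is impossible, so $\ell_{\omega}$ meets $S_{\pm}$ \emph{at most} once. That the count is exactly one for each interior $\omega\in\Omega_{\pm}$ follows by a connectedness argument: by the asymptoticity $(i)$, rays close to the cone $\mathbf{C}$ meet $S_{\pm}$ at large radius, so the set of such $\omega$ is nonempty; it is open by transversality and the local constancy of the intersection number, and it is closed in $\Omega_{\pm}$ because a sequence of intersection points can escape only to the origin (excluded using $(ii)$ and properness of $S_{\pm}$) or off to infinity along a direction tending to $\Sigma:=\mathbf{C}\cap\bS^n_+$ (excluded for interior $\omega$ by $(i)$); condition $(iii)$ together with connectedness of $S_{\pm}$ fixes the common value of the intersection number to be $1$. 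The analogous statement on the boundary plane, that $\lambda\,\partial S_{\pm}$ foliates $\Pi\cap E_{\pm}$, is obtained in the same way.

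Granting this, for $p\in E_{\pm}\setminus K$ let $\lambda(p)$ be the unique scale with $p\in\lambda(p)S_{\pm}$ and set $X_{\pm}(p):=\nu_{\lambda(p)S_{\pm}}(p)$, the unit normal to that leaf pointing into $E_{\pm}$, i.e.\ consistent with $\nu_{S_{\pm}}$. Since $\mu\lambda(p)S_{\pm}$ passes through $\mu p$ and dilations preserve unit normal directions, one gets $\lambda(\mu p)=\mu\lambda(p)$ and $X_{\pm}(\mu p)=X_{\pm}(p)$, so $X_{\pm}$ is homogeneous of degree $0$; moreover $|X_{\pm}|\equiv 1$, which is the equality case of \eqref{eqn:calibration-norm}. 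Because $X_{\pm}$ is constant along rays, its full gradient is controlled by its tangential derivative along the leaf $\lambda(p)S_{\pm}$, which is the second fundamental form of that leaf; combined with the scaling $|A_{\lambda S_{\pm}}|=\lambda^{-1}|A_{S_{\pm}}|$ and $A_{S_{\pm}}\in L^1_{\on{loc}}$, this yields $X_{\pm}\in W^{1,1}_{\on{loc}}(\overline{E_{\pm}}\setminus K)$. Finally, by $(i)$ the rescalings $\lambda S_{\pm}$ converge to $\mathbf{C}$ in $C^1_{\on{loc}}$ as $\lambda\downarrow 0$ (and near infinity), so by homogeneity $X_{\pm}$ extends continuously across $\mathbf{C}\setminus K$ with $X_{\pm}=\nu_{\mathbf{C}}$ there, giving the first requirement of Definition~\ref{def:capillary-calibration}.

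It remains to verify \eqref{eqn:calibration-div} and \eqref{eqn:calibration-cos}. For the unit normal field to a foliation, the divergence identity $\on{div}_{\bR^{n+1}}X_{\pm}(p)=H[\lambda(p)S_{\pm}](p)$ holds, with the mean curvature computed with respect to $\nu_{S_{\pm}}$ (the normal component of the derivative drops out since $|X_{\pm}|=1$); by scale invariance of the sign of $H$, hypothesis $(iv)$ gives $\on{div}X_{\pm}\ge 0$ in $E_{\pm}$, in the pointwise sense where $S_{\pm}$ is $C^1$ and in the distributional sense across $\Pi$ thanks to the $L^1_{\on{loc}}$ bound, which is \eqref{eqn:calibration-div}. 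On $\Pi$, the leaf $\lambda S_{\pm}$ meets $\Pi$ along $\lambda\,\partial S_{\pm}$, and since dilation preserves the angle with $e_{n+1}$, hypothesis $(v)$ gives $\la X_{\pm},e_{n+1}\rg\ge\pm\cos\theta$ along $(\Pi\cap\overline{E_{\pm}})\setminus K$, which is \eqref{eqn:calibration-cos}. Thus $X_{\pm}$ is a capillary sub-calibration on $E_{\pm}$. If $(iv)$ (resp.\ $(v)$) is strict on an open piece of $S_{\pm}$ (resp.\ of $\partial S_{\pm}$), then $\on{div}X_{\pm}>0$ (resp.\ $\la X_{\pm},e_{n+1}\rg>\pm\cos\theta$) on the homothety-invariant open set swept out by that piece, so $X_{\pm}$ is strict.

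The main obstacle is the foliation step of the first paragraph: promoting the infinitesimal/local hypotheses $(i)$--$(iii)$ to the global conclusion that the homotheties of $S_{\pm}$ partition $E_{\pm}$, carefully controlling the behaviour near the vertex, near the cone $\mathbf{C}$, and along $\Pi$, and verifying that the exceptional set $K$ one must excise is $\cH^n$-null. The regularity assertion $X_{\pm}\in W^{1,1}_{\on{loc}}$ likewise relies on the hypothesis $A_{S_{\pm}}\in L^1_{\on{loc}}$ being precisely what controls the tangential derivative of $X_{\pm}$ along the leaves.
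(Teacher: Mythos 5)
Your proof follows the same overall architecture as the paper's: establish that the homotheties $\{\lambda S_{\pm}\}_{\lambda>0}$ foliate $E_{\pm}$, define $X_{\pm}$ as the unit normal to the leaf through each point, obtain $W^{1,1}_{\on{loc}}$ regularity from $A_{S_\pm}\in L^1_{\on{loc}}$, and read off \eqref{eqn:calibration-norm}--\eqref{eqn:calibration-cos} from the geometry. The difference is in how the foliation step is run. The paper considers the radial projection $\pi:S_{\pm}\to E_{\pm}\cap\bS^n_+$, $p\mapsto p/|p|$, observes that star-shapedness makes $d\pi$ injective and asymptoticity makes $\pi$ proper, concludes $\pi$ is a covering map, and then uses hypothesis $(iii)$ to pin the degree to $1$, whence $\pi$ is a diffeomorphism. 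Your version replaces this with a crossing-sign ``at most once'' claim plus an open-and-closed connectedness argument. The crossing-sign argument is not quite self-contained as you state it: for it to rule out a second crossing in the same direction, you need to know that $S_{\pm}$ globally separates $E_{\pm}$ into two components (so that the local ``$\pm\nu$ side'' is the same global component everywhere), otherwise a surface that winds around the apex could be hit twice in the same sense. That separation does hold here (since $E_{\pm}$ is simply connected and $S_{\pm}$ is properly embedded, with boundary in $\partial E_{\pm}$), but you do not say so, and the paper sidesteps the issue entirely by invoking the covering-map fact and letting $(iii)$ fix the sheet count. Your ``at most once'' step is therefore both slightly gappy and redundant: once you know $\pi$ is a covering (which your open-closed argument amounts to proving), $(iii)$ alone forces degree $1$. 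Also, you introduce an exceptional null set $K$ to excise, but the covering-map structure shows no excision beyond $\{0\}$ is needed. None of this is fatal --- the remaining steps (homogeneity, $|X_\pm|\equiv 1$, $\on{div}X_\pm=H$, the boundary angle condition, and strictness) are correct and match the paper --- but the paper's covering-map phrasing is tighter and avoids the unstated separation input.
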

\begin{proof}
We first show that the properties $(i)$~--~$(iii)$ are equivalent to the statement that the rescalings $\{ \lambda S_{\pm} \}_{\lambda > 0}$ of $S_{\pm}$ foliate $E_{\pm}$.
    Equivalently, we will prove that every ray $\ell_q := \{ \lambda q : \lambda > 0 \}$ through some $q \in E_{\pm}$ intersects $S_{\pm}$ exactly once.
    We consider the radial projection map
    \[
    \pi: S_{\pm} \to E_S := E_{\pm} \cap \bS^n_+, \qquad p \mapsto \frac{p}{|p|}.
    \]
    Since $\pm \la p , \nu_{S_{\pm}}(p) \rg > 0$, we find that $p$ is never tangent to $S_{\pm}$, which implies that $d \pi$ is everywhere injective, hence $\pi$ is a local diffeomorphism.
    Since $S_{\pm}$ is asymptotic to ${\mathbf{C}}$ away from a compact set, we deduce that the map $\pi$ is proper.
    It follows that $\pi$ is a proper local diffeomorphism, therefore a covering map.
    Since $S_{\pm}$ intersects some ray $\ell_q$ exactly once, meaning $\# \{ \pi^{-1} ( \frac{q}{|q|}) \} = 1$, we deduce that this covering map has degree $1$, hence it is a diffeomorphism of manifolds with boundary.
    
    Since $E_{\pm}$ is the complement of a cone, it is a conical region, meaning that $E_{\pm} = \{ \lambda \omega : \lambda>0, \omega \in E_S \}$ for $E_S := E_{\pm} \cap \bS^n_+$.
    The map $\pi: S_{\pm} \to E_S$ is a diffeomorphism, hence each $p \in E_{\pm}$ is contained in a unique leaf $\lambda S_{\pm}$ with $\lambda > 0$, so the positive homotheties $\{ \lambda S_{\pm} \}_{\lambda>0}$ of $S_{\pm}$ foliate $E_{\pm}$, as desired.
    We may therefore define a homogeneous degree zero vector field $X_{\pm}$ on $E_{\pm}$ by $X_{\pm}(q) = \nu_{S_{\pm}}( \lambda q)$ for any $q \in E_{\pm}$, where $\lambda = \lambda(q) > 0$ is the unique homothety such that $\lambda q \in S_{\pm}$.
    Since $A_{S_{\pm}} \in L^1_{\on{loc}}$, the normal vector satisfies $\nu_{S_{\pm}} \in W^{1,1}_{\on{loc}}(S_{\pm})$, so $X_{\pm} \in W^{1,1}_{\on{loc}}(E_{\pm} ; \bR^{n+1})$ with a $W^{1,1}_{\on{loc}}$ extension to $E_{\pm} \cup \mathbf{C}$ such that $X_{\pm}|_{\mathbf{C}} = \nu_{\mathbf{C}}$, where $\nu_{\mathbf{C}}$ denotes the normal vector of ${\mathbf{C}}$ pointing into $E_{\pm}$.
    The properties~\eqref{eqn:calibration-norm} and~\eqref{eqn:calibration-cos} follow from the definition of $X_{\pm}$ together with $(v)$.
    It is standard that $\on{div} X_{\pm} = H[S_{\pm}]$ is the mean curvature of $S_{\pm}$ computed with respect to $X_{\pm} = \nu_{S_{\pm}}$, proving~\eqref{eqn:calibration-div}.
    Therefore, $X_{\pm}$ defines a sub-calibration of ${\mathbf{C}}$ on $E_{\pm}$, which is strict if $(iv)$ or $(v)$ holds strictly in the neighborhood of some point.
\end{proof}

Extending Definition~\ref{def:strictly-minimizing} to the one-phase problem, we introduce the notion of \textit{strict minimality} for homogeneous critical points of the Alt-Caffarelli functional.

\begin{definition}\label{def:strictly-minimizing-one-phase}
    Let $u$ be a $1$-homogeneous critical point of the Alt-Caffarelli functional.
    We say that $u$ is \textit{strictly minimizing} for $\cJ$ if there exists a $\kappa > 0$ such that for any $\sigma \in (0,1)$, 
    \[
    \cJ( u ; B^n_1) \leq \cJ( v ; B^n_1) - \kappa \cdot \sigma^n
    \]
    holds for all $v \in H^1(B^n_1)$ with 
    \[
    v - u \in H^1_0(B^n_1), \qquad \cH^{n-1} ( \partial \{ v > 0 \} \cap B^n_{\sigma}) = 0.
    \]
    The latter condition means that $v \equiv 0$ on $B^n_{\sigma}$ or $v>0$ in $B^n_{\sigma}$.
\end{definition}

Chodosh-Edelen-Li~\cite{improved-regularity}*{Theorem 4.1} proved that given a sequence of contact angles $\theta_i \to 0$ and $\cA^{\theta_i}$-minimizing Cacciopoli sets $\Omega_i \subset B_1^+$ with an isolated singularity at $0$, the sets $M_i \cap B_{1/2}^+$ are contained in the graphs of Lipschitz functions $u_i$ over $B_{1/2}^n \subset \Pi$, for $i$ sufficiently large.
Moreover, the functions $\frac{1}{\tan \theta_i} u_i$ converge subsequentially in $(W^{1,2}_{\on{loc}} \cap C^{\alpha} )(B^n_{1/4})$ to a $1$-homogeneous minimizer $u_0$ of the Alt-Caffarelli functional $\cJ$, with $\partial \{ u_i > 0 \} \to \partial \{ v > 0 \}$ in the local Hausdorff distance.

We obtain a version of the above result for strictly minimizing solutions.
\begin{lemma}\label{lemma:strictly-minimizing-one-phase}
    Let $u_i$ be a sequence of graphical homogeneous solutions of the capillary problem with contact angle $\theta_i \to 0$.
    Suppose that the functions $\frac{1}{\tan \theta_i} u_i$ converge in $C^{\infty}_{\on{loc}}(\Pi \setminus \{ 0 \})$ to a solution $u_0$ of the one-phase problem, with
    \begin{equation}\label{eqn:ui-to-u0-converge}
        R^{-1} \left| u_i - \tan \theta_i \, u_0 \right| + |\nabla u_i - \tan \theta_i \, \nabla u_0| = C(n) o(\theta_i) \quad \text{on } \; B_R^n \subset \Pi 
    \end{equation}
    Suppose that each $u_i$ is one-sided strictly minimizing for $\cA^{\theta_i}$ in $E_-(u_i)$ with constant $\kappa_i \tan^2 \theta_i > 0$.
    Then, $u_0$ is strictly minimizing from below for $\cJ$ with constant $\kappa \geq \liminf_i \kappa_i$.
\end{lemma}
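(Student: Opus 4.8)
The plan is to transplant an admissible one-phase competitor for $u_0$ into an admissible one-sided (from below) capillary competitor for each $u_i$, invoke the quantitative capillary strict minimality, and linearize as $\theta_i\downarrow 0$ while tracking the Hardt--Simon constant. Fix a competitor $v$ in the ``from below'' class of Definition~\ref{def:strictly-minimizing-one-phase} on $B_1^n$ at scale $\sigma\in(0,1)$: $v\in H^1(B_1^n)$ with $v\le u_0$, $v-u_0\in H^1_0(B_1^n)$, and $\cH^{n-1}(\partial\{v>0\}\cap B_\sigma^n)=0$. Since $u_0$ is $1$-homogeneous, $\{u_0>0\}$ is a cone; we may assume it is a proper subcone of $\bR^n$ (otherwise there is no singularity and the statement is vacuous, and non-flatness is forced whenever $\kappa_i>0$, a flat cone never being strictly minimizing in the $\sigma^n$ sense). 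Then $v\le u_0$ together with the boundary condition forces $v\equiv 0$ on $B_\sigma^n$, so only ``dig-out'' competitors arise — exactly those detected by the one-sided capillary inequality. It therefore suffices to prove $\cJ(u_0;B_1^n)\le\cJ(v;B_1^n)-2\sigma^n\liminf_i\kappa_i$, which is stronger than the assertion since $\kappa_i\ge 0$.

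To lift $v$, fix a small $\eta>0$ with $\sigma<1-2\eta$ and a radial cutoff $\zeta_\eta$ with $\zeta_\eta\equiv 0$ on $B^n_{1-2\eta}$, $\zeta_\eta\equiv 1$ on $\{|z'|\ge 1-\eta\}$, $|\nabla\zeta_\eta|\le C/\eta$, and set
\[
\tilde v_i := (1-\zeta_\eta)\,v+\zeta_\eta\,\tfrac{u_i}{\tan\theta_i}, \qquad \hat w_i:=\min\bigl(u_i,\ \tan\theta_i\,\tilde v_i\bigr), \qquad F_i:=E_-(\hat w_i).
\]
Then $\hat w_i\le u_i$, so $F_i\subseteq E_-(u_i)$ is a from-below competitor; $\hat w_i=u_i$ on $\{|z'|\ge 1-\eta\}$, so $F_i\triangle E_-(u_i)$ is supported in $\{|z'|<1-\eta,\ 0\le z\le C\tan\theta_i\}\Subset B_1^+$ once $\theta_i$ is small; and $\hat w_i\equiv 0$ on $B_\sigma^n$ (there $\tilde v_i=v=0$), so $F_i\cap B_\sigma^+=\varnothing$, hence $\cH^n(\partial^*F_i\cap B_\sigma^+)=0$. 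Thus $F_i$ is admissible in the one-sided strict-minimality inequality for $u_i$, which yields
\[
\cA^{\theta_i}(E_-(u_i);B_1^+)\le \cA^{\theta_i}(F_i;B_1^+)-\kappa_i\tan^2\theta_i\,\sigma^n.
\]

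Next I would linearize. For $\phi\ge 0$ in $H^1(B_1^n)$ one has $\cA^{\theta}(E_-(\tan\theta\,\phi);B_1^+)=\int_{\{\phi>0\}\cap B_1^n}\bigl(\sqrt{1+\tan^2\theta\,|\nabla\phi|^2}-\cos\theta\bigr)$, and using $1+\tfrac{t}{2}-\tfrac{t^2}{8}\le\sqrt{1+t}\le 1+\tfrac{t}{2}$ with $1-\cos\theta=\tfrac12\tan^2\theta\,(1+O(\theta^2))$, this gives
\[
\cA^{\theta}(E_-(\tan\theta\,\phi);B_1^+)\le\tfrac12\tan^2\theta\,\cJ(\phi;B_1^n)+C\tan^4\theta,
\]
together with the reverse inequality at the cost of an additional term $C\tan^4\theta\,(1+\|\nabla\phi\|_{L^4(B_1^n)}^4)$. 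Applying the lower bound to $\phi=u_i/\tan\theta_i$ — whose $C^1(B_1^n)$-norm is bounded uniformly in $i$ by~\eqref{eqn:ui-to-u0-converge} — and the upper bound to $\phi=\hat\phi_i:=\hat w_i/\tan\theta_i=\min(u_i/\tan\theta_i,\tilde v_i)$, inserting into the previous display and dividing by $\tfrac12\tan^2\theta_i$ gives
\[
\cJ\bigl(\tfrac{u_i}{\tan\theta_i};B_1^n\bigr)+2\kappa_i\sigma^n\le\cJ(\hat\phi_i;B_1^n)+C\bigl(1+\|\nabla(u_i/\tan\theta_i)\|_{L^4}^4\bigr)\tan^2\theta_i.
\]
Letting $i\to\infty$: on the left, $u_i/\tan\theta_i\to u_0$ in $C^1(B_1^n)$ so the Dirichlet energies converge, and $\{u_i>0\}\to\{u_0>0\}$ in measure in $B_1^n$ by standard free-boundary convergence (using $|\nabla(u_i/\tan\theta_i)|=1$ on $\partial\{u_i>0\}$ and $\cH^n(\partial\{u_0>0\}\cap B_1^n)=0$ for the cone $u_0$), so $\cJ(u_i/\tan\theta_i;B_1^n)\to\cJ(u_0;B_1^n)$ and the $\tan^2\theta_i$-term vanishes. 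On the right, $\tilde v_i\to\tilde v_\infty:=(1-\zeta_\eta)v+\zeta_\eta u_0$ in $H^1(B_1^n)$, hence $\hat\phi_i\to\min(u_0,\tilde v_\infty)=\tilde v_\infty$ in $H^1(B_1^n)$ and $\int|\nabla\hat\phi_i|^2\to\int|\nabla\tilde v_\infty|^2$; since $\hat\phi_i=v$ on $B^n_{1-2\eta}$ while $B_1^n\setminus B^n_{1-2\eta}$ has measure $\le C\eta$, $\limsup_i|\{\hat\phi_i>0\}\cap B_1^n|\le|\{v>0\}\cap B_1^n|+C\eta$; and a Hardy inequality applied to $v-u_0\in H^1_0(B_1^n)$ bounds $\int_{B_1^n}|\nabla\tilde v_\infty|^2\le\int_{B_1^n}|\nabla v|^2+\omega(\eta)$ with $\omega(\eta)\to 0$. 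Combining these, $\cJ(u_0;B_1^n)+2\sigma^n\liminf_i\kappa_i\le\cJ(v;B_1^n)+C\eta+\omega(\eta)$; letting $\eta\downarrow 0$ completes the proof, with constant $2\liminf_i\kappa_i\ge\liminf_i\kappa_i$.

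The step I expect to be the main obstacle is the bookkeeping in the lift: the capillary strict-minimality condition forbids boundary inside the ambient ball $B_\sigma^+\subset\bR^{n+1}$, whereas the one-phase condition concerns the base ball $B_\sigma^n$, and the graph of $\tan\theta_i\tilde v_i$ hugs $\Pi$. The resolution is the observation that for a non-flat homogeneous $u_0$ the only admissible from-below competitors are dig-outs ($v\equiv 0$ on $B_\sigma^n$), which lift to capillary competitors avoiding $B_\sigma^+$ entirely at the \emph{same} scale $\sigma$; one must also accommodate that $v$ is merely $H^1$ by splicing in $u_i/\tan\theta_i$ on a thin annulus whose energetic contribution vanishes as $\eta\downarrow 0$. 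The remaining ingredients — the Taylor expansion of the area integrand, the $H^1$-continuity of $(\phi,\psi)\mapsto\min(\phi,\psi)$ and the convergence of $\cJ$ along $C^1$-converging functions, the free-boundary convergence, and the Hardy estimate near $\partial B_1^n$ — are routine.
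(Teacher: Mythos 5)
Your proof is correct and follows essentially the same route as the paper's: take a one-phase competitor, splice in $u_i/\tan\theta_i$ near $\partial B_1^n$ via a radial cutoff, lift the result to a capillary competitor for $u_i$, invoke the quantitative minimality of $u_i$, and linearize $\cA^{\theta_i}$ to $\cJ$ as $\theta_i\downarrow 0$. The paper argues by contradiction (assume $u_0$ fails the $\kappa$-strict inequality, derive a competitor violating strict minimality of $u_i$ for $i$ large) and couples the cutoff scale $\delta_i$ to $i$, whereas you argue directly with a fixed cutoff scale $\eta$ and pass $\eta\downarrow 0$ only at the very end; these are functionally equivalent.

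Two points where your argument is actually sharper than the paper's. First, you truncate by $\min(u_i, \tan\theta_i\,\tilde v_i)$ to force $\hat w_i\le u_i$, which guarantees $F_i = E_-(\hat w_i)\subseteq E_-(u_i)$ and thus genuine admissibility for the one-sided inequality; the paper's interpolation $v_i=(1-\eta_i)\tan\theta_i\,v+\eta_i u_i$ does not obviously satisfy $v_i\le u_i$ from $v\le u_0$, because~\eqref{eqn:ui-to-u0-converge} only gives $\tan\theta_i\,v\le u_i + o(\theta_i)$. Second, your observation that any from-below competitor avoiding $B^n_\sigma$ on the free-boundary level must vanish identically on $B^n_\sigma$ (since $\{u_0>0\}$ is a proper cone) cleanly identifies the relevant competitors as dig-outs, which makes the lift-to-$\bR^{n+1}_+$ bookkeeping transparent. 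You also track the constant more precisely and obtain $\kappa\ge 2\liminf_i\kappa_i$, consistent with (and stronger than) the stated bound $\kappa\ge\liminf_i\kappa_i$.

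The only place you are slightly glib is the convergence $|\{u_i/\tan\theta_i>0\}\cap B_1^n|\to|\{u_0>0\}\cap B_1^n|$: $C^1$ convergence alone does not control this on $\{u_0=0\}$, which has positive measure. The fix is exactly what you gesture at: $u_i$ and $u_0$ are $1$-homogeneous, so their positive phases are cones; comparing the links (via $C^1$ closeness and the nondegeneracy $|\nabla u_0|=1$ on $\partial\{u_0>0\}$) yields Hausdorff and hence measure convergence, since $\cH^n(\partial\{u_0>0\})=0$. With that spelled out, the argument is complete.
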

\begin{proof}
    If $\kappa=0$, then the result follows from the above discussion since $u_0$ is a minimizer for $\cJ$.
    We may therefore assume that $\kappa>0$ and consider a sequence of $\kappa_i \to \kappa$.
    For simplicity, we assume that $\kappa_i = \kappa$ for every $i$ and consider the corresponding homogeneous capillary minimizers $u_i$.
    Suppose, for contradiction, that $u_0$ is not $\kappa$-strictly minimizing in the ball $B_1^n$, so there exists a scale $\sigma>0$ and a $C^2$ competitor $v$ with $\on{spt} (u_0 - v) \Subset B_1^n$, such that 
    \begin{equation}\label{eqn:v-competitor-property}
    \partial \{ v >0 \} \cap B^n_{\sigma} = \varnothing, \qquad \cJ(v ; B^n_1) < \cJ(u_0;B^n_1) + (\kappa - \ve) \sigma^n
    \end{equation}
    for some $\ve>0$.
    For each $u_i$, we let $E_i := E_-(u_i)$.
    A standard computation (see, for example,~\cite{improved-regularity}*{(1.2) and Lemma~4.7}) shows that
    \begin{equation}\label{eqn:capillary-one-phase}
        \cA^{\theta_i}(E_i) = \frac{1}{2} \tan^2 \theta_i \int_{ \{ u_i >0 \} } \bigl( |\nabla \tilde{u}_i|^2 + \chi_{ \{ \tilde{u}_i > 0 \} } + O(\theta) \bigr) \, dz', \qquad \tilde{u}_i := \frac{1}{\tan \theta_i} u_i.
    \end{equation}
    Combined with the convergence~\eqref{eqn:ui-to-u0-converge}, this property implies that 
    \begin{align*}
    \cA^{\theta_i} (E_i, B_1^+) &= \tfrac{1}{2} \tan^2 \theta_i \, \cJ( \tilde{u}_i, B_1^n) + o(\theta_i^2) \\
    &= \tfrac{1}{2} \tan^2 \theta_i \, \cJ(u_0, B_1^n) + o(\theta_i^2)
    \end{align*}
    because $|\tilde{u}_i - u_0| = o(\theta_i)$ and $|\nabla \tilde{u}_i - \nabla u_0| = o(\theta_i)$.
    
    We may use $v$ to produce a competitor $v_i$ for $u_i$, when $\theta_i$ is sufficiently small, as follows.
    We choose a sequence $\delta_i \downarrow 0$ with $\delta_i < \tfrac{1}{10}(1-\sigma)$ and
\begin{equation}\label{eqn:delta-choice}
\sup_{B_1\setminus \bar{B}_{1-8\delta_i}} |\tilde u_i-u_0| \;+\;
\sup_{B_1\setminus \bar{B}_{1-8\delta_i}} |v-u_0|
\;\le\; \delta_i .
\end{equation}
Such a choice is possible due to $\tilde{u}_i \to u_0$ in $C_{\on{loc}}^0(B_1^n \setminus \{ 0 \})$ and $\on{spt}(u_0 -v) \Subset B_1^n$.
Let $\eta : [0,\infty) \to [0,1]$ be a smooth cutoff function with
\[
\eta(s) = 1 \quad \text{for } \; s \in [0,3], \qquad \eta(s) = 1 \quad \text{for } \; s \in [ 5, \infty), \qquad 0 \leq| \eta'(s)| \leq 10.
\]
We define $\eta_i \in C_c^{\infty}(B_1^n)$ to be a radial cutoff with $\eta_i(z') := \eta ( \frac{1- |z'|}{ \delta_i } )$, so that
\[
\eta_i \equiv 0 \ \text{on } B^n_{1-4 \delta_i}, \qquad
\eta_i \equiv 1 \ \text{on } B^n_1\setminus B^n_{1- 2\delta_i}, \qquad
|\nabla\eta_i| \leq C(n) \delta_i^{-1}.
\]
We may now define the competitor function $v_i$ by
\[
v_i := (1-\eta_i) \tan \theta_i v + \eta_i u_i
\]
extended by $u_i$ outside $B_1$.
We let $F_i := E_-(v_i)$, which therefore agrees with $E_i$ outside $B_1^+$.
By construction, $v_i = u_i$ in a neighborhood of $\partial B_1$, hence $F_i = E_-(v_i)$ satisfies $F_i \triangle E_i \Subset B_1^+$.
Moreover, using $\sigma < 1 - 4 \delta_i$ gives $\eta_i \equiv 0$ on $B^n_{\sigma}$.
Since also $v \equiv 0$, we deduce that $v_i \equiv 0$ on $B^n_{\sigma}$, so $\cH^n( \partial^* F_i \cap B^+_{\sigma}) = 0$, meaning that each $F_i$ avoids the ball $B^+_{\sigma}$.
We now claim that 
\begin{equation}\label{eqn:competitor-claim-before-tanthetai}
\cJ( ( 1 - \eta_i) v + \eta_i \tilde{u}_i ; B_1^n) = \cJ( v; B_1^n) + o(1) \qquad \text{as } \; i \to \infty.
\end{equation}
Indeed, $\eta_i \equiv 0$ on $B^n_{1 - 4 \delta_i}$, the two functions agree and the difference in energy comes from the annulus $A^n_i = B_1 \setminus \bar{B}_{1 - 4 \delta_i}$.
On this annulus, we may bound
\allowdisplaybreaks{
\begin{align*}
|\nabla \bigl( (1-\eta_i) v + \eta_i \tilde{u}_i \bigr)|^2 &\leq 2 |\nabla \bigl( (1-\eta_i) v \bigr)|^2 + 2 |\nabla ( \eta_i \tilde{u}_i)|^2 \\
&\leq 2 |\nabla v|^2 + 2 |\nabla \tilde{u}_i|^2 + 4 |\nabla \eta_i|^2 \cdot |\tilde{u}_i - v|^2 \\
&\leq 2 |\nabla v|^2 + 2 |\nabla \tilde{u}_i|^2 + 4 C(n).
\end{align*}}
To obtain the final inequality, we applied the bound $|\nabla \eta_i| \leq C(n) \delta_i^{-1}$ together with the triangle inequality~\eqref{eqn:delta-choice} for $|\tilde{u}_i - v|$.
We may therefore estimate
\[
\int_{A_i^n} |\nabla \bigl( (1-\eta_i) v + \eta_i \tilde{u}_i \bigr)|^2 \leq C(n) \int_{A_i^n} ( |\nabla v|^2 + |\nabla \tilde{u}_i|^2 + 1) \leq C(n) \cH^n( A^n_i) = O (\delta_i).
\]
In the last step, we used the fact that $\sup_{A^n_i} |\nabla \tilde{u}_i| \leq C(n)$ due to~\eqref{eqn:ui-to-u0-converge}.
We likewise bound
\[
\left| \{ v_i > 0 \} \triangle \{ v > 0 \} \right| \leq \cH^n(A_i^n) = O(\delta_i)
\]
and combining the two bounds implies estimate~\eqref{eqn:competitor-claim-before-tanthetai}.

Finally, we may use the expression~\eqref{eqn:competitor-claim-before-tanthetai}, the definition of $v_i$, and the estimate~\eqref{eqn:capillary-one-phase} to obtain
    \begin{align*}
    \cA^{\theta_i}(F_i ; B_1^+) &= \tfrac{1}{2} \tan^2 \theta_i \, \cJ(v, B_1^n) + o(\theta_i^2) \\
    &< \tfrac{1}{2} \tan^2 \theta_i \, \cJ(u_0, B_1^n) + (\kappa-\ve) \tan^2 \theta_i \, \sigma^n + o(\theta_i^2).
    \end{align*}
    for $F_i = E_-(v_i)$.
    Using the expression for $\cA^{\theta_i}(E_i, B_1^+)$, we conclude that
    \[
    \cA^{\theta_i}(F_i, B_1^+) - \cA^{\theta_i}(E_i, B_1^+) < (\kappa-\ve) \tan^2 \theta_i \sigma^n + o(\theta_i^2).
    \]
    where $F_i = E_-(v_i)$ and $E_i = E_-(u_i)$.
    Taking $\theta_i \to 0$ in this step produces a competitor for $E_i$ with 
    \[
    E_i \triangle F_i \Subset B_1, \qquad \cH^n(\partial^* F_i \cap B^+_{\sigma}) = 0, \qquad \cA^{\theta_i}(F_i;B_1^+) \leq \cA^{\theta_i}(E_i;B_1^+) + (\kappa - \tfrac{1}{2} \ve) \tan^2 \theta_i \, \sigma^n 
    \]
    which contradicts the assumption that $u_i$ is strictly minimizing with constant $\kappa \tan^2 \theta_i$.
    Consequently, no competitor $v$ satisfying~\eqref{eqn:v-competitor-property} exists, meaning that $\cJ(v;B_1^n) \geq \cJ(u;B_1^n) + \kappa \sigma^n$ whenever $\partial \{ v>0 \} \cap B^n_{\sigma} = \varnothing$.
    Therefore, $u$ is strictly minimizing, as desired.
\end{proof}

\section{\texorpdfstring{$O(n-k)\times O(k)$}{O(n-k)xO(k)}-invariant capillary cones}\label{section:o(n-k)-cones}

In this section, we construct non-flat $O(n-k) \times O(k)$-invariant capillary cones with an isolated singularity at the origin in $\bR^{n+1}_+$, with the properties described in Theorem~\ref{thm:capillary-cones}.
These cones have \textit{bi-orthogonal symmetry}, meaning that the factors of $O(n-k)\times O(k)$ act orthogonally on $\bR^n$.

Let us briefly outline the proof strategy.
The imposed symmetries reduce the existence of the cones to finding solutions to a free boundary ODE whose solutions must satisfy both Dirichlet and Neumann boundary data, with the specific Neumann data depending on the capillary angle $\theta$; cf.~Proposition~\ref{prop:FreeBdyODE=Cone}.
We produce such solutions by prescribing initial data $f(0) = a, f'(0) = 0$ and showing that for initial heights $a$ up to the one determined by the Lawson cone, the solutions $f$ exist until they attain $0$ and achieve every capillary angle.
The key ingredients are comparison and foliation results for solutions of the ODE with different initial heights, cf.~Lemma~\ref{lemma:comparison-of-solutions}, Proposition~\ref{prop:unique-crossing of solutions}, and Corollary~\ref{corollary:lambda-comparison}, as well as a monotone quantity that vanishes precisely on the Lawson cones and determines the zero-or-blow-up behavior of solutions to the ODE, cf.~Lemma~\ref{lemma:psi-detect-blow-up} and Propositions~\ref{prop:psi-at-turning-point} and~\ref{prop:general-a-behavior}.

In Sections~\ref{sec:minimizing} and~\ref{section:pi/2}, we will prove that these cones are area-minimizing in ambient dimension $8$ or higher (so $n \geq 7$), for the ranges of $1 \leq k \leq n-2$ and appropriate capillary angles as detailed in Theorems~\ref{thm:cones-are-minimizing-smalltheta} and~\ref{thm:cones-are-minimizing-nearpi/2}.

\subsection{ODE reduction} We work in Euclidean space $\bR^{n+1}_+ = \bR^{n-k} \times \bR^k \times \bR_+$ with coordinates $p = (x,y,z) \in \bR^{n-k} \times \bR^k \times \bR_+$, so $z := p_{n+1} = \la p , e_{n+1} \rg$.
We define
\[
R := |p| = \sqrt{|x|^2+|y|^2+z^2}, \qquad \rho := \sqrt{|x|^2 + |y|^2}, \qquad t := \frac{|y|}{\rho} \in [0,1].
\]
We denote the coordinates $x_i$ with Latin subscripts and $y_\alpha$ with Greek.
For a function $U(x,y) = U(\rho,t)$, we denote $Q(U) := \nabla^2 U( \nabla U, \nabla U)$ and study the minimal surface operator
\begin{align*}
\cM(U) &:= \on{div} \left( \frac{\nabla U}{\sqrt{1 + |\nabla U|^2}} \right), \qquad
\sqrt{1 + |\nabla U|^2} \, \cM(U) = \Delta U - \frac{Q(U)}{1 + |\nabla U|^2}.
\end{align*}
\begin{lemma}\label{lemma:general-computation-u-rho,t}
    For the function $U(x,y) = u(\rho,t)$, we have
    \begin{align}
    |\nabla U|^2 &= u_{\rho}^2 + \frac{1-t^2}{\rho^2} u_t^2 \label{eqn:grad-U-squared}, \\
    \Delta U &= u_{\rho \rho} + \frac{n-1}{\rho} u_{\rho} + \frac{1-t^2}{\rho^2} u_{tt} + \frac{(k-1) t^{-1} - (n-1) t}{\rho^2} u_t \label{eqn:laplace-general}, \\
    Q(U) &= u_{\rho}^2 u_{\rho \rho} + 2 \frac{1-t^2}{\rho^2} u_{\rho t} u_{\rho} u_t + \frac{(1-t^2)^2}{\rho^4} u_t^2 u_{tt} - \frac{1-t^2}{\rho^4} u_t^2 (\rho u_{\rho} + t u_t) \label{eqn:hessian-general}
\end{align}
The minimal surface operator is therefore
\begin{equation}\label{eqn:minimal-surface-urho,t}
    (1 + |\nabla U|^2 )^{\frac{3}{2}} \cM(U) = \left( 1 + u_{\rho}^2 + \frac{1-t^2}{\rho^2} u_t^2 \right) \Delta U - Q(U)
\end{equation}
for the expressions $\Delta U$ and $Q(U)$ given above.
\end{lemma}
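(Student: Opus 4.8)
The plan is to verify each identity by a direct chain-rule computation in the Cartesian coordinates $(x,y)\in\bR^{n-k}\times\bR^k$, treating $\rho$ and $t$ as intermediate variables. First I would record the gradients of the coordinate functions: writing $s=|x|$, $r=|y|$, one has $\nabla_x\rho=x/\rho$, $\nabla_y\rho=y/\rho$, and differentiating $t=r\rho^{-1}$ gives $\nabla_x t=-tx/\rho^2$ and $\nabla_y t=(1-t^2)y/(t\rho^2)$ after substituting $r=t\rho$ and $s^2=\rho^2(1-t^2)$. A short computation then yields the orthogonality relations
\[
|\nabla\rho|^2=1,\qquad \nabla\rho\cdot\nabla t=0,\qquad |\nabla t|^2=\frac{1-t^2}{\rho^2},
\]
which can equivalently be read off from the fact that in the double-polar coordinates $x=\rho\sqrt{1-t^2}\,\omega_1$, $y=\rho t\,\omega_2$ the Euclidean metric on $\bR^n$ is $d\rho^2+\frac{\rho^2}{1-t^2}\,dt^2+\rho^2(1-t^2)\,g_{\bS^{n-k-1}}+\rho^2 t^2\,g_{\bS^{k-1}}$. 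Feeding these three relations into $\nabla U=u_\rho\nabla\rho+u_t\nabla t$ immediately produces \eqref{eqn:grad-U-squared}.

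For the Laplacian I would use $\Delta U=u_{\rho\rho}|\nabla\rho|^2+2u_{\rho t}\,\nabla\rho\cdot\nabla t+u_{tt}|\nabla t|^2+u_\rho\Delta\rho+u_t\Delta t$, so that after the orthogonality relations it remains only to compute $\Delta\rho$ and $\Delta t$. Since $\rho$ is the radial coordinate of $\bR^n$ we have $\Delta\rho=(n-1)/\rho$, while $\Delta t$ follows by splitting $\Delta=\Delta_x+\Delta_y$ and applying the radial Laplacians on $\bR^{n-k}$ and $\bR^k$ to $t(s,r)=r(s^2+r^2)^{-1/2}$; the two pieces combine, after substituting back $s^2=\rho^2(1-t^2)$ and $r=\rho t$, to $\Delta t=\big((k-1)t^{-1}-(n-1)t\big)/\rho^2$. (Alternatively, the same formula drops out in one step from the Laplace--Beltrami operator in the coordinates above, whose weight is $\rho^{n-1}(1-t^2)^{(n-k-2)/2}t^{k-1}$.) This gives \eqref{eqn:laplace-general}.

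For $Q(U)=\nabla^2U(\nabla U,\nabla U)$ the efficient route is the identity $\nabla^2U(\nabla U,\cdot)=\tfrac12\nabla(|\nabla U|^2)$, which lets me avoid the Hessians of $\rho$ and $t$ entirely: setting $W:=|\nabla U|^2=u_\rho^2+\frac{1-t^2}{\rho^2}u_t^2$ from \eqref{eqn:grad-U-squared}, I get $Q(U)=\tfrac12\nabla W\cdot\nabla U=\tfrac12\big(W_\rho u_\rho+W_t u_t\tfrac{1-t^2}{\rho^2}\big)$, again using only the orthogonality relations. Differentiating $W$ in $\rho$ and $t$ and collecting terms produces exactly the four terms of \eqref{eqn:hessian-general}, the last of them coming from the $\rho$- and $t$-derivatives of the coefficient $\frac{1-t^2}{\rho^2}$. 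Finally, \eqref{eqn:minimal-surface-urho,t} is just the identity $\sqrt{1+|\nabla U|^2}\,\cM(U)=\Delta U-Q(U)/(1+|\nabla U|^2)$ recorded before the lemma, multiplied through by $1+|\nabla U|^2$ and with $|\nabla U|^2$ replaced using \eqref{eqn:grad-U-squared}.

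There is no conceptual obstacle: the statement is a bookkeeping exercise. The only place where care is genuinely required is the computation of $Q(U)$ --- hence the use of the $\tfrac12\nabla|\nabla U|^2$ identity to cut down the algebra --- together with correctly tracking the change of variables $t=|y|/\rho$, which, unlike $\rho$, is not radial and depends on both $x$ and $y$. One should also note that these formulas carry harmless apparent singularities at $t=0$ (the factor $t^{-1}$, reflecting the collapse of the $\bS^{k-1}$ factor) and at $t=1$; they are immaterial for the identities themselves but will be handled separately when the expressions are used on the link.
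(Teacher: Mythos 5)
Your proposal is correct, and for two of the three identities it follows the same route as the paper (compute $|\nabla\rho|^2$, $\langle\nabla\rho,\nabla t\rangle$, $|\nabla t|^2$, $\Delta\rho$, $\Delta t$ and apply the chain rule). Where you genuinely diverge is in the computation of $Q(U)$. The paper expands $Q(U)=\nabla^2 U(\nabla U,\nabla U)$ directly, which forces it to compute the quantities $\nabla^2\rho(\nabla U,\nabla U)$, $\nabla^2 t(\nabla\rho,\nabla\rho)$, $\nabla^2 t(\nabla\rho,\nabla t)$, $\nabla^2 t(\nabla t,\nabla t)$ from the Cartesian Hessian components of $\rho$ and $t$. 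You instead invoke the pointwise identity $\nabla^2 U(\nabla U,\cdot)=\tfrac12\nabla\bigl(|\nabla U|^2\bigr)$, so that $Q(U)=\tfrac12\nabla W\cdot\nabla U$ with $W=|\nabla U|^2$ already known from \eqref{eqn:grad-U-squared}, and only the orthogonality relations between $\nabla\rho$ and $\nabla t$ are needed. I checked: expanding $\tfrac12\bigl(W_\rho u_\rho+\tfrac{1-t^2}{\rho^2}W_t u_t\bigr)$ does produce exactly \eqref{eqn:hessian-general}, with the final grouped term $-\tfrac{1-t^2}{\rho^4}u_t^2(\rho u_\rho+tu_t)$ arising from the $\rho$- and $t$-derivatives of the coefficient $\tfrac{1-t^2}{\rho^2}$. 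Your route is shorter and less error-prone since it sidesteps the Hessians of $\rho$ and $t$ entirely; the paper's direct expansion is more mechanical but has the side benefit of recording $\nabla^2\rho$ and $\nabla^2 t$ explicitly, which the paper does not reuse elsewhere, so there is no real cost to your shortcut. The double-polar metric you quote, and the weight $\rho^{n-1}(1-t^2)^{(n-k-2)/2}t^{k-1}$, are also correct and give a clean cross-check of $\Delta\rho$ and $\Delta t$ via the Laplace--Beltrami formula.
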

\begin{proof}
In the variables $t, \rho$, we have
\begin{align*}
    \partial_i \rho &= \frac{x_i}{\rho}, &  \partial_{\alpha} \rho &= \frac{y_{\alpha}}{\rho}, \\
    \partial^2_{ij} \rho &= \frac{1}{\rho} \left( \delta_{ij} - \frac{x_i x_j}{\rho^2} \right), & \partial^2_{\alpha \beta} \rho &= \frac{1}{\rho} \left( \delta_{\alpha \beta} - \frac{y_{\alpha} y_{\beta}}{\rho^2} \right), & \partial^2_{i\alpha}\rho &= -\frac{x_iy_\alpha}{\rho^3}\\
    \partial_i t &= - \frac{x_i |y|}{\rho^3} = - \frac{x_i}{\rho^2} t, & \partial^2_{ij} t &= - \frac{\delta_{ij}}{\rho^2} t + 3\frac{x_i x_j}{\rho^4} t, & \partial^2_{i \alpha} t &= \frac{x_i y_{\alpha}}{\rho^4} ( 3t - t^{-1}) \\
    \partial_{\alpha} t &= \frac{y_\alpha}{\rho^2}(t^{-1}-t), &\quad \partial^2_{\alpha \beta} t &= \frac{1}{\rho^2} (t^{-1}-t) \left( \delta_{\alpha \beta} - (3 + t^{-2}) \frac{y_{\alpha} y_{\beta}}{\rho^2}  \right)
\end{align*}
We now consider a function $U(x,y) = u(\rho, t)$, so that 
\[
\nabla U = u_{\rho} \nabla \rho + u_t \nabla t = \left( u_{\rho} - \frac{t}{\rho} u_t \right) \frac{x}{\rho} + \left( u_{\rho} + \frac{t^{-1}(1-t^2)}{\rho} u_t \right) \frac{y}{\rho}  \, .
\]
We therefore obtain the expression~\eqref{eqn:grad-U-squared} by using $\frac{|x|^2}{\rho^2} = 1-t^2$ and $\frac{|y|^2}{\rho^2} = t^2$, whereby
\begin{align*}
    |\nabla U|^2 
    &= (1-t^2) \left( u_{\rho}^2 + \frac{t^2}{\rho^2} u_t^2 - 2 \frac{t}{\rho} u_t u_{\rho} \right) + t^2 \left( u_{\rho}^2 + \frac{t^{-2}(1-t^2)^2}{\rho^2} u_t^2 + 2 \frac{t^{-1}(1-t^2)}{\rho} u_t u_{\rho} \right) \\
    &= u_{\rho}^2 + \frac{(1-t^2)t^2}{\rho^2} u_t^2 + \frac{(1-t^2)^2}{\rho^2} u_t^2.
\end{align*}
Next, we compute the Laplacian $\Delta u$ and $Q(u) = \nabla^2 u(\nabla u, \nabla u)$.
The above computations imply
\begin{align*}
    |\nabla \rho|^2 &=1, &  \la \nabla \rho , \nabla t \rg &= 0, &|\nabla t|^2 = \frac{1-t^2}{\rho^2}, \\
    \Delta \rho &= \frac{n-1}{\rho}, &  \Delta t &= \frac{(k-1) t^{-1} - (n-1) t}{\rho^2}, \\
    \nabla^2 \rho (\nabla U, \nabla U) &= \frac{1-t^2}{\rho^3} u_t^2, &  \nabla^2 t(\nabla \rho, \nabla \rho) &= 0, \\
    \nabla^2 t(\nabla \rho, \nabla t) &= - \frac{1-t^2}{\rho^3}, &  \nabla^2 t (\nabla t, \nabla t) &= - \frac{t(1-t^2)}{\rho^4}.
\end{align*}
Then, we note the identities
{\allowdisplaybreaks
\begin{align*}
    \Delta U &= u_{\rho \rho} |\nabla \rho|^2 + 2 u_{\rho t} \la \nabla \rho, \nabla t \rg + u_{tt} |\nabla t|^2 + u_{\rho} \Delta {\rho} + u_t \Delta t \\
    &= u_{\rho \rho} + \frac{n-1}{\rho} u_{\rho} + \frac{1-t^2}{\rho^2} u_{tt} + \frac{(k-1) t^{-1} - (n-1) t}{\rho^2} u_t, \\
    Q(U) & = u_{\rho \rho} \la \nabla \rho, \nabla U \rg^2 + 2 u_{\rho t} \la \nabla \rho, \nabla U \rg \la \nabla t, \nabla U \rg + u_{tt} \la \nabla t, \nabla U \rg^2 \\
    & \quad \; + u_{\rho} \, \nabla^2 \rho (\nabla U, \nabla U) + u_t \, \nabla^2 t (\nabla U, \nabla U) \\
    &= u_{\rho}^2 u_{\rho \rho} + 2 \frac{1-t^2}{\rho^2} u_{\rho t} u_{\rho} u_t + \frac{(1-t^2)^2}{\rho^4} u_{tt} u_t^2 - \frac{1-t^2}{\rho^4} u_t^2 (\rho u_{\rho} + t u_t)
\end{align*}}
The equations~\eqref{eqn:laplace-general} and~\eqref{eqn:hessian-general} follow from these computations.
\end{proof}
To prove the existence of non-flat $O(n-k) \times O(k)$-invariant capillary cones with an isolated singularity at the origin, we study $1$-homogeneous solutions of equation~\eqref{eqn:minimal-surface-urho,t} having the form $u(\rho,t) = \rho f(t)$, for a function $f: [0,1] \to \bR$ with $f'(0) = 0$, and defined over the domain
\begin{equation}\label{eqn:capillary-domain}
\Gamma_{n,k,\theta} := \{ (x,y) \in \bR^{n-k} \times \bR^k : |y| \leq t_{n,k,\theta} \sqrt{|x|^2 + |y|^2} \}
\end{equation}
for a constant $t_{n,k,\theta} \leq 1$ to be determined.
\begin{proposition}\label{prop:FreeBdyODE=Cone}
    The graph of the function $U(x,y) = \rho f(t)$ defines an $O(n-k) \times O(k)$-invariant minimal cone with boundary in $\Pi$ if and only if $f$ satisfies $f'(0) = 0$ and
    \begin{equation}\label{eqn:cone-ODE}
    \begin{split}
        0 & =  (1+f^2) \left( (1-t^2) f'' + (n-1)(f - t f') \right) + (n-2)(1-t^2) f'^2 (f - t f') \\
        & \qquad + (k-1) t^{-1} f' \left( 1 + f^2 + (1-t^2) f'^2 \right).
    \end{split}
    \end{equation}
    The resulting cone has capillary boundary if and only if the constant $t_{n,k,\theta}$ of~\eqref{eqn:capillary-domain} is the first positive root of $f$, with $f$ positive on $[0,t_{n,k,\theta})$ and
    \begin{equation}\label{eqn:f'-at-theta}
    f'(t_{n,k,\theta}) = - \frac{\tan \theta}{\sqrt{1 - t_{n,k,\theta}^2}}.
\end{equation}
\end{proposition}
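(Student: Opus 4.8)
The plan is to derive the ODE \eqref{eqn:cone-ODE} by substituting the homogeneous ansatz $U(x,y)=u(\rho,t)=\rho f(t)$ into the minimal surface operator computed in Lemma~\ref{lemma:general-computation-u-rho,t}, and then to read off the two boundary conditions from the graphical form of the capillary condition recalled around \eqref{eqn:capillary-condition-graph}--\eqref{eqn:capillary-fbp}. For $u=\rho f(t)$ one has $u_\rho=f$, $u_t=\rho f'$, $u_{\rho\rho}=0$, $u_{\rho t}=f'$, $u_{tt}=\rho f''$, so by \eqref{eqn:grad-U-squared} the quantity $|\nabla U|^2=f(t)^2+(1-t^2)f'^2$ is a function of $t$ alone, as it must be for the graph of a $1$-homogeneous function. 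Plugging these into \eqref{eqn:laplace-general} and \eqref{eqn:hessian-general} gives
\[
\rho\,\Delta U=(1-t^2)f''+(n-1)(f-tf')+(k-1)t^{-1}f',\qquad
\rho\,Q(U)=(1-t^2)f'^2\big((1-t^2)f''+f-tf'\big),
\]
and feeding this into \eqref{eqn:minimal-surface-urho,t} shows that $N(t):=\rho(1+|\nabla U|^2)^{3/2}\cM(U)=(1+f^2+(1-t^2)f'^2)\,\rho\Delta U-\rho Q(U)$ depends on $t$ only. The remaining work is purely algebraic: expanding and collecting, the $f''$-terms combine to $(1+f^2)(1-t^2)f''$, the terms proportional to $f-tf'$ to $\big((n-1)(1+f^2)+(n-2)(1-t^2)f'^2\big)(f-tf')$, and the rest to $(k-1)t^{-1}f'\big(1+f^2+(1-t^2)f'^2\big)$, which is precisely the right-hand side of \eqref{eqn:cone-ODE}. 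Hence $\cM(U)=0$ on $\{U>0\}$ if and only if $f$ solves \eqref{eqn:cone-ODE} there.

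Next I would address regularity at the axis and the location of the boundary. Since $t=|y|/\rho$ is not differentiable along $\{y=0\}$, the field $U=\rho f(|y|/\rho)$ extends $C^1$ across this set if and only if $u_t=\rho f'$ vanishes at $t=0$, i.e. $f'(0)=0$; assuming this, the singular coefficient $(k-1)t^{-1}f'$ in \eqref{eqn:cone-ODE} extends continuously at $t=0$ and the equation forces $f''(0)=-\tfrac{n-1}{k}f(0)$, which is exactly the compatibility condition producing an $O(n-k)\times O(k)$-invariant analytic solution near the axis. Combined with the observation that the graph lies in the open upper half-space over $\{f>0\}$ and meets $\Pi=\{z=0\}$ exactly over $\{U=0\}=\{f=0\}$, this establishes the first equivalence in the proposition: the graph of $U=\rho f(t)$ is an $O(n-k)\times O(k)$-invariant minimal cone with boundary in $\Pi$ if and only if $f'(0)=0$ and \eqref{eqn:cone-ODE} holds.

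For the capillary condition I would use that, as computed just before \eqref{eqn:capillary-fbp}, the graph of $u\ge 0$ meets $\Pi$ at constant angle $\theta$ precisely when $|\nabla u|=\tan\theta$ along $\partial\{u>0\}$. The zero set of $U=\rho f(t)$ is $\{f(t)=0\}$, so if $t_{n,k,\theta}$ is the first positive zero of $f$ and $f>0$ on $[0,t_{n,k,\theta})$, then $\partial\{U>0\}=\{t=t_{n,k,\theta}\}=\partial\Gamma_{n,k,\theta}$; substituting $f(t_{n,k,\theta})=0$ into $|\nabla U|^2=f^2+(1-t^2)f'^2$ leaves $|\nabla U|^2=(1-t_{n,k,\theta}^2)f'(t_{n,k,\theta})^2$ there. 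Equating with $\tan^2\theta$ and using that $f$ decreases through its first zero (so $f'(t_{n,k,\theta})\le 0$, strictly for $\theta\in(0,\tfrac\pi2)$) yields \eqref{eqn:f'-at-theta}; conversely, $f>0$ on $[0,t_{n,k,\theta})$ together with \eqref{eqn:f'-at-theta} is exactly what makes the free boundary equal $\partial\Gamma_{n,k,\theta}$ and the contact angle equal $\theta$. The limiting case $\theta=\tfrac\pi2$ (where $t_{n,k,\theta}\to1$, $f'\to-\infty$, and the graph becomes vertical at the free boundary) is to be read in the corresponding limiting sense, in line with Theorem~\ref{thm:capillary-cones}(ii).

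The main obstacle is the bookkeeping in the first step: one must track the cancellation between $\rho Q(U)$ and the $f''$-part of $(1+|\nabla U|^2)\,\rho\Delta U$, after which the coefficient of $f''$ collapses from an expression involving $f'^2$ to the clean factor $(1+f^2)(1-t^2)$. The only conceptual point requiring care is the analysis at $t=0$: recognizing that $f'(0)=0$ is simultaneously forced by $C^1$-regularity across $\{y=0\}$ and, via \eqref{eqn:cone-ODE}, sufficient for smoothness and analyticity there. Everything else, in particular the one-line evaluation of $|\nabla U|^2$ on the free boundary, is routine.
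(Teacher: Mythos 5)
Your proposal is correct and follows essentially the same route as the paper: substitute the homogeneous ansatz into the expressions of Lemma~\ref{lemma:general-computation-u-rho,t}, observe that $\rho(1+|\nabla U|^2)^{3/2}\cM(U)$ depends only on $t$, simplify to \eqref{eqn:cone-ODE}, and then read off \eqref{eqn:f'-at-theta} from $|\nabla U|^2=(1-t^2)f'^2$ at the first zero of $f$. The extra remarks you add (the $C^1$ extension across $\{y=0\}$ forcing $f'(0)=0$, and the $\theta=\tfrac{\pi}{2}$ limiting case) are correct elaborations on points the paper leaves implicit, not a different argument.
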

\begin{proof}
We use the computations of Lemma~\ref{lemma:general-computation-u-rho,t}, observing that
\[
u_{\rho} = f, \qquad u_t = \rho f', \qquad u_{\rho \rho} = 0, \qquad u_{\rho t} = f', \qquad u_{tt} = \rho f''.
\]
The computations~\eqref{eqn:grad-U-squared}~--~\eqref{eqn:minimal-surface-urho,t} therefore simplify to
\begin{align*}
    |\nabla U|^2 &= f^2 + (1-t^2) f'^2, \\
    \Delta U &= \frac{(n-1) (f - tf') + (1-t^2) f'' + (k-1) t^{-1} f'}{\rho}, \\
    Q(U) &= \frac{2 (1-t^2) f (f')^2 + (1-t^2)^2 (f')^2 f'' - (1-t^2) (f')^2 (f + tf')}{\rho} \\
    &= \frac{(1-t^2) (f')^2}{\rho} \left[ ( f- tf') + (1-t^2) f'' \right].
\end{align*}
Combining these computations yields the expression~\eqref{eqn:cone-ODE}.
The capillary condition then requires that $t_{n,k,\theta}$ be the first positive root of the even function $f$ solving the ODE~\eqref{eqn:cone-ODE}, and that $|\nabla U| = \tan \theta$ at this point.
The computation~\eqref{eqn:grad-U-squared} shows that $|\nabla U|^2 = (1-t_{n,k,\theta}^2) (f')^2$ at this point, with $f$ positive on $(0,t_{n,k,\theta})$.
The claimed equality~\eqref{eqn:f'-at-theta} follows.
\end{proof}
\begin{lemma}\label{lemma:lawson-cones}
For every $n \geq 3$ and $1 \leq k \leq n-2$, the function $\hat{f}_{n,k}$ given by
\[
\hat{f}_{n,k}(t) = \sqrt{\frac{k - (n-1) t^2}{n-k-1} }
\]
satisfies $\hat{f}_{n,k}' \to - \infty$ near its positive root, so it is an even solution of equation~\eqref{eqn:cone-ODE} for $\theta = \frac{\pi}{2}$.
The resulting cone in $\bR^{n+1}_+$, produced by the graph of $U_{n,k}(\rho,t) = \rho \hat{f}_{n,k}(t)$, is the half of the Lawson cone $C(\bS^{n-k-1} \times \bS^k)$ in the upper $z$-half-space, expressed in the form
\begin{align*}
C(\bS^{n-k-1} \times \bS^k) &= \left\{ (x,y,z) \in \bR^{n-k} \times \bR^k \times \bR : \tfrac{k}{n-k-1} |x|^2 = |y|^2 + z^2 \right\}, \\
\on{graph} (U_{n,k}) &= C \Bigl( \bS^{n-k-1} (\sqrt{\tfrac{k}{n-1}}) \times \bS^k (\sqrt{\tfrac{n-k-1}{n-1}}) \Bigr) \cap \{ z \geq 0 \}.
\end{align*}
\end{lemma}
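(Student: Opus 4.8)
The plan is to keep the analytic claims about $\hat f_{n,k}$ and the geometric identification of the cone separate, and to lean on the identification so as to avoid a brute-force substitution into \eqref{eqn:cone-ODE}. Concretely I would proceed in four steps: (1) record the elementary properties of $\hat f_{n,k}$ and $\hat f_{n,k}'$; (2) show by algebra that the graph of $U_{n,k}$ is the upper half of the quadric cone $\{(n-k-1)(|y|^2+z^2)=k|x|^2\}$; (3) recognize this quadric as a Lawson cone, so that minimality is quoted rather than recomputed; (4) feed this back into Proposition~\ref{prop:FreeBdyODE=Cone} to conclude that $\hat f_{n,k}$ solves \eqref{eqn:cone-ODE}.

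For (1): since $1\le k\le n-2$ we have $n-k-1\ge 1$, so the radicand in $\hat f_{n,k}$ is positive exactly on $[0,t_{n,k})$ with $t_{n,k}:=\sqrt{k/(n-1)}<1$, and $\hat f_{n,k}(t_{n,k})=0$; evenness is immediate since $\hat f_{n,k}$ depends on $t$ only through $t^2$. Differentiating the defining relation $(n-k-1)\hat f_{n,k}^2=k-(n-1)t^2$ gives $\hat f_{n,k}'=-\tfrac{(n-1)t}{(n-k-1)\hat f_{n,k}}$, whence $\hat f_{n,k}'(0)=0$ and, since $\hat f_{n,k}\downarrow 0$ as $t\uparrow t_{n,k}$, also $\hat f_{n,k}'\to-\infty$ there. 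Substituting $f=\hat f_{n,k}=0$ and $f'\to-\infty$ into the identity $|\nabla U|^2=f^2+(1-t^2)f'^2$ from Lemma~\ref{lemma:general-computation-u-rho,t} shows $|\nabla U_{n,k}|\to+\infty$ at the root, which by the relation \eqref{eqn:f'-at-theta} is exactly the boundary condition for $\theta=\tfrac\pi2$.

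For (2)–(3): writing $z=\rho\hat f_{n,k}(t)$ with $t=|y|/\rho$, squaring and using $\rho^2 t^2=|y|^2$ and $\rho^2=|x|^2+|y|^2$ yields $(n-k-1)z^2=k|x|^2-(n-k-1)|y|^2$, i.e.\ $(n-k-1)(|y|^2+z^2)=k|x|^2$; conversely this equation together with $z\ge 0$ forces $|y|/\rho\le t_{n,k}$ and $z=\rho\hat f_{n,k}(|y|/\rho)$, so the closure of the graph of $U_{n,k}$ over $\Gamma_{n,k}$ is exactly $\{(n-k-1)(|y|^2+z^2)=k|x|^2\}\cap\{z\ge 0\}$. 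Splitting $\bR^{n+1}=\bR^{n-k}\times\bR^{k+1}$ with $w=(y,z)$, this quadric is the cone over the generalized Clifford torus $\bS^{n-k-1}\times\bS^{k}\subset\bS^n$ with the radii that make it minimal in $\bS^n$, i.e.\ one of Lawson's cones $C(\bS^{n-k-1}\times\bS^k)$; minimality is quoted from \cite{lawson}. Consequently $\cM(U_{n,k})=0$ on $\{U_{n,k}>0\}$, since there the graph is an open subset of the smooth locus of this minimal cone.

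For (4): the graph of $U_{n,k}$ is therefore a $1$-homogeneous minimal hypersurface with boundary $\{z=0,\ |y|=t_{n,k}\rho\}\subset\Pi$, so the forward implication of Proposition~\ref{prop:FreeBdyODE=Cone} gives that $\hat f_{n,k}$ satisfies \eqref{eqn:cone-ODE} (and recovers $\hat f_{n,k}'(0)=0$). I do not expect a real obstacle: the only computational point with any chance of error is the algebra in (2) matching the quadric to a Lawson cone. If one prefers a check of \eqref{eqn:cone-ODE} independent of \cite{lawson}, it can be done by direct substitution after differentiating the defining relation once more to get $(n-k-1)\hat f_{n,k}\hat f_{n,k}''=-(n-1)-(n-k-1)\hat f_{n,k}'^2$, together with $\hat f_{n,k}-t\hat f_{n,k}'=\tfrac{k}{(n-k-1)\hat f_{n,k}}$ and $1-t^2=\tfrac{n-k-1}{n-1}(1+\hat f_{n,k}^2)$; clearing denominators in \eqref{eqn:cone-ODE} then reduces it to a polynomial identity, which is mechanical if slightly bookkeeping-heavy.
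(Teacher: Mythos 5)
Your argument is correct, but it takes a genuinely different route from the paper's. The paper simply verifies by direct substitution that $\hat f_{n,k}$ satisfies the ODE~\eqref{eqn:cone-ODE} and the boundary relation~\eqref{eqn:f'-at-theta}, and then observes the quadric identification; the ODE substitution is treated as a one-line mechanical check. You instead do the quadric identification first, quote Lawson's theorem that $C(\bS^{n-k-1}\times\bS^k)$ is minimal, and then invoke the \emph{if and only if} of Proposition~\ref{prop:FreeBdyODE=Cone} in the reverse direction (minimal graph $\Rightarrow$ ODE) to conclude that $\hat f_{n,k}$ solves~\eqref{eqn:cone-ODE} without ever substituting into it. Both routes are sound. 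The paper's is self-contained and shorter to write down (and arguably more honest, since the ODE~\eqref{eqn:cone-ODE} was itself derived by reducing the minimal-surface equation, so substituting is no harder than identifying the quadric); yours buys the avoidance of bookkeeping at the cost of an external citation and a reliance on the bi-implication structure of Proposition~\ref{prop:FreeBdyODE=Cone}, which is fine here but worth stating explicitly — you should say clearly that you are using the forward implication of that proposition, which holds because the proof there shows $(1+|\nabla U|^2)^{3/2}\cM(U)$ is, up to the nonvanishing factor $1/\rho$, exactly the left-hand side of~\eqref{eqn:cone-ODE}. One small stylistic point: your observation that $|\nabla U_{n,k}|\to+\infty$ at the root is a slight detour; it is cleaner to read off $\hat f_{n,k}'\to-\infty$ directly and compare with~\eqref{eqn:f'-at-theta} at $\theta=\tfrac\pi2$, which is what you also do in the preceding sentence.
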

\begin{proof}
    These properties are verified immediately by substituting the expression for $\hat{f}_{n,k}$ into equations~\eqref{eqn:cone-ODE} and~\eqref{eqn:f'-at-theta}.
    The expression of the graph of $U_{n,k}$ is likewise obtained from $\rho^2 t^2 = |y|^2$.
\end{proof}
We also observe that when $k=n-1$, equation~\eqref{eqn:cone-ODE} is solvable explicitly: the unique function $f$ solving~\eqref{eqn:cone-ODE} with $f(0) = \tan \theta$ and $f'(0) = 0$ is $f(t) = \tan \theta \sqrt{1-t^2}$.
This produces the function $U(x,y) = \tan \theta  |x|$ (where $x \in \bR$) corresponding to the union of two slanted half-spaces as the $\bZ_2 \times O(n-1)$-invariant capillary cone of this form.
The resulting cone is not minimizing for $\cA^{\theta}$; consequently, in what follows, we will only consider $1 \leq k \leq n-2$.
We denote by $\cL_{n,k}$ the linear Legendre operator with self-adjoint expression
\begin{equation}\label{eqn:legendre-form}
\begin{split}
\cL_{n,k} f &= (1-t^2) f'' + (n-1) (f - t f') + (k-1) t^{-1} f' \\
&= \frac{1-t^2}{p(t)} \left[ (p f')' + (n-1) \frac{p}{1-t^2} f \right], \qquad \text{where } \; p(t) := t^{k-1}(1-t^2)^{\frac{n-k}{2}}  .
\end{split}
\end{equation}
We also define the function $A_{n,k}(t) := t - \frac{k-1}{n-2} t^{-1} = t^{-1} ( t^2 - \frac{k-1}{n-2} )$.
We set $\alpha := \frac{k-1}{n-2}$, so $A(t) = t^{-1} (t^2-\alpha)$.
The ODE~\eqref{eqn:cone-ODE} for the even function $f$ assumes the form
\begin{equation}\label{eqn:odeStar}\tag{$\star$}
    (1 + f^2) \cL_{n,k} f + (n-2) (1-t^2) (f')^2 (f - A_{n,k} f') = 0.
\end{equation}
It will also be useful to consider a rescaled version of equation~\eqref{eqn:odeStar}.
For any $\lambda > 0$, let $f_{\lambda} := \frac{f}{\sqrt{\lambda}}$.
Expressing $f = \sqrt{\lambda} f_{\lambda}$ in~\eqref{eqn:odeStar} gives an ODE for $f_{\lambda}$, which assumes the form
\begin{equation}\label{eqn:rescaledODE}\tag{$\star_{\lambda}$}
    (1-t^2) f'' + (f - tf') + (n-2) \left( 1 + (1-t^2) \frac{\lambda (f')^2}{1 + \lambda f^2} \right) (f - A_{n,k} f') = 0.
\end{equation}
If $f(t)$ solves equation~\eqref{eqn:odeStar} for $\lambda = 1$ with initial data $f(0) = a$ and $f'(0) = 0$, then the function $f_{\lambda} := \frac{f}{\sqrt{\lambda}}$ solves~\eqref{eqn:rescaledODE} with initial data $f_{\lambda}(0) = \frac{a}{\sqrt{\lambda}}$ and $f'_{\lambda}(0) = 0$.
We may therefore study the dependence of the solutions to~\eqref{eqn:odeStar} on the initial height $a$ by equivalently studying the rescaled equation~\eqref{eqn:rescaledODE} for a fixed height $a$ and variable $\lambda$.

\subsection{Analysis and comparison of solutions}\label{section:analysis-of-solutions}
Throughout this section, we fix some $n \geq 3$ and $1 \leq k \leq n-2$.
We will construct capillary cones for every angle $\theta$ by proving the existence of initial data $f(0) = a, f'(0) = 0$ for which the graph of the function $f$ meets the $t$-axis at angle $\theta$.
This problem consists of two parts: first, showing that $f$ exists until it reaches 0; second, that we can sweep out every possible angle by varying the initial height of $f$.

We begin by describing the general and relative behaviors of solutions $f_a$ and $f_{\lambda}$ to the capillary ODEs~\eqref{eqn:odeStar} and~\eqref{eqn:rescaledODE} upon varying the initial height $a$ and the scaling parameter $\lambda$.
We prove that the solutions exhibit the behavior showcased in Figure~\ref{fig:fams-n7k1}: heuristically, the $f_a$ \textit{cross} and the $f_\lambda$ \textit{foliate}.
Moreover, the terminal contact angle increases with $a$ and with $\lambda$.
The solutions $f_a$ and $f_\lambda$ satisfy the following:
\begin{enumerate}[$(i)$]
    \item All solutions are strictly decreasing and strictly concave.
    \item All solutions of the capillary ODE~\eqref{eqn:odeStar} that reach zero cross, and subsequently stay below, any other solution that begins at a lower height.
    Consequently, the region under any solution reaching zero is foliated by the graphs of solutions starting from lower heights.
    \item The solutions of the rescaled equation~\eqref{eqn:rescaledODE} starting from the same height $a$ are ordered in strictly decreasing height by the parameter $\lambda$.
    For $a = a_{n,k} = \sqrt{\frac{k}{n-k-1}}$ the Lawson height, the solutions of the ODE~\eqref{eqn:rescaledODE} reach zero if and only if $\lambda \in [0,1]$, and foliate the region between the Lawson and one-phase solutions.
\end{enumerate}
\begin{figure}[t]
  \centering
  \includegraphics{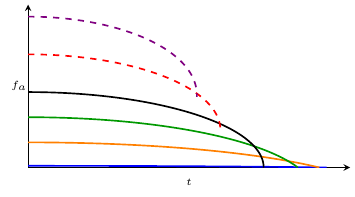}
  \includegraphics{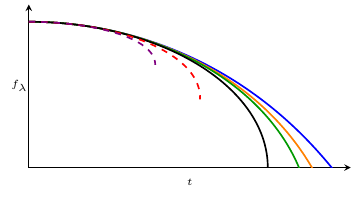}
  \caption{The left graph shows solutions to~\eqref{eqn:odeStar} and the right graph shows solutions to~\eqref{eqn:rescaledODE} numerically computed.
  The black curve illustrates the Lawson solution, the solid lines correspond to capillary cones with angle $\theta < \tfrac{\pi}{2}$, and the dashed lines are solutions to the ODE that start above the Lawson height and do not generate geometric solutions to the capillary problem.
  The blue curve is close, after rescaling, to the normalized solution of the linear one-phase problem.
  We see that for greater $a$, the root of $f_a$ moves inward and each graph $f_a$ crosses lower graphs once.
  On the $\lambda$ side, as $\lambda$ interpolates from 1 to 0, the functions $f_\lambda$ foliate the space between the solutions corresponding to the Lawson cone and one-phase limit.
  }
  \label{fig:fams-n7k1}
\end{figure}

We first derive a general comparison result for solutions $f_i = f_{\lambda_i}$ of the rescaled equation~\eqref{eqn:rescaledODE} with parameters $\lambda_i$.
Throughout this section we fix $n,k$ and follow the notation $\alpha := \frac{k-1}{n-2} \in [0,1)$, $A(t) := t - \alpha t^{-1}$, and $h_f := f - Af'$.
We also let $\psi(t) := \sqrt{|t^2 - \alpha|}$, so $\frac{\psi'}{\psi} = \frac{1}{A}$ for $t \in [0,1] \setminus \{ \sqrt{\frac{k-1}{n-2}} \}$.
Then, the function $h_f = f - \frac{\psi}{\psi'} f'$ satisfies
\begin{equation}\label{eqn:integrating-factor-f-psi}
\left( \frac{f}{\psi} \right)' = \frac{f' - \frac{\psi'}{\psi} f}{\psi} = - \frac{h_f}{A \psi}.
\end{equation}
Using the form $\cL f = (1-t^2) f'' + (f-tf') + (n-2) h_f$, we compute for any function $f$ that
\begin{equation}\label{eqn:h'-Lf-general}
    (1-t^2) h_f' = A ((n-1) h_f - \cL f) - \alpha(1-\alpha) t^{-2} f'.
\end{equation}
In particular, for a function $f$ satisfying~\eqref{eqn:rescaledODE}, this becomes
\begin{equation}\label{eqn:h'-Lf-particular}
    h'_f = A \left( \frac{n-1}{1-t^2} + (n-2) \frac{\lambda (f')^2}{1+\lambda f^2} \right) h_f - \frac{\alpha (1-\alpha)}{t^2 (1-t^2)} f'.
\end{equation}
We will suppress $h_f$ to $h$ when there is no ambiguity.

\begin{lemma}\label{lemma:comparison-of-solutions}
Given a point $t_0 \geq \sqrt{\frac{k-1}{n-2}}$, let $f_1, f_2 \in C^2( [t_0, T) )$ be non-negative functions such that $f_i$ satisfies~\eqref{eqn:rescaledODE} with parameter $\lambda_i$, where $\lambda_2 \geq \lambda_1$ and $T \leq 1$.
Consider the condition
\begin{equation}\label{eqn:the-condition}\tag{$\mathbf{C}$}
    h_{f_2} \geq h_{f_1}, \qquad 0 \leq f_2 \leq f_1, \qquad f'_2 \leq f'_1 \leq 0.
\end{equation}
If the condition~\eqref{eqn:the-condition} is satisfied at $t_0$, then either $\lambda_1 = \lambda_2$ and $f_1 = f_2$ identically, or we have
\[
h_{f_2} > h_{f_1}, \qquad 0 \leq f_2 \leq f_1, \qquad f'_2 < f'_1 \leq 0.
\]
In particular, $f_1 - f_2$ is a strictly positive, strictly increasing function on $(t_0, T]$.
If $f_2$ has maximal existence interval $[t_0, T]$ with $f_2(T) \geq 0$, then $f_1$ exists and remains positive on $[t_0, T]$.

Notably, any two distinct positive solutions of equation~\eqref{eqn:odeStar} intersect at most once over $[ \sqrt{\frac{k-1}{n-2}}, 1)$.
\end{lemma}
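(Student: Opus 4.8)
The plan is a continuation argument in the three quantities $w:=f_1-f_2$, $w'=f_1'-f_2'$, and $\phi:=h_{f_2}-h_{f_1}$. By linearity of $h_f=f-Af'$ in $f$ one has the identity $\phi=Aw'-w$, hence $w'=A^{-1}(w+\phi)$ wherever $A>0$, i.e.\ for $t>\sqrt{\alpha}$; in these terms \eqref{eqn:the-condition} reads $\phi\ge0$, $f_2\ge0$, $w\ge0$, $f_1'\le0$ (the remaining inequality $w'\ge0$ being automatic once $\phi,w\ge0$ and $A>0$). Set $\tau^{*}:=\sup\{t\in[t_0,T):\eqref{eqn:the-condition}\text{ holds on }[t_0,t]\}$; since \eqref{eqn:the-condition} is a closed condition it holds on $[t_0,\tau^{*}]$, and the goal is to prove $\tau^{*}=T$ together with the strict version of \eqref{eqn:the-condition} on $(t_0,\tau^{*}]$, unless $\lambda_1=\lambda_2$ and $f_1\equiv f_2$ (we tacitly exclude the trivial case $f_1\equiv f_2\equiv0$). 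The endpoint $t_0=\sqrt{\alpha}$ is special only superficially: there $A(t_0)=0$ forces $h_{f_i}(t_0)=f_i(t_0)$, so \eqref{eqn:the-condition} already gives $f_1(t_0)=f_2(t_0)$, hence $w(t_0)=\phi(t_0)=0$, and since $A$ has a simple zero at $\sqrt{\alpha}$ while all coefficients of \eqref{eqn:rescaledODE} are continuous there, the argument below applies with only cosmetic changes (or one starts from an interior point).

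The engine is a differential identity for $\phi$: subtracting the two instances of \eqref{eqn:h'-Lf-particular} and writing $P_2h_{f_2}-P_1h_{f_1}=P_2\phi+(P_2-P_1)h_{f_1}$ with $P_i:=\tfrac{n-1}{1-t^2}+(n-2)\tfrac{\lambda_i(f_i')^2}{1+\lambda_i f_i^2}>0$ gives
\[
\phi'=A\,P_2\,\phi+A\,(P_2-P_1)\,h_{f_1}+\frac{\alpha(1-\alpha)}{t^2(1-t^2)}\,w'.
\]
Wherever \eqref{eqn:the-condition} holds and $t>\sqrt{\alpha}$, every term on the right except the first is $\ge0$: $A\ge0$; $h_{f_1}=f_1+A(-f_1')\ge0$; $\alpha\in[0,1)$ and $w'\ge0$; and $P_2-P_1\ge0$ since $\lambda\mapsto\lambda a/(1+\lambda b)$ is nondecreasing for $a,b\ge0$, so $\tfrac{\lambda_2(f_2')^2}{1+\lambda_2 f_2^2}\ge\tfrac{\lambda_1(f_2')^2}{1+\lambda_1 f_2^2}\ge\tfrac{\lambda_1(f_1')^2}{1+\lambda_1 f_1^2}$ using $\lambda_2\ge\lambda_1$, $(f_2')^2\ge(f_1')^2$, $f_2^2\le f_1^2$. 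Hence $\phi'\ge AP_2\phi$ on $[t_0,\tau^{*}]$, so $t\mapsto\phi(t)\exp(-\int_{t_0}^tAP_2)$ is nondecreasing there. Moreover \eqref{eqn:rescaledODE} rewrites as $(1-t^2)f_i''=-\bigl[(n-1)+(n-2)(1-t^2)\tfrac{\lambda_i(f_i')^2}{1+\lambda_i f_i^2}\bigr]h_{f_i}+\alpha t^{-1}f_i'\le0$ under \eqref{eqn:the-condition}, so both $f_i$ are concave; consequently the constraint $f_i'\le0$ propagates to the right automatically, and $f_2$, being positive and decreasing, can first reach $0$ only at the right endpoint.

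It remains to open the interval and to produce strictness. If the strict version of \eqref{eqn:the-condition} fails somewhere on $(t_0,\tau^{*}]$, then $\phi\equiv0$ there; feeding $\phi\equiv\phi'\equiv0$ into the displayed identity forces $(P_2-P_1)h_{f_1}\equiv0$ and $w'\equiv0$ (when $\alpha=0$ one uses $\phi\equiv0$, i.e.\ $(w/t)'\equiv0$, together with $(P_2-P_1)h_{f_1}\equiv0$), whence $w\equiv0$; then $f_1\equiv f_2$ and comparing the two forms of \eqref{eqn:rescaledODE} for this common function forces $\lambda_1=\lambda_2$ — the degenerate case. Otherwise, examine $t_0$. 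If $\phi(t_0)>0$, the monotonicity of $\phi\exp(-\int AP_2)$ gives $\phi>0$ on $[t_0,\tau^{*}]$. If $\phi(t_0)=0$: when $w(t_0)>0$ then $w'(t_0)=w(t_0)/A(t_0)>0$ and the displayed identity at $t_0$ yields $\phi'(t_0)>0$ (the $w'$-term when $\alpha>0$; when $\alpha=0$ necessarily $f_2'(t_0)\ne0$, as $\phi(t_0)=-w(t_0)<0$ otherwise, which makes $P_2(t_0)-P_1(t_0)>0$), so $\phi>0$ just past $t_0$; when $w(t_0)=0$ then $f_1(t_0)=f_2(t_0)=:c$ and $f_1'(t_0)=f_2'(t_0)=:c'$, and if $\lambda_1=\lambda_2$ uniqueness for the IVP of \eqref{eqn:rescaledODE} gives $f_1\equiv f_2$ (degenerate), while if $\lambda_2>\lambda_1$ the difference obeys $w''=a(t)w+b(t)w'+s(t)$ with $a,b$ bounded and source $s=(n-2)\,h_{f_2}(f_2')^2\bigl(\tfrac{\lambda_2}{1+\lambda_2 f_2^2}-\tfrac{\lambda_1}{1+\lambda_1 f_2^2}\bigr)\ge0$, which is strictly positive for $t$ slightly larger than $t_0$ (indeed $h_{f_2}(t_0)=c-A(t_0)c'>0$ in the nontrivial case, and $f_2'\not\equiv0$ near $t_0$ — if $c'=0$ then $f_2''(t_0)\ne0$, so $(f_2')^2$ vanishes to exactly second order), so Duhamel's formula $w(t)=\int_{t_0}^tK(t,\sigma)s(\sigma)\,d\sigma$ with $K(t,\sigma)>0$ for $\sigma$ near $t$ gives $w>0$ on $(t_0,t_0+\varepsilon)$ and the same leading-order expansion gives $\phi=Aw'-w>0$ there. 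In every nondegenerate case $\phi$ becomes and stays positive on $(t_0,\tau^{*}]$, hence $w'=A^{-1}(w+\phi)>0$ there, so $w$ strictly increases from $w(t_0)\ge0$ and $w>0$ on $(t_0,\tau^{*}]$, with all inequalities of \eqref{eqn:the-condition} strict at $\tau^{*}$. If $\tau^{*}<T$ this strictness persists on a right-neighborhood of $\tau^{*}$, contradicting maximality; hence $\tau^{*}=T$, and $f_1-f_2$ is strictly positive and strictly increasing on $(t_0,T]$.

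For the existence assertion: on $[t_0,T)$ we have $0<f_2<f_1$ with $f_1$ nonincreasing (so $f_1\le f_1(t_0)$), while $h_{f_1}=h_{f_2}-\phi<h_{f_2}=f_2+A(-f_2')$, which is bounded on $[t_0,T]$ because $f_2\in C^2([t_0,T])$; since $A$ is increasing, $A(t)\ge A(t_0)>0$ for $t\ge t_0>\sqrt{\alpha}$, so $|f_1'|\le h_{f_1}/A(t_0)\le\sup_{[t_0,T]}h_{f_2}/A(t_0)<\infty$, whence $f_1$ and $f_1'$ stay bounded, $f_1$ extends to $[t_0,T]$, and $f_1\ge f_2>0$ on $[t_0,T)$ with $f_1(T)=f_2(T)+w(T)>0$ even when $f_2(T)=0$. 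Finally, the ``Notably'' statement is the case $\lambda_1=\lambda_2=1$: a positive solution of \eqref{eqn:odeStar} on $[\sqrt{\alpha},1)$ is decreasing there (a short bootstrap from the concavity identity above, starting from $f'(\sqrt{\alpha})\le0$), so at a common value $t_a$ of two distinct positive solutions one may label them so that $f_2'(t_a)<f_1'(t_a)\le0$ and $f_1(t_a)=f_2(t_a)>0$; since $h_{f_2}(t_a)-h_{f_1}(t_a)=A(t_a)\bigl(f_1'(t_a)-f_2'(t_a)\bigr)\ge0$, condition \eqref{eqn:the-condition} holds at $t_0=t_a$, so $f_1-f_2$ is strictly increasing on $(t_a,1)$ and cannot vanish a second time. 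The step I expect to be the main obstacle is producing strictness at $t=t_0$ in the tight sub-case $\lambda_2>\lambda_1$ with $(f_1,f_1')=(f_2,f_2')$ at $t_0$: the first-order inequality $\phi'\ge AP_2\phi$ is then vacuous at $t_0$, and one must read the sign of $w$ (and $\phi$) just to the right from a higher-order expansion driven solely by the $\lambda$-discrepancy source $s$, carefully tracking the order of vanishing of $(f_2')^2$ at $t_0$.
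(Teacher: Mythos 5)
Your proposal is correct and takes essentially the same route as the paper: you subtract the two instances of~\eqref{eqn:h'-Lf-particular}, use the same coefficient monotonicity in $\lambda$ and in $(f,f')$ as in~\eqref{eqn:coefficient-ordering}, exploit the algebraic relation between $h_{f_2}-h_{f_1}$ and $f_1-f_2$ (the paper's~\eqref{eqn:integrating-factor-f-psi} in a slightly different guise), and close with a continuation/first-failure argument, exactly as the paper does. Your Gr\"onwall packaging of $\phi=h_{f_2}-h_{f_1}$ and the Duhamel treatment of the tangential-contact case $(f_1,f_1')=(f_2,f_2')$ at $t_0$ with $\lambda_2>\lambda_1$ are presentational variants of the paper's pointwise comparison $h'_2(t_*)>h'_1(t_*)$ together with its ODE-uniqueness remark, not a genuinely different method.
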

\begin{proof}
We abbreviate the functions $h_i := h_{f_i}$ and work in the maximal interval $[t_0, T]$ where $f_2$ exists and is non-negative, with $T \leq 1$.
First, the functions $f_i$ satisfy $f'_i < 0$ while positive; if there was a first instance $t_*$ where $f'_i(t_*) = 0$ while $f_i(t_*) > 0$, then $f''_i(t_*) \geq 0$.
On the other hand, the equation~\eqref{eqn:rescaledODE} would yield 
\[
(1-t_*^2) f''_i(t_*) = - (n-1) f_i(t_*) < 0,
\]
at $t_*$, a contradiction.
Therefore, $f_i'<0$ and $h_i = f_i - A f'_i > 0$ while the $f_i$ are positive.

At any point $t_*$ in the interval $[t_0,T)$ where both solutions are positive, we use $\lambda_2 \geq \lambda_1$ to see \begin{equation}\label{eqn:coefficient-ordering}
    0 \leq f_2 \leq f_1, \qquad f'_2 \leq f'_1 \leq 0 \qquad \implies \qquad  \frac{\lambda_2 (f'_2)^2}{1 + \lambda_2 f_2^2} \geq \frac{\lambda_2 (f'_1)^2}{1 + \lambda_2 f_1^2} \geq \frac{\lambda_1 (f'_1)^2}{1 + \lambda_1 f_1^2}
    \end{equation}
    and $- f'_2 \geq - f'_1 \geq 0$.
    At least one of the above inequalities must be strict, otherwise $\lambda_1 = \lambda_2$ and ODE uniqueness would force $f_1 \equiv f_2$.
    If $h_2 \geq h_1$ as well, then we use $A \geq 0$ for $t \geq \sqrt{\frac{k-1}{n-2}}$ to obtain
    \begin{equation}\label{eqn:h2'-prime}
        \begin{split}
        h'_2(t_*) &= A \left( \frac{n-1}{1-t_*^2} + (n-2) \frac{\lambda_2 (f'_2)^2}{1 + \lambda_2 f_2^2} \right) h_2 - \frac{\alpha (1-\alpha)}{t_*^2 (1-t_*^2)} f'_2(t_*) \\
        &> A \left( \frac{n-1}{1-t_*^2} + (n-2) \frac{\lambda_1 (f'_1)^2}{1 + \lambda_1 f_1^2} \right) h_1 - \frac{\alpha (1-\alpha)}{t_*^2 (1-t_*^2)} f'_1(t_*) 
        \end{split}
    \end{equation}
    due to~\eqref{eqn:h'-Lf-particular}, whereby $h'_2(t_*) > h'_1(t_*)$ and $h_2 > h_1$ on $(t_*, t_* + \delta)$ for some $\delta>0$.
    Next, using the relation~\eqref{eqn:integrating-factor-f-psi} for $i=1,2$, we have
    \[
    \left( \frac{f_i}{\psi} \right)' = - \frac{h_i}{A \psi}, \qquad \implies \qquad \left( \frac{f_2 - f_1}{\psi} \right)' = - \frac{h_2 - h_1}{A \psi} \qquad \text{on } \; [t_*, t_* + \delta),
    \]
    where $\psi = \sqrt{t^2 - \frac{k-1}{n-2}}$.
    Since $h_2 > h_1$ and $A > 0$, we deduce that $\left( \frac{f_2 - f_1}{\psi} \right)' < 0$, so $\frac{f_2 -f_1}{\psi}$ is a strictly decreasing function on $[t_*, t_* + \delta)$ with $\frac{f_2 - f_1}{\psi}(t_*) \leq 0$.
    On $(t_*, t_* + \delta)$, we now have that $f_2 - f_1 < 0$ while $\psi$ is strictly increasing and $\frac{f_2 - f_1}{\psi}$ is strictly decreasing, so $f_2 - f_1$ is a strictly decreasing negative function.
    Consequently, if the condition~\eqref{eqn:the-condition} is satisfied at a point $t_*$, then it holds strictly on some interval $(t_*, t_* +  \delta)$, meaning that
    \[
    h_2 > h_1, \qquad 0 < f_2 < f_1, \qquad f'_2 < f'_1 < 0 \qquad \text{on } \; (t_*, t_* + \delta).
    \]
    Applying this argument for $t_* = t_0$ shows that~\eqref{eqn:the-condition} holds strictly on $(t_0, t_0 + \delta)$.
    If it failed to hold with strict inequality on $(t_0, T]$, there would exist a first instance $t_1$ where at least one of the equalities is satisfied.
    These equalities would require, respectively,
    \[
    h'_2(t_1) \leq h'_1(t_1), \qquad f''_2(t_1) \geq f''_1(t_1), \qquad \text{or} \qquad f'_2(t_1) \geq f'_1(t_1).
    \]
    The computation~\eqref{eqn:h2'-prime} shows that $h'_2(t_1) > h'_1(t_1)$ at such a point, ruling out the first case.
    Therefore, $h_2(t_1) > h_1(t_1)$, meaning that
    \[
    (f_2(t_1) - f_1(t_1)) - A(t_1) \cdot (f '_2(t_1) - f'_1(t_1)) > 0.
    \]
    If $(f'_2 - f'_1)(t_1) = 0$, then $(h_2 - h_1)(t_1) = (f_2 - f_1)(t_1) \leq 0$ would give a contradiction.
    Similarly, if $(f_2 - f_1)(t_1) = 0$, then $(h_2 - h_1)(t_1) = - A(t_1) \cdot (f'_2 - f'_1)(t_1) \leq 0$ again produces a contradiction.
    We conclude that~\eqref{eqn:the-condition} holds with strict inequality on $(t_0, T]$, as claimed.
    If $f_2$ exists and is positive over $[t_0, T]$, then the above argument shows that $|f'_1| < |f'_2|$ for all $t \in (t_0, T]$, meaning that $f'_1$ cannot blow up before $f'_2$.
    Consequently, the function $f_1$ can be extended up to $t=T$, with $(f_1 - f_2)(T) > (f_1 - f_2)(t_0) = 0$.

    As a consequence of this result, we consider any two distinct positive solutions of equation~\eqref{eqn:odeStar} intersecting at some $t_0 \geq \sqrt{\frac{k-1}{n-2}}$.
    By symmetry, we suppose that $f_1(t_0) = f_2(t_0)$ and $f'_2(t_0) < f'_1(t_0) < 0$, so $h_2(t_0) > h_1(t_0)$.
    Thus, the condition~\eqref{eqn:the-condition} holds strictly, showing that $f_1 > f_2$ and $f'_1 > f'_2$ on $[t_0, T]$, meaning that the solutions remain disjoint.
\end{proof}

To prove the various comparison properties illustrated in Figure~\ref{fig:fams-n7k1}, we will need to understand the behavior of various quantities near $0$ to high order, depending on the parameters $\lambda, a$.
\begin{lemma}\label{lemma:values-at-zero}
For any $\lambda \in [0,\infty)$ and $a>0$, let $f_{\lambda, a}$ be the unique even solution of equation~\eqref{eqn:rescaledODE} with scaling parameter $\lambda$ and initial data $f_{\lambda, a}(0) = a$ and $f'_{\lambda,a}(0) = 0$.
Then, for $h_{\lambda,a} = f_{\lambda,a} - A_{n,k} f'_{\lambda, a}$ we have the following values at $0$:
\begin{align}
    f''_{\lambda, a}(0) &= - \frac{n-1}{k} a, \qquad \qquad h_{\lambda, a}(0) = \frac{n-k-1}{k(n-2)} a > 0, \\
    f_{\lambda, a}^{(4)}(0) &= -\frac{3(n-1)a}{k^3(k+2)}\big(k^2(n+1)+2(n-1)(n-k-1)\big)
   + \frac{6(n-1)^2(n-k-1)}{k^3(k+2)}\cdot\frac{a}{1+\lambda a^2}, \label{eqn:fourth-derivative}
\end{align}
\end{lemma}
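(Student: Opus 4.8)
The plan is to substitute the even Taylor expansion $f_{\lambda,a}(t) = a + \tfrac12 c_2 t^2 + \tfrac1{24} c_4 t^4 + O(t^6)$ into the rescaled equation~\eqref{eqn:rescaledODE} and read off $c_2 = f''_{\lambda,a}(0)$ and $c_4 = f^{(4)}_{\lambda,a}(0)$ from the coefficients of $t^0$ and $t^2$ respectively (odd Taylor coefficients vanish since $f_{\lambda,a}$ is even). The one structural point worth isolating first is that, although $A_{n,k}(t) = t - \alpha t^{-1}$ is singular at the origin, the combination $h_{\lambda,a} = f_{\lambda,a} - t f'_{\lambda,a} + \alpha t^{-1} f'_{\lambda,a}$ entering~\eqref{eqn:rescaledODE} is regular there, because $f'_{\lambda,a}$ is odd and hence $t^{-1} f'_{\lambda,a}$ is even with value $c_2$ at $0$; thus~\eqref{eqn:rescaledODE}, solved for $f''$, has an analytic right-hand side at $t=0$, which both guarantees the existence and uniqueness of the even solution $f_{\lambda,a}$ asserted in the statement and licenses term-by-term matching of power series. (Alternatively, one may differentiate~\eqref{eqn:rescaledODE} directly and evaluate at $0$, using standard smoothness of ODE solutions.)

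First I would match the $t^0$-coefficient. Using $f - tf' = a + O(t^2)$, $t^{-1}f' = c_2 + O(t^2)$, $(1-t^2)f'' = c_2 + O(t^2)$, and $(1-t^2)\tfrac{\lambda(f')^2}{1+\lambda f^2} = O(t^2)$, the constant term of~\eqref{eqn:rescaledODE} is $c_2 + a + (n-2)(a + \alpha c_2) = 0$; since $(n-2)\alpha = k-1$ this collapses to $k c_2 + (n-1)a = 0$, i.e.\ $c_2 = -\tfrac{n-1}{k}a$. Consequently $h_{\lambda,a}(0) = a + \alpha c_2 = \tfrac{k(n-2)-(k-1)(n-1)}{k(n-2)}\,a = \tfrac{n-k-1}{k(n-2)}\,a$, which is positive because $k \le n-2$. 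This gives the first line of the asserted identities.

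Next I would push the expansion to order $t^2$: one has $(1-t^2)f'' = c_2 + (\tfrac12 c_4 - c_2)t^2 + O(t^4)$, $f - tf' = a - \tfrac12 c_2 t^2 + O(t^4)$, $h_{\lambda,a} = (a+\alpha c_2) + (-\tfrac12 c_2 + \tfrac{\alpha}{6}c_4)t^2 + O(t^4)$, and $(1-t^2)\tfrac{\lambda(f')^2}{1+\lambda f^2} = \tfrac{\lambda c_2^2}{1+\lambda a^2}t^2 + O(t^4)$. Collecting the $t^2$-coefficient of~\eqref{eqn:rescaledODE}, and using $(n-2)\alpha = k-1$ once more to combine the $c_4$-terms into $\tfrac{k+2}{6}c_4$ and the $c_2$-terms into $-\tfrac{n+1}{2}c_2$, one obtains
\[
\tfrac{k+2}{6}\,c_4 \;-\; \tfrac{n+1}{2}\,c_2 \;+\; (n-2)\,\tfrac{\lambda c_2^2}{1+\lambda a^2}\,(a+\alpha c_2) \;=\; 0 .
\]
Substituting $c_2 = -\tfrac{n-1}{k}a$ and $a+\alpha c_2 = \tfrac{n-k-1}{k(n-2)}a$ and solving for $c_4$, and then using the elementary identity $\tfrac{\lambda a^3}{1+\lambda a^2} = a - \tfrac{a}{1+\lambda a^2}$ to separate off the term proportional to $\tfrac{a}{1+\lambda a^2}$, gives precisely the expression~\eqref{eqn:fourth-derivative}.

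The computation is routine; the only parts demanding care are the bookkeeping of the order-$t^2$ terms (in particular the contribution of $t^{-1}f'$ through $h_{\lambda,a}$) and the final rearrangement via $\tfrac{\lambda a^3}{1+\lambda a^2} = a - \tfrac{a}{1+\lambda a^2}$ that normalizes $c_4$ into the stated form. I do not anticipate any genuine obstacle beyond this algebra.
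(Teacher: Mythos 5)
Your proposal is correct and is essentially the paper's argument in different bookkeeping: matching the $t^0$- and $t^2$-coefficients of the Taylor expansion of~\eqref{eqn:rescaledODE} is exactly what the paper does by evaluating the equation, and then its second derivative, at $t=0$, with the same use of $(n-2)\alpha = k-1$ and the same final rearrangement of $\tfrac{\lambda a^3}{1+\lambda a^2}$. The preliminary observation that $t^{-1}f'$ is regular at the origin, and hence that the ODE has an analytic right-hand side in the class of even functions, is also made implicitly in the paper's computation $\lim_{t\to 0}(t^{-1}f'_{\lambda,a})=f''_{\lambda,a}(0)$.
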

\begin{proof}
    Since $f_{\lambda, a}'(0) = 0$, equation~\eqref{eqn:rescaledODE} together with $\lim_{t \to 0} (t^{-1} f_{\lambda, a}') = f_{\lambda, a}''(0)$ implies that
    \[
    f_{\lambda, a}''(0) + (n-1) f_{\lambda, a}(0) + (k-1) f_{\lambda, a}''(0) = 0 \implies f_{\lambda, a}''(0) = - \tfrac{n-1}{k} f_{\lambda, a}(0),
    \]
    whereby $f''_{\lambda, a}(0) = - \frac{n-1}{k}a$.
    For $h_{\lambda,a} =f_{\lambda, a} - A f'_{\lambda, a}$, we compute
     \begin{align*}
        h_{\lambda, a}(0) &= f_{\lambda, a}(0) - \lim_{t \to 0} (t^2-\alpha) (t^{-1} f_{\lambda, a}') = f_{\lambda, a}(0) + \alpha f_{\lambda, a}''(0) \\
        &= f_{\lambda, a}(0) - \frac{n-1}{k} \frac{k-1}{n-2} f_{\lambda, a}(0),
    \end{align*}
    so $h_{\lambda, a}(0) = \frac{n-k-1}{k(n-2)} a > 0$ for $k \leq n-2$.
    Also, $f^{(3)}_{\lambda, a}(0) = 0$ by evenness.
    To obtain $f^{(4)}_{\lambda,a}(0)$, we differentiate~\eqref{eqn:rescaledODE} twice, obtaining
    \[
    0 = f^{(4)} (0)- 3f''(0) + (n-2) \left[ 2 \frac{\lambda  f''(0)^2 }{1 + \lambda f(0)^2} \,  h(0) + h''(0) \right].
    \]
    Here, we used the fact that $f$ and $h$ are even functions, and computed
    \[
    \left( (1-t^2) \frac{\lambda (f')^2}{1 + \lambda f^2} \right)''(0) = \left(\frac{\lambda (f')^2}{1 + \lambda f^2} \right)''(0) = 2 \frac{\lambda  f''(0)^2 }{1 + \lambda f(0)^2}.
    \]
    Moreover, $h_{\lambda, a}'(0) = 0$ by evenness, and
    \[
    h''_{\lambda , a}(0) = f''(0) - 2 f''(0) + \tfrac{1}{3} \alpha f^{(4)}(0) = \tfrac{1}{3} \alpha f_{\lambda, a}^{(4)}(0) - f_{\lambda, a}''(0).
    \]
    Substituting the values of $f''_{\lambda, a}(0)$ and $h_{\lambda, a}(0)$ into these computations proves the formulas.
\end{proof}

\begin{lemma}\label{lemma:general-behavior-prep}
    The unique even solution $f_{\lambda, a}$ of equation~\eqref{eqn:rescaledODE} with scaling parameter $\lambda \in \bR_+$ and initial data $f_{\lambda, a}(0) = a > 0$ and $f'_{\lambda, a}(0) = 0$ exists for finite time $[0,b_{\lambda, a})$ with $b_{\lambda,a} \leq 1$.
    The function $f_{\lambda, a}$ remains strictly decreasing on its maximal existence interval $[0, b_{\lambda, a}]$ and has at most one positive root $t_{\lambda, a}$, in which case $t_{\lambda,a} > \sqrt{\frac{k-1}{n-2}}$.
    Moreover, $h = f_{\lambda,a} - A f'_{\lambda,a} > 0$ everywhere.
\end{lemma}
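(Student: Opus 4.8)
The plan is to work on the maximal existence interval $[0,b)$ of the even solution, $b = b_{\lambda,a}$, and to prove the structural facts in the order: (i) $f' < 0$ on $(0,\min(b,1))$ and $h := f - A_{n,k}f' > 0$ on $[0,\min(b,1))$; (ii) $b \le 1$; (iii) monotonicity, root count, and location of the root. Throughout I write $\alpha := \tfrac{k-1}{n-2}$, $A := A_{n,k}$, and use the evolution identity~\eqref{eqn:h'-Lf-particular} for $h$ along a solution of~\eqref{eqn:rescaledODE}, together with $f''_{\lambda,a}(0) = -\tfrac{n-1}{k}a < 0$ and $h_{\lambda,a}(0) = \tfrac{n-k-1}{k(n-2)}a > 0$ from Lemma~\ref{lemma:values-at-zero}. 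Local existence and uniqueness of the even profile are a standard Picard argument after passing to the regular variable $g := t^{-1}f'$; on $[0,1)$ the only finite-time obstruction is blow-up of $|f|$ or $|f'|$, while at $t=1$ the leading coefficient $1-t^2$ degenerates.

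For (i) I would run a single first–failure argument. Since $f''(0) < 0$ and $h(0) > 0$, the supremum $T$ of all $\tau \in (0,\min(b,1))$ with $f' < 0$ on $(0,\tau)$ and $h > 0$ on $[0,\tau)$ is positive; I claim $T = \min(b,1)$. If $T < \min(b,1)$ then at $t = T$ (where $1-T^2>0$) either $f'(T) = 0$ or $h(T) = 0$, and $f(T)\neq 0$, since $f\equiv 0$ is the only solution through a double zero and $f(0)=a>0$. If $f'(T) = 0$, then $h(T) = f(T) > 0$, so~\eqref{eqn:rescaledODE} forces $(1-T^2)f''(T) = -(n-1)f(T) < 0$, i.e. $f''(T) < 0$, contradicting that $f'$ has risen to $0$ from below. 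If $h(T) = 0$, then~\eqref{eqn:h'-Lf-particular} gives $h'(T) = -\tfrac{\alpha(1-\alpha)}{T^2(1-T^2)}f'(T)$; here $f'(T) < 0$ (otherwise $f(T)=f'(T)=0$ and $f\equiv 0$), so $h'(T) > 0$ when $\alpha > 0$, while for $\alpha = 0$ the equation for $h$ is linear homogeneous and $h(T)=0$ would give $h\equiv 0$ — either way this contradicts $h$ decreasing to $0$ at $T$. Hence $T = \min(b,1)$. As a by-product, any zero $t_0\in(0,\min(b,1))$ of $f$ has $h(t_0) = -A(t_0)f'(t_0) > 0$, hence $A(t_0)>0$, i.e. $t_0 > \sqrt\alpha$.

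For (ii), suppose $b > 1$. Fix $t_0\in(\sqrt\alpha,1)$; on $[t_0,1)$ part (i) gives $h>0$, $f'<0$, and $A\ge A(t_0)>0$, so~\eqref{eqn:h'-Lf-particular} yields $h'\ge A(t_0)\tfrac{n-1}{1-t^2}h>0$. Thus $h\ge h(t_0)>0$ on $[t_0,1)$, whence $h'\ge c\,(1-t^2)^{-1}$ with $c := (n-1)A(t_0)h(t_0)>0$, which integrates to $h(t)\to+\infty$ as $t\to 1^-$. Since $A$ is bounded on $[t_0,1]$, this forces $|f|$ or $|f'|$ to blow up before $t=1$, contradicting $b>1$; hence $b\le 1$ and $\min(b,1)=b$. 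Then (iii) is immediate: $f'<0$ on $(0,b)$ is strict monotonicity on $[0,b]$ (with $f(b)$ read as the limit, possibly $-\infty$), so $f$ has at most one positive root; at such a root $t_{\lambda,a}$ one has $h(t_{\lambda,a})>0$ and $h(t_{\lambda,a}) = -A(t_{\lambda,a})f'(t_{\lambda,a})$, so $f'(t_{\lambda,a})<0$ forces $A(t_{\lambda,a})>0$, i.e. $t_{\lambda,a}>\sqrt{(k-1)/(n-2)}$ (if the root equals $b<1$ the solution must have $f'\to-\infty$ there, and $h>0$ again forces $A(b)>0$). Finally $h>0$ everywhere on $[0,b)$ is exactly part (i) now that $\min(b,1)=b$.

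I expect the main obstacle to be part (ii): the bound $b\le 1$ must hold for every $(\lambda,a)$, and the only robust mechanism I see is to first pin down the signs of $f'$ and $h$ on the interval where $1-t^2>0$ and then exploit the singular coefficient $\tfrac{n-1}{1-t^2}$ in the $h$-equation to drive a blow-up. Getting the logical order right — so that $b\le 1$ is never invoked before it is proved — and keeping the sign bookkeeping correct on the portion of the interval where $f$ may already have dipped below zero (where $A$ can be negative and several terms change sign) are the delicate points; the first–failure argument in (i) is designed precisely to absorb all of these sub-cases uniformly.
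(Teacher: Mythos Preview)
Your proof is correct and uses the same basic ingredients as the paper (the initial values from Lemma~\ref{lemma:values-at-zero} and the $h$-evolution identity~\eqref{eqn:h'-Lf-particular}), but the organization differs in ways worth noting. The paper first reduces to $\lambda=1$, then proves $f'<0$ and $h>0$ separately on $[0,t_0]$ (the interval up to the first zero), and finally extends both to $[t_0,b)$ by invoking the comparison Lemma~\ref{lemma:comparison-of-solutions}; your single first-failure argument handles both signs and both regions at once, which is cleaner. For the location of the root, the paper routes through the integrating-factor relation $(f/\psi)'=-h/(A\psi)$ from~\eqref{eqn:integrating-factor-f-psi} to conclude $A(t_0)\ge 0$, whereas your direct observation $h(t_0)=-A(t_0)f'(t_0)>0$ with $f'(t_0)<0$ reaches $A(t_0)>0$ in one line. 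Most notably, you supply an explicit argument for $b\le 1$ by driving $h\to\infty$ via the singular coefficient $(n-1)/(1-t^2)$ in~\eqref{eqn:h'-Lf-particular}; the paper's proof of this lemma does not address that clause at all, leaving the sharper statement $b<1$ to the later Lemma~\ref{lemma:derivative-blow-up}.
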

\begin{proof}
Working with the rescaled function $f = \sqrt{\lambda} f_{\lambda,a}$, which solve~\eqref{eqn:odeStar}, we may set $\lambda=1$.
We suppress the dependence on $a$ in what follows and denote by $f, t, b$ the function $f_{\lambda,a}$ and points $b_{\lambda,a}, t_{\lambda, a}$ introduced above.
Since $f(0) = a > 0$ and $f'(0) = 0$, we have $h>0$ for small $t>0$.
We will make repeated use of the fact that $f \equiv 0$ is a solution of the ODE~\eqref{eqn:odeStar}, therefore ODE uniqueness forces any non-trivial solution $f$ to have $f'(t_*) \neq 0$ at any point where $f(t_*) = 0$.
    Using the definition of $f$ and equation~\eqref{eqn:h'-Lf-general} likewise shows that $f'(t_*) \neq 0$ whenever $f(t_*) = 0$, otherwise $f'(t_*) = f(t_*) = 0$ would force $f \equiv 0$.
    Combining these steps shows that the same is true for the function $\frac{f}{\psi}$ from~\eqref{eqn:integrating-factor-f-psi}.
    As a result, ODE uniqueness implies that $h(t_*) = h'(t_*) = 0$ cannot occur at some point $t_*$, unless $f \equiv 0$.

    We will show that $h>0$ in $[0,b]$.
    Let $t_0$ be the first zero of $f$, if it exists, so $f(t_0) = 0$ and $f'(t_0) < 0$.
    Lemma~\ref{lemma:values-at-zero} gives $f''(0) = - \frac{n-1}{k} < 0$, so $f'' < 0$ and $f' <0$ for small $t>0$.
    By the argument of Lemma~\ref{lemma:comparison-of-solutions}, we obtain $f'<0$ on $(0,t_0]$.
    We use this to prove that $h>0$ on $[0, t_0]$.
    Since $h(0) = \frac{n-k-1}{k(n-2)} a > 0$ from Lemma~\ref{lemma:values-at-zero}, we know that $h>0$ for small $t>0$.
    Therefore, if $h>0$ fails in $[0,b)$, there is a first zero $t_1$ with $h(t_1) = 0$ and $h'(t_1) < 0$.
    Using equation~\eqref{eqn:h'-Lf-general} and the definition of $h$, we obtain
    \begin{equation}\label{eqn:equalities-at-t1}
    f(t_1) = A(t_1) f'(t_1), \qquad h'(t_1) = - \frac{\alpha(1-\alpha)}{t_1^2(1-t_1^2)} f'(t_1).
    \end{equation}
    Note that $\alpha = \frac{k-1}{n-2} \in [0,1)$ for $1 \leq k \leq n-2$, so $\alpha(1-\alpha) \geq 0$.
    If $\alpha = 0$, then $h'(t_1) = 0$ would force $f \equiv 0$, as discussed above; therefore, $h'(t_1) < 0$, $\alpha \in (0,1)$ and $f'(t_1) = - \frac{t_1^2(1-t_1^2)}{\alpha(1-\alpha)} h'(t_1) > 0$.
    Also, $\frac{f(t_1)}{A(t_1)} = f'(t_1) > 0$.
    Since $f'<0$ on $(0,t_0]$ was proved above, we must have $t_1 > t_0$, so $h>0$ on $[0,t_0]$ unless $t_1 = t_0 = \sqrt{\alpha}$.
    Since $\frac{f}{\psi} > 0$ on $(0,t_0)$ and $\frac{f}{\psi}(t_0) = 0$, we have $(\frac{f}{\psi})'(t_0) \leq 0$, so $\frac{h}{A \psi}(t_0) = - (\frac{f}{\psi})'(t_0) \geq 0$.
    For $t_0 \neq \sqrt{\alpha}$, the above argument gives $\frac{h}{\psi}(t_0) > 0$, so $\frac{h}{A \psi}(t_0) \geq 0$ implies $A(t_0) \geq 0$ and $t_0 > \sqrt{\alpha}$.
    For $t_0 > \sqrt{\alpha}$, we can argue as in Lemma~\ref{lemma:comparison-of-solutions} to show that the properties 
    \[
    h > 0 \qquad \text{and} \qquad f' < 0
    \]
    are preserved on $[t_0, b)$.
    It follows that $(\frac{f}{\psi})' = - \frac{h}{A \psi} < 0$ and $\frac{f}{\psi}$ is a strictly decreasing function on $(t_0, b)$.
    In particular, $f(t_0) = 0$ makes $f<0$ on $(t_0, b)$, so $f$ has at most one zero in $(0, b)$.
    On the interval $(t_0, b)$, we now have that $f<0$, $\psi$ is strictly increasing, and $\frac{f}{\psi}$ is strictly decreasing, so $f$ is a strictly decreasing negative function.
    Using $f'<0$ on $(0, t_0]$, we conclude that $f'<0$ on $(0,b)$, so $f$ remains strictly decreasing on $[0,b]$.
    Note that the conclusions $f'<0$ and $h>0$ on $(0,b)$ do not require the existence of a positive root $t_0$, so they are valid for any solution $f$.
\end{proof}

For any function $f_{\lambda}$ satisfying the rescaled equation~\eqref{eqn:rescaledODE}, the function $f := \sqrt{\lambda} f_{\lambda}$ is a solution of the original equation~\eqref{eqn:odeStar}, so the results of Lemma~\ref{lemma:general-behavior-prep} imply that $f_{\lambda} - A f'_{\lambda} > 0$ on the domain of definition $(0,b_{\lambda})$ of $f_{\lambda}$.
This also implies that
\begin{equation}\label{eqn:non-positive}
    \cL f_{\lambda} = - \lambda (n-2) (1-t^2) \frac{(f'_{\lambda})^2}{1 + \lambda f_{\lambda}^2} (f_{\lambda} - A f'_{\lambda}) < 0 \qquad \text{on } \; (0,b_{\lambda}).
\end{equation}
We will use this fact repeatedly to show that the solutions of the rescaled equation~\eqref{eqn:rescaledODE} as $\lambda \uparrow \infty$ and as $\lambda \downarrow 0$ provide lower and upper barriers for the solution of~\eqref{eqn:rescaledODE} with the same initial data at $0$.
As $\lambda \downarrow 0$ in~\eqref{eqn:rescaledODE}, the limit equation becomes $\cL_{n,k} f = 0$, so the resulting function $u_0 = \rho f_0(t)$ is harmonic.
As $\lambda \to \infty$, the limit equation becomes $\on{div} ( \frac{\nabla u_{\infty}}{|\nabla u_{\infty}|}) = 0$, which is the $1$-Laplace equation.
Then, $u_{\infty} = \rho f_{\infty} (t)$ is a function of least gradient ($1$-harmonic), with the significance that all its regular level sets in $\bR^n$ are minimal hypersurfaces.
Here, $f_\infty$ solves with 
\begin{equation}\label{eqn:f_infty}
f_{\infty}^2 \Bigl( (1-t^2) f_{\infty}'' + (n-1) (f_{\infty}-t f_{\infty}') \Bigr) + (n-2)(1-t^2) (f_{\infty}')^2 (f_{\infty} - t f_{\infty}') = 0 
\end{equation}
such that $f_\infty(0) = 1$ and $f_\infty'(0) = 0$, which is the limit of solutions to equation~\eqref{eqn:rescaledODE} as $\lambda \uparrow \infty$.

We will now establish comparison results for the solutions of equation~\eqref{eqn:rescaledODE} by recasting the problem as a first-order ODE system.
For any function $f$, we introduce the Riccati variable $q := \frac{f'}{f}$ and let $w := \frac{\lambda f^2}{1 + \lambda f^2}$.
Then, equation~\eqref{eqn:rescaledODE} assumes the form
\begin{empheq}[left=\empheqlbrace]{align}
q' &= - q^2 - \dfrac{1-tq}{1-t^2} - \dfrac{n-2}{1-t^2}\bigl(1+(1-t^2)q^2 w\bigr)\bigl(1-A(t)q\bigr)
\label{eqn:system-ode-1} \\
w' &= 2w(1-w)q
\label{eqn:system-ode-2}
\end{empheq}
The properties $f(0) = a > 0$ and $f'(0) = 0$ become $q(0) = 0$ and $w(0) = w_0 \in (0,1)$.
Moreover, $q$ is an odd function while $w$ is even, so $q^{(2m)}(0) = w^{(2m+1)}(0) = 0$.
Applying the computations of Lemma~\ref{lemma:values-at-zero}, we obtain $q'(0) = - \frac{n-1}{k}$ and $w''(0) = - \frac{2(n-1)}{k} w_0(1-w_0)$, while
\begin{equation}\label{eqn:q-w-values-at-zero}
    \begin{aligned}
        q^{(3)}(0) &= - \frac{6(n-1)}{k^3 (k+2)} \bigl( k^2 n + (n-1) ( (n-k-1) w_0 + k) \bigr), \\
        w^{(4)}(0) &= \frac{12 (n-1)^2  w_0(1 - w_0)}{k^3 (k+2)} \Bigl(  k(k+2)(1 - 2 w_0) - \Bigl[ k^2 \frac{n}{n-1} + (n-k-1) w_0 + k \Bigr] \Bigr).
    \end{aligned}
\end{equation}
We recall from Lemma~\ref{lemma:general-behavior-prep} that any solution of the general equation~\eqref{eqn:rescaledODE} has $q < 0$ and $h = f_{\lambda} - Af_{\lambda}' > 0$; consequently, $1 - A(t)q = \frac{h}{f_{\lambda}} > 0$ while $f_{\lambda}$ remains positive.
We therefore obtain the following simple comparison property.
\begin{lemma}\label{lemma:simple-system-comparison}
    Let $(q_1, w_1)$ and $(q_2, w_2)$ be analytic solutions of equation~\eqref{eqn:system-ode-1} on an interval $[t_1,t_2]$, with $q_1(t_1) = q_2(t_1) < 0$ and $w_i(t_1) \in (0,1)$.
    If $w_2 - w_1$ has a constant sign on $(t_1, t_2]$, then
    \begin{equation}\label{eqn:constant-sign-q1-q2}
        \on{sgn} ( q_1 - q_2) = \on{sgn} ( w_2 - w_1) \qquad \text{on } \; (t_1,t_2].
    \end{equation}
\end{lemma}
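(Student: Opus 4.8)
The plan is to reduce the statement to a scalar first-order linear comparison for the difference $d := q_1 - q_2$, exploiting the fact that the right-hand side of~\eqref{eqn:system-ode-1} is \emph{affine} in $w$. Write $F(t,q,w)$ for the right-hand side of~\eqref{eqn:system-ode-1}; its only $w$-dependence is through the term $-(n-2)q^2 w\bigl(1-A(t)q\bigr)$. Hence
\[
d' = q_1' - q_2' = \bigl(F(t,q_1,w_1)-F(t,q_2,w_1)\bigr) + \bigl(F(t,q_2,w_1)-F(t,q_2,w_2)\bigr),
\]
where the second bracket equals \emph{exactly} $(n-2)q_2^2\bigl(1-A(t)q_2\bigr)(w_2-w_1)$, with no Taylor remainder, and the first bracket equals $a(t)\,d$ with $a(t):=\int_0^1 \partial_q F\bigl(t,q_2+s(q_1-q_2),w_1\bigr)\,ds$. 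Since $q_1(t_1)<0$ while $q(0)=0$ we have $t_1>0$, and analyticity of the solutions up to $t_2$ forces $t_2<1$; thus $[t_1,t_2]\subset(0,1)$, on which $\partial_q F$ is smooth and $a(t)$ is continuous. We arrive at
\[
d' = a(t)\,d + b(t)\,(w_2-w_1), \qquad b(t):=(n-2)\,q_2(t)^2\bigl(1-A(t)q_2(t)\bigr).
\]

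Next I would fix the sign of the coefficient $b$. By Lemma~\ref{lemma:general-behavior-prep} and the discussion preceding the present lemma, any solution of~\eqref{eqn:rescaledODE} satisfies $q<0$ and $1-A(t)q = h/f_\lambda > 0$ while $f_\lambda$ remains positive; in particular this holds along $(q_2,w_2)$ on $[t_1,t_2]$, so $q_2^2>0$ and $1-A(t)q_2>0$ there. Consequently $b(t)>0$ on all of $[t_1,t_2]$.

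The conclusion then follows from an integrating-factor argument. Setting $\mu(t):=\exp\bigl(-\int_{t_1}^t a(s)\,ds\bigr)>0$, we get $(\mu d)' = \mu\,b\,(w_2-w_1)$ with $\mu b>0$, and $(\mu d)(t_1)=d(t_1)=0$. Integrating from $t_1$ shows that for each $t\in(t_1,t_2]$ the quantity $(\mu d)(t)$ has the same sign as $w_2-w_1$ has on $(t_1,t]$, which is constant by hypothesis; since $\mu>0$, the same is true of $d=q_1-q_2$, giving $\on{sgn}(q_1-q_2)=\on{sgn}(w_2-w_1)$ on $(t_1,t_2]$. The only point requiring real care is the verification that $F$ is exactly affine in $w$ — so that the mismatch in the $w$-slot contributes the explicit, sign-definite term $b(t)(w_2-w_1)$ rather than a remainder — together with the positivity $b(t)>0$, which is precisely where the facts $q<0$ and $h>0$ from Lemma~\ref{lemma:general-behavior-prep} are used; the rest is routine.
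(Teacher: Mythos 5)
Your proof is correct, and it takes a genuinely different route from the paper's.
The paper argues pointwise: at any crossing point $t_*$ where $q_1(t_*)=q_2(t_*)=:q$, subtracting the two copies of~\eqref{eqn:system-ode-1} term-wise gives a formula for $(q_1-q_2)'(t_*)$ whose sign is that of $w_2-w_1$ (using $q^2>0$ and $1-Aq>0$); applied at $t_*=t_1$ this gives the initial ordering of $q_1-q_2$, with a supplementary analyticity/Taylor-coefficient argument when the first derivative degenerates because $(w_2-w_1)(t_1)=0$, followed by a ``no-first-return'' argument to propagate the sign to all of $(t_1,t_2]$.
You instead decompose $d=q_1-q_2$ via a mean-value split into $d'=a(t)\,d+b(t)(w_2-w_1)$, exploiting that the right-hand side of~\eqref{eqn:system-ode-1} is exactly affine in $w$ so that $b(t)=(n-2)q_2^2(1-Aq_2)$ appears with no remainder, then close with the integrating factor: $(\mu d)(t)=\int_{t_1}^t \mu b\,(w_2-w_1)$ has the constant sign of $w_2-w_1$ since $\mu b>0$.
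Both proofs ultimately rest on the same facts from Lemma~\ref{lemma:general-behavior-prep}, namely $q<0$ and $1-Aq=h/f>0$.
What your approach buys is that the Duhamel representation subsumes the degenerate case $(w_2-w_1)(t_1)=0$ uniformly — the integral is positive as soon as $w_2-w_1$ is positive on a set of positive measure in $(t_1,t)$, regardless of the order of vanishing at $t_1$ — and it avoids the separate first-return argument.
The paper's argument is more local and does not require setting up the integrating factor, but pays for it with the two-case analysis on derivative orders and an explicit contradiction at a putative first crossing.
(As a side note, the displayed formula for $(q_1-q_2)'(t_*)$ in the paper's proof contains a typographical slip — the correct expression is $(n-2)q^2(1-A(t_*)q)(w_2-w_1)$, consistent with your $b(t)$ — but the sign conclusion and the remainder of the paper's argument are unaffected.)
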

\begin{proof}
    By symmetry, we may assume that $w_2 - w_1 > 0$ on $(t_1, t_2]$.
    We consider a point $t_*$ where $q_1(t_*) = q_2(t_*)$.
    Subtracting equations~\eqref{eqn:system-ode-1} for the $q_i$ term-wise, we obtain
    \[
    (q_1 - q_2)' (t_*) = \tfrac{n-2}{1- t_*^2} (1 - A(t_*) q) (1 + (1-t_*^2) q^2 (w_2 - w_1) ) > 0
    \]
    by the positivity of $w_2 - w_1$.
    Since $q_1(t_1) = q_2(t_1) < 0$, we may apply this discussion at $t_* = t_1$ to deduce that $(q_1 - q_2)'(t_1) \geq 0$, with equality only if $w_2 = w_1$.
    In that case, we may use $w_2 - w_1 > 0$ on $(t_1, t_2]$ to find the first $m>0$ such that $(w_2 - w_1)^{(i)}(t_1) = 0$ for $0 \leq i \leq m-1$ and $(w_2 - w_1)^{(m)}(t_1) > 0$.
    The above relation implies that $(q_1 - q_2)^{(i+1)}(t_1)= 0$ for $0 \leq i \leq m-1$, and 
    \[
    (q_1 - q_2)^{(m+1)}(t_1) = \tfrac{n-2}{1 - t_1^2} ( 1 - A(t_1) q) (1 + (1- t_1^2) q^2 (w_2 - w_1)^{(m)}(t_1)) > 0
    \]
    whereby $(q_1 - q_2)' > 0$, and $q_1 -q_2 > 0$ for small $t - t_1 > 0$.
    If there was a first instance $t_* > t_1$ where $(q_1 - q_2)(t_*) = 0$, we would encounter $(q_1 - q_2)' (t_*) \leq 0$.
    On the other hand, the above discussion forces $(q_1 - q_2)'(t_*) > 0$, producing a contradiction.
    Therefore, $q_1 - q_2$ must be strictly positive on $(t_1, t_2]$, completing the proof.
\end{proof}

\begin{corollary}\label{cor:comparison-before-sqrtalpha}
    Consider functions $f_1, f_2$ defined on $[0,T]$ such that $f_i$ satisfies~\eqref{eqn:rescaledODE} with parameter $\lambda_i$, where $\lambda_2 \geq \lambda_1$, and $f_i(0) = a, f'_i(0) = 0$.
    If $\frac{f_2}{f_1} \geq \sqrt{\frac{\lambda_1}{\lambda_2}}$ holds on $[0,T]$, then either $\sqrt{\lambda_1} f_1 = \sqrt{\lambda_2} f_2$ or the function $\frac{f_2}{f_1}$ is strictly decreasing on $[0,T]$, with
    \begin{equation}\label{eqn:hf-comparison-before-alpha}
    f_2 - A f'_2 < f_1 - A f'_1 \qquad \text{on } \; [0, \min \{ \sqrt{\alpha}, T \} ].
    \end{equation}
    Moreover, $|f'_1| < + \infty$ holds as long as $|f'_2|<+\infty$ and $\frac{f_2}{f_1} \geq \sqrt{\frac{\lambda_1}{\lambda_2}}$.
\end{corollary}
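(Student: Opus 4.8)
The natural framework is the Riccati substitution $q_i := f_i'/f_i$ and $w_i := \lambda_i f_i^2/(1+\lambda_i f_i^2)\in(0,1)$, under which a solution of~\eqref{eqn:rescaledODE} with parameter $\lambda_i$ and data $f_i(0)=a$, $f_i'(0)=0$ corresponds to a solution of the system~\eqref{eqn:system-ode-1}--\eqref{eqn:system-ode-2} with $q_i(0)=0$ and $w_i(0)=\lambda_i a^2/(1+\lambda_i a^2)$. Clearing denominators shows that the hypothesis $f_2/f_1\ge\sqrt{\lambda_1/\lambda_2}$ on $[0,T]$ is \emph{equivalent} to $w_2\ge w_1$ there, and that $f_2/f_1$ being strictly decreasing is equivalent to $q_2<q_1$. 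If $\lambda_1=\lambda_2$ then $f_1\equiv f_2$ by ODE uniqueness and the first alternative holds; so I assume $\lambda_1<\lambda_2$, hence $w_1(0)<w_2(0)$. I will use Lemma~\ref{lemma:general-behavior-prep} freely: on $(0,T]$ one has $f_i>0$, $q_i<0$, $h_{f_i}>0$, and therefore $1-A q_i=h_{f_i}/f_i>0$.

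\textbf{Step 1: $q_1>q_2$ on $(0,T]$, hence $f_2<f_1$ and $f_2/f_1$ strictly decreasing.} Near $t=0$, the origin values (Lemma~\ref{lemma:values-at-zero} and~\eqref{eqn:q-w-values-at-zero}) give $q_i(0)=q_i''(0)=0$ and $q_i'(0)=-(n-1)/k$ independent of $\lambda_i$, while $q_i^{(3)}(0)$ is strictly decreasing in $w_i(0)$ because the coefficient of $w_0$ in~\eqref{eqn:q-w-values-at-zero} is $-\tfrac{6(n-1)^2(n-k-1)}{k^3(k+2)}<0$ (using $k\le n-2$). Thus $q_1^{(3)}(0)>q_2^{(3)}(0)$ and $q_1-q_2=\tfrac16\bigl(q_1^{(3)}(0)-q_2^{(3)}(0)\bigr)t^3+O(t^5)>0$ for small $t>0$. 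If some $t_*\in(0,T]$ were the first point with $q_1(t_*)=q_2(t_*)=:q$, then $q<0$ and $(q_1-q_2)'(t_*)\le0$; but subtracting~\eqref{eqn:system-ode-1} at $t_*$ gives $(q_1-q_2)'(t_*)=(n-2)\,(1-A(t_*)q)\,q^2\,\bigl(w_2(t_*)-w_1(t_*)\bigr)\ge0$, so $(q_1-q_2)'(t_*)=0$, which (as $q\ne0$) forces $w_1(t_*)=w_2(t_*)$. Then $(q_1,w_1)$ and $(q_2,w_2)$ agree at $t_*$, and uniqueness for the real-analytic system~\eqref{eqn:system-ode-1}--\eqref{eqn:system-ode-2} on $(0,1)$ makes them coincide, whence $w_1(0)=w_2(0)$, a contradiction. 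So $q_1>q_2$ on $(0,T]$; integrating $\bigl(\log(f_1/f_2)\bigr)'=q_1-q_2>0$ from $0$ gives $f_1>f_2$ on $(0,T]$, and $f_2/f_1$ is strictly decreasing on $[0,T]$.

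\textbf{Step 2: the comparison $h_{f_2}<h_{f_1}$ on $[0,\min\{\sqrt\alpha,T\}]$.} For $\alpha=0$ this interval is $\{0\}$, where $h_{f_1}=h_{f_2}$, so assume $\alpha\in(0,1)$. By Lemma~\ref{lemma:values-at-zero}, $h_{f_i}$ is even with $h_{f_i}(0)=\tfrac{n-k-1}{k(n-2)}a$, $h_{f_i}'(0)=0$, and $h_{f_i}''(0)=\tfrac13\alpha f_i^{(4)}(0)-f_i''(0)$; since $f_i^{(4)}(0)$ is strictly decreasing in $\lambda_i$ by~\eqref{eqn:fourth-derivative} (positive coefficient, and $a/(1+\lambda a^2)$ decreasing) while $f_i''(0)$ is not, we get $h_{f_1}''(0)>h_{f_2}''(0)$, hence $h_{f_1}>h_{f_2}$ for small $t>0$. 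Suppose $t_*\in(0,\min\{\sqrt\alpha,T\})$ is the first point with $h_{f_1}(t_*)=h_{f_2}(t_*)=:h>0$, so $(h_{f_1}-h_{f_2})'(t_*)\le0$. Differencing~\eqref{eqn:h'-Lf-particular} at $t_*$, and using $\lambda_i(f_i')^2/(1+\lambda_i f_i^2)=q_i^2 w_i$,
\[
(h_{f_1}-h_{f_2})'(t_*)=(n-2)\,A(t_*)\bigl(q_1(t_*)^2 w_1(t_*)-q_2(t_*)^2 w_2(t_*)\bigr)\,h\;-\;\frac{\alpha(1-\alpha)}{t_*^2(1-t_*^2)}\bigl(f_1'(t_*)-f_2'(t_*)\bigr).
\]
By Step 1, $0>q_1>q_2$ gives $q_1^2<q_2^2$, and $w_1\le w_2$, so $q_1^2 w_1-q_2^2 w_2<0$; as $A(t_*)<0$ for $t_*<\sqrt\alpha$, the first summand is positive. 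For the second, $h_{f_i}=f_i-A f_i'$ gives $f_i'=(f_i-h_{f_i})/A$, so $f_1'(t_*)-f_2'(t_*)=(f_1(t_*)-f_2(t_*))/A(t_*)<0$ by $f_1>f_2$ and $A(t_*)<0$; hence the second summand is positive too, contradicting $(h_{f_1}-h_{f_2})'(t_*)\le0$. Therefore $h_{f_2}<h_{f_1}$ on $(0,\min\{\sqrt\alpha,T\})$, and at $t=\sqrt\alpha$ (if $\le T$) one has $h_{f_i}(\sqrt\alpha)=f_i(\sqrt\alpha)$, so $h_{f_2}(\sqrt\alpha)<h_{f_1}(\sqrt\alpha)$ again by $f_2<f_1$.

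\textbf{Step 3 and the main obstacle.} On the maximal subinterval of $[0,T]$ where $f_1,f_2$ persist as positive solutions and the ratio bound holds, Step 1 gives $q_1>q_2$, hence $f_1'=q_1 f_1\ge q_2 f_1\ge a\,q_2$ (using $q_2<0$ and $f_1\le a$); so $f_1'$ stays bounded below by $a\inf_{[0,T]}q_2>-\infty$ as long as $f_2'$ (equivalently $q_2$) is finite, and since $f_1'<0$ this prevents $f_1$ from blowing up before $f_2$ does, which is the final assertion. I expect Step~2 to be the main obstacle: in the region $t<\sqrt\alpha$ one has $A(t)<0$, reversing the signs from Lemma~\ref{lemma:comparison-of-solutions} (valid for $t_0\ge\sqrt\alpha$), and the crux is that the identity $f_1'-f_2'=(f_1-f_2)/A$ at a crossing of the $h_{f_i}$, combined with the already-established $f_1>f_2$, makes both contributions to $(h_{f_1}-h_{f_2})'$ push the same way. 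A secondary subtlety, in Step~1, is that excluding the degenerate crossing $w_1(t_*)=w_2(t_*)$ requires uniqueness for the \emph{full} system~\eqref{eqn:system-ode-1}--\eqref{eqn:system-ode-2}, not merely for the $q$-equation in isolation.
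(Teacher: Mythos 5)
Your proof is correct, and Step~1 and Step~3 track the paper's reasoning closely (Step~1 re-derives the content of Lemma~\ref{lemma:simple-system-comparison} rather than citing it, and Step~3 bounds $f_1'$ via $f_1' = q_1 f_1 \geq a q_2$ instead of the paper's $f_2' < \sqrt{\lambda_1/\lambda_2}\,f_1'$, which is a harmless variant). The genuine divergence is Step~2. You establish $h_{f_2} < h_{f_1}$ on $(0,\min\{\sqrt\alpha,T\})$ through a first-crossing argument: computing $h_{f_i}''(0)$ from~\eqref{eqn:fourth-derivative} to obtain the strict inequality for small $t$, then differencing~\eqref{eqn:h'-Lf-particular} at a hypothetical crossing $t_*$ and checking that both the $q_i^2 w_i$ term and the $f_1' - f_2'$ term (rewritten via $f_i' = (f_i - h_{f_i})/A$) push $(h_{f_1}-h_{f_2})'(t_*)$ positive because $A<0$ on $(0,\sqrt\alpha)$. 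This works, but the paper does it in one line: since $h_{f_i} = f_i - Af_i' = f_i(1 - Aq_i)$, and $1-Aq_i > 0$ by Lemma~\ref{lemma:general-behavior-prep}, the already-established $0 < f_2 < f_1$ together with $q_2 < q_1 < 0$ and $A < 0$ (so $0 < 1-Aq_2 < 1-Aq_1$) gives $f_2(1-Aq_2) < f_1(1-Aq_1)$ as a product of strictly ordered positives, with no ODE comparison or endpoint check at $\sqrt\alpha$ needed. What your route buys is a uniform template that also runs the first-crossing machinery used elsewhere in the paper (Lemmas~\ref{lemma:general-behavior-prep}, \ref{lemma:comparison-of-solutions}); what the paper's route buys is that the entire comparison~\eqref{eqn:hf-comparison-before-alpha} is an algebraic corollary of Step~1, which is worth internalizing. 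One small point worth flagging in both your write-up and the statement: at $t=0$ one has $h_{f_1}(0) = h_{f_2}(0)$ from Lemma~\ref{lemma:values-at-zero} (and for $k=1$ the interval degenerates to $\{0\}$), so the strict inequality should really be read on the open interval $(0,\min\{\sqrt\alpha,T\}]$; your Step~2 makes this explicit, which is good.
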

\begin{proof}
    The functions $f_i$ are solutions of the ODE system~\eqref{eqn:system-ode-1}~--~\eqref{eqn:system-ode-2} with $w_i = \frac{\lambda_i f_i^2}{1 + \lambda_i f_i^2} = 1 - \frac{1}{1 + \lambda_i f_i^2}$.
    The assumption that $\lambda_2 f_2^2 \geq \lambda_1 f_1^2$ implies $w_2 \geq w_1$ on $[0,T]$, whereby Lemma~\ref{lemma:simple-system-comparison} shows that $( \frac{f_2}{f_1})' = \frac{f_2}{f_1} (q_2 - q_1)$ is decreasing on this interval, where $q_i := \frac{f'_i}{f_i}$.
    The functions $\sqrt{\lambda_i} f_i$ both satisfy the equation~\eqref{eqn:odeStar}, whereby ODE uniqueness forces $( \frac{f_2}{f_1})' < 0$ unless $f_1 \equiv f_2$.
    
    When the $\sqrt{\lambda_i} f_i$ are distinct, we conclude that $\frac{f_2}{f_1}$ is strictly decreasing on $[0,T]$; since $\frac{f_2}{f_1}(0) = 1$ and $f'_i < 0$, the function $f_1 - f_2$ is positive and $f_2 > \sqrt{\frac{\lambda_1}{\lambda_2}}f_1 > 0$, whereby $f'_2 < \sqrt{\frac{\lambda_1}{\lambda_2}} f'_1 < 0$.
    Thus, $|f'_1|$ remains bounded while $|f'_2|$ does, and $f_1$ has no derivative blow-up before $f_2$.

    Finally, the property~\eqref{eqn:hf-comparison-before-alpha} follows from writing
    \[
    f_2 - A f'_2 = f_2 ( 1 - A q_2) < f_1 (1 - A q_1) = f_1 - A f'_1
    \]
    due to $1 - A q_i > 0$ and $A < 0$ on $[0, \sqrt{\alpha}]$, whereby $q_2 < q_1$ implies $1 - A q_2 < 1 - A q_1$.
\end{proof}

\begin{lemma}\label{lemma:derivative-blow-up}
    The even solution $f_{\lambda,a}$ of equation~\eqref{eqn:rescaledODE} with $f_{\lambda,a}(0) = a > 0$ has derivative blow-up at a point $b_{\lambda,a} < 1$, where $|f_{\lambda,a}(b_{\lambda,a})| < + \infty$ and $f'_{\lambda,a}(b_{\lambda,a}) = - \infty$.
\end{lemma}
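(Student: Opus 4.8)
The plan is to show that $f_{\lambda,a}$ cannot exist all the way to $t=1$ with a finite derivative, so its maximal existence interval $[0,b_{\lambda,a})$ with $b_{\lambda,a}\le 1$ from Lemma~\ref{lemma:general-behavior-prep} must in fact terminate strictly before $t=1$ by a derivative blow-up. By rescaling $f=\sqrt{\lambda}f_{\lambda,a}$ we may as well work with the original equation~\eqref{eqn:odeStar} and $\lambda=1$. From Lemma~\ref{lemma:general-behavior-prep} we already know $f$ is strictly decreasing on its maximal interval, $f'<0$, and $h=f-Af'>0$ throughout, with $A(t)=t-\alpha t^{-1}>0$ for $t>\sqrt{\alpha}$.

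First I would argue that the solution must leave the region $\{f>0\}$ in finite ``$t$-time'': if $f$ stayed positive on all of $[0,1)$, then from~\eqref{eqn:non-positive} we have $\cL f<0$, i.e. $(pf')' < -(n-1)\tfrac{p}{1-t^2}f < 0$ with $p(t)=t^{k-1}(1-t^2)^{(n-k)/2}$, so $pf'$ is strictly decreasing; integrating from a fixed $t_1\in(\sqrt{\alpha},1)$ gives $f'(t)\le \tfrac{p(t_1)}{p(t)}f'(t_1) + \tfrac1{p(t)}\int_{t_1}^t(-(n-1)\tfrac{p}{1-s^2}f)\,ds$, and since $p(t)\to 0$ as $t\uparrow 1$ while $f\ge 0$, the term $\tfrac{p(t_1)f'(t_1)}{p(t)}\to -\infty$. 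Hence $f'(t)\to-\infty$ along any sequence $t\uparrow 1$ on which $f$ remains positive, contradicting that $f$ is bounded below by $0$ on a set of full measure near $1$ — more precisely, $f(t)=f(t_1)+\int_{t_1}^t f' \to -\infty$, contradicting $f\ge 0$. Therefore $f$ has a first positive root $t_0=t_{\lambda,a}\in(\sqrt{\alpha},1)$ or $f'$ blows up before reaching any such root; in either case let $b:=b_{\lambda,a}\le t_0$ (or $b<1$ directly) be the right endpoint of the maximal interval, so $b<1$.

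Next I would rule out the only other way the interval could close, namely $b<1$ with $f'(b)$ finite: in that case all coefficients of~\eqref{eqn:rescaledODE} are smooth and bounded near $(b,f(b),f'(b))$ — the only singular coefficients are the $t^{-1}$ and $(1-t^2)^{-1}$ factors, both finite away from $t=0$ and $t=1$ — so the Picard–Lindelöf theorem extends the solution past $b$, contradicting maximality. The remaining possibility is a blow-up in $f'$ itself; but $f$ stays bounded on $[0,b]$ because $f$ is monotone decreasing with $f(0)=a$ and (from the argument of Lemma~\ref{lemma:general-behavior-prep}, where $f/\psi$ is shown monotone) cannot decrease to $-\infty$ in finite time: indeed $(f/\psi)'=-h/(A\psi)$ and on $[t_0,b)$ the quantities $h>0$, $A>0$, $\psi>0$ are controlled away from $t=1$, so $f/\psi$ is bounded, hence $f$ is bounded. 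Therefore the blow-up must be $f'(b)=-\infty$ with $f(b)$ finite, which is the assertion.

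The main obstacle is the second step: making rigorous that $f$ (equivalently $f/\psi$) stays bounded as $t\uparrow b<1$, so that the blow-up is genuinely in the derivative and not in $f$ itself, and simultaneously extracting from~\eqref{eqn:non-positive} the quantitative statement that $f'$ must reach $-\infty$ at some $b<1$ rather than merely being unbounded as $t\uparrow 1$. The cleanest route is probably a Riccati/comparison argument: since $h>0$ we have $1-Aq>0$ with $q=f'/f$, and feeding this into~\eqref{eqn:system-ode-1} shows $q'\le -\tfrac{n-2}{1-t^2}(1-Aq)<0$ on $(t_0',1)$ for any $t_0'>\sqrt\alpha$, from which one gets a differential inequality forcing $q\to-\infty$ (equivalently $f'\to-\infty$, as $f$ is bounded) at some finite $b<1$; comparing with the blow-up time of the scalar ODE $y'=-c(1-t^2)^{-1}(1+|A|\,|y|)$ pins down $b<1$.
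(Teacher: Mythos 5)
Your proposal has a genuine gap in the hardest part of the lemma: forcing the derivative blow-up to happen \emph{strictly before} $t=1$ once the solution has crossed zero.

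Your first step (ruling out that $f$ stays positive on all of $[0,1)$, via the sign of $(pf')'$ from~\eqref{eqn:non-positive}) is essentially sound and parallels the paper's use of Corollary~\ref{cor:comparison-before-sqrtalpha}. But from there you write ``let $b := b_{\lambda,a} \le t_0$ \dots so $b<1$,'' which does not follow: the ODE does not terminate at the first positive root $t_0$ of $f$; the solution continues into the region $f<0$, and $b$ could a priori equal $1$. The real work is to rule out $b=1$ when $f$ has already gone negative, and that is exactly where your sketch fails.

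The Riccati step you propose contains two errors.
\begin{enumerate}[$(i)$]
\item \emph{Sign.} You claim $1-Aq>0$ because $h=f-Af'>0$. But $1-Aq = h/f$, so $\on{sgn}(1-Aq)=\on{sgn}(f)$; once $f<0$ past the zero $t_0$, one has $1-Aq<0$ (equivalently $Aq-1>0$, consistent with $q=f'/f>0$ there). Your differential inequality on $(t_0',1)$ is therefore wrong in the relevant regime.
\item \emph{No finite-time blow-up from a linear comparison.} Even after fixing the sign, the comparison ODE you suggest, $y' = -c(1-t^2)^{-1}(1+|A|\,|y|)$, is linear in $y$. Its solutions grow only like a negative power of $(1-t)$; they diverge as $t\uparrow 1$ but do \emph{not} blow up at any $b<1$. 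Linear $(1-t)^{-1}$-growth cannot produce the claimed $b_{\lambda,a}<1$.
\end{enumerate}

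What is missing is the quantitative estimate the paper calls Step 1: a uniform bound $f-Af' \ge c(n,k)\,|Af'|$ for $t\ge t_0$, which in Riccati variables reads $Aq-1 \ge \tfrac{c}{1-c}>0$ and hence $Aq-1 \gtrsim q$ for large $q$. Feeding this into~\eqref{eqn:system-ode-1} turns the $(n-2)q^2 w(Aq-1)$ term into a genuine cubic, yielding $q' \ge C_a q^3 + \cdots$, which \emph{does} blow up in finite time. One still has to check (Step 2 of the paper) that $q$ actually becomes large somewhere in $(t_0,1)$ — done by comparing with the linear equation $\cL_{n,k}\tilde f=0$ and a Frobenius analysis at $t=1$ — but without the uniform bound on $Aq-1$ the argument cannot close. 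Your last paragraph correctly identifies that the boundedness of $f$ and the derivative nature of the blow-up are the delicate points, but the route you sketch (linear Riccati comparison) cannot reach the conclusion.
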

\begin{proof}
    As in Lemma~\ref{lemma:general-behavior-prep}, we may reduce the result to $\lambda=1$ and suppress the dependence on $\lambda,a$.
    Consider the hypergeometric function $f_0(t) := {}_2F_1( \frac{n-1}{2}, - \frac{1}{2} ; \frac{k}{2} ; t^2)$, which satisfies $\cL_{n,k} f_0 = 0$ for $f_0$ as in~\eqref{eqn:legendre-form}.
    Let $t_{n,k}$ be the zero of $f_0$, which has $t_{n,k} > \sqrt{\frac{k-1}{n-2}}$ by Lemma~\ref{lemma:general-behavior-prep}.
    Since $f_0$ is the solution of~\eqref{eqn:rescaledODE} with $\lambda=0$, Corollary~\ref{cor:comparison-before-sqrtalpha} implies that $\frac{f}{f_0}$ is strictly decreasing while $f$ is positive.
    Therefore, either $f$ has derivative blow-up while non-negative, or it must reach zero at some point $t_0 \leq t_{n,k}$, with $t_0 > \sqrt{\frac{k-1}{n-2}}$ and $f'(t_0) \in (-\infty, 0)$.
    The desired result is automatic in the first case, so it remains to prove the desired property for a solution $f$ of~\eqref{eqn:odeStar} with a zero at $t_0 > \sqrt{\frac{k-1}{n-2}}$.

    \smallskip \noindent \textbf{Step 1:}
    We claim that there exists a $c(n,k) \in (0,1)$ such that every such solution satisfies
    \begin{equation}\label{eqn:nonlinear-term-bound}
        f - Af' \geq c |Af'| \quad \text{for } \; t \geq t_0.
    \end{equation}
    We equivalently seek to prove that the modified quantity $Q_c := f - (1-c) A f'$ satisfies $Q_c \geq 0$.
    First, when $k \geq 2$, we observe that
    \begin{align*}
        Q'_c = f' - (1-c) A' f' - (1-c) A f'' > (-f') \bigl( (1-c) A' -1 \bigr) 
    \end{align*}
    since $A>0$ and $f$ is strictly concave by Lemma~\ref{lemma:general-behavior-prep}.
    Since $A' = 1 + \frac{k-1}{n-2} t^{-2} \geq \frac{n+k-3}{n-2}$, we may choose $c(n,k) = \frac{1}{2} \frac{k-1}{n+k-3}$ to ensure that
    \[
    (1-c) A' -1 \geq \tfrac{2n+k-5}{2(n+k-3)} \cdot \tfrac{n+k-3}{n-2} - 1 = \tfrac{k-1}{2(n-2)}, \qquad Q'_c > \tfrac{k-1}{2(n-2)} |f'| > 0,
    \]
    which ensures that $Q_c$ is strictly increasing.
    Since $Q_c(t_0) = (1-c) |Af'|(t_0)> 0$, we conclude that $Q_c > 0$ and obtain~\eqref{eqn:nonlinear-term-bound}.

    It remains to consider $k=1$.
    Recall $f_{\infty}$ as defined in equation~\eqref{eqn:f_infty} to be the solution of equation~\eqref{eqn:rescaledODE} as $\lambda \uparrow \infty$.
    The function $f_{\infty}$ remains defined and positive on some interval $[0, \tilde{b}_{n}]$, whereby Corollary~\ref{cor:comparison-before-sqrtalpha} shows that $\frac{f_{\infty}}{f}$ is strictly decreasing on $[0, \tilde{b}_n]$.
    In particular, the zero of $f$ satisfies $t_0 > \tilde{b}_n$.
    For $t \geq t_0$, we consider $r := \frac{-f}{t (-f)' } \geq 0$, for which we compute
    \[
    r' = \frac{1-r}{t} - r \frac{f''}{f'} \leq \frac{1-r}{t}, \qquad \text{due to } \; \frac{f''}{f'} > 0.
    \]
    Equivalently, $ \bigl( t (1-r) \bigr)' \geq 0$, and since $r(t_0) =0 $, we obtain
    \[
    t(1-r(t)) \geq t_0(1-r(t_0)) = t_0 \implies r(t) \leq 1 - \tfrac{t_0}{t} \leq 1 - t_0 < 1 - \tilde{b}_n.
    \]
    Since $A_{n,1}(t) = t$, we conclude that~\eqref{eqn:nonlinear-term-bound} is satisfied with $c(n,1) = \tilde{b}_n > 0$, since
    \[
    f > (1- \tilde{b}_n) Af' \implies f - Af' > \tilde{b}_n |Af'|.
    \]
    This establishes the desired property~\eqref{eqn:nonlinear-term-bound} for all $n,k$.

    \smallskip \noindent \textbf{Step 2:}
    Using the lower bound~\eqref{eqn:nonlinear-term-bound}, we now prove the desired blow-up behavior of $f$.
    Since $f$ is concave, the secant line bound yields
    \[
    f(t) \leq f(t_0) + f'(t_0) (t - t_0) = f'(t_0) (t -t_0) \implies |f'(t_0)| \geq \tfrac{a}{t_0}.
    \]
    Using $t_0 \leq t_{n,k}$ and $f'(t) < - \frac{a}{t_0}$ for $t>t_0$, we may therefore take 
    \[
    \hat{t}_1 := \max \Bigl\{ 1 - \frac{1}{5n} , \frac{1 + t_{n,k}}{2} \Bigr\} \in (t_{n,k}, 1)
    \]
    to see that either $f$ is undefined due to derivative blow-up while remaining positive, or $f(\hat{t}_1) < - C(n,k) a$.
    Consequently, $\frac{f^2}{1+f^2} > C(n,k,a)$ for $t > \hat{t}_1$, and $A(t)> c_2(n,k)$ on $[\hat{t}_1 ,1]$, since $\hat{t}_1 > t_{n,k} > \sqrt{\frac{k-1}{n-2}}$.
    Finally, the inequality~\eqref{eqn:nonlinear-term-bound} allows us to bound $Aq-1 \geq c Aq \geq Cq$.
    Combining these bounds in~\eqref{eqn:system-ode-1}, together with $w = \frac{f^2}{1+f^2} < 1$ and $(n-2) A = (n-2) t - (k-1) t^{-1}$, we obtain
    \begin{equation}\label{eqn:q-ODE-bound}
    \begin{split}
    q' &= -q^2+\frac{tq-1}{1-t^2}
    +\frac{n-2}{1-t^2}(Aq-1) + (n-2)q^2w(Aq-1) \\
    &\geq - (n-1) \left( q^2 + \frac{1}{1-t^2} \right) + \frac{(n-1) t - (k-1) t^{-1}}{1-t^2} q + C_a q^3.
    \end{split}
    \end{equation}
    Next, we prove that $q \to \infty$ as $t \uparrow 1$: since $Aq-1 > 0$ from Lemma~\ref{lemma:general-behavior-prep}, the inequality~\eqref{eqn:q-ODE-bound} implies that $q \geq \tilde{q}$, where $\tilde{q} = \frac{\tilde{f}'}{\tilde{f}}$ is defined by
    \[
    \tilde{q}' = - \tilde{q}^2 + \frac{t \tilde{q} - 1}{1-t^2} + \frac{n-2}{1-t^2} (A \tilde{q} - 1), \qquad \tilde{q} = \frac{\tilde{f}'}{\tilde{f}}, \qquad \cL_{n,k} \tilde{f} = 0,
    \]
    for $\tilde{f}$ the solution of the linear operator with $\tilde{f}(t_0) = 0$ and $\tilde{f}(t_0) = -1$.
    Expanding near $t=1$ shows that the kernel of the operator $\cL_{n,k}$ has two linearly independent elements with Frobenius series expansions corresponding to the indicial roots $r_{\text{reg}} = 0$ and $r_{\text{sing}} = \frac{k}{2} - \frac{n-1}{2} - (- \frac{1}{2}) = - \frac{n-k-2}{2}$.
    The first root corresponds to a solution $\tilde{f}_{\text{reg}}(t)$ that remains regular and bounded at $1$, while the second produces $\tilde{f}_{\text{sing}}(t) \sim (1-t^2)^{- \frac{n-k-2}{2}}$ near $t$ for $k<n-2$, or $\tilde{f}_{\text{sing}}(t) \sim \log(1-t^2)$ for $k=n-2$.
    Let us write $\tilde{f} = c_1 \tilde{f}_{\text{reg}}(t) + c_2 \tilde{f}_{\text{sing}}(t)$; we will show that $c_2 \neq 0$.
    Using the self-adjoint form~\eqref{eqn:legendre-form} of $\cL_{n,k}$, we may multiply the expression $\cL_{n,k} f_{\text{reg}}(t) = 0$ and integrate on any interval $[\tau,1]$ to obtain
    \[
    \int_{\tau}^1 p(t) (\tilde{f}'_{\text{reg}})^2 \, dt + (n-1) \int_{\tau}^1 \frac{p(t)}{1-t^2} \tilde{f}_{\text{reg}}^2 \, dt = - p(\tau) \tilde{f}_{\text{reg}}(\tau) \tilde{f}'_{\text{reg}}(\tau) = - \tfrac{1}{2} p(\tau) ( \tilde{f}_{\text{reg}}^2)'(\tau)
    \]
    where $p(t) = t^{k-1}(1-t^2)^{\frac{n-k}{2}}$, so the boundary term at $t=1$ vanishes due to $p(1) =0$ and $|\tilde{f}_{\text{reg}}| < \infty$.
    The left-hand side is a sum of two strictly positive terms, so $( \tilde{f}_{\text{reg}}^2)' < 0$ forces $\tilde{f}_{\text{reg}} > 0$ and $\tilde{f}'_{\text{reg}} < 0$ on $[0,1)$.
    If $c_2 = 0$, then $\tilde{f} = c_1 \tilde{f}_{\text{reg}}(t) \neq 0$ on $[0,1)$, contradicting $\tilde{f}(t_0) = 0$.
    Therefore, $c_2 \neq 0$ yields $\tilde{q} = \frac{\tilde{f}'(t)}{\tilde{f}(t)} \sim \frac{n-k-2}{2(1-t)}$, if $k<n-2$, or $\tilde{q} \sim \frac{1}{(1-t) |\log(1-t)|}$ if $k=n-2$.
    In either case, we have $\tilde{q} \to \infty$, hence also $q \to \infty$ for $t \uparrow 1$ as claimed.

    \smallskip \noindent \textbf{Step 3.} By the above discussion, we may find some point $t_1 \in ( \hat{t}_1 , 1)$ where $q ( t_1) \geq q_0 (t_1) \geq 10(C_a^{-1}+1) n$.
    Furthermore, for $t \geq \hat{t}_1 \geq 1 - \frac{1}{5n}$ and $k \leq n-2$ we may bound
    \[
    (n-1) t - (k-1) t^{-1} \geq (n-1) ( 1 - \tfrac{1}{5n}) - (n-3) (1 - \tfrac{1}{5n})^{-1} = \tfrac{14}{5(5n-1)} + \tfrac{1}{5n} + \tfrac{8}{5},
    \]
    so the expression~\eqref{eqn:q-ODE-bound} implies the bound
    \begin{align*}
    q' \; \geq \; - (n-1) \bigl( q^2 + (1-t^2)^{-1} \bigr) + \tfrac{8}{5} (1-t^2)^{-1} q + C_a q^3  \; \geq \; (1-t^2)^{-1} q +  \tfrac{1}2{ C_a q^3} \, .
    \end{align*}
    This inequality holds at $t_1$ and implies that $q'(t_1) > 0$; therefore, $q$ is strictly increasing for $t \geq t_1$, hence the above inequality remains valid for all $t \geq t_1$.
    We now claim that any function $q$ satisfying
    \begin{equation}\label{eqn:q-differential-inequality}
        q' \geq (1-t^2)^{-1} q + C q^3, \qquad \text{for some } \; C>0,
    \end{equation}
    with initial condition $q(t_1) > 0$ at some $t_1 < 1$ blows up at a point $b<1$.
    This property follows directly by rearranging the inequality~\eqref{eqn:q-differential-inequality} into
    \[
    (q^{-2})' \leq (1-t^2)^{-1} q^{-2} + C \implies q(t)^{-2} \leq \frac{1-t}{1+t} \left( \frac{1 + t_1}{1 - t_1} q(t_1)^{-2} - 2 C \int_{t_1}^t \frac{1+s}{1-s} \, ds \right)
    \]
    by using the integrating factor $\mu(t) = \exp( \int \frac{2}{1-t^2} \, dt) = \frac{1+t}{1-t}$ for the ODE.
    Using the equality $\int \frac{1+x}{1-x} \, dx = - x - 2 \log (1-x)$ to evaluate the above integral, we see that the right-hand side becomes zero at the point $b_* < 1$ that solves the equation
    \[
    t_1 - b_* - 2 \log \Bigl( \frac{1-b_*}{1 - t_1} \Bigr) = \frac{1 + t_1}{1 - t_1} \cdot \frac{1}{2 C q(t_1)^2}.
    \]
    Consequently, $q = \frac{f'}{f} \to \infty$ as $t \uparrow b < 1$ blows up strictly before $1$, as claimed.
    Computing near that point, we see that equation~\eqref{eqn:q-ODE-bound} produces dominant behavior $q' \asymp C_1 q^3$, meaning that $q$ has square-root blow-up $q \sim C_2 (t-b)^{- \frac{1}{2}}$ near $b$.
    Finally, writing $f(b) = f(t_1) \exp( \int_{t_1}^b q)$ with $\int_{t_1}^b q < + \infty$ shows that $|f(b)| < + \infty$.
    Thus, $f$ remains finite until blow-up, completing the proof.
\end{proof}

We now study the variation of solutions to the general equation~\eqref{eqn:rescaledODE} with respect to the initial height and scaling parameters.
We express the equation as
    \begin{equation}\label{eqn:express-in-form-a-lambda}
    \cL f = - (n-2) (1-t^2) \Phi(t,f,f', \lambda), \qquad \Phi(t,f,p,\lambda) := \frac{\lambda p^2}{1+ \lambda f^2}( f - Ap)
    \end{equation}
and let $f_{\lambda, a}$ be the unique locally defined even solution of equation~\eqref{eqn:rescaledODE} with parameter $\lambda$ and initial data $f_{\lambda, a}(0) = a$ and $f'_{\lambda, a}(0) = 0$, having maximal interval of definition $[0, b_{\lambda,a}]$ where $b_{\lambda,a} \leq 1$ and $f'_{\lambda,a}(b_{\lambda,a}) = - \infty$.
Expressed in the form $f'' = F(t,f,f',\lambda)$, the ODE for $f$ has $F \in C^{\infty}( (0 , 1) \times \bR \times \bR \times \bR )$ with a removable singularity at $0$ within the class of even functions.
Applying the continuous dependence theorem for the solutions of ODEs in terms of the initial conditions and parameters shows that $(\lambda,a) \mapsto b_{\lambda,a}$ is a $C^2$ map $[0,\infty)^2 \to (0,1]$.
Moreover, $(\lambda, a) \mapsto f_{\lambda, a}$ defines a $C^2$ mapping from $[0,\infty)^2$ to the space of even $C^2$ functions on $(0, \inf_{\lambda,a} b_{\lambda,a})$.

Suppressing either variable $\lambda$ or $a$ when it is kept fixed, we denote by $v_{\lambda} := \partial_{\lambda} f_{\lambda}(t)$ and $v_a := \partial_a f_a(t)$, respectively, the variational derivatives of $f_{\lambda, a}$ along the parameter $\lambda$ or the initial height variable $a$.
The functions $v_{\lambda}$ and $v_a$ are even and solve the linearized equations
\begin{equation}\label{eqn:linearized-va-vlambda}
    P_{f_{\lambda}} v_{\lambda} = - (n-2) (1-t^2) \partial_{\lambda} \Phi_{\lambda} \qquad \text{and} \qquad  P_{f_a} v_a = 0,
\end{equation}
where $P_f$ denotes the linearization of equation~\eqref{eqn:rescaledODE} at $f$, given by
\begin{equation}\label{eqn:linearization-at-f}
\begin{split}
    P_f v &:= \cL_{n,k} v + (n-2) (1-t^2) \left[ \partial_f \Phi (t,f,f', \lambda) \, v + \partial_p \Phi(t, f, f',\lambda) \, v' \right], \\
    \frac{P_f v}{1-t^2} \!&\;= v'' - \frac{t}{1-t^2} v' + (n-2) \left( \partial_p \Phi - \frac{t - \alpha t^{-1}}{1- t^2} \right) v' + \left( \frac{n-1}{1-t^2} + (n-2) \partial_f \Phi \right) v .
\end{split}
\end{equation}
The coefficients of the ODE~\eqref{eqn:linearized-va-vlambda} are therefore smooth up to $t=0$ because the singularity introduced by $A_{n,k}(t)$ is removable due to the evenness of the functions $f_{\lambda, a}$ and $v_a, v_{\lambda}$.
We differentiate the equalities of Lemma~\ref{lemma:values-at-zero} to obtain the values of $v_a, v_{\lambda}$ and their derivatives at $0$,
\begin{equation}\label{eqn:variation-field-values-at-zero}
\begin{aligned}
v_a(0) &= 1, \qquad v'_a(0) = 0, \qquad & v''_a(0) &= - \frac{n-1}{k}, \\
v_{\lambda}(0) &= v'_{\lambda}(0) = v''_{\lambda}(0) = v^{(3)}_{\lambda}(0) = 0, \qquad & v^{(4)}_{\lambda}(0) &= - \frac{ 6a^3 (n-1)^2 (n-k-1)}{k^3 (k+2) ( 1+ \lambda a^2)^2}.
\end{aligned}
\end{equation}
These computations show that for small $t>0$, we have $v'_a, v''_a < 0$, and $v^{(j)}_{\lambda} < 0$ for all $0 \leq j \leq 4$.
In fact, it will be true that for any $t>0$, we have $v_a^{(i)} < 0$ for all $1 \leq i \leq 4$ and $v_{\lambda}^{(j)} < 0$ for all $0 \leq j \leq 4$, but we will not require such a strong conclusion.
\begin{lemma}\label{lemma:relating-va-v-lambda}
The variation fields $v_a, v_{\lambda}$ at a given function $f_{\lambda,a}$ are related by
\begin{equation}\label{eqn:relating-the-variations}
2 \lambda v_{\lambda} = a v_a - f_{\lambda,a}.
\end{equation}
\end{lemma}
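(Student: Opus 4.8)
The plan is to exploit the exact rescaling symmetry between \eqref{eqn:odeStar} and \eqref{eqn:rescaledODE} already recorded above. Recall that if $f_{1,b}$ is the unique even solution of \eqref{eqn:odeStar} with $f_{1,b}(0)=b$, $f_{1,b}'(0)=0$, then $\lambda^{-1/2}f_{1,b}$ solves \eqref{eqn:rescaledODE} with scaling parameter $\lambda$ and initial height $\lambda^{-1/2}b$. Taking $b=\sqrt\lambda\,a$ and invoking ODE uniqueness, this gives the identity
\[
f_{\lambda,a}(t)=\lambda^{-1/2}\,f_{1,\sqrt\lambda\,a}(t),
\]
which expresses the two-parameter family $f_{\lambda,a}$ in terms of the one-parameter family $b\mapsto f_{1,b}$, valid on their common interval of existence.

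Next I would record the regularity needed to differentiate: by the continuous/smooth dependence discussion preceding the lemma, $(\lambda,a)\mapsto f_{\lambda,a}$ and $b\mapsto f_{1,b}$ are $C^2$ into the space of even $C^2$ functions on any $[0,\tau]$ with $\tau$ below the relevant existence times, and $(\lambda,a)\mapsto(1,\sqrt\lambda\,a)$ is $C^\infty$; hence the chain rule applies pointwise in $t$. Differentiating the boxed identity in $a$ (with $\lambda$ fixed) gives $v_a=\partial_a f_{\lambda,a}=\lambda^{-1/2}\,(\partial_b f_{1,b})|_{b=\sqrt\lambda a}\cdot\sqrt\lambda=(\partial_b f_{1,b})|_{b=\sqrt\lambda a}$, and differentiating in $\lambda$ (with $a$ fixed) gives
\[
v_\lambda=\partial_\lambda f_{\lambda,a}=-\tfrac12\lambda^{-3/2}f_{1,\sqrt\lambda a}+\lambda^{-1/2}(\partial_b f_{1,b})|_{b=\sqrt\lambda a}\cdot\tfrac{a}{2}\lambda^{-1/2}=-\tfrac{1}{2\lambda}f_{\lambda,a}+\tfrac{a}{2\lambda}v_a .
\]
Multiplying through by $2\lambda$ yields \eqref{eqn:relating-the-variations}.

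It is worth noting an alternative, purely ODE-theoretic derivation in the spirit of the surrounding section, in case one prefers not to invoke the rescaling: both $2\lambda v_\lambda$ and $a v_a-f_{\lambda,a}$ are even and solve the \emph{same} linear inhomogeneous equation $P_{f_{\lambda,a}}(\,\cdot\,)=-(n-2)(1-t^2)\cdot 2\lambda\,\partial_\lambda\Phi_\lambda$ on $(0,b_{\lambda,a})$. For the first this is $2\lambda$ times \eqref{eqn:linearized-va-vlambda}; for the second one uses $P_{f}v_a=0$ together with the Euler relation $f\,\partial_f\Phi+f'\,\partial_p\Phi=\Phi+2\lambda\,\partial_\lambda\Phi$ coming from the degree-one homogeneity of $\Phi$ under $(f,p,\lambda)\mapsto(sf,sp,s^{-2}\lambda)$, which combined with $\cL f=-(n-2)(1-t^2)\Phi$ gives $P_f f=(n-2)(1-t^2)\cdot 2\lambda\,\partial_\lambda\Phi$. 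Since both sides vanish to first order at $t=0$ by Lemma~\ref{lemma:values-at-zero} and \eqref{eqn:variation-field-values-at-zero}, and the linearized operator has coefficients extending smoothly across $t=0$ on even functions, uniqueness closes the argument.

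The computation is routine; the only care needed is bookkeeping the rescaling correctly — in particular which parameter is scaled and by which power of $\lambda$ — and checking that all the differentiations take place on a common interval of existence. I do not anticipate a genuine obstacle here.
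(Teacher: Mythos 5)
Your main derivation is essentially the paper's proof. The paper writes the rescaling identity as $f_{\lambda,a}(t)=a\,f_{\lambda a^2,1}(t)$ (normalizing the initial height to $1$) and differentiates in $a$ and $\lambda$ by the chain rule; you instead write $f_{\lambda,a}(t)=\lambda^{-1/2}f_{1,\sqrt{\lambda}a}(t)$ (normalizing the scaling parameter to $1$) and do the same chain-rule computation. These are the same rescaling symmetry read in two different gauges, and both yield \eqref{eqn:relating-the-variations} by identical bookkeeping.

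Your alternative argument is also correct, and it is worth noting its relation to the paper: the computation showing $P_f f = 2\lambda(n-2)(1-t^2)\partial_\lambda\Phi$ via the Euler relation \eqref{eqn:phi-homogeneity-defect} appears verbatim right after Lemma~\ref{lemma:relating-va-v-lambda}, where the authors present it as a \emph{consistency check} of the lemma rather than as its proof. You upgrade this to a standalone proof by observing that both $2\lambda v_\lambda$ and $av_a - f_{\lambda,a}$ satisfy the same second-order linear inhomogeneous ODE, and that they agree together with their first derivatives at $t=0$ (both vanish, by \eqref{eqn:variation-field-values-at-zero} and $f_{\lambda,a}(0)=a$, $f'_{\lambda,a}(0)=0$), so ODE uniqueness on the regular even problem forces equality. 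This is a legitimate alternative route; it buys a derivation entirely internal to the linearized framework, at the cost of the homogeneity bookkeeping for $\Phi$, whereas the rescaling argument is shorter but requires tracking which variable scales by which power of $\lambda$.
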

\begin{proof}
Since $f_{\lambda,a}$ is a solution of the rescaled equation~\eqref{eqn:rescaledODE} with parameter $\lambda$ and $f_{\lambda,a}(0) = a$, then $\sqrt{\lambda} f_{\lambda,a}$ solves the original equation~\eqref{eqn:odeStar} with initial value $\sqrt{\lambda} a$.
Consequently, $f_{\lambda,a}(t) = a f_{\lambda a^2,1}(t)$; let $\tilde{f}_{\mu}(t) := f_{\mu,1}(t)$.
Setting $\mu = \lambda a^2$, we compute $\tilde{f}_{\mu} = \frac{1}{a} f_{\lambda,a}$ and
\begin{align*}
    v_a &= \partial_a f_{\lambda,a} = \partial_a ( a f_{\lambda a^2,1}(t)) = \tilde{f}_{\mu} + a (\partial_{\mu} \tilde{f}_{\mu}) \partial_a ( \lambda a^2) =  \tfrac{1}{a} f_{\lambda,  a} + 2 \lambda a^2 (\partial_{\mu} \tilde{f}_{\mu}), \\
    v_{\lambda} &= \partial_{\lambda} f_{\lambda,a} = \partial_{\lambda} (a f_{\lambda a^2,1}(t)) = a (\partial_{\mu} \tilde{f}_{\mu}) \partial_{\lambda} ( \lambda a^2) = a^3 ( \partial_{\mu} \tilde{f}_{\mu}).
\end{align*}
Using $a (\partial_{\mu} \tilde{f}_{\mu}) = \frac{1}{a^2} v_{\lambda}$ from the second equation into the first one, we obtain $v_a = \frac{1}{a} f_{\lambda,a} + \frac{2\lambda}{a} v_{\lambda}$.
Rearranging this expression establishes the property~\eqref{lemma:relating-va-v-lambda}.
\end{proof}

The coefficients of the linearized operator $P_f$ in~\eqref{eqn:linearization-at-f} are given by
\begin{equation}\label{eqn:pg-Phi-coefficients}
\partial_f \Phi = - \frac{2 \lambda^2 f p^2}{(1+ \lambda f^2)^2} (f - Ap) + \frac{\lambda p^2}{1+ \lambda f^2}, \qquad \partial_p \Phi = \frac{2 \lambda p}{1+ \lambda f^2} (f - Ap) - \lambda A \frac{p^2}{1+\lambda f^2}.
\end{equation}
Using these computations in~\eqref{eqn:linearization-at-f}, we record the resulting expression as
\begin{equation}\label{eqn:Pf-v-linearized}
    P_f v := \cL v + (n-2) (1-t^2) \left( \frac{\lambda (2ff' - 3 A (f')^2)}{1+ \lambda f^2} v' + \frac{\lambda (f')^2 (1- \lambda f^2 + 2 \lambda A f f') }{(1+ \lambda f^2)^2} v  \right)
\end{equation}
Observe that $v_a$ is the even principal solution of the homogeneous equation $P_f v = 0$, while $v_{\lambda}$ is the even solution of the inhomogeneous equation $P_fv = - (n-2) (1-t^2) \partial_{\lambda} \Phi_{\lambda}$.
We compute
\[
\partial_{\lambda} \Phi = \frac{p^2}{(1 + \lambda f^2)^2} (f - Ap), \qquad P_{f_{\lambda}} v_{\lambda} = - (n-2)(1-t^2) \frac{(f_{\lambda}')^2}{(1+\lambda f_{\lambda}^2)^2} (f_{\lambda} - A f'_{\lambda}) < 0
\]
with strictly negative source term for $t>0$, in view of~\eqref{eqn:non-positive}.

A direct computation gives the homogeneity identity
\begin{equation}\label{eqn:phi-homogeneity-defect}
   p (\partial_p \Phi) + f ( \partial_f \Phi) - \Phi = 2 \lambda \frac{p^2}{(1 + \lambda f^2)^2} ( f - A p) = 2 \lambda \partial_{\lambda} \Phi.
\end{equation}
Therefore, using $\cL f = - (n-2) (1-t^2) \Phi$ from the nonlinear equation~\eqref{eqn:express-in-form-a-lambda}, we find that
\begin{align*}
    P_f f &= \cL f + (n-2)(1-t^2) ( p \,\partial_p \Phi + f \, \partial_f \Phi) = (n-2)(1-t^2) ( p \, \partial_p \Phi + f \, \partial_f \Phi - \Phi) \\
    &= 2 \lambda (n-2)(1-t^2) \partial_{\lambda} \Phi.
\end{align*}
Combined with the property~\eqref{eqn:linearized-va-vlambda}, this implies that $P_f ( v_{\lambda} + \frac{1}{2 \lambda} f) = 0 = P_f v_a$, consistent with the identity $v_{\lambda} + \frac{1}{2 \lambda} f = a v_a$ of Lemma~\ref{lemma:relating-va-v-lambda}.

Next, we will obtain a perturbative estimate for the profile function of the capillary cone $\mathbf{C}_{n,k,\theta}$ with sufficiently small contact angle $\theta$, in terms of the hypergeometric function $f_{n,k}(t) := {}_2 F_1( \frac{n-1}{2} , - \frac{1}{2} ; \frac{k}{2} ; t^2)$.
This function satisfies $\cL_{n,k} f_{n,k} = 0$ with $f_{n,k}(0) = 1$ and zeroes at $t = \pm t_{n,k}$.
We refer the reader to~\cites{gauss} and~\cite{dtmf}*{Ch.~15} for standard properties of hypergeometric functions.
A finer analysis of the functions $f_{n,k}(t)$ and their properties related to the one-phase problem is performed in our companion paper~\cite{FTW-stability-one-phase}.

\begin{corollary}\label{corollary:linearized-deviation}
    There exists a $\lambda_{n,k} > 0$ such that for all $\lambda \in (0,\lambda_{n,k})$, the solution $f_{\lambda}$ of equation~\eqref{eqn:rescaledODE} with $f'_{\lambda}(0) =0$ and $f_{\lambda}(0) = 1$ satisfies
    \begin{equation}\label{eqn:f-lambda-f0}
      \left| D^{\ell}(f_{\lambda} - f_0 - \lambda v_0) \right| < C(n,k,\ell) \lambda^2 \qquad \text{on } \; [-t_{n,k}, t_{n,k}]  
    \end{equation}
    where $v_0$ is the unique non-positive even solution of equation
    \begin{equation}\label{eqn:w0-solution}
        \cL_{n,k} v_0 = - (n-2) (1-t^2) (f'_0)^2 ( f_0 - A f_0'), \qquad v_0(0) = v'_0(0) = v_0''(0) = 0.
    \end{equation}
    Moreover, there exists a $\theta_{n,k} > 0$ such that for all $\theta \in (0,\theta_{n,k})$, the even solution $f_{\theta}$ of equation~\eqref{eqn:odeStar} with a zero at $t_{\theta}$, producing a contact angle $\theta = \on{arctan} ( \sqrt{1 - t_{\theta}^2} \, |f'_{\theta}(t_{\theta})|) \in (0, \theta_{n,k})$, satisfies
    \begin{equation}\label{eqn:theta-expansion}
        |D^{\ell} (f_{\theta} - c_{n,k} \theta f_0)| < C(n,k,\ell) \theta^3 , \qquad |t_{\theta} - t_{n,k}| < C(n,k) \theta^2
    \end{equation}
\end{corollary}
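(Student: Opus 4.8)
The plan is to read \eqref{eqn:f-lambda-f0} as a first-order Taylor expansion in $\lambda$ about $\lambda=0$, using the $C^2$-dependence of $(\lambda,a)\mapsto f_{\lambda,a}$ recorded just before the corollary, and then to derive \eqref{eqn:theta-expansion} from \eqref{eqn:f-lambda-f0} via the scaling relation $f_{1,a}=a\,f_{a^2,1}$ together with an inverse function argument that trades the initial height $a$ for the contact angle $\theta$. Throughout, $f_0$ denotes the regular hypergeometric solution $f_{n,k}$ of $\cL_{n,k}f_0=0$ with $f_0(0)=1$, $f_0'(0)=0$.

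\textbf{Step 1: a uniform interval of existence.} Since $f_0$ is linear in the sense of solving $\cL_{n,k}f_0=0$, it is smooth on a neighborhood of $[-t_{n,k},t_{n,k}]$ and has no derivative blow-up there. Writing \eqref{eqn:rescaledODE} as $f''=F(t,f,f',\lambda)$ with $F$ smooth after removing the singularity at $t=0$ within the class of even functions, continuous dependence of ODE solutions on $\lambda$ produces a $\lambda_{n,k}>0$ so that for every $\lambda\in[0,\lambda_{n,k}]$ the even solution $f_\lambda$ with $f_\lambda(0)=1$ exists past $t=t_{n,k}$, i.e.\ $b_{\lambda,1}>t_{n,k}$, and $(\lambda,t)\mapsto f_\lambda(t)$ together with all its $t$-derivatives is $C^2$ on $[0,\lambda_{n,k}]\times[-t_{n,k},t_{n,k}]$.

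\textbf{Step 2: expansion in $\lambda$.} Let $w_0:=\partial_\lambda f_\lambda|_{\lambda=0}$. Evaluating the linearized equation $P_{f_\lambda}(\partial_\lambda f_\lambda)=-(n-2)(1-t^2)\,\partial_\lambda\Phi_\lambda$ at $\lambda=0$: by \eqref{eqn:Pf-v-linearized} every $\lambda$-dependent correction to $\cL_{n,k}$ carries a factor $\lambda$, so $P_{f_0}=\cL_{n,k}$, and $\partial_\lambda\Phi|_{\lambda=0}=(f_0')^2(f_0-Af_0')$; together with $w_0(0)=w_0'(0)=w_0''(0)=0$ from \eqref{eqn:variation-field-values-at-zero}, this shows $w_0$ solves \eqref{eqn:w0-solution}, and \eqref{eqn:w0-solution} has a unique even solution since even solutions of $\cL_{n,k}v=g$ form a one-parameter family pinned by $v(0)$. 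Moreover $w_0\le 0$: by Corollary~\ref{cor:comparison-before-sqrtalpha} the ratio $f_\lambda/f_0$ is strictly decreasing from $1$, so $f_\lambda\le f_0$ and $w_0=\lim_{\lambda\downarrow 0}(f_\lambda-f_0)/\lambda\le 0$; hence $w_0=v_0$. Applying Taylor's theorem with Lagrange remainder to $\lambda\mapsto D^\ell f_\lambda(t)$, which is $C^2$ with bounds uniform over the compact set of Step 1, yields $|D^\ell(f_\lambda-f_0-\lambda v_0)|\le\tfrac12\lambda^2\sup|\partial_\lambda^2 D^\ell f_\lambda|=:C(n,k,\ell)\lambda^2$ on $[-t_{n,k},t_{n,k}]$, which is \eqref{eqn:f-lambda-f0}.

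\textbf{Step 3: from $\lambda$ to the contact angle, and the main obstacle.} By the scaling relation the even solution of \eqref{eqn:odeStar} with $f(0)=a$, $f'(0)=0$ is $a\,\tilde f_{a^2}$ where $\tilde f_\mu:=f_{\mu,1}$, and its first positive root is the root $t_\mu$ of $\tilde f_\mu$ at $\mu=a^2$. Since $t_{n,k}$ is a simple zero of the nontrivial solution $f_0$, the implicit function theorem and Step 1 give that $\mu\mapsto t_\mu$ is smooth near $0$ with $t_\mu=t_{n,k}+O(\mu)$, so $t_\theta=t_{a^2}=t_{n,k}+O(a^2)$. Setting $G(\mu):=\sqrt{1-t_\mu^2}\,|\partial_t\tilde f_\mu(t_\mu)|$, a smooth function with $G(0)=\sqrt{1-t_{n,k}^2}\,|f_0'(t_{n,k})|>0$, the contact angle produced by $a\,\tilde f_{a^2}$ equals $\theta(a)=\on{arctan}\!\bigl(a\,G(a^2)\bigr)$, an odd smooth function of $a$ with $\theta'(0)=G(0)$; inverting, $\theta\mapsto a(\theta)$ is odd and smooth near $0$ with $a(\theta)=c_{n,k}\,\theta+O(\theta^3)$, where $c_{n,k}:=1/G(0)=\bigl(\sqrt{1-t_{n,k}^2}\,|f_0'(t_{n,k})|\bigr)^{-1}$, precisely the normalization constant of $U_{n,k}$ in Theorem~\ref{thm:one-phase-cones}. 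Combining with \eqref{eqn:f-lambda-f0} in the form $\tilde f_{a^2}=f_0+O(a^2)$ in every $C^\ell$-norm on $[-t_{n,k},t_{n,k}]$, one gets
\[
f_\theta=a(\theta)\,\tilde f_{a(\theta)^2}=a(\theta)\,f_0+O(a(\theta)^3)=c_{n,k}\theta\,f_0+O(\theta^3)
\]
in every $C^\ell$-norm, and $t_\theta-t_{n,k}=O(a(\theta)^2)=O(\theta^2)$, giving \eqref{eqn:theta-expansion}. The one genuinely delicate point is Step 1: by the very monotonicity of $f_\lambda/f_0$ used in Step 2, the solution $f_\lambda$ drops below zero strictly before $t_{n,k}$, so one must verify that it nonetheless continues as a solution past $t_{n,k}$ without derivative blow-up, uniformly for $\lambda\in[0,\lambda_{n,k}]$ — this is what legitimizes working on the fixed interval $[-t_{n,k},t_{n,k}]$; everything else is Taylor expansion, uniqueness for a linear second-order ODE, and the inverse function theorem, with the mild extra care that all estimates survive differentiation in $t$ up to $t=0$, which is handled by the removable singularity within the even class already used above.
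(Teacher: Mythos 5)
Your proposal is correct and follows essentially the same route as the paper: the paper also reads \eqref{eqn:f-lambda-f0} as a first-order Taylor expansion in $\lambda$ using the $C^2$-dependence recorded just before the corollary, identifies $v_0 = \partial_\lambda f_\lambda|_{\lambda=0}$ with the stated linearized equation and initial conditions, differentiates $f_\lambda(t_\lambda)=0$ to expand $t_\lambda$, and then passes to the $a$-parametrization via $f_a = a\,f_{a^2,1}$ to read off $\theta = c_{n,k}^{-1}a + O(a^3)$ and conclude \eqref{eqn:theta-expansion}. The two small ways you differ are both fine and arguably worth making explicit: you spell out why $f_\lambda$ extends smoothly past $t_{n,k}$ for small $\lambda$ (continuous dependence on the parameter, since $f_0$ is a solution of the linear Legendre equation and has no blow-up on a neighborhood of $[-t_{n,k},t_{n,k}]$, so $b_{\lambda,1}\to 1 > t_{n,k}$ as $\lambda\downarrow 0$), whereas the paper absorbs this into the smoothness of $(\lambda,a)\mapsto f_{\lambda,a}$ stated before the corollary; and you package the $a\leftrightarrow\theta$ change of variables as an explicit inverse function argument for the odd smooth map $\theta(a)=\arctan(aG(a^2))$, which the paper accomplishes by the equivalent rearrangement $\theta = c_{n,k}^{-1}a + O(a^3)$. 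Your justification of the non-positivity $v_0\le 0$ via the decreasing ratio $f_\lambda/f_0$ from Corollary~\ref{cor:comparison-before-sqrtalpha} (equivalently Corollary~\ref{corollary:lambda-comparison}) is also consistent with the paper, which treats non-positivity as descriptive once $v_0(0)=v_0'(0)=0$ pins down the even solution uniquely.
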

\begin{proof}
    This estimate follows from the linearized framework of equation~\eqref{eqn:linearized-va-vlambda}, where $v_0 := \partial_{\lambda} f_{\lambda}|_{\lambda = 0}$.
    Our computation therein implies that
    \[
    f_{\lambda} = f_0 + \lambda v_0 + O(\lambda^2) \qquad \text{on } \; [ - t_{n,k}, t_{n,k}],
    \]
    with the same expansion being valid for all higher derivatives.
    This produces the expansion~\eqref{eqn:f-lambda-f0}.
    The linearized equation for $v_0$ assumes the form
    \[
\cL v_0 = - (n-2)(1-t^2) (f'_0)^2 (f_0 - A f'_0).
    \]
    The initial condition $v_0(0) = v'_0(0) = v_0''(0) = 0$ follows from~\eqref{eqn:variation-field-values-at-zero}.
    Moreover, letting $t_{\lambda}$ denote the zero of $f_{\lambda}$, we may differentiate the relation $f_{\lambda}(t_{\lambda}) = 0$ in $\lambda$ to obtain $v_{\lambda}(t_{\lambda}) + f'_{\lambda}(t_{\lambda}) ( \partial_{\lambda} t_{\lambda}) = 0$.
    Evaluating at $\lambda=0$ implies $\partial_{\lambda} t_{\lambda}|_{\lambda=0} = - \frac{v_0(t_0)}{f'_0(t_0)}$, whereby
    \begin{equation}\label{eqn:t-lambda-expansion}
        \bigl| t_{\lambda} - t_0 + \tfrac{v_0(t_0)}{f'_0(t_0)} \lambda \bigr| = O (\lambda^2), \qquad \text{for small } \; \lambda \in (0,\lambda_0).
    \end{equation}
    Next, we define $c_{n,k} := \frac{1}{\sqrt{1 -t_0^2} \, |f'_0(t_0)|}$.
    For the solution $f_a$ of~\eqref{eqn:odeStar} with $f_a(0) = a$, rescaling to $\frac{1}{a} f_a$ produces a solution of~\eqref{eqn:rescaledODE} with parameter $\lambda = a^2$.
    Consequently,~\eqref{eqn:f-lambda-f0} and~\eqref{eqn:t-lambda-expansion} imply
    \begin{equation}\label{eqn:a-f0-fa-expansion}
    |D^{\ell}( \tfrac{1}{a} f_a - f_0)| = O(a^2), \qquad |t_a - t_0| = O(a^2),
    \end{equation}
    from which we find $| \tfrac{1}{a}\sqrt{1 - t_a^2} (-f'_a(t_a)) - \sqrt{1 - t_0^2} \, (- f'_0(t_0)) | = O(a^2)$.
    We conclude that
    \[
    \sqrt{1 - t_a^2} \, ( - f'_a(t_a)) = c_{n,k}^{-1} a + O(a^3).
    \]
    For $\theta \in (0,\theta_{n,k})$ sufficiently small, the condition of contact angle $\theta$ then implies $\theta = c_{n,k}^{-1} a + O (a^3)$.
    Using this property in~\eqref{eqn:a-f0-fa-expansion} completes the proof.
\end{proof}

We now establish the various properties leading to the result of Figure~\ref{fig:fams-n7k1}, starting with the fact that the solutions $f_a$ cross uniquely.

\begin{proposition}\label{prop:unique-crossing of solutions}
    For a fixed parameter $\lambda>0$, let $f_a$ denote the solution of equation~\eqref{eqn:rescaledODE} with $f_a(0) = a$ and $f'_a(0) = 0$, and maximal interval of definition $[0,b_a]$.
    For any $0 < a_1 < a_2$, we write $f_i := f_{a_i}$.
    Then, the following properties are satisfied.
    \begin{enumerate}[$(i)$]
        \item The solutions $f_i$ intersect at most once over their common interval of definition $[0, \min_{i=1,2} b_i ]$.
        \item If $f_2$ reaches zero at some point $t_2$, then for any $a \in (0,a_2)$, the solutions $f_a$ and $f_2$ intersect at some point $t_* < t_2$.
        \item If $f_a$ reaches zero at a point $t_a$ for every $a \in [a_1, a_2]$, then $a \mapsto t_a$ is a strictly decreasing map.
        \item For any $a>0$, the variation field $v_a := \partial_a f_a$ has at most one zero and remains strictly decreasing in its positive phase.
        If $f_a$ has a zero at some point $t_a$, then $v_a$ has its unique sign change at $t_0 \in (0,t_a)$.
    \end{enumerate}
\end{proposition}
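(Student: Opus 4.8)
\emph{Strategy and proof of $(i)$.}
The engine is the scaling relation $f_{\lambda,a}(t)=a\,f_{\lambda a^2,1}(t)$ from the proof of Lemma~\ref{lemma:relating-va-v-lambda}, which converts the variation of the initial height $a$ (with $\lambda$ fixed) into variation of the parameter $\mu=\lambda a^2$ for the normalized family $g_\mu:=f_{\mu,1}$. Writing $\mu_i:=\lambda a_i^2$, we have $f_i=a_i g_{\mu_i}$ with $\mu_1<\mu_2$, and $f_1(t)=f_2(t)\iff g_{\mu_2}(t)/g_{\mu_1}(t)=a_1/a_2=\sqrt{\mu_1/\mu_2}$. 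On the maximal interval where $g_{\mu_2}/g_{\mu_1}\ge\sqrt{\mu_1/\mu_2}$, Corollary~\ref{cor:comparison-before-sqrtalpha} applies (the degenerate case $\sqrt{\mu_1}g_{\mu_1}=\sqrt{\mu_2}g_{\mu_2}$ fails at $t=0$), so $g_{\mu_2}/g_{\mu_1}$ is strictly decreasing there; hence $f_2/f_1=(a_2/a_1)\,g_{\mu_2}/g_{\mu_1}$ is strictly decreasing while it is $\ge1$ and so attains the value $1$ at most once, at a point $t_*$. By ODE uniqueness for \eqref{eqn:rescaledODE}, $f_2'(t_*)<f_1'(t_*)<0$ (using Lemma~\ref{lemma:general-behavior-prep}), and it remains to show $f_1>f_2$ for all $t>t_*$. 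If $t_*\ge\sqrt\alpha$, then $h_{f_2}(t_*)-h_{f_1}(t_*)=A(t_*)\bigl(f_1'(t_*)-f_2'(t_*)\bigr)\ge 0$, so condition $(\mathbf{C})$ of Lemma~\ref{lemma:comparison-of-solutions} holds at $t_*$ and that lemma gives $f_1-f_2$ strictly positive and increasing thereafter. If $t_*<\sqrt\alpha$, one first propagates $f_1>f_2$ up to $t=\sqrt\alpha$ using the identity \eqref{eqn:integrating-factor-f-psi} in the form $\bigl((f_1-f_2)/\psi\bigr)'=-(h_{f_1}-h_{f_2})/(A\psi)$ together with the evolution \eqref{eqn:h'-Lf-particular} of $h_{f_1}-h_{f_2}$, and then applies Lemma~\ref{lemma:comparison-of-solutions} from $\sqrt\alpha$ onward.

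\emph{Proof of $(ii)$ and $(iii)$.}
Let $a\in(0,a_2)$ and set $g:=g_{\lambda a^2}$. Suppose $f_a$ does not meet $f_2$ on $(0,t_2)$; then $f_a<f_2$ there, so $g_2/g>\sqrt{\lambda a^2/\mu_2}$ on $(0,t_2)$ and Corollary~\ref{cor:comparison-before-sqrtalpha} applies: $g_2/g$ is strictly decreasing from $1$, and its ``moreover'' clause forbids derivative blow-up of $g$ as long as $f_a\le f_2$, using that $g_2=f_2/a_2$ is smooth on $[0,t_2]$ (its only singularity there being the simple zero at $t_2$, by Lemma~\ref{lemma:derivative-blow-up}). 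Thus $f_a$ exists on $[0,t_2]$ with $f_a(t_2)\le f_2(t_2)=0$. Now $f_a(t_2)=0$ is impossible: then $g_2/g$ extends continuously to $t_2$ with value $g_2'(t_2)/g'(t_2)\in(\sqrt{\lambda a^2/\mu_2},1)$ (strictly, else $\sqrt\lambda f_a\equiv\sqrt\lambda f_2$), which contradicts $f_a>f_2$ (i.e.\ $g_2/g<\sqrt{\lambda a^2/\mu_2}$) just past $t_2$. And $f_a(t_2)<0$ is impossible too: $f_a$ would vanish at some $t_a\in(0,t_2)$, forcing $g_2/g\to+\infty$ there, which contradicts that $g_2/g$ strictly decreases from $1$. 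Hence $f_a$ and $f_2$ cross at a point $t_*\in(0,t_2)$, necessarily unique by $(i)$ and with $f_a(t_*)=f_2(t_*)>0$, proving $(ii)$. For $(iii)$: given $a_1\le a<a'\le a_2$ with $f_a,f_{a'}$ both reaching zero, $(ii)$ gives a crossing $t_*<t_{a'}$; since $f_a<f_{a'}$ before $t_*$ and, by $(i)$, they do not meet again, $f_a>f_{a'}$ on $(t_*,\cdot)$, so $f_a(t_{a'})>f_{a'}(t_{a'})=0$ and hence $t_a>t_{a'}$.

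\emph{Proof of $(iv)$.}
Fix $a$ and write $v_a=\partial_a f_a=\lim_{a'\downarrow a}(f_{a'}-f_a)/(a'-a)$ in $C^1_{\on{loc}}$. For $a'>a$ close to $a$, $(i)$ gives a single crossing $t_*(a')$ with $f_{a'}>f_a$ on $[0,t_*(a'))$; on that interval Corollary~\ref{cor:comparison-before-sqrtalpha} yields $g'/g<g_{\mu'}'/g_{\mu'}$... — equivalently $f_{a'}'<f_a'$ (since $f_a'=(g'/g)f_a<(g'/g)f_{a'}\le(g_{\mu'}'/g_{\mu'})f_{a'}$ is $>f_{a'}'$ using $g'/g<0$ and $f_{a'}\ge f_a$, where $\mu'=\lambda(a')^2$) — so $f_{a'}-f_a$ is positive and strictly decreasing there. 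Consequently, whenever $v_a(s)>0$ one has $t_*(a')>s$ for $a'$ near $a$, and letting $a'\downarrow a$ shows $v_a$ is non-increasing on $[0,s]$; thus $v_a$ is non-increasing on its first positive interval $[0,t_0)$, where $t_0$ denotes the first zero of $v_a$. Since $v_a$ solves the linear second-order equation $P_{f_a}v_a=0$ with $v_a(0)=1$ and $v_a''(0)=-\tfrac{n-1}{k}\ne0$ (see \eqref{eqn:variation-field-values-at-zero}), it is real-analytic and non-constant, hence \emph{strictly} decreasing on $[0,t_0)$; and it cannot become positive again past $t_0$ (that would violate the non-increasing property), so $v_a\le 0$ on $(t_0,b_{\lambda,a})$, with strict inequality since any interior zero there would be a local maximum forcing $v_a\equiv0$. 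Therefore $v_a$ has at most the single zero $t_0$. Finally, if $f_a$ reaches zero at $t_a$, differentiating $f_a(t_a)=0$ gives $v_a(t_a)=-f_a'(t_a)\,\partial_a t_a$; with $f_a'(t_a)<0$ and $\partial_a t_a<0$ by the strict monotonicity in $(iii)$, we get $v_a(t_a)<0$, so the sign change $t_0$ lies in $(0,t_a)$.

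\emph{Main obstacle.}
The one genuinely delicate step is the ``no re-crossing'' part of $(i)$ when $t_*<\sqrt\alpha$: there condition $(\mathbf{C})$ of Lemma~\ref{lemma:comparison-of-solutions} is not available at $t_*$ (the sign of $A$ is wrong), and one must bookkeep the sign of $h_{f_1}-h_{f_2}$ along \eqref{eqn:h'-Lf-particular} — in particular controlling cross terms of the shape $(1-t^2)q^2\,|w_{f_1}-w_{f_2}|$ and $\tfrac{\lambda(f_1')^2}{1+\lambda f_1^2}-\tfrac{\lambda(f_2')^2}{1+\lambda f_2^2}$ arising in the subtracted equation and in Lemma~\ref{lemma:simple-system-comparison} — to carry the ordering $f_1>f_2$ past $t=\sqrt\alpha$; a secondary point requiring care is the strict interiority $t_0<t_a$ in $(iv)$, i.e.\ $\partial_a t_a<0$ rather than merely $\le 0$. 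Everything else is a soft consequence of the rescaling together with Corollaries and Lemmas~\ref{lemma:comparison-of-solutions}, \ref{cor:comparison-before-sqrtalpha}, \ref{lemma:simple-system-comparison}, \ref{lemma:relating-va-v-lambda} already in place.
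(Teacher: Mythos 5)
Your high-level plan — rescaling $f_{\lambda,a}=a\,g_{\lambda a^2}$ to reduce height-variation to parameter-variation, then invoking Corollary~\ref{cor:comparison-before-sqrtalpha} to get strict decrease of the ratio $f_2/f_1$ — matches the paper's starting point (labelled ``Starting point'' in the paper's proof) and is correct up to the point where $f_2/f_1$ first reaches~$1$. However, the proof has two genuine gaps, precisely at the steps that require real work.

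\textbf{Gap in (i): no re-crossing.} After the crossing point $t_*$, Corollary~\ref{cor:comparison-before-sqrtalpha}'s hypothesis fails and you cannot apply it. Your plan is to invoke Lemma~\ref{lemma:comparison-of-solutions} via condition~$(\mathbf{C})$ at $t_*$, which does work when $t_*\ge\sqrt{\alpha}$ (there $A(t_*)>0$ gives $h_{f_2}(t_*)>h_{f_1}(t_*)$). But for $t_*<\sqrt\alpha$ you offer only a sketch (``propagate using~\eqref{eqn:integrating-factor-f-psi} and~\eqref{eqn:h'-Lf-particular}''), and this is not carried out. You flag this as the main obstacle, and indeed the paper takes a completely different route here: it argues by contradiction via a compactness argument over the family $\{f_a\}_{a\le a_1}$, using Corollary~\ref{corollary:linearized-deviation} to show small-$a$ solutions intersect $f_2$ at most once, locating a borderline $a_*$ with either second-order contact or a double intersection one of whose points lies at $\{0,b_2\}$, and then ruling out the latter by the uniqueness of the square-root blow-up expansion $f(t)=c_2+\sqrt{\frac{2(1+\lambda c_2^2)}{\lambda(n-2)A(b_2)}}\sqrt{b_2-t}+\cdots$ near the derivative blow-up point $b_2$. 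That asymptotic-uniqueness argument is the substantive content of~(i) and is absent from your proposal.

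\textbf{Gap in (ii): the case $f_a(t_2)=0$.} Your argument to rule this out has a sign error. You write that past $t_2$ the inequality $f_a>f_2$ is equivalent to $g_2/g<\sqrt{\lambda a^2/\mu_2}$, but past $t_2$ both $g$ and $g_2$ are negative, so $f_a=ag>a_2g_2=f_2$ divides (flipping the inequality) into $g_2/g>a/a_2=\sqrt{\lambda a^2/\mu_2}$, not $<$. Thus the ratio $g_2/g$ is $>a/a_2$ on both sides of $t_2$, and no contradiction arises. This is exactly the delicate case the paper addresses by forming the Wronskian-type quantity $W:=f_1f_2'-f_1'f_2$, establishing the identity~\eqref{eqn:useful-expression-near-t*-root}, writing $f_i(t)=(t_*-t)g_i(t)$ near the common zero, and deriving a sign contradiction at $t_*$ from $A(t_*)>0$ and $g_1(t_*)\neq g_2(t_*)$; in the case $|f_2'(t_*)|\to\infty$ the paper uses L'H\^opital together with the uniqueness of the blow-up profile. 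You would need one of these mechanisms to close the argument.

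Parts (iii) and (iv) do follow once (i) and (ii) are established, and your treatment there is broadly in the spirit of the paper's (which deduces them directly from (i)--(iii)); the intermediate Riccati-sign bookkeeping in your (iv) has a few reversed inequalities ($q_a>q_{a'}$ is what Corollary~\ref{cor:comparison-before-sqrtalpha} actually gives, not $q_a<q_{a'}$), but the conclusion $f_{a'}'<f_a'$ is correct. The strict interiority $\partial_a t_a<0$ in (iv) that you flag as a secondary worry is supplied by (iii), so it is not a separate issue once (i) and (ii) are repaired.
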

\begin{proof}
    We introduce the setup of the proof and then treat each item separately.
    For any parameter $\lambda$ and solutions $f_a$ of~\eqref{eqn:rescaledODE}, we recall that the functions $\sqrt{\lambda} f_a$ are solutions of~\eqref{eqn:odeStar}, so we may work with $\lambda=1$ and equation~\eqref{eqn:odeStar} in what follows.

\smallskip \noindent \textbf{Starting point.}
    Let us define the ``intersection instance''
    \[
    t_* := \sup \{ t \in [0,b_2] : f_1, f_2 \; \text{are defined and} \; f_1 < f_2 \; \text{ on } \; [0,t] \}.
    \]
    We apply Lemma~\ref{lemma:simple-system-comparison} and Corollary~\ref{cor:comparison-before-sqrtalpha}, letting $q_i := \frac{f'_i}{f_i}$ and $w_i := \frac{f_i^2}{1 + f_i^2}$.
    Since $q_i(0) = 0$ and $w_2 > w_1$ if and only if $f_2 > f_1$, we obtain $w_2 > w_1$ on $[0,t_*]$, whereby $q_1 > q_2$ on $(0,t_*]$.
    As in Corollary~\ref{cor:comparison-before-sqrtalpha}, this implies that on the interval $[0,t_*)$ we have $f_1>0$, the functions $f_i$ are both defined, and $\frac{f_2}{f_1}$ is strictly decreasing.
    In particular, the function $f_1$ cannot have derivative blow-up before intersecting, and therefore exceeding, $f_2$.

\smallskip \noindent \textbf{Property $(i)$.}
Suppose, for contradiction, that $f_2$ has at least two points in common with $f_1$.
    We consider the family of solutions $\{ f_a \}_{a \in (0,a_1]}$ of equation~\eqref{eqn:odeStar} with initial heights $f_a(0) = a \leq a_1$ and let $b_2 := b_{a_2}$ denote the blow-up instance of $f'_2$, where $f(b_{2}) = c_2 \in (-\infty,\infty)$.
    Our starting point shows that for any $a \leq a_1$, the function $f_a$ cannot have derivative blow-up before intersecting, and therefore exceeding, $f_2$.
    Since $f_a \to f_1$ uniformly as $a \uparrow a_1$, the family intersects $f_2$ at least twice for $a \uparrow a_1$.
    On the other hand, Corollary~\ref{corollary:linearized-deviation} gives $|f_a - a f_0| < C(n,k) a^2$, whereby $f_2$ intersects the members of this family at most once, for $a>0$ sufficiently small.
    By the compactness of the intervals, we may therefore find a largest value $a_* \in (0,a_1)$ for which the curves $f_{a_*}$ and $f_2$ have contact of order $\geq 2$, or have at least two points of intersection, one of which occurs over the boundary $t \in \{0, b_2 \}$ of the domain of $f_2$.
    In the latter case, exactly two points of intersection must occur, otherwise we could decrease $a_*$ while maintaining at least two points of intersection.

    It remains to rule out the two above situations.
    In the first case, $f_{a_*}$ and $f_2$ have contact of order $\geq 2$ at some point $t_* \in (0,b_2)$, whereby ODE uniqueness forces $f_{a_*} \equiv f_2$; this produces a contradiction, since $a_* \leq a_1 < a_2$.
    In the second case, $f_{a_*}$ and $f_2$ cannot intersect at $\{ t=0\}$, due to $a_* \leq a_1 < a_2$, so they have exactly two points of intersection, occurring at $b_2$ and at some $t_* \in (0,b_2)$.
    Since the function $f_2$ has derivative blow-up at $b_2$, with $f(b_2) = c_2$, we have $f''_2 \to - \infty$ as well, for $t \uparrow b_2$.
    By Lemma~\ref{lemma:derivative-blow-up}, we know that $b_2 < 1$, so the factor $(1-t^2)$ remains regular and non-degenerate as $t \uparrow b_2$ in equation~\eqref{eqn:odeStar}.
    We may therefore combine these properties to balance the dominant terms $(f')^3 \asymp f''$, obtaining $f' \sim (b_2 - t)^{- \frac{1}{2}}$.
    For general $\lambda$, this leads to the expansion
    \[
    f_2(t) = c_2 + \sqrt{\frac{2(1+ \lambda c_2^2)}{\lambda (n-2) A(b_2) }} \, \sqrt{b_2 - t} + O ( |b_2 - t|), \quad f'_2(t) = - \frac{1}{2} \sqrt{\frac{2(1+ \lambda c_2^2)}{\lambda (n-2) A(b_2) }} \frac{1}{\sqrt{b_2 - t}} + O (1)
    \]
    and similarly, $f''_2(t) = - \frac{1}{4} \sqrt{\frac{2(1+ \lambda c_2^2)}{\lambda (n-2) A(b_2) }} (b_2 - t)^{- \frac{3}{2}} + O ( \frac{1}{\sqrt{b_2 - t}}  ) $.
    Recall that $b_2 > \sqrt{\frac{k-1}{n-2}}$ holds by Lemma~\ref{lemma:general-behavior-prep}, so $A(b_2) > 0$ and the above expressions are well-defined.
    Near $b_2$, we may then expand $f_2$ as a series in $\sqrt{b_2 - t}$ to transform
    \[
    f_2(t) = c_2 + \sqrt{b_2 - t} \, g( \sqrt{b_2 - t}), \qquad \text{where} \quad g(0)>0.
    \]
    Using the fact that $f_2$ is a solution of~\eqref{eqn:odeStar}, we derive a uniquely determined power series expansion for $g_i$ (with $\lambda=1$) of the form
    \[
    g(s) = \sqrt{\tfrac{2 ( 1 + c_2^2)}{(n-2) A(b_2)}} + d_2 s^2 + d_4 s^4 + \cdots
    \]
    with the coefficients $d_k$ obtained recursively.
    This means that the function $g$, and therefore $f_2$, is uniquely determined by the properties $f_2(b_2) = c_2$ and $f'_2(b_2) = - \infty$ at $b_2$.

    Returning to the above situation, we have assumed that $f_{a_*}$ and $f_2$ intersect at exactly two points, $t_* \in (0,b_2)$ and $b_2$.
    Since $f_{a_*}(0) \leq a_1 < a_2 = f_2(0)$, we have $f'_{a_*}(t_*) \geq f'_2(t_*)$ at the first point of intersection, and equality cannot hold by the ODE uniqueness argument.
    Therefore, $f_{a_*}'(t_*) > f_2'(t_*)$ and $f_{a_*} > f_2$ for small $t - t_* > 0$, hence for all $t \in (t_* , b_2)$ until the next point of intersection, at $f_{a_*}(b_2) = f_2(b_2) = c_2$.
    Therefore, $\lim_{t \uparrow b_2} (f'_{a_*} - f'_2) \leq 0$, and since $f'_2(b_2) = - \infty$, this forces $f'_{a_*}(b_2) = - \infty$ as well.
    However, the above discussion shows that $f_{a_*}(b_2) = c_2$ and $f'_{a_*}(b_2) = - \infty$ would force $f_{a_*} \equiv f_2$, leading to a contradiction.
    This rules out the second situation, proving that $f_1$ and $f_2$ have at most one point of intersection, as desired.

\smallskip \noindent \textbf{Property $(ii)$.}
In our starting point, we showed that $f_1$ remains defined on $[0,t_*]$ while $f_1 < f_2$.
Since $f(t_2) = 0$, we conclude that either $t_* < t_2$ or $t_* = t_2$ must hold, meaning that $f_1, f_2$ must intersect at some point $t_* \leq t_2$.
We will prove that the solutions are strictly separated, meaning that $t_* < t_2$.
    We form the quantity $W := f_1 f'_2 - f'_1 f_2$ and suppose for contradiction that $t_2 = t_*$ is the first crossing of the solutions, meaning that $f_1(t_*) = f_2(t_*) = 0$.
    Moreover, $W' = f_1 f''_2 - f''_1 f_2$ and the above comparison implies $W = f_1 f_2 (q_2 - q_1) < 0$ on $(0,t_*)$.
    Comparing the identities~\eqref{eqn:odeStar} for the $f_i$ in the form~\eqref{eqn:express-in-form-a-lambda}, we consider the expression   
    \[
    f_1 \left( \cL f_2 + (n-2) (1-t^2) \Phi(t,f_2, f'_2) \right) - f_2 \left( \cL f_1 + (n-2) (1-t^2) \Phi( t, f_1, f'_1) \right) = 0.
    \]
    We collect terms using $(n-2) A + t = (n-1) t - (k-1) t^{-1}$ to obtain
    \begin{equation}\label{eqn:useful-expression-near-t*-root}
    \begin{split}
0 &= (1-t^2) W' - \bigl( (n-1) t - (k-1) t^{-1} \bigr) W \\
&\quad+ (n-2)(1-t^2) \left[ f_1 f_2 \left( \frac{(f'_2)^2}{1 + f_2^2} - \frac{(f'_1)^2}{1 + f_1^2} \right) + A\left( \frac{(f'_1)^2}{1 + f_1^2}f'_1 f_2 - \frac{(f'_2)^2}{1 + f_2^2} f_1 f'_2  \right) \right].
    \end{split}
        \end{equation}
    First, suppose that $|f'_2 (t_*)| < + \infty$, whereby $|f'_1| < + \infty$ on $[0,t_*]$ as in Corollary~\ref{cor:comparison-before-sqrtalpha}.
    Since the functions $f_i$ are analytic, we may write $f_i(t) = (t_* - t) g_i(t)$ with $g_i > 0$ on $[0,t_*]$ and $g_i(t_*) = - f'_i(t_*) \in (0,\infty)$.
    In particular, $f_2 > f_1$ on $(0,t_*)$ implies $g_2 > g_1$, and $g_2(t_*) > g_1 (t_*)$ since otherwise $(f_1(t_*), f'_1(t_*)) = (f_2 (t_*), f'_2(t_*))$ would force $f_1 \equiv f_2$, by ODE uniqueness.
    Moreover,
    \[
    f'_i(t) = - g_i(t) + O (t_* - t), \qquad f''_i(t) = - 2 g'_i(t) + O(t_* - t).
    \]
    We now write $W = (t_* -t) ( g_1 f'_2 - f'_1 g_2)$ and $W' = (t_* - t) ( g_1 f''_2 - f''_1 g_2)$, whereby 
    \begin{align*}
        W &= f_1 f'_2 - f'_1 f_2 = (t_* - t) \left( g_1 ( - g_2 + (t_* - t) g'_2 ) - ( - g_1 + (t_* - t) g'_1 ) g_2 \right) \\
    &= (t_* - t)^2 ( g_1 g'_2 - g'_1 g_2).
    \end{align*}
    Consequently, the fact that $W<0$ on $(0,t_*)$ requires $g'_1 g_2 - g_1 g_2' > 0$ there.
    Similarly, 
    \[
    W' = (t_* - t) (g_1 f''_2 - f''_1 g_2) = 2 (t_* - t) ( g'_1 g_2 - g_1 g'_2)+ O ( ( t_* - t)^2 ) .
    \]
    Using this computations in the above equality and taking $t \uparrow t_*$, we obtain
    \begin{align*}
        & 2 (1-t_*^2) ( g'_1 g_2 - g_1 g'_2) (t_*) + O (t_* - t) \\
        &\quad=  - (n-2)(1-t^2) \left[ (t_* - t) g_1 g_2 \left( \frac{(f'_2)^2}{1 + f_2^2} - \frac{(f'_1)^2}{1 + f_1^2} \right) + A \left( \frac{(f'_1)^2}{1 + f_1^2} f'_1 g_2 - \frac{(f'_2)^2}{1 + f_2^2} g_1 f'_2 \right)  \right] \\
        &\quad= (n-2) (1-t_*^2) A(t_*) g_1 g_2 \left( g_1^2 - g_2^2 \right)  + O (t_* - t).
    \end{align*}
    Since the zero of $f_i$ satisfies $t_i > \sqrt{\frac{k-1}{n-2}}$ by Lemma~\ref{lemma:general-behavior-prep}, we have $A(t_*) > 0$, which implies that
    \[
    \on{sgn} ( g'_1 g_2 - g_1 g'_2)(t_*) = \on{sgn} (g_1 - g_2)(t_*) < 0,
    \]
    where the negativity is strict by the above ODE uniqueness discussion.
    However, $W<0$ forces $g'_1 g_2 - g_1 g'_2 > 0$ on $(0,t_*)$, whereby $(g'_1 g_2 - g_1g'_2)(t_*) \geq 0$; this produces a contradiction.

    We turn to the case $|f'_2(t_*)| \to + \infty$.
    Since $\frac{f_2}{f_1}$ is strictly decreasing on $(0,t_*]$, we have
    \[
    \frac{a_2}{a_1} = \frac{f_2}{f_1} (0) > \lim_{t \uparrow t_*} \frac{f_2(t)}{f_1(t)} = \frac{f'_2 (t_*)}{f'_1(t_*)}
    \]
    by applying L'Hospital's rule.
    Therefore, $|f'_2(t_*)| \to + \infty$ forces $|f'_1(t_*)| \to + \infty$ as well.
    However, the discussion of Property $(i)$ shows that $f_1(t_*) = 0 $ and $f'_1(t_*) = - \infty$ determines the solution uniquely, showing that $f_1 \equiv f_2$.
    We again reach a contradiction, whereby the intersection point $t_* < t_2$ must occur strictly before the zero of $f_2$, as claimed.

\smallskip \noindent \textbf{Property $(iii)$.}
The above discussion shows that for any two solutions $f_i$ reaching zero at $t_i$ with $f_1(0) < f_2(0)$, the unique crossing occurs before $\min \{ t_1, t_2\}$, hence $t_1>t_2$, so we conclude that $a \mapsto t_a$ is a strictly decreasing map.

\smallskip \noindent \textbf{Property $(iv)$.}
This fact is a direct consequence of properties $(i)$~--~$(iii)$.
Indeed, if $v_a$ had at least two zeroes for some $f_a$, then $f_a$ and $f_{a \pm \ve}$ would have at least two points of intersection, lying $\ve$-close to those zeroes, for $\ve \in (0,\ve_0)$ sufficiently small; this contradicts $(i)$.
Similarly, the positivity $v_a \geq 0$ on $[0,t_a]$ would imply that $f_a > f_{a-\ve}$ on $[0,t_a)$, for small $\ve \in (0,\ve_0)$.
This would contradict $(ii)$, since $f_a$ and $f_{a-\ve}$ must intersect at some point $t_* < t_a$.
Therefore, $v_a$ has positive phase $[0,t_0)$, where $b_a \geq t_0$ is the maximal time of definition of $f_a$ and $t_0 < t_a$, meaning that $f_a > 0$ while $v_a > 0$.
Our initial argument shows that for small $\ve \in (0,\ve_0)$, the function $\frac{f_{a+\ve}}{f_a}$ is strictly decreasing while larger than $1$ and both functions are positive.
Therefore, $\frac{f'_{a+\ve} - f'_{a}}{\ve} < \frac{(f_{a+\ve} - f_a) f'_{\ve}}{\ve f_a}$, and sending $\ve \downarrow 0$ shows that $v_a' \leq \frac{f_a'}{f_a}v_a < 0$, so $v_a$ is strictly decreasing on $[0,t_0]$ as claimed.
\end{proof}
Proposition~\ref{prop:unique-crossing of solutions} establishes the first part of the foliation-and-ordering property illustrated in Figure~\ref{fig:fams-n7k1}, showing that \textit{the solutions $f_a$ cross.}
We now complete the picture, proving that \textit{the solutions $f_{\lambda}$ foliate.}

\begin{corollary}\label{corollary:lambda-comparison}
    For a fixed parameter $a>0$ and $\lambda \in (0,\infty)$, let $f_{\lambda}$ denote the solution of equation~\eqref{eqn:rescaledODE} with $f_{\lambda}(0) = a$ and $f'_{\lambda}(0) = 0$, and maximal interval of definition $[0,b_{\lambda}]$.
    The solutions $f_{\lambda}$ lie in strictly decreasing order of $\lambda$ over their positive phase, and foliate a region between $f_{\infty}$ and $f_0$ in the first quadrant.
\end{corollary}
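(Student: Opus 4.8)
The plan is to establish the ordering by proving that the infinitesimal variation $v_{\lambda} := \partial_{\lambda} f_{\lambda}$ is strictly negative on the interior of the positive phase of $f_{\lambda}$, and then to read off the foliation from strict monotonicity together with the limiting profiles. Fix $a > 0$. From the linearized framework developed above, $v_{\lambda}$ solves $P_{f_{\lambda}} v_{\lambda} = -(n-2)(1-t^2)\frac{(f_{\lambda}')^2}{(1+\lambda f_{\lambda}^2)^2}(f_{\lambda} - A f_{\lambda}')$, whose right-hand side is \emph{strictly} negative on $(0,b_{\lambda})$ because $f_{\lambda}' < 0$ and $h_{f_{\lambda}} = f_{\lambda} - A f_{\lambda}' > 0$ by Lemma~\ref{lemma:general-behavior-prep}. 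Equivalently, writing $\Psi := a v_a - f_{\lambda} = 2\lambda v_{\lambda}$ by Lemma~\ref{lemma:relating-va-v-lambda}, where $v_a = \partial_a f_{\lambda,a}$ is the even homogeneous variation field normalized by $v_a(0) = 1$, we have $P_{f_{\lambda}}\Psi = -P_{f_{\lambda}} f_{\lambda} = -2\lambda(n-2)(1-t^2)\partial_{\lambda}\Phi < 0$ on $(0,b_{\lambda})$, together with $\Psi(0) = \Psi'(0) = 0$ from evenness. The target is $\Psi < 0$, that is $v_{\lambda} < 0$, on the open positive phase.

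I would split the positive phase at the zero $t_0$ of $v_a$. By Proposition~\ref{prop:unique-crossing of solutions}$(iv)$, $v_a$ has at most one zero, is positive on $[0,t_0)$ and negative afterwards, and---when $f_{\lambda}$ has a first root $t_{\lambda}$---satisfies $t_0 \in (0,t_{\lambda})$. On $[t_0,t_{\lambda})$ the inequality $\Psi = a v_a - f_{\lambda} < 0$ is immediate, since $v_a \le 0$ while $f_{\lambda} > 0$; the same reasoning covers $[t_0,b_{\lambda})$ if instead $f_{\lambda}$ blows up before reaching zero. On $(0,t_0)$, where $v_a > 0$ solves the homogeneous equation $P_{f_{\lambda}} v = 0$, I would run a Wronskian comparison: with $\widetilde W := v_a \Psi' - v_a' \Psi$ and $\mu := \exp(\int b_1)$, for $b_1$ the first-order coefficient appearing in $(1-t^2)^{-1}P_{f_{\lambda}}$ (see~\eqref{eqn:linearization-at-f}), a direct computation gives $(\mu\widetilde W)' = \mu\,(1-t^2)^{-1}\,v_a\,P_{f_{\lambda}}\Psi < 0$. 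Since $\widetilde W(0) = 0$, this forces $\widetilde W < 0$ on $(0,t_0)$, hence $(\Psi/v_a)' = \widetilde W/v_a^2 < 0$; as $\Psi/v_a \to 0$ when $t \downarrow 0$, we conclude $\Psi/v_a < 0$ and therefore $v_{\lambda} < 0$ on $(0,t_0)$. (If $v_a$ has no zero in the positive phase, this single step already suffices.) I expect the main technical point to be the bookkeeping around the removable singularity of the linearized operator at $t=0$: there $b_1 \sim (n-2)\alpha\, t^{-1}$, so $\mu$ degenerates like $t^{k-1}$, and the argument must be run on $(0,t_0)$ using the even-function regularity of $v_a,v_{\lambda}$ recorded earlier, verifying that $\mu\widetilde W \to 0$ at the origin---this is routine but must be handled with care.

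It remains to globalize and to extract the foliation. Having shown $\partial_{\mu} f_{\mu,a}(t) < 0$ at every $t$ in the open positive phase of $f_{\mu,a}$, fix $0 < \lambda_1 < \lambda_2$ and $t \in (0,t_{\lambda_2})$: the set of $\mu \in [\lambda_1,\lambda_2]$ with $f_{\mu,a}(t) > 0$ is relatively open, contains $\lambda_2$, and on it $\mu \mapsto f_{\mu,a}(t)$ is continuous (by the $C^2$ dependence recorded above) with strictly negative $\mu$-derivative, hence strictly decreasing; continuity of $\mu \mapsto b_{\mu,a}$ forbids this set from terminating inside $(\lambda_1,\lambda_2]$, so it equals $[\lambda_1,\lambda_2]$ and $f_{\lambda_1} > f_{\lambda_2}$ on $(0,t_{\lambda_2})$. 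In particular $t_{\lambda_1} > t_{\lambda_2}$ and $f_{\lambda_1} > f_{\lambda_2}$ wherever both are positive, which is the asserted strictly decreasing order in $\lambda$. Finally, since $f_{\lambda} \to f_0$ locally as $\lambda \downarrow 0$ and $f_{\lambda} \to a f_{\infty}$ locally as $\lambda \uparrow \infty$ (the limit equations discussed after~\eqref{eqn:non-positive}), strict monotonicity and continuity in $\lambda$ imply, via the intermediate value theorem, that every point $(t,y)$ of the first quadrant with $a f_{\infty}(t) < y < f_0(t)$ lies on exactly one graph $\{(s,f_{\lambda,a}(s))\}_{s}$; thus the graphs $\{\mathbf{C}\text{-profiles}\}_{\lambda>0}$ partition, that is foliate, the region between $\on{graph}(a f_{\infty})$ and $\on{graph}(f_0)$.
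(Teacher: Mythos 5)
Your proof is correct and follows essentially the same route as the paper's. Both proofs reduce the ordering to showing $v_{\lambda} = \partial_{\lambda} f_{\lambda} < 0$ on the positive phase, split the phase at the (unique) zero $t_0$ of $v_a$, handle the region past $t_0$ via the algebraic identity $2\lambda v_{\lambda} = a v_a - f_{\lambda}$ from Lemma~\ref{lemma:relating-va-v-lambda} together with the sign information from Proposition~\ref{prop:unique-crossing of solutions}$(iv)$, and on $(0,t_0)$ compare the inhomogeneous solution $v_{\lambda}$ against the positive homogeneous solution $v_a$ of $P_{f_{\lambda}}$. The only difference is that where the paper invokes the Green's kernel representation / Sturm--Picone theorem as a black box, you unpack the comparison explicitly via the Wronskian $\widetilde W = v_a \Psi' - v_a' \Psi$ with integrating factor $\mu \sim t^{k-1}$; your check that $\mu \widetilde W \to 0$ at the origin (using evenness of $v_a, v_\lambda$, so $\widetilde W$ is odd and hence $O(t)$) is exactly the bookkeeping needed there. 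Your final globalization and intermediate-value argument just makes explicit the continuity-in-$\lambda$ step the paper states without elaboration. One small notational improvement you made: you correctly write the upper envelope as $a f_\infty$, whereas the paper's statement leaves the normalization of $f_\infty$ (normalized to $f_\infty(0)=1$ rather than $a$) implicit.
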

\begin{proof}
As discussed in Proposition~\ref{prop:unique-crossing of solutions}, the strictly decreasing foliation is equivalent to proving that for any $\lambda>0$, the variation field $v_{\lambda} := \partial_{\lambda} f_{\lambda}$ produced by varying the scaling parameter satisfies $v_{\lambda} < 0$ on $\{ f_{\lambda} > 0 \}$.
    As in~\eqref{eqn:Pf-v-linearized}, we denote by $P_f$ the linearized operator at the solution $f$ and let $v_a$ be the principal even solution of the homogeneous equation $P_f v_a = 0$ with $v_a(0) = 1$ and $v'_a(0) = 0$.
    Recall that $v_a$ has at most one zero, which we denote by $t_0$, and $t_0 \in \{ f_{\lambda} > 0 \}$.
 
    On the interval $[0, t_0]$, the fact that $v_a \geq 0$ together with $P_f v_{\lambda} < 0$ from~\eqref{eqn:linearized-va-vlambda}, and the Green's kernel representation, implies that $v_{\lambda} < 0$ and $v'_{\lambda} < 0$ on $(0,t_0]$.
    Equivalently, since $P_f(- v_{\lambda}) >0$ and $P_f v_a = 0$, the Sturm-Picone theorem guarantees that $v_{\lambda}$ has no sign changes on $(0,t_0)$, since $v_a$ is strictly positive.
    If $t_0 = \min \{ t_{\lambda}, b_{\lambda} \}$, then we are done; otherwise, on the interval $(t_0, \min \{ t_{\lambda}, b_{\lambda} \})$, we have $-v_a, f_{\lambda,a} > 0$, so the relation~\eqref{eqn:relating-the-variations} implies
    \[
    2 \lambda v_{\lambda} = a v_a - f_{\lambda,a} < 0
    \]
    as desired.
    The inequality is strict up to the endpoint, since $v_{\lambda}(t_{\lambda}) = 0$ would force $v_a(t_{\lambda}) = f_{\lambda,a}(t_{\lambda}) = 0$, contradicting the property $v_a(t_{\lambda}) < 0$ obtained above.
    We conclude that $v_{\lambda} <0$ on $[0, \min \{ t_{\lambda}, b_{\lambda} \} ]$ for every $f_{\lambda}$, hence the functions $\{ f_{\lambda} \}_{\lambda \in (0,\infty)}$ obey a strictly decreasing ordering in $\lambda \in (0,\infty)$ and foliate the region between $f_0$ and $f_{\infty}$.
\end{proof}

\begin{corollary}\label{corollary:terminal-slope-monotonicity}
Let $f_a$ be the solution of equation~\eqref{eqn:odeStar} with initial height $f_a(0) = a$ and define
\[
a_* := \sup \{ a > 0 : f_a(t) \; \text{reaches zero within its interval of existence} \}.
\]
For $a \in (0,a_*]$, let $t_a$ denote the zero of $f_a$.
Then, the terminal slope map 
\[
a \mapsto |f'_a(t_a)|
\]
is strictly increasing in $a \in (0, a_*]$.
Moreover, letting $f_{\lambda}$ be the solution of the rescaled equation~\eqref{eqn:rescaledODE} with fixed initial height and scaling parameter $\lambda$, which reaches zero at $t_{\lambda}$, the terminal slope map $\lambda \mapsto |f'_{\lambda}(t_{\lambda})|$ is strictly increasing in $\lambda \in [0,1]$.
This means that the map
\[
a \mapsto a^{-1} |f'_a(t_a)|
\]
is also strictly increasing in $a \in (0, a_*]$.
\end{corollary}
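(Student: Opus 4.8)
The plan is to reduce all three assertions to a single monotonicity statement for the rescaled family. Using the homogeneity relation $f_{1,a}(t)=a\,f_{a^2,1}(t)$ from the proof of Lemma~\ref{lemma:relating-va-v-lambda}, set $G(\mu):=|f'_{\mu,1}(\tilde t_\mu)|$, the terminal slope of the solution of~\eqref{eqn:rescaledODE} with parameter $\mu$ and initial height $1$, where $\tilde t_\mu$ is its first positive zero. Then $|f'_a(t_a)|=a\,G(a^2)$ and $a^{-1}|f'_a(t_a)|=G(a^2)$, while the family of rescaled solutions with fixed initial height $a_0$ has terminal slope $a_0\,G(\lambda a_0^2)$ at parameter $\lambda$; the same scaling identifies $a_*^2$ with the threshold $\mu_*$ beyond which $f_{\mu,1}$ no longer reaches zero. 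Consequently all three claims follow at once once we know that $G$ is strictly increasing on $(0,\mu_*]$ (with $G(\mu)\to+\infty$ as $\mu\uparrow\mu_*$, the half-Lawson cone having infinite terminal slope): the first as a product of positive increasing functions, the other two directly.

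To prove $G$ strictly increasing I would use the variation field $v_\mu:=\partial_\mu f_{\mu,1}$, which by Corollary~\ref{corollary:lambda-comparison} satisfies $v_\mu<0$ on $\{f_{\mu,1}>0\}$, with $v_\mu(\tilde t_\mu)<0$ strictly (as in the proof there). Differentiating $f'_\mu(\tilde t_\mu)$ in $\mu$ with $\dot{\tilde t}_\mu=-v_\mu(\tilde t_\mu)/f'_\mu(\tilde t_\mu)$, substituting the identity $2\mu v_\mu=v_a-f_{\mu,1}$ of Lemma~\ref{lemma:relating-va-v-lambda} (where $v_a$ is the initial-height variation field of $f_{\mu,1}$), and using the strict concavity $f''_\mu<0$ together with $f'_\mu<0$ at the root, a short computation collapses to
\[
G'(\mu)=\frac{|f'_\mu(\tilde t_\mu)|}{2\mu}\Bigl[\,(v_a/f'_\mu)'(\tilde t_\mu)-1\,\Bigr].
\]
Thus everything reduces to the pointwise inequality $(v_a/f'_\mu)'(\tilde t_\mu)>1$. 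At $\mu=0$ one has $v_a=f_{n,k}$ (the harmonic profile is linear in $a$), so $v_a/f'_\mu=f_{n,k}/f'_{n,k}$ vanishes at its zero with derivative exactly $1$, consistent with $G'(0)=0$; hence the inequality is strict only for $\mu>0$ and is genuinely driven by the nonlinear term of~\eqref{eqn:rescaledODE}.

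To establish $(v_a/f'_\mu)'(\tilde t_\mu)>1$ I would compare $v_a$ with $f_\mu$ along the solution near the root: by Proposition~\ref{prop:unique-crossing of solutions}$(iv)$, $v_a$ has a single zero $t_0<\tilde t_\mu$, is strictly decreasing on $[0,t_0]$, and $v_a<0$ on $(t_0,\tilde t_\mu]$; combined with the bound $v'_a\le (f'_\mu/f_\mu)v_a$ from that proof and the linear equation $P_{f_\mu}v_a=0$ (with $P_f$ given by~\eqref{eqn:Pf-v-linearized}, whose coefficients are regular near $\tilde t_\mu$), one controls the sign and size of $v'_a(\tilde t_\mu)$ against $f''_\mu(\tilde t_\mu)$ and pins down $(v_a/f'_\mu)'(\tilde t_\mu)$. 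An equivalent route bypasses the variational identity: for $\mu_1<\mu_2$ the foliation of Corollary~\ref{corollary:lambda-comparison} gives $f_{\mu_2}<f_{\mu_1}$ with roots $\tilde t_{\mu_2}<\tilde t_{\mu_1}$, both exceeding $\sqrt{(k-1)/(n-2)}$, where $h_f:=f-A_{n,k}f'$ is positive and strictly increasing (both terms of~\eqref{eqn:h'-Lf-particular} being positive) and $A_{n,k}$ is strictly increasing; since $h_{f_{\mu_2}}<h_{f_{\mu_1}}$ on $[0,\sqrt{(k-1)/(n-2)}]$ by Corollary~\ref{cor:comparison-before-sqrtalpha} while the gap there satisfies $(h_{f_{\mu_1}}-h_{f_{\mu_2}})'=|f'_{\mu_1}|-|f'_{\mu_2}|<0$, the hypotheses~\eqref{eqn:the-condition} of Lemma~\ref{lemma:comparison-of-solutions} are met at the point where this gap first closes, forcing $h_{f_{\mu_2}}>h_{f_{\mu_1}}$ and $f'_{\mu_2}<f'_{\mu_1}$ strictly thereafter; evaluating $|f'_{\mu_i}(\tilde t_{\mu_i})|=h_{f_{\mu_i}}(\tilde t_{\mu_i})/A_{n,k}(\tilde t_{\mu_i})$ at the respective roots and using the monotonicity of $h$ and $A_{n,k}$ then yields $G(\mu_2)>G(\mu_1)$.

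The main obstacle is precisely this sign analysis at the root. In the comparison route the two solutions vanish at different points, so after the $h$-ordering flips one must still propagate the strict inequality past $\tilde t_{\mu_2}$ — where $f_{\mu_2}$ has already turned negative, so the coefficient monotonicity~\eqref{eqn:coefficient-ordering} underlying Lemma~\ref{lemma:comparison-of-solutions} must be re-examined — and relate $h$-values across the interval $[\tilde t_{\mu_2},\tilde t_{\mu_1}]$; the naive pointwise estimate $A_{n,k}|f''_{\mu_1}|\le |f'_{\mu_1}|$ that would close this gap fails for contact angles near $\tfrac{\pi}{2}$ (where $|f'|\to\infty$), so one genuinely needs the strict overtaking $h_{f_{\mu_2}}>h_{f_{\mu_1}}$ and the square-root blow-up expansion of $f_{\mu_2}$ at its eventual derivative blow-up, as in the proof of Proposition~\ref{prop:unique-crossing of solutions}. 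In the variational route the same difficulty reappears in disguise: $v_a(\tilde t_\mu)$ and $v'_a(\tilde t_\mu)$ are fixed by the global initial-value problem for $v_a$, so $(v_a/f'_\mu)'(\tilde t_\mu)>1$ cannot be read off at the root alone. Once $G$ is shown to be strictly increasing, the three displayed statements of the corollary follow from the scaling dictionary of the first paragraph.
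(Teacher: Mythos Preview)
Your reduction to the single function $G(\mu)=|f'_{\mu,1}(\tilde t_\mu)|$ via the scaling $f_{1,a}=a\,f_{a^2,1}$ is correct and clean, and the variational identity $G'(\mu)=\tfrac{|f'_\mu|}{2\mu}\bigl[(v_a/f'_\mu)'(\tilde t_\mu)-1\bigr]$ checks out. However, neither of your two routes actually establishes $G'(\mu)>0$. In the variational route you correctly identify the needed inequality $(v_a/f'_\mu)'(\tilde t_\mu)>1$ but do not prove it; the hints you give (the sign of $v_a$ from Proposition~\ref{prop:unique-crossing of solutions}$(iv)$, the bound $v'_a\le (f'_\mu/f_\mu)v_a$) control $v_a/f_\mu$, not $v_a/f'_\mu$, and give no handle on the derivative at the root. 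In the comparison route, the formula $(h_{f_{\mu_1}}-h_{f_{\mu_2}})'=|f'_{\mu_1}|-|f'_{\mu_2}|$ is simply wrong (differentiate $h=f-Af'$ or use~\eqref{eqn:h'-Lf-particular}); and even granting an eventual flip to $h_{f_{\mu_2}}>h_{f_{\mu_1}}$, your final step ``using the monotonicity of $h$ and $A$'' does not close: you are comparing $h_{\mu_2}(\tilde t_{\mu_2})/A(\tilde t_{\mu_2})$ with $h_{\mu_1}(\tilde t_{\mu_1})/A(\tilde t_{\mu_1})$ at \emph{different} points $\tilde t_{\mu_2}<\tilde t_{\mu_1}$, and since $h_{\mu_1}$ is increasing the pointwise bound $h_{\mu_2}(\tilde t_{\mu_2})>h_{\mu_1}(\tilde t_{\mu_2})$ tells you nothing about $h_{\mu_1}(\tilde t_{\mu_1})$. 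You flag exactly this obstacle in your last paragraph, but you do not resolve it.

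The paper proceeds quite differently and sidesteps the root-comparison issue entirely. It proves the map $a\mapsto|f'_a(t_a)|$ strictly increasing \emph{directly}, not via $G(\mu)$, by anchoring the argument at the two ends: for small $a$ it uses the expansion of Corollary~\ref{corollary:linearized-deviation} ($f_a=af_0+O(a^3)$, $v_a=f_0+O(a^2)$, $t_a=t_0+O(a^2)$) to compute $\partial_a f'_a(t_a)=f'_0(t_0)+O(a)<0$ explicitly; near $a_*$ it uses the square-root blow-up to see the terminal slope diverges. With monotonicity in hand at both ends, the middle range is handled by contradiction via the unique-crossing property of Proposition~\ref{prop:unique-crossing of solutions}: if $a_1<a_2$ had reversed terminal slopes, the crossing at $t_*<t_{a_2}$ followed by Lemma~\ref{lemma:comparison-of-solutions} forces $f'_{a_2}<f'_{a_1}$ past $t_*$, yielding a contradiction. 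The $\lambda$-statement then follows by the identical argument with Corollary~\ref{corollary:lambda-comparison} in place of Proposition~\ref{prop:unique-crossing of solutions}, and the $a^{-1}|f'_a(t_a)|$ statement is read off from the $\lambda$-version via the rescaling $f_a=\sqrt{\lambda}f_\lambda$. The key idea you are missing is the use of the linearization at $a=0$ to get monotonicity for free on an initial interval: this replaces your attempt to compute $G'(\mu)$ for \emph{all} $\mu$ by a single explicit computation at $\mu=0$, after which the crossing argument does the rest.
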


\begin{proof}
We prove the result for the map $a \mapsto |f'_a(t_a)|$, using Proposition~\ref{prop:unique-crossing of solutions}.
We first show that $a \mapsto |f'_a(t_a)|$ is strictly increasing for small $a>0$.

Applying the continuous dependence theorem for solutions of the ODE~\eqref{eqn:odeStar} in terms of the initial conditions shows that the map $a \mapsto f_a$ is $C^1$; by the implicit function theorem, the map $a \mapsto t_a$ is also $C^1$.
Differentiating the relation $f_a(t_a) = 0$ and working with the variation field $v_ a:= \partial_a f_a$ along solutions produced by varying the initial height, as in~\eqref{eqn:linearization-at-f},  therefore gives
\begin{align*}
    \partial_a t_a &= - \frac{v_a(t_a)}{f'_a(t_a)}, \\ 
    \partial_a f'_a(t_a) &= v'_a( t_a) + f''_a( t_a) \, \partial_a t_a =  v'_a(t_a) - f''_a(t_a) \frac{v_a(t_a)}{f'_a(t_a)} \\
    &= f'_a(t_a) \cdot \left( \frac{v_a}{f'_a} \right)' (t_a).
    \end{align*}
For small $a \in (0, \ve_{n,k})$, we may write $|D^{\ell}(f_a - a f_0)| < C(n,k,\ell) a^2$, where $f_0$ is the solution of the linear equation $\cL_{n,k} f_0 = 0$, given by the hypergeometric function ${}_2 F_1(\frac{n-1}{2} , - \frac{1}{2} ; \frac{k}{2} ; t^2)$.
Since $\cL_{n,k}$ is a linear operator, the variation field satisfies the same linearized equation, so $|D^{\ell}(v_a - f_0)| < C(n,k,\ell) a^2$ and $|t_a - t_0| < C(n,k) a^2$.
We therefore obtain
\begin{align*}
|v_a(t_a)| &= |v_a(t_a) - a f_0(t_0)| < C(n,k) a^2, \qquad |v'_a(t_a) - a f'_0(t_0)| < C(n,k) a^2, \\
\left| \partial_a f'_a(t_a) - a f'_0(t_0) \right| &\leq C(n,k) a^2 + 2 \left| \frac{f''_0(t_0)}{f'_0(t_0)} \right| a^2 < C'(n,k) a^2,
\end{align*}
so $\partial_a f'_a(t_a) < a ( f'_0(t_0) - C'(n,k) a)$.
Since $f'_0(t_0) < 0$, we conclude that $\partial_a f'_a(t_a) < 0$ for $a \in (0,\ve_{n,k})$, meaning that $a \mapsto |f'_a(t_a)|$ is strictly increasing for small $a > 0$.

For the blow-up time $b_a$ of the solution, we know that $b_a \to 1$ as $a \downarrow 0$, by comparison with the hypergeometric function $f_0$; consequently, $t_a < b_a$ as $a \downarrow 0$.
The definition of $a_*$ implies that $f_{a_* + \ve}$ has derivative blow-up while positive, for any $\ve > 0$ sufficiently small; taking the limit as $\ve \downarrow 0$, we deduce that $t_{a_*} = b_{a_*}$.
The computation of Proposition~\ref{prop:unique-crossing of solutions} shows that $f_{a_*}(t) = \sqrt{\frac{2}{(n-2) A(t_*)}} (t_* - t)^{\frac{1}{2}} + o (|t_* - t|)$, whereby $f_{a_*}(t)$ produces a solution with contact angle $\frac{\pi}{2}$ (free-boundary) in the notation of Proposition~\ref{prop:FreeBdyODE=Cone}.
The continuity in the parameter $a$ implies that as $a \in (0,a_*]$, the solutions $f_a$ produce all capillary angles $\theta \in (0, \frac{\pi}{2}]$.

The above discussion also implies that the map $a \mapsto |f'_a(t_a)|$ is strictly increasing on $(a_* - \ve, a_*]$, for some $\ve > 0$.
If the strictly increasing property failed somewhere in $(0, a_*)$, there would exist a first instance $a_0 > 0$ where $\frac{d}{da}\big\rvert_{a = a_0} |f'_a(t_a)| = 0$, and some $\ve > 0$ such that $\frac{d}{da}|f'_a(t_a)| < 0$ for $a \in (a_0, a_0 + \ve)$.
This means that the map $a \mapsto |f'_a(t_a)|$ is strictly decreasing on $[a_0, a_0 + \ve]$, so we may find some $a_1 < a_2 \in (a_0, a_0 + \ve)$ such that $|f'_{a_2}(t_{a_2})| \leq |f'_{a_1} (t_{a_2})|$.
However, Proposition~\ref{prop:unique-crossing of solutions} ensures that the functions $f_{a_1}$ and $f_{a_2}$ intersect at a unique point $t_* < t_{a_2}$, and $f'_{a_2}(t) < f'_{a_1}(t) < 0$ for all $t > t_*$; applying this at $t = t_2$ produces a contradiction, hence the map $a \mapsto |f'_a(t_a)|$ must be strictly increasing.

The argument for $\lambda \mapsto |f'_{\lambda}(t_{\lambda})|$ is identical, where we instead apply Corollary~\ref{corollary:lambda-comparison}; this guarantees that any $\lambda_1 < \lambda_2$ satisfy $f'_{\lambda_2} < f'_{\lambda_1}$ on the common domain of non-negativity of the two functions.
To see the final monotonicity in $a$, we use the correspondence between $f_{\lambda}$ (with fixed initial height, equal to $a_*$) and $f_a$, given by $f_a = \sqrt{\lambda} f_{\lambda}$ for $\lambda = ( \frac{a}{a_*} )^2 \in (0,1)$.
\end{proof}
In Proposition~\ref{prop:general-a-behavior} below, we will prove that $a_* = a_{n,k} = \sqrt{\frac{k}{n-k-1}}$ is precisely the initial height of the Lawson cone solution from Lemma~\ref{lemma:lawson-cones}.
This means that the admissible solutions $f_a$ (i.e., the ones reaching zero within their interval of existence) are precisely those with $a \in (0 , a_{n,k}]$.

\subsection{Existence and capillary angle}
Completing the proof of Theorem~\ref{thm:capillary-cones}, we now show that as we vary the initial height $a$ between the Lawson height and 0, the solutions $f_a$ of~\eqref{eqn:odeStar} exist until they attain 0 and sweep out all possible capillary angles.
The key ingredient is a monotone quantity that measures the deviation of a solution of the capillary ODE from the Lawson cone and determines its blow-up-or-zero behavior.

\begin{lemma}\label{lemma:psi-detect-blow-up}
    Fix a point $t_0 > \sqrt{\frac{k-1}{n-2}}$ and let $f \in C^2([t_0, T))$ be a maximal solution of equation~\eqref{eqn:odeStar} with initial data $f(t_0) > 0$ and $f'(t_0) \leq 0$.
    We form the quantity $\Psi := f \cdot h_f - \frac{1}{n-2} = f (f - A f') - \frac{1}{n-2}$.
    \begin{enumerate}[$(i)$]
        \item if $\Psi(t_0) \leq 0$ and $\Psi'(t_0) \leq 0$, then $\Psi$ is strictly decreasing on $(t_0, T)$, or $\Psi \equiv 0$ and $f = \hat{f}_{n,k}$ coincides with the Lawson solution of Lemma~\ref{lemma:lawson-cones}.
        \item if $\Psi(t_0) \geq 0$ and $\Psi'(t_0) \geq 0$, then $\Psi$ is strictly increasing on $(t_0, T)$, or $\Psi \equiv 0$ and $f = \hat{f}_{n,k}$ coincides with the Lawson solution of Lemma~\ref{lemma:lawson-cones}.
    \end{enumerate}
    If $f$ is not the Lawson solution, then in case $(i)$, it reaches zero with contact angle $< \frac{\pi}{2}$ and finite derivative at the zero.
    In case $(ii)$, the derivative $f'$ blows up while $f$ remains positive.

    The same result holds, with $t_0 = \sqrt{\frac{k-1}{n-2}}$, upon replacing the conditions $(i)$, $(ii)$, respectively, with
    \[
    \Bigl\{ \Psi(\sqrt{\tfrac{k-1}{n-2}}) \leq 0 \; \text{ and } \; \Psi''(\sqrt{\tfrac{k-1}{n-2}}) \leq 0 \Bigr\} \quad \text{or} \quad \Bigl\{ \Psi(\sqrt{\tfrac{k-1}{n-2}}) \geq 0 \; \text{ and } \; \Psi''(\sqrt{\tfrac{k-1}{n-2}}) \geq 0\Bigr\}.
    \]
\end{lemma}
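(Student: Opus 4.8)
The plan is to find a differential identity for $\Psi$ which makes it behave like a sub-/supersolution of a second-order linear ODE whose unique ``steady state'' is the Lawson solution. First I would record the qualitative behavior of $f$ on $[t_0,T)$: since $t_0\ge\sqrt{(k-1)/(n-2)}$ we have $A\ge 0$ there, so $h_f(t_0)=f(t_0)-A(t_0)f'(t_0)\ge f(t_0)>0$, and the sign-preservation arguments of Lemma~\ref{lemma:comparison-of-solutions} and Lemma~\ref{lemma:general-behavior-prep} give $f>0$, $f'<0$, $h_f>0$ on $(t_0,T)$; moreover $T<1$ by comparison with the hypergeometric solution $f_{n,k}$ as in Corollary~\ref{cor:comparison-before-sqrtalpha} and the proof of Lemma~\ref{lemma:derivative-blow-up}. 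In particular $\Psi$ and all coefficients introduced below are smooth on $(t_0,T)$. I would also note at the outset that $\Psi\equiv 0$ forces $f=\hat f_{n,k}$: writing $y:=f^2$, the identity $fh_f\equiv\tfrac1{n-2}$ reads $y-\tfrac{A}{2}y'=\tfrac1{n-2}$, a first-order linear ODE whose solutions are $y=\tfrac1{n-2}+c(t^2-\alpha)$, and substituting back into~\eqref{eqn:odeStar} pins down $c=-\tfrac{n-1}{n-k-1}$, i.e.\ the Lawson profile of Lemma~\ref{lemma:lawson-cones}.

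The technical heart is the following. Using $fh_f=\Psi+\tfrac1{n-2}$, the formula~\eqref{eqn:h'-Lf-particular} for $h_f'$, and~\eqref{eqn:odeStar} in the form $\cL_{n,k}f=-(n-2)(1-t^2)(f')^2h_f/(1+f^2)$, I would derive a first-order identity
\begin{equation*}
(1-t^2)\Psi'=C(t)+D(t)\,\Psi,\qquad D(t)=A\Bigl((n-1)+(n-2)\tfrac{(1-t^2)(f')^2}{1+f^2}\Bigr)>0 ,
\end{equation*}
with $C(t)$ an explicit expression in $t,f,f'$. Differentiating this and eliminating $f''$ via~\eqref{eqn:odeStar} once more, the goal is to show that the resulting $C'(t)$ collapses into a combination $\mu(t)\Psi'+\nu(t)\Psi$, producing a second-order linear equation $(1-t^2)\Psi''=a(t)\Psi'+b(t)\Psi$ with $b(t)>0$ on $(t_0,T)$. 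Verifying this collapse — that the genuinely nonlinear part of $C'$ reorganizes into $\Psi,\Psi'$ and that the zeroth-order coefficient stays positive — is the analogue of the Simons identity here and I expect it to be the main obstacle. This also explains the variant at $t_0=\sqrt{(k-1)/(n-2)}$: there $A(t_0)=0$ and the bracket $(1-t^2)-\alpha(1-\alpha)t^{-2}$ vanishes, so $C(t_0)=0$ and hence $\Psi'(t_0)=0$ automatically, while $b(t_0)=2\bigl((n-1)+(n-2)\tfrac{(1-t_0^2)(f')^2}{1+f^2}\bigr)(t_0)>0$, so the sign information must be carried by $\Psi''(t_0)$ instead.

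Given the second-order equation, the monotonicity is a maximum-principle argument. Assume $f\ne\hat f_{n,k}$, so $\Psi\not\equiv0$ (equivalently, $\Psi$ and $\Psi'$ never vanish simultaneously, by ODE uniqueness for $\Psi$ together with the previous paragraph). In case $(i)$, $\Psi(t_0)\le0$ and $\Psi'(t_0)\le0$; if $\Psi'(t_0)=0$ then $(1-t_0^2)\Psi''(t_0)=b(t_0)\Psi(t_0)<0$, so in all cases $\Psi'<0$ on some $(t_0,t_0+\delta)$. If $\Psi'$ vanished again, let $t_1$ be the first such time: then $\Psi$ is strictly decreasing on $(t_0,t_1)$, so $\Psi(t_1)<\Psi(t_0)\le0$, yet $\Psi'(t_1)=0$ forces $(1-t_1^2)\Psi''(t_1)=b(t_1)\Psi(t_1)<0$, contradicting $\Psi''(t_1)\ge0$. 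Hence $\Psi'<0$ on $(t_0,T)$. Case $(ii)$ is symmetric, and the $t_0=\sqrt{(k-1)/(n-2)}$ variant runs identically once one uses that $\Psi'(t_0)=0$ automatically and $\Psi''(t_0)<0$ (resp.\ $>0$) unless $f$ is Lawson.

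It remains to read off the endpoint behavior at $T<1$, where $f>0,f'<0$ and either $f(T)=0$ or $f'(T)=-\infty$. If $f'(T)=-\infty$ with $f(T)>0$, then $h_f=f-Af'\to+\infty$ and $\Psi\to+\infty$. If $f(T)=0$ and $f'(T)=-\infty$, the Puiseux expansion $f\sim g_0\sqrt{T-t}$ ($g_0>0$) fed into~\eqref{eqn:odeStar} balances the dominant terms to $(n-2)A(T)g_0^2=2$, whence $fh_f\to\tfrac{A(T)g_0^2}{2}=\tfrac1{n-2}$ and $\Psi(T)=0$. If $f(T)=0$ with $f'(T)$ finite, then $fh_f\to0$, so $\Psi(T)=-\tfrac1{n-2}$. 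In case $(i)$, $\Psi$ is strictly decreasing with $\Psi<\Psi(t_0)\le0$, which excludes $\Psi(T)\in\{+\infty,0\}$; hence $f$ reaches zero with finite derivative and contact angle $\theta=\arctan\!\bigl(\sqrt{1-T^2}\,|f'(T)|\bigr)<\tfrac\pi2$ by Proposition~\ref{prop:FreeBdyODE=Cone}. In case $(ii)$, $\Psi$ is strictly increasing with $\Psi>\Psi(t_0)\ge0$, which excludes $\Psi(T)\in\{0,-\tfrac1{n-2}\}$; hence $f$ cannot reach zero, so $f'(T)=-\infty$ while $f(T)>0$. This completes the argument.
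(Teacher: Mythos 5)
Your outline captures the right strategy — a monotone quantity whose second derivative at a critical point has the sign of the quantity itself, combined with a first‑touching argument — and the endpoint analysis (the Puiseux balance giving $\Psi(T)=0$ when $f(T)=0,\,f'(T)=-\infty$, versus $\Psi(T)=-\tfrac1{n-2}$ or $+\infty$ in the other cases) matches the paper's.  But two genuine gaps remain.

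\textbf{The key identity is not proved, and is stronger than what exists.} You ask for a full second‑order linear identity $(1-t^2)\Psi''=a\Psi'+b\Psi$ and flag its verification as ``the main obstacle.'' The paper does \emph{not} prove such an identity along the whole trajectory. Starting from $\Psi=f^2(1-Aq)-\tfrac1{n-2}$ and $\Psi'=Af^2(-q'-2q^2+t^{-1}q)$, the simplification of $\Psi''$ into a multiple of $\Psi$ uses the substitution $q'=-2q^2+t^{-1}q$, which is precisely the constraint $\Psi'(t_*)=0$ together with $A(t_*)\neq 0$. So what is actually established is the \emph{critical-point} identity
\[
\Psi''(t_*)\;=\;\frac{2(n-2)^2(-f')(f-Af')}{(1-t_*^2)(1+f^2)}\Bigl(1+\frac{(1-t_*^2)(f')^2}{1+f^2}\Bigr)A(t_*)\,\Psi(t_*),
\]
valid only where $\Psi'(t_*)=0$, $t_*\neq\sqrt{\alpha}$, and with a coefficient that depends on $(f,f')$, not on $t$ alone. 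Fortunately this weaker statement is all your first-touching argument uses (you only evaluate the ODE at the putative critical time $t_1$), so the correct fix is to prove this critical-point identity directly, by the Riccati substitution and the elimination of $q''$ via the differentiated ODE; you should not expect, and do not need, the coefficients $a,b$ to be functions of $t$ alone.

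\textbf{The case $t_0=\sqrt{(k-1)/(n-2)}$ with $\Psi''(t_0)=0$ is not handled.} You assert ``$\Psi''(t_0)<0$ (resp.\ $>0$) unless $f$ is Lawson,'' but that is false: the equation $\Psi(t_0)=0$ alone forces $f(t_0)=1/\sqrt{n-2}$ yet leaves $f'(t_0)$ free, and similarly $\Psi''(t_0)=0$ alone does not pin $f$ to the Lawson profile; only the \emph{pair} $\Psi(t_0)=\Psi''(t_0)=0$ forces $f\equiv\hat f_{n,k}$. If $\Psi(t_0)<0$ but $\Psi''(t_0)=0$ (which is compatible with the lemma's hypothesis), the starting sign information is at the next order, and one has to compute $\Psi'''(t_0)$. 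The paper shows $\operatorname{sgn}\Psi'''(t_0)=\operatorname{sgn}\Psi(t_0)$, which restores the strictness needed to launch the monotonicity argument from $t_0+\ve$. Your argument as written assumes the strict inequality $\Psi''(t_0)<0$ and would break down when $\Psi''(t_0)=0$ for a non-Lawson solution.
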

The quantity $\Psi$ has the significance that $\Psi \equiv 0$ if and only if $f \equiv \hat{f}_{n,k}$ is the Lawson solution.
\begin{proof}
Suppose that $\Psi(t_0) = 0$ and $\Psi'(t_0) = 0$, then solving~\eqref{eqn:odeStar} for $f''$ at $t_0$ and substituting into $\Psi'(t_0) = 0$ produces a $2 \times 2$ system for $(f(t_0), f'(t_0))$.
Direct computation shows that $t_0 \leq \sqrt{\frac{k}{n-1}}$ and $f(t_0) = \sqrt{\frac{k - (n-1) t_0^2}{n-k-1}}$ with $f'(t_0) = \hat{f}'_{n,k}(t_0)$ is the unique admissible solution, whereby ODE uniqueness forces $f \equiv \hat{f}_{n,k}$ to be the Lawson solution.

Otherwise, at least one of the inequalities is strict at $t_0$; we will show that $\Psi$ is strictly monotone.
Suppose that this is not the case, so there is a first instance $t_* \neq \sqrt{\frac{k-1}{n-2}}$ where $\Psi'(t_*) = 0$ and $\pm \Psi''(t_*) \geq 0$.
The same ODE uniqueness computation shows that $\Psi''(t_*) \neq 0$ for $f \not\equiv \hat{f}_{n,k}$.
At this point, $\pm \Psi(t_*) < 0$ by the assumptions.
On the other hand, we will show that
\begin{equation}\label{eqn:sign-Psi''-Psi}
\on{sgn} \Psi''(t_*) = \on{sgn} A(t_*) \cdot \on{sgn} \Psi(t_*) 
\end{equation}
at any point $t_* \neq \sqrt{\frac{k-1}{n-2}}$ where $\Psi'(t_*) = 0$.
This will produce a contradiction, whereby $\Psi'$ and $\Psi$ maintain the same fixed sign on $(t_0, T)$.
The full expression at the critical point is
\begin{equation}\label{eqn:Psi''(t*)-expression}
\Psi''(t_*) = \frac{2 (n-2)^2 (-f') (f - A f')}{(1-t_*^2) (1+f^2)} \left( 1 + \frac{(1-t_*^2) (f')^2}{1 + f^2} \right) A (t_*)  \Psi(t_*).
\end{equation}
The equality~\eqref{eqn:sign-Psi''-Psi} will follow from this fact due to $(-f') > 0$ and $h_f = f - Af' > 0$ for any solution, the latter by Lemma~\ref{lemma:general-behavior-prep}.
Using this fact, we will deduce the monotonicity of $\Psi$ for $t_* > \sqrt{\alpha}$.

To prove the expression~\eqref{eqn:Psi''(t*)-expression} for $\Psi''(t_*)$, we introduce the Riccati function $q = \frac{f'}{f}$, so $q' =  \frac{f''}{f} - q^2$ and $f'' = f(q^2 + q')$ transforms equation~\eqref{eqn:odeStar} (in the form~\eqref{eqn:system-ode-1}) into
\begin{equation}\label{eqn:v-Riccati-ODE}
0 = (1-t^2) (q^2 +q') + (1-tq) + (n-2) (1-Aq) + ( (n-2) \Psi + 1) \frac{ (1-t^2)q^2}{1+f^2}.
\end{equation}
Differentiating this equation at a point where $\Psi'(t_*) = 0$, we solve for $q''$ to obtain
\begin{equation}\label{eqn:v''-equation}
    \begin{split}
        (1-t^2) q'' &=  2t q^2 + 3 tq' - 2(1-t^2) qq'  + q + (n-2) (q + \alpha t^{-2} q + A q') \\
     & \quad - 2 ((n-2) \Psi + 1) \left( \frac{ -t q^2 + (1-t^2) qq '}{1+f^2} - \frac{(1-t^2) q^3 f^2}{(1+f^2)^2}\right).
    \end{split}
\end{equation}
Moreover, $\Psi = f^2 (1 - Aq) - \frac{1}{n-2}$ and $A(t) = t - \alpha t^{-1}$ has $A' = 1 + \alpha t^{-2}$ and $A'' = - 2 \alpha t^{-3}$, where $\alpha = \frac{k-1}{n-2}$.
Consequently, we compute
\begin{align*}
    \Psi' &= A f^2 \left(- q' - 2q^2 + t^{-1} q \right), \\
    \Psi'' &= f^2 \left(  2 \alpha t^{-3}  q - 2 \alpha t^{-2} q' - 4 \alpha t^{-2} q^2 - A q'' - 6 A qq' - 4 A q^3 \right)
\end{align*}
where $ 1 - \alpha t^{-2} = t^{-1} A \geq 0$ for $t \geq \sqrt{\alpha}$.
At a point $t_* \neq \sqrt{\frac{k-1}{n-2}}$ where $\Psi'(t_*) = 0$, we have $A(t_*) \neq 0$, so we obtain the relations
\[
q' = -2 q^2 + t^{-1} q \qquad \text{and} \qquad \Psi'' = A f^2( -q'' + 8 q^3 - 6 t^{-1} q^2).
\]
In equations~\eqref{eqn:v-Riccati-ODE} and~\eqref{eqn:v''-equation}, this produces the simplifications 
\[
q^2 + q' = - q^2 + t^{-1} q, \qquad qq' = -2q^3 + t^{-1} q^2, \qquad q + \alpha t^{-2} q + Aq' = 2q - 2 A q^2.
\]
Substituting these expressions for $q''$, we arrive at
\begin{align*} (1-t^2) \frac{\Psi''}{2 f^2 Aq} &=  (n-2) \frac{f^2 (1 - Aq) q}{(1+f^2)^2 t} \left[ t (1-t^2) q + (1+f^2) \left(2 t^3 q - 2 t^2-2 t q +1\right)\right] \\ 
& \quad - (1-tq) - (1-t^2) \frac{q (1 - tq)}{t} .
\end{align*}
Here, we also used the relations for $q'$ to transform the equality~\eqref{eqn:v-Riccati-ODE} into
\begin{equation}\label{eqn:1+tv-t-v-negative}
(1-tq) ( 1 - tq + t^{-1}q) + (n-2) (1-Aq) \left( 1 +  \frac{ (1-t^2) f^2q^2}{1+f^2} \right) = 0.
\end{equation}
Combining these computations, we rearrange the above expression into
\[
\Psi'' =  \frac{2 (n-2) f^2 Aq (1-tq) (1 - tq + t^{-1} q)}{(1-t^2) (1+f^2)} \Psi.
\]
Combining this equality with~\eqref{eqn:1+tv-t-v-negative} and using the definition $q =  \frac{f'}{f}$ finally proves the identity~\eqref{eqn:Psi''(t*)-expression}.
The sign~\eqref{eqn:sign-Psi''-Psi} of $\Psi''$ at the critical point $t_*$ follows, proving the monotonicity of $\Psi$.

Next, we discuss the case of $t_0 = t_{\alpha} := \sqrt{\frac{k-1}{n-2}}$.
We recall the identity
\begin{align*}
\Psi' &= A f^2 (-q' - 2q^2 + t^{-1} q ) = A(t) ( t^{-1} f f' - (f')^2 - ff''), \\
\Psi''( t_{\alpha}) &= 2 ( t^{-1} f f' - (f')^2 - ff'') \big\rvert_{t = t_{\alpha}}
\end{align*}
If the inequality $\pm \Psi''(t_{\alpha}) \leq 0$ is strict at this point, then we can find some $\ve > 0$ sufficiently small such that $\pm \Psi''(t_{\alpha} + \ve) < 0$.
Moreover, $\Psi'(t_{\alpha}) = 0$ due to $A(t_{\alpha}) = 0$, so $\pm \Psi'(t_{\alpha} + \ve) < 0$ as well.
Since $\pm \Psi'(t_{\alpha}) \leq 0$ was assumed, we conclude that 
\[
\pm \Psi(t_{\alpha} + \ve) < 0, \qquad \pm \Psi'(t_{\alpha} + \ve) < 0
\]
for $\ve > 0$ sufficiently small.
Therefore, our previous argument can be applied, starting from $t_0 = t_{\alpha} + \ve$, to conclude the result.
Therefore, it suffices to consider the case $\Psi''(t_{\alpha}) = 0$, meaning that $q' =- 2q^2 + t^{-1} q$ at this point.
We therefore compute, as before,
\[
\Psi'''(t_{\alpha}) = 4 (t^{-1} f f' - (f')^2 - ff'')' = 4 f^2 (-q'' + 8 q^3 - 6 t^{-1} q^2).
\]
In terms of our previous computation, this assumes the form
\[
\frac{\Psi'''(t_{\alpha})}{4} = (t^{-1} f f' - (f')^2 - ff'')'|_{t = t_{\alpha}} = \frac{\Psi''}{A} \bigg\rvert_{t = t_{\alpha}}
\]
Consequently, our earlier algebraic manipulations again become applicable at this point, which allows us to obtain an analogue of~\eqref{eqn:Psi''(t*)-expression} in the form
\[
\Psi'''(t_{\alpha}) = \frac{8(n-2)^3 f(-f')}{ (n-k-1) (1+f^2)} \left( 1 + \frac{\frac{n-k-1}{n-2} (f')^2}{1 + f^2} \right)  \Psi(t_{\alpha}).
\]
We conclude that $\on{sgn} \Psi'''(t_{\alpha}) = \on{sgn} \Psi(t_{\alpha})$ at that point; since $\Psi'(t_{\alpha}) = \Psi''(t_{\alpha}) = 0$, this means that $\pm \Psi(t_{\alpha} + \ve), \pm \Psi'(t_{\alpha} + \ve), \pm \Psi''(t_{\alpha}+\ve) < 0$ for $\ve>0$ sufficiently small.
Therefore, we are again in the case where the previous argument may be applied.
Observe that $\Psi(t_{\alpha}) = 0$ cannot occur, unless $f$ coincides with the Lawson solution $\hat{f}_{n,k}(t)$: $\Psi(t_{\alpha}) = 0$ forces $f(t_{\alpha})^2 = \frac{1}{n-2} = \hat{f}_{n,k}(t_{\alpha})^2$, and the relation $\Psi''(t_{\alpha}) = 0$ requires
\[
(t^{-1} f f' - (f')^2 - ff'')|_{t = t_{\alpha}} = 0.
\]
Since $f''(t_{\alpha})$ can be expressed in terms of $f(t_{\alpha}), f'(t_{\alpha})$ via the ODE~\eqref{eqn:cone-ODE}, this condition produces a quartic equation for $f'(t_{\alpha})$ whose only admissible solution is $f'(t_{\alpha}) = - \frac{(n-1)\sqrt{k-1}}{n-k-1} = \hat{f}'_{n,k}(t_{\alpha})$.
We conclude that $( f(t_{\alpha}) , f'(t_{\alpha}) ) = ( \hat{f}_{n,k} (t_{\alpha}), f'_{n,k}(t_{\alpha}))$, and since both functions $f, \hat{f}_{n,k}$ solve the same equation~\eqref{eqn:cone-ODE}, the uniqueness theorem for solutions of ODE would force $f \equiv \hat{f}_{n,k}$.
This proves the first part of the result.

We now observe that if the solution $f$ has a zero at $t_1 < T$, where $|f'(t_1)| < + \infty$, then $\Psi(t_1) = - \frac{1}{n-2}$.
    On the other hand, if $f$ has finite-time blow up at $T$ where $\lim_{t \uparrow T} f(t) > 0$, then $\Psi(t) \to \infty$ as $t \to T$.
    This is guaranteed by $T > \sqrt{\frac{k-1}{n-2}}$, so $A(T) > 0$ and $(- A f') \to + \infty$ as $t \to T$ and $f' \to - \infty$.
    Finally, if $t_1 = T$ and $f(T) = 0$ while $|f'| \to - \infty$ as $t \to T$, the computation of Proposition~\ref{prop:unique-crossing of solutions} shows that $f(t) = \sqrt{\frac{2}{(n-2) A(T)}} (T-t)^{\frac{1}{2}} + o (|T-t|)$ as $t \to T$, whereby
\[
f'(t) = - \frac{1}{\sqrt{2(n-2) A(T)}} (T-t)^{-\frac{1}{2}} + o( \sqrt{T-t}).
\]
Consequently, $ff' = -\frac{1}{(n-2) A} + o( \sqrt{T-t})$ and $\Psi = f(f-Af') - \frac{1}{n-2} = o(\sqrt{T-t})$, meaning that $\lim_{t \uparrow T} \Psi(t) = 0$ in this case.
Since $\Psi$ has a strict sign and monotonicity on $(t_0, T)$, we cannot have $|\Psi| \to 0$ as $t \uparrow T$ in either case $(i)$ or $(ii)$, which means that $t_1 = T$ is impossible.
Using the strict monotonicity of $\Psi$ when $f$ does not coincide with the Lawson solution, we conclude that $t_1<T$ and $f$ attains zero in case $(i)$, while $f$ remains positive until its derivative blows up in case $(ii)$.
\end{proof}
\begin{remark}
A more precise property is in fact true: for any solution $f$, the quantity
\[
\tilde{\Psi}(t) := t \cdot \Psi(t) =  t \cdot \bigl( f (f - Af') - \tfrac{1}{n-2} \bigr)
\]
has fixed sign and fixed monotonicity on the entire interval $[0,b]$ where $f$ is defined.
Specifically, $\tilde{\Psi}$ is positive and strictly increasing if $\tilde{\Psi}'(0) > 0$, and negative and strictly decreasing if $\tilde{\Psi}'(0) < 0$.
We compute that $\tilde{\Psi}'(0) = \frac{n-k-1}{k(n-2)} (a^2 - a_{n,k}^2)$, confirming that the monotonicity of $\tilde{\Psi}$ and the blow-up-or-zero dichotomy for $f$ are determined by the sign of $a - a_{n,k}$.
\end{remark}

\begin{proposition}\label{prop:psi-at-turning-point}
Let $a_{n,k} := \sqrt{\frac{k}{n-k-1}}$ and consider the solution $f_a$ of equation~\eqref{eqn:odeStar} with initial $f_a(0) = a>0$ and $f_a'(0) = 0$.
    The quantity $\Psi_a := f_a ( f_a - Af'_a) - \frac{1}{n-2}$ satisfies
    \[
    \on{sgn} \Psi_a ( \sqrt{\tfrac{k-1}{n-2}} + \ve ) = \on{sgn} \Psi'_a ( \sqrt{\tfrac{k-1}{n-2}} + \ve ) = \on{sgn} ( a - a_{n,k})
    \]
    for $\ve>0$ sufficiently small, depending on $n,k,a$.
\end{proposition}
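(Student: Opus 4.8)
The plan is to compare $\Psi_a$ with the Lawson quantity $\Psi_{\hat f_{n,k}}\equiv 0$, using the values at the origin together with the monotone quantity $\tilde\Psi_a:=t\Psi_a$ from the Remark after Lemma~\ref{lemma:psi-detect-blow-up}, and then to read off the behaviour just past $t_\alpha:=\sqrt{\tfrac{k-1}{n-2}}$ from a short Taylor expansion based on identities already derived for $\Psi$. First I would record the base case: since $f_a$ is the even solution of \eqref{eqn:odeStar} with $f_a(0)=a$, $f_a'(0)=0$, Lemma~\ref{lemma:values-at-zero} gives $h_{f_a}(0)=\tfrac{n-k-1}{k(n-2)}a$, hence
\[
\Psi_a(0)=f_a(0)h_{f_a}(0)-\tfrac1{n-2}=\tfrac{n-k-1}{k(n-2)}\bigl(a^2-a_{n,k}^2\bigr),
\]
so $\on{sgn}\Psi_a(0)=\on{sgn}(a-a_{n,k})$, and $\Psi_a'(0)=0$ by evenness. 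If $a=a_{n,k}$, then $(f_a(0),f_a'(0))=(\hat f_{n,k}(0),\hat f_{n,k}'(0))$, so $f_a\equiv\hat f_{n,k}$ and $\Psi_a\equiv 0$ and the statement is vacuous; I assume henceforth $a\neq a_{n,k}$, so $f_a$ is not the Lawson solution.

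The second step reduces everything to showing $\on{sgn}\Psi_a(t_\alpha)=\on{sgn}(a-a_{n,k})$, in particular $\Psi_a(t_\alpha)\neq 0$ (for $k=1$ this is already the base case, as $t_\alpha=0$). Here I would use the identities $f_a^2-\tfrac1{n-2}=\Psi_a+A f_af_a'$ and $tA=t^2-\alpha$, which yield the closed form $\tilde\Psi_a'=f_a^2-\tfrac1{n-2}-(t^2-\alpha)(f_af_a')'$, so $\tilde\Psi_a'(0)=\Psi_a(0)$ has sign $\on{sgn}(a-a_{n,k})$. A first-instance argument then shows $\tilde\Psi_a'$ cannot vanish on $(0,t_\alpha]$: a first zero $\tau$ must have $\tau\neq t_\alpha$ (if $\tilde\Psi_a'$ had constant sign on $(0,t_\alpha)$ then $\tilde\Psi_a(t_\alpha)=\int_0^{t_\alpha}\tilde\Psi_a'$, hence $\tilde\Psi_a'(t_\alpha)=f_a(t_\alpha)^2-\tfrac1{n-2}$, would have that sign and be nonzero); and at such $\tau\in(0,t_\alpha)$, where $A(\tau)<0$, substituting \eqref{eqn:odeStar} into $\tilde\Psi_a''=2f_af_a'-2t(f_af_a')'-(t^2-\alpha)(f_af_a')''$ to eliminate $f_a'''$, and using $f_a>0$, $f_a'<0$, $h_{f_a}>0$ from Lemma~\ref{lemma:general-behavior-prep}, should force $\tilde\Psi_a''(\tau)$ to take the sign of $\tilde\Psi_a'(0)$, contradicting $\tau$ being a turning point in the opposite direction (equivalently, one may simply invoke the Remark). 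Hence $\tilde\Psi_a'$ keeps the sign $\on{sgn}(a-a_{n,k})$ on $(0,t_\alpha]$, so $\tilde\Psi_a(t_\alpha)$, and therefore $\Psi_a(t_\alpha)=\tilde\Psi_a(t_\alpha)/t_\alpha$, has sign $\on{sgn}(a-a_{n,k})\neq 0$.

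Finally I would pass to $t_\alpha+\ve$. Since $A(t_\alpha)=0$, the identity $\Psi_a'=A(t)\bigl(t^{-1}f_af_a'-(f_a')^2-f_af_a''\bigr)$ gives $\Psi_a'(t_\alpha)=0$, and then \eqref{eqn:Psi''(t*)-expression} (applicable because $\Psi_a'(t_\alpha)=0$) gives $\Psi_a''(t_\alpha)=0$, its right-hand side carrying the factor $A(t_\alpha)$. For $k\geq 2$ the cubic identity from the proof of Lemma~\ref{lemma:psi-detect-blow-up} reads $\Psi_a'''(t_\alpha)=c\,\Psi_a(t_\alpha)$ with $c=\tfrac{8(n-2)^3 f_a(-f_a')}{(n-k-1)(1+f_a^2)}\bigl(1+\tfrac{\frac{n-k-1}{n-2}(f_a')^2}{1+f_a^2}\bigr)\big|_{t_\alpha}>0$, whence $\on{sgn}\Psi_a'''(t_\alpha)=\on{sgn}(a-a_{n,k})$; Taylor expanding,
\[
\Psi_a(t_\alpha+\ve)=\Psi_a(t_\alpha)+\tfrac16\Psi_a'''(t_\alpha)\ve^3+O(\ve^4),\qquad \Psi_a'(t_\alpha+\ve)=\tfrac12\Psi_a'''(t_\alpha)\ve^2+O(\ve^3),
\]
and $\Psi_a(t_\alpha)\neq 0$ gives that both have sign $\on{sgn}(a-a_{n,k})$ for $\ve>0$ small. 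For $k=1$ ($t_\alpha=0$), evenness forces $\Psi_a'(0)=\Psi_a'''(0)=0$ and $\Psi_a''(0)=0$, while a short computation from Lemma~\ref{lemma:values-at-zero} gives $\Psi_a^{(4)}(0)=\tfrac{4(n-1)(n-2)a^2}{1+a^2}\bigl((n-2)a^2-1\bigr)$, so $\on{sgn}\Psi_a^{(4)}(0)=\on{sgn}(a-a_{n,1})$; then $\Psi_a(\ve)=\Psi_a(0)+\tfrac{\ve^4}{24}\Psi_a^{(4)}(0)+O(\ve^6)$ and $\Psi_a'(\ve)=\tfrac{\ve^3}{6}\Psi_a^{(4)}(0)+O(\ve^5)$ finish the case. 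The main obstacle is the sign-definiteness of $\tilde\Psi_a'$ in the middle step: unless it is imported from the Remark, it rests on the somewhat delicate ODE computation of $\tilde\Psi_a''$ at a candidate turning point; the rest is bookkeeping together with the already-established identities for $\Psi_a$.
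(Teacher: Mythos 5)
Your proposal takes a genuinely different route from the paper: the paper works directly with $\Psi$ and shows the regions $S_\pm := \{\pm(f-\hat f)>0,\ \pm\Psi>0,\ \mp\Psi'>0\}$ are forward-invariant (the key tool being identity~\eqref{eqn:Psi-prime-computation}, which gives $\on{sgn}\Psi'(t_*)=\on{sgn}(f(t_*)-\hat f(t_*))$ whenever $\Psi(t_*)=0$ and $t_*<\sqrt\alpha$), whereas you work with $\tilde\Psi:=t\Psi$ and try to propagate the sign of $\tilde\Psi'$. Your base case and the $k=1$ computation of $\Psi^{(4)}(0)$ are correct and match the paper. But there are two genuine gaps.

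The first is the crux of your Step 2. You must show $\tilde\Psi_a'$ has no sign change on $(0,t_\alpha]$, and you offer only ``substituting~\eqref{eqn:odeStar} into $\tilde\Psi_a''$ \dots should force $\tilde\Psi_a''(\tau)$ to take the sign of $\tilde\Psi_a'(0)$,'' which is not an argument — and indeed $\tilde\Psi_a''$ at a zero of $\tilde\Psi_a'$ does \emph{not} in any transparent way depend only on $f_a>0$, $f_a'<0$, $h_{f_a}>0$; the sign-forcing computation is precisely what the paper's identity~\eqref{eqn:Psi-prime-computation} was engineered to accomplish, and one needs the comparison of $f$ with the Lawson solution $\hat f$, which your calculation does not invoke. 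Citing the Remark is circular: the Remark is stated without proof, immediately before the Proposition, and its claim is strictly stronger than the Proposition itself (fixed sign and monotonicity on all of $[0,b]$), so it rests on Lemma~\ref{lemma:psi-detect-blow-up} and the present Proposition, not the other way around.

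The second gap is in your Step 3. You invoke identity~\eqref{eqn:Psi''(t*)-expression} at $t_*=t_\alpha$ to conclude $\Psi_a''(t_\alpha)=0$ from the factor $A(t_\alpha)=0$. But that identity was derived under the assumption $A(t_*)\neq 0$: the paper uses $\Psi'(t_*)=0$ together with $A(t_*)\neq 0$, $f^2\neq 0$ to divide out and obtain $q'(t_*)=-2q^2+t^{-1}q$, and that Riccati relation is used throughout the subsequent algebra. At $t_\alpha$ one has $\Psi'(t_\alpha)=0$ automatically because $A(t_\alpha)=0$, and no relation on $q'$ follows. Indeed, from $\Psi'=A\,(t^{-1}ff'-(f')^2-ff'')$ one gets $\Psi''(t_\alpha)=A'(t_\alpha)(t^{-1}ff'-(f')^2-ff'')|_{t_\alpha}=2(t^{-1}ff'-(f')^2-ff'')|_{t_\alpha}$, which is generically \emph{nonzero}. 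The paper therefore only obtains $\on{sgn}\Psi''(t_\alpha)=\on{sgn}(a-a_{n,k})$ weakly (from passing to the limit $t\uparrow t_\alpha$ in $\on{sgn}(t^{-1}ff'-(f')^2-ff'')=\on{sgn}(a-a_{n,k})$, itself a consequence of the invariance of $S_\pm$), and splits into two subcases: if $\Psi''(t_\alpha)\neq 0$ the conclusion is immediate, and only when $\Psi''(t_\alpha)=0$ does the cubic identity for $\Psi'''(t_\alpha)$ apply. Your Taylor expansion $\Psi_a'(t_\alpha+\ve)=\tfrac12\Psi_a'''(t_\alpha)\ve^2+O(\ve^3)$ therefore silently discards the leading term $\Psi_a''(t_\alpha)\ve$. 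The eventual sign conclusion happens to survive (since whichever derivative is the first nonvanishing one has sign $\on{sgn}(a-a_{n,k})$), but the argument as written does not establish it.
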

\begin{proof}
We prove the result in three steps.
First, we obtain a stronger property, similar in spirit to Lemma~\ref{lemma:psi-detect-blow-up}.
Then, we show that the solutions $f_a$ satisfy this property, producing the claimed sign.
This discussion is specific to the case $k \geq 2$, which is more challenging.
In the last step, we prove the property when $k=1$.

\smallskip \noindent \textbf{Step 1:}
Let $k \geq 2$.
We claim that for any solution $f$ of equation~\eqref{eqn:odeStar} on some interval $[t_0, \sqrt{\frac{k-1}{n-2}}]$, the regions
\begin{equation}\label{eqn:s-plus-s-minus}
    S_+ := \{ f > \hat{f}, \; \Psi > 0, \; \Psi' < 0 \} \qquad \text{and} \qquad S_- := \{ f < \hat{f}, \; \Psi<0, \; \Psi' > 0 \}
\end{equation}
are invariant, i.e., any solution of~\eqref{eqn:odeStar} starting in these regions must remain in $S_{\pm}$.

The key ingredient of~\eqref{eqn:s-plus-s-minus} is an identity analogous to~\eqref{eqn:Psi''(t*)-expression}.
Let $f$ be any solution of equation~\eqref{eqn:odeStar}, with $\Psi := f (f - Af') - \frac{1}{n-2}$.
We claim that at a point where $\Psi(t_*) = 0$, it holds that
\begin{equation}\label{eqn:Psi-prime-computation}
    \Psi'(t_*) = - \frac{ (n-k-1) \bigl( (k-1) (1+f^2) - (n-1) t^2_*) \bigr) (f^2 - \hat{f}_{n,k}^2)  }{(n-2)^2 t_*^2 (1-t_*^2) (1+f^2) A(t_*)  }  \, .
\end{equation}
Here, $\hat{f} := \hat{f}_{n,k}(t) = \sqrt{\frac{k - (n-1) t^2}{n-k-1}}$ denotes the Lawson solution of Lemma~\ref{lemma:lawson-cones}.
For brevity, we denote $D := f^2 - \hat{f}^2$ and $\alpha := \frac{k-1}{n-2}$ in what follows.
Therefore,
\begin{equation}\label{eqn:1+f2-simplifications}
    1+f^2 = D + \tfrac{n-1}{n-k-1}(1-t^2), \qquad f^2 - \tfrac{1}{n-2} = D - \tfrac{n-1}{n-k-1} (t^2-\alpha).
\end{equation}
At a zero of $\Psi$, we use~\eqref{eqn:1+f2-simplifications} to write
\begin{align*}
ff' &= \frac{1}{A} \Bigl(f^2 - \frac{1}{n-2} \Big) = t \left( \frac{1}{t^2-\alpha} D - \frac{n-1}{n-k-1} \right), \\
0&= (1-t^2) f f'' + f (f - tf') + 1 + (1-t^2) \frac{(f')^2}{1+f^2}.
\end{align*}
Using the expression $\frac{\Psi'}{A} = t^{-1} ff' - (f')^2 - ff''$ along with equation~\eqref{eqn:odeStar}, we therefore obtain
\begin{align*}
    \frac{\Psi'}{A} &= t^{-1} f f' - (f')^2 + \frac{(f')^2}{1+f^2} + \frac{1 + f (f - tf')}{1-t^2} \\
    &= \frac{(1-2t^2)(1+f^2) - t (1-t^2) f f' }{t(1-t^2)(1+f^2)}  f f' + \frac{1}{(1-t^2)(1+f^2)} \Bigl( D + \frac{n-1}{n-k-1}(1-t^2) \Bigr)^2.
\end{align*}
Expanding the first numerator via~\eqref{eqn:1+f2-simplifications}, we obtain
\[
(1-2t^2) (1+f^2) - t (1-t^2) f f' = \frac{-\alpha-t^4 + 2 \alpha t^2 }{t^2-\alpha} D + \frac{n-1}{n-k-1}(1-t^2)^2.
\]
Consequently,
\begin{align*}
    (1-t^2)(1+f^2)\frac{\Psi'}{A} &= \left( \frac{-\alpha-t^4 + 2 \alpha t^2 }{t^2-\alpha} D + \frac{n-1}{n-k-1}(1-t^2)^2 \right) \left( \frac{1}{t^2-\alpha} D - \frac{n-1}{n-k-1} \right) \\
    & \quad + \Bigl( D + \frac{n-1}{n-k-1}(1-t^2) \Bigr)^2 \\
    &=  \frac{\alpha(\alpha-1)}{(t^2-\alpha)^2} D^2 + \frac{n-1}{n-k-1}\frac{ 1-\alpha}{t^2-\alpha} D.
\end{align*}
Writing $\alpha = \frac{k-1}{n-2}$ and $1-\alpha = \frac{n-k-1}{n-2}$, we observe the simplification
\[
(n-1)(t^2-\alpha) - \alpha(n-k-1) D = - \tfrac{n-k-1}{n-2} \bigl( (k-1) (1+f^2) - (n-1) t^2 \bigr)
\]
by using $f^2 = D + \frac{n-1}{n-k-1}(1-t^2)$.
Substituting these properties into the above computation produces~\eqref{eqn:Psi-prime-computation}.
Examining this expression, we notice that $A<0$ and $f'<0$ make 
\[
0 = \Psi(t_*) = f(f-Af') - \tfrac{1}{n-2} < f(t_*)^2 - \tfrac{1}{n-2}.
\]
Therefore, the denominator has negative sign, due to $A(t_*) < 0$, while the numerator has
\[
(k-1) (1+f^2) - (n-1) t_*^2 > (k-1) (1 + \tfrac{1}{n-2}) - (n-1) \tfrac{k-1}{n-2} = 0,
\]
which is strictly positive for $t_* < \sqrt{\frac{k-1}{n-2}}$.
We therefore obtain
\begin{equation}\label{eqn:psi-prime-sign-at-zero}
\Psi(t_*) = 0 \quad \implies \quad \on{sgn} \Psi'(t_*) = \on{sgn} \bigl( f(t_*) - \hat{f} (t_*) \bigr).
\end{equation}
By the symmetry of the conditions $\{ \Psi>0, \Psi' < 0 \}$ and $\{ \Psi < 0, \Psi' > 0 \}$, thanks to the result of Lemma~\ref{lemma:psi-detect-blow-up}, it suffices to consider the region $S_+$.
We will show that any two properties imply that the third one holds strictly; by continuity, this implies the desired condition.
Observe that if any two of the inequalities in~\eqref{eqn:s-plus-s-minus} held as equalities at some point $t_0$, then automatically $(f(t_0), f'(t_0)) = (\hat{f}(t_0), \hat{f}'(t_0))$ by the discussion of Lemma~\ref{lemma:psi-detect-blow-up}.
By ODE uniqueness, this would force $f = \hat{f}$, leading to a contradiction; therefore, at most one condition can produce equality.
We will also apply the result~\eqref{eqn:sign-Psi''-Psi}, whereby at a critical point $t_* \neq \sqrt{\frac{k-1}{n-2}}$ of $\Psi$, we have
\[
\on{sgn} \Psi''(t_*) = \on{sgn} A(t_*) \cdot \on{sgn} \Psi(t_*).
\]
Consider the first exit time from $S_+$, i.e., a first instance $t_*$ where the trajectory $f$ reaches the boundary of $S_+$ and one of the three inequalities reaches equality at $t_*$; we rule out each possibility.
\begin{enumerate}[$(i)$]
    \item If $\Psi'(t_*) = 0$, then $\Psi' < 0$ for $t<t_*$ would force $\Psi''(t_*) \geq 0$; on the other hand, the property~\eqref{eqn:sign-Psi''-Psi} with $\Psi>0$ implies that $\Psi''(t_*) < 0$, producing a contradiction.
    \item If $\Psi(t_*)=0$, with the assumptions $\Psi'<0$ and $f > \hat{f}$, then the property~\eqref{eqn:psi-prime-sign-at-zero} implies that $\Psi'(t_*) > 0$, resulting in a contradiction.
    \item If $f(t_*) = \hat{f}(t_*)$, with the assumptions $\Psi>0$ and $f>\hat{f}$ for $t<t_*$, then $f'(t_*) < \hat{f}'(t_*) < 0$ with strict inequality by ODE uniqueness.
    Since $A<0$ on $[0,\sqrt{\frac{k-1}{n-2}})$, this implies that
    \[
    - A f' < - A \hat{f}' \implies \Psi(t_*) = f (f -Af') - \tfrac{1}{n-2} < \hat{f}(\hat{f} - A \hat{f}') - \tfrac{1}{n-2} = 0.
    \]
    We again arrive at a contradiction, therefore the regions $S_{\pm}$ are preserved as desired.
\end{enumerate}
The above argument proves the claim~\eqref{eqn:s-plus-s-minus}, completing our first step.

\smallskip \noindent \textbf{Step 2:}
We now show that the solutions $f_a$ belong in the regions $S_{\pm}$ of~\eqref{eqn:s-plus-s-minus} corresponding to $\on{sgn}(a - a_{n,k})$, meaning that $f \in S_+$ for $a > a_{n,k}$ and $f \in S_-$ for $a < a_{n,k}$.
Since the regions $S_{\pm}$ are preserved along the ODE, it suffices to prove these properties are satisfied for small $t>0$.
We claim that for any $a>0$ and $k \geq 2$, we have
\begin{equation}\label{eqn:k>2-sign-property}
    \on{sgn} \Psi_a(t) = \on{sgn} ( a - a_{n,k}), \qquad \on{sgn} \Psi_a'(t) = \on{sgn} \Psi''_a(t) = \on{sgn} (a_{n,k} - a)
\end{equation}
for $t>0$ sufficiently small.
Suppressing the dependence of $\Psi, f$ on $a$, we write
\[
\Psi = f ( f - Af') - \tfrac{1}{n-2} = f \bigl(f-tf' + \tfrac{k-1}{n-2} ( t^{-1} f') \bigr) - \tfrac{1}{n-2}.
\]
We may therefore compute
\begin{equation}\label{eqn:psi'-psi''-computations}
\begin{split}
    \Psi'(t) &= ff' - t (f')^2 - t f f'' + \tfrac{k-1}{n-2} f' (t^{-1}f') + \tfrac{k-1}{n-2} f (t^{-1} f')', \\
    \Psi''(t) &= - 3 t f' f'' - t f f^{(3)} + \tfrac{k-1}{n-2} f'' (t^{-1} f') + 2 \tfrac{k-1}{n-2} f' (t^{-1} f')' + \tfrac{k-1}{n-2} f (t^{-1} f')''.
\end{split}
\end{equation}
Recall that $f''(0) = - \frac{n-1}{k} a$, as obtained in Lemma~\ref{lemma:values-at-zero}.
For small $t>0$, we may expand $f_a(t)= a - \frac{n-1}{2k} a t^2 + \frac{1}{24} f^{(4)}_a(0) t^4 + O(t^6)$ in order to express
\[
(t^{-1} f')(0) = - \tfrac{n-1}{k} a, \qquad (t^{-1} f')'(0) = 0, \qquad (t^{-1} f')''(0)  = \tfrac{1}{3} f^{(4)}_a(0).
\]
We therefore obtain $\Psi'(0) = 0$ and
\begin{align*}
    \Psi(0) &= a \Bigl(a - \frac{k-1}{n-2} \frac{n-1}{k} a \Bigr) - \frac{1}{n-2} \\
    &= a^2 \frac{(n-2)k - (k-1)(n-1)}{k (n-2) } - \frac{1}{n-2} \\
    &= \frac{n-k-1}{(n-2) k} (a^2 - a^2_{n,k}), \\
    \Psi''(0) &=  \frac{k-1}{n-2} \cdot (- \frac{n-1}{k} a)^2 + \frac{k-1}{n-2} a \cdot \frac{1}{3} f_a^{(4)}(0) \\
    &= \frac{2(k-1)(n-1)(n-k-1)^2}{(n-2) k^3 (k+2)} \frac{a^2}{1+a^2} \Bigl( \frac{k}{n-k-1} - a^2  \Bigr) \\
    &=\frac{2(k-1)(n-1)(n-k-1)^2}{(n-2) k^3 (k+2)} \frac{a^2}{1+a^2} ( a_{n,k}^2 - a^2).
\end{align*}
If $k \geq 2$, then the prefactor of $\Psi''(0)$ is non-negative, leading to $\on{sgn} \Psi(0)  = -\on{sgn} \Psi''(0) = \on{sgn} (a - a_{n,k})$; this proves the desired sign for $t>0$ small.
This proves~\eqref{eqn:k>2-sign-property}, and combined with the above discussion, shows that the property $f \in S_{\pm}$ is preserved on $[0, \sqrt{\frac{k-1}{n-2}})$.
We conclude that $\on{sgn} \Psi' = - \on{sgn} (a - a_{n,k})$ on $[0 , \sqrt{\frac{k-1}{n-2}})$.
The above discussion shows that
\[
\on{sgn} ( f - \hat{f}) ( \sqrt{\tfrac{k-1}{n-2}}) = \on{sgn}(a - a_{n,k}).
\]
Since $\Psi_a( \sqrt{\frac{k-1}{n-2}}) = f_a( \sqrt{\frac{k-1}{n-2}})^2 - \frac{1}{n-2}$, this proves the first part of the Proposition on $\on{sgn} \Psi_a ( \sqrt{\frac{k-1}{n-2}})$.
Finally, we recall that
\[
\Psi'(t) = A(t) ( t^{-1} f f' - (f')^2 - ff''), \qquad \Psi''( \sqrt{\tfrac{k-1}{n-2}} ) = 2 ( t^{-1} f f' - (f')^2 - ff'')|_{t = \sqrt{\frac{k-1}{n-2}}}.
\]
Since $A(t) < 0$ for $t < \sqrt{\frac{k-1}{n-2}}$, we obtain
\[
\on{sgn} ( t^{-1} f f' - (f')^2 - ff'') = - \on{sgn} \Psi'(t) = \on{sgn} (a - a_{n,k}),
\]
whereby sending $t \uparrow \sqrt{\frac{k-1}{n-2}}$ shows that
\[
\Psi''(\sqrt{\tfrac{k-1}{n-2}} ) \geq 0 \quad \text{if } \; a > a_{n,k}, \qquad \Psi''(\sqrt{\tfrac{k-1}{n-2}}) \leq 0 \quad \text{if } \; a < a_{n,k}.
\]
If either inequality holds strictly, then the desired property follows from $\Psi'(\sqrt{\frac{k-1}{n-2}}) = 0$.
If $\Psi''( \sqrt{\frac{k-1}{n-2}}) = 0$, then the second part of Lemma~\ref{lemma:psi-detect-blow-up} shows that $\on{sgn} \Psi'''( \sqrt{\frac{k-1}{n-2}}) = \on{sgn} \Psi( \sqrt{\frac{k-1}{n-2}})$.
We therefore obtain
\[
\on{sgn} \Psi'( \sqrt{\tfrac{k-1}{n-2}} + \ve) = \on{sgn} \Psi''( \sqrt{\tfrac{k-1}{n-2}} + \ve) = \on{sgn} \Psi( \sqrt{\tfrac{k-1}{n-2}})
\]
for $\ve>0$ sufficiently small.
This completes the proof for $k \geq 2$.

\smallskip \noindent \textbf{Step 3.}
Finally, we let $k=1$, so the desired property becomes
\[
\on{sgn} \Psi(\ve) = \on{sgn} \Psi'(\ve) = \on{sgn} (a - a_{n,k}).
\]
Specializing the computation of $\Psi''$ from~\eqref{eqn:psi'-psi''-computations} to $k=1$ and differentiating twice, we obtain
\begin{align*}
    \Psi^{(3)}(t) &= - 3 f' f'' - 3 t (f'')^2 - 4 t f' f^{(3)} - f f^{(3)} - t f f^{(4)}, \\
    \Psi^{(4)}(t) &= - ( 2 f f^{(4)} + 8 f' f^{(3)} + 6 (f'')^2 ) - t ( f f^{(5)} + 5 f' f^{(4)} + 10 f'' f^{(3)} ), \\
    \Psi^{(4)}(0) &= - 6 f''(0)^2 - 2f(0)f^{(4)}(0) \\
&= \frac{4 (n-1) (n-2)^2 a^2}{1+a^2} \Bigl( a^2 - \frac{1}{n-2} \Bigr).
\end{align*}
In the last step, we used the computations of $f''(0)= - (n-1) a$ and $f^{(4)}(0)$ obtained in Lemma~\ref{lemma:values-at-zero}.
Since $a^2_{n,1} = \frac{1}{n-2}$, it follows that the solution $f_a$ with initial height $f_a(0) = a$ satisfies
\[
\on{sgn} \Psi(t) = \on{sgn} ( a^2 - a_{n,1}^2), \qquad \on{sgn} \Psi'(t) = \on{sgn} \Psi''(t) = \on{sgn} \Psi^{(4)}(t) = \on{sgn} ( a^2 - a^2_{n,1} ) 
\]
for sufficiently small $t > 0$ in this case.
For small $\ve > 0$, this implies that 
\[
\on{sgn} \Psi(\ve) = \on{sgn} \Psi'(\ve) = \on{sgn} \Psi''(\ve) = \on{sgn} \Psi^{(3)}(\ve) = \on{sgn} \Psi^{(4)}(\ve) = \on{sgn} ( a - a_{n,1}),
\]
establishing the desired result.

\end{proof}

\begin{proposition}\label{prop:general-a-behavior}
    Let $a_{n,k} := \sqrt{\frac{k}{n-k-1}}$ and consider the solution $f_a$ of equation~\eqref{eqn:odeStar} with initial $f_a(0) = a>0$ and $f_a'(0) = 0$, having derivative blow-up at $b_a <1$.
    \begin{enumerate}[$(i)$]
        \item For $a < a_{n,k}$, we have $f_a(b_a) < 0$, meaning that the function $f_a$ has a positive root $t_a < b_a$, reaching zero with contact angle $< \frac{\pi}{2}$ and finite derivative.
        \item For $a = a_{n,k}$, we have $f = \hat{f}_{n,k}$, the Lawson solution of Lemma~\ref{lemma:lawson-cones}.
        \item For $a > a_{n,k}$, we have $f_a(b_a) > 0$, meaning that the function does not reach zero: it stays positive until the derivative blows up.
    \end{enumerate}
\end{proposition}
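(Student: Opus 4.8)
The plan is to deduce the trichotomy directly from the monotone quantity $\Psi_a := f_a(f_a - A f_a') - \tfrac{1}{n-2}$, by combining the sign computation of Proposition~\ref{prop:psi-at-turning-point} with the blow-up-or-zero dichotomy of Lemma~\ref{lemma:psi-detect-blow-up}. Case $(ii)$ requires no new work: Lemma~\ref{lemma:lawson-cones} exhibits $\hat{f}_{n,k}$ as an even solution of~\eqref{eqn:odeStar} with $\hat{f}_{n,k}(0) = \sqrt{\tfrac{k}{n-k-1}} = a_{n,k}$ and $\hat{f}_{n,k}'(0) = 0$, so uniqueness of the even solution with this initial data (cf.~Lemma~\ref{lemma:general-behavior-prep}) forces $f_{a_{n,k}} \equiv \hat{f}_{n,k}$, consistently with $\Psi \equiv 0$ characterizing the Lawson solution.

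For cases $(i)$ and $(iii)$, fix $a \neq a_{n,k}$ and choose $\varepsilon > 0$ small enough that Proposition~\ref{prop:psi-at-turning-point} applies at the point $t_0 := \sqrt{\tfrac{k-1}{n-2}} + \varepsilon$. Since $t_0 > \sqrt{\tfrac{k-1}{n-2}}$ and, by Lemma~\ref{lemma:general-behavior-prep}, $f_a > 0$ and $f_a' < 0$ on $[0,t_0]$, the data $(f_a(t_0), f_a'(t_0))$ is admissible for Lemma~\ref{lemma:psi-detect-blow-up}. Proposition~\ref{prop:psi-at-turning-point} then gives $\on{sgn}\Psi_a(t_0) = \on{sgn}\Psi_a'(t_0) = \on{sgn}(a - a_{n,k})$. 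If $a < a_{n,k}$, both quantities are negative, so case $(i)$ of Lemma~\ref{lemma:psi-detect-blow-up} applies: $\Psi_a$ is strictly decreasing and $f_a$ reaches zero at some $t_a < b_a$ with finite derivative and contact angle $< \tfrac{\pi}{2}$; since $f_a$ is strictly decreasing, $f_a(b_a) < 0$, which is conclusion $(i)$. If $a > a_{n,k}$, both quantities are positive, so case $(ii)$ of Lemma~\ref{lemma:psi-detect-blow-up} applies: the derivative $f_a'$ blows up (at $b_a < 1$, by Lemma~\ref{lemma:derivative-blow-up}) while $f_a$ stays positive, so $f_a(b_a) > 0$, which is conclusion $(iii)$. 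The strict positivity of $f_a(b_a)$ is built into Lemma~\ref{lemma:psi-detect-blow-up}: a zero reached with infinite slope would force $\Psi_a(t) \to 0$ as $t \uparrow b_a$ (via the square-root expansion recorded in the proof of Proposition~\ref{prop:unique-crossing of solutions}), which is incompatible with $\Psi_a$ strictly increasing from $\Psi_a(t_0) > 0$.

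There is no serious remaining obstacle: the analytic content sits entirely in Lemma~\ref{lemma:psi-detect-blow-up} and Proposition~\ref{prop:psi-at-turning-point}, and the proposition is their formal corollary. The two points deserving a line of care are (a) that $f_a$ is defined up to $t_0$, which is already implicit in the statement of Proposition~\ref{prop:psi-at-turning-point} and also follows from comparison with the hypergeometric profile $f_0$ via Corollary~\ref{cor:comparison-before-sqrtalpha}, whose first zero exceeds $\sqrt{\tfrac{k-1}{n-2}}$; and (b) matching the precise phrasing of the conclusions --- "contact angle $< \tfrac{\pi}{2}$ and finite derivative at the zero", respectively "stays positive until derivative blow-up" --- which is verbatim that of Lemma~\ref{lemma:psi-detect-blow-up}, cases $(i)$ and $(ii)$. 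I would close by noting that this result retroactively identifies $a_* = a_{n,k}$ in Corollary~\ref{corollary:terminal-slope-monotonicity}, so the admissible initial heights producing genuine capillary cones are exactly $a \in (0, a_{n,k}]$.
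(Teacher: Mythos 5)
Your proof is correct and follows the same route as the paper: apply the sign identity of Proposition~\ref{prop:psi-at-turning-point} at $t_0 = \sqrt{\tfrac{k-1}{n-2}}+\ve$ and then invoke the blow-up-or-zero dichotomy of Lemma~\ref{lemma:psi-detect-blow-up}. The only difference is cosmetic: you spell out the admissibility checks and the ODE-uniqueness argument for case $(ii)$, which the paper's proof leaves implicit.
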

\begin{proof}
    This result follows by combining Lemma~\ref{lemma:psi-detect-blow-up} with Proposition~\ref{prop:psi-at-turning-point}.
    Recall that
    \[
    \on{sgn} \Psi_a ( \sqrt{\tfrac{k-1}{n-2}} + \ve ) = \on{sgn} \Psi'_a ( \sqrt{\tfrac{k-1}{n-2}} + \ve ) = \on{sgn} ( a - a_{n,k})
    \]
    for $\ve>0$ sufficiently small, depending on $n,k,a$.
    For $a < a_{n,k}$, Lemma~\hyperref[lemma:psi-detect-blow-up]{\ref{lemma:psi-detect-blow-up} $(i)$} shows that $f$ reaches zero with contact angle $< \frac{\pi}{2}$ and finite derivative.
    For $a>a_{n,k}$, Lemma~\hyperref[lemma:psi-detect-blow-up]{\ref{lemma:psi-detect-blow-up} $(ii)$} shows that $f$ has derivative blow-up while positive.
\end{proof}

Using the results of this section on the properties of $f_a$, we now prove Theorem~\ref{thm:capillary-cones}.
\begin{proof}[Proof of Theorem~\ref{thm:capillary-cones}]
We now combine the above results to establish the existence and uniqueness of the graphical capillary cones $\mathbf{C}_{n,k,\theta}$ with $O(n-k) \times O(k)$ symmetry.
We express such cones as the graphs of functions $U_{n,k,\theta}(x,y) := \rho f_{n,k,\theta} ( \frac{|y|}{\rho}) $ over the domain $\Gamma_{n,k,\theta} := \{ (x,y) : |y| \leq t_{n,k,\theta} \rho \}$, as in~\eqref{eqn:capillary-domain}.
The function $f_{n,k,\theta}$ produces a capillary cone of angle $\theta$ if and only if it reaches zero at $t_{n,k,\theta}$ and satisfies~\eqref{eqn:f'-at-theta}.
By Proposition~\ref{prop:general-a-behavior}, the function $f_a$ solving~\eqref{eqn:odeStar} reaches zero if and only if $a \leq a_{n,k} = \sqrt{\frac{k}{n-k-1}}$, forming a capillary angle $\theta_a = \arctan ( \sqrt{1-t_a^2} \, |f'_a(t_a)| )$, where $t_a$ denotes the unique positive root of the function $f_a$.

Proposition~\ref{prop:unique-crossing of solutions} and Corollary~\ref{corollary:lambda-comparison} show that $\hat{t}_{n,k} = \sqrt{\frac{k}{n-1}} < t_a < t_{n,k}=: t_0$, so $\sqrt{1-t_a^2} > \sqrt{1 - t_{n,k}^2} > 0$.
The terminal slope function $a \mapsto |f'_a(t_a)|$ has $|f'_a(t_a)| \to 0$ as $a \downarrow 0$ and $|f'_a(t_a)| \to \infty$ as $a \uparrow a_{n,k}$, so it surjects onto $\bR_+$ for $a \in ( 0 , a_{n,k})$.
Therefore, the map $\theta_a$ is surjective.
Moreover, Proposition~\ref{prop:unique-crossing of solutions} shows that the map $a \mapsto t_a$ (initial height to zero) is strictly decreasing, so $\partial_a t_a < 0$ and the function $a \mapsto \sqrt{1 - t_a^2}$ is strictly increasing.
It follows from Corollary~\ref{corollary:terminal-slope-monotonicity} that $a \mapsto |f'_a(t_a)|$ is also strictly increasing, therefore $a \mapsto \theta_a$ is strictly increasing.
We conclude that $a \mapsto \theta_a$ is a bijection, meaning that the cones $\mathbf{C}_{n,k,\theta}$ exist and are unique for every contact angle $\theta \in (0, \frac{\pi}{2}]$.
\end{proof}

As a Corollary of Theorem~\ref{thm:capillary-cones}, we obtain the uniqueness of the half-Lawson cone as $\mathbf{C}_{n,k,\frac{\pi}{2}}$.
\begin{corollary}
The Lawson cone $C(\bS^{n-k-1} \times \bS^k)$ is the unique cohomogeneity two minimal cone in $\bR^{n+1}$, equivariant under $O(n-k) \times O(k) \times \bZ_2$, that is expressible as a bigraph.
\end{corollary}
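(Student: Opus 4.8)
The plan is to cut the cone in half along $\Pi$ and invoke the uniqueness already established in Theorem~\ref{thm:capillary-cones}. So let $\mathbf{C}\subset\bR^{n+1}$ be a minimal cone, of cohomogeneity two, invariant under $O(n-k)\times O(k)\times\bZ_2$, and equal to a bigraph $\mathbf{C}=\{(z',z):z=\pm u(z')\}$ over a domain $\overline{\Omega}\subseteq\bR^n=\Pi$ with $u\ge 0$ and $u\not\equiv 0$ (so that $\mathbf{C}\ne\Pi$, the flat case); the bigraph structure already renders $\mathbf{C}$ invariant under $R:(z',z)\mapsto(z',-z)$, which we take as the $\bZ_2$-factor. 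As a $1$-homogeneous $O(n-k)\times O(k)$-invariant function, $u$ has the rotationally symmetric shape of Section~\ref{section:o(n-k)-cones}, namely $u(x,y)=\rho\,f(|y|/\rho)$ with $f\ge 0$ on a compact interval $[0,t_*]$, $f>0$ on $[0,t_*)$, and $f(t_*)=0$ (the profile curve must return to the axis $\{z=0\}$: by Lemma~\ref{lemma:derivative-blow-up} it cannot stay positive up to $t=1$, and if it developed a vertical tangent while positive then $\mathbf{C}$ would fail to be graphical there). Minimality of $\mathbf{C}$ — i.e.\ $\mathbf{C}$ being an honest stationary hypersurface rather than a creased one — forces $f$ to be even with $f'(0)=0$ (no crease along the degenerate orbit $\{y=0\}$) and $f'(t_*)=-\infty$ (a finite terminal slope would produce a crease along $\partial\Omega$); in particular $f$ solves the cone ODE~\eqref{eqn:cone-ODE} as in Proposition~\ref{prop:FreeBdyODE=Cone}, and $\mathbf{C}$ is smooth away from its apex.

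The next step is to recognize the upper half $\mathbf{C}^+:=\mathbf{C}\cap\{z\ge 0\}$ as a free-boundary capillary cone. The relation $f'(t_*)=-\infty$ says precisely that $|\nabla u|=\tan\tfrac{\pi}{2}=\infty$ along $\partial\{u>0\}$, i.e.\ that $\mathbf{C}$ meets $\Pi$ orthogonally along $\partial\Omega$; one also sees this directly from the $R$-symmetry, since near a point $p\in(\partial\Omega\times\{0\})\setminus\{0\}$ the tangent plane $T_p\mathbf{C}$ is an $R$-invariant hyperplane distinct from $\Pi$, hence contains $e_{n+1}$, so $\langle\nu_{\mathbf{C}}(p),e_{n+1}\rangle=0$. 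Therefore $\mathbf{C}^+$ is an $O(n-k)\times O(k)$-invariant graphical minimal capillary cone in $\bR^{n+1}_+$ of contact angle $\theta=\tfrac{\pi}{2}$ with an isolated singularity at the origin, and it is non-flat because $u\not\equiv 0$ (the only flat $O(n-k)\times O(k)$-invariant graph is $u\equiv 0$).

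By Theorem~\ref{thm:capillary-cones} with $\theta=\tfrac{\pi}{2}$, such a cone is unique, so $\mathbf{C}^+=\mathbf{C}_{n,k,\tfrac{\pi}{2}}$; equivalently, Proposition~\ref{prop:general-a-behavior} shows that among the profiles $f_a$ with $f_a'(0)=0$, the only one reaching zero with infinite terminal slope is the Lawson profile $\hat f_{n,k}$ (case $(ii)$; its slope at the root is $-\infty$ by Lemma~\ref{lemma:lawson-cones}). Hence $\mathbf{C}^+$ is the upper half of the Lawson cone $C(\bS^{n-k-1}\times\bS^k)=\{(x,y,z):\tfrac{k}{n-k-1}|x|^2=|y|^2+z^2\}$, the radii of the two spherical factors being irrelevant at the level of the cone. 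Since $\mathbf{C}=\mathbf{C}^+\cup R(\mathbf{C}^+)$ and the Lawson cone is $R$-invariant, doubling $\mathbf{C}^+$ across $\Pi$ recovers $C(\bS^{n-k-1}\times\bS^k)$, as claimed. Conversely, this cone is genuinely of the asserted type: solving its defining equation for $z$ presents it as the bigraph $z=\pm\sqrt{\tfrac{k}{n-k-1}|x|^2-|y|^2}$, which is $O(n-k)\times O(k)\times\bZ_2$-invariant of cohomogeneity two.

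The main work is concentrated in the first two paragraphs: justifying rigorously that a bigraph which is creased along $\partial\Omega$, or which folds over to become non-graphical, cannot be a minimal cone — so that the slope condition $f'(t_*)=-\infty$ is truly forced — and pinning down the regularity implicit in ``minimal cone $\dots$ expressible as a bigraph'' so that $\mathbf{C}^+$ meets all the hypotheses (non-flat, graphical, isolated singularity) of Theorem~\ref{thm:capillary-cones}. Once $\mathbf{C}^+$ is seen to be a free-boundary $O(n-k)\times O(k)$-invariant capillary cone, the conclusion is immediate from the uniqueness part of that theorem.
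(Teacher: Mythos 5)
Your proposal is correct and follows the same route the paper intends: halve the bigraph along its $\bZ_2$-plane of symmetry, observe that the resulting half-cone is an $O(n-k)\times O(k)$-invariant graphical capillary cone with contact angle $\tfrac{\pi}{2}$ (forced by the no-crease requirement for a stationary cone), and apply the uniqueness in Theorem~\ref{thm:capillary-cones} (equivalently, the case $(ii)$ dichotomy of Proposition~\ref{prop:general-a-behavior}). The paper states the corollary without proof precisely because it is this immediate consequence; your write-up supplies the missing justification, correctly singling out the two regularity facts one must check — evenness with $f'(0)=0$ across the degenerate orbit $\{y=0\}$, and $f'(t_*)=-\infty$ at $\partial\Omega$ so the two sheets join smoothly — and correctly noting that the intermediate case $0<|f'(t_*)|<\infty$ would give a crease, which is incompatible with stationarity. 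One small addition worth making explicit: the case $f'(t_*)=0$ is also incompatible with the bigraph being a minimal cone, but this is moot because Lemma~\ref{lemma:general-behavior-prep} already rules it out for solutions of the ODE~\eqref{eqn:odeStar}.
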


\begin{remark}\label{rmk:distinguished-jacobi-field}
    Following Theorem~\ref{thm:capillary-cones} and the subsequent discussion in Remark~\ref{rmk:remark-after-existence}, we observe that the cones $\mathbf{C}_{n,k,\theta}$ carry a distinguished Jacobi field $J_{n,k,\theta}$ produced by varying the capillary angle $\theta$.
    The monotonicity of $a \mapsto t_a$ and $a \mapsto \theta_a$ (cf.~Proposition~\ref{prop:unique-crossing of solutions} and Corollary~\ref{corollary:terminal-slope-monotonicity}) allows us to work in the $a$-framework, letting $U_a(x,y) := U_{n,k,a}(x,y) = \rho f_a(t)$, for brevity.
    Then, the infinitesimal variation through the family is given by the vertical field $\partial_a U_a = \rho v_a(t)$, for $v_a := \partial_a f_a$ as in~\eqref{eqn:linearized-va-vlambda}.
    The associated Jacobi field (normal speed) on the hypersurface $\mathbf{C}_a := \text{graph}(U_a)$ is then
    \[
    J_{n,k,a} := \frac{\partial_a U_a}{\sqrt{1 + |\nabla U_a|^2}} = \frac{\rho v_a(t)}{\sqrt{1 + f_a(t)^2 + (1-t^2) f'_a(t)^2}}
    \]
    which produces a $O(n-k) \times O(k)$-invariant conical ($1-$homogeneous) Jacobi field that restricts to a function on the link.
    The property $P_{f_a} v_a = 0$ for $v_a$ is equivalent to $(\Delta_{\mathbf{C}_a} + |A_{\mathbf{C}_a}|^2) J_a = 0$, meaning that $J_a$ satisfies the Jacobi equation on the interior.
    The properties $\partial_a t_a = - \frac{v_a(t_a)}{f'_a(t_a)} < 0$, and $v_a(0) = 1, v_a(t_a) < 0$ imply that $J_a$ changes sign, from positive near the cone axis to negative at the free boundary, hence $J_a$ simultaneously changes the cone profile, the free-boundary aperture $t_a$, and the contact angle $\theta(a)$.
    Denoting by $\Sigma_{\theta} := \mathbf{C}_{\theta} \cap \bS^n_+$ the link of the cone, we see that $J_a$ satisfies the forced Robin condition
    \[
    \bigl( \partial_{\nu} - \cot \theta \, A_{\mathbf{C}_\theta} (\nu,\nu) \bigr) J_{\theta} = - 1, \qquad (\partial_{\nu} - \cot \theta(a) \, A_{\mathbf{C}_a} (\nu, \nu) \bigr) J_a = - \theta'(a)
    \]
    along the free boundary $\partial \Sigma_{\theta}$ (resp.~$\partial \Sigma_a$) in the $\theta$-variation (resp.~$a$-variation) of the cones.
\end{remark}

\section{Minimality for small contact angle}\label{sec:minimizing}

We now study the area-minimizing properties of the cones constructed in Section~\ref{section:o(n-k)-cones}.
In this Section, we prove Theorems~\ref{thm:cones-are-minimizing-smalltheta} and~\ref{thm:one-phase-cones}, establishing the minimality of the cones $\mathbf{C}_{n,k,\theta}$ for small contact angle in every dimension $n \geq 7$.
Our argument utilizes a foliation by sub- and supersolutions, which will enable us to apply the techniques developed in Section~\ref{subsection:foliations-minimality}.

The main result of this section can be summarized as follows:
\begin{theorem}
For every $n \geq 7$ and $1 \leq k \leq n-2$, there exists a $\theta_{n,k}$ such that for all $\theta \in (0,\theta_{n,k})$, the cone $\mathbf{C}_{n,k,\theta}$ is strictly stable and one-sided minimizing for the $\cA^{\theta}$-capillary energy.

Suppose, moreover, that there exists a $\beta_{n,k} \in (2-n,-1)$ satisfying the conditions~\eqref{eqn:S'1-new}~--~\eqref{eqn:S'3-new} of Proposition~\ref{prop:supersolution}.
Then, there exists a $\theta_{n,k}$ such that for all $\theta \in (0,\theta_{n,k})$, the cone $\mathbf{C}_{n,k,\theta}$ is minimizing for the $\cA^{\theta}$-capillary energy.
\end{theorem}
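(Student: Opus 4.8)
The strategy is to produce, for every sufficiently small contact angle $\theta$, a single \emph{strict} weak subsolution of the capillary free boundary problem~\eqref{eqn:capillary-fbp} lying below the profile $u_{n,k,\theta}$ of $\mathbf{C}_{n,k,\theta}$, and---under the additional hypothesis---a single strict weak supersolution lying above it, and then to invoke Lemma~\ref{lemma:minimizing-by-subsolutions} together with the scale invariance of $\cA^{\theta}$. By Lemma~\ref{lemma:minimizing-by-subsolutions}$(i)$ a subsolution gives one-sided minimality in $E_-$, its strictness gives strict stability by the discussion following Lemma~\ref{lemma:one-phase-foliations-imply-minimizing} (cf.~\cite{singular-capillary}*{Proposition~2.4}); Lemma~\ref{lemma:minimizing-by-subsolutions}$(ii)$ plus Lemma~\ref{lemma:capillary-foliations-imply-minimizing}$(iii)$ then upgrade this to global minimality once a supersolution is available. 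The first step is to solve the corresponding problem in the one-phase model. By Corollary~\ref{corollary:linearized-deviation}, after rescaling by $\tan\theta$ the profile $u_{n,k,\theta}$ converges in $C^{\infty}_{\mathrm{loc}}(\Pi\setminus\{0\})$ to the $1$-homogeneous one-phase solution $U_{n,k}$ of Theorem~\ref{thm:one-phase-cones}, with $|D^{\ell}(\tfrac{1}{\tan\theta}u_{n,k,\theta}-c_{n,k}f_0)|=O(\theta^2)$ on the link. For $n\ge 7$ the cone $U_{n,k}$ is strictly stable (the content of the companion paper~\cite{FTW-stability-one-phase}), so by the Jerison--Savin characterization the linearization of~\eqref{eqn:one-phase-problem} at $U_{n,k}$ admits a strict positive subsolution; following the scheme of De~Silva--Jerison~\cite{desilva-jerison-cones}, one integrates this infinitesimal datum against the explicit profiles $f_{n,k}$ with a suitable decay exponent to obtain a genuine strict weak subsolution $V$ of~\eqref{eqn:one-phase-problem} asymptotic to $U_{n,k}$, with $V\le U_{n,k}$, $\{V>0\}\subset\{U_{n,k}>0\}$, and $\partial\{V>0\}$ bounded away from the origin. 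This is the substance of Section~\ref{subsection:subsolution}: the required differential inequalities reduce to a finite list of inequalities for $f_{n,k}$ and the exponent, verified in~\cite{FTW-stability-one-phase}. By Lemma~\ref{lemma:one-phase-minimizing-by-subsolutions}$(i)$ this already yields the ``minimizing from below'' part of Theorem~\ref{thm:one-phase-cones}.

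\textbf{Transfer to the capillary problem.} Writing $\cM(u)=\Delta u - Q(u)/(1+|\nabla u|^2)$ as in Lemma~\ref{lemma:general-computation-u-rho,t}, the graphs of interest have height and gradient of size $O(\tan\theta)$ both in the interior and along the free boundary, where $|\nabla u|=\tan\theta$ exactly; hence the error term $Q(u)/(1+|\nabla u|^2)$ is of size $O(\tan^2\theta)$ relative to the leading $O(\tan\theta)$ contribution. Taking $V_{\theta}:=\tan\theta\,V$, with a lower-order correction to absorb the curvature error (or by slightly shrinking the positive set), the strict inequalities $\cM(V_{\theta})>0$ and $|\nabla V_{\theta}|>\tan\theta$ persist for $\theta<\theta_{n,k}$; star-shapedness, the asymptotics to $u_{n,k,\theta}$, and the inclusion $\{V_{\theta}>0\}\subset\{u_{n,k,\theta}>0\}$ with $\partial\{V_{\theta}>0\}$ away from $0$ all follow from the one-phase statements together with Corollary~\ref{corollary:linearized-deviation}. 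Lemma~\ref{lemma:minimizing-by-subsolutions}$(i)$ then shows that $\mathbf{C}_{n,k,\theta}$ is one-sided minimizing in $E_-$, and strict stability follows since $V_{\theta}$ is strict. This proves the first assertion for all $n\ge 7$ and $1\le k\le n-2$.

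\textbf{The supersolution and the extra hypothesis.} For the second assertion one repeats the construction on the side \emph{above} $U_{n,k}$: one seeks a strict weak supersolution $W$ of~\eqref{eqn:one-phase-problem} asymptotic to $U_{n,k}$ with $W\ge U_{n,k}$ on $\{U_{n,k}>0\}$ and $\{U_{n,k}>0\}\cap B^n_{\sigma}\subset\{W>0\}$. Here the admissible decay is governed by the other branch of the indicial equation at infinity, and the construction closes precisely when there is an exponent $\beta_{n,k}\in(2-n,-1)$ for which the profile inequalities~\eqref{eqn:S'1-new}~--~\eqref{eqn:S'3-new} of Proposition~\ref{prop:supersolution} hold---this is exactly the additional hypothesis of the theorem, and it is where the exceptional pairs $(n,k)$ are excluded. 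Given such a $\beta_{n,k}$, Proposition~\ref{prop:supersolution} supplies $W$; rescaling as above produces a strict capillary supersolution $W_{\theta}$ for $\theta<\theta_{n,k}$, so Lemma~\ref{lemma:minimizing-by-subsolutions}$(ii)$ gives one-sided minimality in $E_+$. Combining with the previous paragraph via Lemma~\ref{lemma:capillary-foliations-imply-minimizing}$(iii)$ yields that $\mathbf{C}_{n,k,\theta}$ is minimizing for $\cA^{\theta}$; the same rescaling argument, together with Lemma~\ref{lemma:one-phase-minimizing-by-subsolutions}, gives the global-minimality statement in Theorem~\ref{thm:one-phase-cones}.

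\textbf{Main obstacle.} The analytic heart is the construction of the strict one-phase super- and subsolutions asymptotic to $U_{n,k}$ with controlled decay---that is, verifying the inequalities~\eqref{eqn:S'1-new}~--~\eqref{eqn:S'3-new} for a suitable $\beta_{n,k}$ on the supersolution side, and the analogous inequalities on the subsolution side. The subsolution is robust once strict stability of $U_{n,k}$ is known, but the supersolution genuinely requires the exponent $\beta_{n,k}$ to lie in the window $(2-n,-1)$ while simultaneously meeting all three conditions, which is delicate and can fail for small dimensions. A secondary but nontrivial point is making the transfer to the capillary operator uniform in $\theta$: one must show that the $O(\tan^2\theta)$ curvature correction does not destroy strictness \emph{uniformly up to the free boundary}, which uses the precise $C^{\infty}_{\mathrm{loc}}$ convergence of Corollary~\ref{corollary:linearized-deviation} and the fact that $|\nabla u|=\tan\theta$ is small on $\partial\{u>0\}$.
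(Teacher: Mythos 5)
Your overall architecture — a strict one-sided barrier below the cone giving one-sided minimality and strict stability, an upper barrier keyed to the conditions~\eqref{eqn:S'1-new}~--~\eqref{eqn:S'3-new} giving the other side, and Lemma~\ref{lemma:minimizing-by-subsolutions} closing the argument — matches the paper, and the role you assign to the exponent $\beta_{n,k}$ in the second part is also the right one.  The gap is in the transfer step.

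You propose setting $V_{\theta}:=\tan\theta\,V$ where $V$ is a strict one-phase subsolution asymptotic to $U_{n,k}$, and absorbing the curvature error of $\cM$ with a lower-order correction.  The curvature error is indeed harmless, but the main obstruction is elsewhere: $V$ is asymptotic to $U_{n,k}$, hence $\tan\theta\,V$ is asymptotic to $\tan\theta\,U_{n,k}$, and this is \emph{not} the profile $u_{n,k,\theta}=\rho f_{\theta}(t)$ of $\mathbf{C}_{n,k,\theta}$.  By Corollary~\ref{corollary:linearized-deviation}, $f_{\theta}-c_{n,k}\,\theta\, f_{0}=O(\theta^{3})$ on the link, so $u_{n,k,\theta}$ and $\tan\theta\,U_{n,k}$ differ by a $1$-homogeneous term of fixed (nonzero) link size $O(\theta^{3})$.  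In particular $u_{n,k,\theta}-\tan\theta\,V$ contains, besides the $\rho^{\alpha}$ perturbation with $\alpha<0$, a term proportional to $\theta^{3}\rho$ which grows linearly and eventually dominates.  Then the rescaled leaves $\lambda^{-1}(\tan\theta\,V)(\lambda\,\cdot)$ converge as $\lambda\to 0$ to $\tan\theta\,U_{n,k}$, not to $u_{n,k,\theta}$; Definition~\ref{def:lowerLeaf}$(iv)$ fails, the homotheties do not sweep out all of $E_-(u_{n,k,\theta})$, and Lemma~\ref{lemma:capillary-foliations-imply-minimizing} does not apply.  Neither absorbing the $\cM$-error nor shrinking $\{V_{\theta}>0\}$ fixes this, since the mismatch is in the degree-$1$ asymptotic, not in the boundary location or in the equation.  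The same issue affects the supersolution.

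The paper avoids this by putting the \emph{exact} capillary profile in the ansatz: Proposition~\ref{prop:subsolution} takes $V_{\theta}=\rho f_{\theta}(t)-\Lambda\tan\theta\,\rho^{\alpha}g(t)$, so $u_{n,k,\theta}-V_{\theta}=\Lambda\tan\theta\,\rho^{\alpha}g(t)>0$ decays and the rescaled leaves converge to the cone by construction; likewise $W_{1,\theta}=\rho f_{\theta}(t)+\Lambda\tan\theta\,\rho^{\beta'}\hat g(t)$ in Proposition~\ref{prop:supersolution}.  The one-phase data ($f_{0}$, $g$, $\hat g$, and the conditions~\eqref{eqn:S'1-new}~--~\eqref{eqn:S'3-new}) appear only when verifying the differential inequalities in the small-$\theta$ limit, not as the leading term of the barrier.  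If you replace your ansatz $\tan\theta\,V$ by one built on $\rho f_{\theta}(t)$ and then expand $f_{\theta}=\theta f_{0}+O(\theta^{3})$ to check the inequalities, your argument becomes essentially the paper's.
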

The conditions~\eqref{eqn:S'1-new}~--~\eqref{eqn:S'3-new} are inequalities for hypergeometric functions.
In the present paper, we verify these properties by direct computation for $7 \leq n \leq 100$ and the $k$ indicated in Theorem~\ref{thm:cones-are-minimizing-smalltheta}.
As explained in~\eqref{eqn:betaInductive}, for larger $n$, the range of admissible $\beta_{n,k}$ becomes larger and easier to verify; the strictest cases are $(n,k) = (7,1)$ and $(7,2)$.

Our construction of sub- and supersolutions which foliate both sides of the cone enable us to prove that the constructed capillary cones are minimizing, using the techniques developed in Section~\ref{subsection:foliations-minimality}.
The subsolution will be an interior perturbation of the cone solution $U_{n,k,\theta}$, defined over a sub-domain of $\{ (x,y) : |y| \leq t_{n,k,\theta} \rho \}$ disjoint from the origin.
The supersolution is defined over a domain containing a full neighborhood of the origin.
Rescaling these solutions produces lower and upper foliations of the capillary cone, which imply its minimality by Lemma~\ref{lemma:capillary-foliations-imply-minimizing}.
Our approach is motivated by the energy-minimizing De~Silva-Jerison cone for the one-phase problem~\cite{desilva-jerison-cones}.

For $n \geq 7, 1 \leq k \leq n-2$, and capillary angles satisfying $\theta \in (0, \theta_{n,k})$, we will construct a subsolution $V_{\theta}$ and a supersolution $W_{\theta}$ for~\eqref{eqn:capillary-fbp}, which satisfy
\[
V_{\theta} < U_{n,k,\theta} = \rho f_{n,k,\theta}(t) < W_{\theta}, \qquad \{ V_{\theta} > 0 \} \subset \{ U_{n,k,\theta} > 0 \} \subset \{ W_{\theta} > 0 \}.
\]
We will therefore prove that the cones $\mathbf{C}_{n,k,\theta}$ are minimizing for $\cA^{\theta}$.
The positive phases of the corresponding functions are depicted in Figure~\ref{fig:subsupersolutiondomains} below.
\begin{figure}
    \centering
    \includegraphics[scale =0.7]{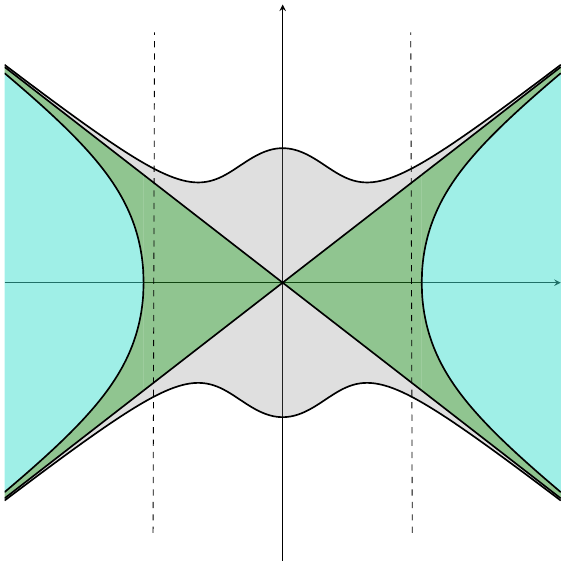}
    \caption{The nested positive phases of the subsolution (blue), the solution (green), and the supersolution (gray).
    This picture corresponds to $(n,k) = (7,2)$, small angle $\theta$, and parameters $\alpha = -3, \beta = -2.5$, for which $(t_0, \tau, \bar{r}, A) = (0.688, 0.731, 0.932, 1.514)$.
    The subsolution, as a perturbation at infinity of the capillary solution, has its boundary asymptotic to the cone.
    The supersolution is defined piecewise, on a compact set near the singularity, and at infinity, where it is again asymptotic to the cone.
    Along the compact piece, the free boundary is given by $s = v(r)$, which corresponds to the curve $t = \frac{1-\frac{\bar{r}^2-r^2}{2A}}{\sqrt{\left(1-\frac{\bar{r}^{2}-r^{2}}{2A}\right)^{2}+r^{2}}}$ in the $(\rho,t)$ coordinates.
    The dashed lines illustrate the interface ${ |x| = \bar{r} }$ along which the two pieces of the supersolution are glued.}
    \label{fig:subsupersolutiondomains}
\end{figure}

When working in the perturbative regime near a given solution, we will estimate the mean curvature of graphs with small gradient as follows.
Note that for any function $u$, we may bound
\begin{align*}
    - |\nabla U|^2 |D^2 U| \leq Q(U) \leq |\nabla U|^2 |D^2 U| \qquad \text{and} \qquad - n |D^2 U| \leq \Delta U \leq n |D^2 U |,
    \end{align*}
where $|D^2 U|$ denotes the operator norm (norm of the largest eigenvalue) of the Hessian.
Therefore,
\begin{align*}
    \left| (1 + |\nabla U|^2)^{\frac{3}{2}} \cM(U) - \Delta U \right| &= \left| |\nabla U|^2 \Delta U - Q(U) \right| \leq n |D^2 U| \cdot |\nabla U|^2 + |\nabla U|^2 |D^2 U|,
\end{align*}
which allows us to bound 
\begin{equation}\label{eqn:MC-bound}
     \left| (1 + |\nabla U|^2)^{\frac{3}{2}} \cM(U) - \Delta U \right| \leq (n + 1) |D^2 U| \cdot |\nabla U|^2.
\end{equation}

\subsection{Construction of a subsolution}\label{subsection:subsolution}

We will construct a subsolution $V$ to~\eqref{eqn:capillary-fbp} as a lower-order perturbation of the cone-defining function $\rho f(t)$.
For any $\Lambda>0$, we define
\begin{equation}\label{eqn:V-subsolution}
    V_{\theta} = \rho f_{\theta}(t) - \Lambda \tan \theta \rho^{\alpha} g(t)
\end{equation}
where $f_{\theta} = f_{n,k,\theta}$ is the unique even solution of equation~\eqref{eqn:odeStar} with initial height $a = a_{n,k,\theta}$ such that the resulting capillary angle is equal to $\theta$.
Moreover, $g$ is the even solution of $\Delta (\rho^{\alpha'} g) = 0$, normalized by $g(0) = 1$; $\alpha = \alpha(n,k,\theta) < 1$ will be determined later, with $\alpha \in (2-n, 0)$ and $|\alpha_{n,k,\theta} - \alpha_{n,k,0}| = O (\theta)$; and the exponent $\alpha'_{n,k,\theta}$ is such that $|\alpha'_{n,k,\theta} - \alpha_{n,k,\theta}| = O (\theta)$, and
\[
c(n,\alpha, \alpha') = \alpha'(\alpha' + n-2) - \alpha(\alpha + n-2) > c(n,\alpha) \theta.
\]
\begin{proposition}\label{prop:subsolution}
    Let $f_0 := f_{n,k}$ denote the solution of the linear problem $\cL_{n,k} f_0 = 0$, given by the hypergeometric function ${}_2F_1(\frac{n-1}{2} , - \frac{1}{2} ; \frac{k}{2} ; t^2)$ with a zero at $t_0$.
    Suppose that $\alpha \in (2-n, 0)$ is such that the function
\begin{equation}\label{eqn:Gn,k-alpha,t}
    \begin{split}
    G_{n,k} (\alpha, t) := (1-\alpha)^2 f_0^2 + (1-t^2) (f_0' - f_0 g'/g)^2, \qquad t \in [0, t_0]
\end{split}
\end{equation}
    is minimized over $[0,t_0]$ at the endpoint $t_0$.
    Then, there exists a $\theta_{n,k} > 0$ such that for all $\theta \in (0, \theta_{n,k})$, the function $V_{\theta}$ defined in~\eqref{eqn:V-subsolution} is a strict weak subsolution of~\eqref{eqn:capillary-fbp}, meaning that
    \[
    \cM(V_{\theta}) > 0 \quad \text{in } \; \{ V_{\theta} > 0 \}, \qquad |\nabla V_{\theta}|^2 > \tan^2 \theta \quad \text{on } \; \partial \{ V_{\theta}>0\}
    \]
    and has positive phase $\{ V_{\theta} > 0 \} \subset \Gamma_{n,k,\theta}$.
\end{proposition}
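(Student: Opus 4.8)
The plan is to verify the two defining inequalities for a strict subsolution directly, treating $V_\theta$ as an $O(\theta)$-perturbation of the capillary cone solution and expanding everything around the $\theta=0$ (hypergeometric) profile. We write $V_\theta = U_{n,k,\theta} - \Lambda \tan\theta\,\rho^\alpha g(t)$ with $U_{n,k,\theta}=\rho f_\theta(t)$, and recall from Corollary~\ref{corollary:linearized-deviation} that after rescaling by $\tan\theta$ we have $f_\theta = c_{n,k}\theta f_0 + O(\theta^3)$ with matching derivative estimates and $|t_\theta - t_0| = O(\theta^2)$; similarly the exponents satisfy $|\alpha_{n,k,\theta}-\alpha_{n,k,0}|=O(\theta)$ and $|\alpha'-\alpha|=O(\theta)$. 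So on the compact slab $t\in[0,t_\theta]$, $\rho\in(0,\infty)$, both $U_{n,k,\theta}$ and the correction term are of size $O(\tan\theta)$ in the natural $\rho$-homogeneous scaling, and all their $\rho$- and $t$-derivatives are controlled uniformly in $\theta$. I would first reduce both conditions to $\theta$-independent inequalities for the linear ($\theta=0$) data and then absorb the error terms.

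\emph{Interior inequality.} First I would use the bound \eqref{eqn:MC-bound}: writing $U=V_\theta$, we have $|(1+|\nabla V_\theta|^2)^{3/2}\cM(V_\theta) - \Delta V_\theta| \le (n+1)|D^2 V_\theta|\,|\nabla V_\theta|^2 = O(\tan^3\theta)$ by homogeneity (in appropriately rescaled variables, or pointwise after factoring out the correct power of $\rho$). Next, $\Delta V_\theta = \Delta U_{n,k,\theta} - \Lambda\tan\theta\,\Delta(\rho^\alpha g)$. Since $g$ solves $\Delta(\rho^{\alpha'}g)=0$, one computes $\Delta(\rho^\alpha g) = \rho^{\alpha-2}\big[c(n,\alpha,\alpha')\,g + (\text{first-order terms that vanish since }\rho^{\alpha'}g\text{ is harmonic})\big]$, so $\Delta(\rho^\alpha g) = c(n,\alpha,\alpha')\rho^{\alpha-2} g(t)$, which is $O(\theta)$-small relative to the typical size but crucially has a definite sign once $g>0$, using the hypothesis $c(n,\alpha,\alpha')>c(n,\alpha)\theta>0$. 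Meanwhile $\Delta U_{n,k,\theta}$ is governed by the ODE~\eqref{eqn:odeStar}: from Lemma~\ref{lemma:general-computation-u-rho,t} and the computation in Proposition~\ref{prop:FreeBdyODE=Cone}, $\rho\,\Delta U_{n,k,\theta} = \cL_{n,k}f_\theta = -(n-2)(1-t^2)(f_\theta')^2 h_{f_\theta}\cdot(1+f_\theta^2)^{-1} = O(\tan^3\theta)$, i.e. the cone is \emph{almost} harmonic to this order. Collecting, $\rho^{2-\alpha}\cM(V_\theta) = \Lambda\tan\theta\, c(n,\alpha,\alpha')\,g(t) + O(\tan^3\theta)$, which is strictly positive for $\theta$ small provided $g>0$ on $[0,t_\theta]$; the latter holds because $g$ is the even solution with $g(0)=1$ of a Legendre-type equation and one checks $g$ cannot vanish before $t_0$ (this can be extracted as in the analysis of the kernel of $\cL_{n,k}$ in the proof of Lemma~\ref{lemma:derivative-blow-up}). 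I would take $\Lambda$ of order $1$ so the leading term dominates.

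\emph{Boundary inequality and the role of $G_{n,k}$.} On the free boundary $\partial\{V_\theta>0\}$ I need $|\nabla V_\theta|^2 > \tan^2\theta$. From \eqref{eqn:grad-U-squared} and the perturbation, $|\nabla V_\theta|^2 = \big(f_\theta - \alpha\Lambda\tan\theta\,\rho^{\alpha-1}g\cdot(\text{radial part})\big)^2 + \tfrac{1-t^2}{\rho^2}(\cdots)^2$; the key point is that on the zero set of $V_\theta$, which is near $\{t=t_\theta\}$, we have $\rho^{\alpha-1}\cdot\rho \approx $ (the value making $V_\theta=0$), so the gradient-squared at the free boundary is $\tan^2\theta$ times $G_{n,k}(\alpha,t)$ evaluated near $t_\theta\approx t_0$, up to $O(\theta)$ corrections — here $G_{n,k}(\alpha,t)=(1-\alpha)^2 f_0^2 + (1-t^2)(f_0'-f_0 g'/g)^2$ is exactly the leading coefficient. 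At $t=t_0$ one has $f_0(t_0)=0$, so $G_{n,k}(\alpha,t_0) = (1-t_0^2)f_0'(t_0)^2$, and the capillary normalization $|\nabla U_{n,k,0}|^2 = (1-t_0^2)f_0'(t_0)^2$ (i.e. $c_{n,k}^{-2}$ built into the definition) forces the free boundary value to match $\tan^2\theta$ to leading order. Hence the strict inequality $|\nabla V_\theta|^2>\tan^2\theta$ at the actual free boundary of $V_\theta$ — which sits slightly inside $\{t<t_\theta\}$, i.e. at a $t$-value strictly less than $t_\theta$ — follows precisely from the hypothesis that $G_{n,k}(\alpha,\cdot)$ attains its minimum on $[0,t_0]$ at the \emph{right} endpoint $t_0$: moving inward strictly increases $G_{n,k}$, which beats the $O(\theta)$ error. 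I would make this quantitative by a Taylor expansion of $G_{n,k}$ at $t_0$, distinguishing the case $\partial_t G_{n,k}(\alpha,t_0)<0$ (generic, linear gain) from the degenerate case $\partial_t G_{n,k}(\alpha,t_0)=0$ (quadratic gain), and in each case checking that the free boundary of $V_\theta$ is displaced inward by the correct order in $\theta$ so that the gain dominates the error. Finally the inclusion $\{V_\theta>0\}\subset\Gamma_{n,k,\theta}$ is immediate since $V_\theta < U_{n,k,\theta}$ pointwise (as $g>0$, $\Lambda,\tan\theta>0$) and $\{U_{n,k,\theta}>0\}=\Gamma_{n,k,\theta}$.

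\emph{Main obstacle.} The delicate point is the boundary inequality: one must track where the free boundary $\{V_\theta=0\}$ actually lies — it is an $O(\theta^{?})$ inward perturbation of $\{t=t_\theta\}$ whose precise displacement depends on the competition between the $\rho f_\theta$ term ($O(\theta)$) and the $\Lambda\tan\theta\,\rho^\alpha g$ term (also $O(\theta)$ but with different $\rho$-scaling) — and then show that at that location $G_{n,k}(\alpha,\cdot)$ has already increased enough above its endpoint minimum to dominate the genuinely present $O(\theta^2)$ or $O(\theta^3)$ error terms coming from \eqref{eqn:MC-bound}, from $|t_\theta - t_0|=O(\theta^2)$, and from $|\alpha'-\alpha|=O(\theta)$. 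Handling the degenerate sub-case where $\partial_t G_{n,k}(\alpha,t_0)=0$ may require one more order of expansion and is where the hypothesis on the exponent shift $c(n,\alpha,\alpha')>c(n,\alpha)\theta$ (which also feeds the interior inequality) would be used to guarantee the correct sign of the second-order term. Everything else is bookkeeping of perturbation estimates already licensed by Corollary~\ref{corollary:linearized-deviation}.
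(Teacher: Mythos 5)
Your decomposition into an interior inequality and a boundary inequality is the right shape, and your observation that the boundary condition is governed by the quantity $G_{n,k}(\alpha,t)$ attaining its minimum at the endpoint is correct. However, there is a genuine gap in the interior argument, and the boundary argument is made unnecessarily hard by missing an exact identity.

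\textbf{Interior inequality: a genuine gap in the $\rho$-scaling.} You replace $\cM(V_\theta)$ by $\Delta V_\theta$ using the bound~\eqref{eqn:MC-bound}, then note that $\rho\,\Delta U_{n,k,\theta}=\cL_{n,k}f_\theta=O(\theta^3)$ (``the cone is almost harmonic''), and conclude $\rho^{2-\alpha}\cM(V_\theta)=\Lambda\tan\theta\,c\,g+O(\tan^3\theta)$. This last step does not follow, because the two pieces do not scale the same way in $\rho$. The favorable term $-\Lambda\tan\theta\,\Delta(\rho^\alpha g)$ scales like $\rho^{\alpha-2}\theta^2$, whereas $\Delta U_{n,k,\theta}=\rho^{-1}\cL_{n,k}f_\theta$ scales like $\rho^{-1}\theta^3$. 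Their ratio involves $\rho^{1-\alpha}$, which is \emph{unbounded} on the positive phase (which is $\rho^{1-\alpha}>\Lambda\tan\theta\,g/f_\theta$, extending to $\rho\to\infty$). So ``absorbing'' $\rho^{2-\alpha}\Delta U_{n,k,\theta}=\rho^{1-\alpha}O(\theta^3)$ into $O(\tan^3\theta)$ is false; your formula would give $\cM(V_\theta)<0$ deep inside the positive phase for $\rho$ large. The error bound~\eqref{eqn:MC-bound} is too crude here because it compares $\cM$ to $\Delta$, but $\Delta U_\theta\neq 0$. The fix is to expand around the exact solution: because $(1+|\nabla U_\theta|^2)\Delta U_\theta - Q(U_\theta)=0$ exactly, the $\rho^{-1}O(\theta^3)$ contributions from $\Delta U_\theta$ and from $Q(U_\theta)$ in the expansion of $(1+|\nabla V_\theta|^2)\Delta V_\theta - Q(V_\theta)$ cancel identically; what survives are the cross-terms involving at least one factor of $W=\Lambda\tan\theta\rho^\alpha g$ or its derivatives, and those genuinely scale as $\rho^{\alpha-2}O(\theta^3)$ (since every factor of $\nabla W$ or $D^2W$ carries $\rho^{\alpha-1}$ or $\rho^{\alpha-2}$, and $\rho^{\alpha-1}$ is bounded on the positive phase). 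Only then is the favorable term $\rho^{\alpha-2}\Lambda\tan\theta\,c\,g$ dominant. (You also have an intermediate sign slip: $\Delta(\rho^\alpha g)=-c(n,\alpha,\alpha')\rho^{\alpha-2}g$ with the paper's convention $c(n,\alpha,\alpha')>0$; your final sign comes out right, but only by a compensating error.)

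\textbf{Boundary inequality: unnecessarily roundabout.} On the free boundary $\{V_\theta=0\}$ one has $\Lambda\tan\theta\,\rho^{\alpha-1}=f_\theta/g$, and substituting this into $|\nabla V_\theta|^2=(\partial_\rho V_\theta)^2+\frac{1-t^2}{\rho^2}(\partial_t V_\theta)^2$ gives the \emph{exact} identity $|\nabla V_\theta|^2=(1-\alpha)^2 f_\theta^2+(1-t^2)(f_\theta'-f_\theta g'/g)^2=G_\theta(\alpha,t)$, with no $O(\theta)$ error. Since $G_\theta(\alpha,t_\theta)=(1-t_\theta^2)f'_\theta(t_\theta)^2=\tan^2\theta$ by the capillary normalization, the subsolution condition is \emph{precisely} that $G_\theta(\alpha,\cdot)$ attains its minimum on $[0,t_\theta]$ at $t_\theta$. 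This is where the hypothesis on $G_{n,k}$ enters, together with the continuity in $\theta$ and the freedom to shift $\alpha_\theta$ by $O(\theta)$; your worries about tracking the free-boundary displacement to some unknown order, and splitting into degenerate/non-degenerate cases of $\partial_t G_{n,k}(\alpha,t_0)$, dissolve once the identity above is used. The inclusion $\{V_\theta>0\}\subset\Gamma_{n,k,\theta}$ is, as you say, immediate from $V_\theta<U_{n,k,\theta}$.
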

\begin{proof}
Denoting by $t_{n,k,\theta}$ the unique positive zero of the cone solution $f = f_{n,k,\theta}$, the capillary condition~\eqref{eqn:f'-at-theta} gives $f'_{n,k,\theta} (t_{n,k,\theta}) = - \frac{\tan \theta}{\sqrt{1 - t_{n,k,\theta}^2}}$.
We suppress the dependence on $n,k$ in what follows and denote $t_{\theta} := t_{n,k,\theta}$; when there is no ambiguity, we will use $f$ and $f_{\theta}$ interchangeably (for $\theta \in (0,\theta_{n,k})$ sufficiently small).
Note that $|t_{n,k,\theta} - t_{n,k}| = O(\theta^2)$ and $|f_{n,k,\theta} - c_{n,k} \theta f_{n,k}| = O(\theta^3)$, as obtained in Corollary~\ref{corollary:linearized-deviation}; here, $c_{n,k} := \frac{1}{\sqrt{1 - t_{n,k}^2} \, |f'_{n,k}(t_{n,k})|}$.
We also have $\sup |D^{\ell} f| = O(\theta)$ for $\theta$ small.
Our subsequent computations are invariant under multiplying the functions by a constant, so may normalize by $c_{n,k}$ in what follows to write $f_{\theta} = \theta f_0 + O(\theta^3)$.

Using the expression~\eqref{eqn:laplace-general}, we compute
\begin{equation}\label{eqn:laplace-rho-a-g}
    \Delta (\rho^{\alpha} g) = \rho^{\alpha-2} \left( (1-t^2) g'' + \bigl( (k-1) t^{-1} - (n-1) t \bigr) g' + \alpha (\alpha + n-2) g \right).
\end{equation}
The even solution of the ODE $\rho^{2-\alpha} \Delta(\rho^{\alpha} g) = 0$ from~\eqref{eqn:laplace-rho-a-g} is given by
\begin{equation}\label{eqn:general-hypergeometric-expression}
g(t) = {}_2 F_1 \left( \frac{n + \alpha-2}{2} ,  - \frac{\alpha}{2}; \frac{k}{2} ; t^2 \right)
\end{equation}
and has a zero $t_{\alpha} \in (0,1)$ if and only if $\alpha \not\in [2-n,0]$, otherwise remains strictly positive on $[-1,1]$.
In fact, $\alpha \in (1-n,1)$ makes $g(t) > 0$ in the interval $[-t_0, t_0]$ where $f$ is positive.
In the limit, $|t_{\alpha}| \uparrow 1$ as $\alpha \to 0^{\pm}$ or $\alpha \to (2-n)^{\pm}$, and $|t_{\alpha}| \downarrow 0$ as $|\alpha| \to \infty$.
Moreover, choosing $\alpha \in (2-n, 0)$ and $|\alpha'_{\theta} - \alpha| = O(\theta)$ in the manner prescribed above ensures that $g > c(n,k)$ and $\sup |D^{\ell} g| = C(n,k,\ell)$ on $[-t_{\theta}, t_{\theta}]$.
Since $g$ satisfies $\rho^{2-\alpha'} \Delta(\rho^{\alpha'} g) = 0$, we deduce that
\[
    \rho^{2-\alpha} \Delta (\rho^{\alpha} g) = (1-t^2) g'' + ( (k-1) t^{-1} - (n-1) t ) g' + \alpha(\alpha + n-2) g \leq - c(n, \alpha) \theta \, g
\]
holds on $[-t_{\theta}, t_{\theta}]$, for a small constant $c(n,\alpha)$ obtained above.

Using $g > c(n,k)$ on $[- t_{\theta}, t_{\theta}]$, we deduce that $V_{\theta} < \rho f_{\theta}(t)$ for all $t \in [-t_{\theta}, t_{\theta}]$.
Therefore, $V_{\theta}(\rho,t) < 0$ for all $|t| > t_{\theta}$, meaning that $\{ V_{\theta} > 0 \} \subset \Gamma_{\theta}$ and we may only consider $t \in [-t_{\theta}, t_{\theta}]$.
On the domain $\{ V_{\theta} > 0 \}$ we have $f > \Lambda \tan \theta \rho^{\alpha - 1} g$, so $\rho^{\alpha-1} < \frac{f}{\Lambda \tan \theta g}$.
Noting that $g>c(n,k)$ and $|f| < C(n,k) \theta$ on $[-t_{\theta}, t_{\theta}]$, for $\theta$ sufficiently small, we obtain $\rho^{\alpha-1} < C(n,k)$ in $\{ V_{\theta} > 0 \}$.
We may therefore apply the notation and the computations of Lemma~\ref{lemma:general-computation-u-rho,t} to bound
\begin{align*}
    |\nabla V_{\theta}|^2 &= (f - \Lambda \tan \theta \alpha \rho^{\alpha-1} g)^2 + (1-t^2) (f' - \Lambda \tan \theta \rho^{\alpha-1} g')^2 \\
    &= |\nabla ( \rho f)|^2 + \rho^{\alpha-1} O(\theta^2), \\
    Q(V_{\theta}) &= Q(\rho f) + \rho^{-1} O(\theta^3).
\end{align*}
The mean curvature computation~\eqref{eqn:minimal-surface-urho,t} combined with the Laplacian~\eqref{eqn:laplace-rho-a-g} therefore becomes
\begin{equation}\label{eqn:MC-computation+error}
\begin{split}
    (1 + |\nabla V_{\theta}|^2)^{\frac{3}{2}} \cM(V_{\theta}) &= - \Lambda \tan \theta \, \Delta (\rho^{\alpha} g) + \rho^{\alpha-2} O (\theta^3) \\
    &= \rho^{\alpha-2} ( - \Lambda \tan \theta \rho^{2-\alpha} \Delta (\rho^{\alpha} g) + O(\theta^3) ) \\
    &\geq \rho^{\alpha-2} \theta ( c(n,k) \Lambda \tan \theta - c(n,k,\alpha) \theta^2),
\end{split}
\end{equation}
since $g>c(n,k)>0$ on $[-t_{\theta}, t_{\theta}]$.
We conclude that $\cM(V_{\theta}) > 0$ on $\{ V_{\theta} > 0 \}$, for $\theta \in (0,\theta_{n,k})$.

The free boundary of the function $V_{\theta}$ is the domain $\partial \{ V_{\theta} > 0 \} = \{ (\rho, t) : \rho^{1-\alpha} = \frac{\Lambda \tan \theta g}{f} \}$.
In this region, the computation from Lemma~\ref{lemma:general-computation-u-rho,t} simplifies to
\begin{equation}\label{eqn:nabla-V-squared}
    |\nabla V_{\theta}|^2 = G_{\theta}(\alpha, t) := (1-\alpha)^2 f^2 + (1-t^2) (f' - fg'/g)^2, \qquad t \in [0, t_{\theta}]
\end{equation}
Writing the condition~\eqref{eqn:capillary-condition-graph} as $f'_{\theta}(t_{\theta}) = - \frac{\tan \theta}{\sqrt{1-t_{\theta}^2}}$ and using $f_{\theta}(t_{\theta}) = 0$, we obtain
\[
G_{\theta}(\alpha, t_{\theta}) = (1-t_{\theta}^2) f_{\theta}'(t_{\theta})^2 = \tan^2 \theta
\]
so the subsolution condition $|\nabla V_{\theta}| \geq \tan \theta$ from~\eqref{eqn:capillary-fbp} is equivalent to $G_{\theta}(\alpha, t)$ attaining its absolute minimum on $[0, t_{\theta}]$ at the endpoint $t_{\theta}$.
Finally, recall that $t_{\theta} < t_0$ as well as $|t_{\theta} - t_0| = O(\theta^2)$ and $|f_{\theta} - \theta f_0| = O(\theta^3)$, by Corollary~\ref{corollary:linearized-deviation}.
If $G_{n,k}(\alpha, t)$ attains its absolute minimum on $[0, t_0]$ at the endpoint $t_0$, for small $\theta \in (0,\theta_{n,k})$ we can choose $|\alpha_{\theta} - \alpha| = O(\theta)$ to ensure that $G_{\theta}(\alpha_{\theta}, t)$ also attains its absolute minimum at $t_{\theta}$.
We conclude that $|\nabla V_{\theta}| \geq \tan \theta$ along $\partial \{ V_{\theta} > 0 \}$, so $V_{\theta}$ is a subsolution to~\eqref{eqn:capillary-fbp} in its positive phase, for $\theta \in (0,\theta_{n,k})$ small.
\end{proof}

Therefore, to produce a subsolution $V_{\theta}$ of~\eqref{eqn:capillary-fbp} for small $\theta \in (0, \theta_{n,k})$ it remains to exhibit some $\alpha_{n,k}$ such that the function $G_{n,k}(\alpha,-)$ of~\eqref{eqn:Gn,k-alpha,t} attains its minimum in $[0, t_0]$ at the endpoint $t_0$.
The computation of Lemma~\ref{lemma:values-at-zero} shows that $f''(0) = - \frac{n-1}{k} f(0)$, and using~\eqref{eqn:laplace-rho-a-g} for $g$ satisfying $\Delta(\rho^{\alpha'} g) = 0$ implies that $g''(0) = - \frac{\alpha' (\alpha' + n-2)}{k} g(0)$.
In the sharp threshold $\alpha' = \alpha$, we obtain
\begin{align*}
    G''_{n,k}(\alpha, 0) &= 2 (1-\alpha)^2 f f'' + 2 (f'' - f g''/g)^2 \\
    &= \frac{2}{k^2}(1-\alpha)^2 f(0)^2 \left( (\alpha + n-1)^2 - (n-1) k \right).
\end{align*}
Since $G_{n,k}(\alpha, t)$ is an even function of $t$, this computation shows that $t=0$ is a local maximum of $G_{n,k}(\alpha, t)$ if $(\alpha + n-1)^2 < (n-1) k$, and is a local minimum if $(\alpha + n-1)^2 > (n-1) k$.

The boundary minimum condition for $G_{n,k}(\alpha, t)$ is the first instance where the dimensional restriction $n \geq 7$ appears: a subsolution alone proves that these cones are strictly stable.
In an upcoming paper, we prove that for dimension $n\leq 6$ and any $k$ and contact angle, the cones $\mathbf{C}_{n,k,\theta}$ are indeed unstable.
We also show that for dimension $n \geq 7$ and any $k, \theta$, the cones $\mathbf{C}_{n,k,\theta}$ are all stable.
In particular, for $n \geq 7$, we now prove that such an $\alpha$ can always be produced, for any $1 \leq k \leq n-2$.
For $t_0$ to be the minimum point of $G_{n,k}$ on $[0,t_0]$, we require $G'_{n,k}(\alpha, t_0) \leq 0$.
Using the expression~\eqref{eqn:Gn,k-alpha,t} along with $f_0(t_0) = 0$, we compute that $G'_{n,k}(\alpha, t_0) < 0$ is equivalent to 
\begin{equation}\label{eqn:strict-stability-condition}
\frac{g'(t_{n,k})}{g(t_{n,k})} > \frac{(n-2) t_{n,k} - (k-1) t_{n,k}^{-1}}{1 - t_{n,k}^2}.
\end{equation}
The existence of some $\alpha \in (2-n,0)$ satisfying~\eqref{eqn:strict-stability-condition} is equivalent to the strict stability of the corresponding solution $U_{n,k}$ of the one-phase problem.
In our companion paper~\cite{FTW-stability-one-phase}*{Theorem 1.1}, we prove that for any $1 \leq k \leq n-2$, these solutions are strictly stable when $n \geq 7$ and unstable when $n \leq 6$.
See~\cite{FTW-stability-one-phase}*{Proposition~2.4} for the derivation of the stability condition~\eqref{eqn:strict-stability-condition} for the solutions $U_{n,k}$.
We also refer the reader to~\cites{caffarelli-jerison-kenig, jerison-savin, hong-singular}.

Let us summarize the results from~\cite{FTW-stability-one-phase} that are needed for verifying the global subsolution property in our setting.
\begin{lemma}\label{lemma:one-shot-reference-FTW}
    For some $n \geq 7$ and $1 \leq k \leq n-2$, let $g(t)$ be as in~\eqref{eqn:general-hypergeometric-expression}, with $\alpha \in (2-n,0)$.
    Let $t_{n,k} > 0$ be the zero of $f_{n,k}(t) = {}_2F_1 ( \frac{n-1}{2} , - \frac{1}{2} ; \frac{k}{2} ; t^2)$, and consider the condition
    \begin{equation}\label{g'-g-condition}
        \frac{g'(t)}{g(t)} - \frac{(n-2) t - (k-1) t^{-1}}{1-t^2} > 0 \qquad \text{on } \; [0,t_k].
    \end{equation}
    For any given $\alpha \in (2-n,0)$, the property~\eqref{g'-g-condition} holds provided that it holds at $t_k$, meaning that the condition~\eqref{eqn:strict-stability-condition} is satisfied.
    Moreover,~\eqref{eqn:strict-stability-condition} and~\eqref{g'-g-condition} both hold for $\alpha = 4-n$.
\end{lemma}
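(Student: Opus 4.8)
The statement to prove is Lemma~\ref{lemma:one-shot-reference-FTW}: for $n \geq 7$ and $1 \leq k \leq n-2$, with $g$ the hypergeometric function of~\eqref{eqn:general-hypergeometric-expression} and parameter $\alpha \in (2-n,0)$, the pointwise inequality~\eqref{g'-g-condition} holds on all of $[0,t_k]$ as soon as it holds at the endpoint $t_k$ (this endpoint condition being~\eqref{eqn:strict-stability-condition}); and both hold for the explicit choice $\alpha = 4-n$. I would attribute the substantive analytic content to the companion paper~\cite{FTW-stability-one-phase} and organize the proof as a reduction: the point is to deduce the \emph{global} inequality from the \emph{endpoint} inequality, plus verify the concrete value $\alpha=4-n$ works.

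\textbf{Step 1: set up the right auxiliary quantity.} Write $P(t) := \frac{g'(t)}{g(t)} - \frac{(n-2)t-(k-1)t^{-1}}{1-t^2}$ on $(0,t_k]$, with the understanding that near $t=0$ the term $(k-1)t^{-1}$ is balanced by $g'(t)/g(t) \sim g''(0)/g(0)\,t$ via L'H\^opital, so that $\lim_{t\downarrow 0} P(t)$ is finite and equals $-\frac{\alpha(\alpha+n-2)}{k} + \frac{n-1}{k} = \frac{(\alpha+n-1)^2 - (n-1)(1-?)}{k}$-type expression computed from $g''(0) = -\tfrac{\alpha(\alpha+n-2)}{k}g(0)$ and the analogous expansion of the comparison term; in particular $P(0^+) > 0$ for $\alpha \in (2-n,0)$ since $\alpha(\alpha+n-2) < 0$. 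Then I would show that the quantity $q := g'/g$ satisfies a Riccati equation obtained from $\rho^{2-\alpha}\Delta(\rho^\alpha g) = 0$, i.e. from~\eqref{eqn:laplace-rho-a-g}: $(1-t^2)(q' + q^2) + ((k-1)t^{-1} - (n-1)t)q + \alpha(\alpha+n-2) = 0$. The comparison function $m(t) := \frac{(n-2)t - (k-1)t^{-1}}{1-t^2}$ is precisely $\hat f_0'/\hat f_0$-adjacent (it is the logarithmic derivative quantity arising from $f_0$ satisfying $\cL_{n,k}f_0=0$), so one computes the ODE that $m$ satisfies and subtracts. The difference $P = q - m$ then satisfies a \emph{linear} first-order ODE with a sign-definite forcing term — this is the mechanism by which a one-sided value propagates.

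\textbf{Step 2: propagate the sign.} Having $P' = a(t)P + b(t)$ with $b(t)$ of a controlled sign on $[0,t_k]$ (this is the core computation; the sign of $b$ is where $n\ge 7$ enters, analogously to how the boundary-minimum discussion for $G_{n,k}$ in the text first invokes $n\ge7$), I argue: if $P(t_k) \ge 0$ but $P$ vanished somewhere in $(0,t_k)$, take the \emph{last} zero $t_*<t_k$; on $(t_*,t_k]$ we have $P>0$, but at $t_*$ we need $P'(t_*)\ge 0$, while the ODE forces $P'(t_*) = b(t_*)$ of the wrong sign unless we are careful — so instead one runs the comparison the other way, using $P(0^+)>0$ and showing a first zero cannot occur. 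Concretely: suppose $t_0$ is the first zero of $P$ in $(0,t_k)$; then $P>0$ on $(0,t_0)$, $P(t_0)=0$, $P'(t_0)\le 0$, but evaluating the linear ODE at $t_0$ gives $P'(t_0) = b(t_0)$ which I will show is strictly positive — contradiction. Hence $P>0$ on all of $(0,t_k)$ provided $P(t_k)>0$, i.e.~\eqref{eqn:strict-stability-condition} implies~\eqref{g'-g-condition}. (The endpoint-nonstrict case follows by an $\alpha$-perturbation or by noting $P(t_k)\ge0$ still forbids an interior zero.)

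\textbf{Step 3: the explicit $\alpha = 4-n$.} For this value, $\alpha(\alpha+n-2) = (4-n)(2) = 2(4-n) = 8-2n$, so the forcing constant in the Riccati equation and the expansion at $0$ simplify, and $g(t) = {}_2F_1(1,\tfrac{n-4}{2};\tfrac k2;t^2)$. I would substitute into~\eqref{eqn:strict-stability-condition} and reduce to a polynomial/hypergeometric inequality at $t=t_k$ which, by the known contiguous-function identities for ${}_2F_1$ and the value $f_{n,k}(t_k)=0$, becomes an explicit inequality verified for all $n\ge 7$, $1\le k\le n-2$ — exactly the content cited from~\cite{FTW-stability-one-phase}*{Theorem 1.1 and Proposition 2.4}. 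Then Steps 1--2 upgrade this endpoint fact to the global one.

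\textbf{Main obstacle.} The hard part is Step~1--2's sign of the forcing term $b(t)$ on the whole interval $[0,t_k]$: showing $b>0$ (equivalently, that the Riccati/linear structure is favorable) is precisely where the dimensional threshold $n\ge 7$ is consumed, and it requires either a clean algebraic identity relating $g$, $f_0$, and the weight $p(t)=t^{k-1}(1-t^2)^{(n-k)/2}$ from~\eqref{eqn:legendre-form}, or a more delicate monotonicity argument for the hypergeometric function. I expect this to rest essentially on the estimates already established in~\cite{FTW-stability-one-phase}, so in the write-up I would state Step~1 as a lemma, cite the companion paper for the sign of $b$ and for the endpoint verification at $\alpha=4-n$, and give only the short ODE-comparison argument of Step~2 in full detail here.
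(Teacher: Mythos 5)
The paper's own ``proof'' of this lemma is a two-line citation to the companion paper's Lemma~3.1 (``propagation of positivity'') and Proposition~1.2 (the verification at $\alpha = 4-n$), so there is no independent argument in this paper to compare against; your Steps~1--3 are an attempt to reconstruct what the companion lemma must say. The reduction structure you outline — set up the Riccati quantity, form the difference $P = q - m$ with $q = g'/g$ and $m(t) = \frac{(n-2)t - (k-1)t^{-1}}{1-t^2}$, then propagate the sign from the endpoint — is the right shape. But Step~2 has a genuine gap that cannot be repaired at the level of generality you propose.

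First, a structural point: $P$ does not satisfy a linear first-order ODE. Subtracting the Riccati identity for $m$ from the one for $q$ gives
\[
(1-t^2)\,P' \;+\; (1-t^2)\,P^2 \;+\; \bigl[\,2(1-t^2)m + (k-1)t^{-1} - (n-1)t\,\bigr]\,P \;=\; -R(t),
\]
which is again Riccati; the $P^2$ term drops out only at a zero of $P$, which is what your touching argument implicitly exploits. More importantly, the forcing term $R(t)$ (your $b$, up to a sign and the factor $(1-t^2)^{-1}$) is \emph{not} sign-definite on $[0,t_{n,k}]$. Computing the residual from substituting $m$ into the Riccati equation, one finds after simplification
\[
R(t) = \alpha(\alpha+n-2) + (n-k-1) + \frac{k-1}{t^2} + \frac{(n-k-1)\,t^2}{1-t^2}.
\]
For $(n,k)=(7,1)$ and $\alpha = 4-n = -3$ this is $R(t) = -1 + \frac{5t^2}{1-t^2}$, which vanishes at $t = 1/\sqrt{6} \approx 0.408$ and changes sign there, while $t_{7,1} \approx 0.517$. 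So $R$ is negative on an initial piece of the interval and positive near the endpoint. This kills a one-sided touching argument in either direction: going rightward from $t=0$, a first zero $t_0$ in the region $\{R>0\}$ gives $P'(t_0)<0$, which is consistent with $P$ crossing downward and produces no contradiction; going leftward from $t_k$, a last zero in the region $\{R<0\}$ gives $P'>0$, again consistent. Your own hesitation about whether to start at $0$ or at $t_k$ is a symptom of precisely this sign change. The companion-paper argument must be doing something more refined — indeed, elsewhere in this paper (in the proof of Proposition~\ref{prop:subsolution-construction-n=7}) the authors mention that the same companion Lemma~3.1 produces ``quadratic barriers of the form $(k-1)-\beta_k t^2$'' for the quantity $t(1-t^2)\frac{g'}{g} - (n-2)t^2 + (k-1)$, which equals $t(1-t^2)P(t)$; this suggests the real mechanism is a barrier construction for a rescaled test function, not a direct sign-definite forcing comparison. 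Without reproducing that construction, your sketch does not close.

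Step~3 is fine as a reduction: checking that~\eqref{eqn:strict-stability-condition} holds at $\alpha = 4-n$ is exactly the content of the companion Proposition~1.2, and once Step~2 is repaired the global inequality~\eqref{g'-g-condition} follows from the endpoint one.
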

\begin{proof}
    The fact that~\eqref{eqn:strict-stability-condition} implies~\eqref{g'-g-condition} is obtained as the ``propagation of positivity'' result~\cite{FTW-stability-one-phase}*{Lemma~3.1}.
    The fact that~\eqref{eqn:strict-stability-condition} holds for $\alpha = 4-n$ is established as~\cite{FTW-stability-one-phase}*{Proposition~1.2}.
\end{proof}

With those auxiliary properties, we are ready to verify that our construction indeed produces a subsolution for all $(n,k)$.
The proof of this result is split into two cases: $n \geq 8$, which admits a simple general argument, and $n=7$, which requires additional care.
\begin{proposition}\label{prop:subsolution-construction}
For every $n \geq 8, 1 \leq k \leq n-2$, and $\alpha \in (2-n, 0)$, we define the functions
\[
g(t) := {}_2 F_1 \left( \frac{n+\alpha-2}{2} , - \frac{\alpha}{2} ; \frac{k}{2} ; t^2 \right), \qquad G_{n,k}(\alpha, t) := (1-\alpha)^2 f_0(t)^2 + (1-t^2) (f'_0 - f_0 g'/g)^2.
\]
Let $t_0 \in (0,1)$ denote the positive zero of $f_0(t)$.
We define $\alpha_{n,k}$ for $n \geq 7$ and $1 \leq k \leq n-2$ by
    \begin{equation}\label{eqn:alpha-values}
   \alpha_{8,k} = - \tfrac{9}{2},  \qquad \alpha_{9,k} = - \tfrac{11}{2}, 
    \qquad \alpha_{n,k}= 4-n \; \text{ for }  \;n \geq 10.
    \end{equation}
    Then, $G_{n,k}(t) := G_{n,k}(\alpha_{n,k},t)$ attains its absolute minimum on $[0,t_0]$ at the endpoint $t_0$.
\end{proposition}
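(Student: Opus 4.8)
\smallskip\noindent\textbf{Proof proposal.} The plan is to establish the stronger statement that, for $\alpha=\alpha_{n,k}$, the function $t\mapsto G_{n,k}(\alpha,t)$ is strictly decreasing on $[0,t_0]$. Since this function is even in $t$ we have $G_{n,k}'(\alpha,0)=0$, while the computation recorded just before the statement gives $G_{n,k}''(\alpha,0)=\tfrac2{k^2}(1-\alpha)^2\bigl((\alpha+n-1)^2-(n-1)k\bigr)$. The first, elementary step is to check that $\alpha_{n,k}\in(2-n,0)$ and $(\alpha_{n,k}+n-1)^2<(n-1)k$ for every $1\le k\le n-2$ --- for $n=8,9$ this is $\tfrac{25}{4}<(n-1)k$, and for $n\ge10$ it is $9\le(n-1)k$ --- so that $G_{n,k}''(\alpha,0)<0$ and hence $G_{n,k}'(\alpha,\cdot)<0$ on a punctured right-neighbourhood of the axis. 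I also record the sign data used throughout: $f_0>0$ and $f_0'<0$ on $(0,t_0)$ (the second because a first interior zero of $f_0'$ with $f_0>0$ is excluded by $\cL_{n,k}f_0=0$); the hypergeometric function $g$ in~\eqref{eqn:general-hypergeometric-expression} has positive Taylor coefficients in $t^2$ when $\alpha\in(2-n,0)$, so $g>0$ and $g'>0$ on $(0,1)$; and $1-\alpha>0$, $\alpha+n-1>0$, $2-2\alpha-n>0$ for the chosen $\alpha$. Finally, since $\alpha=4-n$ satisfies the strict stability inequality~\eqref{eqn:strict-stability-condition} by Lemma~\ref{lemma:one-shot-reference-FTW}, and~\eqref{eqn:strict-stability-condition} is checked directly for $\alpha_{8,k}=-\tfrac92$ and $\alpha_{9,k}=-\tfrac{11}2$ (a finite computation), Lemma~\ref{lemma:one-shot-reference-FTW} gives $m(t):=\frac{g'(t)}{g(t)}-\frac{(n-2)t-(k-1)t^{-1}}{1-t^2}>0$ on $(0,t_0]$.

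The heart of the proof is to propagate the sign $G_{n,k}'(\alpha,\cdot)<0$ from the axis all the way to $t_0$. Using $\cL_{n,k}f_0=0$ and the equation $(1-t^2)g''+\bigl((k-1)t^{-1}-(n-1)t\bigr)g'+\alpha(\alpha+n-2)g=0$, I would reduce $G_{n,k}'$ to the identity, with $q_f:=f_0'/f_0$ and $q_g:=g'/g$,
\[
G_{n,k}'(\alpha,t)=2f_0(t)^2\,E(t),\qquad E(t):=(1-\alpha)(2-2\alpha-n)\,q_f-(1-t^2)\,m\,(q_f-q_g)^2+(1-\alpha)(\alpha+n-1)\,q_g.
\]
On $(0,t_0)$ the first term of $E$ is negative (as $q_f<0$), the middle term is nonpositive (as $m>0$), and the last term $(1-\alpha)(\alpha+n-1)q_g>0$ is the only obstruction; moreover $G_{n,k}'(\alpha,t)\to-2(1-t_0^2)m(t_0)f_0'(t_0)^2<0$ as $t\uparrow t_0$. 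I would close the estimate in one of two ways. The cleaner route is a first-zero argument in the spirit of Lemma~\ref{lemma:psi-detect-blow-up}: at any interior $t_*\in(0,t_0)$ with $E(t_*)=0$, compute $E'(t_*)$ --- equivalently $G_{n,k}''(\alpha,t_*)$ --- by inserting the Riccati relations $q_f'=f_0''/f_0-q_f^2$ and $q_g'=g''/g-q_g^2$ supplied by the two linear ODEs, and show the resulting expression is negative; since $E<0$ just to the right of the axis, $E$ can then never vanish on $(0,t_0)$, so $G_{n,k}(\alpha,\cdot)$ is strictly decreasing. The hands-on alternative splits $(0,t_0)$ into a neighbourhood of the axis (controlled by the first step), a neighbourhood of $t_0$ (controlled by the limit above, which uses $m(t_0)>0$), and a compact middle interval, on which one uses $(q_f-q_g)^2\ge q_g^2$ together with the ODE identity $(1-t^2)m\,q_g=-(1-t^2)q_g'+tq_g+(1-\alpha)(\alpha+n-1)-(n-1)$ to reduce $E<0$ to
\[
(1-\alpha)(2-2\alpha-n)(-q_f)\ \ge\ q_g\bigl((1-t^2)q_g'-tq_g+(n-1)\bigr),
\]
which is verified, with room to spare, for $\alpha=\alpha_{n,k}$ and every $n\ge 8$ (the strictest instances being $n=8,9$ with $k=1$, which forces the slightly more negative values of $\alpha$ in those dimensions).

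The main obstacle is exactly this interior estimate: the lone positive term $(1-\alpha)(\alpha+n-1)q_g\,f_0^2$ in $G_{n,k}'$ must be dominated by the two negative terms on all of $(0,t_0)$, not merely near its endpoints. The favourable terms carry the factors $1-\alpha$ and $m$; making them win forces $\alpha$ sufficiently negative, while preserving $m>0$ (i.e.~\eqref{eqn:strict-stability-condition}) forces $\alpha>2-n$, and this admissible window narrows as $n$ decreases. For $n\ge10$ it is wide enough that the natural value $\alpha=4-n$ works; at $n=8,9$ one pushes $\alpha$ down to $-\tfrac92$, $-\tfrac{11}2$; and at $n=7$ --- in particular for $(n,k)=(7,1),(7,2)$ --- the window is too tight for this ansatz, so a more refined subsolution is needed, which is the content of the separate $n=7$ construction.
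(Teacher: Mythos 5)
Your Step-1-and-Step-2 work matches the paper. The reduction to $G_{n,k}'=2f_0^2\Phi$ with
\[
\Phi=(1-\alpha)(2-2\alpha-n)v_f-\bigl[(1-t^2)v_g-(n-2)t+(k-1)t^{-1}\bigr](v_f-v_g)^2+(1-\alpha)(\alpha+n-1)v_g
\]
(in your notation $m=v_g-\frac{(n-2)t-(k-1)t^{-1}}{1-t^2}$, so the middle term is $-(1-t^2)m(v_f-v_g)^2$), the sign data $f_0>0$, $f_0'<0$, $v_g>0$, $m>0$, and the identification of the last term as the sole obstruction are all exactly as in the paper. Your verification that $\alpha_{n,k}$ satisfies \eqref{eqn:strict-stability-condition} via Lemma~\ref{lemma:one-shot-reference-FTW} and direct checks at $n=8,9$ is also correct.

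The gap is in how you close the argument. Your ``cleaner route''---a first-zero argument applied to $\Phi$ itself---is not shown to work, and it is not clear it can. At a zero $t_*$ of $\Phi$ the quadratic piece $(1-t^2)m(v_f-v_g)^2$ is \emph{nonzero} (it equals $(1-\alpha)\bigl[(2-2\alpha-n)v_f+(n+\alpha-1)v_g\bigr](t_*)$, which at such a point is positive), so $\Phi'(t_*)$ contains the full derivative of this quadratic term, including $m'$, $(v_f-v_g)'$, and the factor $(1-t^2)'$. There is no evident cancellation that pins down its sign; the ``insert the Riccati relations and show the result is negative'' step is precisely the piece that has not been verified. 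Your ``hands-on alternative,'' which applies $(v_f-v_g)^2\ge v_g^2$ and the (correct) identity $(1-t^2)mv_g=-(1-t^2)v_g'+tv_g-\alpha(\alpha+n-2)$, terminates in an inequality between two functions of $t$ that is merely asserted ``with room to spare''; that inequality is nontrivial on the interior of $[0,t_0]$ and is not established.

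What the paper actually does at this point is simpler and stronger: use $m>0$ to \emph{discard} the quadratic term entirely (not merely bound it by $q_g^2$), giving $\Phi\le(1-\alpha)E$ with the \emph{linear} combination $E:=(2-2\alpha-n)v_f+(n+\alpha-1)v_g$. Combining the two Riccati equations yields the first-order ODE
\[
E'=-(2-2\alpha-n)v_f^2-(n+\alpha-1)v_g^2-\frac{(k-1)t^{-1}-(n-1)t}{1-t^2}\,E+k\,\frac{E'(0)}{1-t^2},
\]
in which the coefficient of $E$ is irrelevant at a zero. One then computes $E'(0)\le0$ (with $E'''(0)<0$ in the borderline case $n=10$ where $E'(0)=0$), so $E<0$ for small $t$, and at a hypothetical first zero $t_*$ the ODE gives $E'(t_*)<0$ because all three remaining terms are nonpositive---contradiction. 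It is the passage from the quadratic-in-$(v_f-v_g)$ expression $\Phi$ to the purely linear $E$, and the clean inhomogeneous ODE that $E$ satisfies, that makes the first-zero argument close. That reduction is the missing ingredient in your proposal.
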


\begin{proof}
    We will prove that the function $G_{n,k}$ is strictly decreasing on $[0,t_0]$ for the chosen $\alpha_{n,k}$, from which the desired property will follow.
    This claim is established in three steps.

\smallskip \noindent \textbf{Step 1:} 
We prove that the $\alpha_{n,k}$ from~\eqref{eqn:alpha-values} satisfy~\eqref{eqn:strict-stability-condition}.
For $n \in \{ 8, 9\}$, we compute that
\begin{align*}
    \frac{g'_{8,k,-\frac{9}{2}}(t_{8,k})}{g_{8,k,-\frac{9}{2}}(t_{8,k})} - \frac{6 t_{8,k} - (k-1) t_{8,k}^{-1}}{ 1 - t_{8,k}^2} &> 10^{-2} & \qquad &\text{for } \; 1 \leq k \leq 6, \\
    \frac{g'_{9,k,-\frac{11}{2}}(t_{9,k})}{g_{9,k,-\frac{11}{2}}(t_{9,k})} - \frac{7 t_{9,k} - (k-1) t_{9,k}^{-1}}{ 1 - t_{9,k}^2} &> 10^{-1} & \qquad &\text{for } \; 1 \leq k \leq 7,
\end{align*}
which shows that the property~\eqref{eqn:strict-stability-condition} is satisfied for $n \in \{ 8, 9 \}$.
For $n \geq 10$ and $\alpha_{n,k} = 4-n$, the property~\eqref{eqn:strict-stability-condition} is obtained as~\cite{FTW-stability-one-phase}*{Proposition~1.2} of our companion paper.
Thus, the $\alpha_{n,k}$ satisfy~\eqref{eqn:strict-stability-condition} for all $n \geq 8$.

\smallskip \noindent \textbf{Step 2:} Next, we reduce the analysis of $G_{n,k}$ to the derivative $\Phi = \frac{1}{2 f_0^2} G'_{n,k}$.
    Let us denote $v_f := \frac{f'}{f}$ and $v_g := \frac{g'}{g}$, since $f>0$ on $[0,t_0)$ and $g>0$ on $[0,1]$ for $\alpha \in (2-n,0)$.
    Since $g$ satisfies~\eqref{eqn:laplace-rho-a-g}, writing $\frac{g''}{g} = v'_g + v_g^2$ shows that the function $v_g$ solves the Riccati ODE
    \begin{equation}\label{eqn:riccati-equation}
    (1-t^2) (v'_g + v_g^2) + ( (k-1) t^{-1} - (n-1) t) \cdot v_g + \alpha (\alpha + n - 2) = 0
    \end{equation}
    and likewise for $f_0$, with $\alpha = 1$.
    We may therefore write
    \begin{align*}
        G_{n,k} &= f_0^2 \left[ (1-\alpha)^2 + (1-t^2) (v_f - v_g)^2 \right], \qquad G'_{n,k} = 2f_0^2 \Phi, \qquad \text{where} \\
        \Phi(t) &= - t (v_f - v_g)^2 + (1-t^2) (v_f - v_g) (v'_f - v'_g) + v_f \left[ (1-\alpha)^2 + (1-t^2) (v_f - v_g)^2 \right].
    \end{align*}
    Combining the Riccati equations for $v_f$ and $v_g$, we obtain
    \[
    (1-t^2) (v_f - v_g)' + \left[ (1-t^2)(v_f + v_g) + (k-1) t^{-1} - (n-1) t \right] (v_f - v_g) + (1-\alpha)(n+\alpha - 1) = 0.
    \]
    We therefore seek to prove that $\Phi< 0$ on $(0,t_0)$, where
    \begin{equation}\label{eqn:Phi-computation}
    \Phi = (1-\alpha)^2 v_f - \left[ (1-t^2) v_g - (n-2) t + (k-1) t^{-1} \right] (v_f - v_g)^2 - (1-\alpha)(n+\alpha-1)(v_f - v_g).
    \end{equation}
    Using $v_g(0) = 0$ and $v'_g(0) = - \frac{\alpha(\alpha+n-2)}{k}$, we obtain $\Phi(0) = 0$ and
    \[
    \Phi'(0) = \frac{(1-\alpha)^2 }{k^2} \left[ (n+\alpha - 1)^2 - k (n-1) \right],
    \]
    meaning that $G''_{n,k}(\alpha,0)$ has the sign of $|n+\alpha - 1| - \sqrt{k(n-1)}$.

    \smallskip
    \noindent \textbf{Step 3:} We now prove that $G_{n,k}$ is strictly decreasing on $[0,t_0]$ for $n \geq 8$.
    Since $\alpha_{n,k}$ was checked above to have the properties~\eqref{eqn:two-proof-properties}, we may apply Lemma~\ref{lemma:one-shot-reference-FTW} to obtain the positivity property~\eqref{g'-g-condition}.
    In terms of $v_g = \frac{g'}{g}$, this result is equivalent to
\[
(1-t^2) v_g - (n-2) t + (k-1) t^{-1} > 0 \qquad \text{on } \; [0,t_0].
\]
    We may therefore bound
    \[
    \Phi (t) \leq (1-\alpha) \left[ (1-\alpha) v_f - (n+\alpha - 1) (v_f - v_g) \right] = (1-\alpha) E,
    \]
    where $E$ denotes the quantity $E := (2 - 2 \alpha - n) v_f + (n + \alpha - 1) v_g$.
    Using the Riccati equations satisfied by $v_f$ and $v_g$, we compute that $E$ satisfies the evolution equation
    \begin{equation}\label{eqn:e-evolution-equation}
    \begin{split}
E' &= - (2 - 2 \alpha -n) v_f^2 - (n+\alpha-1) v_g^2 - \frac{(k-1) t^{-1} - (n-1) t}{1 - t^2} E + k\frac{E'(0)}{1-t^2}.
    \end{split}
\end{equation}
Moreover, using $v'_f(0) = - \frac{n-1}{k}$ and $v'_g(0) = - \frac{\alpha(\alpha + n-2)}{k}$, we find
\[
E'(0) = - \frac{(n-1)(2-2\alpha-n) + \alpha(n+\alpha-1)(n+\alpha-2)}{k}.
\]
For $n \geq 8$, we obtain the derivatives $E'_{n,k}(0)$ as 
\[
E'_{8,k}(0) = - \frac{33}{8k}, \qquad E'_{9,k}(0) = -\frac{91}{8k} < 0, \qquad E'_{n,k}(0) = - \frac{(n-3)(n-10)}{k} \leq 0 \quad \text{for } \; n \geq 10,
\]
and $E'''_{10,k} = - \frac{4536}{k^2 (k+2)} < 0$, using the computations of Lemma~\ref{lemma:values-at-zero}.
This proves that $\alpha_{n,k} \in  (2-n, \frac{2-n}{2})$ produces $E'(0) \leq 0$ and $E'''(0) < 0$ in all cases, so $E(t) < 0$ for small $t>0$.
If there was a first instance $t_* \in (0, t_0)$ where $E(t_*) = 0$, then the uniqueness of solutions to ODE would force $E'(t_*) > 0$.
On the other hand, the equation~\eqref{eqn:e-evolution-equation} yields
\[
E'(t_*) = - (2 - 2 \alpha -n) v_f^2 - (n+\alpha-1) v_g^2 + k\frac{E'(0)}{1-t_*^2} < 0
\]
at such a point, where $2 - 2 \alpha - n > 0$ for $\alpha < \frac{2-n}{2}$; this produces a contradiction.
Consequently, $\Phi \leq (1-\alpha) E < 0$ implies that $G_{n,k}$ is strictly decreasing on $[0,t_0]$ for all such $n,k,\alpha$.
Therefore, it attains its absolute minimum on $[0,t_0]$ at the endpoint $t_0$, completing the proof.
\end{proof}

\begin{proposition}\label{prop:subsolution-construction-n=7}
Let $n=7$ and $1 \leq k \leq 5$.
We define $\alpha_1 = - 3.23$ and $\alpha_k = -3$ for $2 \leq k \leq 5$.
Let $f_k(t) := {}_2F_1( 3, - \frac{1}{2} ; \frac{k}{2} ; t^2)$ and $g_k(t) := {}_2 F_1 ( \frac{5 + \alpha_k}{2} , - \frac{\alpha_k}{2} ; \frac{k}{2} ; t^2)$.
Then, the function 
\[
G_k(t) = (1-\alpha_k)^2 f_k(t)^2 + (1-t^2) (f'_k - f_k g_k'/g_k)^2
\]
attains its absolute minimum on $[0,t_k]$ at $t_k$, the positive zero of $f_k(t)$.
\end{proposition}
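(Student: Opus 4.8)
The plan is to carry over the argument of Proposition~\ref{prop:subsolution-construction} as far as it goes, and then to treat the genuinely different behavior of the threshold dimension $n=7$. Set $v_f := f_k'/f_k$ on $[0,t_k)$, $v_g := g_k'/g_k$ on $[0,1]$, and $\Phi := \tfrac{1}{2 f_k^2} G_k'$, so that, exactly as in Step~2 of Proposition~\ref{prop:subsolution-construction}, $G_k = f_k^2\bigl[(1-\alpha_k)^2+(1-t^2)(v_f-v_g)^2\bigr]$ and the claim is equivalent to $G_k$ attaining its minimum on $[0,t_k]$ at $t_k$, with $\Phi$ given by~\eqref{eqn:Phi-computation} for $n=7$, $\alpha=\alpha_k$. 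First I would verify, by direct numerical evaluation at the single point $t_k$, that the strict stability inequality~\eqref{eqn:strict-stability-condition} holds for each of the five pairs $(7,k)$ with the stated $\alpha_k$; this is a finite check, and by Lemma~\ref{lemma:one-shot-reference-FTW} it upgrades to the positivity~\eqref{g'-g-condition}, i.e.\ $(1-t^2)v_g-(n-2)t+(k-1)t^{-1}>0$ on all of $[0,t_k]$. As in Proposition~\ref{prop:subsolution-construction} this yields $\Phi\le(1-\alpha_k)E_k$ on $(0,t_k)$, where $E_k := (2-2\alpha_k-n)v_f+(n+\alpha_k-1)v_g$ satisfies the evolution equation~\eqref{eqn:e-evolution-equation}.

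The reason $n=7$ must be separated from $n\ge 8$ is that the final step of Proposition~\ref{prop:subsolution-construction} — showing $E_k<0$ on $(0,t_k)$, hence $G_k$ strictly decreasing — breaks down: using Lemma~\ref{lemma:values-at-zero} one finds $E_k'(0)=\tfrac{12}{k}>0$ when $\alpha_k=4-n=-3$, and likewise $E_k'(0)>0$ for $(n,k)=(7,1)$ with $\alpha_1=-3.23$, so $E_k>0$ for small $t>0$ and the clean $E$-bound is vacuous there. I would instead argue that $G_k$ is \emph{unimodal} on $[0,t_k]$. The two ends are controlled directly: $\Phi(t)\to-\infty$ as $t\uparrow t_k$, since $v_f\to-\infty$ and the term $-\bigl[(1-t^2)v_g-(n-2)t+(k-1)t^{-1}\bigr](v_f-v_g)^2$ dominates~\eqref{eqn:Phi-computation}; and near $0$, $\Phi(0)=0$ with $\Phi'(0)=\tfrac{(1-\alpha_k)^2}{k^2}\bigl[(6+\alpha_k)^2-6k\bigr]$, which is negative for $k=2,\dots,5$ (where $\alpha_k=-3$) and positive for $k=1$ (where $\alpha_1=-3.23$ is pushed close to the edge of the strict-stability window). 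Next, from~\eqref{eqn:e-evolution-equation} together with $2-2\alpha_k-n>0$, I would show that once $E_k$ is sufficiently negative the term $-(2-2\alpha_k-n)v_f^2$ dominates, so $E_k<0$ on an interval $(t_1,t_k)$ abutting $t_k$, on which $\Phi\le(1-\alpha_k)E_k<0$. The remaining work is confined to the short interval $[0,t_1]$, where $v_f$ and $v_g$ are explicit bounded ratios of hypergeometric functions: there I would establish $\Phi<0$ for $k=2,\dots,5$, and for $k=1$ that $\Phi$ is positive near $0$ and undergoes a single sign change, by a direct quantitative estimate of $v_f,v_g$ on $[0,t_1]$ — and this is precisely where the tuned values $\alpha_1=-3.23$ and $\alpha_k=-3$ are used.

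Granting this sign analysis, unimodality forces the minimum of $G_k$ on $[0,t_k]$ to be attained at an endpoint. For $k=2,\dots,5$ one in fact obtains $\Phi<0$ throughout $(0,t_k)$, so $G_k$ is strictly decreasing and its minimum is at $t_k$, as required. For $k=1$, $G_1$ first increases and then decreases, so $\min_{[0,t_k]}G_1=\min\{G_1(0),G_1(t_k)\}$; since $f_1(0)=1$ and $f_1'(0)=g_1'(0)=0$ give $G_1(0)=(1-\alpha_1)^2$, it remains to check the single numerical inequality $G_1(t_1)=(1-t_1^2)f_1'(t_1)^2\le(1-\alpha_1)^2$. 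The hard part will be the interior estimate on $[0,t_1]$: once $E_k\ge 0$ there is no structural cancellation left to exploit, so the sign of $\Phi$ there must be extracted quantitatively, case by case; this is also the reason the admissible exponents are chosen individually near the boundary of the strict-stability range rather than by the uniform rule $\alpha=4-n$ that sufficed for $n\ge 8$.
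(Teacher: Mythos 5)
Your high‑level plan is in the same spirit as the paper's proof: reduce to the sign of $\Phi = \tfrac{1}{2f_k^2}G_k'$; observe that $n=7$ is special because $E'(0)>0$ (your computation $E'(0)=\tfrac{12}{k}$ for $\alpha=-3$ is correct, and agrees with the paper's $\Phi'(0)=\tfrac{48}{k^2}(3-2k)$ analysis); show $\Phi<0$ on $(0,t_k)$ for $k\ge 2$ so that $G_k$ is strictly decreasing; and for $k=1$ establish that $G_1$ has a single interior maximum and reduce to the endpoint comparison $G_1(t_1)\le G_1(0)=(1-\alpha_1)^2$, which is precisely the second inequality in~\eqref{eqn:two-proof-properties}. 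That structure matches.

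Where your proposal has a genuine gap is in the step you concede is hard — the compact interval — and in the remark that ``once $E_k\ge 0$ there is no structural cancellation left to exploit.'' That remark is incorrect, and the missing cancellation is exactly what makes the paper's argument work for $k=4,5$. The paper observes that $(v_f+v_g)'=-(v_f^2+v_g^2)-\tfrac{(k-1)t^{-1}-6t}{1-t^2}(v_f+v_g)$, whose power‑series expansion $(v_f+v_g)(t)=-\tfrac{36}{k^2}t^3+O(t^5)$ forces $v_f+v_g<0$ on $(0,t_k)$. Since $E=v_f+3v_g$ for $\alpha=-3$, this yields the algebraic simplification $4E\le 4(v_g-v_f)$, which factors out of $\Phi_k\le 4E-\bigl((k-1)t^{-1}-\beta_k t\bigr)(v_f-v_g)^2$ the positive quantity $v_g-v_f$ and reduces the claim to the scalar inequality $\tilde\Phi_k := 4-\bigl((k-1)t^{-1}-\beta_k t\bigr)(v_g-v_f)\le 0$, which is then checked on the explicit closed‑form expressions. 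Without this reduction, a ``direct quantitative estimate'' of $\Phi$ on a compact window is not obviously tractable — $\Phi$ there is a delicate competition between the positive $4E$ term and the negative $(v_f-v_g)^2$ term with a coefficient that vanishes at $t=\sqrt{\alpha}$. Similarly for $k=2,3$ the paper leans on the Gauss identities $g_{7,2}(t)=(1-t^2)^{-3/2}$ and $g_{7,3}(t)=(1-t^2)^{-1}$, which collapse $(1-t^2)v_g-5t+(k-1)t^{-1}$ to the elementary $(k-1)t^{-1}-kt$; you gesture at ``explicit bounded ratios of hypergeometric functions'' but do not use these simplifications, so it is unclear your interior estimate would close.

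Your split ``$E<0$ on $(t_1,t_k)$ by the ODE, then estimate on $[0,t_1]$'' is a reasonable reorganization, but it is not how the paper proceeds: the paper never needs to locate the sign change of $E$ separately, because either the explicit $g_k$ formulas ($k=2,3$) or the $v_f+v_g<0$ reduction ($k=4,5$) handles the whole interval at once. For $k=1$, your endgame — unimodality plus $G_1(t_1)\le G_1(0)$ — is equivalent to the paper's Rolle argument with $\tilde G:=G_1-G_1(t_1)$, and both proofs ultimately lean on a finite numerical verification of the unique critical point of $\Phi_1$. So: right skeleton, one real missing idea (the $v_f+v_g<0$ cancellation), and correspondingly the interior estimate as you have sketched it is not yet a proof.
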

\begin{proof}
    The argument for this case follows the same strategy as Proposition~\ref{prop:subsolution-construction}, but requires some more refined estimates.
    For the $\alpha_k$ given above, we consider the two properties
    \begin{equation}\label{eqn:two-proof-properties}
    \frac{g'_k(t_k)}{g_k(t_k)} - \frac{5 t_k - (k-1) t_k^{-1}}{1 - t_k^2} > 0, \qquad \sqrt{1 - t_k^2} \, |f'_k(t_k)| < 1- \alpha_k.
\end{equation}
    We verify that these hold by direct computation: for $1 \leq k \leq 5$ and the corresponding $\alpha_k$, we have
    \begin{align*}
        1 - \sqrt{1 - t_k^2} \, |f'_k(t_k)| - \alpha_k > 10^{-2}, \qquad
        \frac{g'_{k}(t_k)}{g_{k}(t_k)} - \frac{5 t_k - (k-1) t_k^{-1}}{ 1 - t_k^2} > 10^{-2}.
\end{align*}
    The proof of the claim proceeds in two steps.
    We first treat the case of $2 \leq k \leq 5$, where the argument of Proposition~\ref{prop:subsolution-construction} is applicable after more careful estimates.
    Then, we address the case of $k=1$, which is more delicate.

\smallskip \noindent \textbf{Step 1:} 
For $2 \leq k \leq 5$, we will prove that the function $G_k$ is strictly decreasing on $[0,t_k]$.
In view of the computation $G'_k = 2 f_0^2 \Phi_k$ in Proposition~\ref{prop:subsolution-construction}, we may use the identities~\eqref{eqn:Phi-computation} and~\eqref{eqn:e-evolution-equation} with $\alpha = -3$ and $E = v_f + 3 v_g$, whereby
\begin{align*}
\Phi_k &= 4 E -  \left[ (1-t^2) v_g - 5 t + (k-1) t^{-1} \right] (v_f - v_g)^2, \\
E' &= - v_f^2 -3 v_g^2 - \frac{(k-1) t^{-1} - 6 t}{1-t^2} E + \frac{12}{1-t^2}.
\end{align*}
Our earlier computation of $\Phi'_k(0)$ in Proposition~\ref{prop:subsolution-construction} gives $\Phi'_k(0) = \frac{48}{k^2} (3 - 2k) < 0$ for $2 \leq k \leq 5$, so $\Phi<0$ for small $t>0$.
Moreover, $v_f \to \infty$ as $t \uparrow t_k$ makes $\Phi \to - \infty$, due to
\[
(1-t^2) v_g - 5t + (k-1) t^{-1} > 0 \qquad \text{on } \; [0,t_k]
\]
by the property~\eqref{eqn:two-proof-properties}.
Recall that $g_k(t) = {}_2 F_1( 1, \frac{3}{2} ; \frac{k}{2} ; t^2)$ satisfies the identities
\[
{}_2 F_1(a,b;a;s) = (1-s)^{-b}, \qquad {}_2F_1(a,b;b;s) = (1-s)^{-a}
\]
whereby $g_{7,2}(t) = (1-t^2)^{- \frac{3}{2}}$ and $g_{7,3}(t) = (1-t^2)^{-1}$.
In these cases, we may simplify
\[
(1-t^2) v_g - 5 t + (k-1) t^{-1} = (k-1) t^{-1} - kt
\]
and we obtain $\Phi_k < 0$ on $[0,t_k]$ for $k \in \{2,3\}$ by direct computation.
Finally, for $k \in \{ 4, 5\}$, we first apply a lower bound for the function
\[
(1-t^2) v_g - 5t + (k-1) t^{-1}
\]
by the technique of~\cite{FTW-stability-one-phase}*{Lemma~3.1}.
Using the subsolution construction therein, we obtain quadratic barriers of the form $(k-1) - \beta_k t^2$ for the function
\[
t (1-t^2) \tfrac{g'_{\alpha}(t)}{g_{\alpha}(t)} - (n-2) t^2 + (k-1).
\]
Computing in the interval $[0,t_k]$, we find that $\beta_4 = \frac{37}{10}$ and $\beta_5 = \frac{43}{10}$ satisfy the subsolution conditions, whereby
\begin{align*}
    (1-t^2)v_g - 5t+ (k-1) t^{-1} &\geq (k-1) t^{-1} - \beta_k t \qquad \text{on } \; [0, t_k], \\
    \Phi_k(t) &\leq 4 E(t) - \left( (k-1) t^{-1} - \beta_k t \right) (v_f - v_g)^2.
\end{align*}
We compute that for this choice of constants $\beta_k$, 
\[
(k-1) t_k^{-1} - \beta_k t_k > 10^{-2},
\]
whereby $(k-1) t^{-1} - \beta_k t > 0$ on $[0,t_k]$.
Moreover, we may bound the term $4 E(t)$ by
\[
4 E(t) = 4 v_f + 12 v_g \leq 4 (v_g - v_f)
\]
which is equivalent to $ v_f + v_g \leq 0$.
The negativity of this expression follows from the general computation~\eqref{eqn:e-evolution-equation}, which now specializes to
\[
(v_f + v_g)' = - (v_f^2 + v_g^2) - \tfrac{(k-1) t^{-1} - 6t}{1-t^2} (v_f + v_g).
\]
This equation has the structure considered in~\eqref{eqn:e-evolution-equation}.
Using the power series expansion for the hypergeometric function and the computation of the derivatives $v^{(i)}_f(0), v^{(i)}_g(0)$, we find that
\[
v_f(t) = - \tfrac{6}{k} t - \tfrac{18}{k^2} t^3 + O(t^5), \qquad v_g(t) = \tfrac{6}{k} t - \tfrac{18}{k^2} t^3 + O(t^5)
\]
for small $t>0$.
Consequently, $(v_f + v_g)(t) = - \tfrac{36}{k^2} t^3+ O(t^5)$ is negative for small $t>0$.
At a first zero of $(v_f + v_g)$, we would obtain $(v_f + v_g)'(t_*) < 0$, which is impossible; therefore, $v_f + v_g < 0$ on $[0,t_k]$.
Combining these bounds, we arrive at
\begin{align*}
\Phi_k(t) &\leq 4(v_g - v_f) - \left( (k-1) t^{-1} - \beta_k t \right) (v_g - v_f)^2 \\
&= (v_g - v_f) \left[ 4 - \left( (k-1) t^{-1} - \beta_k t \right) (v_g - v_f) \right]
\end{align*}
and since $v_g - v_f > 0$, the property $\Phi_k(t) \leq 0$ is reduced to
\[
\tilde{\Phi}_k(t) := 4 - \left( (k-1) t^{-1} - \beta_k t \right) (v_g - v_f) \leq 0.
\]
We compute that $\tilde{\Phi}_k(0) = 4 - (k-1) \cdot \frac{12}{k} < 0$ and $\tilde{\Phi}_k(t) \to - \infty$ as $t \uparrow t_k$ due to $(k-1) t^{-1} - \beta_k t > 0$ on $[0,t_k]$.
The functions $f_k, g_k$ for $k \in \{ 4, 5 \}$ have explicit expressions (up to a constant factor)
\begin{align*}
    f_4(t) &= (1-t^2)^{- \tfrac{1}{2}} ( 1 - \tfrac{5}{4}t^2), & \qquad f_5(t) &= 15 - t^{-2} - (15 t - 6 t^{-1} - t^{-3}) \on{atanh} t, \\
    g_4(t) &= t^{-2} \bigl( (1-t^2)^{- \frac{1}{2}} - 1 \bigr), & \qquad g_5(t) &= t^{-3} ( \on{atanh} t - t).
\end{align*}
We therefore obtain
\begin{align*}
    k&=4:  &  v_f(t) &= \frac{t(6 - 5t^2)}{(1-t^2)(5t^2-4)}, \qquad \qquad  v_g(t) = \frac{2t^2 + \sqrt{1-t^2}-1}{t(1-t^2)}, \\
    k&=5: & v_f(t) &= \frac{15 t^6 \on{atanh} t - 15 t^5 - 9 t^4 \on{atanh} t + 4 t^3 - 3t^2 \on{atanh} t + 3(t -  \on{atanh} t)}{t (15 t^6 \on{atanh} t - 15 t^5 - 21 t^4 \on{atanh} t + 16 t^3 + 5 t^2 \on{atanh} t  + \on{atanh} t - t)}, \\
    & &v_g(t) &= \frac{2t(1-t^2) + t + (3t^2-1) \on{atanh} t}{t (\on{atanh} t-t)(1-t^2)}.
\end{align*}
Using these explicit expressions, we find that $\tilde{\Phi}_k(t) < -4 < 0$ on $[0, t_k]$ in both cases; in particular, $\tilde{\Phi}_k$ remains negative, hence $\Phi_k < 0$ as well.
We conclude that $G'_k = 2 f_k^2 \Phi_k < 0$ and $G_k$ is strictly decreasing on $[0,t_k]$.
Therefore, it attains its absolute minimum at $t_k$, as claimed.

\smallskip \noindent \textbf{Step 2:}
For $k=1$, we claim that $G_1$ has exactly one critical point, meaning that the function $\Phi_1 := \frac{1}{2f_1^2} G'_1$ has exactly one zero.
\[
f_{7,1}(t) = {}_2 F_1 \left( 3 , - \tfrac{1}{2} ; \tfrac{1}{2} ; t^2 \right) = - \tfrac{15}{8} t \cdot \on{atanh} t + \frac{ \frac{15}{8} t^4 - \frac{25}{8} t^2 + 1}{(1-t^2)^2}, \qquad t_1 \approx 0.517331.
\]
Taking $\alpha_1 = - 3.23$, we compute as in~\cite{desilva-jerison-cones}*{\S 3} (where $\alpha_{7,1} = - 3.21122$) to verify that the resulting function $\Phi_1$ from~\eqref{eqn:Phi-computation} has a unique zero, at $\hat{t}_1 \approx 0.304097$, with $\Phi_1' < 0$ for $t \in (\hat{t}_1, t_1)$.
Therefore, $G_1(t)$ has a unique critical point.
Next, using
\[
G_1(0) = (1-\alpha_1)^2, \qquad G_1(t_1) = (1 - t_1^2) f'_1(t_1)^2,
\]
due to $f_1(t_1) = 0$, we compute that the conditions~\eqref{eqn:two-proof-properties} are equivalent to
\[
G_1(0) > G_1(t_1), \qquad G'_1(t_1) < 0.
\]
We define the function $\tilde{G}(t) := G_1(t) - G_1(t_1)$, so the conditions~\eqref{eqn:two-proof-properties} are equivalent to $\tilde{G}(0) > 0$ and $\tilde{G}'(t_1) < 0$, where $\tilde{G}(t_1) = 0$.
    Therefore, if $G_1$ has a lower minimum than $G_1(t_1)$, then $\tilde{G}$ must become negative in $(0,t_1)$, with at least two interior zeros.
By Rolle's theorem, this would produce at least two points $0 < \tilde{t}_1 < \tilde{t}_2 < t_1$ where $G_1'(\tilde{t}_i) = \tilde{G}'(\tilde{t}_i) = 0$, which contradicts $G_1$ having at most one interior critical point.
We conclude that $G_1$ attains its minimum on the interval $[0,t_1]$ at $t_1$, completing the proof.
\end{proof}

\begin{proposition}\label{prop:small-theta-one-sided}
    For every $n \geq 7$ and $1 \leq k \leq n-2$, there exists a $\theta_{n,k} > 0$ such that for all $\theta \in ( 0 , \theta_{n,k})$, there exists a strict subsolution for the problem~\eqref{eqn:capillary-fbp} lying strictly below the cone $\mathbf{C}_{n,k,\theta}$.
    In particular, for all $\theta \in (0, \theta_{n,k})$, the capillary cone $\mathbf{C}_{n,k,\theta}$ is one-sided minimizing for $\cA^{\theta}$, and strictly stable.
\end{proposition}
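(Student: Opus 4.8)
The plan is to build the subsolution $V_\theta$ of~\eqref{eqn:V-subsolution} explicitly and then invoke Lemma~\ref{lemma:minimizing-by-subsolutions}$(i)$. First I would fix, for each pair $(n,k)$ with $n\ge 7$ and $1\le k\le n-2$, the exponent $\alpha=\alpha_{n,k}\in(2-n,0)$ provided by Proposition~\ref{prop:subsolution-construction} (when $n\ge 8$) and by Proposition~\ref{prop:subsolution-construction-n=7} (when $n=7$); in both cases the profile $G_{n,k}(\alpha_{n,k},\cdot)$ of~\eqref{eqn:Gn,k-alpha,t} attains its absolute minimum on $[0,t_0]$ at the right endpoint $t_0$, where $t_0$ is the positive zero of the hypergeometric function $f_{n,k}$. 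With this choice of $\alpha$, Proposition~\ref{prop:subsolution} immediately yields a $\theta_{n,k}>0$ such that for every $\theta\in(0,\theta_{n,k})$ the function
\[
V_\theta=\rho f_{n,k,\theta}(t)-\Lambda\tan\theta\,\rho^{\alpha}g(t)
\]
is a \emph{strict} weak subsolution of the capillary free boundary problem~\eqref{eqn:capillary-fbp} whose positive phase obeys $\{V_\theta>0\}\subset\Gamma_{n,k,\theta}=\{U_{n,k,\theta}>0\}$.

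Next I would verify the remaining hypotheses of Lemma~\ref{lemma:minimizing-by-subsolutions}$(i)$ for the pair $(V_\theta,U_{n,k,\theta})$, all of which follow from the form of the perturbation once $\alpha<1$ is used. Since $g>c(n,k)>0$ on the interval $[-t_{n,k,\theta},t_{n,k,\theta}]$ identified in the proof of Proposition~\ref{prop:subsolution}, one has $V_\theta<U_{n,k,\theta}$ wherever $\rho>0$, hence on $\{V_\theta>0\}$. Because $\alpha-1<0$, the homothetic rescalings $\lambda\,V_\theta(z'/\lambda)=\rho f_{n,k,\theta}(t)-\Lambda\tan\theta\,\lambda^{1-\alpha}\rho^{\alpha}g(t)$ converge to $U_{n,k,\theta}$ locally uniformly on $\{U_{n,k,\theta}>0\}$ as $\lambda\downarrow 0$, so $V_\theta$ is asymptotic to the cone-defining function. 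Finally, the free boundary $\partial\{V_\theta>0\}=\{\rho^{1-\alpha}=\Lambda\tan\theta\,g(t)/f_{n,k,\theta}(t)\}$ forces $\rho$ to stay bounded below by a positive constant for $t\in[0,t_{n,k,\theta})$, since $g/f_{n,k,\theta}$ is bounded below there and $1-\alpha>0$; hence $\partial\{V_\theta>0\}$ is disjoint from a neighborhood of the origin. Lemma~\ref{lemma:minimizing-by-subsolutions}$(i)$, applied with the $1$-homogeneous solution $u=U_{n,k,\theta}$ of~\eqref{eqn:capillary-fbp}, then gives that $\mathbf{C}_{n,k,\theta}$ is one-sided minimizing for $\cA^\theta$ in $E_-(U_{n,k,\theta})$.

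Strict stability of $\mathbf{C}_{n,k,\theta}$ then follows from the strictness of $V_\theta$: a strict subsolution of~\eqref{eqn:capillary-fbp} asymptotic to the cone solution forces the cone to be strictly stable, by the remark following Lemma~\ref{lemma:one-phase-foliations-imply-minimizing} together with~\cite{singular-capillary}*{Proposition~2.4} (and, for the one-phase analogue,~\cite{jerison-savin}*{Proposition~2.1}). The genuinely substantive input for this proposition sits entirely upstream, in Propositions~\ref{prop:subsolution},~\ref{prop:subsolution-construction} and~\ref{prop:subsolution-construction-n=7}: it is there that the dimensional threshold $n\ge 7$ enters, through the strict stability of the one-phase cones $U_{n,k}$ established in the companion paper~\cite{FTW-stability-one-phase}, and there that the delicate hypergeometric estimates are carried out. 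The one point that still merits attention at the present step is that the leaves of the lower foliation are the homothetic copies $\lambda\,V_\theta(\cdot/\lambda)$, whose positive phases recede to infinity as $\lambda\to\infty$; matching this with the orientation convention of Definition~\ref{def:lowerLeaf} is routine bookkeeping, already absorbed into the proof of Lemma~\ref{lemma:minimizing-by-subsolutions}.
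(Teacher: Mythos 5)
Your proposal is correct and establishes the stated conclusion, but it takes a different final step than the paper does, and the difference matters for what comes after.

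Both arguments begin identically: take $\alpha_{n,k}$ from Propositions~\ref{prop:subsolution-construction} and~\ref{prop:subsolution-construction-n=7}, feed it into Proposition~\ref{prop:subsolution} to obtain the strict subsolution $V_{\Lambda,\theta}=\rho f_\theta(t)-\Lambda\tan\theta\,\rho^{\alpha}g(t)$, and observe that rescalings $\lambda^{-1}V_{\Lambda_0,\theta}(\lambda z')=V_{\lambda^{\alpha-1}\Lambda_0,\theta}(z')$ produce a one-parameter family that sweeps out the region below the cone. Where the two proofs diverge is at the passage from subsolution to minimality. You invoke Lemma~\ref{lemma:minimizing-by-subsolutions}$(i)$ directly, which rests on the sliding-foliation/maximum-principle argument of Lemma~\ref{lemma:capillary-foliations-imply-minimizing}, and concludes that $\mathbf{C}_{n,k,\theta}$ is one-sided \emph{minimizing}. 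The paper instead reformulates the family of subsolutions as level sets of the potential $\varphi_\theta(x,y,z)=g(t)^{-1}\rho^{-\alpha}(\rho f_\theta(t)-z)$, forms the unit vector field $X_\theta=\nabla\varphi_\theta/|\nabla\varphi_\theta|$, verifies that it is a one-sided strict sub-calibration of $E_-$ satisfying the quantitative divergence lower bound~\eqref{eqn:one-sided-calibration}, and applies Lemma~\ref{lemma:strict-calibration} (or Lemma~\ref{lemma:one-sided}). That route yields one-sided \emph{strict} minimality in the Hardt--Simon sense of Definition~\ref{def:strictly-minimizing}, with an explicit $\kappa\gtrsim\tan^2\theta$.

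This is not a cosmetic distinction. Your proof is shorter and relies on the softer foliation lemma, and it is entirely adequate for Proposition~\ref{prop:small-theta-one-sided} as stated. But the very next result, Corollary~\ref{cor:one-phase-strictly-minimizing}, leans on the sub-calibration structure and the explicit divergence estimate via Remark~\ref{remark-1-proof} to extract the constant $\kappa_\theta\ge C(n,k)\tan^2\theta$, which is then fed into Lemma~\ref{lemma:strictly-minimizing-one-phase} to pass the strict-minimality property to the one-phase limit. So although your argument proves the proposition, recording the potential-function/calibration formulation here — as the paper does — is what makes the subsequent corollary run. If you adopt your route, you would need to separately recover the quantitative strictness, for instance by verifying condition~\eqref{eqn:one-sided-calibration} anyway, at which point the two proofs converge.
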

\begin{proof}
    Combining Propositions~\ref{prop:subsolution},~\ref{prop:subsolution-construction}, and~\ref{prop:subsolution-construction-n=7} shows that for $\theta \in (0,\theta_{n,k})$ sufficiently small, we obtain a strict subsolution of the capillary problem~\eqref{eqn:capillary-fbp} by defining $V_{\theta}$ as in~\eqref{eqn:V-subsolution},
    \[
    V_{\Lambda, \theta}(z') := \rho f_{\theta}(t) - \Lambda \tan \theta \rho^{\alpha} g(t),
    \]
    for arbitrary $\Lambda > 0$.
    Indeed, given a subsolution $V_{\Lambda_0, \theta}$ corresponding to some $\Lambda_0>0$ as in Proposition~\ref{prop:subsolution}, its rescalings $\lambda^{-1} V_{\Lambda_0, \theta}(\lambda z')$ are also strict subsolutions, since the conditions~\eqref{eqn:capillary-fbp} are preserved under rescaling.
    We then observe that 
    \[
    \lambda^{-1} V_{\Lambda_0, \theta}(\lambda z') = \rho f_{\theta}(t) - \lambda^{\alpha-1} \Lambda_0 \tan \theta  \rho^{\alpha} g(t) = V_{\lambda^{\alpha-1} \Lambda_0 , \theta}(z')
    \]
    and varying $\lambda > 0$ recovers every $V_{\Lambda, \theta}$.
    The existence of a strict subsolution implies the strict stability of the cone $\mathbf{C}_{n,k,\theta}$, as discussed in Section~\ref{subsection:definitions}.
    Moreover, we observe that the graph of $V_{\Lambda, \theta}$ is the $\Lambda \tan \theta$-level set (for $\Lambda >0$) of the potential function
    \[
    \varphi_{\theta}(x,y,z) := \tfrac{1}{g(t)} \rho^{- \alpha} \bigl( \rho f_{\theta}(t) - z \bigr).
    \]
    Consequently, the result of this section can be restated as proving that for $\theta \in (0, \theta_{n,k})$ sufficiently small, $\varphi_{\theta}$ is the potential function for a sub-calibration $X_{\theta} := \frac{\nabla \varphi_\theta}{|\nabla \varphi_{\theta}|}$ of the side $E_-$ of the cone $\mathbf{C}_{n,k,\theta}$, satisfying the strict inequality~\eqref{eqn:one-sided-calibration}.
    Therefore, Lemma~\ref{lemma:strict-calibration} implies that the cone $\mathbf{C}_{n,k,\theta}$ is strictly minimizing for $\cA^{\theta}$ on the side $E_-$, as desired.
\end{proof}

\begin{corollary}\label{cor:one-phase-strictly-minimizing}
    For every $n \geq 7$ and $1 \leq k \leq n-2$, the one-homogeneous solution $U_{n,k} := c_{n,k} \rho f_{n,k}(t)$ of the one-phase problem is strictly minimizing from below for $\cJ$, and strictly stable.
\end{corollary}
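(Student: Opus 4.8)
The plan is to obtain the corollary as the $\theta \downarrow 0$ limit of the capillary statement Proposition~\ref{prop:small-theta-one-sided}, through the transfer principle of Lemma~\ref{lemma:strictly-minimizing-one-phase}. Fix $n \geq 7$ and $1 \leq k \leq n-2$ and let $\theta_i \downarrow 0$ with $\theta_i < \theta_{n,k}$. The capillary cones $\mathbf{C}_{n,k,\theta_i}$ are the graphs of the $1$-homogeneous functions $u_i := U_{n,k,\theta_i}(x,y) = \rho f_{n,k,\theta_i}(t)$, which solve the capillary free boundary problem~\eqref{eqn:capillary-fbp} with contact angle $\theta_i$ by Proposition~\ref{prop:FreeBdyODE=Cone}, while $u_0 := U_{n,k} = c_{n,k}\rho f_{n,k}(t)$ solves~\eqref{eqn:one-phase-problem}. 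The first input needed for Lemma~\ref{lemma:strictly-minimizing-one-phase} is the convergence $\tfrac{1}{\tan\theta_i}u_i \to u_0$ in $C^{\infty}_{\on{loc}}(\Pi\setminus\{0\})$ together with the quantitative rate~\eqref{eqn:ui-to-u0-converge}. This follows from Corollary~\ref{corollary:linearized-deviation}, which gives $|D^{\ell}(f_{n,k,\theta_i} - c_{n,k}\theta_i f_{n,k})| < C(n,k,\ell)\theta_i^3$ and $|t_{n,k,\theta_i} - t_{n,k}| < C\theta_i^2$, combined with $\tan\theta_i - \theta_i = O(\theta_i^3)$, the inclusion $t_{n,k,\theta_i} < t_{n,k}$ from the proof of Theorem~\ref{thm:capillary-cones}, and the $0$-homogeneity of $\nabla U_{n,k,\theta_i}$, so that the $t$-uniform profile bounds translate into bounds on $\tfrac{1}{\tan\theta_i}u_i - u_0$ and its gradient on every half-ball $B_R^n$ of order $O(\theta_i^3) = o(\theta_i)$. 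The second input is that each $u_i$ is one-sided strictly minimizing for $\cA^{\theta_i}$ in $E_-(u_i)$ with constant of the form $\kappa_i\tan^2\theta_i$, where $\liminf_i\kappa_i > 0$; granting this, Lemma~\ref{lemma:strictly-minimizing-one-phase} yields immediately that $u_0 = U_{n,k}$ is strictly minimizing from below for $\cJ$ with constant $\kappa \geq \liminf_i\kappa_i > 0$.

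Supplying this second input is the place where a non-routine estimate is needed, because Proposition~\ref{prop:small-theta-one-sided} is stated only qualitatively: I would re-examine the sub-calibration built in its proof, namely $X_{\theta_i} = \nabla\varphi_{\theta_i}/|\nabla\varphi_{\theta_i}|$ on $E_-$ with $\varphi_{\theta_i} = \tfrac{1}{g(t)}\rho^{-\alpha}(\rho f_{\theta_i}(t) - z)$ and $\Lambda$ of unit size, and track the $\theta$-dependence of its strictness. Since the level sets of $\varphi_{\theta_i}$ are the graphs of $V_{\Lambda,\theta_i}$, one has $\on{div} X_{\theta_i} = \cM(V_{\Lambda,\theta_i})$, and the mean curvature estimate~\eqref{eqn:MC-computation+error} together with $|\nabla V_{\Lambda,\theta_i}|^2 = O(\theta_i^2)$ gives $\on{div} X_{\theta_i} \geq c(n,k)\,\theta_i^2\,\rho^{\alpha-2}$ on $E_-$ for $\theta_i$ small. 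Over any fixed homothety-invariant open set $W \Subset E_-\setminus(\{0\}\cup\mathbf{C})$ bounded away from $\{\rho = 0\}$, this reads $\on{div} X_{\theta_i}(p) \geq c(n,k,W)\,\theta_i^2\,|p|^{-1}$; carrying this strictness through the divergence-theorem argument of Lemma~\ref{lemma:strict-calibration} and Remark~\ref{remark-1-proof} — exactly as in the proof of Lemma~\ref{lemma:one-sided} — shows that $\mathbf{C}_{n,k,\theta_i}$ is one-sided strictly minimizing on $E_-$ with constant $\gtrsim c(n,k)\theta_i^2$, that is, with constant $\kappa_i\tan^2\theta_i$ where $\liminf_i\kappa_i \geq c(n,k) > 0$. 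The main obstacle is precisely this bookkeeping: verifying that the strict-minimality constant of the capillary cone degenerates at the rate $\tan^2\theta_i$ and no faster. Everything else in this step is routine once the $\theta$-uniform lower bound on $\on{div} X_{\theta_i}$ is in hand.

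For strict stability of $U_{n,k}$, the cleanest route is to invoke~\cite{FTW-stability-one-phase}*{Theorem~1.1}, which establishes for $n \geq 7$ the stability condition~\eqref{eqn:strict-stability-condition} used throughout this section; alternatively, to make the corollary self-contained, one passes to the limit $\theta_i \downarrow 0$ in $\tfrac{1}{\tan\theta_i}V_{\Lambda,\theta_i} = \rho\,\tfrac{f_{\theta_i}(t)}{\tan\theta_i} - \Lambda\rho^{\alpha}g(t)$ to obtain $V_0 = U_{n,k} - \Lambda\rho^{\alpha}g$, which is harmonic in its positive phase with $|\nabla V_0| \geq 1$ along $\partial\{V_0 > 0\}$ by Proposition~\ref{prop:subsolution-construction} (resp.~\ref{prop:subsolution-construction-n=7}); perturbing the auxiliary exponent $\alpha'$ slightly so that $\Delta(\rho^{\alpha}g) < 0$ turns $V_0$ into a strict subsolution of~\eqref{eqn:one-phase-problem} lying below $U_{n,k}$, and strict stability then follows from the one-phase criterion recalled after Lemma~\ref{lemma:one-sided} (see~\cite{jerison-savin}*{Proposition~2.1}). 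Combining the two pieces gives that $U_{n,k}$ is strictly stable and strictly minimizing from below for $\cJ$, as claimed.
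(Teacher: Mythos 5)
Your overall strategy is the same as the paper's: apply Lemma~\ref{lemma:strictly-minimizing-one-phase} with input data supplied by Corollary~\ref{corollary:linearized-deviation} and the sub-calibration from Proposition~\ref{prop:small-theta-one-sided}, then invoke~\cite{FTW-stability-one-phase} for strict stability. You also correctly locate the crux: pinning down that the one-sided strict-minimality constant of $\mathbf{C}_{n,k,\theta}$ is $\gtrsim \tan^2\theta$ uniformly. The gap is in how you close this step.

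First, the asserted bound $\on{div}X_{\theta_i} \geq c(n,k)\,\theta_i^2\,\rho^{\alpha-2}$ ``on $E_-$'' is not correct as stated. The estimate~\eqref{eqn:MC-computation+error} gives $\cM(V_{\Lambda,\theta}) \gtrsim \Lambda\,\theta^2\,\rho^{\alpha-2}$, and $\on{div}X_\theta$ at a point $q\in E_-$ equals $\cM(V_{\Lambda,\theta})$ for the level $\Lambda = \varphi_\theta(q)/\tan\theta$ through $q$. As $q$ approaches the cone $\mathbf C$, $\Lambda \to 0$ and your lower bound degenerates. The uniform statement has to carry a factor recording the distance to the cone (which is what the paper's $\on{div} X_\theta \gtrsim \tan^2\theta\,\on{dist}(p,\mathbf C)/R^2$ does); your version only holds on a set bounded away from $\mathbf C$ by a $\theta$-dependent margin, and the constant silently depends on that margin.

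Second, and more importantly, the phrase ``any fixed homothety-invariant open set $W \Subset E_-\setminus(\{0\}\cup\mathbf C)$'' cannot be taken at face value: $E_-(\mathbf C_{n,k,\theta})$ is the slab $\{0 < z < \rho f_\theta(t)\}$, which collapses onto $\Pi$ as $\theta\downarrow 0$, so no fixed $W$ is compactly contained in $E_-$ for all small $\theta$. You must let $W = W_\theta$ depend on $\theta$, and then the resulting strict-minimality constant from Lemma~\ref{lemma:strict-calibration} and Remark~\ref{remark-1-proof} is controlled by $\alpha_\theta\cdot\cH^n(W_\theta\cap\bS^n_+)$, where $\cH^n(W_\theta\cap\bS^n_+)$ itself scales with $\theta$ as the slab thins. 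This is precisely the bookkeeping you flag as the ``main obstacle,'' but you do not carry it out: your argument, as written, would produce a $\kappa_i$ with an uncontrolled $\theta_i$-dependence rather than $\liminf_i \kappa_i > 0$. The paper's proof makes an explicit choice $W_\theta := \{(\rho,t,z)\in E_- : |t|\leq\delta_{n,k},\ z < \tfrac12\rho f_\theta(t)\}$, on which the two scalings (the strictness of $\on{div}X_\theta$ and the spherical measure of $W_\theta$) are tracked jointly so that the product gives $\kappa_\theta \gtrsim \tan^2\theta$. Your alternative stability argument via the limit subsolution $V_0 = U_{n,k} - \Lambda\rho^{\alpha}g$ is fine and is a legitimate (if unnecessary) byproduct; the issue is solely the quantitative step in the minimality transfer.
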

\begin{proof}
    The strict stability of the solutions $U_{n,k}$ for all $n \geq 7$ and $1 \leq k \leq n-2$ is obtained as~\cite{FTW-stability-one-phase}*{Theorem 1.1} in our companion paper.
    In the current setting, the result is equivalent to the existence of a strict subsolution of the one-phase problem~\ref{eqn:one-phase-problem}, as discussed above.
    To prove the strict minimality of the solution $U_{n,k}$, we will apply Lemma~\ref{lemma:strictly-minimizing-one-phase} for $u_{\theta} := c_{n,k} \rho f_{\theta}$ and $u_0 := U_{n,k} = c_{n,k} \rho f_{n,k}$.
    The results of Corollary~\ref{corollary:linearized-deviation} imply that
    \[
    |f_{\theta} - \tan \theta \, f_{n,k}| + |f'_{\theta} - \tan \theta \, f'_{n,k}| = C(n) O(\theta^2)
    \]
    for $\theta \in (0,\theta_{n,k})$, meaning that the $u_i$ and $u_0$ satisfy the condition~\eqref{eqn:ui-to-u0-converge}.
    Moreover, Lemma~\ref{lemma:strict-calibration} and Remark~\ref{remark-1-proof} imply that if a capillary cone $\mathbf{C}$ admits a sub-calibration $X_-$ on the side $E_-$ satisfying $\on{div} X_- \geq \alpha R^{-1}$ on some homothetically invariant set $W \subset E_-$, then it is strictly minimizing for $\cA^{\theta}$ with constant $\kappa \geq \tfrac{1}{2} \alpha \cH^n(W \cap \bS^n_+)$.
    Let $\delta_{n,k}>0$ be sufficiently small so that $f_{\theta} > c_1(n,k) \tan \theta > 0$ for $t \in [0, \delta_{n,k}]$ and all $\theta \in ( 0 , \theta_{n,k})$, and take
    \[
    W_{\theta} := \{ (\rho, t, z) \in E_- : |t| \leq \delta_{n,k}, \; z < \tfrac{1}{2} \rho f_{\theta}(t) \}.
    \]
    It follows from Corollary~\ref{corollary:linearized-deviation} that $\cH^n(W_{\theta} \cap \bS^n_+) \geq \tfrac{1}{2} \cH^n(W_0 \cap \bS^n_+)$ holds for sufficiently small $\theta$.
    The computation of Proposition~\ref{prop:subsolution} (see~\ref{eqn:MC-computation+error}) implies that $\varphi_{\theta}$ satisfies
    \[
    \on{div} X_{\theta} = \on{div} \frac{\nabla \varphi_{\theta}}{|\nabla \varphi_{\theta}|} \geq c(n,k) \tan^2 \theta \on{dist}(p, \mathbf{C}_{n,k,\theta}) \qquad \text{on } \; \mathbf{C}_{n,k,\theta} \cap \bS^n_+
    \]
    on the link of the cone, therefore the homogeneity of $X_{\theta}$ implies that $\on{div} X_{\theta} \geq c \tan^2 \theta \frac{\on{dist}(p, \mathbf{C}_{n,k,\theta})}{R^2}$ on $E_-(\mathbf{C}_{n,k,\theta})$.
    The set $W_{\theta}$ is homothetically invariant and $\text{dist}(p, \mathbf{C}_{n,k,\theta}) > \tfrac{1}{2} \rho f_{\theta}(t) \geq c_1(n,k) \rho$ holds on $W_{\theta}$ due to $z< \tfrac{1}{2} \rho f_{\theta}(t)$ and $f_{\theta} > c_1(n,k) \tan \theta > 0$.
    Finally, $R^2 = \rho^2 (1 + f_{\theta}(t)^2) \leq 2 \rho^2$ allows us to bound
    \[
    \text{div} X_{\theta} \geq c \tan^2 \theta \frac{\on{dist}(p, \mathbf{C}_{n,k,\theta})}{R^2} \geq c \tan^2 \theta \frac{c_1(n,k) \rho}{2 \rho^2} \geq C(n,k) \tan^2 \theta R^{-1} .
    \]
    The above discussion now implies that $\mathbf{C}_{n,k,\theta}$ is strictly minimizing for $\cA^{\theta}$ on $E_-$, with constant
    \[
    \kappa_{\theta} \geq \tfrac{1}{2} C(n,k) \tan^2 \theta \cH^n(W_{\theta} \cap \bS^n_+) \geq \tfrac{1}{4} C(n,k) \cH^n(W_0 \cap \bS^n_+) \tan^2 \theta,
    \]
    whereby $\kappa_\theta \geq C(n,k) \tan^2 \theta$.
    We may therefore apply Lemma~\ref{lemma:strictly-minimizing-one-phase} to conclude that the one-phase solution $U_{n,k}$ is strictly minimizing from below for $\cJ$ with constant $\kappa \geq C(n,k) >0$, as desired.
\end{proof}

\subsection{Construction of a supersolution}\label{subsection:supersolution}

We will construct a weak, strict supersolution $W$ to the capillary problem~\eqref{eqn:capillary-fbp} over $\bR^n$, defined over a domain containing 
\[
\Gamma_{n,k,\theta} = \{ (x,y) : |y| \leq t_{n,k,\theta} \rho \}  = \Bigl\{ (x,y) : |y| < \tfrac{t_{n,k,\theta}}{\sqrt{1- t_{n,k,\theta}^2}} |x| \Bigr\}
\]
together with a neighborhood of the origin.
We construct $W_{\theta}$ in two pieces $W_{1,\theta}$ and $W_{2,\theta}$, where $W_{1,\theta}$ is defined in a region near infinity and $W_{2,\theta}$ is defined in a region near the origin.
The two pieces interface along the hypersurface $\{ |x| = \bar{r} \}$, with $\bar{r} < \frac{\sqrt{1-t^2_{n,k,\theta}}}{t_{n,k,\theta}}$ determined by the construction.
As in the case of the subsolution, the construction of the two pieces, together with their matching along the interface, depends on a single auxiliary constant: an exponent $\beta \in (2-n,-1)$.
For the convenience of the reader, we first introduce all the ingredients of the construction, and then verify each part of the supersolution property.
Throughout, we will choose $\theta \in ( 0, \theta_{n,k})$ appropriately small to absorb terms that are uniformly higher-order in $\theta$.
In what follows, we fix a pair $(n,k)$ as in Theorem~\ref{thm:cones-are-minimizing-smalltheta} and suppress the dependence of the various quantities, domains, and functions on $(n,k)$.
We write $\beta := \beta_{n,k,\theta}, \beta_0 := \beta_{n,k,0}$, and use similar notation for all the auxiliary parameters that we will introduce, with the understanding that $|\beta - \beta_0| = O(\theta)$ for $\theta$ sufficiently small.

Let $\hat{g}$ be the even solution of equation $\Delta( \rho^{\beta} \hat{g} ) = 0$, normalized by $\hat{g}(0) = 1$, hence given by the hypergeometric expression~\eqref{eqn:general-hypergeometric-expression}.
We define the function
\[
\hat{G}_{\theta}(\beta, t) := (1-\beta)^2 f_{\theta}^2 + (1-t^2) (f'_{\theta} - f_{\theta} \hat{g}'/\hat{g})^2.
\]
Suppose that $\hat{G}_{\theta}$ is strictly decreasing in a right neighborhood of the root $t_{\theta}$ of $f_{\theta}$; as computed in~\eqref{eqn:strict-stability-condition}, this property is equivalent to $\hat{G}'_{\theta}(\beta, t_{\theta}) < 0$, meaning
\[
    (n-2) t_{\theta} - (k-1) t_{\theta}^{-1} - (1 - t_{\theta}^2) \frac{\hat{g}'(t_{\theta})}{\hat{g}(t_{\theta})} < 0.
\]
Using $|f_{\theta}| \to \infty$ (from Lemma~\ref{lemma:general-behavior-prep}) and $|\hat{g}| \to \infty$ as $|t| \uparrow b_{\theta} \leq 1$ we obtain the existence of a unique $\tau = \tau(n,k,\beta) > t_{\theta}$ such that $\hat{G}_{\theta}(\beta, \tau) = \hat{G}_{\theta}(\beta, t_{\theta})$.
We choose some exponent $\beta'$ such that $|\beta' - \beta| = O(\theta)$, for which 
\[
c(n,\beta,\beta') = \beta(\beta+n-2) - \beta'(\beta'+n-2) > c(n,\beta) \theta.
\]
For a point $(x,y) \in \bR^{n-k} \times \bR^k$ we denote $r := |x|, s := |y|$.
These variables are related to $\rho, t$ by
\[
\rho = \sqrt{|x|^2 + |y|^2}, \qquad t = \tfrac{|y|}{\sqrt{|x|^2 + |y|^2}}, \qquad |x| = \rho \sqrt{1-t^2}, \qquad |y| = \rho t.
\]
We then define $W_{1,\theta}$ by 
\begin{equation}\label{eqn:W1-supersolution}
    W_{1,\theta}(r,s): = \rho f_{\theta}(t) + \frac{- f_{\theta}(\tau'_{\theta})}{(\tau'_{\theta})^{1-\beta'} \hat{g}(\tau'_{\theta})} \rho^{\beta'} \hat{g}(t) \qquad \text{over} \quad \Gamma'_{\theta} := \Bigl\{ (x,y) : |y| < \tfrac{\tau'_{\theta}}{\sqrt{1-\tau'_{\theta}{}^2}} |x| \Bigr\}
\end{equation}
where $\tau'_{\theta} = \tau_{\theta} - \ve$, for $0 < \ve < \ve(n,k,\theta,\beta)$ sufficiently small.
For brevity, we denote
\[
\Lambda = \frac{- f_{\theta}(\tau'_{\theta})}{\tan \theta (\tau'_{\theta})^{1-\beta'} \hat{g}(\tau'_{\theta})}, \qquad W_{1,\theta} = \rho f_{\theta}(t) + \Lambda \tan \theta \rho^{\beta'} \hat{g}(t).
\]
The level set $\{ W_{1,\theta} = 0 \}$ intersects $\partial \Gamma' = \{ t = \tau'_{\theta} \}$ at the points $( |x|, |y|) = ( \bar{r}, 1)$, where $\bar{r} = \frac{\sqrt{1 - (\tau'_{\theta})^2}}{\tau'_{\theta}}$ and $\rho = (\tau'_{\theta})^{-1}$.
Let us also define $H_{\theta}(\xi) := W_{1,\theta}(\bar{r}, \xi)$ for $\xi \in [0,1]$.
To construct the function $W_{2,\theta}$, we denote by $A := \left| \frac{dr}{ds} \right|_{(\bar{r}, 1)}$ the slope of the level set of $W_1(\rho, t)$ (viewed as a function of $(r,s)$) at $(\bar{r}, 1)$, computed in the $(s,r)$-plane for a graph $r = r(s)$.
We then introduce the functions
\[
v(r) := 1 - \frac{\bar{r}^2 - r^2}{2 \bar{r} A}, \qquad \varphi(r) := \sqrt{1 + A^{-2}} \frac{v(r)}{\sqrt{1 + v'(r)^2}} = \sqrt{1 + A^{-2}} \frac{v(r)}{\sqrt{1 + (\bar{r} A)^{-2}  r^2}}.
\]
With this notation, we denote $(r,s) := (|x|, |y|)$ and define the supersolution $W$ by
\begin{equation}\label{eqn:W-defined-supersolution}
W_{\theta}( x, y) := \begin{cases}
    W_{1,\theta} (r,s) & \text{in } \; \overline{\Gamma'_{\theta} \cap \{r > \bar{r} \} }, \\
    \varphi ( r ) \, H_{\theta} \left( \frac{s}{v ( r )} \right) & \text{in } \; \{ r \leq \bar{r}, \, s \leq v(r) \}.
\end{cases}
\end{equation}
The function $W_{\theta}$ has free boundary consisting of
\[
\partial \{ W_{\theta} > 0 \} = \bigl( \, \partial \{ W_1 > 0 \} \cap \overline{\Gamma'_{\theta} \cap \{ r > \bar{r} \} } \, \bigr) \cup \{ r \leq \bar{r}, \; s = v(r) \},
\]
which ensures that $\Gamma_{\theta} \subset \{ W_{\theta} > 0\} $.
We denote by $W_{2,\theta}$ the restriction $W_{\theta}|_{\{ r \leq \bar{r}, s \leq v(r) \}}$.

We will produce an exponent $\beta$, with associated constants $\tau, \tau',\beta,\Lambda, \bar{r}$ (depending on $n,k,\beta$ and within $O(\theta)$ at $\theta = 0$, for $\theta \in (0, \theta_{n,k})$ sufficiently small) so that the function $W_{\theta}$ is a weak strict supersolution for~\eqref{eqn:capillary-fbp} in its positive phase.
We will show that $W_{\theta}$ is a strict supersolution by establishing the following properties:
\begin{align}
    A &> \bar{r} =  \frac{\sqrt{1-(\tau'_{\theta})^2}}{\tau'_{\theta}} = \text{slope}(\partial \Gamma'_{\theta}), \tag{S1} \label{eqn:slope-at-match} \\
    \cM(W_{1,\theta}) &<0 & & \text{in } \; \{ W_1 > 0\} \cap \overline{\Gamma'_{\theta} \cap \{ r > \bar{r} \} }, & \tag{S2} \label{eqn:MW1<0} \\
    |\nabla W_{1,\theta}| &< \tan \theta & & \text{on } \; \partial \{W_1 > 0 \} \cap \overline{\Gamma'_{\theta} \cap \{ r > \bar{r} \} }, \label{eqn:nablaW1<1} \tag{S3} \\
    |\nabla W_{2,\theta}|^2 \big\rvert_{s = v(r)} &\leq |\nabla W_{1,\theta}|^2 \big\rvert_{(\bar{r}, 1)} & & \text{for all } \; r \leq \bar{r}, & \tag{S4} \label{eqn:match-W1-W2} \\
    \frac{\partial}{\partial r} W_{1,\theta}(r,s) \big\rvert_{\bar{r}} &< \frac{\partial}{\partial r} W_{2,\theta}(r,s) \big\rvert_{\bar{r}} & & \text{for } \; s \in (- 1, 1), \tag{S5} \label{eqn:ddrW-1ddrW2} \\
    \cM(W_{2,\theta}) &<0, & & \text{in } \; \{ r \leq \bar{r}, \; s \leq v(r) \}, \tag{S6} \label{eqn:MW2<0} \\
    |\nabla W_{2,\theta}| &< \tan \theta & & \text{on } \; \{ r \leq \bar{r}, \; s = v(r) \}.\tag{S7} \label{eqn:nablaW2<1}
\end{align}
The requirements~\eqref{eqn:MW1<0}~--~\eqref{eqn:nablaW1<1} and~\eqref{eqn:MW2<0}~--~\eqref{eqn:nablaW2<1} are the standard supersolution properties for each component.
The condition~\eqref{eqn:slope-at-match} will ensure that $\partial \{ W_{1,\theta} > 0 \} \subset \Gamma'_{\theta}$ for $|x| = r > \bar{r}$.
The property $v'(\bar{r}) = \frac{1}{A}$ guarantees that the level curves $W_{1,\theta}(\rho,t) = 0$ and $W_{2,\theta}(r,s) = 0$ have equal slopes at the points $(\bar{r}, \pm 1)$, and~\eqref{eqn:match-W1-W2} ensures that $|\nabla W_{2,\theta}|$ is bounded along the free boundary $\{ s = v(r) \}$ of $W_{2,\theta}$ by the value $|\nabla W_{1,\theta}|_{(\bar{r}, 1)}$.
Combining this property with~\eqref{eqn:nablaW1<1} will yield $|\nabla W_{2,\theta}| < \tan \theta$ along $\partial \{ W_{2,\theta} > 0 \}$, which is~\eqref{eqn:nablaW2<1}.
Finally, the property~\eqref{eqn:ddrW-1ddrW2} will imply that the piecewise function $W$ is the minimum of $W_{1,\theta}$ and $W_{2,\theta}$ in $\{ W_{\theta} > 0 \}$ across $\{ r = \bar{r} \}$, so it is a weak strict supersolution in its positive phase $\{ W_{\theta} > 0 \}$.
We show that for $\theta \in (0, \theta_{n,k})$ sufficiently small, the verification of these properties can be reduced to the case of the one-phase problem, for $\theta=0$.

The construction of the supersolution $W_{\theta}$ in~\eqref{eqn:W-defined-supersolution} depends entirely on the parameter $\beta \in (2-n , -1)$.
For small contact angle $\theta$, we may reduce the computation of the conditions~\eqref{eqn:slope-at-match}~--~\eqref{eqn:nablaW2<1} to the model case of the corresponding solution to the one-phase problem.
The auxiliary quantities needed for that construction are presented in Definition~\ref{def:supermodel-solution}.
The following Proposition~\ref{prop:supersolution} shows that given a supersolution parameter $\beta$ satisfying the properties of Lemmas~\ref{lemma:W-computation} and~\ref{lemma:K-computation}, we can construct a supersolution and deduce that $\mathbf{C}_{n,k,\theta}$ is $\cA^{\theta}$-minimizing for small angle $\theta \in (0, \theta_{n,k})$.
Following this, we exhibit explicit examples in Table~\ref{table:ExplicitSuperQuadratics} of parameters $\beta$ that produce supersolutions.
\begin{definition}\label{def:supermodel-solution}
     Let $f_0 := {}_2 F_1 \Bigl( \frac{n-1}{2} , - \frac{1}{2} ; \frac{k}{2} ; t^2 \Bigr)$ be the hypergeometric function solving the linear problem $\cL_{n,k} f_0 = 0$ with a zero at $t_0$.
     Consider a parameter $\beta \in (2 - n,-1)$ and define
     \[
     \hat{g}(t) := {}_2 F_1 \left( \frac{n+\beta-2}{2} , - \frac{\beta}{2} ; \frac{k}{2} ; t^2 \right).
     \]
     Suppose that $\beta$ is such that $\hat{g}$ satisfies 
     \begin{equation}\label{eqn:n-2-k-1-condition}
   \frac{\hat{g}'(t_0)}{\hat{g}(t_0)} > \frac{ (n-2) t_0 - (k-1) t_0^{-1}}{1 - t_0^2}
\end{equation}
    and define $\tau = \tau(n,k,\beta) > t_0$ to satisfy 
    \begin{equation}\label{eqn:tau-definition}
        \tau := \tau(n,k,\beta), \qquad (1-\beta)^2 f_0(\tau)^2 + (1-\tau^2) (f_0' - f_0 \, \hat{g}'/\hat{g})(\tau)^2 = (1 - t_0^2) f_0'(t_0)^2.
    \end{equation}
    We define the constant $\bar{r} := \frac{\sqrt{1 - \tau^2}}{\tau}$ and set  
    \begin{equation}\label{eqn:define-A}
    A := - \frac{1}{\sqrt{1-\tau^2}} \frac{(1-\beta) \tau f_0(\tau) + (1-\tau^2) (f'_0 - f_0 \hat{g}'/\hat{g})(\tau)}{(1-\beta) f_0(\tau) - \tau (f'_0 - f_0 \hat{g}'/ \hat{g}) (\tau)}.
    \end{equation}
    We denote $p(t) := (1-t^2)^{\frac{1-\beta}{2}}$ and define the function $\cW(t)$ by 
\begin{equation}\label{eqn:W-function-simplified}
\begin{split}
    \cW(t) &:= \left( \left(1 - \tfrac{\bar{r}}{A} \right) (1-t^2) + \tfrac{1}{A^2+1} \right)f_0(t) + t(1-t^2)\left(\tfrac{\bar{r}}{A} - 1\right) f'_0(t) \\
    &- \tfrac{f_0(\tau)}{\hat{g}(\tau)} \tfrac{p(t)}{p(\tau)} \left[ \left( \beta \left(1 - \tfrac{\bar{r}}{A} \right) (1-t^2) - (1-\beta) \tfrac{\bar{r}}{A} + \tfrac{1}{A^2+1} \right)\hat{g}(t) + t(1-t^2)\left(\tfrac{\bar{r}}{A} - 1\right) \hat{g}'(t) \right].
\end{split}
\end{equation}
We also introduce auxiliary constants $a_1, a_0$ and the function $u(t)$ by
\begin{align}
    a_1 &:= (n-k) \bigl( \tfrac{3}{2} A^2 \bar{r} - A + \tfrac{1}{2} \bar{r} \bigr) + \bigl( - 2 A^2 \bar{r} + 3 A - \tfrac{3}{2} \bar{r} \bigr), \label{eqn:auxiliary-a1} \\ 
    a_0 &:= (n-k) A^2 (A^2 \bar{r} - A + \tfrac{1}{2} \bar{r}), \label{eqn:auxiliary-a0} \\
    u(t) &:= f_0(t) - \frac{f_0(\tau)}{\hat{g}(\tau)} \Bigl( \frac{1-t^2}{1-\tau^2} \Bigr)^{\frac{1-\beta}{2}} \hat{g}(t) . \label{eqn:u(t)-function-auxiliar}
% \\    H(\xi) &:= \frac{\bar{r}}{\sqrt{1-t^2}} u(t), \qquad \text{where } \; t(\xi) := \frac{\tau \xi}{\sqrt{1 -\tau^2 + \tau^2 \xi^2}} . \label{eqn:h-function}
    %\frac{( 1-\tau^2 + \tau^2 \xi^2)^{\frac{1}{2}}}{\tau} \left( f_0(t) + \frac{-f_0(\tau)}{\hat{g}(\tau)} ( 1 - \tau^2 + \tau^2 \xi^2)^{\frac{\beta - 1}{2}} \hat{g}(t) \right)_{t = \frac{\tau \xi}{\sqrt{ 1 - \tau^2 + \tau^2 \xi^2 }}}.
\end{align}
Finally, let $\tilde{v}(x) := 1 - \frac{\bar{r}}{2A}(1-x)$ and define the function $K(x,\xi)$, for $(x,\xi) \in [0,1] \times [0,1]$, by
\begin{equation}\label{eqn:K-tilde-x,xi}
        \begin{split}
            K(x,\xi) &= (1 + A^{-2} x \xi^2) H''(\xi) - \left( \left( n-k - \frac{2x}{A^2 + x} \right) \frac{\tilde{v}(x)}{\bar{r} A} - \frac{k-1}{\xi^2} \right) \xi H'(\xi) \\
            & \quad+ A\frac{ \frac{\bar{r}}{2}(n-k-1) x^2 + a_1 x + a_0}{(\bar{r} A)^2 (A^2 + x)^2} \tilde{v}(x) H(\xi),
        \end{split}
\end{equation}
where $H(\xi) := \frac{\bar{r}}{\sqrt{1-t^2}} u(t)$ from~\eqref{eqn:u(t)-function-auxiliar}, for $t(\xi) = \frac{\tau \xi}{\sqrt{1 - \tau^2 + \tau^2 \xi^2}}$.
\end{definition}
In the following Proposition we reduce the properties~\eqref{eqn:slope-at-match}~--~\eqref{eqn:nablaW2<1}, for cones $\mathbf{C}_{n,k,\theta}$ with small contact angle $\theta$, to three computable conditions for the model one-phase solution.
\begin{proposition}\label{prop:supersolution}
Suppose that for the construction of Definition~\ref{def:supermodel-solution}, we have $\cW'(\tau) > 0$ and
\begin{align}
 \tau \left( f'_0(\tau)/ f_0(\tau) - \hat{g}'(\tau) / \hat{g}(\tau) \right)& > 1 - \beta, \label{eqn:S'1-new} \tag{S${}^\prime$1} \\
    \cW(t) &<0 \qquad & & \text{for }  0 \leq t < \tau, \label{eqn:S'2-new} \tag{S${}^\prime$2} \\
    K(x,\xi) &<-c(n,k) \qquad & & \text{for } 0 \leq x,\xi \leq 1.\label{eqn:S'3-new} \tag{S${}^\prime$3}
\end{align}
    Then, there is a $\theta_{n,k} > 0$ such that for all $\theta \in (0, \theta_{n,k})$, we can produce $|\beta_{\theta} - \beta_0| = O(\theta)$ for which the function $W_{\theta}$ defined in~\eqref{eqn:W-defined-supersolution} is a strict weak supersolution of the capillary problem~\eqref{eqn:capillary-fbp} with contact angle $\theta$, whose positive phase $\{ W_{\theta} > 0 \}$ encloses $\Gamma'_{n,k,\theta}$ and a neighborhood of the origin.
\end{proposition}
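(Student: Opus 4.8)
The plan is to verify the seven properties~\eqref{eqn:slope-at-match}--\eqref{eqn:nablaW2<1} that were identified above as sufficient for $W_\theta$ to be a strict weak supersolution, reducing each of them — for $\theta$ small — to a \emph{strict} inequality about the one-phase model of Definition~\ref{def:supermodel-solution}, and then gluing the two pieces into $W_\theta$. First I would fix the $\theta$-dependence of the parameters: given $\beta_0:=\beta\in(2-n,-1)$, choose $\beta_\theta$ with $|\beta_\theta-\beta_0|=O(\theta)$ and an auxiliary exponent $\beta'_\theta$ with $|\beta'_\theta-\beta_\theta|=O(\theta)$ and $c(n,\beta,\beta'):=\beta(\beta+n-2)-\beta'(\beta'+n-2)>c(n,\beta)\theta$, so that $\rho^{2-\beta'}\Delta(\rho^{\beta'}\hat g)=-c(n,\beta,\beta')\hat g<0$ on $[-\tau'_\theta,\tau'_\theta]$ (on which $\hat g$ is positive and bounded, since $\beta\in(2-n,-1)$ and $\tau'_\theta<\tau_\theta<1$). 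By Corollary~\ref{corollary:linearized-deviation}, $f_\theta=c_{n,k}\theta f_0+O(\theta^3)$ with the same bounds for derivatives, so $t_\theta\to t_0$ and all the derived quantities $\tau_\theta,\tau'_\theta,\bar r_\theta,A_\theta,\Lambda_\theta$ and the rescaled profiles $v,\varphi,H_\theta/\theta$ depend continuously on $\theta$ and converge as $\theta\downarrow 0$ to the model quantities $\tau,\bar r,A,\dots$ of Definition~\ref{def:supermodel-solution}. Since every model inequality below is strict, this reduces each of~\eqref{eqn:slope-at-match}--\eqref{eqn:nablaW2<1} to its $\theta=0$ counterpart.

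Next I would dispatch the three properties near infinity. For~\eqref{eqn:slope-at-match}, a short manipulation of~\eqref{eqn:define-A} together with $\bar r=\sqrt{1-\tau^2}/\tau$ shows $A_0>\bar r_0$ is equivalent to $1-\beta>0$ once one knows $\tau\big(f_0'/f_0-\hat g'/\hat g\big)(\tau)>1-\beta$, which is exactly~\eqref{eqn:S'1-new}. For~\eqref{eqn:MW1<0}, exactly as in Proposition~\ref{prop:subsolution}, the estimate~\eqref{eqn:MC-bound} with~\eqref{eqn:laplace-rho-a-g} gives $(1+|\nabla W_{1,\theta}|^2)^{3/2}\cM(W_{1,\theta})=\rho^{\beta'-2}\big(-\Lambda\tan\theta\,c(n,\beta,\beta')\hat g+O(\theta^3)\big)$, which is negative for small $\theta$ precisely because on $\{W_1>0\}\cap\overline{\Gamma'_\theta\cap\{r>\bar r\}}$ one has $r>\bar r>0$, so $\rho$ is bounded below and the error is genuinely lower order. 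For~\eqref{eqn:nablaW1<1}, along the relevant part of $\{W_1=0\}$ one has $|\nabla W_{1,\theta}|^2=\hat G_\theta(\beta',t):=(1-\beta')^2 f_\theta^2+(1-t^2)(f_\theta'-f_\theta\hat g'/\hat g)^2$, with $\hat G_\theta(\beta',t_\theta)=(1-t_\theta^2)f_\theta'(t_\theta)^2=\tan^2\theta$; since, by the definition~\eqref{eqn:tau-definition} of $\tau$, the model function $\hat G_0(\beta,\cdot)$ returns to its value at $t_0$ exactly at $\tau$, this reduces to $\hat G_0(\beta,t)<\hat G_0(\beta,t_0)$ on $(t_0,\tau)$, which holds because $\hat G_0'(\beta,t_0)<0$ is~\eqref{eqn:n-2-k-1-condition} and the remaining model hypotheses together with the propagation-of-positivity mechanism of Lemma~\ref{lemma:one-shot-reference-FTW} forbid $\hat G_0$ from rising back above $\hat G_0(\beta,t_0)$ in between. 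Taking $\tau'_\theta=\tau_\theta-\varepsilon$ then gives the strict inequality at the interface point $(\bar r,1)$.

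For the cap and the gluing, the function $W_{2,\theta}=\varphi(r)H_\theta(s/v(r))$ is built so that $v(\bar r)=1$ and $\varphi(\bar r)=1$ (hence $W_{2,\theta}$ and $W_{1,\theta}$ agree on $\{r=\bar r\}$, and $W_{2,\theta}$ vanishes on $\{s=v(r)\}$), $v'(\bar r)=1/A$ (so the two free boundaries have equal slope at $(\bar r,\pm1)$), and the factor $\varphi$ is the unique normalization making $|\nabla W_{2,\theta}|$ along $\{s=v(r)\}$ no larger than $|\nabla W_{1,\theta}|$ at $(\bar r,1)$, i.e.~\eqref{eqn:match-W1-W2}; combined with~\eqref{eqn:nablaW1<1} this yields~\eqref{eqn:nablaW2<1}. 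The derivative-matching condition~\eqref{eqn:ddrW-1ddrW2} across $\{r=\bar r\}$ — which makes $W_\theta=\min(W_{1,\theta},W_{2,\theta})$ near the interface, so the corner points the supersolution way — reduces at $\theta=0$, under the substitution $t=\tau' s/\sqrt{1-\tau'^2+\tau'^2 s^2}$, to $\cW(t)<0$ on $[0,\tau)$ together with $\cW'(\tau)>0$ at the endpoint, which is~\eqref{eqn:S'2-new}. Finally, expanding the minimal surface operator on $W_{2,\theta}$ and using $H_\theta=O(\theta)$, the leading term of $\Delta W_{2,\theta}$, after dividing by the common $O(\theta)$ scale and passing to the rescaled variables $(x,\xi)$, is proportional to the function $K(x,\xi)$ of~\eqref{eqn:K-tilde-x,xi}, while the errors coming from~\eqref{eqn:MC-bound} and from $Q(W_{2,\theta})$ are $O(\theta^3)$ on the compact cap region (the coordinate singularities at $r=0$ and $s=0$ being removable for the even profile); hence~\eqref{eqn:S'3-new} forces $\cM(W_{2,\theta})<0$ for small $\theta$, which is~\eqref{eqn:MW2<0}. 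Assembling: $W_\theta$ is continuous, equals $\min(W_{1,\theta},W_{2,\theta})$ near $\{r=\bar r\}$ by~\eqref{eqn:ddrW-1ddrW2}, is mean-convex on each piece by~\eqref{eqn:MW1<0} and~\eqref{eqn:MW2<0}, has gradient $<\tan\theta$ along its free boundary by~\eqref{eqn:nablaW1<1} and~\eqref{eqn:nablaW2<1}, and has positive phase enclosing $\Gamma'_{n,k,\theta}$ (by~\eqref{eqn:slope-at-match}, which keeps $\partial\{W_1>0\}\subset\Gamma'_\theta$ for $r>\bar r$) and a neighborhood of the origin (where $W_{2,\theta}>0$); so it is a strict weak supersolution of~\eqref{eqn:capillary-fbp} with contact angle $\theta$.

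I expect the main obstacle to be property~\eqref{eqn:MW2<0}: carrying out the explicit computation of the minimal surface operator on the glued cap $\varphi(r)H_\theta(s/v(r))$, isolating the $O(\theta)$-leading term as the function $K(x,\xi)$ and controlling the remaining terms uniformly up to the axis $\{r=0\}$ and the slice $\{s=0\}$ — together with the bookkeeping, throughout the argument, that the $\theta$-families of auxiliary constants and profiles converge to the model ones and that each strict model inequality survives the perturbation.
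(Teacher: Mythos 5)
Your proposal follows essentially the same three-step structure as the paper's proof: verify (S1)--(S7) by (i) reducing each to the $\theta=0$ model via the $O(\theta)$-convergence from Corollary~\ref{corollary:linearized-deviation} and the strictness of the model inequalities, (ii) estimating $\cM(W_{1,\theta})$ and $\cM(W_{2,\theta})$ via~\eqref{eqn:MC-bound}, and (iii) reducing the interface matching to $\cW$ and the cap mean-curvature to $K$. You skip the explicit computational reduction of $\tilde K(r,\xi)$ to $K(x,\xi)$ and of $\tilde\cW(\xi)$ to $\cW(t)$ (the paper's Steps 3a--3b), but those are bookkeeping.

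One misattribution worth flagging: for~\eqref{eqn:nablaW1<1} you claim that $\hat G_0(\beta,\cdot)<\hat G_0(\beta,t_0)$ on $(t_0,\tau)$ follows from $\hat G_0'(\beta,t_0)<0$ together with ``the propagation-of-positivity mechanism of Lemma~\ref{lemma:one-shot-reference-FTW}.'' That lemma is about the \emph{subsolution} quantity $\frac{g'(t)}{g(t)} - \frac{(n-2)t-(k-1)t^{-1}}{1-t^2}$ on the interval $[0,t_k]$ \emph{before} the free boundary; it says nothing about the behavior of the supersolution profile $\hat G_0(\beta,\cdot)$ past $t_0$. The paper does not invoke it here; it relies instead on the fact that $\tau$ is defined in~\eqref{eqn:tau-definition} as the (first/unique) return point of $\hat G_0(\beta,\cdot)$ to the level $\hat G_0(\beta,t_0)$, so that $\hat G_0<\hat G_0(t_0)$ on $(t_0,\tau)$ by construction, and then $\tau'_\theta=\tau_\theta-\varepsilon$ gives strictness. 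Your conclusion is correct but your cited mechanism is not the right one. A second, smaller point: in your treatment of~\eqref{eqn:slope-at-match}, the formula~\eqref{eqn:A-rbar} shows $A-\bar r>0$ is equivalent, given $1-\beta>0$ (automatic for $\beta<-1$), to positivity of the \emph{denominator}, i.e. to~\eqref{eqn:S'1-new} itself; your phrasing (``$A_0>\bar r_0$ is equivalent to $1-\beta>0$ once one knows [S$'$1]'') inverts which condition is automatic and which is the hypothesis, though the intent is clear.
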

\begin{proof}
    Our argument proceeds in three steps: first, we compute the mean curvature conditions~\eqref{eqn:MW1<0} and \eqref{eqn:MW2<0} for the function $W_{\theta}$, and relate them to $K(x,\xi)$.
    Second, we examine the slope matching conditions~\eqref{eqn:match-W1-W2} and~\eqref{eqn:ddrW-1ddrW2}, and relate them to the function $\cW(t)$.
    Finally, we compute these quantities in the model case of the one-phase reduction and obtain~\eqref{eqn:W-function-simplified} and~\eqref{eqn:K-tilde-x,xi}.
    In what follows, we fix the pair $(n,k)$ in the range of Theorem~\ref{thm:cones-are-minimizing-smalltheta} and suppress the dependence of the domains $\Gamma_{n,k,\theta}$, the function $f_{n,k,\theta}$, and the points $t_{n,k,\theta}, \tau_{n,k,\theta}$ on $\theta$; instead, we write $\Gamma_{\theta}, \Gamma'_{\theta}, f_{\theta}, t_{\theta}, \tau_{\theta}$.

    Let us recall the ingredients of the construction of the supersolution $W_{\theta}$ from~\eqref{eqn:W-defined-supersolution} in the model case of the one-phase problem: we first introduce
    \begin{align*}
     W_1(r,s) &:= (r^2 + s^2)^{\frac{1}{2}} f_0 \left( \frac{s}{\sqrt{r^2 + s^2}} \right) + \Lambda (r^2 + s^2)^{\frac{\beta}{2}} \hat{g} \left( \frac{s}{\sqrt{r^2 + s^2}} \right), \\ 
     \Lambda &:= \frac{- f_0(\tau)}{\tau^{1-\beta} \hat{g}(\tau)}, \qquad 
     A := \left| \frac{\partial_s W_1}{\partial_r W_1} \right|_{(\bar{r}, 1)} = - \frac{\partial_s W_1}{\partial_r W_1} \bigg\rvert_{(\bar{r}, 1)}.
     \end{align*}
     The definition of $H(\xi)$ via~\eqref{eqn:u(t)-function-auxiliar} corresponds to $H(\xi) := W_1(\bar{r}, \xi) = \frac{\bar{r}}{\sqrt{1-t^2}} u(t)$, for $|\xi| \leq 1$.
     Indeed,
     \[
     \rho (\bar{r},\xi) = \sqrt{ \tfrac{1-\tau^2}{\tau^2} + \xi^2} = \tau^{-1} \sqrt{1-\tau^2 + \tau^2 \xi^2}, \qquad t(\bar{r}, \xi) = \frac{\xi}{\tau^{-1} \sqrt{1-\tau^2 + \tau^2 \xi^2}} = \frac{\tau \xi}{\sqrt{1-\tau^2 + \tau^2 \xi^2}},
     \]
     which sends $[0,1] \ni \xi \mapsto [0,\tau] \in t$.
     The inverse map of $\xi \mapsto t(\xi)$ along $\{ r = \bar{r} \}$ is given by
     \begin{equation}\label{eqn:inverse-map-xi-t}
     \frac{\xi^2}{\bar{r}^2 + \xi^2 } = t^2 \iff \xi = \frac{\sqrt{1-\tau^2}}{\tau} \cdot \frac{t}{\sqrt
     {1-t^2}} = \frac{\bar{r} t}{\sqrt{1-t^2}}.
     \end{equation}
     We also define the functions $v(r), \varphi(r)$, for $|r| \leq \bar{r}$, by
     \begin{align*}
    v(r) &:= 1 - \frac{\bar{r}^2-r^2}{2 \bar{r} A}, \qquad v'(r) = \frac{r}{\bar{r}A}, \qquad v''(r) = \frac{1}{\bar{r} A}, \qquad v'(\bar{r}) = \frac{1}{A}, \qquad v(\bar{r}) = 1, \\
    \varphi(r) &:= \sqrt{1 + A^{-2}} \frac{v(r)}{\sqrt{1 + v'(r)^2}} = \sqrt{1 + A^{-2}} \frac{v(r)}{\sqrt{1 + (\bar{r} A)^{-2}  r^2}}.
\end{align*}
    Using $|D^{\ell}(f_{\theta} - \theta f_0)| < C(n,k,\ell) \theta^3$ shows that $\left|\frac{f_{\theta}(\tau)}{\tan \theta} \right| = C(n,k, \tau) + O(\theta^2)$ for $\theta \in (0, \theta_{n,k})$, so the computation of $\tau_{\theta}, \Lambda, \bar{r}, A$ in~\eqref{eqn:W1-supersolution} (for general $\theta$) and in~\eqref{eqn:tau-definition} (for $\theta = 0$) implies that
\[
|\tau_{\theta} - \tau_0| = O(\theta), \qquad |\Lambda_{\theta} - \Lambda_0| = O(\theta), \qquad  |\bar{r}_{\theta} - \bar{r}_0| = O(\theta), \qquad |A_{\theta} - A_0| = O(\theta)
\]
holds for small $\theta \in (0, \theta_{n,k})$.
The choice $\beta \in (2-n, -1)$ ensures that $\hat{g} > c(n,k)$ on $[ - \tau_{\theta}, \tau_{\theta}] $ and $\Lambda \rho^{\beta-1} \tan \theta \leq 2 \Lambda \bar{r}^{\beta-1} \theta = O(\theta)$ holds on $\overline{\Gamma'_{\theta} \cap \{ r > \bar{r} \} }$.

\smallskip
\noindent\textbf{Step 1: Estimates for the mean curvature operator.}
Applying the computations~\eqref{eqn:laplace-rho-a-g} and~\eqref{eqn:MC-computation+error}, we estimate the operator $\cM(W_{1,\theta})$ in the region $\{ W_{1,\theta} > 0 \} \cap \overline{\Gamma'_{\theta} \cap \{ r > \bar{r} \} }$ by
\begin{align*}
    ( 1 + |\nabla W_{1,\theta}|^2)^{\frac{3}{2}} \cM(W_{1,\theta}) &\leq \rho^{\beta'-2} ( \Lambda \tan \theta \rho^{2-\beta'} \Delta(\rho^{\beta'} \hat{g}) + O(\theta^3)) \\
    &\leq \rho^{\beta'-2} \theta ( - c(n,k,\Lambda) \theta + c(n,k,\beta) \theta^2),
\end{align*}
which means that $\cM(W_{1,\theta}) < 0$ on $\{ W_{1,\theta} > 0 \} \cap \overline{\Gamma'_{\theta} \cap \{ r > \bar{r} \} }$, for $\theta \in (0,\theta_{n,k})$.
The second step in this inequality comes from estimating
\begin{align*}
\rho^{2-\beta'} \Delta (\rho^{\beta'} \hat{g}) &= \rho^{2-\beta} \Delta (\rho^{\beta} \hat{g}) - \left[ \beta(\beta+ n-2) - \beta'(\beta'+n-2) \right]  \hat{g} < - c(n,k,\beta) \theta,
\end{align*}
since $c(n,\beta,\beta') \asymp c(n,k,\beta) \theta$ and the function $\hat{g}$ is constructed to satisfy $\hat{g} > c(n,k)$ on $[-\tau_{\theta}, \tau_{\theta}]$.
Next, we verify the boundary gradient condition~\eqref{eqn:nablaW1<1} for the supersolution along the free boundary $\partial \{ W_{1,\theta} > 0 \} \cap \overline{\Gamma'_{\theta} \cap \{ r > \bar{r} \}}$.
The function $\hat{g}$ is constructed to satisfy $\hat{g} > c(n,k)$ on $[ - \tau_{\theta}, \tau_{\theta}]$, so
\[
\partial \{ W_{1,\theta} > 0 \} \cap \overline{\Gamma'_{\theta} \cap \{ r > \bar{r} \} } \subset \{ t: t_{\theta} \leq t \leq \tau_{\theta} \}.
\]
The computation from~\eqref{eqn:nabla-V-squared} becomes
\[
\hat{G}_{\theta}(\beta, t) := |\nabla W_{1,\theta}|^2 = (1-\beta)^2 f_{\theta}^2 + (1-t^2) (f'_{\theta} - f_{\theta} \hat{g}'/\hat{g})^2.
\]
Since $\hat{G}_{\theta}(\beta, t_{\theta}) = \tan^2 \theta$, the condition~\eqref{eqn:nablaW1<1} is equivalent to $\hat{G}_{\theta}(\beta, -)$ attaining its maximum on $[t_{\theta}, \tau_{\theta}]$ at $t_{\theta}$.
The construction of $\beta$, with $\hat{G}'(\beta, t_0) < 0$, implies that $\hat{G}'_{\theta}(\beta, t_{\theta}) < 0$ for $\theta \in (0, \theta_{n,k})$.
We may therefore produce a $\tau_{\theta} > t_{\theta}$ with $\hat{G}_{\theta}(\beta, \tau_{\theta}) = \hat{G}_{\theta}(\beta, t_{\theta})$ and take $\tau'_{\theta} := \tau_{\theta} - \ve$, for $0 < \ve < \ve(n,k,\theta, \beta)$ sufficiently small, whereby~\eqref{eqn:nablaW1<1} is satisfied on $[t_{\theta}, \tau'_{\theta}]$.
We conclude that $W_{1,\theta}$ satisfies the supersolution conditions for~\eqref{eqn:capillary-fbp} in $\{ W_{1,\theta} > 0 \} \cap \overline{\Gamma' \cap \{ r > \bar{r} \} } $, for $\theta \in (0, \theta_{n,k})$.

To study $W_{2, \theta} = W_{\theta}|_{ \{ r \leq \bar{r}, s \leq v(r) \} }$, notice that the variables $r = |x|$ and $s = |y|$ satisfy $\partial_r x= \frac{x}{r}$ and $\partial_s y= \frac{y}{s}$, so $W = W(r,s)$ has
\begin{align*}
\nabla W &= \partial_r W \, \tfrac{x}{r} \oplus \partial_s W \tfrac{y}{s}, \\
D^2 W &= \begin{pmatrix}\partial^2_{rr} W \frac{x \otimes x}{r^2} + \frac{1}{r} \partial_r W (I_x - \frac{x \otimes x}{r^2} ) & \partial^2_{rs} W \frac{x \otimes y}{rs}  \\
\partial^2_{rs} W \frac{x \otimes y}{rs} & \partial^2_{ss} W \frac{y \otimes y}{s^2} + \frac{1}{s} \partial_s W (I_y - \frac{y \otimes y}{s^2})
\end{pmatrix}, \\
\Delta W &= \partial^2_{rr} W + \tfrac{n-k-1}{r} \partial_r W + \partial^2_{ss} W + \tfrac{k-1}{s} \partial_s W.
\end{align*}
For the function $W_{2,\theta}(r,s) = \varphi(r) H_{\theta} \bigl( \frac{s}{v(r)} \bigr)$, where $H_{\theta}(\xi) := W_{1,\theta}(\bar{r}, \xi)$, we compute
\begin{align*}
    |\nabla W_{2,\theta}|^2 &= \left( \varphi' H_{\theta} - \varphi H'_{\theta} v' \frac{s}{v^2} \right)^2 + \frac{(\varphi H'_{\theta})^2}{v^2}, \\
    \Delta W_{2,\theta} &= \frac{\varphi}{v^2} \left( 1 + s^2 \frac{v'^2}{v^2} \right) H_{\theta}'' - \frac{s \varphi}{v^2} \left( 2 \frac{\varphi'}{\varphi} v' + v'' - 2 \frac{(v')^2}{v} + \frac{n-k-1}{r} v' - (k-1) \frac{v}{s^2} \right) H'_{\theta} \\
    & \quad + \left( \varphi'' + \frac{n-k-1}{r} \varphi' \right) H_{\theta}
\end{align*}
as functions of $r$ and $\xi = \frac{s}{ v(r)}$.
Let us denote
\begin{equation}\label{eqn:model-K-tilde}
\begin{split}
    \tilde{K}(r,\xi) &:= \left( 1 + s^2 \frac{v'^2}{v^2} \right) H_{\theta}'' - s \left( 2 \frac{\varphi'}{\varphi} v' + v'' - 2 \frac{(v')^2}{v} + \frac{n-k-1}{r} v' - (k-1) \frac{v}{s^2} \right) H'_{\theta} \\
    &\; \quad + \left( \frac{\varphi''}{\varphi} + \frac{n-k-1}{r} \frac{\varphi'}{\varphi} \right) v^2 H_{\theta}
\end{split}
\end{equation}
coming from the model case of the one-phase problem, as $\theta \downarrow 0$.
    In the next step, we will prove that the function $\tilde{K}(r,\xi)$ is equivalent to the definition of $K(x,\xi)$ in~\eqref{eqn:K-tilde-x,xi}, after the change of variable $x := (\frac{r}{\bar{r}})^2$.
    Notably, this implies $\tilde{K}(r,\xi) < - c(n,k)$ due to~\eqref{eqn:S'3-new}.
    Using these computations in the expression for $\Delta W_{2, \theta}$, we will show that as $\theta \downarrow 0$, we have $\frac{v^2}{\varphi} \Delta W_{2,\theta} \xrightarrow{C^2} \tilde{K}(r,\xi)$.
    
    We recall the definition of $H_{\theta}$ for general $\theta$, together with the expression for $H(\xi)$ and the bounds $|D^{\ell} g| = C(n,k,\beta, \ell)$ and $|D^{\ell}(f_{\theta} - \theta f_0)| = C(n,k,\ell) \theta^3$ from Corollary~\ref{corollary:lambda-comparison}.
This lets us bound
\begin{equation}\label{eqn:h-theta-bounds}
|D^{\ell} (H_{\theta}(\xi) - \theta H_0 (\xi)) | < C(n,k,\ell, \Lambda, \bar{r}) \theta^2 = C(n,k,\ell, \beta) \theta^2
\end{equation}
for $r< \bar{r}$ with $\bar{r} > 0$ and $|\xi| \leq 1$ finite.
Since $A, \bar{r}$ (and therefore $v(r)$) satisfy uniform bounds in $\theta$, we obtain $\| W_{2,\theta} \|_{C^4} < C(n,k,\beta) \theta$.
Observe that the $(ss)$-Hessian term in $D^2 W_{2,\theta}$ is also uniformly bounded near $s=0$, due to $|\frac{1}{s} \partial_s W_{2,\theta}| < 2 |\partial^2_{ss}W_{2,\theta}| = O(\theta)$.
On the compact domain $\{ r \leq \bar{r}, \xi \leq 1 \}$, we may therefore use~\eqref{eqn:h-theta-bounds} and the computation of $\Delta W_{2,\theta}$ and $\tilde{K}(r,\xi) = \frac{v^2}{\varphi} \Delta W_{2,0}$ to bound
\[
\left|\Delta W_{2,\theta} - \tan \theta \frac{\varphi}{v^2} \tilde{K}(r,\xi) \right| < C(n,k,\beta, \Lambda , \tau) \theta^2 = C(n,k,\beta) \theta^2.
\]
Combining this bound with the estimate~\eqref{eqn:MC-bound} and $\tilde{K}(r,\xi) < - c(n,k)$ shows that
\begin{align*}
(1 + |\nabla W_{2,\theta}|^2)^{\frac{3}{2}} \cM(W_{2,\theta}) &\leq \Delta W_{2,\theta} + C(n,k,\beta) \theta^3 \\
&\leq - \theta \left( c(n,k) - C_1(n,k,\beta) \theta - C_2(n,k,\beta) \theta^2 \right)
\end{align*}
meaning that $\cM(W_{2,\theta}) < 0$ provided that $\theta \in ( 0 , \theta_{n,k})$ is chosen sufficiently small.

\smallskip
\noindent\textbf{Step 2: Estimates for the slope.}
It remains to verify the slope matching conditions~\eqref{eqn:match-W1-W2} and~\eqref{eqn:ddrW-1ddrW2} along the interface $\{ r = \bar{r} \}$.
Using the expression of $H_{\theta}(\xi) = W_{1,\theta}(\bar{r}, \xi)$ in $(r,s)$-coordinates, we can write $H'_{\theta}(\xi) = \partial_s W_1(\bar{r}, \xi)$.
For $\xi = 1$ we obtain $H_{\theta}(1) = W_{1,\theta}(\bar{r}, 1) = 0$ and $\frac{\tau_{\theta} \xi}{\sqrt{1 - \tau_{\theta}^2 + \tau_{\theta}^2 \xi^2}} = \tau_{\theta}$, whereby $\partial_s W_1(\bar{r},1) = H'_{\theta}(1)$.
The slope expression $A(\tau_{\theta}) = \left| \frac{dr}{ds} \right|_{(\bar{r},1)} = - \frac{\partial_s W_1}{\partial_r W_1} \Big|_{(\bar{r},1)}$ becomes
\begin{align*}
    |\nabla W_{1,\theta}|^2|_{(\bar{r},1)} &= (\partial_s W_{1,\theta})^2 + (\partial_r W_{1,\theta})^2 =  H'_{\theta}(1)^2 (1 + A(\tau_{\theta})^{-2}), \\
    |\nabla W_{2,\theta}|^2|_{s = v(r)} &= H'_{\theta}(1)^2 \frac{\varphi^2}{v^2} (1 + v'(r)^2).
\end{align*}
Therefore, the construction of $\varphi(r) = \frac{\sqrt{1+A^{-2}} v(r)}{\sqrt{1 + v'(r)^2}}$ implies that
\[
|\nabla W_{2,\theta}|^2|_{s = v(r)} = H'_{\theta}(1)^2 \frac{\varphi^2}{v^2} (1 + v'(r)^2) = H'_{\theta}(1)^2 (1 + A^{-2}) = |\nabla W_{1,\theta}|^2|_{(\bar{r}, 1)}.
\]
Consequently, the level curves $\{ W_1 = 0\}$ and $\{ W_2 = 0 \}$ have equal slopes at the points $(\bar{r}, \pm 1)$, and satisfy~\eqref{eqn:match-W1-W2} for all $r$.
To verify the slope condition~\eqref{eqn:slope-at-match}, note that 
\begin{align*}
    \partial_r W_{1,\theta}(\bar{r}, 1) &= (1 - \beta)\sqrt{1-\tau_{\theta}^2} f_{\theta}(\tau_{\theta}) - \tau_{\theta} \sqrt{1-\tau_{\theta}^2} (f'_{\theta} - f_{\theta} \hat{g}'/\hat{g})(\tau_{\theta}), \\
    \partial_s W_1(\bar{r}, 1) &= (1-\beta) \tau_{\theta} f_{\theta}(\tau_{\theta}) + (1-\tau^2_{\theta}) (f'_{\theta} - f_{\theta} \hat{g}'/\hat{g})(\tau_{\theta})
\end{align*}
whereby $|A_{\theta} - A| = O(\theta)$, where $A$ is defined in~\eqref{eqn:define-A}.
Using 
\[
|f'_{\theta}/f_{\theta} - f'_0/f_0| = O(\theta), \qquad  |\tau_{\theta} - \tau| = O(\theta), \qquad |\bar{r}_{\theta} - \bar{r}| = O(\theta),
\]
as above, we find that $A_{\theta} > \bar{r}_{\theta}$, for $\theta \in (0, \theta_{n,k})$ sufficiently small, is reduced to $A > \bar{r}$.
Using the expression~\eqref{eqn:define-A}, we compute that
\begin{equation}\label{eqn:A-rbar}
    A - \bar{r} = \frac{1 - \beta}{\tau \sqrt{1-\tau^2} \left[ \tau \bigl( \frac{f'_0(\tau)}{f_0(\tau)} - \frac{\hat{g}'(\tau)}{\hat{g}(\tau)} \bigr) - (1 - \beta)  \right] }
\end{equation}
whereby $A_{\theta} > \bar{r}_{\theta}$ is equivalent to the positivity of the denominator, which is~\eqref{eqn:S'1-new}.

We finally examine the condition~\eqref{eqn:ddrW-1ddrW2} that $\frac{\partial}{\partial r} W_{1,\theta}(s,r)|_{\bar{r}} < \frac{\partial}{\partial r} W_{2,\theta}(s,r)|_{\bar{r}}$ for $s \in (-1, 1)$.
We compute $\partial_r W_2 = \varphi' H - \varphi H' v' \frac{s}{v^2}$, and using $v(\bar{r}) = 1$ and $v'(\bar{r}) = \frac{1}{A}$ makes~\eqref{eqn:ddrW-1ddrW2} equivalent to
\[
\cW_{\theta}(\xi) := \partial_r W_{1,\theta}(\bar{r}, \xi) - \left( \varphi'(\bar{r}) H_{\theta}(\xi) - \frac{\xi H'_{\theta}(\xi)}{A} \right) < 0, \qquad \text{for } \; 0 \leq \xi < 1.
\]
Observe that $\cW_{\theta} (1) = 0$ by construction.
Indeed, the definition of $A = - \frac{\partial_s W_1}{\partial_r W_1} \big\rvert_{(\bar{r}, 1)}$ gives
\begin{align*}
    \partial_r W_{1,\theta}(\bar{r}, 1) &= - \frac{1}{A} \partial_s W_1|_{(\bar{r}, 1)} = - \frac{\partial_s W_1(\bar{r}, s)|_{(\bar{r},1)}}{A} = - \frac{H'_{\theta}(1)}{A}, \\
    \partial_r W_{2,\theta} (\bar{r}, 1) &= \varphi'(\bar{r}) H_{\theta}(1) - \frac{H'_{\theta}(1)}{A} = - \frac{H'_{\theta}(1)}{A},
\end{align*}
meaning that $\cW_{\theta}(1) = \partial_r W_{1,\theta}(\bar{r},1) - \partial_r W_{2,\theta}(\bar{r}, 1) = 0$.
Let us consider
\begin{equation}\label{eqn:W-tilde-intermediate}
    \tilde{\cW}(\xi) := \partial_r W_1(\bar{r}, \xi) - \left( \varphi'(\bar{r}) H(\xi) - \frac{1}{A} \xi H'(\xi) \right) \qquad \text{for } \; 0 \leq \xi < 1.
\end{equation}
If the function $\tilde{\cW}(\xi)$ additionally has $\tilde{\cW}(\xi)<0$ for $0 \leq \xi < 1$ and $\tilde{\cW}'(1)>0$, then in fact $\tilde{\cW}'(\xi) > 0$ in an interval $[1-\ve,1+\ve]$.
At the last step of the proof, we will show that $\tilde{\cW}(\xi) < 0$ and $\tilde{\cW}'(1) > 0$ is equivalent to $\cW(t) < 0$ and $\cW'(\tau) > 0$ for the function $\cW(t)$ from~\eqref{eqn:W-function-simplified}.

Given these properties, the continuity in $\theta$ and the property~\eqref{eqn:h-theta-bounds} ensure that $\cW'_{\theta} (1) > 0$ and $\cW'_{\theta}(\xi) > 0$ on $[ 1 - \frac{\ve}{2}, 1 + \frac{\ve}{2}]$, for small $\theta \in (0, \theta_{n,k})$, as well as $\cW_{\theta} < 0$ on $[0, 1 - \frac{\ve}{2}]$.
Using $\cW'_{\theta}>0$ on $[1 - \frac{\ve}{2} , 1 + \frac{\ve}{2}]$ and $\cW_{\theta}(1) = 0$ then yields~\eqref{eqn:ddrW-1ddrW2}.
We conclude that the function $W_{\theta}$ defined in~\eqref{eqn:W-defined-supersolution} is a strict weak supersolution of the capillary problem~\eqref{eqn:capillary-fbp} at contact angle $\theta$ with the desired properties, for $\theta \in (0, \theta_{n,k})$ sufficiently small.

\smallskip
\noindent\textbf{Step 3a: Reduction of $\tilde{K}$.}
We now simplify the function $\tilde{K}(r,\xi)$ from~\eqref{eqn:model-K-tilde} into the expression $K(x,\xi)$ from~\eqref{eqn:K-tilde-x,xi}.
Using the expression $\varphi(r) := \frac{\sqrt{1+A^{-2}} v(r)}{\sqrt{1 + v'(r)^2}}$, we compute
\begin{align*}
&(\log \varphi)' = \frac{v'}{v} - \frac{v' v''}{1+ v'^2}, \qquad \qquad 2 \frac{\varphi'}{\varphi} v' = 2 \frac{(v')^2}{v} - 2 v'' + \frac{2v''}{1 + v'^2}, \\
& 2 \frac{\varphi'}{\varphi} v' + v'' - 2 \frac{(v')^2}{v} + \frac{n-k-1}{r} v' - (k-1) \frac{v}{s^2} = \frac{1 - v'^2}{1 + v'^2} v'' + \frac{n-k-1}{r} v' - \frac{k-1}{\xi^2 v}.
\end{align*}
Consequently, the expression~\eqref{eqn:model-K-tilde} for $\tilde{K}(r,\xi)$ becomes
\begin{equation}\label{eqn:K-tilde-intermediate}
    \begin{split}
    \tilde{K}(r,\xi) &:= (1 + v'^2 \xi^2) H'' - \left( \frac{1 - v'^2}{1 + v'^2} vv'' + \frac{n-k-1}{r} vv' - \frac{k-1}{\xi^2} \right) \xi H' \\
    & \; \quad+ \left( \frac{\varphi''}{\varphi} + \frac{n-k-1}{r} \frac{\varphi'}{\varphi} \right) v^2 H.
    \end{split}
    \end{equation}
    We introduce the variable $x := (\frac{r}{\bar{r}})^2 \in [0,1]$, so $v'(r)^2 = A^{-2} x$ and $v(r) = \tilde{v}(x)$, where $\tilde{v}(x) := (1 - \frac{\bar{r}}{2A}) + \frac{\bar{r}}{2 A} x$.
    The computations of Proposition~\ref{prop:supersolution} together with the explicit expressions for $\varphi(r), v(r)$ allow us to write
    \allowdisplaybreaks{
    \begin{align*}
          \frac{\varphi''}{\varphi} +& \frac{n-k-1}{r} \frac{\varphi'}{\varphi} \\
         &= \frac{(n-k) (\bar{r}A) (1 + A^{-2} x)^2 - 2 \frac{\bar{r}}{A} x(1 + A^{-2} x) + v \left[ ( 3 A^{-2} x - (n-k) (1 + A^{-2}x) \right]}{(\bar{r} A)^2 v (1 + A^{-2}x)^2} \\
        &= \frac{ \frac{\bar{r}}{2}(n-k-1) x^2 + a_1 x + a_0}{A(\bar{r} A)^2 v (1 + A^{-2} x)^2},
    \end{align*}}
    for $a_1, a_0$ the coefficients of~\eqref{eqn:auxiliary-a1}, \eqref{eqn:auxiliary-a0}.
    Substituting in~\eqref{eqn:K-tilde-intermediate} produces the expression~\eqref{eqn:K-tilde-x,xi}.

\smallskip
\noindent\textbf{Step 3b: Reduction of $\tilde{\cW}$.}
Finally, we reduce the function $\tilde{\cW}(\xi)$ of~\eqref{eqn:W-tilde-intermediate} to the expression $\cW(t)$ from~\eqref{eqn:W-function-simplified}.
Using the expression $\varphi(r) := \frac{\sqrt{1+A^{-2}} v(r)}{\sqrt{1 + v'(r)^2}}$, we first compute that
\[
\varphi'(\bar{r}) = \frac{1}{A} \left( 1 - \frac{  v''(\bar{r})}{1+A^{-2}} \right) = \frac{1}{A} \left( 1 - \frac{1}{\bar{r} A (1+A^{-2})} \right).
\]
The expression~\eqref{eqn:W-tilde-intermediate} for $\tilde{\cW}(\xi)$ therefore becomes
\[
\tilde{\cW}(\xi) = \partial_r W_1 (\bar{r},\xi) - \frac{1}{A} \left( \left( 1 - \frac{1}{\bar{r} A (1+A^{-2})} \right) H(\xi) - \xi H'(\xi) \right)
\]
where $H(\xi)$ is defined via~\eqref{eqn:u(t)-function-auxiliar} as $H(\xi) := W_1(\bar{r},s)_{s = \xi} = \frac{\bar{r}}{\sqrt{1-t^2}}u(t)$, where
\[
W_1 (r,s) := (r^2 + s^2)^{\frac{1}{2}} f_0 \left( \frac{s}{\sqrt{r^2 + s^2}} \right) + \Lambda (r^2 + s^2)^{\frac{\beta}{2}} \hat{g} \left( \frac{s}{\sqrt{r^2+s^2}} \right).
\]
Consequently, $H'(\xi) = \partial_s W_1(\bar{r}, s)|_{s = \xi}$.
We will express $H(\xi)$ and its derivatives in the variable $t$, which is related to $\xi$ via $t(\xi) = \frac{\tau \xi}{\sqrt{1-\tau^2 + \tau^2 \xi^2}}$; note that the map $\xi \mapsto t(\xi)$ sends $[0,1]$ monotonically onto $[0,\tau]$.
The inverse map is given by $\xi = \frac{\sqrt{1-\tau^2}}{\tau} \cdot \frac{t}{\sqrt{1-t^2}} = \frac{\bar{r} t}{\sqrt{1-t^2}}$, as computed in~\eqref{eqn:inverse-map-xi-t}.

We denote $p(t) := (1-t^2)^{\frac{1-\beta}{2}}$ and compute the resulting expressions as
\begin{align*}
    \partial_r W_1(\bar{r},\xi) &= \sqrt{1-t^2}\Big(f_0(t)-t f_0'(t)- \frac{f_0(\tau)}{\hat{g}(\tau)} \frac{p(t)}{p(\tau)}\big(\beta\,\hat g(t)-t\,\hat g'(t)\big)\Big),\\
    H(\xi)&= W_1(\bar r,\xi)=\frac{\bar r}{\sqrt{1-t^2}}\Big(f_0(t)- \frac{f_0(\tau)}{\hat{g}(\tau)} \frac{p(t)}{p(\tau)}\,\hat g(t)\Big),
\end{align*}
For $t(\xi) = \frac{\tau \xi}{\sqrt{1 - \tau^2 + \tau^2 \xi^2}}$ and $u(t) = f_0(t) - \frac{f_0(\tau)}{\hat{g}(\tau)} \bigl( \frac{1-t^2}{1-\tau^2} \bigr)^{\frac{1-\beta}{2}} \hat{g}(t)$, this produces~\eqref{eqn:u(t)-function-auxiliar}-\eqref{eqn:H(xi)-expression} via
\begin{equation}\label{eqn:H(xi)-expression}
    H(\xi) = \frac{\bar{r}}{\sqrt{1-t^2}} u(t), \qquad \frac{dt}{d\xi} = \frac{(1-t^2)^{\frac{3}{2}}}{\bar{r}}, \qquad H'(\xi) = t u(t) + (1-t^2) u'(t).
\end{equation}
We now prove that $\tilde{\cW}(\xi) = \frac{\cW(t)}{\sqrt{1-t^2}}$ as in~\eqref{eqn:W-function-simplified}.
Using the above relations, we compute $\cW(t) := \sqrt{1-t^2} \tilde{\cW}(\xi)$ as
\allowdisplaybreaks{
\begin{align*}
\cW(t)
&=\sqrt{1-t^2}\,\partial_r W_1(\bar r,\xi)
-\frac{\sqrt{1-t^2}}{A}\left(\left(1-\frac{1}{\bar r A(1+A^{-2})}\right)H(\xi)-\xi H'(\xi)\right)\\
&=(1-t^2)\left(f_0-t f_0' - \frac{f_0(\tau)}{\hat{g}(\tau)} \frac{p(t)}{p(\tau)}(\beta \hat g-t \hat g') \right) - \frac{1}{A} \left(1-\frac{1}{\bar r A(1+A^{-2})}\right)\bar r \left(f_0 - \frac{f_0(\tau)}{\hat{g}(\tau)} \frac{p(t)}{p(\tau)} \hat g \right)  \\
%&\quad + \frac{1}{A} \bar{r} t \left(t f_0+(1-t^2)f_0' - \frac{f_0(\tau)}{\hat{g}(\tau)} \frac{p(t)}{p(\tau)} (\beta t \hat g+(1-t^2)\hat g') \right) \\
%&= \left[ \left(1 - \frac{\bar{r}}{A} \right) (1-t^2) + \frac{1}{A^2+1} \right] f_0(t) + t(1-t^2)\left(\frac{\bar{r}}{A} - 1\right) f'_0(t) \\
    &\;\quad- \frac{f_0(\tau)}{\hat{g}(\tau)} \frac{p(t)}{p(\tau)} \left[ \left( \beta \left(1 - \frac{\bar{r}}{A} \right) (1-t^2) - (1-\beta) \frac{\bar{r}}{A} + \frac{1}{A^2+1} \right)\hat{g}(t) + t(1-t^2)\left(\frac{\bar{r}}{A} - 1\right) \hat{g}'(t) \right]
\end{align*}}
which rearranges to the claimed expression~\eqref{eqn:W-function-simplified}.
This completes our computation.
\end{proof}
\begin{remark}
The proof strategy used in Theorem~\ref{thm:cones-are-minimizing-smalltheta} relies on a perturbative construction of sub- and supersolutions, allowing us to reduce the barrier conditions to verifying the corresponding inequalities for the linearized problem.
The same argument can be applied to other cones for the one-phase problem in $\bR^n$ in order to obtain the following general result:
\begin{proposition}
    Let $u = \rho f(\omega)$ be a one-homogeneous solution to the one-phase problem in $\bR^n$, for $\rho := |z'|$ and $\omega = \frac{z'}{|z'|} \in \bS^{n-1}$.
    Suppose that $u$ is strictly minimizing and strictly stable,  and the positive phase $\{ f > 0 \}$ has link $\Omega \subset \bS^{n-1}$, such that $u$ admits a strict weak supersolution $W$ with $|\nabla W|^2 < 1$ away from $\partial \Omega$, and $W = \rho f + \rho^{\beta} \hat{g}$ away from a compact set, where $\inf_{\Omega} \hat{g} > 0$.
Then, there is a $\theta_0 = \theta_0(f) > 0$ such that for all $\theta \in (0,\theta_0)$, there exists a minimizing graphical capillary cone $\mathbf{C}_{\theta} \subset \bR^{n+1}_+$ for $\cA^{\theta}$, which is close in $C^{k,\alpha}_{\on{loc}}$ to the graph of $\rho f\tan \theta$.
\end{proposition}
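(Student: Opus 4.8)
The plan is to mirror the proof of Theorem~\ref{thm:cones-are-minimizing-smalltheta}, replacing the explicit $O(n-k)\times O(k)$ computations by abstract perturbative arguments anchored at the fixed one-phase cone $u_0 = \rho f$. The first task is to produce the capillary cone itself. Rescaling, a one-homogeneous $u$ solves~\eqref{eqn:capillary-fbp} with contact angle $\theta$ if and only if $\tilde u := u/\tan\theta$ satisfies $\cM(\tan\theta\,\tilde u) = 0$ in $\{\tilde u > 0\}$ and $|\nabla \tilde u| = 1$ on $\partial\{\tilde u > 0\}$; by the estimate~\eqref{eqn:MC-bound} the interior equation reads $\Delta \tilde u = \tan^2\theta \cdot \Psi(\tilde u)$ for a smooth $\Psi$ that is cubic in $\nabla\tilde u$ at unit scale, so at $\theta = 0$ this is precisely the one-phase problem~\eqref{eqn:one-phase-problem}, solved by $u_0$. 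Working on the fixed spherical link $\Omega$ via a graphical (hodograph) parametrization of the free boundary, as is standard for free-boundary elliptic problems, the linearization at $u_0$ of this one-homogeneous free-boundary system has, by the Jerison--Savin criterion, a kernel consisting of one-homogeneous Jacobi fields satisfying the linearized free-boundary condition; the strict stability of $u_0$ rules these out (and $u_0$ itself is \emph{not} in the kernel, since dilation scales $|\nabla u_0|$ along the free boundary away from $1$). Hence the one-homogeneous linearized operator is an isomorphism of suitable H\"older spaces, and the implicit function theorem gives, for $\theta \in (0,\theta_0)$, a one-homogeneous solution $\tilde u_\theta$ with $\|\tilde u_\theta - u_0\|_{C^{k,\alpha}_{\on{loc}}} = O(\tan^2\theta)$ and $\partial\{\tilde u_\theta > 0\} \to \partial\{u_0 > 0\}$. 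The graph of $u_\theta := \tan\theta\,\tilde u_\theta$ is the desired cone $\mathbf{C}_\theta$, and $|D^\ell(u_\theta - \tan\theta\,u_0)| = O(\theta^3)$ at unit scale.

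Next I would build the barriers by perturbing fixed one-phase barriers. From strict stability, the Jerison--Savin reformulation supplies a strict subsolution of the linearized one-phase problem at $u_0$; as in Proposition~\ref{prop:subsolution}, choose an exponent $\alpha \in (2-n,0)$ and a homogeneous degree-$\alpha$ field $\rho^\alpha\psi$, with $\inf_\Omega\psi > 0$ and $\psi$ solving the spherical eigenvalue equation for a slightly shifted exponent $\alpha'$, so that $V := u_0 - \Lambda\rho^\alpha\psi$ is a strict subsolution of~\eqref{eqn:one-phase-problem} asymptotic to $u_0$, with $V < u_0$ on $\{V>0\} \subset \{u_0>0\}$ and $\partial\{V>0\}$ disjoint from the origin. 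The supersolution $W = \rho f + \rho^\beta\hat g$ at infinity, glued across a compact interface to a cap, is given by hypothesis. I then set $V_\theta := u_\theta - \Lambda\tan\theta\,\rho^\alpha\psi$ and define $W_\theta$ as the corresponding perturbation of $W$ (replace the leading $\rho f$ in each piece of $W$ by $u_\theta$, keeping the same cap profile and gluing hypersurface, with the finitely many gluing parameters adjusted by $O(\theta)$). By~\eqref{eqn:MC-bound} the mean curvature operators of these perturbed objects differ from their Laplacians by $O(\theta^3)$, \emph{uniformly}: the tail $\rho^\beta$ with $\beta < -1$ controls infinity, and $C^{k,\alpha}_{\on{loc}}$-closeness controls compacta; since the $\alpha$--$\alpha'$ (resp.\ $\beta$--$\beta'$) gap provides a strict margin of order $\theta$, one gets $\cM(V_\theta) > 0$ and $\cM(W_\theta) < 0$ in their positive phases. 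The strict gradient inequalities $|\nabla V| > 1$ and $|\nabla W|^2 < 1$ along the free boundaries, combined with the exact condition $|\nabla u_\theta| = \tan\theta$ and the overall $\tan\theta$-scaling, yield $|\nabla V_\theta| > \tan\theta$ and $|\nabla W_\theta| < \tan\theta$; the slope-matching and "minimum of the two pieces is a weak supersolution" steps of Proposition~\ref{prop:supersolution} carry over with an $O(\theta)$ margin.

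With $V_\theta$ and $W_\theta$ in hand, minimality is the same argument as in Proposition~\ref{prop:small-theta-one-sided} and Lemma~\ref{lemma:minimizing-by-subsolutions}. By the one-homogeneity of $u_\theta$ and the scale invariance of $\cA^\theta$, the dilations $\{\lambda^{-1}V_\theta(\lambda\,\cdot)\}_{\lambda > 0}$ form a lower foliation of $u_\theta$ and $\{\lambda^{-1}W_\theta(\lambda\,\cdot)\}_{\lambda > 0}$ an upper foliation, in the sense of Definitions~\ref{def:lowerLeaf} and~\ref{def:upperLeaf}; Lemma~\ref{lemma:capillary-foliations-imply-minimizing}$(iii)$ then shows that $\mathbf{C}_\theta$ is minimizing for $\cA^\theta$. (The one-sided minimality and strict stability follow already from the subsolution alone, via Lemma~\ref{lemma:strict-calibration}, exactly as in Proposition~\ref{prop:small-theta-one-sided}.)

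The main obstacle is the existence step: making strict stability genuinely deliver invertibility of the one-homogeneous linearized operator \emph{uniformly} as $\theta \downarrow 0$. One cannot linearize the capillary problem directly, because the Robin boundary term $\cot\theta\,A(\nu,\nu)$ in the stability/Jacobi operator degenerates; rescaling to $\tilde u$ and perturbing the one-phase problem is essential, and care is needed to set up the hodograph parametrization so that the free boundary moves within a fixed functional space. A secondary difficulty is checking that the $O(\theta^3)$ supersolution errors are uniform on the non-compact domain and survive the compact-cap gluing; this leans on $\beta < -1$ for the tail and on the continuous dependence of the finitely many gluing parameters on $\theta$, precisely as in the proof of Proposition~\ref{prop:supersolution}.
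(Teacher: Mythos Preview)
Your barrier construction and the concluding foliation argument are correct and match the paper's scheme: perturb the one-phase sub- and supersolutions by replacing $\rho f$ with $u_\theta$, use the $(\alpha,\alpha')$ and $(\beta,\beta')$ gaps to absorb the $O(\theta^3)$ error from~\eqref{eqn:MC-bound}, and apply Lemma~\ref{lemma:minimizing-by-subsolutions}. The paper states this proposition precisely as a remark indicating that the same perturbative strategy extends beyond the $O(n-k)\times O(k)$ setting, and it explicitly defers full details to~\cite{FTW_MinimizingII}.

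The gap is in your existence step. You claim that strict stability of $u_0$ rules out one-homogeneous Jacobi fields and hence that the linearized operator is invertible for the implicit function theorem. But strict stability only says the first Robin eigenvalue on the link satisfies $\Lambda_1 > -\bigl(\tfrac{n-2}{2}\bigr)^2$; a one-homogeneous kernel element $v = \rho\phi$ with $\Delta v = 0$ and the linearized free-boundary condition corresponds to $\phi$ being a Robin eigenfunction with eigenvalue $n-1 > 0$, far above the bottom of the spectrum. Strict stability places no constraint on whether $n-1$ is an eigenvalue. In fact, infinitesimal rotations of $u_0$ already produce one-homogeneous Jacobi fields whenever $u_0$ is not fully rotation-invariant, so the kernel is generically nontrivial. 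At minimum you would need to quotient by the ambient rotation group before invoking IFT, and even then nothing you have assumed rules out non-rotational kernel elements.

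The paper's sketch sidesteps IFT entirely. It indicates that one should perturb the Hardt--Simon-type foliation of the one-phase minimizer---available from De~Silva--Jerison--Shahgolian and Edelen--Spolaor--Velichkov~\cites{hardt-simon-DSJ, SMP-one-phase}, which require precisely the strictly minimizing and strictly stable hypotheses---to obtain capillary barriers with the correct asymptotics. The cone $\mathbf{C}_\theta$ is then produced by trapping a one-homogeneous capillary minimizer between the perturbed foliations, via compactness rather than linearized invertibility. This route uses the strictly \emph{minimizing} hypothesis in an essential way (it is what furnishes the foliation), whereas your IFT route only invokes strict stability and leaves that hypothesis idle.
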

The strict stability assumption for $u$ ensures the existence of a strict subsolution of the form $\rho f(\omega) - \rho^{\beta} g$ away from a compact set.
Therefore, only the more delicate construction of a supersolution requires this particular presentation near infinity.
This general argument, building on minimizing cones for the one-phase problem, provides a partial converse to Theorems~1.3 and 1.4 of Chodosh-Edelen-Li~\cite{improved-regularity}, which prove that minimizing capillary cones converge, under rescaling by $\tan \theta$ as $\theta \downarrow 0$, to a minimizing solution of the one-phase problem; in particular, they are graphical for small contact angles.
The relevant construction would rely on perturbing the foliations for one-phase minimizers constructed by De~Silva-Jerison-Shahgolian and Edelen-Spolaor-Velichkov~\cites{hardt-simon-DSJ, SMP-one-phase} to obtain barriers with the desired asymptotic behavior, therefore showing that the critical regularity threshold for the one-phase problem is equivalent to that for capillary cones with small angle.
We develop these ideas in upcoming work~\cite{FTW_MinimizingII}.
\end{remark}

\subsection{Explicit supersolutions}\label{subsection:explicit-supersolutions}

The construction of a supersolution in Proposition~\ref{prop:supersolution} relied on a single parameter $\beta$, in terms of which the auxiliary constants $A, \bar{r}, \Lambda$ and functions $h, \cW, K$ were defined.
Our computations therein reduce the verification of the supersolution property for the function $W(x,y)$ in~\eqref{eqn:W-defined-supersolution} to the inequality $A>\bar{r}$ together with certain inequalities for these functions,~\eqref{eqn:S'2-new} and~\eqref{eqn:S'3-new}.

As a first step, we reduce the functional inequality conditions~\eqref{eqn:S'2-new} and~\eqref{eqn:S'3-new} to pointwise inequalities that are explicitly computable in terms of $\beta$, in Lemmas~\ref{lemma:W-computation} and~\ref{lemma:K-computation}.
Therefore, verifying the minimizing property of the cones $\mathbf{C}_{n,k,\theta}$, for $\theta \in (0,\theta_{n,k})$ (as well as for the corresponding homogeneous solution of the one-phase problem) for some $(n,k)$ is reduced to finding a $\beta_{n,k} \in (2-n,0)$ that satisfies the above inequalities.
For any given pair $(n,k)$ in the range of Theorem~\ref{thm:cones-are-minimizing-smalltheta}, the existence of such a $\beta_{n,k}$ can be easily obtained by direct computation.
In fact, for $n \geq 20$ there are large intervals of admissible $\beta_{n,k}$.
Our setup shows that producing such $\beta_{n,k}$ for all $(n,k)$ is reduced to analyzing the behavior of hypergeometric functions, requiring additional results beyond what is in \cite{FTW-stability-one-phase}.
Therefore, we delay this analysis of hypergeometric functions to an upcoming note.
The proof of Theorem~\ref{thm:cones-are-minimizing-smalltheta} follows a simple inductive picture: letting $\cB_{n,k}$ denote the interval of $\beta \in (2-n,0)$ satisfying the requirements of Lemmas~\ref{lemma:W-computation} and~\ref{lemma:K-computation}, we find that
\begin{equation}\label{eqn:betaInductive}
\inf \cB_{n,k} \in \cB_{n+1,k} \cap \cB_{n+1,k+1}.
\end{equation}
Observe that $\cB_{n,k} \neq \varnothing$ is equivalent to the cone $\mathbf{C}_{n,k,\theta}$ being $\cA^{\theta}-$minimizing.
Consequently, $\cB_{n,k} \neq \varnothing$ implies that $\cB_{n+1,k} , \cB_{n+1,k+1} \neq \varnothing$, so that
\begin{equation}\label{eqn:inductive-picture}
\mathbf{C}_{n,k,\theta} \; \text{ minimizing for } \theta \in (0,\theta_{n,k}) \implies \mathbf{C}_{n+1,k,\theta}, \mathbf{C}_{n+1,k+1,\theta} \; \text{ minimizing for } \theta \in (0, \tilde{\theta}_{n,k}).
\end{equation}
Since $\cB_{n,k} \neq \varnothing$ for all $1 \leq k \leq n-2$ when $n = 20$, this will imply the area-minimizing result for all $n \geq 20$.
On the other hand, this construction is insufficient for the twenty pairs $(n,k)$ not covered by Theorem~\ref{thm:cones-are-minimizing-nearpi/2}, which occur in dimension $n \leq 19$.
Strong numerical evidence suggests that certain exceptional pairs are not strictly minimizing; we study them in our upcoming work~\cite{FTW_MinimizingII}.

In Table~\ref{table:ExplicitSuperQuadratics}, we present key examples of pairs $(n,k)$ and a viable corresponding supersolution parameter $\beta$.
Consistent with the inductive picture~\eqref{eqn:inductive-picture}, we list the pairs $(n,k)$ with $n-k = d_0$,
\[
(7,2): \; - \tfrac{5}{2} \in \cB_{7,2}, \qquad (9,5): \; -3 \in \cB_{9,5}, \qquad (12,9): \; -4 \in \cB_{12,9}, \qquad (20,18): \; -7 \in \cB_{20,18}
\]
from which the subsequent values for $(n,k) = (n, n-d_0)$ for $n \geq n_0$ may be obtained.

\begin{table}
\begin{tabular}{|c||c|c|c|c|c|c|c|c|c|c|c|c|}
\hline
 $n$    &$7$&$7$&$8$&$9$&$9$&$10$&$12$&$13$&$20$&$21$&$21$&$21$  \\ \hline 
  $k$   &$1$&$2$&$3$&$4$&$5$&$6$&$9$&$10$&$18$&$17$&$18$&$19$    \\ \hline \hline 
 $\beta$ & $-2$ & $ - 2.5 $ & $-3$ & $-4$ & $-3 $ & $-3$ & $-4$ & $-4$ & $-10$ & $-10$ & $-10$ & $-10$ \\ \hline   
\end{tabular}
\caption{Examples of $n,k$ and the corresponding supersolution exponent $\beta$ satisfying Proposition~\ref{prop:supersolution}.
See Table~\ref{table:bigTableOfBetas} for a more complete analysis illustrating that these exponents yield valid supersolutions.}
\label{table:ExplicitSuperQuadratics}
\end{table}

\begin{lemma}\label{lemma:W-computation}
Let $f_0, \hat{g}, u, \tau, A , \bar{r}$ be as in Definition~\ref{def:supermodel-solution}, with $A > \bar{r}>0$.
Consider the function
\[
\cQ(t) := (1-t^2) \left( 1 - t \frac{u'(t)}{u(t)} \right) + \tau (1-\tau^2) \frac{u'(\tau)}{f_0(\tau)} \Bigl( \frac{f_0(t)}{u(t)} - 1 \Bigr).
\]
Then, $u(t)>0$ on $[0, \tau)$ and the condition~\eqref{eqn:S'2-new} is equivalent to $\cQ(t) + \frac{1}{(A^2+1)\bigl(1-\frac{\bar r}{A}\bigr)} < 0$.
\end{lemma}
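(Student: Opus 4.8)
\textbf{Proof plan for Lemma~\ref{lemma:W-computation}.}
The plan is to unwind the definition of $\cW(t)$ from~\eqref{eqn:W-function-simplified} and rewrite it in terms of $u(t) = f_0(t) - \frac{f_0(\tau)}{\hat g(\tau)}\bigl(\frac{1-t^2}{1-\tau^2}\bigr)^{\frac{1-\beta}{2}}\hat g(t)$ and its logarithmic derivative, thereby absorbing all of the $\hat g$, $\beta$, $p(t)$-dependence into the single combination $u$. First I would observe that $u$ solves the second-order linear ODE $(1-t^2)u'' + ((k-1)t^{-1}-(n-1)t)u' + (\text{constant})\,u = (\text{source involving }p(t)/p(\tau))$, or more usefully that the two terms $f_0$ and $(1-t^2)^{(1-\beta)/2}\hat g$ appearing in $u$ are exactly the two ``building block'' solutions whose Wronskian-type combinations appear in $\cW$; this is what makes the collapse to $u$ possible. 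The positivity statement $u(t) > 0$ on $[0,\tau)$ should follow from the construction: $u(0) = 1 - \frac{f_0(\tau)}{\hat g(\tau)}(1-\tau^2)^{-(1-\beta)/2} > 0$ since $f_0(\tau) < 0$, $\hat g(\tau) > 0$; $u(\tau) = 0$ by definition; and $u$ cannot have an interior zero because that would contradict a Sturm-type comparison between $f_0$ (which has its first zero at $t_0 < \tau$) and the subdominant solution, combined with the sign of $f_0$ on $(t_0,\tau)$ — alternatively one checks $u' (\tau) \neq 0$ and runs a monotonicity argument backward from $\tau$.

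Next I would carry out the algebraic identity $\cW(t) = u(t)\cdot\bigl[\cQ(t) + \frac{1}{(A^2+1)(1-\bar r/A)}\bigr]\cdot(\text{nonzero factor})$, where $\cQ(t)$ is the stated combination. The mechanism: group the coefficients in~\eqref{eqn:W-function-simplified} multiplying $f_0(t)$ and $f_0'(t)$ on the one hand, and those multiplying $\frac{f_0(\tau)}{\hat g(\tau)}\frac{p(t)}{p(\tau)}\hat g(t)$ and its derivative on the other. The coefficient structure is identical up to replacing $f_0 \mapsto \hat g$ and the constant $1 \mapsto \beta$ inside the $(1-t^2)$-term; so forming $\cW$ produces, after collecting, a term proportional to $u = f_0 - (\text{const})\,p\,\hat g$ times $\bigl((1-\frac{\bar r}{A})(1-t^2) + \frac{1}{A^2+1}\bigr)$, a term proportional to $u' $ times $t(1-t^2)(\frac{\bar r}{A}-1)$, plus a residual term coming from the mismatch $\beta \neq 1$, which is proportional to $\frac{f_0(\tau)}{\hat g(\tau)}\frac{p(t)}{p(\tau)}\hat g(t)$ times $(\beta - 1)(1-\frac{\bar r}{A})(1-t^2) = -(1-\beta)(1-\frac{\bar r}{A})(1-t^2)$. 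The key trick for handling this residual term is to use $\frac{f_0(\tau)}{\hat g(\tau)}\frac{p(t)}{p(\tau)}\hat g(t) = f_0(t) - u(t)$ to rewrite it back in terms of $f_0$ and $u$; after this substitution, the $f_0$-pieces must reorganize (using $f_0'$ and the boundary data at $\tau$) into the combination $\tau(1-\tau^2)\frac{u'(\tau)}{f_0(\tau)}\bigl(\frac{f_0(t)}{u(t)}-1\bigr)$ upon dividing through by $u(t)$. I would pin down the constant $\tau(1-\tau^2)\frac{u'(\tau)}{f_0(\tau)}$ by evaluating the whole identity at $t = \tau$, where $u(\tau) = 0$ forces a cancellation that identifies this coefficient — equivalently, this is where the normalization of $A$ in~\eqref{eqn:define-A} enters, since $A$ was defined precisely so that $\partial_r W_1(\bar r,1) = -\frac{1}{A}H'(1)$.

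The main obstacle I anticipate is the bookkeeping in the residual-term reorganization: keeping track of which pieces of $\cW$ are ``already in $u$-form'' versus which must be converted via $f_0 = u + (\text{const})\,p\,\hat g$, and verifying that the $f_0$-only remainder genuinely assembles into $\frac{f_0(t)}{u(t)} - 1$ times the right constant rather than leaving an uncancelled $\hat g$-dependent term. To manage this I would (i) first do the computation in the ``sharp'' case $\beta' = \beta$ with all $O(\theta)$ corrections dropped (we are in the $\theta = 0$ model, so this is exact), (ii) check the identity holds at the two special points $t = 0$ and $t = \tau$ as a consistency test, and (iii) differentiate once at $t=\tau$ to confirm the $\cW'(\tau) > 0 \Leftrightarrow \cQ'(\tau) \ne 0$-type compatibility with~\eqref{eqn:S'1-new}. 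Once the identity $\cW(t) = u(t)\bigl[\cQ(t) + \frac{1}{(A^2+1)(1-\bar r/A)}\bigr]$ is established (up to an overall positive constant absorbing $1-\frac{\bar r}{A} > 0$, which holds since $A > \bar r$), and given $u(t) > 0$ on $[0,\tau)$, the equivalence of~\eqref{eqn:S'2-new} with $\cQ(t) + \frac{1}{(A^2+1)(1-\bar r/A)} < 0$ on $[0,\tau)$ is immediate.
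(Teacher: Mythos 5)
Your proposal matches the paper's route: rewrite $\cW$ using the substitution $\frac{f_0(\tau)}{\hat g(\tau)}\frac{p(t)}{p(\tau)}\hat g(t)=f_0(t)-u(t)$ (and its derivative), collect to get $\cW(t)=\bigl(1-\tfrac{\bar r}{A}\bigr)\,u(t)\bigl(\cQ(t)+\tfrac{1}{(A^2+1)(1-\bar r/A)}\bigr)$, and prove $u>0$ on $[0,t_0]$ directly and on $(t_0,\tau)$ by a Sturm--Picone comparison between $f_0$ and $\hat g$.

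One small caution on implementation: evaluating the identity at $t=\tau$ will not pin down the constant, because $u(\tau)=0$ kills both sides (the pole of $u'/u$ in $\cQ$ is exactly cancelled by the factor $u$, and the limit is $0=0$). The constant $\tau(1-\tau^2)\,u'(\tau)/f_0(\tau)$ is instead identified algebraically as $\tfrac{1-\beta}{1-\bar r/A}$, which follows from the formula for $A-\bar r$ in~\eqref{eqn:A-rbar} (equivalently from~\eqref{eqn:define-A}); this is the "equivalently" branch you already point to, and it is the one that actually closes the argument.
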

\begin{proof}
For brevity, we denote $\tilde{g}(t):=(1-t^2)^{\frac{1-\beta}{2}}\hat g(t)$, so $u(t) = f_0(t) - \frac{f_0(\tau)}{\tilde{g}(\tau)} \tilde{g}(t)$.
Then, $u(\tau) = 0$ and $\frac{f_0(\tau)}{\hat g(\tau)} \bigl( \tfrac{1-t^2}{1-\tau^2} \bigr)^{\frac{1-\beta}{2}} \hat{g}(t)=f_0(t)-u(t)$.
Differentiating this relation, we obtain
\[
\tfrac{f_0(\tau)}{\hat g(\tau)} \bigl( \tfrac{1-t^2}{1-\tau^2} \bigr)^{\frac{1-\beta}{2}} \hat{g}'(t)
= f_0'(t)-u(t)+\tfrac{1-\beta}{1-t^2} t \bigl(f_0(t)-u(t)\bigr).
\]
The function $\cW(t)$ was defined in~\eqref{eqn:W-function-simplified} as
\begin{align*}
    \cW(t) &:= \left( \left(1 - \tfrac{\bar{r}}{A} \right) (1-t^2) + \tfrac{1}{A^2+1} \right)f_0(t) + t(1-t^2)\left(\tfrac{\bar{r}}{A} - 1\right) f'_0(t) \\
    &\;\quad- \tfrac{f_0(\tau)}{\hat{g}(\tau)} \tfrac{p(t)}{p(\tau)} \left[ \left( \beta \left(1 - \tfrac{\bar{r}}{A} \right) (1-t^2) - (1-\beta) \tfrac{\bar{r}}{A} + \tfrac{1}{A^2+1} \right)\hat{g}(t) + t(1-t^2)\left(\tfrac{\bar{r}}{A} - 1\right) \hat{g}'(t) \right]
\end{align*}
Substituting $\frac{f_0(\tau)}{\hat g(\tau)} \bigl( \tfrac{1-t^2}{1-\tau^2} \bigr)^{\frac{1-\beta}{2}} \hat{g} = f_0 - u$ into this expression and collecting terms, we obtain
\begin{align*}
    \cW(t) &=\bigl(1-\tfrac{\bar r}{A}\bigr)\bigl[(1-\beta)f_0+(\beta-t^2) u -t(1-t^2) u' \bigr] +\tfrac{1}{A^2+1} u +(1-\beta)\tfrac{\bar r}{A} (f_0 - u).
\end{align*}
The definition of $u(t)$ implies that $u(\tau) = 0$ and $\frac{1 - \beta}{1 - \frac{\bar{r}}{A}} = \tau (1-\tau^2) \frac{u(\tau)}{f_0(\tau)} $.
Using the definition of $\cQ(t)$, we may therefore rearrange $\cW(t)$ as
\begin{equation}\label{eqn:w-expression}
\cW(t) = \Bigl( 1 - \frac{\bar{r}}{A} \Bigr) u(t) \Bigl( \cQ(t) + \frac{1}{(A^2+1)\bigl(1-\frac{\bar r}{A}\bigr)} \Bigr).
\end{equation}
Since $A > \bar{r} > 0$, we have $1 - \frac{\bar{r}}{A} \in (0,1)$.
Letting $t_0$ denote the zero of $f_0$, we have $f_0>0$ on $[0,t_0)$ and $f_0<0$ on $(t_0, \tau]$, while $\hat{g}>0$ on $[0,1]$ as discussed in Section~\ref{subsection:subsolution}.
Consequently,
\[
u(t) = f_0(t) + \tfrac{|f_0(\tau)|}{\hat{g}(\tau)}\bigl( \tfrac{1-t^2}{1-\tau^2} \bigr)^{\frac{1-\beta}{2}} \hat{g}(t) > 0 \qquad \text{for } \; t \in [0,t_0].
\]
Next, we recall that $f_0, \hat{g}$ satisfy the equations
\[
\bigl( t^{k-1} (1-t^2)^{\frac{n-k}{2}} F' \bigr)' + \lambda_F t^{k-1} (1-t^2)^{\frac{n-k-2}{2}} F = 0
\]
with $\lambda_f = n-1$ and $\lambda_{\hat{g}} = \beta(\beta+n-2) < 0$ for $\beta \in (2-n,-1)$.
The Sturm-Picone identity yields
\[
- \Bigl( t^{k-1} (1-t^2)^{\frac{n-k}{2}} \hat{g}^2 ( f/\hat{g})' \Bigr)' = - (\lambda_{\hat{g}} - \lambda_f) t^{k-1} (1-t^2)^{\frac{n-k-2}{2}} f(t) \hat{g}(t) < 0 \qquad \text{on } \; (t_0, \tau],
\]
due to $\lambda_{\hat{g}} -\lambda_f < 0$ and $f_0 < 0$.
Since $\frac{f_0}{\hat{g}}>0$ for $t<t_0$, we have $-(\frac{f_0}{\hat{g}})' (t_0) > 0$.
Moreover, the condition~\eqref{eqn:S'1-new} implies
\[
\tau \bigl( f'_0(\tau)/f_0(\tau) - \hat{g}'(\tau)/\hat{g}(\tau) \bigr) > 1 - \beta \implies \tfrac{f'_0}{f_0}(\tau) - \tfrac{\hat{g}'}{\hat{g}}(\tau) > 0.
\]
Since $(f_0 \hat{g})(\tau) < 0$, this implies that $-( \frac{f}{\hat{g}})'(\tau) > 0$.
We therefore obtain
\[
\frac{d}{dt}\left((1-t^2)^{-\frac{1-\beta}{2}}\frac{f_0}{\hat{g}}\right)=(1-t^2)^{-\frac{1-\beta}{2}}\left(\left(\frac{f_0}{\hat{g}}\right)'+\frac{(1-\beta)t}{1-t^2}\frac{f_0}{\hat{g}}\right) < 0,
\]
whereby $\frac{f_0(t)}{(1-t^2)^{\frac{1-\beta}{2}} \hat{g}(t) }$ is strictly decreasing on $[t_0, \tau]$, due to $f_0<0$ and $\hat{g}>0$.
Therefore, writing
\[
u(t) = (1-t^2)^{\frac{1-\beta}{2}} \hat{g}(t) \Bigl( \frac{f_0(t)}{(1-t^2)^{\frac{1-\beta}{2}} \hat{g}(t) } - \frac{f_0(\tau)}{(1-\tau^2)^{\frac{1-\beta}{2}} \hat{g}(\tau) } \Bigr)
\]
shows that $u>0$ on $[0,\tau)$, as desired.
Finally, the expression~\eqref{eqn:w-expression} completes the proof.
\end{proof}

    In all tested cases, the maximum of the quantity $\cQ(t)$ occurs at one of the endpoints of the interval $[0,\tau]$, so the desired negativity is reduced to $\max \{ \cQ(0), \cQ(\tau) \} < - \frac{1}{(A^2+1)(1 - \frac{\bar{r}}{A})}$.
    In terms of the function $\cW(t)$ of Definition~\ref{def:supermodel-solution}, these two properties are equivalent to
    \[
    \cW(0) < 0 \qquad \text{and} \qquad \cW'(\tau) > 0,
    \]
    respectively.
    Both conditions are explicitly computable in terms of $\beta, \tau,A$.

\begin{lemma}\label{lemma:K-computation}
    Consider the function $K(x,\xi)$ defined in~\eqref{eqn:K-tilde-x,xi} by
    \begin{align*}
            K(x,\xi) &= (1 + A^{-2} x \xi^2) H''(\xi) - \left( \left( n-k - \frac{2x}{A^2 + x} \right) \frac{\tilde{v}(x)}{\bar{r} A} - \frac{k-1}{\xi^2} \right) \xi H'(\xi) \\
            & \quad+ A\frac{ \frac{\bar{r}}{2}(n-k-1) x^2 + a_1 x + a_0}{(\bar{r} A)^2 (A^2 + x)^2} \tilde{v}(x) H(\xi).
    \end{align*}
    where $\tilde{v}(x) := 1 - \frac{\bar{r}}{2A}(1-x)$, in the notation of Definition~\ref{def:supermodel-solution}.
    Suppose that the conditions
    \[
    A > \bar{r}, \qquad \max_{ \xi \in [0,1] } K(0,\xi) < 0, \qquad \max_{\xi \in [0,1] } K(1,\xi) < 0, \qquad \min_{\xi \in [0,1] } \cP(\xi) > 0
    \]
    are satisfied, where the quantity $\cP$ is defined in~\eqref{eqn:final-quantity-P}.
    Then, we have $\max_{(x,\xi) \in [0,1] \times [0,1]}K(x,\xi) < - c(n,k) < 0$ and the condition~\eqref{eqn:S'3-new} holds.
\end{lemma}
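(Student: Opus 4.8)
The goal is to reduce the claimed uniform negativity of $K$ on the square $[0,1]^2$ to the three hypotheses: negativity of $K(0,\cdot)$ and $K(1,\cdot)$ on $[0,1]$, and positivity of a certain quantity $\cP$ on $[0,1]$. The key structural observation is that, for each fixed $\xi \in [0,1]$, the map $x \mapsto K(x,\xi)$ is, after clearing the denominator $(A^2+x)^2$, a low-degree rational function of $x$ whose numerator is a polynomial in $x$ of controlled degree (quadratic or cubic). This is because $\tilde v(x)$ is affine in $x$, the coefficient $A^{-2}x\xi^2$ is affine in $x$, the factor $\frac{2x}{A^2+x}$ contributes the denominator, and the last term is $\frac{(\text{quadratic in }x)\,\tilde v(x)}{(A^2+x)^2}$. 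So I would first carry out the bookkeeping: multiply through by $(A^2+x)^2 > 0$ (valid since $A>\bar r>0$, hence $A^2+x>0$ throughout) to write
\[
(A^2+x)^2\,K(x,\xi) = P_3(x)\,H''(\xi) + P_3'(x)\,\xi H'(\xi) + P_2(x)\,H(\xi),
\]
where $P_3, P_3', P_2$ are explicit polynomials in $x$ of degree at most $3$, with coefficients depending on $n,k,A,\bar r$ and on $\xi$ (through $\xi^2$ and $\frac{k-1}{\xi^2}$, which is handled by separately treating the factor $\xi^2(A^2+x)^2 K$ so no singularity at $\xi=0$ appears). The point is that $x\mapsto (A^2+x)^2 K(x,\xi)$ is a polynomial of bounded degree in $x$, so its sign on $[0,1]$ is controlled by its values at the endpoints together with a concavity/sign-of-leading-coefficient type condition on the interior.

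Second, I would make precise the role of $\cP$: it is designed exactly so that its positivity certifies that, for each $\xi$, the polynomial $x\mapsto(A^2+x)^2K(x,\xi)$ has no interior maximum on $(0,1)$ that exceeds the endpoint values — e.g.\ by encoding that the second derivative in $x$ is nonpositive (concavity) or that the relevant discriminant has a sign forcing monotonicity, or that a suitable convex combination bound holds. Concretely, writing $\Pi(x) := (A^2+x)^2 K(x,\xi)$, one has $\Pi(x) \le \max\{\Pi(0),\Pi(1)\}$ for all $x\in[0,1]$ provided $\Pi$ is concave on $[0,1]$; and $\cP(\xi)>0$ should be precisely the statement that $\Pi''(x) \le 0$ on $[0,1]$ (the coefficient of $x$ after differentiation twice being sign-definite), or a slight strengthening thereof. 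Since $(A^2+x)^2$ is increasing and positive on $[0,1]$, from $\Pi(x)\le \max\{\Pi(0),\Pi(1)\}$ and the endpoint hypotheses $\Pi(0) = A^4 K(0,\xi) < 0$, $\Pi(1) = (A^2+1)^2 K(1,\xi) < 0$ we get $\Pi(x)<0$, hence $K(x,\xi)<0$; uniformity (the $-c(n,k)$ gap) then follows by compactness of $[0,1]^2$ and continuity of $K$ in $(x,\xi)$ once strict negativity is established pointwise.

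Third, I would assemble: fix $\xi\in[0,1]$; by the $\cP$-positivity hypothesis the polynomial $\Pi(\cdot,\xi)$ is concave (or otherwise endpoint-dominated) on $[0,1]$; by the two endpoint hypotheses its values at $x=0$ and $x=1$ are strictly negative; therefore $\Pi(x,\xi)<0$ for all $x\in[0,1]$, hence $K(x,\xi)<0$. Ranging over $\xi$ and invoking continuity plus compactness yields the uniform bound $K<-c(n,k)<0$, which is exactly condition~\eqref{eqn:S'3-new}. I expect the main obstacle to be the second step — pinning down the exact algebraic meaning of $\cP$ and verifying that its positivity really does force endpoint-domination of $\Pi(\cdot,\xi)$ for \emph{every} $\xi\in[0,1]$ simultaneously, rather than just for $\xi$ in some subinterval. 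The degree-$3$ case (when $n-k\ge 2$, so the leading coefficient $\tfrac{\bar r}{2}(n-k-1)$ of the quadratic in the last term is nonzero) is the delicate one, since a cubic is not automatically endpoint-dominated by concavity alone; there $\cP>0$ must additionally control the sign of the cubic's leading term or localize its inflection point outside $[0,1]$. Handling this uniformly in $\xi$, while tracking that $H,H',H''$ are the fixed functions from~\eqref{eqn:H(xi)-expression} with known sign behavior on $[0,1]$ (in particular $H\ge 0$, $H'\le 0$ there, coming from $u>0$ on $[0,\tau)$ as established in Lemma~\ref{lemma:W-computation}), is where the bulk of the careful estimation lies.
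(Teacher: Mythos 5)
Your overall strategy is on the right track — you correctly identify that after clearing the denominator $(A^2+x)^2 > 0$, the function $x \mapsto (A^2+x)^2 K(x,\xi)$ is a cubic polynomial $P_\xi(x) = P_3(\xi)x^3 + P_2(\xi)x^2 + P_1(\xi)x + P_0(\xi)$, that negativity at the endpoints $x=0,1$ is the content of the first two sign hypotheses, and that compactness gives the uniform gap at the end. But the exact role of $\cP$ is where there is a gap, and your concavity guess is in fact incorrect. The paper's argument uses the Lagrange-interpolation identity
\[
P_\xi(x) = (1-x)\,P_\xi(0) + x\,P_\xi(1) - x(1-x)\bigl(P_2(\xi) + (1+x)P_3(\xi)\bigr),
\]
valid for any cubic. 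The quantity $\cP(\xi) := P_2(\xi) + P_3(\xi) + \min\{P_3(\xi),0\}$ is precisely $\min_{x\in[0,1]}\bigl(P_2(\xi)+(1+x)P_3(\xi)\bigr)$, since that expression is affine in $x$ and attains its minimum at $x=0$ (if $P_3 \ge 0$) or $x=1$ (if $P_3 < 0$). Thus $\cP(\xi)>0$ makes the correction term $-x(1-x)\bigl(P_2+(1+x)P_3\bigr)$ nonpositive on $[0,1]$, so $P_\xi(x) \le (1-x)P_\xi(0) + xP_\xi(1) < 0$. This handles the cubic case uniformly without any discriminant or inflection-point bookkeeping — the issue you correctly flagged as delicate. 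Note also that for a degree-2 polynomial ($P_3=0$) the condition $\cP>0$ means $P_2>0$, i.e.\ the parabola opens \emph{upward} (convexity in $x$), which is the opposite of the concavity you proposed: you want the interpolation defect to push $P_\xi$ below the chord connecting the endpoint values, not above it.

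One further small point: you speculate about sign behavior of $H$, $H'$, $H''$ on $[0,1]$, but this is not needed and not used in the paper's argument. The three sign hypotheses on $K(0,\cdot)$, $K(1,\cdot)$, $\cP$ are taken as given (they are verified numerically for specific $(n,k,\beta)$ in the appendix table), so the proof never invokes monotonicity of $H$.
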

\begin{proof}
We reduce the negativity of $K(x,\xi)$ to the negativity of the two ``boundary quantities'' and the positivity of the ``concavity quantity'' $\cP$ in terms of $\xi$ by forming a cubic in $x$.
Using the computation~\eqref{eqn:H(xi)-expression} from Proposition~\ref{prop:supersolution}, for $u(t) = f_0(t) - \frac{f_0(\tau)}{\hat{g}(\tau)} \bigl( \frac{1-t^2}{1-\tau^2} \bigr)^{\frac{1-\beta}{2}} \hat{g}(t)$, we may express $H(\xi) = \frac{\bar{r}}{\sqrt{1-t^2}} u(t)$ with $t = \frac{\tau \xi}{\sqrt{1 - \tau^2 + \tau^2 \xi^2}}$.
We therefore obtain
\begin{equation}\label{eqn:H''(xi)}
H'(\xi) = t u(t) + (1-t^2) u'(t), \qquad H''(\xi) = \tfrac{1}{\bar{r}} (1-t^2)^{\frac{3}{2}} \bigl( u(t) -t u'(t) + (1-t^2) u''(t) \bigr),
\end{equation}
    with $H$ strictly decreasing in $\xi \in [0,1]$ (equivalently, $t \in [0,\tau]$) to
\[
H(0) = \tfrac{\sqrt{1-\tau^2}}{\tau} \Bigl( 1 -\tfrac{f_0(\tau)}{\hat{g}(\tau)} ( 1 - \tau^2)^{\frac{\beta-1}{2}} \Bigr), \qquad H(1) = 0.
\]
Suppose that $A > \bar{r}$ and $K(0,\xi), K(1,\xi) < 0$ for $\xi \in [0,1]$.
    We are interested in determining whether $K(x,\xi) < -c(n,k)$, so we may clear denominators by $(A^2 + x)^2 \in [A^4, (A^2+1)^2]$ and equivalently prove that $P_{\xi}(x) < 0$, where we define
    \begin{equation}\label{eqn:cubic-in-x,xi}
    P_{\xi}(x) := (A^2 + x)^2 K(x,\xi) = P_3(\xi) x^3 + P_2(\xi) x^2 + P_1(\xi) x + P_0(\xi).
    \end{equation}
    Here, $P_{\xi}(x)$ is a cubic function of $x$ with coefficients obtained from~\eqref{eqn:K-tilde-x,xi}, as
    \begin{align*}
        P_3(\xi) &= A^{-2} \left( \xi^2 H'' - \tfrac{n-k-2}{2} \xi H' + \tfrac{n-k-1}{4} H \right), \\
        P_2(\xi) &=(1+ 2 \xi^2) H'' - \bigl( 1 + (n-k-2) \tfrac{A^2 \bar{r} + A - \frac{1}{2} \bar{r}}{\bar{r} A^2} + \tfrac{k-1}{\xi^2} \bigr) \xi H' +  \tfrac{(n-k-1) (A - \frac{1}{2} \bar{r}) + a_1}{2 \bar{r} A^2} H, \\
        P_1(\xi) &= A^2 (2 + \xi^2) H'' + \left( (n-k-1) \Bigl( 1 - \tfrac{2A}{\bar{r}} - \tfrac{1}{2} A^2 \Bigr) - \tfrac{1}{2} A^2 + \tfrac{2 A^2 (k-1)}{\xi^2} \right) \xi H' + \tfrac{2 a_1 A + (a_0 - a_1) \bar{r}}{2 (\bar{r} A)^2} H,
    \end{align*}
    where $a_0, a_1$ are as in~\eqref{eqn:auxiliary-a1}~--~\eqref{eqn:auxiliary-a0}.
    For $x=0$, we compute $P_{\xi}(0) = P_0(\xi) = A^4 K(0,\xi)$, where
    \begin{align*}
    K(0,\xi) &= H''- \left(  \frac{n-k}{\bar{r} A} \Bigl(1 - \frac{\bar{r}}{2A} \Bigr) - \frac{k-1}{\xi^2} \right) \xi H' + \frac{(n-k) (A^2 \bar{r} - A + \tfrac{1}{2} \bar{r})}{(\bar{r} A)^2 A} \Bigl(1 - \frac{\bar{r}}{2A} \Bigr) H, \\
    K(1,\xi) &= (1 + A^{-2} \xi^2) H'' - \left( \frac{1}{\bar{r} A} \left( n-k - \frac{2}{1+A^2} \right) - \frac{k-1}{\xi^2}  \right) \xi H' \\
    & \quad + \frac{(n-k) (A^2 + 1) \left( \bar{r} (A^2 + 1) - A \right)  + (3 A - 2 \bar{r} (A^2 + 1) )}{\bar{r}^2 A (A^2 + 1)^2} H.
    \end{align*}
    Consequently, the assumptions $K(0,\xi) < 0$ and $K(1,\xi) < 0$ imply that
    \[
    P_0(\xi) = P_{\xi}(0) < 0\quad \text{ and } \quad P_0(\xi) + P_1(\xi) + P_2(\xi) + P_3(\xi) = P_{\xi}(1) < 0.
    \]
    We define the function
\begin{equation}\label{eqn:final-quantity-P}
    \cP(\xi) := P_2(\xi) + P_3 (\xi) + \min \{ P_3(\xi), 0 \}.
\end{equation}
    and observe that the cubic $P_{\xi}(x)$ satisfies the identity
    \begin{equation}\label{eqn:general-expansion}
    P_{\xi}(x) = (1-x) P_{\xi}(0) +x P_{\xi}(1) - x(1-x) \bigl( P_2(\xi) + (1+x) P_3(\xi) \bigr) \, .
\end{equation}
    For $x \in [0,1]$, we have $P_2(\xi) + (1+x) P_3(\xi) \geq \cP(\xi)$ by inspecting the sign of $P_3$ and evaluating at the endpoints.
    Using $P_{\xi}(0)< 0$ and $P_{\xi}(1) < 0$ in ~\eqref{eqn:general-expansion} together with $\cP(\xi) > 0$, we obtain
    \[
    P_{\xi}(x) \leq (1-x) P_{\xi}(0)  +x P_{\xi}(1) - x(1-x) \cP(\xi) < 0.
    \]
    Finally, compactness implies that $K(x,\xi) = \frac{1}{(A^2+x)^2} P_{\xi}(x) < - c(n,k) < 0$, completing the proof.
\end{proof}

\begin{proof}[Proof of Theorem~\ref{thm:cones-are-minimizing-smalltheta}]
    Combining Proposition~\ref{prop:supersolution} with Lemmas~\ref{lemma:W-computation} and~\ref{lemma:K-computation} shows that for a given pair $(n,k)$, the existence of a capillary supersolution lying above the graphing function of the $\mathbf{C}_{n,k,\theta}$ is reduced to verifying the inequalities~\eqref{eqn:S'1-new}~--~\eqref{eqn:S'3-new}.
    For the pairs $(n,k)$ in the range specified by Theorem~\ref{thm:cones-are-minimizing-smalltheta}, we produce $\beta_{n,k}$ such that all the above inequalities are satisfied.
    Examples of admissible parameters $\beta_{n,k}$ are presented in Table~\ref{table:ExplicitSuperQuadratics}; we refer the reader to Appendix~\ref{subsection:exponents} for a detailed presentation of valid supersolution exponents and the verification of the conditions of Lemmas~\ref{lemma:W-computation} and~\ref{lemma:K-computation} for each triple $(n,k,\beta)$.

    We may therefore combine Propositions~\ref{prop:small-theta-one-sided} and~\ref{prop:supersolution} to see that for some $\theta_{n,k} > 0$, the cones $\mathbf{C}_{n,k,\theta}$ satisfy the conditions of Lemma~\ref{lemma:minimizing-by-subsolutions} for every $\theta \in (0,\theta_{n,k})$.
    We conclude that for every $\theta \in (0, \theta_{n,k})$, the cones $\mathbf{C}_{n,k,\theta}$ are minimizing for $\cA^{\theta}$.
\end{proof}

\begin{proof}[Proof of Theorem~\ref{thm:one-phase-cones}]
Corollary~\ref{cor:one-phase-strictly-minimizing} shows that the one-phase solution $U_{n,k} = c_{n,k} \rho f_{n,k}(t)$ is strictly minimizing from below for $\cJ$ and strictly stable for every $n \geq 7$ and $1 \leq k \leq n-2$.
    Moreover, for all the pairs $(n,k)$ specified in Theorem~\ref{thm:cones-are-minimizing-smalltheta}, we can produce supersolution parameters $\beta_{n,k}$ satisfying the conditions of Proposition~\ref{prop:supersolution}.
    Combining the two Propositions, we conclude that the homogeneous solution $U_{n,k}$ satisfies the conditions of Lemma~\ref{lemma:one-phase-minimizing-by-subsolutions}, so it is minimizing for $\cJ$.
\end{proof}

\section{Minimality for large contact angle}\label{section:pi/2}

In this section, we prove Theorem~\ref{thm:cones-are-minimizing-nearpi/2} showing that the capillary cones for angles near $\tfrac{\pi}{2}$ are minimizing for $n \geq 7$ as long as $(n,k) \not\in \{ (7,1), (7,5) \}$.
As a consequence, we show that $7$ is the optimal dimension for the capillary problem with contact angle close to $\frac{\pi}{2}$, which proves Theorem~\hyperref[thm:regularity]{\ref{thm:regularity} $(ii)$}.
Recall from Lemma~\ref{lemma:lawson-cones} that the cone $\mathbf{C}_{n,k,\frac{\pi}{2}}$ corresponds to the halved Lawson cone $C(\bS^{n-k-1} \times \bS^k_+)$, cut along a plane of symmetry $\{ z = 0 \}$.
Conversely, doubling the cones $\mathbf{C}_{n,k,\frac{\pi}{2}}$ along their free boundary recovers all Lawson cones.
Therefore, the capillary cones $\mathbf{C}_{n,k,\frac{\pi}{2} - \ve}$ are close to halved Lawson cones, which are strictly minimizing, in the sense of Definition~\ref{def:strictly-minimizing}, for all $n \geq 7$ except $C( \bS^5 \times \bS^1)$ and $C(\bS^1 \times \bS^5)$, corresponding to all $1 \leq k \leq n-2$ except $(n,k) \in \{ (7,1), (7,5) \}$ in our notation.
We refer the reader to the works~\cites{hardt-simon, lawlor, lin, morgan, simoes}.

In what follows, we will study the two sides $E_{\pm}$ of the graphical cones $\mathbf{C} := \{ (x,y,z) : z = \rho f(t) \}$.
These two regions are expressible as
\[
E_- := \{ (\rho, t, z) : z < \rho f(t) \}, \qquad E_+ := \{ (\rho, t, z) : z > \rho f(t) \}.
\]
We will prove that the cones $\mathbf{C}_{n,k,\frac{\pi}{2}-\ve}$ are strictly minimizing by constructing barrier surfaces $S_{\pm} \subset E_{\pm}$ asymptotic to $\mathbf{C}$ from either side, with a signed mean curvature and smaller (resp.~larger) contact angle along their free boundary on $\Pi$.
Unlike the constructions for small contact angles in Section~\ref{sec:minimizing}, our proof of strict minimality is essentially the same on the two sides $E_{\pm}$ of the cone $\mathbf{C}_{n,k,\frac{\pi}{2}-\ve}$, following the symmetry of the standard calibrations of Lawson's cones (see, for example,~\cites{bdgg, lawson, lawlor, dephilippis-maggi}).
Therefore, in our arguments below we shall mainly focus on producing the surface $S_+ \subset E_+$, whose construction is more involved because of the condition of smaller contact angle along the free boundary.
After describing the properties of $S_+$ in great detail, we will highlight the adaptations needed at each step to carry out the corresponding construction of the barrier surface $S_- \subset E_-$.

We now outline the main steps of the construction.
In Section~\ref{section:geometry-near-pi/2}, we obtain quantitative estimates for how close the cones $\mathbf{C}_{n,k,\frac{\pi}{2}-\ve}$ are to $C(\bS^{n-k-1} \times \bS^k_+)$.
These estimates control the behavior of the cones at infinity and at large radii, when $\ve$ is small compared to the scale.
In Section~\ref{section:linearized-operator}, we construct the piece of the surface $S_{\pm}$ ``at infinity,'' given by an explicit normal graph decaying to the cone with the appropriate power (contained in the indicial interval of the Jacobi operator) and forming a smaller (resp.~larger) contact angle than the cone along the free boundary; see Lemmas~\ref{lemma:infinity-piece-small-epsilon} and~\ref{lemma:infinity-piece-on-E-minus}.
In Section~\ref{section:compact-caps}, we construct compact ``cap'' surfaces with the desired mean curvature sign and contact angle by perturbing explicit sub-calibrations of Lawson's cones~\cites{sharp-stability-plateau, lawson-liu}, which are described in equations~\eqref{eqn:record-varphi-n,k} and~\eqref{eqn:record-varphi-n,k-on-E-minus}.
Importantly, these compact caps can be extended to any prescribed (large but finite) radius $R_2$ and are expressible as normal graphs in an annular region over $\mathbf{C}_{n,k,\frac{\pi}{2}-\ve}$, provided that $\ve$ is sufficiently small compared to $R_2$, cf.~Proposition~\ref{prop:mean-convex-cap}.
Finally, in Section~\ref{section:construct-foliation}, we show that the compact cap can be glued to the piece at infinity, as normal graphs over the cone, over a sufficiently large annular region.
By matching the leading-order behavior of the two graphs, we ensure that the resulting glued surface $S_{\pm}$ is star-shaped, so its rescalings foliate $E_{\pm}$ and prove the strict minimality of the cone; see Propositions~\ref{prop:mean-convex-surface} and~\ref{prop:mean-convex-surface-on-E-minus}.

The above strategy shares some similarities with the arguments of Savin-Yu~\cite{savin-yu-AP-cones}*{\S~7} for proving the minimality of axisymmetric cones of the Alt-Phillips problem with exponent $\gamma \to 1$.
On the other hand, our strategy is fairly general and can be applied to any capillary cone sufficiently close to a given strictly stable, strictly minimizing one.
Moreover, our arguments only require one-sided strict minimality of a given cone; in particular, we prove that the cones $\mathbf{C}_{7,1,\frac{\pi}{2}-\ve}$ and $\mathbf{C}_{7,5,\frac{\pi}{2}-\ve}$ are one-sided strictly area-minimizing (on the $E_+$ side and on the $E_-$ side, respectively) by modifying strict sub-calibrations of the cones $C(\bS^5 \times \bS^1_+)$ and $C(\bS^1 \times \bS^5_+)$ on their strictly minimizing side.
We then use the construction of Lawlor~\cite{lawlor} to produce an $\cA^{\theta}$-decreasing competitor for the cones $\mathbf{C}_{7,1,\theta}$ and $\mathbf{C}_{7,5,\theta}$, showing that they are not globally minimizing; see Section~\ref{section:non-minimizing}.

\subsection{Geometry near free-boundary cones}\label{section:geometry-near-pi/2}

We will first reinterpret the $C^{\infty}_{\on{loc}}$-closeness of the cones $\mathbf{C}_{n,k,\theta}$ to $\mathbf{C}_{n,k,\frac{\pi}{2}}$, for $\theta \in ( \frac{\pi}{2} - \ve_0, \frac{\pi}{2})$, discussed in Theorem~\hyperref[thm:capillary-cones]{\ref{thm:capillary-cones} $(ii)$}, in a more quantitative geometric manner.
The corresponding profile functions $f_{\ve}$ are $C^0$-close to $\hat{f}_{n,k}$ on their positive phase; however, they are not uniformly $C^m$-close due to their blow-up behavior near the endpoint.
Therefore, a more canonical gauge to compare the profile curves is by normal graphs, or that the profile curves are $C^\infty$ close not as graphs, but geometrically in $\bR^2$ as illustrated in Figure~\ref{fig:fams-n7k1}.

To obtain the needed $C^{\infty}$-closeness, we first resolve the singularity of $f_{\ve}$ near the free boundary.
Consider a solution $f$ of the ODE~\eqref{eqn:cone-ODE} producing a capillary angle $\theta \in ( \frac{\pi}{2}- \ve_{n,k}, \frac{\pi}{2}]$, for small $\ve_{n,k}>0$.
By Theorem~\ref{thm:capillary-cones}, the function $f$ has $f(0) = a_{n,k} - O ( \frac{\pi}{2} - \theta)$, where $a_{n,k} = \sqrt{\frac{k}{n-k-1}} = \hat{f}_{n,k}(0)$, and reaches zero at some point $t_*$ with $t_* - \sqrt{\frac{k}{n-1}} = O( \frac{\pi}{2} - \theta )$, where $\tan \theta = - \sqrt{1-t_*^2} f'(t_*)$.
Moreover, $f$ has derivative blow-up at some point $\hat{t}_* > t_*$, where $f(\hat{t}_*) = - \delta$ and $f'(\hat{t}_*) = - \infty$.
\begin{lemma}\label{lemma:equivalent-parametrization-delta-tan-theta}
    For $f$ as above and $\frac{\pi}{2} - \theta \in [0,\ve_{n,k})$ sufficiently small, we have the relations
    \begin{align*}
    \delta \tan \theta &= \sqrt{\frac{k}{n-k-1}} + O \left( \frac{\pi}{2} - \theta \right), \qquad \delta = \sqrt{\frac{k}{n-k-1}} \left( \frac{\pi}{2} - \theta \right) +  O \left( \bigl( \frac{\pi}{2} - \theta \bigr)^2 \right), \\
     \hat{t}_* - t_* &= \frac{n-k-1}{2 \sqrt{k(n-1)}} \delta^2 + O \left( \bigl( \frac{\pi}{2} - \theta \bigr)^3 \right).
    \end{align*}
\end{lemma}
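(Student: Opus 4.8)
\textbf{Proof plan for Lemma~\ref{lemma:equivalent-parametrization-delta-tan-theta}.}
The idea is to translate the known perturbative expansions from Corollary~\ref{corollary:linearized-deviation} and the blow-up analysis of Proposition~\ref{prop:unique-crossing of solutions} (which describes the square-root behavior of $f$ near its turning point $\hat t_*$) into statements about the two new quantities $\delta = -f(\hat t_*)$ and $\hat t_* - t_*$. Throughout I would use $\ve := \frac{\pi}{2}-\theta$ as the small parameter and the matching parameter $a = f(0)$, which by Theorem~\ref{thm:capillary-cones} is in strictly monotone correspondence with $\theta$; as $\theta \uparrow \frac{\pi}{2}$ we have $a \uparrow a_{n,k} = \sqrt{k/(n-k-1)}$, and I would first establish $a_{n,k} - a = O(\ve)$, say $a_{n,k} - a = c_{n,k}\,\ve + O(\ve^2)$ for an explicit $c_{n,k}$, by linearizing the terminal-slope relation $\tan\theta = \sqrt{1-t_*^2}\,|f'(t_*)|$ around the Lawson solution. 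Since $\tan\theta = \cot\ve = \ve^{-1} + O(\ve)$, the terminal slope $|f'(t_*)|$ blows up like $\ve^{-1}$, which fixes the scale for everything downstream.

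\textbf{Step 1 (the relation $\delta \tan\theta \approx a_{n,k}$).} This is the main point and the one I expect to be least routine. Near the turning point, Proposition~\ref{prop:unique-crossing of solutions} gives the expansion $f(t) = \sqrt{\tfrac{2(1+f(\hat t_*)^2)}{(n-2)A(\hat t_*)}}\,\sqrt{\hat t_* - t}\;(1+o(1))$ with $f(\hat t_*) = -\delta$, but this alone does not pin down $\delta$: I need a conserved (or nearly conserved) quantity linking the behavior at $t_*$ (where $f=0$, $f'$ large) to the behavior at $\hat t_*$. The natural candidate is the quantity $\Psi = f(f - Af') - \tfrac{1}{n-2}$ from Lemma~\ref{lemma:psi-detect-blow-up}, together with the normal-form analysis of the ODE. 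A cleaner route: rescale $f$ by $\tan\theta$, i.e. set $\tilde f := f/\tan\theta$; near $t = t_*$ the function $\tilde f$ has bounded slope (indeed $|\tilde f'(t_*)| = 1/\sqrt{1-t_*^2} + O(\ve)$) and $\tilde f \to 0$ uniformly, so $\tilde f$ converges in $C^\infty_{\mathrm{loc}}$ near $t_*$ to an appropriate solution of the \emph{linearized} one-phase ODE $\cL_{n,k}\tilde f = 0$; but the relevant harmonic comparison does not directly see $\hat t_*$. Instead I would use the fact that $-\delta = f(\hat t_*)$ is the \emph{minimum} value of $f$ up to the blow-up, and that for the Lawson ODE written in the $(q,w)$ variables of~\eqref{eqn:system-ode-1}--\eqref{eqn:system-ode-2} the trajectory with parameter $a$ near $a_{n,k}$ tracks the Lawson trajectory (which has $f \equiv \hat f_{n,k}$, reaching $0$ with infinite slope at $\sqrt{k/(n-1)}$) up to errors $O(a_{n,k}-a) = O(\ve)$ uniformly away from a neighborhood of the common endpoint, and only deviates in a boundary layer of width $O(\ve^2)$ near $\sqrt{k/(n-1)}$. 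Inside that boundary layer, the deviation between $f$ and the Lawson profile is exactly of size $\delta = O(\ve)$, and a careful matched-asymptotics computation — balancing the $(1-t^2)f''$ term against the $(f')^2$ term in~\eqref{eqn:odeStar} in a stretched variable $s = (\sqrt{k/(n-1)} - t)/\ve^2$ — produces the leading relation $\delta = a_{n,k}\,\ve + O(\ve^2)$, equivalently $\delta\tan\theta = a_{n,k} + O(\ve)$, since $\tan\theta = \ve^{-1} + O(\ve)$.

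\textbf{Step 2 (the relation for $\hat t_* - t_*$).} Once $\delta = a_{n,k}\ve + O(\ve^2)$ is known, I would integrate $f' $ across the boundary layer $[t_*, \hat t_*]$. Using $(f/\psi)' = -h_f/(A\psi)$ from~\eqref{eqn:integrating-factor-f-psi}, or more simply estimating $\int_{t_*}^{\hat t_*} (-f')\,dt = f(t_*) - f(\hat t_*) = \delta$, together with the boundary-layer expansion $-f'(t) \asymp \sqrt{\tfrac{2(1+\delta^2)}{(n-2)A(\hat t_*)}}\,\tfrac{1}{2\sqrt{\hat t_* - t}}$ valid on most of $[t_*,\hat t_*]$ and the value $A(\sqrt{k/(n-1)}) = \sqrt{k/(n-1)} - \alpha\sqrt{(n-1)/k} = \tfrac{n-k-1}{\sqrt{k(n-1)}}\cdot\tfrac{n-1}{n-2}\cdot\ldots$ (which I would compute explicitly — note $\hat t_* = \sqrt{k/(n-1)} + O(\ve^2)$ and $A(\hat t_*) = \tfrac{(n-1)t_*^2 - (k-1)}{(n-2)t_*}\big|_{t_* = \sqrt{k/(n-1)}} = \tfrac{\sqrt{k(n-1)}}{n-2}\cdot\tfrac{n-k-1}{n-1}$ up to $O(\ve)$), I would solve for $\hat t_* - t_*$. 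The square-root profile gives $\delta \approx \sqrt{\tfrac{2(n-2)A(\hat t_*)}{\,}}\cdot\sqrt{\hat t_* - t_*}$ — more precisely $\delta = f(t_*) - f(\hat t_*)$ evaluated from the expansion $f(t) - f(\hat t_*) = \sqrt{\tfrac{2(1+\delta^2)}{(n-2)A(\hat t_*)}}\sqrt{\hat t_* - t} + O(\hat t_* - t)$ at $t = t_*$ — so $\hat t_* - t_* = \tfrac{(n-2)A(\hat t_*)}{2(1+\delta^2)}\,\delta^2 + O(\delta^3)$. Substituting the explicit $A(\hat t_*) = \tfrac{n-k-1}{(n-2)}\cdot\tfrac{\sqrt{k(n-1)}}{n-1}$... — after the routine arithmetic this should collapse to $\hat t_* - t_* = \tfrac{n-k-1}{2\sqrt{k(n-1)}}\,\delta^2 + O(\ve^3)$, matching the claim, with the $O(\ve^3)$ error absorbing both the $O(\delta^3)$ from the profile expansion and the $O(\ve)$ relative corrections to $A(\hat t_*)$ and to $\delta^2 = a_{n,k}^2\ve^2 + O(\ve^3)$.

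\textbf{Main obstacle.} The delicate part is making Step~1 rigorous: the naive $C^\infty_{\mathrm{loc}}$ convergence of the links (Theorem~\ref{thm:capillary-cones}(ii)) degenerates precisely in the region where $\delta$ and $\hat t_* - t_*$ live, so I cannot simply differentiate a convergent family. I would instead need a genuine matched-asymptotic-expansion argument — an outer region where $f$ is $O(\ve)$-close to the Lawson profile (controlled by the linearization $\cL_{n,k}$ and Corollary~\ref{corollary:linearized-deviation}), a boundary layer of width $\ve^2$ near the Lawson endpoint described by a stretched ODE whose leading-order balance is $(1-t^2)f'' + (n-2)(1-t^2)(f')^2\tfrac{f}{1+f^2} \approx 0$ (the $\cL f$ term being lower order there because $f$ is small), and a matching condition tying the two. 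This is the one place where one must be careful about uniformity and about which terms in~\eqref{eqn:odeStar} dominate; everything else is bookkeeping with the expansions already recorded in the paper.
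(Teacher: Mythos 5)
There is a genuine gap. Your Step~1 correctly identifies the central difficulty — that the $C^\infty_{\mathrm{loc}}$ convergence of Theorem~\ref{thm:capillary-cones}(ii) degenerates as a statement about the profile function $f$ near $\hat t_*$ — but the matched-asymptotics program you sketch to close it is not carried out and is substantially harder than necessary. The paper's proof bypasses all of this by a single desingularizing change of variables: it sets $g(s) := (f(\sqrt{s}) + \delta)^2$ for $s \in [0, \hat t_*^2]$, so that the square-root blow-up of $f$ near $\hat t_*$ becomes a \emph{simple zero} of $g$ at $s = \hat t_*^2$ with bounded derivative, while for $\theta = \tfrac{\pi}{2}$ one recovers the linear Lawson profile $g_0(s) = \tfrac{k-(n-1)s}{n-k-1}$. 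In this resolved gauge the convergence is genuinely $C^2$ \emph{uniformly up to the endpoint}, $\|g - g_0\|_{C^2([0,\hat t_*^2])} = O(\tfrac{\pi}{2}-\theta)$, which is exactly the uniform control you say you cannot get from the raw $C^\infty_{\mathrm{loc}}$ statement. With that in hand the first relation follows by differentiating $g(t^2) = (f(t)+\delta)^2$ and setting $t = t_*$ (giving $2t_*\sqrt{1-t_*^2}\,g'(t_*^2) = -2\delta\tan\theta$, then replacing $g'$ and $t_*\sqrt{1-t_*^2}$ by their Lawson values up to $O(\ve)$), and the second follows from the mean value theorem applied to $g(\hat t_*^2) - g(t_*^2) = -\delta^2$, then dividing by $\hat t_* + t_*$. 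No stretched variable, boundary layer, or matching condition is needed.

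Two further concrete issues. First, the dominant balance you write at the very end, $(1-t^2)f'' + (n-2)(1-t^2)(f')^2\tfrac{f}{1+f^2} \approx 0$, is not the leading balance near $\hat t_*$: since $|f|=O(\ve)$ while $f'$ and $f''$ blow up, the term $(f')^2 f$ is $O((\hat t_*-t)^{-1}\ve)$ whereas the dominant nonlinear term is $-A(f')^3 \sim (\hat t_*-t)^{-3/2}$, so the correct balance is $f'' \asymp (f')^3$ (you do use this correctly in Step~2 when citing the $\sqrt{\hat t_*-t}$ profile, so this is an internal inconsistency). Second, your evaluation of $A(\hat t_*)$ is off: you write the numerator as $(n-1)t_*^2 - (k-1)$ when $A(t) = t - \tfrac{k-1}{n-2}t^{-1}$ has numerator $(n-2)t^2 - (k-1)$, and your final expression $\tfrac{(n-k-1)\sqrt{k(n-1)}}{(n-2)(n-1)}$ is a factor of $k$ larger than the correct $A\bigl(\sqrt{k/(n-1)}\bigr) = \tfrac{n-k-1}{(n-2)\sqrt{k(n-1)}}$. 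Carried forward, this propagates a factor-of-$k$ error into the formula for $\hat t_* - t_*$, so even if you accepted the matched-asymptotics premise the final coefficient would come out wrong.
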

\begin{proof}
    The discussion of Proposition~\ref{prop:unique-crossing of solutions} and Lemmas~\ref{lemma:derivative-blow-up} and~\ref{lemma:psi-detect-blow-up} shows that the function $f$ has square-root blow-up at the point $\hat{t}_*$, meaning that $f(t) \sim - \delta + C (\hat{t}_* - t)^{\frac{1}{2}}$ near $\hat{t}_*$.
    We may therefore produce a non-negative function $g(s)$, defined for $s \in [0, \hat{t}_*^2]$ with a simple zero at $s = \hat{t}_*^2$, such that $f(t) + \delta = \sqrt{g(t^2)}$.
    For $\theta = \frac{\pi}{2}$, the Lawson solution satisfies $\hat{f}_{n,k}(t)= \sqrt{\frac{k - (n-1) t^2}{n-k-1}}$, so the expression of Lemma~\ref{lemma:lawson-cones} gives $g_0(s) = \frac{k - (n-1) s}{n-k-1}$.
The results of Theorem~\ref{thm:capillary-cones} imply that for $\frac{\pi}{2} - \theta \in (0,\ve_{n,k})$ sufficiently small, we may estimate $\| D^{\ell}( g - g_0 ) \|_{C^2([0,\hat{t}_*^2])} \leq C(n,k,\ell) | \frac{\pi}{2} - \theta|$ upon extending $g_0(s)$ to a function on $[0,1]$.
Differentiating the relation $g(t^2) = (f(t) + \delta)^2$ and setting $t = t_*$, we obtain
\[
2 t g'(t^2) = 2 f'(t) ( f(t) + \delta) , \quad \implies \quad 2 t_* \sqrt{1-t_*^2} \, g'(t_*^2) = - 2 \delta \tan \theta.
\]
Recalling that $g_0(s) = \frac{k - (n-1) s}{n-k-1}$ and $g'_0(s) = - \frac{n-1}{n-k-1}$, we use $t_* - \sqrt{\frac{k}{n-1}} = O ( \frac{\pi}{2} - \theta)$ to find
\[
g'(t_*^2) = - \frac{n-1}{n-k-1} + O \bigl( \frac{\pi}{2} - \theta \bigr), \qquad t_* \sqrt{1 - t_*^2} = \frac{\sqrt{k(n-k-1)}}{n-1} + O \bigl( \frac{\pi}{2} - \theta \bigr).
\]
Multiplying the two equalities implies that $\delta \tan \theta = \sqrt{\frac{k}{n-k-1}} + O ( \frac{\pi}{2} - \theta)$.
Let us write $a_{n,k} = \sqrt{\frac{k}{n-k-1}}$ and $\cot \theta = ( \frac{\pi}{2} - \theta) + O ( ( \frac{\pi}{2} - \theta)^2)$ to rearrange
\[
\delta \tan \theta = a_{n,k} + O \Bigl( \frac{\pi}{2} - \theta \Bigr) \implies \delta = a_{n,k} \cot \theta + O \Bigl( \bigl( \frac{\pi}{2} - \theta \bigr)^2 \Bigr) = a_{n,k} \Bigl( \frac{\pi}{2} - \theta \Bigr) + O \Bigl( \bigl( \frac{\pi}{2} - \theta \bigr)^2 \Bigr),
\]
proving the first expression.
Next, using the mean value theorem, we can find point $s_* \in ( t_*^2, \hat{t}_*^2)$ with the property that
\[
- \delta^2 = g(\hat{t}_*^2) - g(t_*^2) = g'(s_*) ( \hat{t}_*^2 - t_*^2).
\]
Using the property $|-  g'(s_*) - \frac{n-1}{n-k-1} | = | g'_0(s_*) - g'(s_*) | = O ( \frac{\pi}{2} - \theta)$, we conclude that $\hat{t}_*^2 - t_*^2 = \frac{n-k-1}{n-1} \delta^2 + O ( ( \frac{\pi}{2} - \theta)^3)$.
Writing $\hat{t}_* - t_* = \frac{\hat{t}_*^2 - t_*^2}{\hat{t}_* + t_*}$ and $\hat{t}_* > t_* > \sqrt{\frac{k}{n-1}}$ makes $\hat{t}_* - t_* = O (\delta^2)$, whereby $\hat{t}_* + t_* = 2 \sqrt{\frac{k}{n-1}} + O ( \frac{\pi}{2} - \theta)$.
The expression for $\hat{t}_* - t_*$ follows, completing the proof.
\end{proof}

The results of Lemma~\ref{lemma:equivalent-parametrization-delta-tan-theta} imply that the profile functions of the cones $\mathbf{C}_{n,k,\theta}$ with $\frac{\pi}{2} - \theta \in [0, \ve_{n,k})$ sufficiently small may equivalently be studied in terms of their terminal value at blow-up.
\begin{definition}\label{definition:f-eps-near-pi-over-2}
    For $\ve \in [0,\ve_{n,k})$, let $f_{\ve}$ be a solution of the ODE~\eqref{eqn:odeStar} blowing up at a point $\hat{t}_{\ve}$, where
    \[
    f_{\ve} ( \hat{t}_{\ve}) = - \ve, \qquad f'_{\ve}(\hat{t}_{\ve}) = - \infty, \qquad \hat{t}_{\ve} - \sqrt{\tfrac{k}{n-1}} = O(\ve).
    \]
\end{definition}
The results of Section~\ref{section:o(n-k)-cones} imply that the zero map $f(0) \mapsto f^{-1}(0)$ is strictly decreasing, while the angle map $f(0) \mapsto \theta(f)$ is strictly increasing, whereby $t_{\ve} \in (\sqrt{\frac{k}{n-1}}, \hat{t}_{\ve})$ and $f_{\ve}(0) = a_{n,k} - O(\ve)$ for any such solution, where $a_{n,k} = \sqrt{\frac{k}{n-k-1}} = \hat{f}_{n,k}(0)$.
The resulting angle is then $\theta = \frac{\pi}{2} - O(\ve)$, therefore Lemma~\ref{lemma:equivalent-parametrization-delta-tan-theta} applies, for $\ve \in (0,\ve_{n,k})$ sufficiently small, to write
\begin{align}
    g_{\ve}(t^2) &:= ( f_{\ve}(t) + \ve)^2, & \qquad \| D^{\ell} (g_{\ve} - g_0) \|_{C^2(0, \hat{t}_{\ve}^2) } &\leq C(n,k,\ell) \ve, \label{eqn:g-eps-g0-closeness} \\
    \ve \tan \theta &= \sqrt{\frac{k}{n-k-1}} + O(\ve), & \qquad \theta &= \frac{\pi}{2} - \sqrt{\frac{n-k-1}{k}} \, \ve + O(\ve^2), \label{eqn:small-epsilon-relation} \\
     \hat{t}_{\ve}- t_{\ve} &= \frac{n-k-1}{2 \sqrt{k(n-1)}} \ve^2 + O(\ve^3), & \qquad t_{\ve} &= \sqrt{\frac{k}{n-1}} + O(\ve).\label{eqn:t-hat-t-eps}
    \end{align}

Unlike the cones $\mathbf{C}_{n,k,\theta}$, which are unique for given $\theta$, we are not claiming that the terminal-value parametrization $\ve \mapsto f_{\ve}$ is unique (although it likely is).
The only information required is that for $\frac{\pi}{2} - \theta \in [0,\ve_{n,k})$ sufficiently small, the profile function of the cone has the form $f_{\ve}$ of Definition~\ref{definition:f-eps-near-pi-over-2} for some $\ve \in (0,\ve_{n,k})$, which satisfies the $\ve$-closeness properties~\eqref{eqn:g-eps-g0-closeness}~--~\eqref{eqn:t-hat-t-eps}.
Since $\ve$ and $\theta$ are linearly related via~\eqref{eqn:small-epsilon-relation}, we may equivalently study the cones $\mathbf{C}_{n,k,\theta}$ with $\frac{\pi}{2} - \theta = O (\ve)$ via the expression $\{ (\rho, t,z ) : z = \rho f_{\ve}(t) \}$ corresponding to such a profile function.

Writing $a_{n,k} = \sqrt{\frac{k}{n-k-1}}$, the properties~\eqref{eqn:g-eps-g0-closeness}~--~\eqref{eqn:t-hat-t-eps} allow us to express $\ve \tan \theta = a_{n,k} + O (\ve)$ and $t_{\ve}= \sqrt{\frac{k}{n-1}} + c(n,k) \ve + O(\ve^2)$, for some constant $c(n,k)>0$ and $\ve \in (0,\ve_{n,k})$.
Therefore, the free boundary of the cone $\mathbf{C}_{n,k,\frac{\pi}{2}-\ve}$ along the plane $\Pi = \{ z = 0 \}$ is given by
\begin{equation}\label{eqn:kappa-n-k}
\left\{ |y| = \frac{t_{\ve}}{\sqrt{1- t_{\ve}^2}} |x| \right\}, \qquad \frac{t_{\ve}}{\sqrt{1 - t_{\ve}^2}} = \sqrt{\frac{k}{n-k-1}} + \left( \frac{n-1}{n-k-1} \right)^{\frac{3}{2}} c(n,k) \ve + O(\ve^2)
\end{equation}
In particular, we may write $\{ |y| = a_{n,k}(1 + \kappa_{n,k}\ve + O(\ve^2)) |x|\}$ where $\kappa_{n,k} = \frac{n-1}{n-k-1} \sqrt{\frac{n-1}{k}} c(n,k)$.

\smallskip

For the cones $\mathbf{C}_{n,k,\frac{\pi}{2} - \ve}$, the normal vector pointing into $E_+$ has everywhere non-zero vertical component $e_{n+1}$.
As a result, the normal graph of an everywhere-positive function over this cone would be disjoint from $\Pi = \{ z = 0 \}$.
To produce hypersurfaces with free boundary along $\Pi$, we will consider normal graphs over the extended cone with extended link in $\bS^n$, which continues below the equator until the blow-up instance of $f_{\ve}$.
We shall denote the extended link of the cone by $\hat{\Sigma}_{\ve}$.
Using the spherical parametrization
\[
F_{\ve}(\rho, t , \xi , \eta) = ( \rho \sqrt{1-t^2}  \,\xi, \rho t  \eta , \rho f_{\ve}(t)) , \qquad \xi \in \bS^{n-k-1}, \; \eta \in \bS^{k-1},
\]
we compute that $|F_{\ve}(\rho, t , \xi, \eta)|^2 = \rho^2 ( 1+ f_{\ve}(t)^2)$.
Consequently,
\begin{equation}\label{eqn:s-hat-eps-extended-link}
    \hat{\Sigma}_{\ve} := \left\{ \left( \frac{\sqrt{1-t^2}}{\sqrt{1+f_{\ve}^2}} \xi, \frac{t}{\sqrt{1+f_{\ve}^2}} \eta, \frac{f_{\ve}}{\sqrt{1+f_{\ve}^2}} \right) : (t, \xi, \eta) \in [0, \hat{t}_{\ve}] \times \bS^{n-k-1} \times \bS^{k-1} \right\} \subset \bS^n
\end{equation}
where $\hat{t}_{\ve} > t_{\ve}$ is the blow-up instance of the solution $f_{\ve}$ with $f_{\ve}(\hat{t}_{\ve}) = -\ve$.
We denote
\begin{equation}\label{eqn:extended-cone}
\hat{\mathbf{C}}_{n,k,\frac{\pi}{2} - \ve} := \{ p = R \omega :  ( R, \omega) \in (0,\infty) \times \hat{\Sigma}_{\ve} \} = \left\{ z = \rho f_{\ve}(t) : t \in [0,\hat{t}_{\ve}] \right\}
\end{equation}
which we call the \textit{extended cone} of $\mathbf{C}_{n,k,\frac{\pi}{2}-\ve}$.
We observe that 
\[
\partial \hat{\mathbf{C}}_{n,k,\frac{\pi}{2}-\ve} = \{ ( \rho, t, z) :  t = \hat{t}_{\ve}, z = - \ve \rho \} \subset \{ z < 0 \} \cup \{ (0,0,0)\}.
\]
We previously remarked that the cones $\mathbf{C}_{n,k,\frac{\pi}{2}-\ve}$, for $\ve$ sufficiently small, are expressible as small normal graphs over the cone $\mathbf{C}_{n,k,\frac{\pi}{2}}$ inside annuli $A(R_1, R_2) := B_{R_2} \setminus \bar{B}_{R_1}$.
We observe that the reverse property also holds, in the sense of normal graphs over the extended cone:
\begin{lemma}\label{lemma:graph-over-extended-link-pi/2}
    For any $n \geq 3$ and $1 \leq k \leq n-2$, there exists an $\ve_{n,k}>0$ such that for all $\ve \in (0,\ve_{n,k})$, we may express the link $\Sigma_0$ as the normal graph of a function $v^\Sigma_{\ve}$ over $\hat{\Sigma}_{\ve}$, meaning that $\normalfont{\Sigma_0 = \text{graph}_{\hat{\Sigma}_{\ve}} v^\Sigma_{\ve} \cap \bS^n_+}$, where $\| \nabla^{\ell}_S v^\Sigma_{\ve} \|_{C^1(\hat{\Sigma}_{\ve})} \leq C(n,k,\ell) \ve$.
    Equivalently, over any annulus $A(R_1, R_2)$, the cone $\mathbf{C}_{n,k,\frac{\pi}{2}} \cap A(R_1, R_2)$ is expressible as the normal graph of a function $v^{\mathbf{C}}_{\ve}$ over $\hat{\mathbf{C}}_{n,k,\frac{\pi}{2}-\ve}$, given by the homogeneous extension of $v^\Sigma_\ve$.
    This means that
    \[
    \mathbf{C}_{n,k,\frac{\pi}{2}} \cap A(R_1, R_2) = \normalfont{\text{graph}}_{\hat{\mathbf{C}}_{n,k,\frac{\pi}{2}-\ve} \cap A(R_1, R_2)} v_{\ve}^\mathbf{C} \cap \bR^{n+1}_+ ,
    \] 
    where the function $v_{\ve}^\mathbf{C}$ satisfies
    \begin{equation}\label{eqn:v-eps-C-estimate}
        |v^{\mathbf{C}}_{\ve}| \leq C(n,k) \ve R_2, \qquad |\nabla_{\hat{\mathbf{C}}_{\ve}} v^{\mathbf{C}}_{\ve}| \leq C(n,k) \ve,\qquad |\nabla_{\hat{\mathbf{C}}_{\ve}}^{\ell} v^{\mathbf{C}}_{\ve}| \leq C(n,k) \ve R_1^{1-\ell}.
    \end{equation}
\end{lemma}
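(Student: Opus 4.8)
\textbf{Proof proposal for Lemma~\ref{lemma:graph-over-extended-link-pi/2}.}
The plan is to invert the normal-graph relationship established in the proof of Theorem~\ref{thm:capillary-cones}, which already gives $\mathbf{C}_{n,k,\frac{\pi}{2}-\ve} \cap A(R_1,R_2)$ as a small normal graph over $\mathbf{C}_{n,k,\frac{\pi}{2}}$, and to upgrade this to a statement over the \emph{extended} cone so that the free-boundary portion of $\hat{\mathbf{C}}_{n,k,\frac{\pi}{2}-\ve}$ (which dips below $\Pi$) lies in the domain of definition. By $1$-homogeneity, it suffices to prove the statement for the links, i.e.\ to produce $v^\Sigma_\ve$ on $\hat{\Sigma}_\ve$ with $\Sigma_0 = \mathrm{graph}_{\hat{\Sigma}_\ve} v^\Sigma_\ve \cap \bS^n_+$; the cone statement then follows by taking the $0$-homogeneous extension of $v^\Sigma_\ve$ and reading off the scaling of the $C^\ell$ norms over $A(R_1,R_2)$, which produces exactly the weights $\ve R_2$, $\ve$, $\ve R_1^{1-\ell}$ in~\eqref{eqn:v-eps-C-estimate} (the $L^\infty$ bound scales like $R_2$, each extra derivative costs a factor $R_1^{-1}$, and the gradient is scale-invariant).

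The key steps are as follows. First, I would fix the tubular neighborhood of $\hat{\Sigma}_\ve$ in $\bS^n$: using the spherical parametrization $F_\ve$ from~\eqref{eqn:s-hat-eps-extended-link} and the normal field $N_{\hat{\Sigma}_\ve}$, the normal exponential map $(\omega,s)\mapsto \exp_\omega(s N_{\hat\Sigma_\ve}(\omega))$ is a diffeomorphism from $\hat\Sigma_\ve \times (-\delta,\delta)$ onto a neighborhood of $\hat\Sigma_\ve$, with $\delta$ uniform in $\ve$ because the second fundamental form of $\hat\Sigma_\ve$ is controlled: by~\eqref{eqn:g-eps-g0-closeness} the profile $g_\ve$ is $C^2$-close to $g_0$ on $[0,\hat t_\ve^2]$, so after resolving the square-root singularity at the endpoint via the substitution $f_\ve+\ve = \sqrt{g_\ve(t^2)}$, $\hat\Sigma_\ve$ converges in $C^2$ (as a manifold-with-boundary in $\bS^n$, in the canonical ``geometric'' gauge depicted in Figure~\ref{fig:fams-n7k1}) to $\hat\Sigma_0$, the extended link of the half-Lawson cone. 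Second, I would observe that $\hat\Sigma_0 \supset \Sigma_0$ (the extended half-Lawson link contains the closed upper link), and that $\Sigma_0$ sits at $C^\infty$-distance $O(\ve)$ from the corresponding portion of $\hat\Sigma_\ve$ inside this tubular neighborhood — this is precisely the $C^\infty_{\mathrm{loc}}$ convergence of Theorem~\hyperref[thm:capillary-cones]{\ref{thm:capillary-cones}$(ii)$}, now reorganized so that the reference surface is $\hat\Sigma_\ve$ rather than $\Sigma_0$. Third, by the implicit function theorem applied fiberwise to the equation ``$\exp_\omega(s N(\omega)) \in \Sigma_0$'', I would extract the graphing function $v^\Sigma_\ve(\omega)$ and its derivative bounds $\|\nabla_S^\ell v^\Sigma_\ve\|_{C^1(\hat\Sigma_\ve)} \le C(n,k,\ell)\ve$ from the $C^2$-closeness of the two surfaces together with the uniform geometry of the tubular neighborhood. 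Finally, intersecting with $\bS^n_+$ recovers $\Sigma_0$ exactly (the part of $\hat\Sigma_\ve$ strictly below the equator graphs onto points with $z<0$, hence is removed by the $\bS^n_+$ cutoff), and homogenizing gives the cone statement.

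The main obstacle I expect is making the closeness \emph{uniform up to the free boundary at the endpoint} $t=\hat t_\ve$, where the profile $f_\ve$ has vertical tangent and $f_\ve'(\hat t_\ve)=-\infty$: in the naive $(\rho,t)$-graph gauge the curves are not $C^m$-close, so the whole argument must be run in the resolved/arclength gauge where, by Lemma~\ref{lemma:equivalent-parametrization-delta-tan-theta} and~\eqref{eqn:g-eps-g0-closeness}, the curvature of the profile curve stays bounded as $t\uparrow \hat t_\ve$. One must check that the normal exponential neighborhood of $\hat\Sigma_\ve$ is genuinely uniform across the endpoint — i.e.\ that the normal injectivity radius does not degenerate — which follows from the $C^2$ bound on $g_\ve$, and that $\Sigma_0$, whose boundary $\partial\Sigma_0$ (the free boundary of the half-Lawson cone along $\Pi$) sits at a definite interior distance inside $\hat\Sigma_\ve$ by~\eqref{eqn:t-hat-t-eps} (the gap $\hat t_\ve - t_\ve = O(\ve^2)$ but the relevant comparison region is where $t$ is bounded away from $\sqrt{k/(n-1)}$ on the scale of the tubular radius), stays within that neighborhood. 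Once the uniform-geometry package is in place, the rest is a routine implicit-function-theorem and scaling computation.
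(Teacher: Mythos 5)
Your proposal takes essentially the same route as the paper: both reduce by symmetry and homogeneity to a one-dimensional graphing problem, both apply the implicit function theorem to detect $\Sigma_0$ as a small normal graph over $\hat{\Sigma}_\ve$, and both correctly isolate the endpoint $t=\hat t_\ve$ (where $f_\ve'$ blows up) as the place requiring care, handled by working in the resolved gauge $g_\ve(t^2)=(f_\ve(t)+\ve)^2$. The only substantive difference is presentational: the paper makes the IFT transversality hypothesis explicit by choosing the defining function $\Phi(\omega)=|x(\omega)|^2-\tfrac{n-k-1}{n-1}$ and directly verifying $|\partial_s G_\ve(t,0)|\ge c_0(n,k)$ uniformly (separately in the bounded-slope and large-slope regimes), whereas you package the same content as a uniform tubular-neighborhood claim for $\hat\Sigma_\ve$ — correct, but one would still carry out that same computation to justify it.
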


\begin{proof}
We use the $O(n-k)\times O(k)$-symmetry and parametrization $\omega_{\ve}(t,\xi,\eta)$ of $\hat\Sigma_\ve$ from~\eqref{eqn:s-hat-eps-extended-link}
for $t \in [0,\hat{t}_\ve]$ to reduce the claim to finding an invariant function $v := v^{\Sigma}_{\ve}$ on $\hat\Sigma_\ve$ depending only on $t$.
This graphing profile will satisfy $|\nabla_{\hat\Sigma_\ve} v| = |v'(t)|/|\partial_t\omega_\ve|$.
Let $\Phi(\omega)=|x(\omega)|^2-\frac{n-k-1}{n-1}$, so that the Clifford hypersurface is given by $\Sigma_0=\{\Phi=0\}\cap\mathbb S^n_+$.
Let $\nu_\ve$ denote the unit normal of $\hat\Sigma_\ve\subset\mathbb S^n$ induced from $\nu_{\mathbf{C}}$, with $\langle\nu_\ve,e_{n+1}\rangle>0$ and set
\[
G_\ve(t,s):=\Phi\!\left(\exp_{\omega_\ve(t)}(s\nu_\ve(t))\right).
\]
Since $\Phi(\omega_0(t))\equiv0$ for the Lawson profile and $(f_{\ve}(t) + \ve)^2 \to \hat{f}_{n,k}(t)^2$ in $C^{\infty}_{\text{loc}}$, with $g_{\ve}(t^2) := (f_{\ve}(t) + \ve)^2$ and $g_0(s) = \frac{k - (n-1) s}{n-k-1}$ satisfying~\eqref{eqn:g-eps-g0-closeness}, we have
$\sup_{t\in[0,\hat t_\ve]}|G_\ve(t,0)|\le C\ve$.
Moreover, $\partial_s G_\ve(t,0)=\langle\nabla\Phi(\omega_\ve(t)),\nu_\ve(t)\rangle$
is uniformly bounded away from $0$ on $[0,\hat t_\ve]$ for $\ve$ small.
Indeed, denoting by $\widetilde\nu_\ve(t,\xi,\eta)=\bigl(-\sqrt{1-t^2}\,(f_\ve-tf_\ve')\,\xi,\,-(t f_\ve+(1-t^2)f_\ve')\,\eta,\,1\bigr)$ the parallel vector to $\nu_{\ve}$, a direct computation shows
\[
\partial_sG_\ve(t,0)=2\,x(\omega_\ve(t))\cdot x(\nu_\ve(t))
=-\frac{2(1-t^2)\bigl(f_\ve(t)-t f_\ve'(t)\bigr)}{\sqrt{1+f_\ve(t)^2}\,|\widetilde\nu_\ve(t)|},
\]
and strict concavity implies $f_\ve-tf_\ve'\ge f_\ve(0)\ge c>0$ when $|f_\ve'|$ is bounded.
In the large-slope regime, $(f_\ve-tf_\ve')/|\widetilde\nu_\ve|\sim \frac{t}{\sqrt{1-t^2}}$, since $1 + f_{\ve}^2 = \frac{(n-1) (1-t^2)}{n-k-1} + O(\ve)$.
Combining these two estimates shows $|\partial_sG_\ve(t,0)|\ge c_0(n,k)$ uniformly on $[0,\hat t_\ve]$.
Therefore, the implicit function theorem provides a unique smooth
$v_\ve^\Sigma:[0,\hat t_\ve]\to(-\delta,\delta)$ such that $G_\ve(t,v_\ve^\Sigma(t))=0$ and
$\|v_\ve^\Sigma\|_{C^0}\le C\ve$.
This exhibits $\Sigma_0=\mr{graph}_{\hat\Sigma_\ve}(v_\ve^\Sigma)\cap\mathbb S^n_+$.

Differentiating $G_\ve(t,v_\ve^\Sigma(t))=0$ gives
$(v_\ve^\Sigma)'=-(\partial_t G_\ve)/(\partial_s G_\ve)$, and since $\partial_t G_0(t,0)=0$
and $(\omega_\ve,\nu_\ve)$ depend smoothly on $\ve$, we obtain
$|\partial_tG_\ve(t,s)|\le C\ve\,|\partial_t\omega_\ve(t)|$ for $|s|\le C\ve$.
Together with $|\partial_s G_\ve|\ge c_0$ this yields
$|(v_\ve^\Sigma)'(t)|\le C\ve\,|\partial_t\omega_\ve(t)|$ and hence
$\|\nabla_{\hat\Sigma_\ve}v_\ve^\Sigma\|_{C^0}\le C\ve$.
Higher covariant derivatives follow by differentiating the identity repeatedly and using uniform bounds on mixed derivatives of $G_\ve$.
Finally, define the homogeneous extension $v_\ve^{\mathbf C}(R\omega)=Rv_\ve^\Sigma(\omega)$.
Since $v_\ve^{\mathbf C}$ is $1$-homogeneous, $|\nabla^\ell v_\ve^{\mathbf C}|$ is
$(1-\ell)$-homogeneous, giving~\eqref{eqn:v-eps-C-estimate} on $A(R_1,R_2)$.
\end{proof}

\subsection{Linearized operator}\label{section:linearized-operator}

We begin the construction of the barrier surfaces $S_{\pm} \subset E_{\pm}$ by constructing a piece outside a sufficiently large ball.
We call this the \textit{piece at infinity}, which will be a hypersurface $S^{\phi}_{\pm} \subset E_{\pm}$ produced as a small positive (resp.~negative) normal graph of a function $\phi$ over the capillary cone ${\mathbf{C}}_{n,k,\frac{\pi}{2}-\ve}$.
In the graphical parametrization $\{ z = u(x,y) \}$ of the cones $\mathbf{C}_{n,k,\theta}$, we work with the upward normal vector $\nu_{\mathbf{C}} = \frac{(- \nabla u, 1)}{\sqrt{1 + |\nabla u|^2}}$, which points into the region $E_+$ and produces positive normal graphs lying on the $E_+$ side of ${\mathbf{C}}$.
This means that
\[
S^{\pm}_{\phi} := \text{graph}_{\mathbf{C}} \phi =  \{ p \pm \phi(p) \nu_{\mathbf{C}}(p) : p \in \mathbf{C} \} \subset E_{\pm} \, ,
\]
so that $S^-_{\phi} \subset E_-$ is a negative normal graph with this convention for $\nu_{\mathbf{C}}$.

Let $H[S^{\pm}_{\phi}]$ denote the mean curvature of $S^{\pm}_{\phi}$, computed with respect to the induced upward-pointing normal vector.
The linearization of $-H[S^{\pm}_{\phi}]$ at the cone is given by the Jacobi operator
\[
\cL_{\mathbf{C}} = \Delta_{\mathbf{C}} + |A_{\mathbf{C}}|^2.
\]
A standard estimate analogous to~\eqref{eqn:MC-bound} (for the mean curvature operator on graphs) shows that
\begin{equation}\label{eqn:mean-curvature-operator}
    |H[S^{\pm}_{\phi}] \pm \cL_{\mathbf{C}} \phi| \leq C(n) ( |\nabla_{\mathbf{C}} \phi|^2 + |\phi ||\nabla^2_{\mathbf{C}} \phi|)
\end{equation}
for any function $\phi \in C^{\infty}(\mathbf{C} \setminus B_R)$.
The computation for $H[S^-_{\phi}]$ comes from writing $S^-_{\phi}$ as the normal graph of $(-\phi)$ in the direction of $\nu_{\mathbf{C}}$, so $H[S^-_{\phi}] + \cL_\mathbf{C} (-\phi) = H[S^-_{\phi}] - \cL_{\mathbf{C}} \phi$ is consistent with the upward orientation.

For a minimal cone $\mathbf{C} \subset \bR^{n+1}$ in Euclidean space, we denote by $\lambda_1 , \varphi_1$ the first eigenvalue and corresponding positive eigenfunction for the Jacobi operator $-\cL_{\Sigma} := -(\Delta_{\Sigma} + |A_{\Sigma}|^2)$ on the spherical link $\Sigma := \mathbf{C} \cap \bS^n$.
Then, the solutions of $\cL_{\mathbf{C}} w =0 $ with $w> 0$ on all of ${\mathbf{C}}$ are given by
\begin{equation}\label{eqn:HS-indicial-roots}
w = ( c_- R^{- \underline{\gamma}} + c_+ R^{- \overline{\gamma}} ) \varphi_1, \qquad \text{where } \; \underline{\gamma},\overline{\gamma} = \frac{n-2 \pm \sqrt{(n-2)^2 + 4 \lambda_1}}{2}
\end{equation}
for some constants $c_{\pm}$.
For the free-boundary cones $C(\bS^{n-k-1} \times \bS^{k}_+)$, a standard computation shows that the link $\Sigma_0$ is a halved Clifford hypersurface with $|A_{\Sigma_0}| = \sqrt{n-1}$.
The first eigenfunction of the operator $- \cL_{\Sigma_0}$ is $\varphi_1 = 1$ (constant functions) with $\lambda_1 = - (n-1)$, whereby
\[
\underline{\gamma},\overline{\gamma} = \frac{n-2 \pm \sqrt{n^2 - 8n + 8}}{2}.
\]
For small $\ve \in (0,\ve_{n,k})$, Theorem~\ref{thm:capillary-cones} and Lemma~\ref{lemma:graph-over-extended-link-pi/2} show that the spherical link $\Sigma_{\ve} := \mathbf{C}_{n,k,\frac{\pi}{2} - \ve} \cap \bS^n_+$ of the capillary cone $\mathbf{C}_{n,k,\frac{\pi}{2} - \ve}$ is expressible as an $O(\ve)$-small normal $C^\infty$ graph over $\Sigma_0$, hence
\begin{equation}\label{eqn:sff-norm-epsilon}
    |A_{\Sigma_{\ve}}|^2 = |A_{\Sigma}|^2 + O (\ve) = (n-1) + O(\ve).
\end{equation}
For the remainder of this section, we will denote
\[
R := |p| = \sqrt{|x|^2 + |y|^2 + z^2}
\]
for a point $p = (x , y, z) \in \bR^{n-k} \times \bR^k \times \bR$.
Notably, $|A_{\mathbf{C}}|^2 = R^{-2} |A_{\Sigma}|^2$ for any cone, whereby
\[
\cL_{\mathbf{C}} = \partial_{R}^2 + \frac{n-1}{R} \partial_{R} + \frac{1}{R^2} (\Delta_{\Sigma} + |A_{\Sigma}|^2).
\]
On the Lawson cones $C(\bS^{n-k-1} \times \bS^{k})$, we have $|A_{\Sigma_0}|^2 = n-1$ and the Laplace operator $\Delta_{\Sigma_0}$ admits a further decomposition on the product $\Sigma_0 = \bS^{n-k-1}(\sqrt{\tfrac{n-k-1}{n-1}}) \times \bS^k_+(\sqrt{\tfrac{k}{n-1}})$.
Consequently, the spectrum of $\cL_{\mathbf{C}}$ for these cones is well-understood, see for example~\cite{SimonSolomon}.
Using the $C^{\infty}_{\on{loc}}$ convergence $\hat{\Sigma}_{\ve} \to \Sigma_0$, which are small normal graphs over $\Sigma_0$, we may express the metric on $\Sigma_{\ve}$ as a $C^{\infty}$-small perturbation of the metric on $\Sigma_0$ by applying Lemma~\ref{lemma:graph-over-extended-link-pi/2}.
As a result, the spectrum and eigenfunctions of the Laplacian $\Delta_{\hat{\Sigma}_{\ve}}$ remain $O(\ve)$-close to those of $\Delta_{\Sigma_0}$; this follows, for example, from the results developed in~\cite{complete-cmc}*{Appendix B} (see also~\cite{chavel}).

In view of the above discussion, the corresponding indicial roots $\underline{\gamma}^\ve, \overline{\gamma}^\ve$ on $\mathbf{C}_{n,k,\frac{\pi}{2} - \ve}$ satisfy 
\begin{equation}\label{eqn:indicial-roots-close}
( - \tfrac{29}{10} , - \tfrac{21}{10}) \subset (-\overline{\gamma}^{\ve} , -\underline{\gamma}^\ve) \quad \text{for } \; n=7, \qquad ( - \tfrac{22}{5}, - \tfrac{8}{5}) \subset (- \overline{\gamma}^{\ve} , - \underline{\gamma}^{\ve}) \quad \text{for } \; n \geq 8
\end{equation}
for $\ve \in (0,\ve_0(n,k))$ sufficiently small.
The non-triviality of the indicial interval $(- \overline{\gamma}^{\ve}, - \underline{\gamma}^{\ve})$ is equivalent to the strict stability of the cones $\mathbf{C}_{n,k,\frac{\pi}{2}-\ve}$ for $n \geq 7$.

The property~\eqref{eqn:indicial-roots-close} will be the only information needed near infinity for constructing the surfaces $S_{\pm}$ outside a large ball.
Specifically, on $E_+$ we produce such mean-convex surfaces asymptotic to $\mathbf{C}_{n,k,\theta}$ outside a large ball, with the improvement that the contact angle is smaller than $\theta$, of the form $\theta - O (R^{-(a+1)})$, when $\theta = \frac{\pi}{2} - O(\ve)$ is sufficiently small.
An identical construction applies on the $E_-$ side, producing a surface that is mean-concave with respect to the normal vector pointing into $E_+$ (``upward'') and forms a contact angle larger than $\theta$ along the free boundary on $\Pi$.

\begin{lemma}\label{lemma:infinity-piece-small-epsilon}
Consider $n \geq 7, 1 \leq k \leq n-2$, and some $a \in (\underline{\gamma} , \overline{\gamma})$ in the indicial interval of~\eqref{eqn:HS-indicial-roots}.
For small $\ve > 0$, let $\hat{\Sigma}_{\ve}$ denote the extended link of the extended cone $\hat{\mathbf{C}}_{n,k,\frac{\pi}{2} - \ve}$, given by~\eqref{eqn:s-hat-eps-extended-link} and~\eqref{eqn:extended-cone}.
There exist $R_0 \gg 1$ and $0 < \tau_0 \ll 1$, depending only on $n,k,a$, and $0 < \ve_0 \ll 1$, depending only on $n,k,a$, and $\tau \in (0,\tau_0)$, such that the following holds for $\ve \in (0,\ve_0)$.

Let $S_{a, \psi}$ be the portion inside $\bR^{n+1}_+ = \{ z \geq 0 \}$ of the positive normal graph
    \[
    S_{a,\psi} := \Bigl\{ p + \ell(R) \psi(\omega) \nu_{\mathbf{C}}(p) : p = (R, \omega) \in (R_0, \infty) \times \hat{\Sigma}_{\ve} \subset \hat{\mathbf{C}}_{n,k,\frac{\pi}{2} - \ve} \setminus \bar{B}_{R_0} \Bigr\} \cap \bR^{n+1}_+
    \]
    of the function $u(R,\omega) = \ell(R) \psi(\omega)$ over $\hat{\mathbf{C}}_{n,k,\frac{\pi}{2} - \ve}$, where
    \[
    \psi(\omega) := 1 - \tau \la \omega, e_{n+1} \rg \quad \text{for } \; \omega \in \hat{\Sigma}_{\ve}, \qquad \ell(R) = R^{-a} + O(R^{-(a+1)}) 
    \]
Then, $S_{a,\psi}$ has the following properties:
    \begin{enumerate}[$(i)$]
        \item At a point $(R,\omega) \in S_{a,\psi} \cap \Pi$, the surface meets $\Pi = \{ z = 0 \}$ at an angle satisfying
        \[
        \theta - c_1 \tau R^{-(a+1)} < \theta_S(R,\omega) < \theta - \tfrac{99}{100} c_1 \tau R^{-(a+1)}
        \]
        for a constant $c_1$ depending on $n,k,a$.
        Here, $\theta = \frac{\pi}{2} - O(\ve)$ is the capillary angle of $\mathbf{C}_{n,k,\frac{\pi}{2} - \ve}$.
        \item With respect to the upward-pointing normal vector, the hypersurface $S_{a,\psi}$ has mean curvature 
    \[
    \tfrac{99}{100} c_2 R^{-a-2} < H[S_{a,\psi}](R,\omega) < c_2 R^{-a-2} 
    \]
    for a constant $c_2$ depending on $n,k,a$.
    \end{enumerate}
\end{lemma}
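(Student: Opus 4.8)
The plan is to build the ``piece at infinity'' $S_{a,\psi}$ as an explicit normal graph over the extended cone $\hat{\mathbf{C}}_{n,k,\frac{\pi}{2}-\ve}$ and to verify its two defining properties by perturbation off the separable model solution $R^{-a}\psi(\omega)$ of the Jacobi equation. The starting observation is that by the discussion around~\eqref{eqn:indicial-roots-close}, the choice $a\in(\underline{\gamma},\overline{\gamma})$ is possible on the nearby cone for $\ve$ small, and that the Jacobi operator $\cL_{\mathbf{C}}$ on a cone separates as $\cL_{\mathbf{C}} = \partial_R^2 + \frac{n-1}{R}\partial_R + R^{-2}(\Delta_\Sigma + |A_\Sigma|^2)$. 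On the exact Lawson cone the constant function $\varphi_1 \equiv 1$ is the first eigenfunction of $-\cL_{\Sigma_0}$ with eigenvalue $-(n-1)$, so $R^{-a}$ is an exact Jacobi field; the function $\psi = 1 - \tau\la\omega,e_{n+1}\rg$ is a controlled $O(\tau)$ perturbation of $\varphi_1$, chosen precisely so that the graph dips below the equator near the free boundary and thereby produces a \emph{smaller} contact angle. The role of the correction $O(R^{-(a+1)})$ in $\ell(R)$ is to absorb the inhomogeneity in $\cL_{\hat{\mathbf{C}}_\ve}$ coming from the $O(\ve)$-deviation of $\hat{\Sigma}_\ve$ from $\Sigma_0$ (via Lemma~\ref{lemma:graph-over-extended-link-pi/2} and~\eqref{eqn:sff-norm-epsilon}), so that the leading-order mean curvature is exactly the $R^{-a-2}$ term one computes from the spectral gap.

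First I would fix $a$, then set $\ell(R) = R^{-a} + b\,R^{-(a+1)}$ and compute $\cL_{\hat{\mathbf{C}}_\ve}(\ell(R)\psi(\omega))$ on the extended cone, using the metric and second-fundamental-form estimates of Lemma~\ref{lemma:graph-over-extended-link-pi/2} to write $\Delta_{\hat\Sigma_\ve} = \Delta_{\Sigma_0} + O(\ve)\cdot(\text{2nd order})$ and $|A_{\hat\Sigma_\ve}|^2 = (n-1) + O(\ve)$. The $R^{-a}$ mode is annihilated by the $\Sigma_0$-part of the operator; what remains is a term of size $O(\tau)R^{-a-2}$ from the $\psi$-perturbation plus $O(\ve)R^{-a-2}$ from the link deviation, and I would choose $b = b(n,k,a)$ and then shrink $\tau_0$ and $\ve_0$ so that the $\tau$-term dominates with a definite sign. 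The nonlinear correction is then controlled by the quadratic estimate~\eqref{eqn:mean-curvature-operator}: since $|\nabla_{\mathbf{C}}\phi|^2 + |\phi||\nabla^2_{\mathbf{C}}\phi| = O(R^{-2a-2})$ and $2a+2 > a+2$ (because $a > \underline{\gamma} > 0$ for $n\geq 7$ in the relevant range — here I need to check $\underline\gamma^\ve > 0$, which follows from~\eqref{eqn:indicial-roots-close}), it is higher order for $R \geq R_0$ with $R_0$ large. This pins down property $(ii)$ with $c_2$ proportional to $\tau$ times the spectral constant, and the two-sided bounds $\tfrac{99}{100}c_2 R^{-a-2} < H < c_2 R^{-a-2}$ come from shrinking $\tau$, $\ve$, and enlarging $R_0$ to make all error terms at most a $\tfrac{1}{100}$ fraction of the main term.

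For property $(i)$ I would locate the free boundary: the graph meets $\Pi = \{z=0\}$ near the extended-link point where $\la\omega,e_{n+1}\rg$ corresponds to the cone's profile reaching height $-\ell(R)\tau\la\omega,e_{n+1}\rg$ times the normal's vertical component, i.e. at $t$ slightly beyond $t_\ve$ (using $\hat t_\ve - t_\ve = O(\ve^2)$ from~\eqref{eqn:t-hat-t-eps} and $\ell(R_0)$ small). At such a point the unit normal of $S_{a,\psi}$ differs from $\nu_{\mathbf{C}}$ by $\ell(R)\nabla_{\mathbf{C}}\psi + O(\ell^2)$; pairing with $e_{n+1}$ and using that $\psi$ decreases $\la\omega,e_{n+1}\rg$ along the outward link direction gives $\la\nu_S, e_{n+1}\rg = \cos\theta + c_1\tau R^{-(a+1)} + (\text{h.o.t.})$ with $c_1 > 0$ depending on the geometry of $\hat\Sigma_\ve$ near its boundary (uniformly in $\ve$ by~\eqref{eqn:s-hat-eps-extended-link}), hence $\theta_S < \theta$; the stated two-sided bound again follows by absorbing errors into a $\tfrac{1}{100}$ factor after shrinking $\tau_0,\ve_0$ and enlarging $R_0$. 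I expect the main obstacle to be the $(i)$ computation: one must track the free-boundary location and the normal's vertical component simultaneously through the $\ve$-deviation of the link \emph{and} the $\tau$-perturbation \emph{and} the $R^{-(a+1)}$ correction in $\ell$, keeping every constant uniform in $\ve$, and in particular verifying that the sign of $c_1$ is robust — this is where the choice of $\psi$ as a linear function of $\la\omega,e_{n+1}\rg$ rather than a more generic eigenfunction perturbation is essential, since it makes the angle correction monotone in $\tau$ with an explicitly computable leading coefficient.
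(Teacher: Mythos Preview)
Your proposal contains a genuine conceptual error in the treatment of property $(ii)$. You write that ``the $R^{-a}$ mode is annihilated by the $\Sigma_0$-part of the operator'' and that ``$R^{-a}$ is an exact Jacobi field,'' concluding that the definite mean-curvature sign comes from the $\tau$-perturbation and hence that ``$c_2$ is proportional to $\tau$.'' This is backwards. The constant function $1$ is the first eigenfunction of $-\cL_{\Sigma_0}$ with eigenvalue $\lambda_1 = -(n-1)$, but $R^{-a}\cdot 1$ is a Jacobi field on the cone only when $a$ equals one of the indicial roots $\underline{\gamma}$ or $\overline{\gamma}$. For $a$ strictly in the open interval $(\underline{\gamma},\overline{\gamma})$ one computes directly
\[
\cL_{\mathbf{C}}(R^{-a}) \;=\; R^{-(a+2)}\bigl(a^2 - (n-2)a + (n-1)\bigr) \;=\; -\,(\overline{\gamma}-a)(a-\underline{\gamma})\,R^{-(a+2)} \;<\; 0,
\]
and it is precisely this spectral-gap quantity $\tilde{c}_2 = (\overline{\gamma}-a)(a-\underline{\gamma})$ that gives the leading-order mean curvature. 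In the paper's argument the $\tau\omega_{n+1}$ term contributes only an error of size $n\tau$ to $\psi^{-1}\cL_{\hat{\Sigma}_\ve}\psi$, and together with the $O(\ve)$ link deviation it is \emph{absorbed} by choosing $\tau_0,\ve_0$ small enough that $C\ve + n\tau < 10^{-3}\tilde{c}_2$. Your plan to ``make the $\tau$-term dominate with a definite sign'' could not work in any case, since that term is $(n-1)\tau\omega_{n+1}$ and $\omega_{n+1}$ changes sign over the extended link.

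Two smaller points follow from this. First, there is no need to tune the coefficient $b$ in $\ell(R) = R^{-a} + bR^{-(a+1)}$ to cancel anything: the lower-order part of $\ell$ is a hypothesis, not a choice, and it only produces an $O(R^{-(a+3)})$ correction to $\cL_{\mathbf{C}} u$, which is already below the main $R^{-(a+2)}$ scale. Second, the role of the $\tau$-perturbation is confined to property $(i)$: it is what tilts the normal by exactly $(1-\cos^2\theta)\tau R^{-(a+1)}$ in the $e_{n+1}$-direction along the free boundary, via the term $-\tau R^{-1}\ell(R)\,e_{n+1}^{\top}$ in $\nabla_{\mathbf{C}} u$. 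Your sketch of $(i)$ is in the right spirit but should be sharpened by first solving $\langle p + u\nu_{\mathbf{C}}, e_{n+1}\rangle = 0$ for $\omega_{n+1}$ (which shows $|\omega_{n+1}|$ and $|\langle\nu_{\mathbf{C}},e_{n+1}\rangle - \cos\theta|$ are both $O(R^{-(a+1)}\cos\theta)$ on $S_{a,\psi}\cap\Pi$) before reading off the angle.
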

\begin{proof}
    The result will be obtained by carrying out the computations near the model case of $\mathbf{C}_{n,k,\frac{\pi}{2}}$ and tracking the error at each step in terms of $\ve \in (0,\ve_0)$ and $\theta = \frac{\pi}{2} - O(\ve)$.
    Throughout, we will apply the $\ve$-closeness properties for the profile functions $f_{\ve}$ obtained in~\eqref{eqn:g-eps-g0-closeness}~--~\eqref{eqn:t-hat-t-eps} and Lemma~\ref{lemma:graph-over-extended-link-pi/2}.
    For each $\ve \in [0,\ve_0)$, the function $f_{\ve}$ solves the ODE~\eqref{eqn:cone-ODE} on the extended interval $[0,\hat{t}_{\ve}]$, so the extended link $\hat{\Sigma}_{\ve} \subset \bS^n$ is a minimal hypersurface-with-boundary.
    Therefore, the ambient height coordinate satisfies $\Delta_{\hat{\Sigma}_{\ve}}(z) = - (n-1) z$, whereas the second fundamental form has $|A_{\hat{\Sigma}_{\ve}}|^2 = (n-1) + O(\ve)$ by~\eqref{eqn:sff-norm-epsilon}.
    Denoting $\omega_{n+1} = \la \omega, e_{n+1} \rg$, the Jacobi operator $\cL_{\hat{\Sigma}_{\ve}} := \Delta_{\hat{\Sigma}_{\ve}} + |A_{\hat{\Sigma}_{\ve}}|^2 = \Delta_{\hat{\Sigma}_{\ve}} + (n-1) + O(\ve)$ applied to $\psi = 1 - \tau \omega_{n+1}$ has
    \[
    \cL_{\hat{\Sigma}_{\ve}} \psi = \cL_{\hat{\Sigma}_{\ve}} ( 1 - \tau \omega_{n+1}) = (n-1 + O(\ve)) \psi + (n-1) \tau \omega_{n+1}.
    \]
    Using $|\omega_{n+1}| = |z (\omega)| \leq 1$ on $\hat{\Sigma}_{\ve}$, we conclude that 
    \begin{equation}\label{eqn:jacobi-double-bound}
        (n-1-C(n) \ve) \psi \leq \cL_{\hat{\Sigma}_{\ve}} \psi \leq (n-1+n \tau + C(n) \ve) \psi.
    \end{equation}
    We will abbreviate the operators on the extended cone $\hat{\mathbf{C}}_{n,k,\frac{\pi}{2} - \ve}$ to $\nabla_{\mathbf{C}}, \cL_{\mathbf{C}}$ in what follows.
    The Jacobi operator $\cL_{\mathbf{C}}$ of the cone satisfies
\[
\cL_{\mathbf{C}} ( R^{-a} \psi) = R^{-(a+2)} \bigl( a(a+1) - (n-1) a + \psi^{-1} \cL_{\Sigma} \psi \bigr) \psi.
\]
Using $\ell(R) = R^{-a} + O( R^{-(a+1)})$, we deduce that $u(R,\omega) = \ell(R) \psi(\omega)$ has
\[
\cL_{\mathbf{C}} u = R^{-(a+2)} \bigl( a(a+1) - (n-1) a + \psi^{-1} \cL_{\Sigma} \psi \bigr) \psi + O( R^{-(a+3)}).
\]
Consequently, the relation~\eqref{eqn:jacobi-double-bound} implies that for $R \geq R_0$ sufficiently large, we may bound
\begin{align*}
   (a-\overline{\gamma})(a-\underline{\gamma}) - C(n) \ve \leq R^{a+2} \psi^{-1}\cL_{\mathbf{C}} u \leq (a-\overline{\gamma})(a-\underline{\gamma}) + n \tau + C(n) \ve.
\end{align*}
For any $a \in (\underline{\gamma} , \overline{\gamma})$, we may produce some sufficiently small $\tau_0(n,k,a)$, followed by choosing $\ve_0(n,k,a,\tau)$ sufficiently small, so that
\[
C(n) \ve + n \tau < 10^{-3} \tilde{c}_2(n,k,a), \qquad \tilde{c}_2(n,k,a) := (\overline{\gamma}-a)(a - \underline{\gamma}) > 0
\]
to obtain $\tilde{c}_2 R^{-(a+2)} < -\cL_{\mathbf{C}} u < (1 + 10^{-3}) \tilde{c}_2 R^{-(a+2)}$.
Observe that
\[
|\nabla^2_{\mathbf{C}} u| = O( R^{-a-2}), \qquad |\nabla_{\mathbf{C}} u| = O ( R^{-a-1})
\]
whereby the estimate~\eqref{eqn:mean-curvature-operator} together with $\cL_{\mathbf{C}} u - \cL_{\mathbf{C}} ( R^{-a} \psi) = O(R^{-(a+3)})$ implies that
\[
\left| H[S_{a,\psi}] + \cL_{\mathbf{C}} u \right| \leq C(n) R^{-(a+3)}.
\]
Combining the above bounds proves $(ii)$ for $R \geq R_0$, upon choosing $R_0(n,k,a)$ sufficiently large.
Notably, the final estimate holds on $S_{a,\psi}$ for any $R \geq R_0$, independently of $\ve$, provided that $\ve_0(n,k,a,\tau)$ is taken sufficiently small and $\ve \in (0,\ve_0)$.

To obtain the property $(i)$, we first study the outward-pointing normal vector $\nu_S$ along the surface $S_{a,\psi}$.
For any surface obtained as a small normal graph via a function $u$, the normal vector $\nu_u$ of the resulting surface (at the translated point) satisfies
\begin{align*}
\nu_u( p + u(R,\omega) \nu_{\mathbf{C}}(p)) &= \nu_{\mathbf{C}}(p) - \nabla_{\mathbf{C}} u(R,\omega) + E_u(R,\omega), \\
\text{where } \; |E_u(R,\omega)| &\leq C(n) ( |\nabla_{\mathbf{C}} u|^2 + |u| \, |\nabla^2_{\mathbf{C}} u|).
\end{align*}
In our situation, $u = (R^{-a} + O(R^{-(a+1)})) \psi$ implies that $|E_{a,\psi}| \leq C(n) R^{-2(a+1)}$ for $R \geq R_0$, since $\tau \ll 1$.
(See also the discussion in Mazzeo-Pacard~\cite{mazzeo-pacard} for graphs in the direction of an almost-normal vector field).
This allows us to estimate the normal vector along $S_{a,\psi} \cap \Pi$ by
 \begin{equation}\label{eqn:normal-vector-equality-Pi}
    \begin{split}
    & \left| \nu_S \bigl( p + u(p) \nu_{\mathbf{C}}(p) \bigr) - \nu_{\mathbf{C}}(p) + \nabla_{\mathbf{C}} u(p) \right| \leq C(n) R^{-2(a+1)}, \\
    & \quad \text{where } \qquad p + u(p) \nu_{\mathbf{C}}(p) \in S_{a,\psi} \cap \Pi.
    \end{split}
    \end{equation}
In view of the discussion preceding~\eqref{eqn:s-hat-eps-extended-link} and~\eqref{eqn:extended-cone}, if $\ve>0$ then this property requires $p \in \hat{\mathbf{C}}_{n,k,\frac{\pi}{2} - \ve} \setminus \mathbf{C}_{n,k,\frac{\pi}{2} - \ve}$ to lie in the extended cone, strictly below the separating plane $\Pi$.
If $\ve = 0$, we have $p \in \mathbf{C}_{n,k,\frac{\pi}{2}} \cap \Pi$, since the normal graph over the $\frac{\pi}{2}$-cone again has free boundary on $\Pi$.
If $p=(R,\omega)$ has the property that $p+ \ell(R)\psi(\omega )\nu_{\mathbf{C}}(p) \in \{z=0\}$, we compute that
    \begin{align*}
    0 &= \la R \omega + \ell(R) \psi(\omega) \nu_{\mathbf{C}}(p), e_{n+1} \rg \\
    &= R \bigl( \omega_{n+1} + R^{-1} \ell(R) (1 - \tau \omega_{n+1}) \la \nu_{\mathbf{C}}(p), e_{n+1} \rg \bigr).
    \end{align*}
    This implies that
    \begin{equation}\label{eqn:omega-(n+1)}
    \begin{split}
    \omega_{n+1} &= -\frac{ R^{-1} \ell(R) \la \nu_{\mathbf{C}}(p),e_{n+1}\rg}{1- \tau R^{-1} \ell(R) \la \nu_{\mathbf{C}}(p),e_{n+1}\rg} \\
    &= - R^{-1} \ell(R) \la \nu_{\mathbf{C}}(p), e_{n+1} \rg + O ( \tau R^{-2(a+1)} \la \nu_{\mathbf{C}}(p), e_{n+1} \rg^2)
    \end{split}
    \end{equation}
    where $R \geq R_0$ is sufficiently large, independently of $\ve \in (0,\ve_0)$.
    In the last step, we used the fact that $\ell(R) = O(R^{-a})$, so $R^{-1} \ell(R) = O(R^{-(a+1)})$.
    In particular, for $R \geq R_0$, 
    \begin{equation}\label{eqn:nu-C-cos-theta}
        |\la \nu_{\mathbf{C}}(p),e_{n+1}\rg -\cos \theta| \leq C(n) R^{-(a+1)} \la \nu_{\mathbf{C}}(p),e_{n+1}\rg \leq C(n) R^{-(a+1)} \cos \theta
    \end{equation}
    where $\theta = \frac{\pi}{2} - O(\ve)$ is the capillary angle of $\mathbf{C}_{n,k,\frac{\pi}{2} - \ve}$.
    Next, we compute
    \begin{equation}\label{eqn:nabla-c-u-computation}
        \nabla_{\mathbf{C}} u = \nabla_{\mathbf{C}} ( \ell(R) \psi(\omega)) = \ell'(R) (1 - \tau \omega_{n+1}) \omega - \tau R^{-1}\ell(R) e_{n+1}^{\top}.
    \end{equation}
    where $e_{n+1}^{\top}$ denotes the orthogonal projection of the vector $e_{n+1}$ onto $T_{\omega}\hat{\Sigma}_{\ve}$.
    It follows that
    \begin{align*}
    \la \nabla_{\mathbf{C}} u, e_{n+1} \rg &= \ell'(R) (1 - \tau \omega_{n+1} ) \omega_{n+1} - \tau R^{-1} \ell(R) \bigl(1 - \omega_{n+1}^2 - \la \nu_{\mathbf{C}}(p), e_{n+1} \rg^2 \bigr),
    \end{align*}
    upon using $|e_{n+1}^{\top}|^2 = 1 - \omega_{n+1}^2 - \la \nu_{\mathbf{C}}(p), e_{n+1}\rg^2$.
    Applying~\eqref{eqn:omega-(n+1)}, we deduce that
    \begin{equation}\label{eqn:nabla-c-capillary}
    \left| \la \nabla_{\mathbf{C}} u , e_{n+1} \rg + (1 - \cos^2 \theta) \, \tau R^{-(a+1)} \right| \leq C(n)R^{-(a+2)}
    \end{equation}
    for $R \geq R_0$.
    In the last step, we used $\ell(R) = R^{-a} + O( R^{-(a+1)})$ and $\ell'(R) = - a R^{-(a+1)} + O(R^{-(a+2)})$, whereby
    \[
    R^{-1}\ell'(R) (1 - \tau \omega_{n+1}) \omega_{n+1} = O ( \ell'(R) R^{-2} \ell(R) \cos \theta) = O ( \ve R^{-1-2(a+1)}) = o (R^{-(a+2)})
    \]
    due to $a>1$.
    Combining~\eqref{eqn:nu-C-cos-theta} and~\eqref{eqn:nabla-c-capillary} into~\eqref{eqn:normal-vector-equality-Pi}, we arrive at
    \begin{equation}\label{eqn:nu-S-e-n+1-estimate}
       \left| \la \nu_S, e_{n+1} \rg - \cos \theta - (1- \cos^2 \theta) \tau R^{-(a+1)} \right| \leq C(n) R^{-(a+2)}
    \end{equation}
    for $R \geq R_0$.
    For $\theta = \frac{\pi}{2} - O(\ve)$ and any $\delta \ll 1$ sufficiently small (independently of $\ve$), we have
    \[
    \cos (\theta - \delta) = \cos \theta + \delta \sin \theta + O (\delta^2)
    \]
    In particular, for $\ve \in (0,\ve_0)$, where $\ve_0(n,k,a,\tau)$ is sufficiently small, we may write 
    \begin{equation}\label{eqn:cos-theta-delta-estimate}
        \cos \theta + (1 - 10^{-3}) \delta < \cos ( \theta - \delta) < \cos \theta + (1 + 10^{-3}) \delta.
    \end{equation}
    Consequently, the contact angle $\theta_S(R,\omega)$ at $(R,\omega) \in S_{a,\psi} \cap \Pi$ satisfies
    \[
    \cos \theta + (1 - 10^{-3}) (\theta - \theta_S) < \cos \theta_S = \la \nu_S, e_{n+1} \rg < \cos \theta + (1 + 10^{-3}) (\theta - \theta_S).
    \]
    Combining this property with~\eqref{eqn:nu-S-e-n+1-estimate}, and using $\cos^2 \theta < 10^{-3}$ for $\ve \in (0,\ve_0)$, we conclude that
    \[
    \theta - c_1 \tau R^{-(a+1)} < \theta_S(R,\omega) < \theta - \tfrac{99}{100} c_1 \tau R^{-(a+1)}
    \]
    as claimed.
    This establishes the property $(i)$, completing the proof.
\end{proof}

\begin{remark}\label{remark:piece-at-infinity-general}
    The construction of Lemma~\ref{lemma:infinity-piece-small-epsilon} can be generalized significantly, while the argument given above continues to apply.
    For a capillary cone ${\mathbf{C}}$ with contact angle $\theta$, the first eigenfunction of the Jacobi operator $\cL_{\Sigma}$ on the spherical link $\Sigma := \mathbf{C} \cap \bS^n_+$ is defined as the solution of the Robin eigenvalue problem 
    \begin{equation}\label{eqn:robin-eigenvalue-problem}
\begin{cases}
        \cL_{\Sigma} \varphi_1 + \lambda_1 \varphi_1 = 0 & \text{in } \; \Sigma, \\ 
        \partial_{\nu} \varphi_1 - \cot \theta \, A_{\Sigma}(\nu,\nu)\varphi_1 = 0 & \text{on } \; \partial \Sigma.
\end{cases}
    \end{equation}
    See, for example,~\cite{ros-souam}*{Lemma~3.1} and~\cite{singular-capillary}*{Lemma~2.1}.
    For free-boundary minimal cones, the contact angle $\theta = \frac{\pi}{2}$ means that the Robin boundary condition of~\eqref{eqn:robin-eigenvalue-problem} reduces to a Neumann eigenvalue problem.
    The construction of Lemma~\ref{lemma:infinity-piece-small-epsilon} implies that for contact angle $\theta \in ( \frac{\pi}{2} - \ve_{n,k} , \frac{\pi}{2}]$ and $n \geq 7$, we may construct a strict subsolution of the linearized problem with first eigenvalue $\lambda_1 > - ( \frac{n-2}{2})^2$, which implies the strict stability of the corresponding cone.
    We now provide one formulation of the general result:
    \begin{lemma}
        Consider a family of capillary minimal cones $\{ \mathbf{C}_i \}$ with an isolated singularity whose links satisfy $\Sigma_i \to \Sigma_{\infty}$ in $C^{\infty}_{\on{loc}}$ as hypersurfaces-with-boundary in $\bS^n_+$.
        Suppose that $\Sigma_{\infty}$ is the link of a capillary cone $\mathbf{C}_{\infty}$ with an isolated singularity and contact angle $\theta_{\infty} = \lim_i \theta_i$, and let $\varphi_1$ denote the first Jacobi eigenfunction on $\Sigma_{\infty}$ for the Robin boundary condition~\eqref{eqn:robin-eigenvalue-problem} along $\partial \Sigma_{\infty}$.
        If $\mathbf{C}_{\infty}$ is strictly stable, then each $\mathbf{C}_i$ is strictly stable for $i \geq N_0$.
        Moreover, there exist $R_0 \gg 1$ and $0 < \tau_0 \ll 1$ such that for $i \geq N_0(\tau)$, taking $\ell(R) = R^{-a} + O(R^{-(a+1)})$ for $a \in ( \underline{\gamma}, \overline{\gamma})$ and
        \[
        S^i_{a,\psi} := \on{graph}_{\mathbf{C}_i \setminus \bar{B}_{R_0}} ( \ell(R) ( \varphi_1(\omega) - \tau \omega_{n+1}) ) \cap \bR^{n+1}_+,
        \]
        produces a surface $S^i_{a,\psi} \subset E_+(\mathbf{C}_i)$ with angle $\theta^i_S$ along $\Pi$ and mean curvature $H[S^i_{a,\psi}]$ satisfying
        \[
        \tfrac{99}{100} c_1 < \tau^{-1} R^{a+1} \bigl( \theta_i - \theta^i_S(R,\omega) \bigr) < c_1, \qquad \tfrac{99}{100} c_2 < R^{a+2}H[S^i_{a,\psi}](R,\omega) < c_2.
        \]
    \end{lemma}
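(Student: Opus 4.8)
The plan is to extend the argument of Lemma~\ref{lemma:infinity-piece-small-epsilon} from the concrete family $\mathbf{C}_{n,k,\frac{\pi}{2}-\ve}$ to an abstract family $\{\mathbf{C}_i\}$ converging smoothly to a strictly stable capillary cone $\mathbf{C}_\infty$. First I would set up the extended-link picture: since $\Sigma_i \to \Sigma_\infty$ in $C^\infty_{\on{loc}}$ as hypersurfaces-with-boundary and $\mathbf{C}_\infty$ has contact angle $\theta_\infty \in (0,\frac\pi2]$, I can, exactly as in Section~\ref{section:geometry-near-pi/2}, pass to a uniform gauge in which $\Sigma_i = \on{graph}_{\Sigma_\infty}(v_i)$ with $\|\nabla^\ell_{\Sigma} v_i\|_{C^1} \to 0$; this gives $|A_{\Sigma_i}|^2 = |A_{\Sigma_\infty}|^2 + o(1)$ and, by the spectral-continuity results cited in the excerpt (\cite{complete-cmc}*{Appendix~B}, \cite{chavel}), the first Robin--Jacobi eigenpair $(\lambda_1^i,\varphi_1^i)$ on $\Sigma_i$ for the boundary condition~\eqref{eqn:robin-eigenvalue-problem} converges to $(\lambda_1^\infty,\varphi_1^\infty)$. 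Strict stability of $\mathbf{C}_\infty$ means $\lambda_1^\infty > -(\tfrac{n-2}{2})^2$, so the indicial interval $(\underline\gamma^\infty,\overline\gamma^\infty)$ from~\eqref{eqn:HS-indicial-roots} is nonempty and open; by continuity $\lambda_1^i > -(\tfrac{n-2}{2})^2$ for $i\ge N_0$, proving the strict stability of each $\mathbf{C}_i$, and for any fixed $a$ in the limiting open indicial interval we have $a\in(\underline\gamma^i,\overline\gamma^i)$ for all large $i$ with uniform margin.

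Next I would run the two computations of Lemma~\ref{lemma:infinity-piece-small-epsilon} verbatim, replacing the explicit profile $f_\ve$ by the abstract family and $\psi = 1-\tau\la\omega,e_{n+1}\rg$ by $\psi = \varphi_1^i(\omega) - \tau\omega_{n+1}$. For the mean-curvature estimate $(ii)$: using $\Delta_{\Sigma_i}\varphi_1^i = -(\lambda_1^i + |A_{\Sigma_i}|^2)\varphi_1^i$ and $\Delta_{\Sigma_i}\omega_{n+1} = -(n-1)\omega_{n+1}$ (the latter holding on the extended minimal link, which as in~\eqref{eqn:s-hat-eps-extended-link} I continue past the free boundary), the cone Jacobi operator gives
\[
\cL_{\mathbf{C}_i}(R^{-a}\psi) = R^{-(a+2)}\bigl((a-\overline\gamma^i)(a-\underline\gamma^i)\,\varphi_1^i + (n-1)\tau\,\omega_{n+1} + o(1)\bigr),
\]
where the $o(1)$ is uniform in $R$ and comes from the spectral perturbation. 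Choosing $\tau_0$ small so that $n\tau_0$ is a small fraction of $\tilde c_2 := (\overline\gamma^\infty - a)(a-\underline\gamma^\infty) > 0$, then $N_0(\tau)$ large so the $o(1)$ error is likewise dominated, and $R_0$ large so the $O(R^{-(a+3)})$ discrepancy between $\ell(R)\psi$ and $R^{-a}\psi$ plus the quadratic error in~\eqref{eqn:mean-curvature-operator} is dominated, yields the two-sided bound $\tfrac{99}{100}c_2 R^{-(a+2)} < H[S^i_{a,\psi}] < c_2 R^{-(a+2)}$ with $c_2$ depending on $n,k,a$ (through $\tilde c_2$). For the contact-angle estimate $(i)$, I reuse the normal-vector expansion~\eqref{eqn:normal-vector-equality-Pi}, the free-boundary relation~\eqref{eqn:omega-(n+1)} solving for $\omega_{n+1}$ on $S^i_{a,\psi}\cap\Pi$, and the key identity~\eqref{eqn:nabla-c-capillary} computing $\la\nabla_{\mathbf{C}}u,e_{n+1}\rg$; the only structural difference is that $\varphi_1^i$ is not constant, but since it is a fixed smooth positive function bounded below away from zero on the compact link, every bound in that argument goes through with constants depending on $\|\varphi_1^i\|_{C^2}$, which are uniform in $i$ by smooth convergence. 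The sign of the leading term $-(1-\cos^2\theta_i)\tau R^{-(a+1)}$ in~\eqref{eqn:nabla-c-capillary} is what forces $\theta^i_S < \theta_i$, and combined with~\eqref{eqn:cos-theta-delta-estimate}--\eqref{eqn:nu-S-e-n+1-estimate} gives $\tfrac{99}{100}c_1 < \tau^{-1}R^{a+1}(\theta_i - \theta^i_S) < c_1$.

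The main obstacle is the uniformity of the spectral perturbation across the \emph{whole} family and, more delicately, across the \emph{extended} links $\hat\Sigma_i$ whose boundary moves: I need that $\varphi_1^i$, the metric, and $|A|^2$ on $\hat\Sigma_i$ converge in $C^\infty$ on the closed manifold-with-boundary, uniformly, together with the location of $\partial\hat\Sigma_i$. In the concrete case this was handled by the explicit resolution of the square-root singularity via $g_\ve(t^2) = (f_\ve(t)+\ve)^2$ in~\eqref{eqn:g-eps-g0-closeness}; abstractly I would hypothesize (as part of "$\Sigma_i \to \Sigma_\infty$ in $C^\infty_{\on{loc}}$ as hypersurfaces-with-boundary") that the same smooth convergence holds for a suitable smooth extension $\hat\Sigma_i$ past the free boundary, or alternatively phrase the lemma so the graph $S^i_{a,\psi}$ is taken over $\mathbf{C}_i$ itself outside $\bar B_{R_0}$ and only the half inside $\bR^{n+1}_+$ is retained, sidestepping the extension entirely — this is legitimate because the angle computation in Lemma~\ref{lemma:infinity-piece-small-epsilon} only ever evaluates $\nu_S$ and $\nu_{\mathbf{C}}$ at points of $\{z=0\}$, which for $\theta_i$ close to $\frac\pi2$ (equivalently $\ve$ small) sit just below the equator and are reached by the extended cone; for $\theta_\infty$ strictly less than $\frac\pi2$ no extension is needed and the free boundary of $\mathbf{C}_i$ is genuinely on $\Pi$. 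A secondary technical point is ensuring the higher covariant derivative bounds $|\nabla^\ell_{\mathbf{C}} u| = O(R^{-a-\ell})$ used in~\eqref{eqn:mean-curvature-operator}, which follow from $1$-homogeneity of the extension of $\varphi_1^i - \tau\omega_{n+1}$ together with the uniform $C^\ell$ bounds on the link; once these are in place the proof concludes exactly as above.
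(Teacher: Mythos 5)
The paper states this lemma in Remark~\ref{remark:piece-at-infinity-general} without proof, remarking only that ``the argument given above continues to apply'' and deferring details to forthcoming work. Your proposal is a faithful attempt to fill in that gap by transplanting the proof of Lemma~\ref{lemma:infinity-piece-small-epsilon}, and the stability assertion together with the mean-curvature estimate are handled essentially correctly: the spectral-continuity argument for the Robin eigenpair $(\lambda_1^i,\varphi_1^i) \to (\lambda_1^\infty,\varphi_1^\infty)$ under $C^\infty_{\on{loc}}$ convergence of the links is the right ingredient, and it yields both a uniform open indicial interval and strict stability of $\mathbf{C}_i$ for large $i$. Your displayed formula for $\cL_{\mathbf{C}_i}(R^{-a}\psi)$ is imprecise --- since $|A_{\Sigma_i}|^2$ is not constant for a general limiting cone $\mathbf{C}_\infty$, the correct coefficient of $\tau\omega_{n+1}$ is $-\bigl(a^2-(n-2)a + |A_{\Sigma_i}|^2-(n-1)\bigr)$ rather than $(n-1)$ --- but this term is bounded uniformly and your conclusion that it is dominated once $\tau_0$ is small and $i$ is large is unaffected.

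The genuine gap is in the contact-angle step. When $\varphi_1$ is non-constant (which happens whenever $\theta_\infty \neq \tfrac{\pi}{2}$), the expansion~\eqref{eqn:nabla-c-u-computation} acquires a new term $R^{-1}\ell(R)\la\nabla_{\Sigma}\varphi_1, e_{n+1}\rg$, which near the free boundary is of order $R^{-(a+1)}\sin\theta_i\,\partial_\nu\varphi_1$ --- exactly the same size as the $\tau$-dependent contribution you need for the sign. Simultaneously, the estimate~\eqref{eqn:nu-C-cos-theta} no longer makes $\la\nu_{\mathbf{C}}(p),e_{n+1}\rg - \cos\theta_i$ higher-order, because moving $p$ below $\Pi$ by $O(R^{-a})$ produces a first-order change involving $A_{\mathbf{C}_i}(\nu,\nu)$. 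Controlling these two extra terms by $\|\varphi_1^i\|_{C^2}$, as you propose, does not determine their sign, and without sign control the conclusion $\theta^i_S < \theta_i$ could fail. The mechanism you need to invoke explicitly is that the Robin condition~\eqref{eqn:robin-eigenvalue-problem}, $\partial_\nu\varphi_1 = \cot\theta_\infty\, A_{\Sigma_\infty}(\nu,\nu)\varphi_1$, is precisely the linearization of the contact-angle constraint at $\mathbf{C}_\infty$: it forces the two new $\varphi_1$-dependent contributions to cancel at order $R^{-(a+1)}$, so that after the dust settles the only surviving first-order effect is the $-\tau\sin^2\theta_i$ term, as in~\eqref{eqn:nu-S-e-n+1-estimate}. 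Lemma~\ref{lemma:infinity-piece-small-epsilon} avoids this because $\theta_\infty = \tfrac\pi2$ makes $\varphi_1$ constant and $\cot\theta \to 0$, so both extra terms are identically zero; the general case requires carrying out the cancellation, which your outline omits.
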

    \noindent
    We recover Lemma~\ref{lemma:infinity-piece-small-epsilon} for $\theta_i \to \frac{\pi}{2}$ and $\mathbf{C}_{\infty}= \mathbf{C}_{n,k,\frac{\pi}{2}}$.
    This result has more general consequences for the strict minimality of capillary cones, which we explore in upcoming work~\cite{FTW_MinimizingII}.
\end{remark}

Following the above general discussion, we obtain the following analogue of Lemma~\ref{lemma:infinity-piece-small-epsilon} for negative normal graphs over $\hat{\mathbf{C}}_{n,k,\frac{\pi}{2}-\ve}$ with respect to the vector $\nu_{\mathbf{C}}$ pointing into $E_+$.
The resulting surface $S^-_{a,\psi} \subset E_-$ will form the piece of $S_- \subset E_-$ at infinity.
\begin{lemma}\label{lemma:infinity-piece-on-E-minus}
Consider $n \geq 7, 1 \leq k \leq n-2$, and some $a \in (\underline{\gamma} , \overline{\gamma})$ in the indicial interval of~\eqref{eqn:HS-indicial-roots}.
There exist $R_0 \gg 1$ and $0 < \tau_0 \ll 1$, depending only on $n,k,a$, and $0 < \ve_0 \ll 1$, depending only on $n,k,a$, and $\tau \in (0,\tau_0)$, such that the following holds for $\ve \in (0,\ve_0)$.

Let $S^-_{a, \psi}$ be the portion inside $\bR^{n+1}_+ = \{ z \geq 0 \}$ of the negative normal graph
    \[
    S^-_{a,\psi} := \Bigl\{ p - \ell(R) \psi(\omega) \nu_{\mathbf{C}}(p) : p = (R, \omega) \in (R_0, \infty) \times \Sigma_{\ve} \subset \mathbf{C}_{n,k,\frac{\pi}{2} - \ve} \setminus \bar{B}_{R_0} \Bigr\} \cap \bR^{n+1}_+
    \]
    of the function $u(R,\omega) = \ell(R) \psi(\omega)$ over $\mathbf{C}_{n,k,\frac{\pi}{2} - \ve}$, where
    \[
    \psi(\omega) := 1 - \tau \la \omega, e_{n+1} \rg  \quad \text{for } \; \omega \in \Sigma_{\ve}, \qquad \ell(R) = R^{-a} + O(R^{-(a+1)}) 
    \]
Then, $S^-_{a,\psi}$ has the following properties:
    \begin{enumerate}[$(i)$]
        \item At a point $(R,\omega) \in S^-_{a,\psi} \cap \Pi$, the surface meets $\Pi = \{ z = 0 \}$ at an angle satisfying
        \[
        \theta + \tfrac{99}{100} c_1 \tau R^{-(a+1)} < \theta_S(R,\omega) < \theta + c_1 \tau R^{-(a+1)}
        \]
        for a constant $c_1$ depending on $n,k,a$.
        Here, $\theta = \frac{\pi}{2} - O(\ve)$ is the capillary angle of $\mathbf{C}_{n,k,\frac{\pi}{2} - \ve}$.
        \item With respect to the upward-pointing normal vector, the hypersurface $S^-_{a,\psi}$ has mean curvature
    \[
    - c_2 R^{-a-2} < H[S^-_{a,\psi}](R,\omega) < - \tfrac{99}{100} c_2 R^{-a-2} 
    \]
    for a constant $c_2$ depending on $n,k,a$.
    \end{enumerate}    
\end{lemma}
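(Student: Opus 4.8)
The plan is to mirror the construction of Lemma~\ref{lemma:infinity-piece-small-epsilon} almost verbatim, keeping track of the sign changes induced by using the negative normal graph instead of the positive one. The key observation is that replacing $u = \ell(R)\psi(\omega)$ by $-u$ in the direction of $\nu_{\mathbf{C}}$ (which points into $E_+$) flips the sign of every leading-order term computed there: the linearization $H[S^-_{a,\psi}] - \cL_{\mathbf{C}} u$ of the mean curvature (consistent with the upward orientation, as noted after~\eqref{eqn:mean-curvature-operator}) equals $-(H[S_{a,\psi}] + \cL_{\mathbf{C}} u)$ up to the quadratic error controlled by~\eqref{eqn:mean-curvature-operator}, and likewise the normal-vector expansion~\eqref{eqn:normal-vector-equality-Pi} picks up a sign in the $\nabla_{\mathbf{C}} u$ term. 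Since we still take $a\in(\underline{\gamma},\overline{\gamma})$, we have $\tilde c_2(n,k,a) = (\overline\gamma - a)(a-\underline\gamma) > 0$ as before, so $-\cL_{\mathbf{C}}u \in (\tilde c_2 R^{-(a+2)}, (1+10^{-3})\tilde c_2 R^{-(a+2)})$; hence $H[S^-_{a,\psi}] = -(-\cL_{\mathbf{C}}u) + O(R^{-(a+3)})$ lies in $(-c_2 R^{-a-2}, -\tfrac{99}{100}c_2 R^{-a-2})$ for $R \geq R_0(n,k,a)$ sufficiently large and $\ve \in (0,\ve_0)$, which is exactly $(ii)$.

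For part $(i)$ I would rerun the free-boundary computation with the graph in the $-\nu_{\mathbf{C}}$ direction. The condition $p - \ell(R)\psi(\omega)\nu_{\mathbf{C}}(p) \in \{z=0\}$ now gives, in place of~\eqref{eqn:omega-(n+1)},
\begin{equation*}
\omega_{n+1} = R^{-1}\ell(R)\la \nu_{\mathbf{C}}(p), e_{n+1}\rg + O(\tau R^{-2(a+1)}\cos^2\theta),
\end{equation*}
and the analogue of~\eqref{eqn:normal-vector-equality-Pi} reads $\nu_{S^-} = \nu_{\mathbf{C}}(p) + \nabla_{\mathbf{C}}u(p) + O(R^{-2(a+1)})$, so that $\la \nu_{S^-}, e_{n+1}\rg = \cos\theta + \la\nabla_{\mathbf{C}}u, e_{n+1}\rg + \cdots$. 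Carrying~\eqref{eqn:nabla-c-u-computation}–\eqref{eqn:nabla-c-capillary} through with the opposite sign on $\omega_{n+1}$ yields $\la\nabla_{\mathbf{C}}u, e_{n+1}\rg = -(1-\cos^2\theta)\tau R^{-(a+1)} + O(R^{-(a+2)})$ with the sign reversed relative to~\eqref{eqn:nabla-c-capillary}; wait—more carefully, the sign of $\la\nabla_{\mathbf{C}}u,e_{n+1}\rg$ is unchanged but the contribution of $\omega_{n+1}$ into it changes sign, and one finds $\la\nu_{S^-},e_{n+1}\rg = \cos\theta - (1-\cos^2\theta)\tau R^{-(a+1)} + O(R^{-(a+2)})$, i.e. the normal is tilted so the surface meets $\Pi$ at a \emph{larger} angle. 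Using $\cos\theta$ is strictly decreasing on $(0,\pi)$ together with~\eqref{eqn:cos-theta-delta-estimate} applied to $\theta + \delta$ (for which $\cos(\theta+\delta) = \cos\theta - \delta\sin\theta + O(\delta^2)$ and $\sin\theta = 1 - O(\ve^2)$ close to $1$), and $\cos^2\theta < 10^{-3}$ for $\ve \in (0,\ve_0)$, gives $\theta + \tfrac{99}{100}c_1\tau R^{-(a+1)} < \theta_S(R,\omega) < \theta + c_1\tau R^{-(a+1)}$, which is $(i)$.

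Concretely, the steps in order: (1) record the $\ve$-closeness facts~\eqref{eqn:g-eps-g0-closeness}–\eqref{eqn:t-hat-t-eps}, \eqref{eqn:sff-norm-epsilon}, \eqref{eqn:indicial-roots-close}, and Lemma~\ref{lemma:graph-over-extended-link-pi/2}, exactly as in Lemma~\ref{lemma:infinity-piece-small-epsilon}; (2) apply $\cL_{\hat\Sigma_\ve}$ to $\psi = 1-\tau\omega_{n+1}$ to get the two-sided bound~\eqref{eqn:jacobi-double-bound}, unchanged; (3) compute $\cL_{\mathbf{C}}u$ for $u = \ell(R)\psi$, obtaining $-\cL_{\mathbf{C}}u \asymp \tilde c_2 R^{-(a+2)}\psi$ after choosing $\tau_0(n,k,a)$ then $\ve_0(n,k,a,\tau)$ small; (4) use the mean-curvature estimate~\eqref{eqn:mean-curvature-operator}, now for the negative graph, to conclude $H[S^-_{a,\psi}] = -(-\cL_{\mathbf{C}}u) + O(R^{-(a+3)})$ and read off $(ii)$; (5) redo the free-boundary and normal-vector computation~\eqref{eqn:normal-vector-equality-Pi}–\eqref{eqn:nu-S-e-n+1-estimate} with the sign flip to get $\la\nu_{S^-},e_{n+1}\rg = \cos\theta - (1-\cos^2\theta)\tau R^{-(a+1)} + O(R^{-(a+2)})$; (6) invert via~\eqref{eqn:cos-theta-delta-estimate} applied at $\theta + \delta$ to deduce $(i)$. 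The only genuinely non-routine point is bookkeeping the two sign reversals so that the inequalities come out oriented correctly — the mean curvature becomes negative (mean-concave with respect to the upward normal, as required for a lower barrier on $E_-$) and the contact angle becomes larger than $\theta$ — but no new analytic input beyond what is already in the proof of Lemma~\ref{lemma:infinity-piece-small-epsilon} is needed, so the argument is a direct adaptation and can be stated as such.
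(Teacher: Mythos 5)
Your proposal is correct and takes essentially the same approach as the paper's own proof, which likewise presents Lemma~\ref{lemma:infinity-piece-on-E-minus} as a direct adaptation of Lemma~\ref{lemma:infinity-piece-small-epsilon} obtained by tracking the sign change in the normal-vector expansion ($\nu_{S^-}=\nu_{\mathbf{C}}+\nabla_{\mathbf{C}}u+O(R^{-2(a+1)})$), in the free-boundary equation for $\omega_{n+1}$ (now with a $+$ sign), and in the mean-curvature linearization ($H[S^-_{a,\psi}]=\cL_{\mathbf{C}}u+O(R^{-(a+3)})$, which is negative). One small point you gloss over that the paper makes explicit: for the negative graph, the free boundary on $\Pi$ is reached over points of $\Sigma_\ve$ itself (with $\omega_{n+1}>0$), so there is no need to work over the extended cone $\hat{\mathbf{C}}_{n,k,\frac{\pi}{2}-\ve}$ here; your invocation of $\hat\Sigma_\ve$ in the Jacobi bound is harmless since $\Sigma_\ve\subset\hat\Sigma_\ve$, but the observation is a genuine simplification and is consistent with the statement of the lemma, which parametrizes over $\Sigma_\ve$ rather than $\hat\Sigma_\ve$.
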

\begin{proof}
    The argument is a straightforward adaptation of Lemma~\ref{lemma:infinity-piece-small-epsilon}.
    We observe that $S^-_{a,\psi}$ is a normal graph over (a smaller portion of) $\mathbf{C}_{n,k,\frac{\pi}{2}-\ve}$, so we did not need to work with the extended cone $\hat{\mathbf{C}}$ for this step.
    This follows from the discussion preceding~\eqref{eqn:s-hat-eps-extended-link}, since for any $\ve>0$, the normal vector to $\mathbf{C}_{n,k,\frac{\pi}{2}-\ve}$ pointing into $E_+$ has everywhere-positive vertical component, hence any negative normal graph of the cone automatically produces a free boundary along $\Pi$.

    Computing as in Lemma~\ref{lemma:infinity-piece-small-epsilon} and applying~\eqref{eqn:mean-curvature-operator}, we similarly deduce 
    \[
    \left| H[S^-_{a,\psi}] - \cL_{\mathbf{C}} u \right| \leq C(n) R^{-(a+3)},
    \]
    where $u = \ell(R) \psi(\omega)$ was shown to satisfy $\tilde{c}_2 < - R^{a+2} \cL_{\mathbf{C}} u < (1 + 10^{-3}) \tilde{c}_2$.
    Moreover, 
\[
\left|\nu_u( p - u(R,\omega) \nu_{\mathbf{C}}(p)) - \nu_{\mathbf{C}}(p) -\nabla_{\mathbf{C}} u(R,\omega)\right|\leq  O(R^{-2(a+1)})
\]
holds by virtue of~\eqref{eqn:normal-vector-equality-Pi}.
At a point of $S^-_{a,\psi} \cap \Pi$, we have $\la p - \ell(R) \psi(\omega) \nu_{\mathbf{C}}(p) ,e_{n+1}\rg= 0$, which is solved for $\omega_{n+1} = R^{-1} \la p, e_{n+1} \rg$ as in~\eqref{eqn:nu-S-e-n+1-estimate} to produce
\begin{align*}
    \omega_{n+1} &= \frac{ R^{-1} \ell(R) \la \nu_{\mathbf{C}}(p),e_{n+1}\rg}{1+{\tau}R^{-1} \ell(R) \la \nu_{\mathbf{C}}(p),e_{n+1}\rg} \\
    &=  R^{-1} \ell(R) \la \nu_{\mathbf{C}}(p), e_{n+1} \rg + O ( \tau R^{-2(a+1)} \la \nu_{\mathbf{C}}(p), e_{n+1} \rg^2), \\
    |\la \nu_{\mathbf{C}}(p),e_{n+1}\rg -\cos \theta| &\leq  C(n) R^{-(a+1)} \cos \theta, \\
    \left| \la \nabla_{\mathbf{C}} u , e_{n+1} \rg + (1 - \cos^2 \theta) \, \tau R^{-(a+1)} \right| &\leq C(n)R^{-(a+2)}.
\end{align*}
Together, these properties imply that
\[
       \left| \la \nu_S, e_{n+1} \rg - \cos \theta + (1- \cos^2 \theta) \tau R^{-(a+1)} \right| \leq C(n) R^{-(a+2)}.
\]
The remainder of the proof follows similarly.
\end{proof}

\subsection{Compact caps}\label{section:compact-caps}

We now construct the compact ``cap'' profile at distance $1$ from the origin whose positive phase encloses an enlarged cone together with a neighborhood of the origin.
Crucially, this cap is mean-convex, it has non-empty intersection with a large sphere $\partial B_{R_2}$, and it meets the plane $\Pi = \{ z = 0 \}$ at an angle $< \theta = \tfrac{\pi}{2} - O(\ve)$, for $\ve$ sufficiently small (depending on $R_2$).
The construction of such a cap only relies on the proximity of the cones $\mathbf{C}_{n,k,\frac{\pi}{2}-\ve}$ to $\mathbf{C}_{n,k,\frac{\pi}{2}}$, in the sense of Lemma~\ref{lemma:graph-over-extended-link-pi/2}, and therefore admits a fairly general construction, which we discuss in upcoming work~\cite{FTW_MinimizingII}.
In the present situation, we can leverage the geometry of the Lawson cones $C(\bS^{n-k-1} \times \bS^k_+)$ and work with an explicit ansatz for the cap, which we present now.

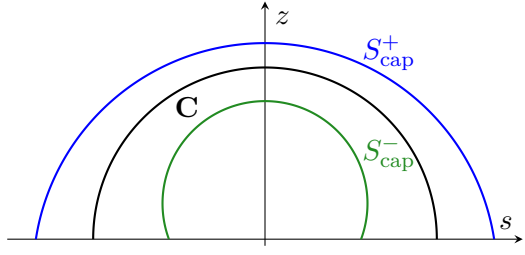
\begin{figure}
    \centering
    
    \begin{tikzpicture}
    \pgfmathsetmacro{\a}{1.3}
    \pgfmathsetmacro{\b}{0.27}
    \pgfmathsetmacro{\c}{2.5}

    \pgfmathsetmacro{\Ablue}{sqrt((\c+\b*\b)^2 + (\a*\a*\a*\a))}

    \pgfmathsetmacro{\xmaxblue}{sqrt(\Ablue - \b*\b)}

    \pgfmathsetmacro{\Agreen}{sqrt((\a*\a*\a*\a) + (\b*\b*\b*\b) - (\c))}
    \pgfmathsetmacro{\rgreen}{sqrt(\Agreen)}

    \pgfmathsetmacro{\alphagreen}{asin(\b/\rgreen)}
    \pgfmathsetmacro{\tstartgreen}{360 - \alphagreen}
    \pgfmathsetmacro{\tendgreen}{540 + \alphagreen}

    \pgfmathsetmacro{\xlab}{0.8*\rgreen}
    \pgfmathsetmacro{\yblueLab}{sqrt(\Ablue - \xlab*\xlab) - \b}
    \pgfmathsetmacro{\ygreenLab}{\b + sqrt(\Agreen - \xlab*\xlab)-0.05}

    \begin{axis}[
      axis equal image,
      xmin=-1.95, xmax=1.95,
      ymin=-0.05, ymax=1.8,      
      axis lines=middle,
      ticks=none,
      xlabel={$s$},
      ylabel={$z$},
      clip=false,                
      clip mode=individual,
    ]

      \addplot[black, thick, domain=0:180, samples=400]
        ({\a*cos(x)},{\a*sin(x)})
        node[pos=0.65, below] {$\mathbf{C}$};

      \addplot[blue, thick, domain=-\xmaxblue:\xmaxblue, samples=400]
        {sqrt(\Ablue - x^2) - \b};

      \addplot[ForestGreen, thick, domain=\tstartgreen:\tendgreen,
               samples=800, variable=\t]
        ({\rgreen*cos(\t)}, {\b + \rgreen*sin(\t)});

      \node[anchor=west, xshift=2pt, yshift=2pt, text=blue]
        at (axis cs:\xlab,\yblueLab) {$S_{\mathrm{cap}}^{+}$};

      \node[anchor=west, xshift=2pt, yshift=-2pt, text=ForestGreen]
        at (axis cs:\xlab,\ygreenLab) {$S_{\mathrm{cap}}^{-}$};

    \end{axis}
  \end{tikzpicture}
    \caption{We depict the cross-sections of the cone $\mathbf{C}$ and the upper and lower barrier surface caps $S^\pm_{\mr{cap}}$ when $|x|$ is equal to a (large) constant.
    The upper cap, drawn in blue, has smaller contact angle and the lower cap, drawn in green, is non-graphical and has greater contact angle.
    The $x$-axis represents the variable $s = |y|$, including a negative component for illustrative purposes.}
    \label{fig:Caps}
\end{figure}

For the area-minimizing Lawson cones $C(\bS^{n-k-1} \times \bS^k)$, the constructions of De~Philippis-Paolini, De~Philippis-Maggi, and Z.~Liu~\cites{dephilippis-paolini, sharp-stability-plateau, lawson-liu} establish a property stronger than strict minimality: there exists a homogeneous function $\varphi_{n,k}$ on $\bR^{n+1}$ such that the unit vector field $\frac{\nabla \varphi_{n,k}}{|\nabla \varphi_{n,k}|}$ provides a quantitative sub-calibration for the cones.
This means that
\begin{equation}\label{eqn:quantitative-sub-calibration}
\begin{aligned}
    \frac{\nabla \varphi_{n,k}}{|\nabla \varphi_{n,k}|} &= \nu_{C(\bS^{n-k-1} \times \bS^k)} & \qquad & \text{on } \; C(\bS^{n-k-1} \times \bS^k), \\
    \pm \on{div} \frac{\nabla \varphi_{n,k}}{|\nabla \varphi_{n,k}|} &\geq c_{n,k} \frac{\on{dist}(p, C(\bS^{n-k-1} \times \bS^k))}{R^2}, &\qquad &\text{where } R := |p|.
    \end{aligned}
\end{equation}
for a positive constant $c_{n,k} > 0$, where the choice of sign $\pm$ corresponds to the two connected components $E_{\pm}$ of $\bR^{n+1} \setminus C(\bS^{n-k-1} \times \bS^k)$.
This construction implies a global quadratic stability inequality for these cones that cannot be deduced from strict minimality alone, cf.~Lemma~\ref{lemma:one-sided}.

The explicit potentials satisfying~\eqref{eqn:quantitative-sub-calibration} will be useful in our setting, as their level sets produce surfaces with a signed mean curvature asymptotic to the free-boundary cone $\mathbf{C}_{n,k,\frac{\pi}{2}}$ from both sides $E_{\pm}$.
We first record these constructions on the sides 
\begin{equation}\label{eqn:E+-for-Lawson}
\begin{split}
E_+ &:= \left\{ (x,y,z) \in \bR^{n-k} \times \bR^k \times \bR : \frac{|x|^2}{n-k-1} < \frac{|y|^2 + z^2}{k} \right\} \\
&\;= \left\{ (x,y,z) \in \bR^{n-k} \times \bR^k \times \bR : z^2 > \rho^2 \hat{f}_{n,k}(t)^2 \right\}
\end{split}
\end{equation}
and $E_- := \bR^{n+1} \setminus \overline{E}_+$.
Here, we recalled our notation of $\hat{f}_{n,k}(t) = \sqrt{\frac{k - (n-1) t^2}{n-k-1}}$ from Lemma~\ref{lemma:lawson-cones}, allowing us to rearrange the above inequality.
Let us write $(r,s) := ( |x|, |y|)$ in what follows.
For points $p = (r,s,z)$ on the $E_+$ side, we define
    \begin{equation}\label{eqn:record-varphi-n,k}
        \begin{aligned}
            \mathrm{I} &:  (n,k) \in \{ (7,1), (7,2), (7,4) \}, \qquad & \varphi^+_{7,k}(r,s,z) &:= \bigl( (s^2 + z^2) - \tfrac{k}{n-k-1} r^2 \bigr) (s^2 + z^2)^{\frac{3}{4}}.\\
            \mathrm{II} &:  (n,k) = (7,3), \qquad & \varphi^+_{7,3}(r,s,z) &:= (s^2 + z^2)^{\frac{7}{4}} - r^{\frac{7}{2}}.\\
            \mathrm{III} &: 8 \leq n \leq 12, \; k=1, \qquad & \varphi^+_{n,1} (r,s,z) &:= \bigl( (s^2 + z^2) - \tfrac{1}{n-2} r^2 \bigr)  (s^2 + z^2).\\
            \mathrm{IV} &: \text{all other pairs with } n \geq 8, \qquad & \varphi_{n,k}^+(r,s,z) &:= (s^2 + z^2)^2 - ( \tfrac{k}{n-k-1})^2 r^4.
        \end{aligned}
    \end{equation}
The construction for $p = (r,s,z) \in E_-$ is analogous, letting
    \begin{equation}\label{eqn:record-varphi-n,k-on-E-minus}
        \begin{aligned}
            \mathrm{I} &:  (n,k) \in \{ (7,2), (7,4), (7,5) \}, \qquad & \varphi^-_{7,k}(r,s,z) &:= \bigl( (s^2 + z^2) - \tfrac{k}{n-k-1} r^2 \bigr) \, r^{\frac{3}{2}}.\\
            \mathrm{II} &:  (n,k) = (7,3), \qquad & \varphi^-_{7,3}(r,s,z) &:= (s^2 + z^2)^{\frac{7}{4}} - r^{\frac{7}{2}}.\\
            \mathrm{III} &: 8 \leq n \leq 12, \; k=1, \qquad & \varphi^-_{n,1} (r,s,z) &:= \bigl( (s^2 + z^2) - \tfrac{1}{n-2} r^2 \bigr) \, r^3.\\
            \mathrm{IV} &: \text{all other pairs with } n \geq 8, \qquad & \varphi_{n,k}^-(r,s,z) &:= (s^2 + z^2)^2 - ( \tfrac{k}{n-k-1})^2 r^4.
        \end{aligned}
    \end{equation}
It is clear from this definition that $\varphi^+_{n,k} = \varphi^-_{n,k}$ in cases II and IV, while in cases I and III, the definition of $\varphi_{n,k}$ component-wise is continuous along the interface $\mathbf{C} = \bar{E}_+ \cap \bar{E}_-$ and defines a potential function $\varphi_{n,k} \in W^{2,1}_{\text{loc}}$.

The functions~\eqref{eqn:record-varphi-n,k} and~\eqref{eqn:record-varphi-n,k-on-E-minus} are the adaptations of~\cite{sharp-stability-plateau}*{\S~4} and~\cite{lawson-liu}*{\S~2.1} to our setting.
We note the inclusion of the pair $(7,1)$ for the $E_+$ side and of the pair $(7,5)$ for the $E_-$ side, corresponding to a strict sub-calibration of the strictly stable cones $\mathbf{C}_{7,1,\frac{\pi}{2}}$ and $\mathbf{C}_{7,5,\frac{\pi}{2}}$ on their strictly minimizing side ($E_+$ and $E_-$, respectively).
The cone $C(\bS^5 \times \bS^1)$ is strictly minimizing on the side $E_+$, but not globally minimizing, by work of Sim\~oes and F.H.~Lin~\cites{simoes, lin}.
To see that the functions $\varphi_{7,1}^+$ and $\varphi_{7,5}^-$ provide the required one-sided strict sub-calibrations, we use~\cite{sharp-stability-plateau}*{Lemma~4.1} and~\cite{lawson-liu}*{Lemma~2 and \S~3} to see that the property
    \[
    \on{div} \frac{\nabla \varphi^+_{7,1}}{|\nabla \varphi^+_{7,1}|} \geq c_{7,1} \frac{\on{dist} (p, \mathbf{C}_{7,1,\frac{\pi}{2}})}{R^2} \qquad \text{for } \; p \in E_+(\mathbf{C}_{7,1,\frac{\pi}{2}})
    \]
for the function $\varphi^+_{7,1}$ of~\eqref{eqn:record-varphi-n,k} (respectively, $- \on{div} \frac{\nabla \varphi^-_{7,5}}{|\nabla \varphi^-_{7,5}|} \geq c_{7,5} \frac{\on{dist} ( p, \mathbf{C}_{7,5,\frac{\pi}{2}})}{R^2}$ for $p \in E_-( \mathbf{C}_{7,5,\frac{\pi}{2}} )$) is equivalent to the positivity of the cubic
    \[
    (t^{3}-3\sqrt{5}t^{2}-15t+25) \qquad \text{for } \; t \in [0,1]
    \]
upon substituting $A = 1$ and $B = \sqrt{5}$ into their notation.
    We see that this expression attains its minimum at $t=1$, equal to $11-3 \sqrt{5} > 4$, whereby the condition~\eqref{eqn:quantitative-sub-calibration} is satisfied.
Consequently, $\varphi^+_{7,1}$ (resp.~$\varphi^-_{7,5}$) produces a one-sided strict sub-calibration which exhibits the strict stability and one-sided strict minimality of the cone $\mathbf{C}_{7,1,\frac{\pi}{2}}$ (resp.~$\mathbf{C}_{7,5,\frac{\pi}{2}}$) on its $E_+$ side (resp.~$E_-$ side).

The potential functions $\varphi_{n,k}^{\pm}$ described in~\eqref{eqn:record-varphi-n,k} and~\eqref{eqn:record-varphi-n,k-on-E-minus} satisfy the property~\eqref{eqn:quantitative-sub-calibration}, meaning that their level sets are mean-convex (resp.~mean-concave) and have contact angle $\frac{\pi}{2}$ with $\Pi = \{z = 0\}$.
For the remainder of this section, we focus on the construction of a cap $S_{\text{cap}}^+ \subset E_+$ for the surface $S_+$.
Accordingly, we work with the functions $\varphi^+_{n,k}$ from~\eqref{eqn:record-varphi-n,k}, which we denote by $\varphi_{n,k}$ for the remainder of this section.
At the end, we will highlight the necessary adaptations for the construction of the piece $S^-_{\text{cap}} \subset E_-$ of the surface $S_-$.

An illustration of the cap profiles enclosing $\mathbf{C}_{n,k,\theta}$ for $n \geq 8$ (corresponding to Case IV of~\eqref{eqn:record-varphi-n,k} and~\eqref{eqn:record-varphi-n,k-on-E-minus}) is given in Figure~\ref{fig:Caps}.

We observe that the level sets of the degree-$\beta$ homogeneous functions $\varphi_{n,k}$ over $E_+$ at distance $1$ from the origin are expressible as normal graphs over the corresponding cone $\mathbf{C}_{n,k,\frac{\pi}{2}}$ asymptotic to the function $c_{n,k} R^{1-\beta}$.
For $\ve \in (0,\ve_0)$ sufficiently small, we construct the desired cap surface $S_{\text{cap}}$ as the level set of a function $\tilde{\varphi}_{n,k,\delta}(r,s,z)$, obtained by modifying $\varphi_{n,k}$ over the ball $B_{R_2}$ into
\begin{equation}\label{eqn:varphi-n,k-and-S-cap}
\begin{split}
        \tilde{\varphi}_{n,k,\delta}(r,s,z) &:= \varphi_{n,k}(r,s,z + \delta R_2^{-(\beta+1)} ), \\ 
        S_{\on{cap}} &:= \{ \tilde{\varphi}_{n,k,\delta} (r,s,z) = \varphi_{n,k}(0,1, \delta R_2^{-(\beta+1)} ) \},
    \end{split}
    \end{equation}
for some small $\delta>0$ to be determined.
    This construction amounts to vertically ``lowering'' a level set of $\varphi_{n,k}$ by $\delta R_2^{-(\beta+1)} >0$, thereby producing a contact angle $<\frac{\pi}{2}$ along $\Pi = \{ z=0\}$.
    Moreover, this definition ensures that $(0,1,0) \in S_{\on{cap}} \cap \Pi$, meaning that the free boundary of $S_{\on{cap}}$ is at distance $1$ from the origin.
    We now obtain a description of $S_{\text{cap}}$ as a normal graph over the cone $\mathbf{C}_{n,k,\frac{\pi}{2}}$ in an annular region $A(R_1, R_2) = B_{R_2} \setminus \bar{B}_{R_1}$.

\begin{lemma}\label{lemma:normal-graph-varphi-n,k}
    Consider the function $\varphi_{n,k}$, defined for $n \geq 7$ and $1 \leq k \leq n-2$ with $(n,k) \neq (7,5)$.
    Let $\beta$ be the degree of homogeneity of $\varphi_{n,k}$, so $\beta(n,k) = \frac{7}{2}$ for $n=7$ and $\beta(n,k) = 4$ for $n \geq 8$.
    There exists a small $\delta_{n,k}>0$ such that for all sufficiently large $R_1 \geq R_0(n,k)$, all $R_2 \geq \delta_{n,k}^{-1} R_1$, and all $\delta \in (0,\delta_{n,k})$, the level set
    \[
    S_{\normalfont{\text{cap}}} := \left\{ \varphi_{n,k}(r,s,z + \delta R_2^{-(\beta+1)} ) = \varphi_{n,k}(0,1, \delta R_2^{-(\beta+1)}) : (r,s) = (|x|,|y|) \right\} \cap \bR^{n+1}_+
    \]
    is expressible as the normal graph of a function $\hat{u}_1$ over $\mathbf{C}_{n,k,\frac{\pi}{2}} \cap A(R_1, R_2)$, satisfying 
    \begin{equation}\label{eqn:scap-over-the-cone}
    \begin{split}
        & R^{-1} | \hat{u}_1 - \bar{u}_1 | + |\nabla_{\mathbf{C}} (\hat{u}_1 - \bar{u}_1)| \leq C(n,k) R^{2(1-\beta)} \\
        & \quad \text{where } \quad  \bar{u}_1(p) := c^+_{n,k} R^{1-\beta} - a_{n,k}^{-1} \delta R_2^{-(\beta+1)} \omega_{n+1}(p), \qquad a_{n,k}^{-1} \omega_{n+1}(p) = \la \nu_{\mathbf{C}}(p), e_{n+1} \rg.
    \end{split}
    \end{equation}
    for $a_{n,k} = \sqrt{\frac{k}{n-k-1}}$ and $c^+_{n,k}$ the constant from~\eqref{eqn:constant-cases-I-through-IV}.
    Moreover, $|\nabla^{\ell}_{\mathbf{C}} \hat{u}_1| \leq C(n,k,\ell) R^{1-\beta-\ell}$.
\end{lemma}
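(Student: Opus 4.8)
The plan is to realize $S_{\on{cap}}$ as a vertically translated level set of the homogeneous function $\varphi_{n,k}$ and to exploit two structural facts: that $\varphi_{n,k}$ vanishes to first order on $\mathbf{C}_{n,k,\frac{\pi}{2}}$, so that $\nabla\varphi_{n,k}$ is transverse (and normal) to the cone off the origin, and that $\varphi_{n,k}$ is homogeneous of degree $\beta$, which fixes the scaling of the resulting normal graph. Inspecting the four cases of~\eqref{eqn:record-varphi-n,k} one checks that, near $\mathbf{C}_{n,k,\frac{\pi}{2}}\setminus\{0\}$, $\varphi^+_{n,k}$ factors as a strictly positive smooth function times a defining function of the cone with nonvanishing gradient; hence $\nabla\varphi_{n,k}$ is nowhere zero on $\mathbf{C}_{n,k,\frac{\pi}{2}}\setminus\{0\}$, points into $E_+$, and is normal there. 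By $(\beta-1)$-homogeneity together with the $O(n-k)\times O(k)$-symmetry — under which the $\bS^{n-k-1}$-factor of the link $\Sigma_0=\bS^{n-k-1}(\sqrt{\tfrac{n-k-1}{n-1}})\times\bS^k_+(\sqrt{\tfrac{k}{n-1}})$ sits at a fixed radius — the quantity $|\nabla\varphi_{n,k}|$ is constant on $\Sigma_0$; one sets $c^+_{n,k}:=1/|\nabla\varphi_{n,k}|$ evaluated on $\Sigma_0$, which is the constant of~\eqref{eqn:constant-cases-I-through-IV}.

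Next I would work in normal-bundle coordinates over the cone. On the conical region $\{\,\on{dist}(p,\mathbf{C}_{n,k,\frac{\pi}{2}})<\tfrac12|p|\,\}$ the nearest-point projection onto $\mathbf{C}_{n,k,\frac{\pi}{2}}$ is a smooth submersion, and any hypersurface that is $O(|p|^{1-\beta})$-close to the cone is uniquely a normal graph $\{q+\hat u(q)\nu_{\mathbf{C}}(q)\}$. Applying the implicit function theorem fiberwise to $\varphi_{n,k}(q+u\nu_{\mathbf{C}}(q))=c_0$, using the Taylor expansion $\varphi_{n,k}(q+u\nu_{\mathbf{C}}(q))=|\nabla\varphi_{n,k}(q)|\,u+O(|q|^{\beta-2}u^2)$ and the homogeneity of $|\nabla\varphi_{n,k}|$, gives a unique small solution $\hat u_{1,0}(q)=c_0\,c^+_{n,k}|q|^{1-\beta}\bigl(1+O(|q|^{1-\beta})\bigr)$ valid for all $|q|\ge R_1$ once $R_1\ge R_0(n,k)$; and expanding the explicit $\varphi_{n,k}$ at $(0,1,\delta R_2^{-(\beta+1)})$ yields $c_0=1+O(\delta^2 R_2^{-2(\beta+1)})$, so $c_0$ may be replaced by $1$ at a cost absorbed into $O(R^{2(1-\beta)})$ under the hypothesis $R_2\ge\delta_{n,k}^{-1}R_1$ (which also keeps the offset $\epsilon:=\delta R_2^{-(\beta+1)}$ small relative to the scale $R_1^{1-\beta}$ of $\hat u_{1,0}$ at the inner radius). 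I then compose with the vertical translation $z\mapsto z+\epsilon$: decomposing $\epsilon e_{n+1}=\epsilon\la\nu_{\mathbf{C}}(q),e_{n+1}\rg\nu_{\mathbf{C}}(q)+\epsilon e_{n+1}^{\top}$ and absorbing the tangential part into a reparametrization of $\mathbf{C}_{n,k,\frac{\pi}{2}}$ shows that $S_{\on{cap}}$ is the normal graph of $\hat u_1(q)=\hat u_{1,0}(q)-\epsilon\la\nu_{\mathbf{C}}(q),e_{n+1}\rg+O(\epsilon|q|^{-\beta})$; replacing $q$ by the graph point $p=q+\hat u_1(q)\nu_{\mathbf{C}}(q)$ changes the $0$-homogeneous quantity $\la\nu_{\mathbf{C}},e_{n+1}\rg$ by $O(|q|^{-1}\hat u_1)=O(|q|^{-\beta})$, again absorbed. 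Finally, inserting the identity $\la\nu_{\mathbf{C}}(p),e_{n+1}\rg=a_{n,k}^{-1}\omega_{n+1}(p)$ for the half-Lawson cone — which follows from the Lawson-profile identities $1+\hat f_{n,k}^2=\tfrac{(n-1)(1-t^2)}{n-k-1}$ and $1+\hat f_{n,k}^2+(1-t^2)(\hat f_{n,k}')^2=\tfrac{k(n-1)(1-t^2)}{(n-k-1)^2\hat f_{n,k}^2}$ with $a_{n,k}=\sqrt{k/(n-k-1)}$ — gives $\hat u_1=\bar u_1+O(R^{1-2\beta})$ with $\bar u_1$ exactly as in~\eqref{eqn:scap-over-the-cone}, and since $1-2\beta<2-2\beta$ this error is $\le C(n,k)R^{2(1-\beta)}$.

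The derivative bounds I would obtain by rescaling: pulling back to a dyadic sub-annulus $A(R,2R)$ and dilating by $R^{-1}$ turns $\varphi_{n,k}(q+\hat u_1(q)\nu_{\mathbf{C}}(q)+\epsilon e_{n+1})=c_0$ into a uniformly elliptic graph equation on a fixed annulus with right-hand side of size $O(R^{1-\beta})$; interior Schauder estimates (equivalently, repeated differentiation of the implicit relation) give $|\nabla^{\ell}_{\mathbf{C}}\hat u_1|\le C(n,k,\ell)R^{1-\beta-\ell}$, and subtracting the analogous expansion of $\bar u_1$ yields $|\nabla_{\mathbf{C}}(\hat u_1-\bar u_1)|\le C(n,k)R^{-2\beta}\le C(n,k)R^{2(1-\beta)}$, completing~\eqref{eqn:scap-over-the-cone}.

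I expect the main obstacle to be the behavior near the free boundary of $\mathbf{C}_{n,k,\frac{\pi}{2}}$ on $\Pi=\{z=0\}$, where $\la\nu_{\mathbf{C}},e_{n+1}\rg$ (hence $\omega_{n+1}$) degenerates to $0$ and the normal direction becomes tangent to $\Pi$, so that the nearest-point/normal-graph picture must be organized with care there; I would handle this either by phrasing the graph over the extended link $\hat\Sigma$ of Lemma~\ref{lemma:graph-over-extended-link-pi/2} and truncating by intersection with $\bR^{n+1}_+$, or by observing that the $O(R^{1-\beta})$ separation keeps $S_{\on{cap}}$ uniformly close to the cone all the way to $\Pi$, so the estimates persist up to the free boundary. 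The remaining steps — the casewise verification of transversality and of the value of $c^+_{n,k}$, and the bookkeeping in the translation step — are routine.
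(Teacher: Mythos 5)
Your proposal is correct in substance, but it organizes the translation step differently from the paper, and the comparison is worth making. The paper packages the vertical offset into the implicit-function problem from the start: it sets $q(p,\tau):=\tau\,\nu_{\mathbf{C}}(p)+\hat\delta\,e_{n+1}$ (with $\hat\delta:=\delta R_2^{-(\beta+1)}$) and $\Psi(p,\tau):=\varphi_{n,k}\bigl(p+q(p,\tau)\bigr)-c_{\hat\delta}$, so that $S_{\mathrm{cap}}=\{\Psi=0\}$, then chooses $\bar u_1$ so that $\la\nabla\varphi_{n,k}(p),q(p,\bar u_1)\rg=1$ exactly — the $\hat\delta$-terms cancel because $\la\nabla\varphi,e_{n+1}\rg=|\nabla\varphi|\la\nu_{\mathbf{C}},e_{n+1}\rg$ on $\mathbf{C}$ — and solves $\Psi(p,\bar u_1+\cR)=0$ by Taylor expansion and the implicit function theorem, with the uniform lower bound $\Psi_\tau\geq cR^{\beta-1}$ supplying nondegeneracy. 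You instead solve the untranslated level set as a normal graph $\hat u_{1,0}$ and then compose with $z\mapsto z+\epsilon$, decomposing $\epsilon\,e_{n+1}$ into normal and tangential parts and re-projecting the shifted base point onto the cone; this is sound but requires an extra reparametrization estimate (an instance of Lemma~\ref{lemma:iterated-normal-graph}) which the paper's one-shot formulation avoids. For the higher derivative bounds, the paper iteratively differentiates the identity $\Psi(p,\bar u_1+\cR)=0$, whereas you rescale to a unit annulus and invoke Schauder estimates; both are standard and equivalent in strength. Your flagged obstacle at the free boundary, where $\la\nu_{\mathbf{C}},e_{n+1}\rg\to 0$, is overcautious: the lower bound $\Psi_\tau\geq cR^{\beta-1}$ holds uniformly up to $\partial\mathbf{C}_{n,k,\frac{\pi}{2}}$ because $\varphi_{n,k}$ and its gradient are defined and homogeneous on all of $\bR^{n+1}$; the paper simply intersects with $\bR^{n+1}_+$ at the end with no special boundary treatment. (Your fallback of graphing over the extended link $\hat\Sigma_\ve$ from Lemma~\ref{lemma:graph-over-extended-link-pi/2} is also a slight mismatch, since that link belongs to the cone at angle $\tfrac{\pi}{2}-\ve$ rather than $\tfrac{\pi}{2}$; the natural extension in the present lemma is the full Lawson cone.) One minor imprecision: the constancy of $|\nabla\varphi_{n,k}|$ on $\Sigma_0$ follows from the full $O(n-k)\times O(k+1)$-invariance of $\varphi_{n,k}$ (each $\varphi^+_{n,k}$ in~\eqref{eqn:record-varphi-n,k} depends only on $|x|$ and $\sqrt{|y|^2+z^2}$) combined with homogeneity, not merely from $O(n-k)\times O(k)$-symmetry as you assert.
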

\begin{proof}
For $p \in \mathbf{C}$, we let $\omega := \frac{p}{|p|}$ be the link variable, so $p = R \omega$ and $\nu_{\mathbf{C}}(p) = \nu(\omega)$.
We recall the notation $a_{n,k} := \sqrt{\frac{k}{n-k-1}}$, so that $\mathbf{C}_{n,k,\frac{\pi}{2}}$ is given by $s^2+z^2 =a_{n,k}^2 r^2$, with normal vector $\nu_{\mathbf{C}} (x,y,z) = \frac{(- a^2_{n,k} x, y,z)}{a_{n,k} R}$.
In each of the cases I~--~IV from~\eqref{eqn:record-varphi-n,k}, we use $|\nabla \varphi|^2 = \varphi_r^2 + \varphi_s^2 + \varphi_z^2$ to compute $|\nabla \varphi|$ along $\mathbf{C}_{n,k,\frac{\pi}{2}}$ as $c_{n,k}^+|\nabla \varphi_{n,k}| =  R^{\beta-1}$, where
\begin{equation}\label{eqn:constant-cases-I-through-IV}
\mathrm{I: } \; c_{7,k}^+ = \frac{(1 + a^2_{7,k})^{\frac{3}{4}}}{2 a_{7,k}^{\frac{5}{2}}}, \quad \mathrm{II: } \; c_{7,3}^+ = \frac{2^{\frac{7}{4}}}{7}, \quad \mathrm{III: } \; c_{n,1}^+=\frac{(n-1)\sqrt{n-2}}{2}, \quad \mathrm{IV: } \; c_{n,k}^+ = \frac{1 + a_{n,k}^2}{4 a_{n,k}^3}.
\end{equation}
Let us denote $\hat
\delta := \delta R_2^{-(\beta+1)}$, for brevity, so $\tilde{\varphi}_{n,k,\delta} = \varphi_{n,k}(r,s,z+\hat{\delta})$.
By the homogeneity of $\varphi$, we know that $\varphi \sim R^{\beta}$ and $|\nabla^{\ell} \varphi| \sim R^{\beta-\ell}$.
Using the expressions~\eqref{eqn:record-varphi-n,k}, we define 
\begin{equation}\label{eqn:c-hat-delta-estimate}
c_{\hat{\delta}} := \varphi_{n,k}(0,1,\hat{\delta}) = (1 + \delta^2 R_2^{-2(\beta+1)})^{\frac{\beta}{2}} = 1 + \tfrac{\beta}{2} \delta^2 R_2^{-2(\beta+1)} + O (\delta^4 R_2^{-4(\beta+1)}).
\end{equation}
We also denote, for brevity, $q(p,\tau) := \tau \nu_{\mathbf{C}}(p) + \hat{\delta} e_{n+1}$ and set $\Psi(p,\tau) := \varphi(p + q(p,\tau)) - c_{\hat{\delta}}$, so that $S_{\text{cap}} = \{ \Psi = 0 \}$.
Using $|q(p,\tau)| = O (\tau + \hat{\delta})$, we may bound
\[
\Psi_{\tau}(p, \tau) = \la \nabla \varphi( p + q(p,\tau)) , \nu_{\mathbf{C}} \rg \geq \la \nabla \varphi, \nu_{\mathbf{C}} \rg - C(n) |D^2 \varphi ||q(p,\tau)| \geq c R^{\beta-1}.
\]
The construction of $\bar{u}_1$ in~\eqref{eqn:scap-over-the-cone} is made to ensure that $\la \nabla \varphi, q(p, \bar{u}_1) \rg = 1$, since
\begin{align*}
& \la \nu_{\mathbf{C}}(p), e_{n+1} \rg = a_{n,k}^{-1} \omega_{n+1}(p) \implies \bar{u}_1(p) = c_{n,k}^+R^{1-\beta} - \hat{\delta} \la \nu_{\mathbf{C}} , e_{n+1} \rg \\
& \implies \la \nabla \varphi(p), q(p, \bar{u}_1) \rg= \la \nabla \varphi(p) , c^+_{n,k} R^{1-\beta} \nu_{\mathbf{C}}(p) \rg = c^+_{n,k} R^{1-\beta} \Big\la \nabla \varphi , \frac{\nabla \varphi}{|\nabla \varphi|} \Big\rg = c^+_{n,k} R^{1-\beta} |\nabla \varphi| = 1,
\end{align*}
by using the above computation $c^+_{n,k} |\nabla \varphi| =  R^{\beta-1}$.
We also observe that
\[
|\nabla^{\ell}_{\mathbf{C}} \bar{u}_1| \leq C_{\ell} ( R^{1-\beta-\ell} + \hat{\delta} R^{-\ell}).
\]
by construction.
Therefore, we can Taylor-expand $\varphi$ near a point $p \in \mathbf{C}$ (where $\varphi(p) = 0$) to write
\[
\varphi(p + q(p, \bar{u}_1) ) - 1 = \tfrac{1}{2} D^2 \varphi [ q(p, \bar{u}_1), q(p,\bar{u}_1) ] + O ( \hat{\delta}^2 R^{\beta-2}).
\]
Since $\varphi$ is homogeneous of degree $\beta$, we have $|D^{\ell} \varphi| \sim R^{\beta-\ell}$; on the other hand, $|q(p, \bar{u}_1)| \lesssim R^{1-\beta} + \hat{\delta}$ was observed above.
In what follows, we make repeated use of the fact that $\hat{\delta} = \delta R_2^{-(\beta+1)} \leq \delta R^{-(\beta+1)}$ in $A(R_1, R_2)$, so $\hat{\delta} = o (R^{1-\beta})$.
Consequently,
\[
D^2 \varphi [ q(p, \bar{u}_1) , q(p, \bar{u}_1)] \leq C(n) R^{\beta-2} ( R^{2(1-\beta)} + \hat{\delta}^2 + \hat{\delta} R^{1-\beta}) \leq C(n) R^{-\beta}.
\]
Using $c_{\delta} = 1 + \tfrac{\beta}{2} \hat{\delta}^2 + O ( \hat{\delta}^4)$, due to~\eqref{eqn:c-hat-delta-estimate}, we may expand $\Psi(p,\bar{u}_1 + \cR)$ as
\[
\Psi( p, \bar{u}_1 + \cR) = ( \Psi(p, \bar{u}_1 + \cR) - \Psi(p, \bar{u}_1) ) + O ( R^{-\beta})
\]
Since $\Psi(p, \tau)$ is increasing in $\tau$, for each fixed $p \in \mathbf{C} \cap A(R_1, R_2)$, the mean value theorem gives, for some $\sigma = \sigma(p) \in(0,1)$,
\[
\Psi( p, \bar{u}_1 + \cR) = \Psi(p, \bar{u}_1) + \cR \Psi_{\tau}(p, \bar{u}_1) + \tfrac{1}{2} \cR^2 \Psi_{\tau \tau}(p, \bar{u}_1 + \sigma \cR),
\]
where $\Psi_{\tau} \geq c R^{\beta-1}$ was shown earlier, and we likewise obtain
\[
|\nabla_{\mathbf{C}} \Psi_{\tau}| \leq C(n) R^{\beta-2}, \qquad |\Psi_{\tau \tau}| \leq C(n) R^{\beta-2}.
\]
Applying the implicit function theorem, we deduce the existence of a smooth solution $\cR$ to $\Psi ( p, \bar{u}_1 + \cR) = 0$, which satisfies $R^{-1} |\cR| \leq C(n) R^{-2\beta}$.

Applying total covariant differentiation to the equality $\Psi( p, \bar{u}_1 + \cR) = 0$, we obtain 
\[
\nabla_{\mathbf{C}} \Psi (p, \bar{u}_1 + \cR) + \Psi_{\tau} \nabla_{\mathbf{C}} ( \bar{u}_1 + \cR) = 0 \implies  \nabla_{\mathbf{C}} \cR = -\frac{\nabla_{\mathbf{C}} \Psi ( p , \bar{u}_1 + \cR)  }{\Psi_{\tau} ( p, \bar{u}_1 + \cR)} - \nabla_{\mathbf{C}} \bar{u}_1
\]
where $\nabla_{\mathbf{C}} \Psi(p, \tau)$ denotes the covariant gradient of the function $\Psi$ only with respect to the $p$-variables, evaluated at a point $(p,\tau) \in \bR^{n+1} \times \bR$.
Expanding as before and using the fact that $\frac{\nabla_{\mathbf{C}} \Psi( p, \bar{u}_1)}{\Psi_{\tau}(p, \bar{u}_1) } = - \nabla_{\mathbf{C}} \bar{u}_1 + O( R^{1-\beta}  \cR)$, we may write
\[
\frac{\nabla_{\mathbf{C}} \Psi ( p , \bar{u}_1 + \cR)  }{\Psi_{\tau} ( p, \bar{u}_1 + \cR)} = - \nabla_{\mathbf{C}} \bar{u}_1 + \frac{1}{\Psi_{\tau}(p, \bar{u}_1)} \left( \nabla_{\mathbf{C}} \Psi_{\tau}(p, \bar{u}_1) - \Psi_{\tau \tau}(p, \bar{u}_1) \, \nabla_{\mathbf{C}} \bar{u}_1 \right) \cR + O (R^{1-\beta}\cR+\cR^2)
\]
where $|\nabla_{\mathbf{C}} \Psi_{\tau} - \Psi_{\tau \tau} \nabla_{\mathbf{C}} \bar{u}_1| \leq |\nabla_{\mathbf{C}} \Psi_{\tau}| + |\Psi_{\tau \tau}| \, |\nabla_{\mathbf{C}} \bar{u}_1|$.
Combining these bounds with $|\nabla_{\mathbf{C}} \bar{u}_1| \leq C(n,k) R^{-\beta}$, we arrive at
\begin{align*}
|\nabla_{\mathbf{C}} \cR| &\leq C(n,k) R^{1-\beta} ( |\nabla_{\mathbf{C}} \Psi_{\tau}| + |\Psi_{\tau \tau}| \, |\nabla_{\mathbf{C}} \bar{u}_1|) \, |\cR| + O(\cR^2) \leq C(n,k) R^{2(1-\beta)}
\end{align*}
which establishes the property~\eqref{eqn:scap-over-the-cone}.
The derivation of higher-order estimates for $\Psi$ is analogous, upon differentiating $\Psi(p, \bar{u}_1 + \cR) = 0$ by $\nabla^{\ell}_{\mathbf{C}}$ and inductively iterating the bounds obtained at each step.
We again apply the homogeneity estimates for covariant derivatives of $\varphi$ and $\nu_{\mathbf{C}}$, together with $|D^{\ell} \varphi| \sim R^{\beta-\ell}$, $\partial^{\ell}_{\tau} \Psi \sim R^{\beta-\ell}$, and $|\nabla_{\mathbf{C}}^{\ell}q(p, \bar{u}_1)| \lesssim (R^{1-\beta-\ell} + \hat{\delta} R^{-\ell})$.
In particular, we obtain 
\[
|\nabla^{\ell}_{\mathbf{C}} \hat{u}_1| \leq C(n,k,\ell) |\nabla^{\ell}_{\mathbf{C}} \bar{u}_1| \leq C(n,k,\ell) ( R^{1-\beta-\ell} + \delta R_2^{-(\beta+1)} R^{1-\ell}) \leq C(n,k,\ell) R^{1-\beta-\ell}
\]
as desired.
This completes the proof.
\end{proof}

We will also use the following standard result, allowing us to relate the expressions of a given surface as a small normal graph over two nearby surfaces; see, for example,~\cite{uniqueness-gabor}*{Lemma~5.9}.

\begin{lemma}[\cite{uniqueness-gabor}*{Lemma~5.9}]\label{lemma:iterated-normal-graph}
    Let $S \subset \bR^{n+1}$ be a hypersurface with boundary and second fundamental form $A_S$ satisfying $\sup_S \|\nabla^{\ell}_S A_S \| < c_{\ell}$.
    We consider $C^m$-small functions $u,v$ and define $S_u$ to be the normal graph of $u$ over $S$, then take $S_{u+v}$ to be the normal graph of $v$ over $S_u$.
    Then, $S_{u+v}$ is expressible as a normal graph over $S$, given by a function $w$ satisfying
    \[
    w = u + v + O \bigl( |u| |\nabla_S v|^2 + |v| |\nabla_S u|^2 + |v| |\nabla_S u| |\nabla_S v| \bigr).
    \]
    In particular, if $\| u \|_{C^m(S)}, \| v\|_{C^m(S)}$ are sufficiently small (depending on the geometry of $S$), then
    \begin{equation}\label{eqn:w-u-v-bounds}
    \| w - (u+v) \|_{C^{m-1}(S)} < C(n,S) \| u \|_{C^m(S)} \| v \|_{C^m(S)}.
    \end{equation}
\end{lemma}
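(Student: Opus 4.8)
The statement in question is Lemma~\ref{lemma:iterated-normal-graph}, the composition rule for normal graphs: if $S_u$ is the normal graph of $u$ over $S$, and $S_{u+v}$ is the normal graph of $v$ over $S_u$, then $S_{u+v}$ is a normal graph over $S$ by a function $w$ with $w = u + v + O(|u||\nabla_S v|^2 + |v||\nabla_S u|^2 + |v||\nabla_S u||\nabla_S v|)$, hence $\|w-(u+v)\|_{C^{m-1}(S)}< C(n,S)\|u\|_{C^m(S)}\|v\|_{C^m(S)}$ when both are $C^m$-small.

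\medskip

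\textbf{Approach.} The plan is to set up the tubular-neighbourhood (Fermi) coordinates around $S$ and express both normal graphs in those coordinates, then solve for the $S$-normal coordinate of a point of $S_{u+v}$ by the implicit function theorem and Taylor-expand. Concretely, let $X\colon S\times(-\delta,\delta)\to\bR^{n+1}$, $X(p,s)=p+s\,\nu_S(p)$, be the normal exponential map, which is a diffeomorphism onto a tubular neighbourhood when $\|u\|_{C^0},\|v\|_{C^0}$ are small relative to the reach of $S$ (controlled by the curvature bounds $\sup_S\|\nabla_S^\ell A_S\|<c_\ell$). First I would record the standard fact that the unit normal $\nu_{S_u}$ of the normal graph $S_u=\{X(p,u(p))\}$, pulled back to $S$, has the form $\nu_{S_u}(X(p,u(p))) = \alpha(p)\bigl(\nu_S(p) - W_u(p)^{-1}\nabla_S u(p)\bigr)$ for a normalising scalar $\alpha = (1+|W_u^{-1}\nabla_S u|^2)^{-1/2}$, where $W_u = I - u\,A_S$ is the (invertible, close to identity) shape-operator factor; all of this is classical and the curvature bounds make every term and its $S$-derivatives controlled. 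Then a point $q\in S_{u+v}$ over the base point $p\in S_u$ (itself over $p_0\in S$) is $q = X(p_0,u(p_0)) + v(p)\,\nu_{S_u}$, and I want to write $q = X(p_1, w(p_1))$ for a unique $p_1\in S$ near $p_0$ and $w$ small. Projecting $q$ onto $S$ along $\nu_S$ defines $p_1$ and $w$ implicitly via $\langle q - X(p_1,w), T\rangle = 0$ for $T\in T_{p_1}S$; the Jacobian of this system is a small perturbation of the identity, so the implicit function theorem gives smooth $p_1(p_0,v),\ w(p_0,v)$ with the quantitative bounds coming from the contraction-mapping estimates.

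\medskip

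\textbf{Key steps in order.} (1) Fix Fermi coordinates and the smoothness/invertibility of $X$ on the relevant tube, using the curvature hypotheses to get uniform constants. (2) Compute $\nu_{S_u}$ in Fermi coordinates and its $S$-covariant derivatives up to order $m-1$, obtaining $\nu_{S_u} = \nu_S + O(|\nabla_S u|)$ with the precise leading correction $-\nabla_S u$ (after accounting for the $W_u$ factor, which contributes only at order $|u||\nabla_S u|$). (3) Write the defining equation for $(p_1,w)$: expand $q = X(p_0,u(p_0)) + v\,\nu_{S_u}$ and compare with $X(p_1,w)$; to zeroth order $p_1 = p_0$ and $w = u(p_0) + v(p_0)$, since when $u\equiv 0$ the graph is literally $X(p,v(p))$ and $w=v$, and when $v\equiv 0$ it is $X(p,u(p))$ and $w=u$. (4) Apply the implicit function theorem / a direct Taylor expansion: the discrepancy $w - (u+v)$ arises only from (a) the tangential displacement $p_1 - p_0 = O(v\,|\nabla_S u|)$ needed because $\nu_{S_u}$ is tilted relative to $\nu_S$, composed with evaluating $u$ at the shifted point, and (b) the normal component of $v(\nu_{S_u}-\nu_S) = -v\,\nabla_S u + \text{(higher order)}$. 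Collecting these gives exactly the claimed error $O(|u||\nabla_S v|^2 + |v||\nabla_S u|^2 + |v||\nabla_S u||\nabla_S v|)$: the term $|v||\nabla_S u|^2$ is the second-order effect of the tangential shift, $|u||\nabla_S v|^2$ is its symmetric counterpart from re-expanding $u$ along the tilted direction, and the mixed term is the cross-interaction. (5) Differentiate the implicit relation up to order $m-1$ and iterate the bounds, using that all geometric quantities of $S$ have uniformly bounded derivatives, to upgrade the pointwise $O(\cdot)$ to the $C^{m-1}$ estimate~\eqref{eqn:w-u-v-bounds} with a constant depending only on $n$ and the geometry of $S$.

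\medskip

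\textbf{Main obstacle.} The routine part is the implicit function theorem itself; the genuinely fiddly point is bookkeeping the tangential correction $p_1 - p_0$ and propagating it through the Taylor expansion and through $m-1$ covariant derivatives while keeping the error strictly \emph{bilinear} in $u$ and $v$ (i.e.\ no pure $O(|u|^2)$ or $O(|v|^2)$ terms survive), which is what makes the estimate sharp and is exactly what is needed later for the gluing argument of Section~\ref{section:construct-foliation}. Ensuring the constants depend only on $n$ and the curvature bounds $c_\ell$ of $S$ — not on $u,v$ — requires that every expansion be carried out uniformly on the tube, so the smallness hypotheses $\|u\|_{C^m},\|v\|_{C^m}\ll 1$ must be quantified against the reach and curvature of $S$ from the outset. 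Since this is a standard lemma (we cite~\cite{uniqueness-gabor}*{Lemma~5.9}), in the write-up I would state the Fermi-coordinate setup, do the two-term Taylor expansion carefully enough to exhibit the bilinear error structure, and refer to the cited source for the full derivative iteration.
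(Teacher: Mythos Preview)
The paper does not prove this lemma at all: it is stated with a direct citation to~\cite{uniqueness-gabor}*{Lemma~5.9} and used as a black box in the proof of Proposition~\ref{prop:mean-convex-cap}~$(v)$. Your Fermi-coordinate / implicit-function-theorem outline is the standard approach and is correct in spirit; it is essentially what one finds in the cited reference. If you are writing this up for the paper, a one-line citation suffices, since the authors themselves do not supply a proof.
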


For $\ve$ sufficiently small, we construct the caps at contact angle $\theta = \frac{\pi}{2} - O(\ve)$ as level sets of appropriate perturbations $\tilde{\varphi}_{n,k,\ve}$ of the function $\varphi_{n,k}$.
\begin{proposition}\label{prop:mean-convex-cap}
Consider any $n \geq 7$ and $1 \leq k \leq n-2$ except $(n,k) = (7,5)$.
Let $\beta(n) = \frac{7}{2}$ for $n=7$ and $\beta(n) = 4$ for $n \geq 8$.
There exists a $\Lambda = \Lambda(n,k)$ such that for any $R_1 > 0$ sufficiently large, any $R_2 > \Lambda R_1$, and $\ve \in (0, \Lambda^{-1} R_2^{-4\beta})$, we can produce a function $U_{\on{cap}}(r,s)$ over the ball $B^n_{R_2} \subset \Pi$ with graph $S_{\text{cap}} := \{ z = U_{\text{cap}}(r,s) \}$ satisfying the following properties: 
\begin{enumerate}[$(i)$]
\item The mean curvature operator satisfies $\cM ( U_{\on{cap}}(r,s) ) < - d_{n,k} R^{-(1+\beta)}$, meaning that $S_{\text{cap}}$ is mean-convex with respect to the upward-pointing normal vector $\nu_S$.
\item The inequality $\la \nu_S , e_{n+1} \rg \geq \cos \theta$ is satisfied along the free boundary of $U_{\on{cap}}$, where $\theta = \frac{\pi}{2} - O(\ve)$ denotes the contact angle of the capillary cone $\mathbf{C}_{n,k,\frac{\pi}{2}-\ve}$.
\item The free boundary of $U_{\on{cap}}(r,s)$ is contained in the cone region $E_+(\mathbf{C}_{n,k,\frac{\pi}{2} - \ve}) \cap \Pi$, given by $\{ |y| > \tfrac{t_{\ve}}{\sqrt{1-t_{\ve}^2}} |x| \}$, and has distance $1$ from the origin.
\item The cap function satisfies $U_{\on{cap}} > \rho f_{\ve}(t)$ over $B_{R_2}$.
\item The surface $S_{\text{cap}} \cap A(R_1, R_2)$ over the annulus $A(R_1, R_2) := B_{R_2} \setminus \bar{B}_{R_1}$ is expressible as a normal graph over the extended cone $\hat{\mathbf{C}}_{n,k,\frac{\pi}{2}-\ve}$, with 
\begin{equation}\label{eqn:phi-ck-bounds}
    \begin{split}
        & R^{-1} | u_1 - \bar{u}_1 | + |\nabla_{\mathbf{C}} (u_1 - \bar{u}_1)| \leq C(n,k) R^{2(1-\beta)}, \\
        & \quad \text{where } \quad  \bar{u}_1(p) := c_{n,k} R^{1-\beta} - a_{n,k}^{-1} \delta R_2^{-(\beta+1)} \omega_{n+1}(p).
    \end{split}
    \end{equation}
    for $a_{n,k} = \sqrt{\frac{k}{n-k-1}}$ and $c_{n,k}$ the constant from~\eqref{eqn:constant-cases-I-through-IV}.
    Moreover, $|\nabla^2_{\mathbf{C}} u_1| \leq C(n,k) R^{-(\beta+1)}$.
\end{enumerate}
\end{proposition}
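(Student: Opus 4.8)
## Proof proposal for Proposition~\ref{prop:mean-convex-cap}

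The plan is to take the explicit sub-calibrating potential $\varphi_{n,k} = \varphi_{n,k}^+$ from~\eqref{eqn:record-varphi-n,k} (for the relevant case I--IV, so of homogeneity $\beta$) and perturb it vertically to $\tilde\varphi_{n,k,\delta}(r,s,z) := \varphi_{n,k}(r,s,z+\hat\delta)$, with $\hat\delta := \delta R_2^{-(\beta+1)}$ for a suitably small $\delta = \delta(n,k)$. The cap $S_{\text{cap}}$ is defined as the level set $\{\tilde\varphi_{n,k,\delta} = \varphi_{n,k}(0,1,\hat\delta)\} \cap \bR^{n+1}_+$, exactly as in~\eqref{eqn:varphi-n,k-and-S-cap}. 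First I would record that $S_{\text{cap}}$ is, near the free boundary, a graph $\{z = U_{\text{cap}}(r,s)\}$ over $B^n_{R_2} \subset \Pi$ with $(0,1,0) \in S_{\text{cap}} \cap \Pi$, so the free boundary sits at distance $1$ from the origin. Property $(iii)$ then follows because lowering the level set of $\varphi_{n,k}$ by $\hat\delta$ pushes the free-boundary trace of $S_{\text{cap}}$ into the interior of the cone region $E_+(\mathbf{C}_{n,k,\frac\pi2})$ at distance $1$; since $\mathbf{C}_{n,k,\frac\pi2-\ve}$ is an $O(\ve)$-normal graph over $\mathbf{C}_{n,k,\frac\pi2}$ by Lemma~\ref{lemma:graph-over-extended-link-pi/2}, and the vertical shift is order $\hat\delta = \delta R_2^{-(\beta+1)}$, choosing $\ve$ small compared to $R_2^{-4\beta}$ keeps the free boundary inside $\{|y| > \tfrac{t_\ve}{\sqrt{1-t_\ve^2}}|x|\}$ as required.

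For $(i)$, I would use the quantitative sub-calibration property~\eqref{eqn:quantitative-sub-calibration} of $\varphi_{n,k}$ on $E_+$: along the level sets of $\varphi_{n,k}$ itself one has $\on{div}\frac{\nabla\varphi_{n,k}}{|\nabla\varphi_{n,k}|} \geq c_{n,k}\,\on{dist}(p,\mathbf{C})/R^2$, so these level sets are mean-convex with a quantitative lower bound on the mean curvature. The vertical translation by $\hat\delta$ is a Euclidean isometry in the $z$-direction applied to $\varphi_{n,k}$, so $\tilde\varphi_{n,k,\delta}$ has exactly the same divergence inequality about the \emph{translated} cone $\mathbf{C} - \hat\delta e_{n+1}$; near distance $R$ from the origin the level set of $\tilde\varphi_{n,k,\delta}$ differs from that of $\varphi_{n,k}$ by a normal displacement of order $\hat\delta$, which translates (via the divergence computation~\eqref{eqn:div-X-computation} and homogeneity $|D^\ell\varphi| \sim R^{\beta-\ell}$) into $\cM(U_{\text{cap}}) < -d_{n,k} R^{-(1+\beta)}$ once $\delta$ is small enough that the perturbation does not overwhelm the base inequality. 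For $(ii)$, along the free boundary $\{z=0\}$ the unit normal of the level set of $\varphi_{n,k}$ is horizontal only on $\mathbf{C}$; after shifting down by $\hat\delta$, a direct computation of $\la\nu_S,e_{n+1}\rg$ at the free-boundary point shows $\la\nu_S,e_{n+1}\rg = c(n,k)\hat\delta + O(\hat\delta^2) > 0 = \cos(\tfrac\pi2)$, and since $\theta = \tfrac\pi2 - O(\ve)$ with $\ve \ll \hat\delta$ by the hypothesis $\ve < \Lambda^{-1}R_2^{-4\beta}$, we get $\la\nu_S,e_{n+1}\rg \geq \cos\theta$. Property $(iv)$, $U_{\text{cap}} > \rho f_\ve(t)$ over $B_{R_2}$, I would obtain from the fact that $S_{\text{cap}}$ lies on the $E_+$ side of the translated cone $\mathbf{C}-\hat\delta e_{n+1}$, which itself lies above $\mathbf{C}_{n,k,\frac\pi2-\ve}$ once $\hat\delta$ dominates the $O(\ve R_2)$ discrepancy between $\mathbf{C}_{n,k,\frac\pi2}$ and $\mathbf{C}_{n,k,\frac\pi2-\ve}$ in $B_{R_2}$ — this again is where $\ve \ll R_2^{-4\beta}$ is used.

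For $(v)$ I would first apply Lemma~\ref{lemma:normal-graph-varphi-n,k} to write $S_{\text{cap}} \cap A(R_1,R_2)$ as the normal graph of a function $\hat u_1$ over $\mathbf{C}_{n,k,\frac\pi2}$ satisfying the estimate~\eqref{eqn:scap-over-the-cone} with leading term $\bar u_1 = c_{n,k}^+ R^{1-\beta} - a_{n,k}^{-1}\hat\delta\,\omega_{n+1}$; then, using Lemma~\ref{lemma:graph-over-extended-link-pi/2}, I would re-express this graph over the extended cone $\hat{\mathbf{C}}_{n,k,\frac\pi2-\ve}$, which differs from $\mathbf{C}_{n,k,\frac\pi2}$ by the $O(\ve)$-small normal graph $v^{\mathbf{C}}_\ve$ with the bounds~\eqref{eqn:v-eps-C-estimate}. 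Iterating the two normal-graph representations via Lemma~\ref{lemma:iterated-normal-graph} produces a single function $u_1$ over $\hat{\mathbf{C}}_{n,k,\frac\pi2-\ve}$ whose difference from $\hat u_1 + v^{\mathbf{C}}_\ve$ is controlled by the product of their $C^m$ norms, which is $O(\ve R^{1-\beta})$; absorbing this into the already-present error $O(R^{2(1-\beta)})$ requires $\ve \leq R^{1-\beta}$ on the annulus, hence $\ve \lesssim R_1^{1-\beta}$, which is implied by $\ve < \Lambda^{-1}R_2^{-4\beta} \leq \Lambda^{-1}R_1^{-4\beta}$ for $\Lambda$ large. This yields~\eqref{eqn:phi-ck-bounds} with $c_{n,k} = c_{n,k}^+$, and the Hessian bound $|\nabla^2_{\mathbf{C}} u_1| \leq C(n,k)R^{-(\beta+1)}$ follows from the corresponding bound in Lemma~\ref{lemma:normal-graph-varphi-n,k} together with the scale-invariance of $v^{\mathbf{C}}_\ve$.

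The main obstacle I expect is bookkeeping the interplay of the three small/large scales $\ve$, $R_1$, $R_2$, $\delta$ and the homogeneity exponent $\beta$ simultaneously: the vertical shift $\hat\delta = \delta R_2^{-(\beta+1)}$ must be small enough (relative to $\ve$, which in turn is small relative to $R_2^{-4\beta}$) that the cap stays strictly above $\mathbf{C}_{n,k,\frac\pi2-\ve}$ and inside its cone region all the way out to $R_2$, yet large enough (relative to $\ve$) to force the strict contact-angle inequality $\la\nu_S,e_{n+1}\rg \geq \cos\theta$; threading this needle, and verifying that the quantitative mean-convexity survives the perturbation uniformly on $A(R_1,R_2)$, is the delicate part, but it is a matter of carefully ordering the choices $\delta = \delta(n,k)$, then $R_1$ large, then $\Lambda = \Lambda(n,k)$, then $R_2 > \Lambda R_1$, then $\ve < \Lambda^{-1}R_2^{-4\beta}$, rather than any genuinely new idea.
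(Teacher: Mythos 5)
Your proposal follows essentially the same approach as the paper: perturb $\varphi_{n,k}$ vertically by $\hat\delta := \delta R_2^{-(\beta+1)}$ to form $\tilde\varphi_{n,k,\delta}$, take the cap as the corresponding level set, and derive $(i)$ from translation-invariance of the quantitative sub-calibration~\eqref{eqn:quantitative-sub-calibration}, $(ii)$--$(iv)$ from direct comparisons, and $(v)$ by composing Lemmas~\ref{lemma:normal-graph-varphi-n,k}, \ref{lemma:graph-over-extended-link-pi/2}, and~\ref{lemma:iterated-normal-graph}.

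One quantitative point in your $(ii)$ deserves attention: you state $\la\nu_S,e_{n+1}\rg = c(n,k)\hat\delta + O(\hat\delta^2)$ ``at the free-boundary point,'' which is only what one sees at the innermost point $(0,1,0)$. Since $|\nabla\tilde\varphi| \sim R^{\beta-1}$ while $\partial_z\tilde\varphi \sim \hat\delta\,R^{\beta-2}$ (roughly), the vertical component of $\nu_S$ decays along the free boundary, and at distance $s \sim R_2$ the lower bound degrades to $\sim \hat\delta/R_2 = \delta R_2^{-(\beta+2)}$ (this is exactly what the paper's computation~\eqref{eqn:dot-product-bound} produces). Your conclusion still goes through because $\ve < \Lambda^{-1}R_2^{-4\beta} \ll \delta R_2^{-(\beta+2)}$ (for $\beta \geq \tfrac{7}{2}$ we have $4\beta > \beta+2$), so even the degraded far-field bound dominates $\cos\theta \sim \ve$. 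But as written your estimate is a factor of $R_2$ too optimistic and fails to justify $(ii)$ over the whole free boundary; you should carry out the computation uniformly in $s \in [1, R_2]$ and keep the $1/R_2$ loss, as the paper does. Everything else in the proposal is sound.
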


\begin{proof}
We consider the function $\tilde{\varphi}_{n,k,\delta}(r,s,z) := \varphi_{n,k}(r,s,z+\delta R_2^{-(\beta+1)})$ over the ball $B_{R_2}$, defined in~\eqref{eqn:varphi-n,k-and-S-cap} by modifying the explicit profile $\varphi_{n,k}$.
    We see directly from~\eqref{eqn:record-varphi-n,k}~--~\eqref{eqn:record-varphi-n,k-on-E-minus} that $\partial_z \tilde{\varphi}_{n,k,\delta} \neq 0$, for $\ve \in (0,\ve_0)$ sufficiently small.
    Consequently, the level set $S_{\text{cap}} = \{ \tilde{\varphi} = \tilde{\varphi}(0,1,0) \}$ is expressible as the graph of a function $z = U_{\on{cap}}(r,s)$.
    Recall the explicit forms of $\varphi_{n,k}$ from~\eqref{eqn:record-varphi-n,k}, corresponding to Cases I~--~IV; in case IV, the corresponding graph function $z = U_{\text{cap}}(r,s)$ is given by
    \[
    z = U_{\on{cap}(n,k,\delta)}(r,s) := \sqrt{\sqrt{a^4_{n,k} r^4 + \bigl(1+\delta^2 R_2^{-2(\beta+1)} \bigr)^2} - s^2} - \delta R_2^{-(\beta+1)}, \qquad a_{n,k} := \sqrt{\tfrac{k}{n-k-1}}
    \]
    coming from~\eqref{eqn:varphi-n,k-and-S-cap}.
    The computations in Cases I~--~III are analogous.
    We will now verify that the above surfaces $S_{\on{cap}}$ and corresponding graph functions $U_{\on{cap}}(r,s)$ have the required properties $(i)$~--~$(v)$.
    The discussion following~\eqref{eqn:varphi-n,k-and-S-cap} shows that $(0,1,0) \in S_{\text{cap}} \cap \Pi$, so $S_{\text{cap}}$ has free boundary at distance $1$ from the origin in all cases.
    The constructions in Cases I and II are small modifications of the more general constructions in Cases III and IV, respectively.
    Therefore, their validity will follow from analogous arguments.

    \smallskip

    \noindent \textbf{Property $(i)$.}
    The functions $\tilde{\varphi}_{n,k,\delta}$ are obtained from $\varphi_{n,k}$ under a translation, which leaves the mean curvature invariant.
    Consequently, the computations of~\eqref{eqn:quantitative-sub-calibration} imply that
    \begin{equation}\label{eqn:div-varphi-tilde}
    \on{div} \frac{\nabla \tilde{\varphi}_{n,k,\delta}}{|\nabla \tilde{\varphi}_{n,k,\delta}|} \geq  d_{n,k} \frac{s^2 + (z+\delta R_2^{-(\beta+1)})^2 - a^2_{n,k} r^2}{R^3} \geq \tilde{d}_{n,k} \frac{(1+\delta^2 R_2^{-2(\beta+1)})^2}{R^{1+\beta}}.
    \end{equation}
    In the last step, we used the fact that $\tilde{\varphi}_{n,k,\delta} = (1+\delta^2 R_2^{-2(\beta+1)})^2$ along $S_{\on{cap}}$, whereby 
    \[
    (s^2 + (z+\delta R_2^{-(\beta+1)})^2 - a^2_{n,k}r^2) \cdot \frac{\tilde{\varphi}_{n,k,\delta}}{s^2 + (z+\delta R_2^{-(\beta+1)})^2 - a^2_{n,k}r^2} = (1+\delta^2 R_2^{-2(\beta+1)})^2.
    \]
    For each function $\varphi^{\pm}_{n,k}$ from~\eqref{eqn:record-varphi-n,k}~--~\eqref{eqn:record-varphi-n,k-on-E-minus}, we observe that $\varphi_{n,k}$ is a homogeneous function of degree $\beta$ vanishing along $\{ s^2 + z^2 = a_{n,k}^2 r^2 \}$, so that $\frac{\varphi_{n,k}}{s^2 + z^2 - a_{n,k}^2 r^2}$ is a well-defined positive homogeneous function of degree $\beta-2$.
    Consequently, $\tilde{\varphi}_{n,k,\delta}$ vanishes along $\{ s^2 + (z+\delta R_2^{-(\beta+1)} )^2 = a^2_{n,k}r^2 \}$, and $\frac{\tilde{\varphi}_{n,k,\delta}}{s^2 + (z+\delta R_2^{-(\beta+1)})^2 - a^2_{n,k}r^2}$ is a well-defined positive function with $\frac{\tilde{\varphi}_{n,k,\delta}}{s^2 + (z+\delta R_2^{-(\beta+1)})^2 - a^2_{n,k}r^2} \leq d_{n,k} R^{\beta-2}$.
    Rearranging terms in~\eqref{eqn:div-varphi-tilde}, we find
    \begin{align*}
    \on{div} \frac{\nabla \tilde{\varphi}_{n,k,\delta}}{|\nabla \tilde{\varphi}_{n,k,\delta}|} &\geq  d_{n,k} \frac{s^2 + (z+\delta R_2^{-(\beta+1)})^2 - a^2_{n,k} r^2}{R^3} \\
    &\geq d_{n,k} \bigl( 1+ \delta^2 R_2^{-2(\beta+1)} \bigr)^2 R^{-3} \frac{s^2 + (z+\delta R_2^{-(\beta+1)})^2 - a^2_{n,k} r^2}{\tilde{\varphi}_{n,k,\delta}}
    \end{align*}
    where the last term is $\geq d_{n,k} R^{-(1+\beta)}$, as claimed.
    This proves~\eqref{eqn:div-varphi-tilde}, and we obtain
    \[
    H[S_{\text{cap}}] = - \cM( U_{\text{cap}}) = \on{div} \frac{\nabla \tilde{\varphi}_{n,k,\delta}}{|\nabla \tilde{\varphi}_{n,k,\delta}|} \geq d_{n,k} R^{-(1+\beta)}
    \]
    as desired.
    This proves the required mean-convexity estimate $(i)$ in each case.

    \smallskip

\noindent \textbf{Property $(ii)$.}
    Along the free boundary of $U_{\on{cap}}$, we know that $z = 0$ and 
    \begin{equation}\label{eqn:level-set-equation}
        (s^2+\delta^2 R_2^{-2(\beta+1)})^2 - a_{n,k}^4 r^4 = (1+\delta^2 R_2^{-2(\beta+1)} )^2.
    \end{equation}
    Recall that $|\nabla \tilde{\varphi}|^2 = \tilde{\varphi}_r^2 + \tilde{\varphi}_s^2 + \tilde{\varphi}_z^2$, hence the contact angle along $S_{\text{cap}} \cap \Pi$ is computed by
    \begin{align*}
    \la\nu_{S_{\on{cap}}} , e_{n+1} \rg &= \frac{\partial_{n+1} \tilde{\varphi}}{|\nabla \tilde{\varphi}|}\bigg\rvert_{z =0} = \frac{\delta R_2^{-(\beta+1)} ( s^2 + \delta^2 R_2^{-2(\beta+1)} ) }{\sqrt{a_{n,k}^8 r^6 + \bigl(s^2+\delta^2 R_2^{-2(\beta+1)} \bigr)^3}} \geq \frac{\delta R_2^{-(\beta+1)}}{\sqrt{(1 + a^2_{n,k})(s^2 + \delta^2 R_2^{-2(\beta+1)})}}.
    \end{align*}
    Using~\eqref{eqn:level-set-equation} together with the bound $s \leq R_2$ in the ball $B_{R_2}$, we may estimate
    \[
    r^2 \leq a_{n,k}^{-2}(s^2 + \delta^2 R_2^{-2(\beta+1)} ), \qquad \sqrt{s^2 + \delta^2 R_2^{-2(\beta+1)}} \leq 2 R_2,
    \]
    which yields the last step above.
    The above bound becomes
    \begin{equation}\label{eqn:dot-product-bound}
        \la\nu_{S_{\on{cap}}} , e_{n+1} \rg \geq \frac{\delta R_2^{-(\beta+1)}}{\sqrt{(1 + a_{n,k}^2)(s^2 + \delta^2 R_2^{-2(\beta+1)})}} \geq \frac{\delta R_2^{-(\beta+1)}}{2 R_2 \sqrt{ \frac{n-1}{n-k-1}}} = C(n,k) \delta R_2^{-(\beta+2)}.
    \end{equation}
    On the other hand, the contact angle satisfies $\cos \theta < \frac{1}{\tan \theta} < 2 a^{-1}_{n,k} \ve$, in view of~\eqref{eqn:small-epsilon-relation}.
    Combining the two inequalities, we may enforce $\la \nu_{S_{\on{cap}}} , e_{n+1} \rg \geq \cos \theta$ inside $B_{R_2}$ upon taking $\delta \in (0,\delta_{n,k})$ and $\ve \in (0,\ve_0)$, where $\ve_0 < C(n,k) \delta R_2^{-(\beta+2)}$.
    This establishes $(ii)$.

        \smallskip 
    
    \noindent \textbf{Property $(iii)$.}
    The free boundary of $U_{\on{cap}}$ is given by the expression~\eqref{eqn:level-set-equation}.
    To ensure that $\partial \{ U_{\on{cap}} > 0 \} \subset \{ s \geq \frac{t_{\ve}}{\sqrt{1-t_{\ve}^2}} r \}$, we will check that $\partial \{ U_{\on{cap}} > 0 \} \subset \{ s \geq a_{n,k} (1+ \frac{R_2^4+1}{R_2^4} \kappa \ve) r\}$, where $\kappa= \kappa_{n,k}$ is as in~\eqref{eqn:kappa-n-k}.
    In view of the expansion $\frac{t_{\ve}}{\sqrt{1-t_{\ve}^2}} = a_{n,k}(1+ \kappa_{n,k} \ve + O(\ve^2))$, this will imply the desired containment for $\ve \in (0,\ve_0)$ with $\ve_0(n,k,R_2)$ sufficiently small.
    For the latter property, we equivalently want
    \begin{align*}
    & \bigl( a_{n,k}^2 (1 + \tfrac{R_2^4+1}{R_2^4} \kappa \ve)^2 R_2^2 + \delta^2 R_2^{-2(\beta+1)} \bigr)^2 - a_{n,k}^4 R_2^4 \leq \bigl(1+\delta^2 R_2^{-2(\beta+1)} \bigr)^2 \\
    & \quad\iff \Bigl( \bigl(1+ \tfrac{R_2^4+1}{R_2^4} \kappa \ve \bigr)^4 - 1 \Bigr) (a_{n,k} R_2)^4 + 2 \Bigl(1+ \tfrac{R_2^4+1}{R_2^4} \kappa \ve\Bigr)^2 ( a_{n,k} \delta R_2^{-\beta})^2 \leq 1 + 2 \delta^2 R_2^{-2(\beta+1)}.
    \end{align*}
    We observe that $\bigl(1+ \tfrac{R_2^4+1}{R_2^4} \kappa \ve\bigr)^4 - 1 < 16 \kappa \ve$ and $\bigl(1 + \tfrac{R_2^4+1}{R_2^4} \kappa \ve\bigr)^2 < 2$, for all $\ve \in (0,\ve_0(\kappa))$ sufficiently small.
    It will therefore suffice to ensure that
    \[
    2 \left[8 \kappa \ve(a_{n,k} R_2)^2 + 2 \delta^2 R_2^{-2(\beta+1)} \right] (a_{n,k} R_2)^2 \leq 1 + 2 \delta^2 R_2^{-2(\beta+1)}.
    \]
    Finally, we observe that this inequality is satisfied whenever $\ve < \ve_0 = C(n,k) \delta R_2^{-(\beta+2)}$ and $R_2 \gg_{n,k}1$ is sufficiently large, where $\beta=4$ in our case.
    Indeed, the left-hand side becomes
    \begin{align*}
    2 \left[ 8 \kappa \ve(a_{n,k} R_2)^2 + 2 \delta^2 R_2^{-2(\beta+1)} \right] (a_{n,k} R_2)^2 < 2 \left( 8 C(n,k) \kappa \ve_0 a_{n,k}^2 R_2^2 + 2 R_2^{-3} \right) a_{n,k}^2 R_2^2 < C(n,k) \delta
    \end{align*}
    which is satisfied whenever $\delta \in (0,\delta_{n,k})$ is sufficiently small, establishing $(iii)$.
    We note that the choice of the factor $\frac{R_2^4+1}{R_2^4}$ is made to produce $(1 + \frac{R_2^4+1}{R_2^4} \kappa \ve) = 1 + \kappa \ve + \kappa R_2^{-4} \ve$, which is $1 + \kappa \ve + O(\ve^2)$ for $\ve = O (R_2^{-(\beta+2)})$ as determined above.

    \smallskip

\noindent\textbf{Property $(iv)$.}
    For the function~\eqref{eqn:varphi-n,k-and-S-cap}, the property $(iv)$ amounts to $U_{\on{cap}}(r,s) > \sqrt{r^2 + s^2} \, f_{\ve}(t)$.
    Since $U_{\on{cap}}$ is defined by $\{ \tilde{\varphi}_{n,k,\delta} = (1+\delta^2 R_2^{-2(\beta+1)} )^2 \}$, we equivalently want 
    \[
    \tilde{\varphi}_{n,k,\delta} ( r,s, \rho f_{\ve}(t)) < (1+\delta^2 R_2^{-2(\beta+1)})^2.
    \]
    We write $s^2 = \rho^2 t^2$ and $r^4 = \rho^4 (1-t^2)^2$, so we want
    \[(\rho^2 t^2 + ( \rho f_{\ve}(t) + \delta R_2^{-(\beta+1)})^2)^2 - a_{n,k}^4 \rho^4 (1-t^2)^2 < (1+\delta^2 R_2^{-2(\beta+1)})^2.
    \]
    Expanding this expansion, we equivalently need
    \[
    \rho^4 ( t^2 + f_{\ve}(t)^2)^2 + 4 \delta R_2^{-(\beta+1)} \rho^3 f_{\ve}(t) (t^2 + f_{\ve}(t)^2) - a_{n,k}^4 \rho^4 (1-t^2)^2 < 1 + o (R^{-2\beta})
    \]
    upon using $R \leq R_2$ to absorb the terms
    \[
    4 (\delta R_2^{-(\beta+1)})^2 \rho \bigl( \delta R_2^{-(\beta+1)} f_{\ve}(t) + \rho  ( t^2 + 2 f_{\ve}(t)^2) \bigr) = o ( R^{-2\beta}).
    \] 
    Note that $f_0(t)^2 = \hat{f}_{n,k}(t)^2 = a_{n,k}^2 - \tfrac{n-1}{n-k-1} t^2$ and $t^2 + f_0(t)^2 = a_{n,k}^2 (1-t^2)$, so we may write
    \begin{align*}
    (t^2 + f_{\ve}(t)^2)^2 - a_{n,k}^4 (1-t^2)^2 &= \bigl( t^2 + f_{\ve}(t)^2 - a_{n,k}^2(1-t^2) \bigr) \bigl( t^2 + f_{\ve}(t)^2 + a_{n,k}^2(1-t^2) \bigr) \\
    &\leq C_1(n,k) ( f_{\ve}(t)^2 - f_0(t)^2).
    \end{align*}
    The last expression is $\leq C_2(n,k) \ve$, in view of $\| f_{\ve} - f_0 \|_{C^0} \leq C(n,k) \ve$ and~\eqref{eqn:g-eps-g0-closeness}.
    Therefore, it suffices to ensure that
    \[
    C_1(n,k) R_2^4 \ve + C(n,k,\kappa) \delta R_2^{2-\beta} < 1 \qquad \text{for } \; \ve \in (0,\ve_0)
    \]
    where $\beta>3$ in all cases.
    Therefore, the required inequality is guaranteed by taking $\delta_{n,k} < \frac{1}{10 C(n,k,\kappa)}$ and $\delta \in (0,\delta_{n,k})$, followed by $\ve_0 < \tilde{C}(n,k) \delta R_2^{-(\beta+2)}$.
    This proves the property $(iv)$.

    \smallskip

    \noindent \textbf{Property $(v)$.}
    Finally, for property $(v)$, we recall from Lemma~\ref{lemma:normal-graph-varphi-n,k} that the surface $S_{\text{cap}}$ is expressible as the normal graph of a function $\hat{u}_1$ over the surface $\mathbf{C}_{n,k,\frac{\pi}{2}} \cap A(R_1, R_2)$ satisfying~\eqref{eqn:scap-over-the-cone}.
    By Lemma~\ref{lemma:graph-over-extended-link-pi/2}, we know that $\mathbf{C}_{n,k,\frac{\pi}{2}} \cap A(R_1, R_2)$ is expressible as the normal graph of a function $v_{\ve}^\mathbf{C}$ over the extended cone $\hat{\mathbf{C}}_{n,k,\frac{\pi}{2}-\ve} \cap A(R_1, R_2)$ satisfying~\eqref{eqn:v-eps-C-estimate}.
    For $\ve \in (0, C(n,k) R^{-4 \beta} )$ sufficiently  small, the properties of~\eqref{eqn:sff-norm-epsilon} and Lemma~\ref{lemma:graph-over-extended-link-pi/2} imply that the surface $\hat{\mathbf{C}}_{n,k,\frac{\pi}{2}-\ve} \cap A(R_1, R_2)$ have bounded geometry, uniformly in $\ve$.
    We may therefore apply Lemma~\ref{lemma:iterated-normal-graph} to see that when $\| \hat{u}_1 \|_{C^m} , \| v_{\ve}^\mathbf{C} \|_{C^m}$ are sufficiently small (in terms of $n,k$), then $S_{\text{cap}}$ is expressible as a normal graph of a function $u_1$ over $\hat{\mathbf{C}}_{n,k,\frac{\pi}{2}-\ve} \cap A(R_1, R_2)$, which satisfies~\eqref{eqn:w-u-v-bounds} in the form
    \[
    \| u_1 - ( \hat{u}_1 + v^{\mathbf{C}}_{\ve}) \|_{C^{m-1}} \leq C(n,k) \| \hat{u}_1 \|_{C^m} \| v^{\mathbf{C}}_{\ve} \|_{C^m}.
    \]
    In particular, we obtain
    \begin{align*}
    |\nabla^2_{\mathbf{C}} u_1| &\leq |\nabla^2_{\mathbf{C}} \hat{u}_1| + |\nabla^2_{\mathbf{C}} v_{\ve}^\mathbf{C}| + C(n,k) \ve R^{1-\beta} \leq C(n,k) ( R^{-(\beta+1)} + \ve R_1^{-1} + \ve R^{1-\beta})
    \end{align*}
    by combining the explicit form of $\bar{u}_1$ with the bound~\ref{lemma:graph-over-extended-link-pi/2}.
    Finally, taking $\delta < C(n,k)^{-1} \Lambda^{-1}$ and $\ve_0 = C(n,k) \delta R_2^{-(\beta+1)}$ ensures that $\ve <\ve_0 < \Lambda^{-1} R_2^{-4 \beta}$ and $\ve (R_1^{-1} + R^{1-\beta}) < R^{-(\beta+1)}$ over $A(R_1, R_2)$ and $|\nabla^2_{\mathbf{C}} u_1| < C(n,k) R^{-(\beta+1)}$, as desired.
    The bounds~\eqref{eqn:phi-ck-bounds} follows similarly, by writing
    \begin{align*}
        R^{-1} &|u_1 - \bar{u}_1| + |\nabla_{\mathbf{C}} (u_1 - \bar{u}_1)| \\
    &\leq R^{-1} ( |u_1 - \hat{u}_1| + |\hat{u}_1 - \bar{u}_1|) + |\nabla_{\mathbf{C}} ( u_1 - \hat{u}_1) | + |\nabla_{\mathbf{C}} ( \hat{u}_1 - \bar{u}_1)| \\
        &\leq C(n,k) ( R^{2(1-\beta)} + \delta R_2^{-(\beta+1)} R^{-(\beta+1)}) + R^{-1} |v^{\mathbf{C}}_{\ve}| + |\nabla_{\mathbf{C}} v^{\mathbf{C}}_{\ve}| + C(n,k) \| \hat{u}_1 \|_{C^2} \| v_{\ve}^\mathbf{C} \|_{C^2}  ).
    \end{align*}
    In the last step, we bounded the term $R^{-1} |\tilde{u}_1 - \bar{u}_1| + |\nabla_\mathbf{C} (\tilde{u}_1 - \bar{u}_1)|$ from~\eqref{eqn:scap-over-the-cone}.
    Moreover,
    \[
    R^{-1} |v^{\mathbf{C}}_{\ve}| + |\nabla_{\mathbf{C}} v_{\ve}^\mathbf{C}| + C(n,k) \| \hat{u}_1 \|_{C^2} \| v_{\ve}^\mathbf{C} \|_{C^2} \leq C(n,k) \ve + C(n,k) \ve R^{1-\beta} R_2
    \]
    from Lemma~\ref{lemma:graph-over-extended-link-pi/2}, which is $\leq C(n,k) R^{2(1-\beta)}$ for $\ve \in (0,\ve_0)$ with $\ve_0 \leq C(n,k) \delta R^{-4\beta}$.
    Combining these steps, we obtain the bounds~\eqref{eqn:phi-ck-bounds}; this establishes property $(v)$.

    The above arguments conclude the construction of the compact cap $S_{\on{cap}}$ in case IV, corresponding to $n \geq 8$.
    The same arguments extended to Case II, upon replacing the exponent $4$ with $\frac{7}{2}$ at each step.
    The verification of Cases I and III, via~\eqref{eqn:varphi-n,k-and-S-cap}, is also a straightforward adaptation of the above discussion.
    In this case, the free boundary of the cap profile is given by
    \begin{equation}\label{eqn:cap-exceptional}
        \bigl( (s^2 + \delta^2 R_2^{-2(\beta+1)}) - \tfrac{1}{n-2} r^2 \bigr) \bigl( s^2 + \delta^2 R_2^{-2(\beta+1)}) = (1+\delta^2 R_2^{-2(\beta+1)})^2.
    \end{equation}
    Indeed, property $(i)$ does not invoke the particular form of $\tilde{\varphi}_{n,k,\delta}$, and therefore follows verbatim; for property $(ii)$, we similarly compute that
    \begin{align*}
    \la \nu_{S_{\on{cap}}} , e_{n+1} \rg &= \frac{\partial_{n+1} \tilde{\varphi}}{|\nabla \tilde{\varphi}|} \bigg\rvert_{z = 0} = \frac{\delta \bigl( 2(s^2+\delta^2 R_2^{-2(\beta+1)}) - \tfrac{1}{n-2} r^2)}{\sqrt{(s^2+\delta^2 R_2^{-2(\beta+1)}) \bigl[ \tfrac{1}{(n-2)^2} r^4 + (2 (s^2+\delta^2 R_2^{-2(\beta+1)}) - \tfrac{1}{n-2} r^2)^2 \bigr]}} \\
    &> \frac{\delta R_2^{-(\beta+1)}}{5 \sqrt{s^2+\delta^2 R_2^{-2(\beta+1)}}}
    \end{align*}  
    which is $> \frac{1}{10} \delta R_2^{-(\beta+2)}$ on $B_{R_2}$.
    Here, we used the rough bound coming from~\eqref{eqn:cap-exceptional},
    \[
    s^2 + \delta^2 R_2^{-2(\beta+1)} <  2 ( s^2 + \delta^2 R_2^{-2(\beta+1)} ) - \tfrac{1}{n-2} r^2  < 2 (s^2 + \delta^2 R_2^{-2(\beta+1)}).
    \]
    The inequality $\cos \theta < 2 a_{n,k}^{-1} \ve$ used above shows that $\la \nu_{S_{\on{cap}}} , e_{n+1} \rg \geq \cos \theta$ is be satisfied for $\delta \in (0,\delta_{n,k})$ and $\ve < \ve_0 = C(n,k) \delta R_2^{-(\beta+2)}$.
    Property $(iii)$ is satisfied for $\ve \leq \frac{1}{10 C(n,k)} R_2^{-(\beta+2)}$ via the same manipulations, and property $(iv)$ may simply be checked along $s=0$, where it amounts to
    \[
    \bigl( \bigl( r f_{\ve}(0) + \delta R_2^{-(\beta+1)}\bigr)^2 - \tfrac{1}{n-2} r^2 \bigr) (r f_{\ve}(0) + \delta R_2^{-(\beta+1)})^2 < (1+\delta^2 R_2^{-2(\beta+1)})^2
    \]
    The same computation as in $(iv)$ above shows that the property is satisfied upon taking $\delta \in (0,\delta_{n,k})$ and $\ve < \ve_0 = C(n,k) \delta R_2^{-(\beta+2)}$, for dimensional constants $\delta_{n,k}, C(n,k)$.
    Finally, property $(v)$ follows from an application of Lemma~\ref{lemma:iterated-normal-graph} and the bound~\ref{eqn:w-u-v-bounds} identical to the above argument.

    To conclude the proof of the result as stated, we note that the above constructions apply to any $\ve \in (0,\ve_0)$ with $\ve_0 = C(n,k) \delta R_2^{-4\beta}$.
    We may therefore redefine $\Lambda(n,k)^{-1}$ to be the minimum between the previous $\Lambda(n,k)^{-1}$ (from the annular region) and this $C(n,k) \delta$.
    We finally take $R_0 \gg_{n,k} 1$ sufficiently large to ensure that $\Lambda(n,k) R_2^{-4\beta} < \ve(n,k)$, where $\ve(n,k)$ are the ambient dimensional bounds within which the indicial root interval satisfies~\eqref{eqn:indicial-roots-close} and the estimates of~\eqref{eqn:g-eps-g0-closeness}~--~\eqref{eqn:t-hat-t-eps} an Lemma~\ref{lemma:graph-over-extended-link-pi/2} are valid.
    This completes our construction.
\end{proof}

We note that the existence of the mean-convex caps described in Proposition~\ref{prop:mean-convex-cap} is essentially equivalent to the strict minimality of the cone $\mathbf{C}_{n,k,\frac{\pi}{2}}$ on the side $E_+$.
Indeed, taking $R_0 \to + \infty$ makes $\ve \downarrow 0$ and recovers the free boundary mean-convex surface $S_+$ on the $E_+$ side of $\mathbf{C}_{n,k,\frac{\pi}{2}}$, which exists (by~\cites{hardt-simon, dephilippis-maggi, lawson-liu}) if and only if $n \geq 7 $ and $1 \leq k \leq n-2$, with $(n,k) \neq (7,5)$.

We now discuss the adaptation of Proposition~\ref{prop:mean-convex-cap} to the side $E_-$.
We require $n \geq 7$ and $1 \leq k \leq n-2$, with $(n,k) \neq (7,1)$, corresponding to the strictly minimizing cones $C(\bS^{n-k-1} \times \bS^k_+)$ on $E_-$.
The previous arguments carry over to this side, with one important caveat: we need to \textit{raise} the cap above its plane of symmetry by a small amount, in order to produce a definite increase to the contact angle along $\Pi$.
While the resulting surface will no longer be graphical over the plane $\Pi$, the necessary constructions transfer in the normal graph gauge over the cone $\mathbf{C}$.

Let us record the construction of the resulting surface:
\begin{proposition}\label{prop:mean-concave-cap-on-E-minus}
Consider any $n \geq 7$ and $1 \leq k \leq n-2$ except $(n,k) = (7,1)$.
Let $\beta(n) = \frac{7}{2}$ for $n=7$, $\beta(n) = 5$ for $\{ 8 \leq n \leq 12, k= 1\}$, and $\beta(n) = 4$ all other $(n,k)$.
There exists a $\Lambda = \Lambda(n,k)$ such that for any $R_1 > 0$ sufficiently large, any $R_2 > \Lambda R_1$, and $\ve \in (0, \Lambda^{-1} R_2^{-4\beta})$, there exists a surface-with-boundary $S^-_{\text{cap}} \subset E_-$, with $S^-_{\text{cap}} \cap \partial B_{R_2} \neq \varnothing$, satisfying the following properties: 
\begin{enumerate}[$(i)$]
\item We have $H[S_{\text{cap}}^-] < - d_{n,k} R^{-(1+\beta)}$, meaning that $S^-_{\text{cap}}$ is mean-concave with respect to the upward-pointing normal vector $\nu_S$.
\item The inequality $\la \nu_S , e_{n+1} \rg \leq 0$ is satisfied along the free boundary of $S^-_{\text{cap}}$.
\item The free boundary of $S^-_{\text{cap}}$ is contained in the cone region $E_-(\mathbf{C}_{n,k,\frac{\pi}{2} - \ve}) \cap \Pi$, given by $\{ |y| < \tfrac{t_{\ve}}{\sqrt{1-t_{\ve}^2}} |x| \}$, and has distance $1$ from the origin.
\item The surface $S^-_{\text{cap}} \cap A(R_1, R_2)$ over the annulus $A(R_1, R_2) := B_{R_2} \setminus \bar{B}_{R_1}$ is expressible as a negative normal graph over the cone $\mathbf{C}_{n,k,\frac{\pi}{2}-\ve}$, i.e., in the direction opposite the cone normal $\nu_{\mathbf{C}}$ pointing into $E_+$, with 
\[
 R^{-1} | u_1 - \underbar{u}_1 | + |\nabla_{\mathbf{C}} (u_1 - \underbar{u}_1)| \leq C(n,k) R^{2-2\beta}\;\text{ where }  \;\underbar{u}_1(p) := c^-_{n,k} R^{1-\beta} - a_{n,k}^{-1} \delta R_2^{-(\beta+1)} \omega_{n+1}(p)
\]
    for $a_{n,k} = \sqrt{\frac{k}{n-k-1}}$ and $c^-_{n,k}$ given by
    \[
    \mathrm{I: } \; c_{7,k}^- = \frac{a_{7,k}(1 + a^2_{7,k})^{\frac{3}{4}}}{2 }, \quad \mathrm{II: } \; c_{7,3}^- = \frac{2^{\frac{7}{4}}}{7}, \quad \mathrm{III: } \; c_{n,1}^-=\frac{(n-1)^{\frac{3}{2}}}{2(n-2)^2}, \quad \mathrm{IV: } \; c_{n,k}^- = \frac{a_{n,k}(1 + a^2_{n,k})^{2}}{4 }.
    \]
    in each of the Cases I-IV from~\eqref{eqn:record-varphi-n,k-on-E-minus}.
    Moreover, $|\nabla^2_{\mathbf{C}} u_1| \leq C(n,k) R^{-(\beta+1)}$.
\end{enumerate}
\end{proposition}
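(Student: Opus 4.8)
The plan is to run the construction of Proposition~\ref{prop:mean-convex-cap} on the side $E_-$, replacing the sub-calibration potentials $\varphi^+_{n,k}$ of~\eqref{eqn:record-varphi-n,k} by the potentials $\varphi^-_{n,k}$ of~\eqref{eqn:record-varphi-n,k-on-E-minus} and reversing the direction of the vertical shift. Recall that for every $n \geq 7$ and $1 \leq k \leq n-2$ with $(n,k) \neq (7,1)$, the homogeneous degree-$\beta$ function $\varphi^-_{n,k}$ satisfies the quantitative sub-calibration estimate~\eqref{eqn:quantitative-sub-calibration} on the $E_-$ side, namely
\[
- \on{div} \frac{\nabla \varphi^-_{n,k}}{|\nabla \varphi^-_{n,k}|} \geq c_{n,k} \, \frac{\on{dist}(p, \mathbf{C}_{n,k,\frac{\pi}{2}})}{R^2} \qquad \text{on } \; E_-(\mathbf{C}_{n,k,\frac{\pi}{2}}),
\]
which is the content of~\cite{sharp-stability-plateau}*{\S~4} and~\cite{lawson-liu}*{\S~2.1}, the pair $(7,5)$ being admissible because $\varphi^-_{7,5}$ furnishes the one-sided strict sub-calibration of $\mathbf{C}_{7,5,\frac{\pi}{2}}$ on its strictly minimizing side discussed after~\eqref{eqn:record-varphi-n,k-on-E-minus}. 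I would then fix a small $\delta = \delta_{n,k} > 0$, set $\tilde\varphi^-_{n,k,\delta}(x,y,z) := \varphi^-_{n,k}(r,s,\, z - \delta R_2^{-(\beta+1)})$ — the $z$-coordinate is shifted \emph{up}, the opposite direction from the $E_+$ cap — and take $S^-_{\text{cap}}$ to be the portion lying in $\bR^{n+1}_+$ on the $E_-$ side of the level set $\{\tilde\varphi^-_{n,k,\delta} = c_\delta\}$, extending from its free boundary on $\Pi$ out to $\partial B_{R_2}$, where $c_\delta := \varphi^-_{n,k}(1,0,-\delta R_2^{-(\beta+1)})$ is chosen so that the free boundary of $S^-_{\text{cap}}$ passes through the distance-$1$ point on the positive $x$-axis — the mirror of the fact that the $E_+$ cap of Proposition~\ref{prop:mean-convex-cap} anchors its free boundary at $(0,1,0)$. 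Because the $E_-$ cross-section of $\{\tilde\varphi^-_{n,k,\delta} = c_\delta\}$ is a nearly complete circle (the green curve in Figure~\ref{fig:Caps}), $S^-_{\text{cap}}$ is not graphical over $\Pi$, so the statement and all estimates are phrased in the normal-graph-over-cone gauge rather than as a graph $z = U_{\text{cap}}$.

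With this set-up I would verify (i)--(iii) as in Proposition~\ref{prop:mean-convex-cap}. For (i): vertical translation preserves mean curvature, so with $\nu_S := \nabla \tilde\varphi^-_{n,k,\delta}/|\nabla \tilde\varphi^-_{n,k,\delta}|$ — which agrees with the upward cone normal $\nu_{\mathbf{C}}$ along $\mathbf{C}$ — the sub-calibration estimate, after the same rearrangement using that $\tilde\varphi^-_{n,k,\delta} \equiv c_\delta$ on $S^-_{\text{cap}}$, reads $H[S^-_{\text{cap}}] = \on{div}(\nu_S) \leq - d_{n,k} R^{-(1+\beta)}$; thus $S^-_{\text{cap}}$ is mean-concave with respect to the upward normal, the minus sign in~\eqref{eqn:quantitative-sub-calibration} producing concavity rather than convexity. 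For (ii): along $\{z = 0\}$ one has $\la \nu_S, e_{n+1}\rg = \partial_{n+1}\tilde\varphi^-_{n,k,\delta}/|\nabla \tilde\varphi^-_{n,k,\delta}|\big|_{z=0}$, and the upward shift makes $\partial_z\tilde\varphi^-_{n,k,\delta}|_{z=0} = \partial_w\varphi^-_{n,k}(r,s,w)\big|_{w = -\delta R_2^{-(\beta+1)}}$ strictly negative on $B_{R_2}$, so $\la \nu_S, e_{n+1}\rg \leq 0$ throughout, which is (much) stronger than $\leq \cos\theta$ and gives contact angle $> \theta$. For (iii): the free-boundary equation $\{\tilde\varphi^-_{n,k,\delta}|_{z=0} = c_\delta\}$ can be solved near $s = 0$ to show it lies in $\{|y| < \tfrac{t_\ve}{\sqrt{1-t_\ve^2}}|x|\} = E_-(\mathbf{C}_{n,k,\frac{\pi}{2}-\ve}) \cap \Pi$, by the elementary manipulations of Proposition~\ref{prop:mean-convex-cap}(iii) together with the expansion $t_\ve/\sqrt{1-t_\ve^2} = a_{n,k}(1 + \kappa_{n,k}\ve + O(\ve^2))$ from~\eqref{eqn:kappa-n-k}, provided $\delta \in (0,\delta_{n,k})$ and $\ve < C(n,k)\,\delta R_2^{-(\beta+2)}$. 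There is no counterpart of Proposition~\ref{prop:mean-convex-cap}(iv) here, since $S^-_{\text{cap}}$ is non-graphical; the needed separation from $\mathbf{C}_{n,k,\frac{\pi}{2}-\ve}$ on the $E_-$ side is recorded instead by the normal-graph estimate in (iv).

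For property (iv) I would first compute $|\nabla \varphi^-_{n,k}|$ along $\mathbf{C}_{n,k,\frac{\pi}{2}}$ in each of the Cases I--IV of~\eqref{eqn:record-varphi-n,k-on-E-minus} to get $c^-_{n,k}|\nabla \varphi^-_{n,k}| = R^{\beta-1}$ with the listed constants $c^-_{n,k}$, and then run the implicit-function-theorem argument of Lemma~\ref{lemma:normal-graph-varphi-n,k} verbatim (with $z + \delta R_2^{-(\beta+1)}$ replaced by $z - \delta R_2^{-(\beta+1)}$) to express $S^-_{\text{cap}} \cap A(R_1,R_2)$ as a \emph{negative} normal graph over $\mathbf{C}_{n,k,\frac{\pi}{2}}$, asymptotic to $c^-_{n,k}R^{1-\beta}$ with quadratic remainder $O(R^{2-2\beta})$ and $|\nabla^2_{\mathbf{C}}u_1| \leq C(n,k) R^{-(\beta+1)}$. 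Unlike the $E_+$ case, the extended cone $\hat{\mathbf{C}}$ is not needed: as noted in the proof of Lemma~\ref{lemma:infinity-piece-on-E-minus}, for every $\ve > 0$ the cone normal $\nu_{\mathbf{C}}$ into $E_+$ has everywhere-positive vertical component, so a negative normal graph over $\mathbf{C}_{n,k,\frac{\pi}{2}-\ve}$ automatically has free boundary on $\Pi$. I would then invoke Lemma~\ref{lemma:graph-over-extended-link-pi/2} to write $\mathbf{C}_{n,k,\frac{\pi}{2}} \cap A(R_1,R_2)$ as an $O(\ve)$-small normal graph over $\mathbf{C}_{n,k,\frac{\pi}{2}-\ve} \cap A(R_1,R_2)$ and Lemma~\ref{lemma:iterated-normal-graph} to compose the two graphs, absorbing the cross terms into $O(R^{2-2\beta})$ once $\ve < \Lambda^{-1}R_2^{-4\beta}$; this yields the stated representation over $\mathbf{C}_{n,k,\frac{\pi}{2}-\ve}$ with profile $\underbar{u}_1(p) = c^-_{n,k}R^{1-\beta} - a_{n,k}^{-1}\delta R_2^{-(\beta+1)}\omega_{n+1}(p)$. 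As usual one redefines $\Lambda(n,k)$ to be the minimum of the constants arising along the way and takes $R_0 \gg_{n,k} 1$ so that $\Lambda R_2^{-4\beta}$ lies in the range where~\eqref{eqn:indicial-roots-close}, \eqref{eqn:g-eps-g0-closeness}--\eqref{eqn:t-hat-t-eps}, and Lemma~\ref{lemma:graph-over-extended-link-pi/2} apply.

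The main obstacle I anticipate is the sign and orientation bookkeeping, which is genuinely the reverse of the $E_+$ construction on several counts simultaneously: one must check that the minus sign in~\eqref{eqn:quantitative-sub-calibration} for $\varphi^-_{n,k}$ gives mean-\emph{concavity} of $S^-_{\text{cap}}$ with respect to the upward normal, that the cap must be \emph{raised} rather than lowered to force $\la \nu_S, e_{n+1}\rg \leq 0$ along $\Pi$ while keeping its free boundary inside $E_-(\mathbf{C}_{n,k,\frac{\pi}{2}-\ve})\cap\Pi$, and that the resulting profile lies on the $E_-$ side of the cone, i.e.~that $S^-_{\text{cap}}$ really is a negative normal graph. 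One must also track the fact that the homogeneity degree $\beta$ now equals $5$ on the range $\{8 \leq n \leq 12,\ k=1\}$ (rather than $4$ as on $E_+$) and $\tfrac{7}{2}$ for $n=7$, and run the verification case by case through I--IV of~\eqref{eqn:record-varphi-n,k-on-E-minus}. Beyond this, every estimate is a routine adaptation of Proposition~\ref{prop:mean-convex-cap} and Lemma~\ref{lemma:normal-graph-varphi-n,k}.
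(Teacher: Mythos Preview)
Your proposal is correct and follows essentially the same approach as the paper: define $\tilde\varphi^-_{n,k,\delta}(r,s,z) := \varphi^-_{n,k}(r,s,\,z-\delta R_2^{-(\beta+1)})$, take $S^-_{\text{cap}}$ as the level set through $(1,0,0)$, note that the upward shift makes property~(ii) automatic, derive (i) from the sub-calibration estimate~\eqref{eqn:quantitative-sub-calibration} on $E_-$, handle (iii) exactly as in Proposition~\ref{prop:mean-convex-cap}, and obtain (iv) by running the implicit-function argument of Lemma~\ref{lemma:normal-graph-varphi-n,k} followed by Lemmas~\ref{lemma:graph-over-extended-link-pi/2} and~\ref{lemma:iterated-normal-graph}. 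Your write-up is in fact more detailed than the paper's (which simply points to the $E_+$ argument at each step), and your observation that the extended cone is unnecessary on the $E_-$ side is a nice clarification.
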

\begin{proof}
We work with the functions $\varphi_{n,k}^-$ from~\eqref{eqn:record-varphi-n,k-on-E-minus}, which we abbreviate to $\varphi_{n,k}$ in what follows.
For some small $\delta>0$, to be determined, we define
\begin{equation}\label{eqn:s-cap-on-e-minus}
    \begin{split}
        \tilde{\varphi}_{n,k,\delta} (r,s,z) &:= \varphi_{n,k} ( r , s , z - \delta R_2^{-(\beta+1)} ) , \\
        S^-_{\text{cap}} &:= \{ \tilde{\varphi}_{n,k,\delta}(r,s,z) = \varphi_{n,k}(1,0,-\delta R_2^{-(\beta+1)} ) \}
    \end{split}
\end{equation}
    as in~\eqref{eqn:varphi-n,k-and-S-cap}.
    This construction ensures that $(1,0,0) \in S^-_{\text{cap}} \subset \Pi$, and amounts to ``raising'' the level set of $\varphi_{n,k}$ inside $E_-$ by $\delta R_2^{-(\beta+1)}$ to form an obtuse angle along the free boundary.
    Then, the property $(ii)$ is automatic, while $(i)$ again follows from the property~\eqref{eqn:quantitative-sub-calibration} of the $\varphi_{n,k}$.
    The property $(iii)$, along with $S^-_{\text{cap}} \subset E_-$, is established as in Proposition~\ref{prop:mean-convex-cap}, upon taking $\ve < C(n,k) \delta R^{-4\beta}$ sufficiently small.
    To obtain $(iv)$, we use the fact that $S^-_{\text{cap}}$ is expressible as the negative normal graph of a function $\hat{u}_1$ over the cone $\mathbf{C}_{n,k,\frac{\pi}{2}}$, which satisfies 
    \[
    R^{-1} |\hat{u}_1 - \underbar{u}_1| + |\nabla_{\mathbf{C}}( \hat{u}_1 -\underbar{u}_1) | \leq C(n,k) R^{2(1-\beta)}, \qquad |\nabla^{\ell}_{\mathbf{C}} \hat{u}_1| \leq C(n,k,\ell) R^{1-\beta-\ell}
    \]
    as in Lemma~\ref{lemma:normal-graph-varphi-n,k}.
    The expansion of $\bar{u}_1$ is obtained by applying the arguments therein, and the constants $c_{n,k}^-$ are likewise computed as $c_{n,k}^-=(-\varphi_{n,k}^- (1,0,0))\frac{R^{\beta-1}}{|\nabla \varphi^-_{n,k}|}$.
    The expression of $S^-_{\text{cap}}$ as a normal graph over $\mathbf{C}_{n,k,\frac{\pi}{2}-\ve}$ is then obtained by combining Lemmas~\ref{lemma:graph-over-extended-link-pi/2} and~\ref{lemma:iterated-normal-graph}, as done above; this proves $(iv)$.
    We conclude that $S^-_{\text{cap}}$ has all the claimed properties $(i)$~--~$(iv)$.
\end{proof}

\subsection{Construction of a foliation}\label{section:construct-foliation}

Combining the constructions of Lemma~\ref{lemma:infinity-piece-small-epsilon} and Proposition~\ref{prop:mean-convex-cap}, we may construct a barrier surface $S_{\pm} \subset E_{\pm}(\mathbf{C}_{n,k,\frac{\pi}{2}-\ve})$, for $\ve$ sufficiently small.

\begin{proposition}\label{prop:mean-convex-surface}
    For every $n \geq 7$ and $1 \leq k \leq n-2$, except $(n,k) = (7,5)$, there exists an $\ve_{n,k} > 0$ such that the following holds.
    For all $\theta \in ( \frac{\pi}{2} - \ve_{n,k} , \frac{\pi}{2}]$, there exists a graphical surface-with-boundary $S_+ \subset E_+ ( \mathbf{C}_{n,k,\theta})$ asymptotic to the cone $\mathbf{C}_{n,k,\theta}$ that is strictly mean-convex (with respect to the upward-pointing normal vector), forms a contact angle $ < \theta$ with the plane $\Pi = \{ z = 0 \}$ along its free boundary, and its positive homotheties foliate $E_+(\mathbf{C}_{n,k,\theta})$.
\end{proposition}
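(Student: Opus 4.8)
The plan is to build the surface $S_+$ by gluing together two pieces: the ``piece at infinity'' $S_{a,\psi}$ produced in Lemma~\ref{lemma:infinity-piece-small-epsilon}, defined on $\hat{\mathbf{C}}_{n,k,\frac{\pi}{2}-\ve}\setminus\bar B_{R_0}$, and the ``compact cap'' $S_{\on{cap}}$ produced in Proposition~\ref{prop:mean-convex-cap}, whose positive phase encloses the enlarged cone together with a neighborhood of the origin and which on the annulus $A(R_1,R_2)$ is a normal graph over the extended cone. First I would fix the exponent $a$ and the homogeneity degree $\beta$ of the relevant potential $\varphi_{n,k}$; by~\eqref{eqn:indicial-roots-close}, for $n\ge7$ (and $(n,k)\ne(7,5)$) one can choose $a$ strictly inside the indicial interval $(\underline\gamma^\ve,\overline\gamma^\ve)$ with $a>1$, and in fact with $a<2\beta-1$ so that the cap's normal-graph profile $\bar u_1(p)=c_{n,k}R^{1-\beta}-a_{n,k}^{-1}\delta R_2^{-(\beta+1)}\omega_{n+1}(p)$ decays \emph{faster} than the leading term $\ell(R)\psi(\omega)\sim R^{-a}$ of the infinity piece. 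The order of operations: (1) choose $a,\beta,\tau$; (2) invoke Lemma~\ref{lemma:infinity-piece-small-epsilon} to get $S_{a,\psi}$ on $\{R\ge R_0\}$ with the stated mean-curvature and contact-angle bounds; (3) for $R_1,R_2$ with $R_2\gg_{n,k}R_1\gg_{n,k}R_0$ and $\ve<\Lambda^{-1}R_2^{-4\beta}$, invoke Proposition~\ref{prop:mean-convex-cap} to get the cap with its normal-graph expansion~\eqref{eqn:phi-ck-bounds} on $A(R_1,R_2)$; (4) interpolate the two normal-graph profiles over $A(R_1,R_2)$ with a radial cutoff, using~\eqref{eqn:mean-curvature-operator} to control the error introduced by the cutoff; (5) check the glued surface $S_+$ is star-shaped; (6) apply Lemma~\ref{lemma:get-a-foliation} (or Remark~\ref{rmk:upper-lower-foliations}) to conclude the homotheties of $S_+$ foliate $E_+$.

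For the gluing step, on $A(R_1,R_2)$ both pieces are normal graphs over the extended cone with profiles $u_\infty:=\ell(R)\psi(\omega)$ and $u_{\on{cap}}:=u_1$, and I would set $u:=\chi(R/R_1)\,u_\infty+(1-\chi(R/R_1))\,u_{\on{cap}}$ for a cutoff transitioning in, say, $[R_1,2R_1]$, choosing $R_1$ large enough that $\|u_\infty-u_{\on{cap}}\|_{C^2(A(R_1,2R_1))}\le C R_1^{-\min\{a,\beta-1\}}\cdot(\text{small})$ — here one uses that on this annulus $u_\infty\sim R^{-a}$ while $u_{\on{cap}}\sim R^{1-\beta}$ and both have the same sign; the genuinely delicate point is that the interpolation error in the mean curvature, roughly $|\nabla\chi|\cdot|\nabla(u_\infty-u_{\on{cap}})|+|\Delta\chi||u_\infty-u_{\on{cap}}|\sim R_1^{-2}|u_\infty-u_{\on{cap}}|$, must be dominated by the \emph{good} negative mean curvature of both pieces ($\sim R^{-a-2}$ from Lemma~\ref{lemma:infinity-piece-small-epsilon} and $\sim R^{-\beta-1}$ from Proposition~\ref{prop:mean-convex-cap}) on $[R_1,2R_1]$. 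This forces the quantitative choice $a<\beta-1$ (or the reverse with roles swapped) and $R_1$ large depending only on $n,k$; with that choice the glued $S_+$ remains strictly mean-convex throughout, retains the cap's free boundary (distance $1$ from the origin, inside $E_+\cap\Pi$, contact angle $<\theta$ by Proposition~\ref{prop:mean-convex-cap}$(ii)$--$(iii)$ outside a compact set and by Lemma~\ref{lemma:infinity-piece-small-epsilon}$(i)$ — but since the free boundary lies entirely in the cap region, only the cap's angle bound is used), and is graphical over $\Pi$.

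The star-shapedness in step (5) is where the leading-order matching is essential: $S_+$ is star-shaped, i.e.\ $\langle p,\nu_{S_+}(p)\rangle>0$, precisely when the normal-graph profile $u(R,\omega)$ over the cone satisfies $u-R\,\partial_R u>0$ (so that along each ray the graph height, measured in the normal direction, is a strictly decreasing function of the homothety factor). For $u_\infty=\ell(R)\psi$ with $\ell(R)=R^{-a}+O(R^{-a-1})$ and $a>0$ this gives $u_\infty-R\partial_R u_\infty=(1+a)R^{-a}\psi+O(R^{-a-1})>0$ for $R\ge R_0$; for the cap, $\bar u_1-R\partial_R\bar u_1=(\beta-1)(\text{positive})+\dots$, and one checks $u_1$ inherits this from~\eqref{eqn:phi-ck-bounds} because the homothety argument is exactly the one behind the explicit sub-calibration of the Lawson cone. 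On the gluing annulus, writing $u-R\partial_R u$ as the same convex-type combination plus cutoff terms $R\chi'(R/R_1)(u_\infty-u_{\on{cap}})/R_1$, the dominant positive contributions $(1+a)R^{-a}\psi$ and $(\beta-1)c_{n,k}R^{1-\beta}$ beat the $O(R_1^{-1}|u_\infty-u_{\on{cap}}|)$ error once $R_1$ is large, so $S_+$ is star-shaped everywhere. The main obstacle I anticipate is precisely this simultaneous bookkeeping: arranging one exponent $a$ (inside the indicial interval, hence constrained by $n$ via~\eqref{eqn:indicial-roots-close}) together with $\beta$, $\tau$, $\delta$, $R_0,R_1,R_2,\ve_{n,k}$ so that \emph{all} of (mean-convexity on the gluing region, positivity of $u-R\partial_R u$ on the gluing region, the free-boundary containment, and the $\ve$-smallness needed for the normal-graph representations over the extended cone) hold at once — this is a finite chain of compatible inequalities, doable for each $(n,k)\ne(7,5)$ with $n\ge7$ since the indicial interval is nonempty there, but requiring care. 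Once $S_+$ is constructed with these properties, Lemma~\ref{lemma:get-a-foliation} applied on $E_+$ yields a capillary sub-calibration, hence (by Lemma~\ref{lemma:one-sided} and Lemma~\ref{lemma:capillary-foliations-imply-minimizing}, via Remark~\ref{rmk:upper-lower-foliations}) that the homotheties $\{\lambda S_+\}$ foliate $E_+(\mathbf{C}_{n,k,\theta})$, completing the proof; the converse side $S_-\subset E_-$ is handled identically using Lemma~\ref{lemma:infinity-piece-on-E-minus} and Proposition~\ref{prop:mean-concave-cap-on-E-minus}, which is treated separately (Proposition~\ref{prop:mean-convex-surface-on-E-minus}).
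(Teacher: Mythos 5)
Your overall architecture --- cap from Proposition~\ref{prop:mean-convex-cap}, piece at infinity from Lemma~\ref{lemma:infinity-piece-small-epsilon}, glue with a radial cutoff, check star-shapedness, conclude via Lemma~\ref{lemma:get-a-foliation} --- is the same as the paper's, but the exponent arithmetic in the gluing step is a genuine gap. You propose taking $a\neq\beta-1$ (``$a<\beta-1$ or the reverse'') so the two normal-graph profiles have \emph{different} leading orders, and then assert $\|u_\infty-u_{\on{cap}}\|_{C^2}\le C R_1^{-\min\{a,\beta-1\}}\cdot(\text{small})$. That estimate is false: with $u_\infty\sim R^{-a}$ and $u_{\on{cap}}\sim c_{n,k}R^{1-\beta}$ having \emph{distinct} exponents, $u_\infty-u_{\on{cap}}$ is of size $\sim R^{-\min\{a,\beta-1\}}$ with a definite, non-tunable coefficient; there is no ``small'' factor to extract. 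Consequently the cutoff error (cf.~\eqref{eqn:linearized-operator}) is $\sim R^{-\min\{a+2,\beta+1\}}$, which is the \emph{same} order as the larger of the two good mean-curvature contributions and strictly exceeds the smaller one; since $\zeta H_1+(1-\zeta)H_2$ collapses to the smaller contribution near the appropriate end of the transition region and every term scales homogeneously in $R$, taking $R_1$ large does not make the error relatively small --- there is no parameter left to arrange the sign. The identical failure hits your star-shapedness estimate, where $R\zeta'(u_\infty-u_{\on{cap}})$ is of the same order as $u-R\,\partial_R u$ itself.

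The paper's fix is to take $a=\beta-1$ \emph{exactly} and rescale the infinity piece $\tilde S_{\beta-1,\psi}$ by $c_{n,k}^{1/\beta}$ so that both profiles share the identical leading term $c_{n,k}R^{1-\beta}$; the subleading term of $\ell$ is then set to $-R^{-\beta}$, giving $u_1-u_2=c_1 R^{-\beta}+O(R^{-(\beta+1)})$, positive and strictly one order lower than $u_1,u_2\sim R^{1-\beta}$ (this is~\eqref{eqn:u1-u2-lower-order}). With the leading orders matched, the cutoff error drops to $O(R^{-(\beta+2)})$ against a good mean curvature $\gtrsim R^{-(\beta+1)}$, and the star-shapedness error to $O(R^{-\beta})$ against $u-R\,\partial_R u\gtrsim R^{1-\beta}$, both absorbed by taking $R_1\gg_{n,k}1$. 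The compatibility $\beta-1\in(\underline{\gamma}^\ve,\overline{\gamma}^\ve)$ for every $n\geq 7$ and $(n,k)\neq(7,5)$ is precisely what~\eqref{eqn:indicial-roots-close} is engineered to guarantee with the choices $\beta=\tfrac{7}{2}$ for $n=7$ and $\beta=4$ for $n\geq 8$. (A secondary slip: the free boundary of $S_+$ on $\Pi$ is \emph{not} confined to the cap region --- the glued and infinity pieces contribute too, so the contact-angle estimate of Lemma~\ref{lemma:infinity-piece-small-epsilon}$(i)$ and the corresponding estimate on $S_{\text{glue}}$ are genuinely needed.)
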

\begin{proof}
We will construct the surface $S_+$ in three pieces.
For sufficiently large radii $R_1 < R_2$, to be determined, we define the annulus $A(R_1,R_2) := B_{R_2} \setminus \bar{B}_{R_1}$.
We will also abbreviate the cone $\mathbf{C}_{n,k,\frac{\pi}{2}-\ve}$ (resp.~the extended cone $\hat{\mathbf{C}}_{n,k,\frac{\pi}{2}-\ve}$) to ${\mathbf{C}}$ (resp.~$\hat{\mathbf{C}}$) in what follows.
Using $\beta(n) = \frac{7}{2}$ for $n = 7$ and $\beta(n) = 4$ for $n \geq 8$, we find $\beta-1 \in (\underline{\gamma}, \overline{\gamma})$ in the corresponding indicial interval of the cone $\mathbf{C}_{n,k,\frac{\pi}{2}}$, so $1- \beta \in (- \overline{\gamma}^{\ve}, - \underline{\gamma}^{\ve})$, for $\ve$ sufficiently small, in view of~\eqref{eqn:indicial-roots-close}.
    Consequently, for $\tilde{R}_1 \geq R_0(n,k)$, we may construct
    \[
    \tilde{S}_{\beta-1,\psi} := \text{graph}_{\hat{\mathbf{C}} \setminus B_{\tilde{R}_1}} \tilde{u}_2, \qquad \tilde{u}_2 := ( R^{1-\beta} - R^{-\beta}) (1 - \tau R_2^{-2} \omega_{n+1}) 
    \]
    to satisfy the properties discussed in Lemma~\ref{lemma:infinity-piece-small-epsilon}.
    We consider the constants $c_{n,k}^+$ of Proposition~\hyperref[prop:mean-convex-cap]{\ref{prop:mean-convex-cap} $(v)$}, which we abbreviate to $c_{n,k}$ for this construction.
    We then define the rescaled surface $S_{\beta-1,\psi} := c_{n,k}^{\frac{1}{\beta}} \tilde{S}_{\beta-1,\psi}$, which is again mean-convex, with contact angle $< \theta$ along $\Pi$; moreover, $c_{n,k}^{\frac{1}{\beta}} \Bigl(c_{n,k}^{- \frac{1}{\beta}} R \Bigr)^{1-\beta} = c_{n,k} R^{1-\beta}$ means that we can write
    \[
    S_{\beta-1,\psi} = \text{graph}_{\hat{\mathbf{C}} \setminus B_{R_1}} u_2, \qquad u_2 := ( c_{n,k} R^{1-\beta} - c_1 R^{-\beta}) (1- \tau R_2^{-2} \omega_{n+1}), \qquad R_1 \geq R_0(n,k)
    \]
    by redefining $R_0(n,k)$ as $c_{n,k}^{\frac{1}{\beta}}R_0(n,k)$.
    We then define a smooth hypersurface $S_+ \subset E_+$ by
    \begin{equation}\label{eqn:define-glued-surface}
    S_+ \cap B_{R_1} = \text{graph}_{B_{R_1}}(U_{\on{cap}}), \qquad S_+ \cap A(R_1,R_2) = S_{\on{glue}}, \qquad S_+ \cap B(R_2)^{\text{\sffamily{C}}} = S_{\beta-1,\psi}.
    \end{equation}
    Here, $U_{\on{cap}}$ is the function obtained in Proposition~\ref{prop:mean-convex-cap}, with $\on{graph}_{B_{R_2}}(U_{\on{cap}}) = S_{\on{cap}}$.
    By Proposition~\ref{prop:mean-convex-cap}, we may choose $\ve < \ve_0(R_1, R_2)$ sufficiently small so that $S_{\on{cap}} \cap A(R_1, R_2)$ is expressible as the normal graph of a function $u_1$ over the annular region $\hat{\mathbf{C}} \cap A(R_1, R_2)$ with 
    \[
    R^{-1} |u_1 - \bar{u}_1| + |\nabla_{\mathbf{C}} (u_1 - \bar{u}_1)| \leq C(n,k) R^{2(1-\beta)}, \quad \text{where } \; \bar{u}_1(p) = c_{n,k} R^{1-\beta} - a_{n,k}^{-1} \delta R_2^{-(\beta+1)} \omega_{n+1}
    \]
    as in~\eqref{eqn:scap-over-the-cone} in Lemma~\ref{lemma:normal-graph-varphi-n,k}.
    We recall that $\beta(n) \geq \frac{7}{2} > 3$ in all cases, so the term $R^{2(1-\beta)}$ is genuinely lower-order and may be absorbed to write $u_1 = \bar{u}_1 + O(R^{3-2\beta}) = c_{n,k} R^{1-\beta} + O(R^{-(\beta+1)})$ inside the annular region $\hat{\mathbf{C}} \cap A(R_1, R_2)$.
    On the other hand, we have
    \[
    u_2 (p) = c_{n,k} R^{1-\beta} - c_1 R^{-\beta} (1 - \tau R_2^{-2} \omega_{n+1}) - c_{n,k} \tau R_2^{-2} R^{1-\beta} \omega_{n+1}
    \]
    which means that, over $\hat{\mathbf{C}}\cap A(R_1, R_2)$, we may write
    \begin{equation}\label{eqn:u2-R-1-beta}
    R^{-1} |u_2 - c_{n,k} R^{1-\beta}| + |\nabla_{\mathbf{C}} (u_2 - c_{n,k} R^{1-\beta})| \leq C(n,k) R^{-(\beta+1)}.
    \end{equation}
    Combining the expressions for $u_1$ and $u_2$ shows that $u_1 - u_2 = c_1 R^{-\beta} + O ( R^{-(\beta+1)})$, with
    \begin{equation}\label{eqn:u1-u2-difference-estimates}
        R^{-1} |u_1 - u_2| + |\nabla_{\mathbf{C}} (u_1 -u_2)| \leq C(n,k) R^{-(\beta+1)}.
    \end{equation}
    Moreover, we can choose $\tau < \frac{1}{4}$ and $R_1$ sufficiently large to satisfy
    \begin{equation}\label{eqn:u1-u2-lower-order}
    \tfrac{1}{2} c_1 R^{-\beta} < u_1 - u_2 < 2 c_1 R^{-\beta}
    \end{equation}
    over $\hat{\mathbf{C}} \cap A(R_1, R_2)$.
    We now take $R_2 = M R_1$, where $M(n,k)$ is a large constant, and $R_1 \gg_{n,k} 1$ is sufficiently large (to be determined later).
    Let $\tilde{\zeta}: [0,1] \to [0,1]$ be a smooth cutoff function satisfying $\tilde{\zeta} \equiv 1$ for $s \in [0,\frac{1}{4}]$, $\tilde{\zeta} \equiv 0$ for $s \in [ \frac{3}{4}, 1]$, and $-10 \leq \tilde{\zeta}' \leq 0$ on $[ \frac{1}{4}, \frac{3}{4}]$.
    We then define $\zeta : [R_1, R_2] \to [0,1]$ by $\zeta(R) := \tilde{\zeta} ( \frac{R - R_1}{R_2 - R_1} )$ and take $\zeta(p) := \zeta(|p|)$ so that $\zeta \equiv 1$ on $B_{\frac{1}{4} R_2}$ and $\zeta \equiv 0$ outside $B_{\frac{4}{5} R_2}$.
    By construction, $\nabla \zeta = \tfrac{1}{R_2-R_1} \tilde{\zeta}' \frac{p}{R}$, which implies that $|\nabla \zeta| \leq \frac{C(n)}{MR_1} \leq \tilde{C}(n) R^{-1}$ on $A(R_1,R_2)$.
    Iterating this computation shows that $|\nabla^{\ell} \zeta| \leq C(n,\ell) R^{-\ell}$ for all $\ell \geq 0$.
    We then let
    \[
    u := \zeta(R) u_1 + (1 - \zeta(R)) u_2
    \]
    and define $S_{\text{glue}}$ over $A(R_1, R_2)$ to be the normal graph of $u$ over the extended cone $\hat{\mathbf{C}}$,
    \[
    S_\text{glue} := \on{graph}_{\hat{\mathbf{C}} \cap A(R_1, R_2)} u= \{ p + u(p) \nu_{\mathbf{C}}(p) : p \in \hat{\mathbf{C}} \cap A(R_1, R_2) \} \cap \bR^{n+1}_+.
    \]
    Consequently, the definition~\eqref{eqn:define-glued-surface} produces a smooth hypersurface $S_+ \subset E_+$.
    We now prove that $S_+$ has the desired barrier properties.

\smallskip \noindent \textbf{Mean convexity.}
    The pieces $S_{\on{cap}}$ and $S_{\beta-1,\psi}$ are mean-convex, by Proposition~\ref{prop:mean-convex-cap} and Lemma~\ref{lemma:infinity-piece-small-epsilon}, respectively.
    We verify the mean-convexity of $S_{\text{glue}}$ by a computation analogous to Lemma~\ref{lemma:infinity-piece-small-epsilon}.
    For the glued function $u$, the linearized operator satisfies
    \begin{equation}\label{eqn:linearized-operator}
        \cL_{\mathbf{C}} u = \zeta \cL_{\mathbf{C}} u_1 + (1- \zeta) \cL_{\mathbf{C}} u_2 + \Bigl( \zeta'' + \frac{n-1}{R} \zeta' \Bigr) (u_1 - u_2) + 2 \zeta' \partial_R (u_1 - u_2).
    \end{equation}
    We observe that
    \begin{equation}\label{eqn:nabla-C-u}
        \nabla_{\mathbf{C}} u = \zeta'(R) (u_1 - u_2) \omega + \zeta \nabla_{\mathbf{C}} u_1 + (1-\zeta) \, \nabla_{\mathbf{C}} u_2
    \end{equation}
    and computing similarly, we may bound
    \begin{align*}
    |\nabla_{\mathbf{C}} u| &\leq |\zeta'| |u_1 - u_2| + |\nabla_{\mathbf{C}} u_1| + |\nabla_{\mathbf{C}} u_2| , \\
    |\nabla^2_{\mathbf{C}} u| &\leq ( |\zeta''| + n R^{-1} |\zeta'|) |u_1 - u_2| + 2 |\zeta'| \, |\nabla_{\mathbf{C}} (u_1 - u_2)| + |\nabla^2_{\mathbf{C}} u_1| + |\nabla^2_{\mathbf{C}} u_2|.
    \end{align*}
    Using the bounds $|\nabla^{\ell}_{\mathbf{C}} u_i| < c_{\ell} R^{1-\beta-\ell}$ in the above computations, we therefore obtain $|\nabla^{\ell}_{\mathbf{C}} u| < \tilde{c}_{\ell} R^{1-\beta-\ell}$.
    Arguing as in Lemma~\ref{lemma:infinity-piece-small-epsilon}, we may therefore use the estimate~\eqref{eqn:mean-curvature-operator} to obtain
    \[
    |H[S_{\on{glue}}] + \cL_{\mathbf{C}} u| \leq C(n) R^{-2 \beta}.
    \]
    On the other hand, the estimate~\eqref{eqn:phi-ck-bounds} becomes
    \[
    R^{-1} |u_1 - \bar{u}_1| + |\nabla_{\mathbf{C}} (u_1 - \bar{u}_1)| + R |\nabla^2_{\mathbf{C}} (u_1 - \bar{u}_1)| < C(n,k) R^{2-2 \beta}
    \]
    which implies, in particular, that 
    \[
    |\Delta_{\mathbf{C}} (u_1 - \bar{u}_1) | + |A_{\mathbf{C}}|^2 |u_1 - \bar{u}_1| < C(n,k) R^{-2 \beta+1} \implies |\cL_{\mathbf{C}} (u_1 - \bar{u}_1)| < C(n,k) R^{-2 \beta+1}.
    \]
    Using the bound~\eqref{eqn:u1-u2-difference-estimates} together with $|\nabla^{\ell} \zeta | \leq C(n) R^{-\ell}$ in~\eqref{eqn:linearized-operator}, we obtain
    \[
    \cL_{\mathbf{C}} u \leq \zeta \cL_{\mathbf{C}} u_1 + (1- \zeta) \cL_{\mathbf{C}} u_2 + C(n) R^{-(\beta+2)}.
    \]
    On the other hand, Lemma~\hyperref[lemma:infinity-piece-small-epsilon]{\ref{lemma:infinity-piece-small-epsilon} $(ii)$} showed that $\cL_{\mathbf{C}} u_2 < - c_2 R^{-(\beta+1)}$, while Proposition~\hyperref[prop:mean-convex-cap]{\ref{prop:mean-convex-cap} $(ii)$} obtained the same result for $u_1$.
    We conclude that
    \[
    \cL_{\mathbf{C}} u \leq - ( c_1 \zeta + c_2(1-\zeta) ) R^{-(\beta+1)} + C(n) R^{-(\beta+2)} \leq - \tfrac{1}{2} \min \{ c_1, c_2 \} R^{-(\beta+1)} + C(n) R^{-(\beta+2)}
    \]
    on $A(R_1, R_2)$.
    Finally, we may choose $R_1 \gg_{n,k} 1$ sufficiently large, so that on $A(R_1, R_2)$ we obtain
    \begin{align*}
    H[S_{\text{glue}}] &\geq - \cL_C u - C(n) R^{-2 \beta} \geq \tfrac{1}{2} \min \{ c_1, c_2 \} R^{-(\beta+1)} -  C(n) R^{-(\beta+2)} - C(n) R^{-2\beta} \\
    & > \tfrac{1}{4} \min \{ c_1, c_2 \} R^{-(\beta+1)},
    \end{align*}
    meaning that $S_{\text{glue}}$ (and therefore all of $S_+$) is strictly mean-convex, as desired.

\smallskip \noindent \textbf{Contact angle along $\Pi$.}
The pieces $S_{\on{cap}}$ and $S_{\beta-1,\psi}$ have contact angle $< \theta$ with the plane $\Pi$, by Proposition~\ref{prop:mean-convex-cap} and Lemma~\ref{lemma:infinity-piece-small-epsilon}, respectively.
Using the expression~\eqref{eqn:normal-vector-equality-Pi}, we may write
\[
|\nu_{S_\text{glue}}(p + u(p) \nu_{\mathbf{C}}(p)) - \nu_{\mathbf{C}}(p) + \nabla_{\mathbf{C}} u| \leq C(n) \bigl( |\nabla_{\mathbf{C}} u|^2 + |u| \, |\nabla^2_{\mathbf{C}} u| \bigr) \leq C(n) R^{-2 \beta}
\]
by applying the bound $|\nabla^{\ell}_{\mathbf{C}} u| < \tilde{c}_{\ell} R^{1-\beta-\ell}$ obtained above.
Applying this computation to the points $p + u(p) \nu_{\mathbf{C}}(p) \in S_{\text{glue}} \cap \Pi$, as in Lemma~\ref{lemma:infinity-piece-small-epsilon}, we obtain $|\la \nu_{\mathbf{C}}(p) , e_{n+1} \rg - \cos \theta| \leq C(n) R^{-\beta} \cos \theta$ and may compute $\nabla_{\mathbf{C}} u$ from the expression~\eqref{eqn:nabla-C-u}.
Consequently, 
\begin{align*}
     \la \nu_{S_\text{glue}}&(p + u(p) \nu_{\mathbf{C}}(p)), e_{n+1} \rg \\
    &\geq  \la \nu_{\mathbf{C}} (p), e_{n+1} \rg - \la \nabla_{\mathbf{C}} u, e_{n+1} \rg - C(n) R^{-2\beta} \\
    &\geq \bigl( \cos \theta - C(n) R^{-\beta} \cos \theta) - \la \zeta \nabla_{\mathbf{C}} \bar{u}_1 + (1-\zeta) \nabla_{\mathbf{C}} u_2 + \zeta'(R) (\bar{u}_1 - u_2) \omega, e_{n+1} \rg - C(n) R^{-2\beta} \\
    &\geq \cos \theta - \la \zeta \nabla_{\mathbf{C}} \bar{u}_1 + (1-\zeta) \nabla_{\mathbf{C}} u_2, e_{n+1} \rg - C(n) R^{-\beta} ( \cos \theta + R^{-\beta}) - C(n) R^{-(\beta+1)} \omega_{n+1}(p).
\end{align*}
Throughout the computation, we replaced $u_1$ by $\bar{u}_1$ using the estimate~\eqref{eqn:phi-ck-bounds} with an error term $C(n) R^{-2\beta}$.
In the last step, we applied the bound~\eqref{eqn:u2-R-1-beta} combined with $|\zeta'(R)| \lesssim R^{-1}$ on $\hat{\mathbf{C}} \cap A(R_1, R_2)$, so $|\zeta'(R) (\bar{u}_1-u_2)| < C(n) R^{-(\beta+1)}$ by applying~\eqref{eqn:u1-u2-lower-order}.
    By Proposition~\ref{prop:mean-convex-cap}, we may take $\delta < \Lambda (n,k)^{-1}$ and $\ve_0 < \Lambda^{-1} R^{-4 \beta}_2 < C(n,k) \delta R_2^{-4 \beta}$ to ensure that
    \[
    \cos \theta < 2 a_{n,k}^{-1} \ve < C(n,k) \delta R_2^{-4\beta} \leq C(n,k) \delta R^{-4 \beta} \qquad \text{on } \; A(R_1, R_2),
    \]
    allowing us to collect the above bounds into
    \[
    \la \nu_{S_\text{glue}}, e_{n+1} \rg \geq \cos \theta - \la \zeta \nabla_{\mathbf{C}} \bar{u}_1 + (1-\zeta)  \nabla_{\mathbf{C}} u_2, e_{n+1} \rg - C(n) R^{-\beta} ( R^{-\beta} + R^{-1} \omega_{n+1})
    \]
    along $S_{\text{glue}} \cap \Pi$.
    We observe that
    \begin{align*}
    u &=  \zeta \bar{u}_1 + (1-\zeta) u_2 \\
    &= c_{n,k} R^{1-\beta} - (1-\zeta) c_1 R^{-\beta} - \left[ \zeta a_{n,k}^{-1} \delta R_2^{-(\beta+1)} + (1-\zeta)(c_{n,k} R^{1-\beta} - c_1 R^{-\beta})\tau R_2^{-2} \right] \omega_{n+1},
    \end{align*}
    whereby $c_1 R^{1-\beta} \leq u \leq c_2 R^{1-\beta}$ on $A(R_1, R_2)$.
    Computing as in~\eqref{eqn:omega-(n+1)} from Lemma~\ref{lemma:infinity-piece-small-epsilon}, we find
    \[
    p + u(p) \nu_{\mathbf{C}}(p) \in \Pi \implies R\,\omega_{n+1}(p) + u(p) \la \nu_{\mathbf{C}}, e_{n+1} \rg = 0
    \]
    where $|\la \nu_{\mathbf{C}}, e_{n+1} \rg - \cos \theta| \leq C(n) R^{-\beta}$ as in~\eqref{eqn:nu-C-cos-theta}.
    We conclude that $- \tilde{c}_2 R^{1-\beta} \cos \theta < \omega_{n+1}(p) < - \tilde{c}_1 R^{1-\beta} \cos \theta$ holds along $S_{\text{glue}} \cap \Pi$.
    Finally, to compute $\nabla_{\mathbf{C}}$, observe that we may write $\nabla_{\mathbf{C}} u_i = \nabla u_i - \la \mathbf{\nu}_C, \nabla u_i \rg \nu_{\mathbf{C}} = ( I - \nu_{\mathbf{C}} \otimes \nu_{\mathbf{C}}) \nabla u_i$, so the above expression of $u = \zeta \bar{u}_1 + (1-\zeta) u_2$ gives
    \allowdisplaybreaks{
    \begin{align*}
         \la \zeta &\nabla_\mathbf{C} \bar{u}_1 + (1-\zeta) \nabla_{\mathbf{C}} u_2, e_{n+1} \rg &\\
        &= - \zeta a_{n,k}^{-1} \delta R_2^{-(\beta+1)} R^{-1} - (1-\zeta) c_{n,k} \tau R_2^{-2} R^{-\beta} & (\mr{L}_1)\\
       &\quad+ (1-\zeta) c_1 \tau R_2^{-2} R^{-(\beta+1)} \left[ 1 - \la \nu_{\mathbf{C}}, e_{n+1} \rg^2 - (\beta+1) \omega_{n+1}^2 \right] &(\mr{L}_2)\\
        &\quad+ \Bigl( - c_{n,k} (\beta-1) R^{-\beta} + (1-\zeta) c_1 \beta R^{-(\beta+1)} \Bigr) \omega_{n+1} & (\mr{L}_3)\\
        &\quad+ \zeta a_{n,k}^{-1} \delta R_2^{-(\beta+1)} R^{-1} ( \la \nu_{\mathbf{C}}(p), e_{n+1} \rg^2 + \omega_{n+1}^2 ) + (1-\zeta) c_{n,k} \tau R_2^{-2} R^{-\beta} ( \la \nu_{\mathbf{C}}(p), e_{n+1} \rg^2 + \beta \omega_{n+1}^2).& (\mr{L}_4)
    \end{align*}}
    Using the above bounds $|\omega_{n+1}| \leq C(n) R^{-4\beta}$ and $|\la \nu_{\mathbf{C}}, e_{n+1} \rg | \leq C(n) \cos \theta \leq C(n) R^{-4\beta}$, we may estimate the terms $(\mathrm{L}_i)$ of each line of the above computation by
    \[
    (\mathrm{L}_2) \leq C(n) R^{-(\beta+3)}, \qquad (\mathrm{L}_3) \leq C(n) R^{-5\beta}, \qquad (\mathrm{L}_4) \leq C(n) R_2^{-4\beta}
    \]
    over $A(R_1, R_2)$.
    Moreover, using $R_1 \leq R \le M R_1$ on $A(R_1, R_2)$ along with $\max \{ \zeta a_{n,k}^{-1} \delta , (1-\zeta) c_{n,k} \tau \} \geq \tfrac{1}{2} \max \{ a_{n,k}^{-1} \delta, c_{n,k} \tau \} \geq C(n,k,\delta,\tau)$ due to $\max \{ \zeta, 1-\zeta \} \geq \frac{1}{2}$, we can bound
    \begin{align*}
    (\mathrm{L}_1) : \quad - \zeta a_{n,k}^{-1} \delta R_2^{-(\beta+1)} R^{-1} - (1-\zeta) c_{n,k} \tau R_2^{-2} R^{-\beta} &\leq - C(n,k,\delta, \tau) \max \{ R_2^{-(\beta+1)} R^{-1}, R_2^{-2} R^{-\beta} \} \\
    &\leq - C(n,k,\delta, \tau, M) R^{-(\beta+2)}.
    \end{align*}
    We therefore obtain
    \begin{equation}\label{eqn:zeta-u1-u2-gradient-bound}
    \la \zeta \nabla_\mathbf{C} \bar{u}_1 + (1-\zeta) \nabla_{\mathbf{C}} u_2, e_{n+1} \rg \leq -  \tfrac{1}{2} C(n,k,\delta, \tau,M) R^{-(\beta+2)}
    \end{equation}
    where we used $R_1 \leq R \leq M R_1$ for $A(R_1, R_2)$ chose $R_1 \gg_{n,k} 1$ sufficiently large so that
    \[
    C(n,k,\delta, \tau, M) R^{-(\beta+2)} > \tfrac{1}{2} C(n,k,\delta, \tau, M) R^{-(\beta+2)} + C(n) R^{-(\beta+3)} + C(n) R^{-2\beta}.
    \]
    Finally, combining the above lower bound on $\la \nu_{S_{\text{glue}}}, e_{n+1} \rg$ with the estimates $(\mathrm{L}_1) - (\mathrm{L}_4)$ and the bounds $|\la \nu_{\mathbf{C}}, e_{n+1} \rg| ,\cos \theta, |\omega_{n+1}(p)| \leq C(n) R^{-4\beta}$, we conclude that 
    \begin{align*}
        \la \nu_{S_\text{glue}}, e_{n+1} \rg &\geq \cos \theta - \la \zeta \nabla_{\mathbf{C}} \bar{u}_1 + (1-\zeta)  \nabla_{\mathbf{C}} u_2, e_{n+1} \rg - C(n) R^{-2\beta} \\
        &\geq \cos \theta + \tfrac{1}{2} C(n,k,\delta, \tau, M) R^{-(\beta+2)} - C(n) R^{-2\beta}
    \end{align*}
    on $A(R_1, R_2)$.
    Taking $R_1 \geq R_0(n,k)$ sufficiently large in the last step ensures that $\la \nu_S, e_{n+1} \rg > \cos \theta$ holds everywhere along $S_+ \cap \Pi$, which establishes the desired contact angle condition.

\smallskip \noindent \textbf{Graphical property.}
The graphical property of the constructed surface $S_+$ is a direct consequence of the above discussion.
Indeed, since $\omega_{n+1}(p) \geq - R^{-1} u(p) \la \nu_{\mathbf{C}}(p), e_{n+1} \rg$ is satisfied for any $p \in \mathbf{C}$ with $p + u(p) \nu_{\mathbf{C}}(p) \in S_{\text{glue}}$, the computation along $S_{\text{glue}}$ shows that $\la \nu_{S_{\text{glue}}} (p + u(p) \nu_{\mathbf{C}}) , e_{n+1} \rg \geq \cos \theta > 0$ holds everywhere along $S_{\text{glue}}$.
The same computation holds on $S_{\beta-1, \psi}$ (as computed in Lemma~\ref{lemma:infinity-piece-small-epsilon}) and on $S_{\text{cap}}$ (by Proposition~\ref{prop:mean-convex-cap}), meaning that $\la \nu_S , e_{n+1} \rg > 0$ holds everywhere on $S_+$.
We deduce from this that $S_+$ is expressible as the graph of a function over $\Pi = \{ z = 0\}$.

\smallskip \noindent \textbf{Foliation by homotheties.}
We finally prove that the positive homotheties of $S_+$ foliate $E_+$.
The surface $S_+$ is smooth, properly embedded, connected, and asymptotic to ${\mathbf{C}}$, so Lemma~\ref{lemma:get-a-foliation} shows that the leaves $\{ \lambda S_+ \}_{\lambda>0}$ foliate $E_+$ if and only if $\la p, \nu_{S_+}(p) \rg > 0$ for all $p \in S_{\pm}$.
We verify this condition separately on the three pieces of $S_{\pm}$ from~\eqref{eqn:define-glued-surface}.

First, on $S_{\on{cap}}$, the level set expression $\nu_{S_{\on{cap}}} = \frac{\nabla \tilde{\varphi}}{|\nabla \tilde{\varphi}|}$ makes $\la p, \nu_{S_{\on{cap}}} \rg > 0$ equivalent to
\begin{align*}
    p \cdot \nabla \tilde{\varphi} &= (x,y,z) \cdot \bigl( \tilde{\varphi}_r \frac{x}{r}, \tilde{\varphi}_s \frac{y}{s}, \tilde{\varphi}_z\bigr) = r \tilde{\varphi}_r + s \tilde{\varphi}_s + z \tilde{\varphi}_z > 0.
\end{align*}
We compute this expression using $\tilde{\varphi}_{n,k}(r,s,z) := \varphi_{n,k}^+(r,s,z+ \delta R_2^{-(\beta+1)}$ from~\eqref{eqn:record-varphi-n,k}; in the general Case IV, we have $\beta=4$ and $S_{\text{cap}} = \{ \tilde{\varphi}(r,s,z) = \tilde{\varphi}(0,1,0) \}$.
We then obtain, along $S_{\text{cap}}$
\begin{align*}
    \tilde{\varphi}_r &= - 4 a_{n,k}^4 r^3, \qquad \tilde{\varphi}_s = 4 s ( s^2 + (z+\delta R_2^{-5})^2), \qquad \tilde{\varphi}_z = 4 (z+\delta R_2^{-5}) ( s^2 + (z+\delta R_2^{-5})^2), \\
    p \cdot \nabla \tilde{\varphi} &= - 4 a_{n,k}^4 r^4 + 4 ( s^2 + (z+\delta R_2^{-5})^2) \bigl(s^2 + z(z+\delta R_2^{-5}) \bigr) \\
    &= 4(1+\delta^2 R_2^{-6})^2 -  4 \delta R_2^{-5}(z+\delta R_2^{-5}) ( s^2 + (z+\delta R_2^{-5})^2) \qquad \qquad \text{on } \; S_{\on{cap}}.
\end{align*}
The second term above is $\geq - 4 \delta R_2^{-5} \cdot 2 R_2 \cdot 2 R_2^2 = - 16 \delta R_2^{-2}$ on $B_{R_2}$.
For any $R_2$, we may therefore ensure that $p \cdot \nabla \tilde{\varphi} > 0$ in $B_{R_2}$ by taking $\delta \in (0,\delta_{n,k})$ sufficiently small (followed by taking $\ve < \ve_0 = C(n,k) \delta R_2^{-6}$ sufficiently small) consistent with the construction of Proposition~\ref{prop:mean-convex-cap}.
Consequently, $S_{\text{cap}}$ will be star-shaped.
In Case II, we recall the definition
\[
\tilde{\varphi}(r,s,z) = (s^2 + (z+\delta R_2^{-5})^2 - \tfrac{1}{n-2} r^2) ( s^2 + (z+\delta R_2^{-5})^2), \qquad S_{\text{cap}} := \{ \tilde{\varphi}(r,s,z) = \tilde{\varphi}(0,1,0) \}.
\]
In this situation, we obtain
\begin{align*}
    p \cdot \nabla \tilde{\varphi} &= - \tfrac{2}{n-2} r^2 ( s^2 + (z+\delta R_2^{-5})^2 ) + 2 \Bigl( 2 ( s^2 + (z+\delta R_2^{-5})^2 ) - \tfrac{1}{n-2} r^2 \Bigr) (s^2 + z(z+\delta R_2^{-5}) ) \\
    &= 4 ( 1 + \delta^2 R_2^{-10})^2 - 2 \delta R_2^{-5}(z+\delta R_2^{-5}) \Bigl( s^2 + (z+\delta R_2^{-5})^2 + \frac{(1+\delta^2 R_2^{-10})^2}{s^2 + (z+\delta R_2^{-5})^2} \Bigr)
\end{align*}
on $S_{\text{cap}}$, by computing as above.
Since $\on{dist} ( S_{\text{cap}} \cap \Pi, 0) = 1$, the second term above is again bounded by $\geq - C(n) \delta R_2^{-2}$, so taking $\delta \in (0,\delta_{n,k})$ ensures that $p \cdot \nabla \tilde{\varphi} > 0$ in $B_{R_2}$.
In the exceptional cases I and III, the same computation is valid, consistent with Proposition~\ref{prop:mean-convex-cap}.
Therefore, $S_{\text{cap}}$ is star-shaped, for $\ve<\ve_1(n,k,R_1, R_2)$ sufficiently small.

For the pieces $S_{\text{glue}} \cup S_{\beta-1,\psi}$, we may write
\[
S_{\text{glue}} \cup S_{\beta-1,\psi} = \text{graph}_{\hat{\mathbf{C}} \setminus \bar{B}_{R_1} } \tilde{u} = \{ p + \tilde{u}(p) \nu_{\mathbf{C}}(p) : p \in \hat{\mathbf{C}} \} \cap \bR^{n+1}_+,
\]
where $\tilde{u}$ agrees with $u$ or $u_2$ in the respective region.
The points on $S_{\text{glue}} \cup S_{\beta-1,\psi}$ have the form $q = p + \tilde{u}(p) \nu_{\mathbf{C}}(p)$, and we can express $\nu_{S_+}(q) = \nu_{\mathbf{C}}(p) - \nabla_{\mathbf{C}} \tilde{u} + E_{\tilde{u}}$, with
\[
|E_{\tilde{u}}| = |\nu_{S_+}(q) - \nu_{\mathbf{C}}(p) + \nabla_{\mathbf{C}} \tilde{u}| \leq C(n) \bigl( |\nabla_{\mathbf{C}} \tilde{u}|^2 + |\tilde{u}| \, |\nabla^2_{\mathbf{C}} \tilde{u}| \bigr)
\]
as in~\eqref{eqn:normal-vector-equality-Pi}.
We therefore obtain
\begin{align*}
    \la q, \nu_{S_+} (q) \rg &= \la p + \tilde{u}(p) \nu_{\mathbf{C}} , \nu_{\mathbf{C}} - \nabla_{\mathbf{C}} \tilde{u} \rg + \la p + \tilde{u}(p) \nu_{\mathbf{C}} , E_{\tilde{u}} \rg \\
    &= \tilde{u}(p) - \la p, \nabla_{\mathbf{C}} \tilde{u} \rg + \la p + \tilde{u}(p) \nu_{\mathbf{C}}, E_{\tilde{u}} \rg
\end{align*}
because $\la \nu_{\mathbf{C}}, \nabla_{\mathbf{C}} \tilde{u} \rg = 0$.
Using $|\nabla^{\ell}_{\mathbf{C}} \tilde{u}| \leq c_{\ell} R^{1-\beta-\ell}$, as above, we obtain $|E_{\tilde{u}}| \leq C(n) R^{-2\beta}$ and 
\[
|\la p + \tilde{u}(p) \nu_{\mathbf{C}}, E_{\tilde{u}} \rg| \leq |p + \tilde{u}(p) \nu_{\mathbf{C}}| \cdot |E_{\tilde{u}}| \leq C(n) R^{1-2\beta}.
\]
Along $S_{\beta-1, \psi}$, we have $\tilde{u} = u_2$ and the computation~\eqref{eqn:nabla-c-u-computation} gives
\[
\la p, \nabla_{\mathbf{C}} ( \ell(R) \psi(\omega) ) \rg = R \ell'(R) \psi(\omega)
\]
because the second summand is orthogonal to $p = R \omega$.
Applying this to $u_2$, we obtain
\begin{align*}
u_2 - \la p , \nabla_{\mathbf{C}} u_2 \rg &= ( \ell(R) - R \ell'(R)) (1 - \tau \omega_{n+1}) \\ 
&= \bigl( c_{n,k} \beta R^{1-\beta} - c_1 (\beta+1) R^{-\beta} \bigr) (1 - \tau \omega_{n+1}),
\end{align*}
which is $> \tilde{c}_{n,k} R^{1-\beta}$ for $\tau < \frac{1}{4}$ and $R \geq R_1(n,k)$.
Combined with the above bound, this yields
\[
\la q, \nu_{S_{\beta-1,\psi}}(q) \rg > \tilde{c}_{n,k} R^{1-\beta} - C(n) R^{-\beta} > 0
\]
for $R \geq R_1(n,k)$.
Similarly, on $S_{\text{glue}}$, we use~\eqref{eqn:nabla-C-u} to write
\begin{align*}
    \tilde{u} - \la p, \nabla_{\mathbf{C}} \tilde{u} \rg &= \zeta \bigl( u_1 - \la p, \nabla_{\mathbf{C}} u_1 \rg \bigr) + (1-\zeta) \bigl( u_2 - \la p, \nabla_{\mathbf{C}} u_2 \rg \bigr) + R \zeta'(R) (u_1 - u_2) \\
    & > \zeta \bigl( u_1 - \la p, \nabla_{\mathbf{C}} u_1 \rg \bigr) + (1-\zeta) \bigl( u_2 - \la p, \nabla_{\mathbf{C}} u_2 \rg \bigr) - C(n) R^{-\beta}
\end{align*}
by applying~\eqref{eqn:u1-u2-lower-order} and $|R \zeta'(R)| \leq C(n)$.
Using the property~\eqref{eqn:phi-ck-bounds} of $u_1$, we obtain
\[
u_1 - \la p, \nabla_{\mathbf{C}} u_1 \rg > \tilde{c}_{n,k} R^{1-\beta}
\]
by an analogous computation to the one for $u_2$.
We conclude that
\[
\la q, \nu_{S_{\on{glue}}}(q) \rg > \tilde{c}_{n,k} R^{1-\beta} - C(n) R^{- \beta} > 0
\]
holds as well, upon taking $R_1(n,k)$ sufficiently large.
Therefore, $S_+$ is star-shaped, as claimed.

\smallskip \noindent \textbf{Conclusion of the construction.}
Summarizing the above discussion, we can produce a smooth surface $S_+$ with the desired properties provided that the gluing annulus $A(R_1, R_2)$ is sufficiently large and $\tau$ is sufficiently small.
Applying Lemma~\ref{lemma:infinity-piece-small-epsilon} ensures the validity of the construction for $\ve < \ve_1(n,k,\tau, R_2)$ sufficiently small, and Proposition~\ref{prop:mean-convex-cap} is applicable for $\ve < \ve_2(n,k,R_1, R_2)$.
We may therefore take $\ve_{n,k} := \min \{ \ve(n,k), \ve_1, \ve_2 \}$ to ensure that all the above perturbative arguments are valid.
We conclude that $S_+ \subset E_+ (\mathbf{C}_{n,k,\theta})$ is a star-shaped, strictly mean-convex surface asymptotic to $\mathbf{C}_{n,k,\theta}$, with $\la \nu_{S_+}, e_{n+1} \rg > 0$.
We can also find a ray $\ell_q = \{ \lambda q > 0 \}$ through some $q \in E_+$ intersecting $S_+$ exactly once; for example, any point with $(r,s,z) = (0,1,0) \in S_{\text{cap}}$ has this property.
Applying Lemma~\ref{lemma:get-a-foliation} shows that the positive homotheties of $S_+$ foliate $E_+$, as desired.
\end{proof}

The adaptation of the above construction to the side $E_-$ is straightforward, allowing us to smoothly glue the piece $S^-_{\beta-1,\psi}$ at infinity (from Lemma~\ref{lemma:infinity-piece-on-E-minus}) with the compact cap $S^-_{\text{cap}}$ (from Proposition~\ref{prop:mean-concave-cap-on-E-minus}) as negative normal graphs over $\mathbf{C}_{n,k,\frac{\pi}{2}-\ve}$.
The relevant properties of the glued surface $S_- \subset E_-$ are verified in the same manner as for $S_+$ changed appropriately by negating $\cL_{\mathbf{C}} u$ and reversing the bounds on $\la \nu_S, e_{n+1} \rg - \cos \theta$ at each step.
The star-shapedness of $E_-$ is unchanged.
As remarked in the prelude to Proposition~\ref{prop:mean-concave-cap-on-E-minus}, the cap region forms an obtuse angle with $\Pi$, meaning that the barrier surface $S_-$ is not graphical.

We record the construction and properties of $S_-$ in the following statement:
\begin{proposition}\label{prop:mean-convex-surface-on-E-minus}
    For every $n \geq 7$ and $1 \leq k \leq n-2$, except $(n,k) = (7,1)$, there exists an $\ve_{n,k} > 0$ such that the following holds.
    For all $\theta \in ( \frac{\pi}{2} - \ve_{n,k} , \frac{\pi}{2}]$, there exists a surface-with-boundary $S_- \subset E_- ( \mathbf{C}_{n,k,\theta})$ asymptotic to the cone $\mathbf{C}_{n,k,\theta}$ that is strictly mean-concave (with respect to the upward-pointing normal vector), forms a contact angle $> \theta$ with the plane $\Pi = \{ z = 0 \}$ along its free boundary, and its positive homotheties foliate $E_-(\mathbf{C}_{n,k,\theta})$.
\end{proposition}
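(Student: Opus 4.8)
The argument is a direct adaptation of the proof of Proposition~\ref{prop:mean-convex-surface} to the side $E_-$, and I only indicate the modifications. The plan is to build $S_-$ out of three pieces glued together as \emph{negative} normal graphs over $\mathbf{C}:=\mathbf{C}_{n,k,\frac{\pi}{2}-\ve}$, in the direction opposite to the cone normal $\nu_{\mathbf{C}}$ pointing into $E_+$. A simplification on this side is that for any $\ve>0$ the normal $\nu_{\mathbf{C}}$ has everywhere-positive vertical component, so a negative normal graph of the cone automatically produces a free boundary along $\Pi$; hence, unlike in the $E_+$ case, I never need to pass to the extended cone $\hat{\mathbf{C}}$. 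I fix $\beta$ as in Proposition~\ref{prop:mean-concave-cap-on-E-minus}, so $\beta-1$ lies in the indicial interval of $\mathbf{C}_{n,k,\frac{\pi}{2}}$ and therefore $1-\beta\in(-\overline{\gamma}^{\ve},-\underline{\gamma}^{\ve})$ for small $\ve$ by~\eqref{eqn:indicial-roots-close}.

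First I would take the piece at infinity to be the surface $S^-_{\beta-1,\psi}$ of Lemma~\ref{lemma:infinity-piece-on-E-minus} with $a=\beta-1$, rescaled so that its leading coefficient equals the constant $c^-_{n,k}$ from Proposition~\hyperref[prop:mean-concave-cap-on-E-minus]{\ref{prop:mean-concave-cap-on-E-minus} $(iv)$}; by that lemma it is strictly mean-concave with respect to the upward normal, with $H\asymp -R^{-\beta-1}$, and meets $\Pi$ at an angle strictly larger than $\theta$. For the compact piece I would take the cap $S^-_{\text{cap}}$ of Proposition~\ref{prop:mean-concave-cap-on-E-minus}, which is mean-concave, has $\la\nu_S,e_{n+1}\rg\le 0<\cos\theta$ along its free boundary (so contact angle $\ge\tfrac{\pi}{2}>\theta$), has free boundary at distance $1$ from the origin inside $E_-(\mathbf{C})\cap\Pi$, and is a negative normal graph over $\mathbf{C}\cap A(R_1,R_2)$ of a function $u_1=\underbar{u}_1+O(R^{2-2\beta})$ with $\underbar{u}_1=c^-_{n,k}R^{1-\beta}-a_{n,k}^{-1}\delta R_2^{-(\beta+1)}\omega_{n+1}$. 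Exactly as in the proof of Proposition~\ref{prop:mean-convex-surface}, both pieces agree to leading order with $c^-_{n,k}R^{1-\beta}$ on the annulus, so $u_1-u_2=cR^{-\beta}+O(R^{-\beta-1})$ for some $c>0$; I would then set $R_2=MR_1$ for a large dimensional constant $M$, pick a radial cutoff $\zeta$ supported in $A(R_1,R_2)$ with $|\nabla^{\ell}\zeta|\le CR^{-\ell}$, and define $S_-$ to be $S^-_{\text{cap}}$ inside $B_{R_1}$, the negative normal graph of $u=\zeta u_1+(1-\zeta)u_2$ over $\mathbf{C}\cap A(R_1,R_2)$, and $S^-_{\beta-1,\psi}$ outside $B_{R_2}$. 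Lemma~\ref{lemma:iterated-normal-graph} together with the uniform bounds of Lemma~\ref{lemma:graph-over-extended-link-pi/2} and~\eqref{eqn:indicial-roots-close} make this a smooth, properly embedded surface for $\ve$ small.

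The verification then proceeds as in the $E_+$ case, with the signs reversed throughout. For a negative normal graph the mean curvature linearizes as $H[S^-_u]-\cL_{\mathbf{C}}u$ (this is~\eqref{eqn:mean-curvature-operator} with $\phi=-u$), and Lemma~\ref{lemma:infinity-piece-on-E-minus} and Proposition~\ref{prop:mean-concave-cap-on-E-minus} give $\cL_{\mathbf{C}}u_i\ge cR^{-\beta-1}>0$ on the annulus; combining this with the gluing error $|H[S_{\text{glue}}]-\cL_{\mathbf{C}}u|\le CR^{-2\beta}$ and the bounds on $u_1-u_2$ and $\nabla^{\ell}\zeta$ yields $H[S_{\text{glue}}]<-cR^{-\beta-1}<0$ once $R_1$ is large, so all of $S_-$ is strictly mean-concave. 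For the boundary angle I would run the computation along $S_{\text{glue}}\cap\Pi$ from the proof of Proposition~\ref{prop:mean-convex-surface} verbatim, now arriving at the reversed inequality $\la\nu_{S_-},e_{n+1}\rg<\cos\theta-\tfrac{1}{2}c(n,k,\delta,\tau,M)R^{-\beta-2}+C(n)R^{-2\beta}$, which for $R_1\gg 1$ gives $\la\nu_{S_-},e_{n+1}\rg<\cos\theta$ on the free boundary, i.e.\ contact angle $>\theta$. The star-shapedness is verified on the three pieces exactly as before: on $S^-_{\text{cap}}$ one uses the level-set expression $\nu=\nabla\tilde\varphi/|\nabla\tilde\varphi|$ with $\tilde\varphi$ built from~\eqref{eqn:record-varphi-n,k-on-E-minus} (raised by $\delta R_2^{-(\beta+1)}$) to get $\la p,\nabla\tilde\varphi\rg<0$ on $E_-$ for $\delta$ small, while on the two outer pieces one uses $u_i-\la p,\nabla_{\mathbf{C}}u_i\rg=(\ell(R)-R\ell'(R))\psi(\omega)\asymp R^{1-\beta}$; hence $S_-$ satisfies the star-shapedness hypothesis of Lemma~\ref{lemma:get-a-foliation} on the side $E_-$, and since it is properly embedded, connected, asymptotic to $\mathbf{C}$, and meets some ray through $E_-$ exactly once, its positive homotheties $\{\lambda S_-\}_{\lambda>0}$ foliate $E_-$. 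I note that, in contrast to $S_+$, the surface $S_-$ is \emph{not} graphical over $\Pi$ because the cap meets $\Pi$ at an obtuse angle; this causes no difficulty, since the entire construction is carried out in the normal-graph gauge over $\mathbf{C}$.

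The exclusion $(n,k)\ne(7,1)$ enters only through Proposition~\ref{prop:mean-concave-cap-on-E-minus}: the explicit functions~\eqref{eqn:record-varphi-n,k-on-E-minus} yield a strict sub-calibration of the Lawson cone $C(\bS^{n-k-1}\times\bS^k_+)$ on its $E_-$ side precisely when $1\le k\le n-2$ and $(n,k)\ne(7,1)$, which reflects that $C(\bS^5\times\bS^1)$ fails to be strictly minimizing on that side. I expect the only real bookkeeping obstacle to be the gluing step, just as in Proposition~\ref{prop:mean-convex-surface}: one must fix the hierarchy of constants $\tau\ll 1$, $M\gg 1$, $R_1\gg 1$, and finally $\ve\ll 1$ (depending on $R_1,R_2$) in the right order, so that the gluing errors on $A(R_1,R_2)$ are dominated by the genuine mean-concavity $\asymp R^{-\beta-1}$ and the genuine angle defect $\asymp R^{-\beta-2}$ supplied by the model pieces, and so that the $O(\ve)$-closeness of $\mathbf{C}_{n,k,\frac{\pi}{2}-\ve}$ to $\mathbf{C}_{n,k,\frac{\pi}{2}}$ from Lemma~\ref{lemma:graph-over-extended-link-pi/2} does not spoil either sign.
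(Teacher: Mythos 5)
Your proposal matches the paper's own treatment exactly: the paper likewise just observes that one glues the piece at infinity from Lemma~\ref{lemma:infinity-piece-on-E-minus} with the cap from Proposition~\ref{prop:mean-concave-cap-on-E-minus}, both realised as negative normal graphs over $\mathbf{C}_{n,k,\frac{\pi}{2}-\ve}$ (no extended cone needed since $\nu_{\mathbf{C}}$ has positive vertical component on this side), and then re-runs the mean-curvature, angle, and star-shapedness estimates of Proposition~\ref{prop:mean-convex-surface} with the signs reversed. The hierarchy of constants $\tau \ll 1$, $M \gg 1$, $R_1 \gg 1$, $\ve \ll 1$ and the role of the exceptional pair $(7,1)$ are all identified correctly.

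One sign in the middle of the mean-curvature paragraph is flipped and should be corrected: Lemma~\ref{lemma:infinity-piece-on-E-minus} and Proposition~\ref{prop:mean-concave-cap-on-E-minus} give $\cL_{\mathbf{C}}u_i \le -cR^{-\beta-1} < 0$, not $\cL_{\mathbf{C}}u_i \ge cR^{-\beta-1} > 0$. Indeed, from the paper's computation in Lemma~\ref{lemma:infinity-piece-small-epsilon}, $R^{a+2}\psi^{-1}\cL_{\mathbf{C}}u \approx (a-\overline{\gamma})(a-\underline{\gamma}) < 0$ for $a\in(\underline{\gamma},\overline{\gamma})$, and the same negativity persists in the $E_-$ lemma. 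As written, your chain $\cL_{\mathbf{C}}u_i>0$ combined with $|H[S_{\text{glue}}]-\cL_{\mathbf{C}}u|\le CR^{-2\beta}$ would force $H[S_{\text{glue}}]>0$, contradicting the conclusion $H[S_{\text{glue}}]<-cR^{-\beta-1}<0$ that you state two lines later. With the sign of $\cL_{\mathbf{C}}u_i$ corrected the argument is internally consistent and agrees with the paper.
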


Using the construction of Propositions~\ref{prop:mean-convex-surface} and~\ref{prop:mean-convex-surface-on-E-minus}, we may now conclude the first part of Theorem~\ref{thm:cones-are-minimizing-nearpi/2}.
Namely, we deduce that for all $n \geq 7$ and $1 \leq k \leq n-2$, except $(n,k) \in \{ (7,1) , (7,5) \}$, the cones $\mathbf{C}_{n,k,\theta}$ are strictly minimizing for $\cA^{\theta}$, for $\theta \in ( \frac{\pi}{2} - \ve_{n,k}, \frac{\pi}{2}]$.

\begin{proof}[Proof of Theorem~\ref{thm:cones-are-minimizing-nearpi/2}: Part I]
The strict stability of the cones $\mathbf{C}_{n,k,\frac{\pi}{2}-\ve}$, for $\ve \in (0,\ve_{n,k})$ and any $n \geq 7$, follows from the existence of a strict subsolution of the linearized Robin boundary value problem~\eqref{eqn:robin-eigenvalue-problem} on the cone, as obtained in Lemma~\ref{lemma:infinity-piece-small-epsilon} and the subsequent discussion.
To prove the strict minimality of the cone $\mathbf{C}_{n,k,\frac{\pi}{2}-\ve}$, we first consider the side $E_+ := \{ z > \rho f(t) \}$.
Proposition~\ref{prop:mean-convex-surface} ensures that for every $n \geq 7$ and $1 \leq k \leq n-2$, except $(n,k) = (7,5)$, there is an $\ve_{n,k} > 0$ such that for all $\theta \in ( \frac{\pi}{2} - \ve_{n,k}, \frac{\pi}{2}]$, there exists a surface-with-boundary $S_+ \subset E_+$ that is graphical, strictly mean-convex, has contact angle $< \theta$ with the plane $\Pi$ along its free boundary, and its positive homotheties foliate $E_+(\mathbf{C}_{n,k,\theta})$.
We may therefore apply Lemma~\ref{lemma:get-a-foliation} to obtain a strict capillary sub-calibration of $E_+ ( \mathbf{C}_{n,k,\theta}) $ from the rescalings $\{ \lambda S_+\}_{\lambda>0}$, which proves that the cone $\mathbf{C}_{n,k,\theta}$ is strictly minimizing for $\cA^{\theta}$ on its $E_+$ side.

Likewise, for every $n \geq 7$ and $1 \leq k \leq n-2$ except $(n,k) = (7,1)$, there exists an $\ve'_{n,k} > 0$ for which Proposition~\ref{prop:mean-convex-surface-on-E-minus} provides a graphical, mean-concave surface-with-boundary $S_- \subset E_-$ asymptotic to $\mathbf{C}_{n,k,\theta}$, with $\theta \in ( \frac{\pi}{2} - \ve'_{n,k}, \frac{\pi}{2}]$, having contact angle $> \theta$ with the plane $\Pi$ along its free boundary.
The positive homotheties of $S_-$ foliate $E_-(\mathbf{C}_{n,k,\theta})$, whereby applying Lemma~\ref{lemma:get-a-foliation} as above proves the strict minimality of $\mathbf{C}_{n,k,\theta}$ for $\cA^{\theta}$ on its $E_-$ side.
Combining the two properties implies the strict global minimality of the cones when $n \geq 7$ and $(n,k) \neq \{ (7,1), (7,5) \}$.
This completes the proof of Theorem~\ref{thm:cones-are-minimizing-nearpi/2} for $(n,k) \not\in \{ (7,1), (7,5) \}$.
\end{proof}

\begin{remark}
    As discussed following the proofs of Lemma~\ref{lemma:infinity-piece-small-epsilon} and Proposition~\ref{prop:mean-convex-cap}, the strategy used in the above construction is very general and can be used to prove the minimality of capillary cones close to a given strictly stable and strictly minimizing one, by gluing a compact piece of the foliation of the cone near the origin to the normal graph of a radial function $\sim r^{-a}$ determined by the indicial interval of the cone.
    This construction is applicable on either side of the cone; we explore it in greater generality in upcoming work~\cite{FTW_MinimizingII}.
\end{remark}

\subsection{Non-minimizing cones}\label{section:non-minimizing}

Finally, we show the second part of Theorem~\ref{thm:cones-are-minimizing-nearpi/2}, namely that for $(n,k) \in \{ (7,1), (7,5) \}$, there exist some $\ve^* > 0$ such that for all $\theta \in ( \frac{\pi}{2} - \ve^*, \frac{\pi}{2}]$, the cones $\mathbf{C}_{7,1,\theta}$ and $\mathbf{C}_{7,5,\theta}$ are strictly stable and one-sided area minimizing, but not minimizing for $\cA^{\theta}$.

\begin{proof}[Proof of Theorem~\ref{thm:cones-are-minimizing-nearpi/2}: Part II]
    We recall that $\mathbf{C}_{7,1,\frac{\pi}{2}}$ corresponds to a halved cone $C(\bS^5 \times \bS^1)$, which was proved by Lin~\cite{lin} to be one-sided strictly area-minimizing on its $E_+$ side.
    Likewise, the cone $\mathbf{C}_{7,5,\frac{\pi}{2}}$ is a halved cone $C(\bS^1 \times \bS^5)$, which is strictly minimizing on its $E_-$ side.
    Applying Proposition~\ref{prop:mean-convex-surface} proves the strict minimality of $\mathbf{C}_{7,1,\theta}$ in $E_+$, for $\theta \in ( \frac{\pi}{2} - \ve_{7,1}, \frac{\pi}{2}]$.
    Likewise, applying Proposition~\ref{prop:mean-convex-surface-on-E-minus} shows that the cones $\mathbf{C}_{7,5,\theta}$ are strictly minimizing on $E_-$, for $\theta \in ( \frac{\pi}{2} - \ve_{7,5}, \frac{\pi}{2}]$.
    Both results also imply the strict stability of the cones $\mathbf{C}_{7,1,\theta}$ and $\mathbf{C}_{7,5,\theta}$ due to the existence of a strict subsolution of the linearized Robin boundary value problem~\eqref{eqn:robin-eigenvalue-problem} on the cone, as obtained in Lemma~\ref{lemma:infinity-piece-small-epsilon} and the subsequent discussion.

    We will now prove that the above cones are not globally minimizing as $\theta \uparrow \frac{\pi}{2}$.
    In what follows, we will work with $\mathbf{C}_{7,1,\theta}$, for concreteness and notational simplicity; the argument for $\mathbf{C}_{7,5,\theta}$ is entirely analogous.
We recall that the cones $C(\bS^5 \times \bS^1)$ and $C(\bS^1 \times \bS^5)$ are not area-minimizing, by the work of Sim\~oes~\cite{simoes}.
    A computation of Lawlor~\cite{lawlor}*{p.~3} shows that the area-minimizing surface $S$ in the unit ball with boundary $\bS^5 \times \bS^1$ has area $(1 - \sigma_{1,5}) \cdot \textbf{M}( C(\bS^5 \times \bS^1) \cap B_1)$, where $\sigma_{1,5} \approx 2 \cdot 10^{-8}$.
    Let $\hat{S}$ denote the hypersurface-with-boundary obtained as the piece of $S$ contained in the half-space $\{ z \geq 0 \}$, and let $\hat{E} \subset B_1^+$ be either of the connected components of $B_1^+ \setminus \hat{S}$.
    Note that $S$ inherits the $O(6)$-equivariance of its boundary, therefore $\hat{S}$ is a free boundary minimal hypersurface in $B_1^+ = B_1 \cap \{ z \geq 0 \}$ which satisfies
    \begin{equation}\label{eqn:competitor-area-lawlor}
    \textbf{M}( \hat{S} \cap B_1^+) = (1 - \sigma_{1,5}) \cdot \textbf{M} ( \mathbf{C}_{7,1 , \frac{\pi}{2}} \cap B_1^+), \qquad \sigma_{1,5} \approx 2 \cdot 10^{-8}.
    \end{equation}
    For angles $\theta$ close to $\frac{\pi}{2}$, we can directly use normal graphs over $\hat{S}$ with small gradient to construct area-decreasing competitors to $\mathbf{C}_{7,1,\theta}$ for the capillary problem.
    Using Lemma~\ref{lemma:graph-over-extended-link-pi/2}, we may express the spherical links of the cones $\mathbf{C}_{7,1,\frac{\pi}{2}-\ve}$, with $\ve \in (0,\ve_*)$, as normal graphs over $\mathbf{C}_{7,1,\frac{\pi}{2}} \cap \bS^n_+$ with norm $\| \nabla^{\ell} v^{\Sigma}_{\ve} \|_{C^2} \leq C_{\ell} \ve$.
    Consequently,
    \[
    \textbf{M}(\mathbf{C}_{7,1,\theta} \cap B_1^+) \geq \textbf{M}(\mathbf{C}_{7,1,\frac{\pi}{2}} \cap B_1^+) - C \ve
    \]
    for $\theta \in (\frac{\pi}{2} - \ve_*, \frac{\pi}{2}]$.
    On the other hand, for any $\ve \in (0, \ve_*)$ sufficiently small, we can produce a function $\hat{v}_{\ve}$, with $\| \nabla^{\ell} \hat{v}_{\ve} \|_{C^2} \leq C_{\ell} \ve$ such that the normal graph $\hat{S}_{\theta}$ of its positive phase has spherical trace $\partial ( \mathbf{C}_{7,1,\theta} \cap B_1^+)$ but is not a cone.
    As before, let $\hat{E}_{\theta} \subset B_1^+$ denote the connected component of $B_1^+ \setminus \hat{S}_{\theta}$, chosen to vary continuously in $\theta$ such that $\hat{E}_{\theta} \xrightarrow{H} \hat{E}$ as $\theta \uparrow \frac{\pi}{2}$.
    Note that $\hat{S}_{\theta}$ is not required to be $\cA^{\theta}$-stationary, though such a construction is also possible.
    We therefore obtain
    \[
    \textbf{M} ( \hat{S}_{\theta} \cap B_1^+) \leq \textbf{M}( \hat{S} \cap B_1^+) + C \ve
    \]
    for a constant depending only on the ambient parameters.

    Let $E_{7,1,\theta}$ denote the connected component of $\bR^{n+1}_+ \setminus \mathbf{C}_{7,1,\theta}$ lying on the same side as the set $\hat{E}_{\theta}$ obtained by the above construction.
    Combining the previous steps, we obtain
    \begin{align*}
         \cA^{\theta}& (\hat{E}_{\theta} ;  B_1^+) - \cA^{\theta} (E_{7,1,\theta} ; B_1^+) \\
        &= \left[ \textbf{M}(\hat{S}_{\theta} \mres B_1^+) - \cos \theta |\partial^* \hat{E}_{\theta} \cap B_1 \cap \Pi| \right] - \left[ \textbf{M}(\mathbf{C}_{7,1,\theta} \cap B_1^+) - \cos \theta |\partial^* E_{7,1,\theta} \cap B_1 \cap \Pi| \right] \\
        &\leq \textbf{M}( \hat{S}_{\theta} \mres B_1^+) - \textbf{M}(\mathbf{C}_{7,1,\theta} \mres B_1^+) + C  |\partial^* E_{7,1,\theta} \cap B_1 \cap \Pi| \cdot \ve \\
        &\leq \left( \textbf{M}(\hat{S} \mres B_1^+) - \textbf{M}(\mathbf{C}_{7,1,\frac{\pi}{2}} \mres B_1^+) \right) + C_1 \ve + C_2 |\partial^* E_{7,1,\frac{\pi}{2}} \cap B_1 \cap \Pi| \cdot \ve + C_3 \ve^2
    \end{align*}
    for constants $C_i$ depending only on the ambient parameters.
    The computation~\eqref{eqn:competitor-area-lawlor} implies
    \[
    \textbf{M}(\hat{S} \mres B_1^+) - \textbf{M}(\mathbf{C}_{7,1,\frac{\pi}{2}} \mres B_1^+) = - \sigma_{1,5} \cdot \textbf{M}(\mathbf{C}_{7,1,\frac{\pi}{2}} \mres B_1^+).
    \]
    We therefore conclude that 
    \[
    \cA^{\theta} (\hat{E}_{\theta} ; B_1^+) < \cA^{\theta} (E_{7,1,\theta} ; B_1^+)
    \]
    for all $\theta \in (\frac{\pi}{2} - \ve, \frac{\pi}{2}]$, where $\ve \in (0, \ve_*)$ is sufficiently small.
    This produces a capillary competitor of strictly lower $\cA^{\theta}$-energy, showing that $\mathbf{C}_{7,1,\theta}$ is not $\cA^{\theta}$-minimizing.
    \end{proof}

Recall that Theorem~\ref{thm:cones-are-minimizing-smalltheta} shows that for some maximal $\ve_*$ and any $\theta \in (0,\ve_*)$, the cone $\mathbf{C}_{7,1,\theta}$ is strictly minimizing.
    We conjecture that $\ve_* = \frac{\pi}{2} - \ve^*$, so that all cones $\mathbf{C}_{7,1,\theta}$ are strictly area-minimizing for $\theta < \ve_*$ and not area-minimizing for $\theta > \ve_*$.

\begin{remark}
Observe that the above proof makes no use of the precise values of $(n,k)$ and $|\theta - \tfrac{\pi}{2}| = \ve$ beyond the fact that the cones $\mathbf{C}_{7,1,\frac{\pi}{2}}$ and $\mathbf{C}_{7,5,\frac{\pi}{2}}$ are not minimizing for $\cA^{\frac{\pi}{2}}$ and admits a strictly area-decreasing competitor with the same spherical trace, by Lawlor's construction.
The above argument is applicable for any values of $(n,k,\theta)$, showing that:
\begin{corollary}\label{cor:not-minimizing}
Suppose that for some $\theta_0 \in (0,\tfrac{\pi}{2}]$, the cone $\mathbf{C}_{n,k,\theta_0}$ is not minimizing for $\cA^{\theta_0}$, meaning that there exists some $F \neq E_{n,k,\theta_0}$ with 
\[
E_{n,k,\theta_0} \triangle F \Subset B_1, \qquad \cA^{\theta_0}(F;B_1) < \cA^{\theta_0}(E_{n,k,\theta_0} ; B_1).
\]
Then, there exists some $\ve_0 = \ve_0(n,k,\theta_0) > 0$ such that for all $\theta \in (\theta_0 - \ve_0, \theta_0 + \ve_0)$, the cone $\mathbf{C}_{n,k,\theta}$ is not minimizing for $\cA^{\theta}$.
\end{corollary}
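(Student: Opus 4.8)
The plan is to argue by perturbation. The hypothesis provides a Caccioppoli competitor $F$ with $E_{n,k,\theta_0}\triangle F\Subset B_1$ and a definite energy gap $\cA^{\theta_0}(F;B_1)-\cA^{\theta_0}(E_{n,k,\theta_0};B_1)=:-\delta_0<0$, and the goal is to transplant $F$ to an admissible competitor $F_\theta$ for $E_{n,k,\theta}$ whose energy deficit against $E_{n,k,\theta}$ is still strictly negative once $|\theta-\theta_0|$ is small. First I would normalize: since both the functional $\cA^\theta$ and the cones $\mathbf{C}_{n,k,\theta}$ are scale invariant, after replacing $F$ by a dilate we may assume $K:=\overline{E_{n,k,\theta_0}\triangle F}\subset B_{1/4}$, so $F$ agrees with the cone $E_{n,k,\theta_0}$ on $B_1\setminus B_{1/4}$ and the energy gap already localizes: $\cA^{\theta_0}(F;B_{1/2})-\cA^{\theta_0}(E_{n,k,\theta_0};B_{1/2})=-\delta_0$.

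Next I would record the quantitative closeness of $\mathbf{C}_{n,k,\theta}$ to $\mathbf{C}_{n,k,\theta_0}$ and package it as a family of equivariant diffeomorphisms. By Theorem~\ref{thm:capillary-cones} the link $\mathbf{C}_{n,k,\theta}\cap\bS^n_+$ depends smoothly on $\theta$ for $\theta_0<\tfrac{\pi}{2}$, and by Lemma~\ref{lemma:graph-over-extended-link-pi/2} together with the $\ve$-closeness estimates~\eqref{eqn:g-eps-g0-closeness}--\eqref{eqn:t-hat-t-eps} the same holds (in the normal-graph-over-the-extended-cone gauge of Section~\ref{section:geometry-near-pi/2}) when $\theta_0=\tfrac{\pi}{2}$. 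In either case there is a constant $C=C(n,k,\theta_0)$ and, for $\theta$ near $\theta_0$, an $O(n-k)\times O(k)$-equivariant bi-Lipschitz homeomorphism $\Phi_\theta:\bR^{n+1}_+\to\bR^{n+1}_+$ fixing the origin, preserving $\Pi$ setwise, smooth away from $0$, equal to the identity outside $B_{3/4}$, with $\|\Phi_\theta-\mathrm{id}\|_{C^1}\le C|\theta-\theta_0|$, and satisfying $\Phi_\theta(E_{n,k,\theta_0})\cap B_{1/2}=E_{n,k,\theta}\cap B_{1/2}$. Concretely one takes $\Phi_\theta$ radial, $\Phi_\theta(R\omega)=R\,\Psi_\theta^R(\omega)$, with $\Psi_\theta^R$ interpolating between the small sphere diffeomorphism carrying the link of $\mathbf{C}_{n,k,\theta_0}$ onto that of $\mathbf{C}_{n,k,\theta}$ (for $R\le\tfrac12$) and the identity (for $R\ge\tfrac34$); its restriction to $\Pi\cap\bS^n$ is the corresponding small diffeomorphism carrying one free-boundary sector onto the other, which makes $\Phi_\theta$ preserve $\Pi$ and the half-space.

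I would then set $F_\theta:=\big(E_{n,k,\theta}\setminus B_{1/2}\big)\cup\big(\Phi_\theta(F)\cap B_{1/2}\big)$. Because $\Phi_\theta$ carries $\mathbf{C}_{n,k,\theta_0}$ onto $\mathbf{C}_{n,k,\theta}$ on all of $B_{1/2}$ while $F=E_{n,k,\theta_0}$ on $B_{1/2}\setminus B_{1/4}$, we get $\Phi_\theta(F)=E_{n,k,\theta}$ on $B_{1/2}\setminus\Phi_\theta(B_{1/4})$; hence the two pieces of $F_\theta$ match across $\partial B_{1/2}$ without creating perimeter there, and $E_{n,k,\theta}\triangle F_\theta=\Phi_\theta(E_{n,k,\theta_0}\triangle F)\cap B_{1/2}\subset\Phi_\theta(K)\Subset B_1$, so $F_\theta$ is an admissible competitor for $E_{n,k,\theta}$. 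For the energy, since $F_\theta$ and $E_{n,k,\theta}$ coincide on $B_1\setminus\overline{B_{1/2}}$ and $\cA^\theta(\cdot;B_1)$ localizes,
\[
\cA^\theta(F_\theta;B_1)-\cA^\theta(E_{n,k,\theta};B_1)=\cA^\theta(\Phi_\theta(F);B_{1/2})-\cA^\theta(E_{n,k,\theta};B_{1/2}).
\]
Here $\cA^\theta(E_{n,k,\theta};B_{1/2})=\cA^{\theta_0}(E_{n,k,\theta_0};B_{1/2})+O(|\theta-\theta_0|)$, the mass term because $\mathbf{C}_{n,k,\theta}$ is an $O(|\theta-\theta_0|)$-small $C^1$ perturbation of $\mathbf{C}_{n,k,\theta_0}$ on $B_{1/2}$ away from the origin (and contributes the same small mass near the origin by homogeneity), and the wetting term because $|\cos\theta-\cos\theta_0|=O(|\theta-\theta_0|)$ and the aperture $t_{n,k,\theta}$, hence the $\cH^n$-measure of the trace sector, varies by $O(|\theta-\theta_0|)$; and $\cA^\theta(\Phi_\theta(F);B_{1/2})=\cA^{\theta_0}(F;B_{1/2})+O(|\theta-\theta_0|)$, the perimeter changing by $O(\|\Phi_\theta-\mathrm{id}\|_{C^1})\,P(F;B_{2/3})$ plus the perimeter of the cone on an $O(|\theta-\theta_0|)$-thin shell near $\partial B_{1/2}$, and the trace term by $O(|\theta-\theta_0|)$ since $\Phi_\theta$ preserves $\Pi$ and is $C^1$-close to the identity. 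Combining with the normalized gap $-\delta_0$ gives
\[
\cA^\theta(F_\theta;B_1)-\cA^\theta(E_{n,k,\theta};B_1)=-\delta_0+O(|\theta-\theta_0|),
\]
with the implied constant depending on $n,k,\theta_0$ and on the fixed competitor $F$. Choosing $\ve_0$ so that the error is $<\delta_0/2$ on $(\theta_0-\ve_0,\theta_0+\ve_0)$ then exhibits, for each such $\theta$, a competitor strictly beating $E_{n,k,\theta}$, i.e. $\mathbf{C}_{n,k,\theta}$ is not minimizing.

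I expect the main obstacle to be the construction and bookkeeping of the equivariant diffeomorphism $\Phi_\theta$ near the free boundary on $\Pi$ — especially at $\theta_0=\tfrac{\pi}{2}$, where the profile functions $f_{n,k,\theta}$ blow up at their zero so that one must pass to the extended cone $\hat{\mathbf{C}}_{n,k,\frac{\pi}{2}-\ve}$ of Section~\ref{section:geometry-near-pi/2} and control $\Phi_\theta$ through the normal-graph gauge of Lemma~\ref{lemma:graph-over-extended-link-pi/2} — together with the (otherwise routine) GMT care needed to localize $\cA^\theta$ across $\partial B_{1/2}$ and to push perimeter and trace forward under $\Phi_\theta$. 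All of this becomes mechanical once the closeness estimates of Theorem~\ref{thm:capillary-cones} and Lemma~\ref{lemma:graph-over-extended-link-pi/2} are invoked, and the argument is exactly the one already carried out, in concrete form, in Part~II of the proof of Theorem~\ref{thm:cones-are-minimizing-nearpi/2}.
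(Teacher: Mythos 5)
Your proposal is correct and takes essentially the same perturbation route as the paper's remark following the corollary: split the energy comparison into the $O(|\theta-\theta_0|)$ change in the wetting coefficient $\cos\theta$, an $O(|\theta-\theta_0|)$ change in the perimeter and trace of a transplanted competitor $F_\theta$, and an $O(|\theta-\theta_0|)$ change in $\cA^\theta(E_{n,k,\theta};B_1)$ itself, then absorb all three errors into the fixed gap $-\delta_0$. Where you genuinely add content is in the construction of $F_\theta$: the paper merely asserts that one can produce a competitor $F_\theta$ with the same spherical trace as $\mathbf{C}_{n,k,\theta}$ and $P(F_\theta;B_1)-P(F;B_1)=O(|\theta-\theta_0|)$, pointing implicitly to the normal-graph perturbation used in Part~II of the proof of Theorem~\ref{thm:cones-are-minimizing-nearpi/2}, whereas your explicit push-forward by an equivariant bi-Lipschitz $\Phi_\theta$ — preserving $\Pi$, carrying one cone onto the other on $B_{1/2}$, the identity outside $B_{3/4}$, $C^1$-close to $\mathrm{id}$ — supplies this missing step cleanly. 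Your route also has the slight advantage of working immediately for an arbitrary Caccioppoli competitor $F$ with no regularity assumed on $\partial^* F$, since a bi-Lipschitz push-forward preserves the BV structure and gives the perimeter and trace comparability directly; a normal-graph perturbation would be more natural when $F$ has a smooth reduced boundary (as in the Lawlor competitor the paper actually uses in Theorem~\ref{thm:cones-are-minimizing-nearpi/2}), but is less obviously applicable here.
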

Indeed, the non-minimizing assumption implies the existence of some $\sigma_0 > 0$ such that 
\[
\cA^{\theta_0}(F;B_1) = (1-\sigma_0) \cA^{\theta_0}(E_{n,k,\theta_0} ; B_1).
\]
For $|\theta - \theta_0| < \ve_0$ we may expand $\cos \theta = \cos \theta_0 + O(\ve)$ as above and approximate the capillary energy $\cA^{\theta}$ (in view of~\eqref{eqn:capillary-energy}) as
\begin{align*}
\bigl| \cA^{\theta}(E;B_1) - \cA^{\theta_0}(E;B_1) \bigr| &= |\cos \theta - \cos \theta_0| \, \cH^n ( \partial^* E \cap \Pi \cap B_1) \leq C(n,E) \ve 
\end{align*}
We may then construct an area-decreasing competitor $F_{\theta}$ with the same spherical trace as $\mathbf{C}_{n,k,\theta}$ and $P(F_{\theta};B_1) - P(F_{\theta_0};B_1) = O(|\theta-\theta_0|)$; applying the above computation, we then deduce that $\cA^{\theta}(F_{\theta}; B_1) < \cA^{\theta}(E_{n,k,\theta};B_1)$ for $|\theta -\theta_0| < \ve_0$.
In particular,~\eqref{cor:not-minimizing} implies that, for given $(n,k)$, the property of $\mathbf{C}_{n,k,\theta}$ being non-minimizing for $\cA^{\theta}$ is an open condition in $\theta \in (0, \frac{\pi}{2}]$.
\end{remark}

\begin{remark}\label{rem:minimizingNotStrict}
    For a regular cone $\mathbf{C} \subset \bR^{n+1}$ with an isolated singularity, being area-minimizing implies that it is uniquely minimizing, i.e., the unique solution for the Plateau problem with spherical trace $\mathbf{C} \cap \bS^n$; this follows from the existence of a Hardt-Simon foliation of $\bR^{n+1} \setminus \mathbf{C}$ by smooth minimal hypersurfaces~\cite{hardt-simon}.
    In upcoming work, we study the analogue of this construction for minimizing capillary cones.
    In particular, this will let us deduce that the set
    \[
    \vartheta_{n,k} := \{ \theta \in (0, \tfrac{\pi}{2}] : \mathbf{C}_{n,k,\theta} \; \text{ is not area-minimizing} \},
    \]
    which is open due to Corollary~\ref{cor:not-minimizing}, is not closed unless $\vartheta_{n,k} = ( 0, \frac{\pi}{2}]$.
    For $(n,k) = (7,1)$, we have $\vartheta_{n,k} \neq (0, \frac{\pi}{2}]$, due to Theorem~\ref{thm:cones-are-minimizing-smalltheta}; consequently, $\theta_* := \inf \vartheta_{7,1} \not\in \vartheta_{7,1}$.
    The cone $\mathbf{C}_{7,1,\theta_*}$ will then be minimizing, but not strictly minimizing, for $\cA^{\theta_*}$.
    This transition exhibits the first instance of such a non-strict minimizer; the corresponding question of area-minimizing but not strictly area-minimizing minimal cones in Euclidean space remains open.
\end{remark}

\newpage
\appendix

\section{Supersolution exponents}\label{subsection:exponents}

In Section~\ref{subsection:supersolution}, we constructed supersolutions proving the minimizing property of $\mathbf{C}_{n,k,\theta}$ for given pairs $(n,k)$ based on the decay rate $\beta$ and reduced the verification of their properties (obtained in Proposition~\ref{prop:supersolution}) to the conditions of Lemmas~\ref{lemma:W-computation} and~\ref{lemma:K-computation} for $\beta$.
A representative sample of viable exponents $\beta_{n,k}$ for pairs $(n,k)$ covered by Theorem~\ref{thm:cones-are-minimizing-smalltheta} is presented in the following Table~\ref{table:bigTableOfBetas}.
We record the quantities $\hat{\cQ}(t) := \cQ(t) + \frac{1}{(A^2+1)(1 - \frac{\bar{r}}{A})}$ and $\{ K_0(\xi), K_1 (\xi) \} := \{ K(0,\xi), K(1,\xi) \}$.

\begin{table}[H]
    \centering
     \begin{tabular}{|c|c|c||c|c|c|c|c|c|}      % B + C table v3              -- only differs from R by K computation precision                          
      \hline                                         
      $n$ & $k$ & $\beta$ & $\bar{r} -A$ & $\max_{[0,\tau]} \hat{\cQ}(t) $ & $\max_{ [0,1]} K_0(\xi) $ & $\max_{[0,1]} K_1(\xi) $ & $\min_{[0,1]} \cP(\xi)$ \\                                
      \hline \hline                       
      $ 7 $ & $ 1 $ & $ -2 $ & $ -3.33 $ & $ -0.011 $ & $ -1.55 $ & $ -1.39 $ & $2.34$ \\            
      $ 7 $ & $ 2 $ & $ -2.5 $ & $ -0.58 $ & $ -0.034 $ & $ -0.81 $ & $ -0.62 $ & $4.29$ \\            
      \hline                                        
      $ 8 $ & $ 1 $ & $ -4 $ & $ -1.67 $ & $ -0.237 $ & $ -1.68 $ & $ -1.27 $ & $5.37$ \\                 
      $ 8 $ & $ 2 $ & $ -4 $ & $ -0.66 $ & $ -0.645 $ & $ -1.71 $ & $ -1.01 $ & $5.65$ \\                 
      $ 8 $ & $ 3 $ & $ -3 $ & $ -1.29 $ & $ -1.024 $ & $ -2.52 $ & $ -2.51 $ & $5.95$ \\                 
      \hline                                       
      $ 9 $ & $ 1 $ & $ -5 $ & $ -2.66 $ & $ -0.307 $ & $ -2.00 $ & $ -1.69 $ & $7.22$ \\                 
      $ 9 $ & $ 2 $ & $ -5 $ & $ -0.98 $ & $ -0.967 $ & $ -2.79 $ & $ -1.94 $ & $7.11$ \\                 
      $ 9 $ & $ 3 $ & $ -5 $ & $ -0.60 $ & $ -1.478 $ & $ -1.97 $ & $ -1.27 $ & $8.10$ \\                 
      $ 9 $ & $ 4 $ & $ -4 $ & $ -1.26 $ & $ -1.720 $ & $ -3.06 $ & $ -2.98 $ & $11.03$ \\                 
      $ 9 $ & $ 5 $ & $ -3 $ & $ -1.45 $ & $ -1.452 $ & $ -2.81 $ & $ -3.09 $ & $13.52$ \\                 
      \hline                                       
      $ 12 $ & $ 1 $ & $ -8 $ & $ -5.47 $ & $ -0.345 $ & $ -2.85 $ & $ -2.64 $ & $13.90$ \\                
      $ 12 $ & $ 2 $ & $ -8 $ & $ -1.73 $ & $ -1.266 $ & $ -3.56 $ & $ -3.00 $ & $12.29$ \\                
      $ 12 $ & $ 3 $ & $ -8 $ & $ -1.03 $ & $ -2.206 $ & $ -4.73 $ & $ -3.39 $ & $12.69$ \\                
      $ 12 $ & $ 4 $ & $ -8 $ & $ -0.73 $ & $ -2.617 $ & $ -4.10 $ & $ -2.55 $ & $16.73$ \\                
      $ 12 $ & $ 5 $ & $ -8 $ & $ -0.56 $ & $ -2.920 $ & $ -3.13 $ & $ -1.46 $ & $25.21$ \\                
      $ 12 $ & $ 6 $ & $ -7 $ & $ -1.20 $ & $ -2.903 $ & $ -4.95 $ & $ -4.09 $ & $26.59$ \\                
      $ 12 $ & $ 7 $ & $ -7 $ & $ -0.95 $ & $ -2.917 $ & $ -2.77 $ & $ -1.45 $ & $27.07$ \\                
      $ 12 $ & $ 8 $ & $ -6 $ & $ -1.33 $ & $ -2.380 $ & $ -3.60 $ & $ -2.71 $ & $30.4$ \\                
      $ 12 $ & $ 9 $ & $ -4 $ & $ -2.01 $ & $ -1.440 $ & $ -5.42 $ & $ -5.27 $ & $28.09$ \\                
      \hline                                       
      $ 20 $ & $ 16 $ & $-12 $ & $-1.55 $ & $ -3.777 $ & $ -10.37 $ & $ -3.81 $ & $74.18$ \\             
      $ 20 $ & $ 17 $ & $ -11 $ & $ -1.71 $ & $ -2.720 $ & $ -10.65 $ & $ -4.59 $ & $72.39$  \\             
      $ 20 $ & $ 18 $ & $ -7 $ & $ -3.01 $ & $ -0.997 $ & $ -18.14 $ & $ -16.41 $ & $56.85$ \\             
      \hline                                       
      $ 30 $ & $ 1 $ & $ -26 $ & $ -19.38 $ & $ -0.349 $ & $ -5.77 $ & $ -5.66 $ & $77.62$ \\              
      $ 30 $ & $ 15 $ & $ -24 $ & $ -1.32 $ & $ -8.985 $ & $ -19.73 $ & $ -10.03 $ & $119.39$ \\            
      $ 30 $ & $ 28 $ & $ -16 $ & $ -2.16 $ & $ -1.909 $ & $ -40.33 $ & $ -24.82 $ & $109.76$\\            
      \hline                                       
      $ 40 $ & $ 1 $ & $ -36 $ & $ -25.43 $ & $ -0.348 $ & $ -6.87 $ & $ -6.77 $ & $125.76$\\              
      $ 40 $ & $ 38 $ & $ -24 $ & $ -1.91 $ & $ -2.621 $ & $ -98.47 $ & $ -31.33 $ & $159.46$\\            
      \hline                                       
      $ 50 $ & $ 1 $ & $ -46 $ & $ -30.71 $ & $ -0.347 $ & $ -7.81 $ & $ -7.71 $ & $181.08$\\              
      $ 50 $ & $ 25 $ & $ -42 $ & $ -1.74 $ & $ -14.545 $ & $ -48.89 $ & $ -28.40 $ & $245.12$\\           
      $ 50 $ & $ 48 $ & $ -31 $ & $ -1.96 $ & $ -3.105 $ & $ -282.76 $ & $ -109.81 $ & $207.44$\\          
      \hline                                       
      $ 60 $ & $ 1 $ & $ -56 $ & $ -35.43 $ & $ -0.347 $ & $ -8.65 $ & $ -8.55 $ & $242.74$\\              
      $ 60 $ & $ 58 $ & $ -38 $ & $ -1.99 $ & $ -3.539 $ & $ -757.64 $ & $ -326.85 $ & $255.70$\\          
      \hline                                       
      $ 70 $ & $ 1 $ & $ -66 $ & $ -39.71 $ & $ -0.347 $ & $ -9.42 $ & $ -9.32 $ & $310.14$\\              
      $ 70 $ & $ 35 $ & $ -60 $ & $ -2.02 $ & $ -19.879 $ & $ -104.13 $ & $ -57.47 $ & $386.63$\\          
      $ 70 $ & $ 68 $ & $ -46 $ & $ -1.86 $ & $ -4.069 $ & $ -1847.27 $ & $ -484.63 $ & $306.18$\\       
      
      \hline                                       
      $ 100 $ & $ 1 $ & $ -96 $ & $ -50.76 $ & $ -0.347 $ & $ -11.42 $ & $ -11.30 $ & $542.80$\\           
      $ 100 $ & $ 50 $ & $ -88 $ & $ -1.98 $ & $ -28.832 $ & $ -271.42 $ & $ -59.27 $ & $625.82$\\         
      $ 100 $ & $ 98 $ & $ -46 $ & $ -26.56 $ & $ -3.544 $ & $ -9680.10 $ & $ -9670.75 $ & $381.46$\\      
      \hline                     
  \end{tabular}  
    \caption{Representative $\beta_{n,k}$ sample for pairs $(n,k)$.}
    \label{table:bigTableOfBetas}
\end{table}

\newpage 

\nocite{*}
\bibliography{ref}

\end{document}